\newcommand{\NN}{\mathbb{N}}
\newcommand{\RR}{\mathbb{R}}
\newcommand{\QQ}{\mathbb{Q}}
\newcommand{\ZZ}{\mathbb{Z}}
\newcommand{\CC}{\mathbb{C}}
\newcommand{\FF}{\mathbb{F}}
\newcommand{\TT}{\mathbb{T}}
\newtheorem{convention}[theorem]{Convention}
\newtheorem{observation}[theorem]{Observation}
\theoremstyle{definition}
\newtheorem{construction}[theorem]{Construction}
\newcommand{\cA}{\mathcal{A}}
\newcommand{\cB}{\mathcal{B}}
\newcommand{\cC}{\mathcal{C}}
\newcommand{\cD}{\mathcal{D}}
\newcommand{\cE}{\mathcal{E}}
\newcommand{\cF}{\mathcal{F}}
\newcommand{\cG}{\mathcal{G}}
\newcommand{\cH}{\mathcal{H}}
\newcommand{\cK}{\mathcal{K}}
\newcommand{\cL}{\mathcal{L}}
\newcommand{\cP}{\mathcal{P}}
\newcommand{\cS}{\mathcal{S}}
\newcommand{\cT}{\mathcal{T}}
\newcommand{\cU}{\mathcal{U}}
\newcommand{\cW}{\mathcal{W}}
\newcommand{\cV}{\mathcal{V}}
\newcommand{\bB}{\mathbf{B}}
\newcommand{\bx}{\mathbf{x}}
\newcommand{\by}{\mathbf{y}}
\newcommand{\seq}[3]{ \left( {#1}_{#2} \right)_{#2 \in #3} }
\newcommand{\set}[3]{ \left\{ {#1}_{#2} \right\}_{#2 \in #3}}
\newcommand{\setsep}{\ : \ }
\newcommand{\imply}{\Longrightarrow}
\newcommand{\lc}{\operatorname{lc}}
\newcommand{\ci}[1]{\left\llbracket #1 \right\rrbracket}%{\left[\left[ #1 \right]\right]}
\newcommand{\fp}[1]{\left< #1 \right> } %{\left\{\left\{ #1 \right\}\right\}}
\newcommand{\abs}[1]{\left| #1 \right|}
\newcommand{\Top}{\operatorname{Top}}
\newcommand{\Hom}{\operatorname{Hom}}
\newcommand{\llim}[2]{\displaystyle{\operatorname*{\mathit{#1}--lim}_{\phantom{#1} #2} }\ }
\newcommand{\id}{\operatorname{id}}
\newcommand{\cl}{\operatorname{cl}}
\newcommand{\lin}{\operatorname{lin}}
\newcommand{\restrict}[1]{\big|_{#1}}
\newcommand{\FS}[1]{\mathrm{FS} \! \left( #1\right)}
\newcommand{\FP}[1]{\mathrm{FP} \! \left( #1\right)}
\newcommand{\FU}[1]{\mathrm{FU} \! \left( #1\right)}
\newcommand{\card}[1]{{\# #1}}
\newcommand{\ldiv}[2]{ #1\backslash #2}
\newcommand{\rdiv}[2]{#2 / #1}
\newcommand{\principal}[1]{{\cF_{#1}}}
\newcommand{\IP}{\mathsf{IP}}
\renewcommand{\C}{\mathsf{C}}
\newcommand{\IPst}{\IP^*}
\newcommand{\VIP}{\mathsf{VIP}}
\newcommand{\Ultrafilters}[1]{\mathfrak{Ult}\left( #1 \right)}
\newcommand{\Filters}[1]{\mathfrak{Filt}\left( #1 \right)}
\newcommand{\Theory}{\mathfrak{T}}
\newcommand{\Language}{\mathcal{L}}
\newcommand{\Structure}{\mathsf{S}}
\newcommand{\norm}[1]{\left\lVert #1 \right\rVert}
\newcommand{\scalar}[1]{\left< #1 \right>}
\newcommand{\defiff}{\mbox{if}}
\newcommand{\Kappa}{\mathrm{K}}
\newcommand{\floor}[1]{\lfloor #1 \rfloor}
\newcommand{\ceil}[1]{\lceil #1 \rceil}
\title{\ttitle} % Defines the thesis title - don't touch this
\begin{document}

\frontmatter % Use roman page numbering style (i, ii, iii, iv...) for the pre-content pages

\setstretch{1.3} % Line spacing of 1.3

% Define the page headers using the FancyHdr package and set up for one-sided printing
\fancyhead{} % Clears all page headers and footers
\rhead{\thepage} % Sets the right side header to show the page number
\lhead{} % Clears the left side page header

\pagestyle{fancy} % Finally, use the "fancy" page style to implement the FancyHdr headers

\newcommand{\HRule}{\rule{\linewidth}{0.5mm}} % New command to make the lines in the title page

% PDF meta-data
\hypersetup{pdftitle={\ttitle}}
\hypersetup{pdfsubject=\subjectname}
\hypersetup{pdfauthor=\authornames}
\hypersetup{pdfkeywords=\keywordnames}

%----------------------------------------------------------------------------------------
%	TITLE PAGE
%----------------------------------------------------------------------------------------

\begin{titlepage}
\begin{center}

\textsc{\LARGE \univnameA}\\ % University name
\textsc{\LARGE \univnameB}\\[1.5cm] % University name
\textsc{\Large Master Thesis}\\[0.5cm] % Thesis type

\HRule \\[0.4cm] % Horizontal line
{\huge \bfseries \ttitle}\\[0.4cm] % Thesis title
\HRule \\[1.5cm] % Horizontal line
 
\begin{minipage}{0.4\textwidth}
\begin{flushleft} \large
\emph{Author:}\\
\authornames % Author name - remove the \href bracket to remove the link
\end{flushleft}
\end{minipage}
\begin{minipage}{0.4\textwidth}
\begin{flushright} \large
\emph{Supervisors:} \\
\href{http://www.science.uva.nl/math/People/show_person.php?Person_id=Eisner-Lobova+T.}{\supnameAa}\\ % Supervisor name - remove the \href bracket to remove the link  
\href{http://staff.science.uva.nl/~alejan/}{\supnameAb} \\% Supervisor name - remove the \href bracket to remove the link  
\href{http://www.im.uj.edu.pl/instytut/pracownik?id=219}{\supnameB}
\end{flushright}	
\end{minipage}\\[3cm]
 
\large \textit{A thesis submitted in fulfilment of the requirements\\ for the degree of \degreename}\\[0.3cm] % University requirement text
\textit{in the}\\[0.4cm]
\deptnameA,\ \univnameA \\ 
\deptnameB,\ \univnameB \\[2cm]
%\groupname\\\deptname\\[2cm] % Research group name and department name
 
{\large \today}\\[4cm] % Date

\vfill
\end{center}

\end{titlepage}

%----------------------------------------------------------------------------------------
%	DECLARATION PAGE
%	Your institution may give you a different text to place here
%----------------------------------------------------------------------------------------

\Declaration{

\addtocontents{toc}{\vspace{1em}} % Add a gap in the Contents, for aesthetics

I, \authornames, declare that this thesis titled \emph{``\ttitle''} is my own. I confirm that:

\begin{itemize} 
\item[\tiny{$\blacksquare$}] This work was done wholly or mainly while in candidature for a research degree at these Universities.
%\item[\tiny{$\blacksquare$}] Where any part of this thesis has previously been submitted for a degree or any other qualification at this University or any other institution, this has been clearly stated.
\item[\tiny{$\blacksquare$}] Where I have consulted the published work of others, this is always clearly attributed.
\item[\tiny{$\blacksquare$}] Where I have quoted from the work of others, the source is always given. With the exception of such quotations, this thesis is entirely my own work.
\item[\tiny{$\blacksquare$}] I have acknowledged all main sources of help.
%\item[\tiny{$\blacksquare$}] Where the thesis is based on work done by myself jointly with others, I have made clear exactly what was done by others and what I have contributed myself.\\
\end{itemize}
 
Signed:\\
\rule[1em]{25em}{0.5pt} % This prints a line for the signature
 
Date:\\
\rule[1em]{25em}{0.5pt} % This prints a line to write the date
}

\clearpage % Start a new page

%----------------------------------------------------------------------------------------
%	QUOTATION PAGE
%----------------------------------------------------------------------------------------

\pagestyle{empty} % No headers or footers for the following pages

\null\vfill % Add some space to move the quote down the page a bit

\textit{``Thanks to my solid academic training, today I can write hundreds of words on virtually any topic without possessing a shred of information, which is how I got a good job in journalism."}

\begin{flushright}
Dave Barry
\end{flushright}

\vfill\vfill\vfill\vfill\vfill\vfill\null % Add some space at the bottom to position the quote just right

\clearpage % Start a new page

%----------------------------------------------------------------------------------------
%	ABSTRACT PAGE
%----------------------------------------------------------------------------------------

\addtotoc{Abstract} % Add the "Abstract" page entry to the Contents

\abstract{\addtocontents{toc}{\vspace{1em}} % Add a gap in the Contents, for aesthetics
	
	Ultrafilters are very useful and versatile objects with applications throughout mathematics: in topology, analysis, combinarotics, model theory, and even theory of social choice. Proofs based on ultrafilters tend to be shorter and more elegant than their classical counterparts. In this thesis, we survey some of the most striking ways in which ultrafilters can be exploited in combinatorics and ergodic theory, with a brief mention of model theory.

	In the initial sections, we establish the basics of the theory of ultrafilters in the hope of keeping our exposition possibly self-contained, and then proceed to specific applications. Important combinatorial results we discuss are the theorems of Hindman, van der Waerden and Hales-Jewett. Each of them asserts essentially that in a finite partition of, respectively, the natural numbers or words over a finite alphabet, one cell much of the combinatorial structure. We next turn to results in ergodic theory, which rely strongly on combinatorial preliminaries. They assert essentially that certain sets of return times are combinatorially rich. We finish by presenting the ultrafilter proof of the famous Arrow's Impossibility Theorem and the construction of the ultraproduct in model theory.		
}

\clearpage % Start a new page

%----------------------------------------------------------------------------------------
%	ACKNOWLEDGEMENTS
%----------------------------------------------------------------------------------------

\setstretch{1.3} % Reset the line-spacing to 1.3 for body text (if it has changed)

\acknowledgements{\addtocontents{toc}{\vspace{1em}} % Add a gap in the Contents, for aesthetics

Any advances that are made in this thesis would not be have been possible without the guidance and help from the supervisors under whom the author had the privilege to work. Many thanks go to Pavel Zorin-Kranich, who was de facto an informal supervisor of this thesis, to Miko\l{}aj Fr\k{a}czyk for the illuminating discussions and his non-wavering enthusiasm, and to the StackExchange community for providing and endless supply of answers to the endless stream of questions produced during our work. We are also grateful to Professor Bergelson for expressing an interest in our research, and providing some useful remarks.

Last, but not least, the author wishes to thank his close ones for the continual support and understanding during the time of writing of this thesis. 

The \LaTeX\ template to which this thesis owns its appearance was created by Steven Gunn and Sunil Patel, and is distributed on Creative Commons License CC BY-NC-SA 3.0 at \url{http://www.latextemplates.com}.
}
\clearpage % Start a new page

%----------------------------------------------------------------------------------------
%	LIST OF CONTENTS/FIGURES/TABLES PAGES
%----------------------------------------------------------------------------------------

\pagestyle{fancy} % The page style headers have been "empty" all this time, now use the "fancy" headers as defined before to bring them back

\lhead{\emph{Contents}} % Set the left side page header to "Contents"
\tableofcontents % Write out the Table of Contents

\mainmatter % Begin numeric (1,2,3...) page numbering

\pagestyle{fancy} % Return the page headers back to the "fancy" style

% Include the chapters of the thesis as separate files from the Chapters folder
% Uncomment the lines as you write the chapters

%\input{ Chapter1}

%\setcounter{chapter}{-1}
\chapter*{Introduction}
\addcontentsline{toc}{chapter}{Introduction}  
\label{I:chapter} 
\lhead{Chapter \ref{I:chapter}. \emph{Introduction}} 

	Ultrafilters are one of the most mysterious and surprising objects in mathematics. On the one hand, there is no explicit construction of an ultrafilter and even proof of their existence involves the axiom of choice. On the other hand, they turn out to have remarkable applications in a wide variety of branches of mathematics. In topology they are closely tied to Stone-\v{C}ech compactification. In analysis, they provide a notion  of a generalised limit, which is in many ways the best possible generalised limit that exists.  In model theory, they make construction of ultraproducts and saturated models possible, leading to worthwhile applications in non-standard analysis. Even the theory of social choice can benefit from application of ultrafilters. What is even more, the space of ultrafilters is a highly non-trivial object with rich algebraic and topological structure, well worth the study in its own right. With a shade of national pride, we also mention that ultrafilters were first discovered by the Polish mathematician Tarski in 1930s, cf. \cite{history-who-gave-C-W-tale}.

	The application of ultrafilters that interests us the most lies in ergodic theory. Running a little ahead of the exposition, we informally present the basic idea behind these applications. In ergodic theory, one classically considers Ces\'{a}ro averages, which in a simplest instance take the form
$$\lim_{N \to \infty} \frac{1}{N} \sum_{n=1}^{N} f\left(T^nx\right),$$
where $T$ is a measure preserving transformation of a compact probability space $X$, and $f:\ X \to \RR$ is a measurable function. Ultrafilters allow one to replace the Ces\'{a}ro averages by generalised limits along an idempotent ultrafilter $p$
$$\llim{p}{n} f\left(T^nx\right).$$
While typical results in ergodic theory would imply that certain sets of recurrence times are non-empty, or at best syndetic, ultrafilter methods show additional algebraic structure of these sets, such as $\IP^*$ or $\C^*$.
  
Behaviour of the mentioned averages depends on the algebraic properties of $p$, hence it becomes necessary to study the algebraic structure of ultrafilters. We also need to establish a connection with combinatoric to recover a concrete notion of largeness from considerations about ultrafilters. However, we dispose of the need to study Ces\'{a}ro averages, which frees us of much of the $\varepsilon/\delta$ management. On the whole, we are able to shift much of the burden from analysis to algebra, which often simplifies the reasonings and strengthens the conclusions, as well as provides a different point of view.

%The most significant contributions in this area are due to Bergelson, McCutcheon, Kra, Blass, Leibman \dots \todo{make sure not to miss people, double check spelling, then triple check}
Many important contributions to the area of our investigation were made by Bergelson, Blass, Hindman, Knutson, Kra, Leibman, McCutcheon, and others. The highly illuminating papers by these authors were the basis of our research. Very accessible expository papers by Bergelson were especially helpful and motivating. Of importance to our considerations was also the sole paper by Schnell \cite{Schnell} on ergodic theory. A comprehensive reference for algebraic structure is due to Hindman and Strauss \cite{hindman-strauss}. Topological and set-theoretic issues are thoroughly discussed by Comfort and Negrepontis \cite{comfort-negre-book}. Great expository material can be found in the thesis by Zirnstein \cite{zirnstein-thesis}. Most of the discussed results come from these sources, with some minor extensions and simplifications due to the author.

	The aim of this thesis is to provide a possibly self-contained introduction to the many ways in which ultrafilters are applicable to ergodic theory and combinatorial number theory, and provide a glimpse of their applications in other areas. We hope to convince the reader that ultrafilters are a versatile and powerful tool for dealing with problems in these fields. We do not assume previous knowledge of ultrafilters, and take care to keep the treatment self-contained. A degree of familiarity with abstract topology and functional analysis is necessary for the dynamical applications. A nodding acquaintance with ergodic theory and with combinatorics is useful, but not strictly indispensable.

	The thesis structure is as follows.
	
	In Chapter \ref{A:chapter} we introduce the preliminary material. We begin by defining filters and ultrafilters on arbitrary spaces. Subsequently, we introduce the natural topological structure on the space of ultrafilters, as well as the semigroup structure, if the underlying space is a semigroup. We close with some remarks on general left-topological semigroups.

	In Chapter \ref{C:chapter} we discuss applications in combinatorics, especially combinatorial number theory. Notions introduced there include $\IP/\IP^*$ sets and $\C/\C^*$ (central) sets, which will be important for dynamical applications. We provide the ultrafilter proof of Hindman's theorem, which is arguably one of the most elegant application of ultrafilters, as well as some immediate generalisations. Other discussed results include van der Waerden Theorem and Hales-Jewett Theorem, which have effectively the same proof in the ultrafilter approach.

	In Chapter \ref{B:chapter} we discuss applications in ergodic theory. We first discuss the easy example of maps on the torus, where we derive recurrence results along polynomials and more general functions. A significant amount of work goes into establishing the correct generalisation of polynomial maps.  Next, we proceed to applications in general dynamical systems, where we prove that certain set of return times are $\IP^*$ or $\C^*$. 

	In Chapter \ref{D:chapter} we present some applications of ultrafilters in apparently unrelated areas of mathematics: social choice theory and model theory. Our main purpose there is to show how multifarious applications of ultrafilters are; the reader interested solely in ergodic theory may disregard this chapter. We start by considering Arrow's theorem on voting procedures. Next, we develop ideas already present in this simple application to construct ultraproducts, which are an important object in model theory, important for the introduction of non-standard analysis.

\chapter{Preliminaries} % Main chapter title
\label{A:chapter} 
\lhead{Chapter \ref{A:chapter}. \emph{Preliminaries}} 

In this chapter, we build up the basics of the theory, necessary for later applications. To begin with, we define ultrafilters on arbitrary sets, and then proceed to introduce the additional structure that the space of ultrafilters carries. In particular, we show that the space of the ultrafilters on a discrete semigroup has a natural structure of a compact left-topological semigroup, and can be identified with the Stone-\v{C}ech compactification. We develop some basic theory of Stone-\v{C}ech compactifications and compact left-topological semigroups in an abstract way, avoiding reference to the concrete example of the ultrafilter space, partly for elegance, partly because we will require a pinch more generality in the applications to come. The notion of a generalised limit, defined here, will play an the most essential  role in the following chapters.

For most of the applications, it suffices to restrict one's attention to ultrafilters on simple spaces. The single most useful example is the natural numbers $\NN = \{1,2,3,\cdots\}$. Slightly more general ones are the integers $\ZZ$, the finite sets of natural numbers $\cP_{\text{fin}}(\NN)$, and Cartesian products thereof. The reader may always assume that the space $X$ is one of these special cases.

All of the presented results are widely known by now. The basic definitions can be found in any introductory text, and are provided in many of the cited papers. For aspects connected to topology and pure set theory, we refer to \cite{comfort-negre-book}. Also, many purely topological texts treat Stone-\v{C}ech compactification, possibly without identifying it with the ultrafilters; see for example \cite{engelking-topology}. For a detailed discussion of the algebraic structure, we refer to \cite{hindman-strauss}.

\section{Set theory: filters and ultrafilters}
\index{ultrafilter|(}
Throughout this section, let $X$ denote for an arbitrary set. We will later require the space $X$ to additionally have the structure of a discrete semigroup, but just yet we work in a fully general context. The main goal in this section is to introduce and analyse the notion of ultrafilters on $X$, but it will be useful to also define the related weaker notions of filters and families with the finite intersection properties. 

\begin{definition}[Finite intersection property]\label{def:finite-intersection}
\index{finite intersection property}
  Let $X$ be a set. A family $\cA \subset \cP(X)$ is said to have \emph{finite intersection property} if and only if for any finite subset $\cA_0 \subset \cA$ it holds the intersection $\bigcap \cA_0$ is non-empty.

\end{definition}

\begin{definition}[Filter]\label{def:filter}
\index{filter}
  Let $X$ be a set. A family $\cF \subset \cP(X)$ is said to be a \emph{filter} if and only if the following conditions are satisfied:
  \begin{enumerate}[label={(\textit{\roman*})}]
    \item\label{def:filter-prop:1} $\emptyset \not \in \cF,\ X \in \cF.$ 
    \item\label{def:filter-prop:2} If $A \in \cF$ and $A \subset B$ then $B \in \cF$.
    \item\label{def:filter-prop:3} If $A,B \in \cF$ then $A \cap B \in \cF$. 
  \end{enumerate}
	We denote the family of all ultrafilters on the set $X$ by $\Filters{X}$.
\end{definition}

\begin{definition}[Ultrafilter]\label{def:ultrafilter}
\index{ultrafilter}
  Let $X$ be a set. A family $\cU \subset \cP(X)$ is said to be an \emph{ultrafilter} if and only if $\cU$ is a filter and the following additional condition is satisfied:	
  \begin{enumerate}[label={(\textit{\roman*})},start=4]
    \item\label{def:ultrafilter-prop:4} If $A \cup B \in \cU$ then $A \in \cU$ or $B \in \cU$.
  \end{enumerate}
	We denote the family of all ultrafilters on the set $X$ by $\Ultrafilters{X}$.
\end{definition}

  We acknowledge that this notation is slightly non-standard. It is more frequent to denote the set of ultrafilters by $\beta X$, which has its roots in topology. This issue will be discussed in more detail.

\begin{observation}
\index{ultrafilter}
  If $\cU \in \Filters{X}$ is a filter the property \ref{def:ultrafilter-prop:4} in definition \ref{def:ultrafilter} is equivalent to either of the following conditions:
  \begin{enumerate}[label={(\textit{\roman*})},start=5]
    \item\label{def:ultrafilter-prop:8} If $A \cup B = X$ then $A \in \cU$ or $B \in \cU$.
    \item\label{def:ultrafilter-prop:5} If $\bigcup_{i=1}^n A_i \in \cU$ then $A_i \in \cU$ for some $i$.
    \item\label{def:ultrafilter-prop:6} If $C,D \in \cP(X) \setminus \cA$, then $C \cup D \in \cP(X) \setminus \cA$
    \item\label{def:ultrafilter-prop:7} If $C \not \in \cU$ then $C^c \in \cU$.
  \end{enumerate}
\end{observation}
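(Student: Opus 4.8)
The plan is to establish a cycle of implications linking property \ref{def:ultrafilter-prop:4} to each of \ref{def:ultrafilter-prop:8}, \ref{def:ultrafilter-prop:5}, \ref{def:ultrafilter-prop:6}, \ref{def:ultrafilter-prop:7}, using only the filter axioms \ref{def:filter-prop:1}--\ref{def:filter-prop:3}. Since all conditions are to be shown equivalent to \ref{def:ultrafilter-prop:4} under the standing hypothesis that $\cU$ is a filter, it suffices to prove a single chain such as \ref{def:ultrafilter-prop:4} $\Rightarrow$ \ref{def:ultrafilter-prop:5} $\Rightarrow$ \ref{def:ultrafilter-prop:8} $\Rightarrow$ \ref{def:ultrafilter-prop:7} $\Rightarrow$ \ref{def:ultrafilter-prop:6} $\Rightarrow$ \ref{def:ultrafilter-prop:4}, closing the loop.

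First I would handle \ref{def:ultrafilter-prop:4} $\Rightarrow$ \ref{def:ultrafilter-prop:5} by an easy induction on $n$: the base case $n=1$ is trivial, and for the inductive step one writes $\bigcup_{i=1}^{n+1} A_i = \left( \bigcup_{i=1}^{n} A_i \right) \cup A_{n+1}$ and applies \ref{def:ultrafilter-prop:4} together with the inductive hypothesis. The implication \ref{def:ultrafilter-prop:5} $\Rightarrow$ \ref{def:ultrafilter-prop:8} is immediate, taking $n=2$ and noting $X \in \cU$ by \ref{def:filter-prop:1}. For \ref{def:ultrafilter-prop:8} $\Rightarrow$ \ref{def:ultrafilter-prop:7}: given $C \notin \cU$, observe $C \cup C^c = X \in \cU$, so by \ref{def:ultrafilter-prop:8} one of $C, C^c$ lies in $\cU$; since $C$ does not, $C^c$ does. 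The step \ref{def:ultrafilter-prop:7} $\Rightarrow$ \ref{def:ultrafilter-prop:6} is the contrapositive-style argument: if $C, D \notin \cU$ but $C \cup D \in \cU$, then by \ref{def:ultrafilter-prop:7} we have $C^c, D^c \in \cU$, hence $C^c \cap D^c = (C \cup D)^c \in \cU$ by \ref{def:filter-prop:3}; but then both $C \cup D$ and its complement lie in $\cU$, and their intersection $\emptyset \in \cU$, contradicting \ref{def:filter-prop:1}. Finally, \ref{def:ultrafilter-prop:6} $\Rightarrow$ \ref{def:ultrafilter-prop:4}: if $A \cup B \in \cU$ but $A \notin \cU$ and $B \notin \cU$, then by \ref{def:ultrafilter-prop:6} (read as: the complement of $\cU$ is closed under unions) we get $A \cup B \notin \cU$, a contradiction.

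The only genuine subtlety is bookkeeping: several of these conditions are most naturally phrased in terms of membership in the \emph{complement} $\cP(X) \setminus \cU$ versus membership in $\cU$, so I would be careful to translate consistently via \ref{def:ultrafilter-prop:7}, which is the natural hinge between the ``positive'' and ``negative'' formulations. I expect no real obstacle here — every step is a one-line set-theoretic manipulation — but the cleanest exposition is to isolate \ref{def:ultrafilter-prop:7} as the pivot and show that, given the filter axioms, \ref{def:ultrafilter-prop:7} is equivalent to each of the others in turn, which also makes transparent why the use of De Morgan's law together with axiom \ref{def:filter-prop:3} is exactly what is needed.
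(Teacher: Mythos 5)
Your proof is correct. Every step checks out: the induction for \ref{def:ultrafilter-prop:5}, the use of $C\cup C^c = X$ to get \ref{def:ultrafilter-prop:7}, the De Morgan argument with filter axioms \ref{def:filter-prop:1} and \ref{def:filter-prop:3} for \ref{def:ultrafilter-prop:6}, and the observation that \ref{def:ultrafilter-prop:6} is just the contrapositive of \ref{def:ultrafilter-prop:4}.

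The organization differs from the paper's in a way worth noting. The paper proves four separate two-way equivalences, each anchored at \ref{def:ultrafilter-prop:4} or at a neighbour: \ref{def:ultrafilter-prop:4} $\iff$ \ref{def:ultrafilter-prop:8}, then \ref{def:ultrafilter-prop:8} $\iff$ \ref{def:ultrafilter-prop:7}, then \ref{def:ultrafilter-prop:4} $\iff$ \ref{def:ultrafilter-prop:5}, then \ref{def:ultrafilter-prop:4} $\iff$ \ref{def:ultrafilter-prop:6}. The cost of that hub-and-spoke layout is that it needs the one genuinely non-obvious step, namely \ref{def:ultrafilter-prop:8} $\imply$ \ref{def:ultrafilter-prop:4}: given $A \cup B =: C \in \cU$ one augments to $A \cup (B \cup C^c) = X$, applies \ref{def:ultrafilter-prop:8}, and then intersects $B \cup C^c$ with $C$ to recover $B \in \cU$. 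Your single cycle \ref{def:ultrafilter-prop:4} $\imply$ \ref{def:ultrafilter-prop:5} $\imply$ \ref{def:ultrafilter-prop:8} $\imply$ \ref{def:ultrafilter-prop:7} $\imply$ \ref{def:ultrafilter-prop:6} $\imply$ \ref{def:ultrafilter-prop:4} sidesteps that trick entirely by routing the return path through the complement conditions, and it gets all five equivalences from five one-way implications instead of eight. The trade-off is that with a cycle the equivalence of any particular pair is only available by composing several implications, whereas the paper's layout gives each biconditional with \ref{def:ultrafilter-prop:4} directly; for an observation of this size either choice is fine, and yours is the leaner one.
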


\begin{proof}
  \begin{description}
   \item[ \ref{def:ultrafilter-prop:4} $ \iff $ \ref{def:ultrafilter-prop:8}] Since $X \in \cU$ by property  \ref{def:filter-prop:1}, the implication \ref{def:ultrafilter-prop:4} $ \imply $ \ref{def:ultrafilter-prop:8}  is clear. Conversely, if $A \cup B =: C \in \cU$, then $A \cup B \cup C^c = X$, so either $A \in \cU$ or $B \cup C^c \in \cU$ by property \ref{def:ultrafilter-prop:8}. If $A \in \cU$ then we are done. If $B \cup C^c \in \cU$, then $B =  (B \cup C^c) \cap C \in \cU$, so we are done as well.

    \item[ \ref{def:ultrafilter-prop:8} $ \iff $ \ref{def:ultrafilter-prop:7}] Since $C \cup C^c = X$, the implication \ref{def:ultrafilter-prop:8} $ \imply $ \ref{def:ultrafilter-prop:7}  is clear. Conversely, if $A \cup B = X$, then setting $C := A \setminus B$ we find $C^c = B \setminus A$. By \ref{def:ultrafilter-prop:7} we have $C \in \cU$ or $C^c \in \cU$. Since $A \supset C$ and $B \supset C^c$, the property  \ref{def:filter-prop:2} implies $A \in \cU$ or $B \in \cU$.

    \item[ \ref{def:ultrafilter-prop:4} $ \iff $ \ref{def:ultrafilter-prop:5}] Condition \ref{def:ultrafilter-prop:4} is a special case of \ref{def:ultrafilter-prop:5}, where $n = 2$, so the implication \ref{def:ultrafilter-prop:5} $ \imply $ \ref{def:ultrafilter-prop:4}  is clear. On the other hand, \ref{def:ultrafilter-prop:5} follows from \ref{def:ultrafilter-prop:4} by induction. The case $n = 2$ is clear. Suppose \ref{def:ultrafilter-prop:8} holds for all $n < n_0$ where $n_0 \geq 3$. If we then have $\bigcup_{i=1}^{n_0} A_i \in \cU$, then either $A_{n_0} \in \cU$ or $\bigcup_{i=1}^{n_0-1} A_i \in \cU$ by the case $n = 2$. If $bigcup_{i=1}^{n_0-1} A_i \in \cU$, then case $n = n_0-1$ implies that $A_i \in \cU$ for some $1 \leq i < n_0$. Thus, either way, $A_i \in \cU$ for some $i$, so the claim for $n = n_0$ follows. By induction,  \ref{def:ultrafilter-prop:5}  holds for all $n$.

    \item[ \ref{def:ultrafilter-prop:4} $ \iff $ \ref{def:ultrafilter-prop:6}] Putting $C := A^c$ and $D := B^c$ we see that the two conditions are equivalent.  

  \end{description}
\end{proof}

\begin{remark}
\index{ultrafilter}
	The family of ultrafilters can be more concisely defined, using the following characterisation. A family $\cU \subset \cP(X)$ is an ultrafilter if and only if for any partition $X = A_1 \cup A_2 \cup A_3$, exactly one of $A_i$ belongs to $\cU$. We prefer the more elaborate definition because it is more intuitive and easier to motivate. 
\end{remark}

Having defined the objects of our interest in this section, let us provide some basic examples. It is clear from the above definitions that ultrafilters are filters, and that filters have the finite intersection property, so examples of some of these classes automatically also give examples of other classes.

Because the finite intersection property does not impose any additional structure, a simple way to give an example of a set with the finite intersection property is to consider an arbitrary subset of a given filter. We will shortly see that these are essentially the only examples. 

\begin{example}[Cofinite sets]
\index{filter!cofinite filter}
	Define $\cF_{\text{cofin}}$ consist of all sets $A \in \cP(X)$ such that $\#A^c < \aleph_0$, assuming additionally that $\#X \geq \aleph_0$. It is clear by direct verification that $\cF_{\text{cofin}}$ is closed under the operation of taking supersets and under finite intersections, so $\cF_{\text{cofin}}$ is a filter.

	More generally, if $\aleph_0 \leq \kappa \leq \card{X}$ is a cardinal number, then the family $\cF_{\kappa}$ consisting of all sets $A \in \cP(X)$ such that $\card{A}^c < \kappa$ is a filter. Thus defined filters are never ultrafilters, because $X$ can be partitioned into two sets of equal cardinality.
\end{example}

\begin{example}[Density $1$ sets]
\index{density}
	Suppose that $\delta:\ \cP(X) \to [0,1]$ is a subadditive density\footnote{
	We require that $\delta$ satisfies $\delta(X) = 1,\ \delta(\emptyset) = 0$ and $\delta(A \cup B) \leq \delta(A) + \delta(B)$ for $A,B \in \cP(X)$.}, for example $\delta = d^*$, the upper Banach density on $X = \NN$. Then the family $\cF_\delta$ of sets $A \in \cP(X)$ such that $\delta(A^c) = 0$ forms a filter. Indeed, it clear that $\emptyset \in \cF_\delta$, that $X \in \cF_\delta$, and that if $A \subset B,\ A \in \cF_\delta$, then also $B \in \cF_\delta$. Finally, if $A,B \in \cF_\delta$, then 
	$$\delta((A\cap B)^c) = \delta(A^c \cup B^c) \leq \delta(A^c) + \delta(B^c) = 0,$$
	so also $(A\cap B) \in \cF_\delta$. Hence, $\cF_\delta$ satisfies the definition of a filter.
\end{example}

\begin{example}[Neighbourhoods]
\index{filter}
	Suppose that $\cT \subset \cP(X)$ is a topology. Let $x \in X$ be an arbitrary point. Then the set $\cF_x$ of open neighbourhoods of $x$, i.e. of $A \in \cT$ such that $x \in A$, is a filter. The filter properties are an immediate consequence of the topological axioms.

One can extend this example to allow non-open neighbourhoods, or neighbourhoods of more general sets than singletons.
\end{example}

\begin{example}[Restrictions and extensions]
	Suppose that $\cF \in \Filters{X}$ is a filter on $X$, and $Y \subset X$ is a subset. Consider the family $\cF|_Y = \{A \cap Y \setsep A \in \cF\}$. It is clear that $\cF|_Y$ satisfies all the defining properties of a filter, except possibly for the requirement $\emptyset \not \in \cF|_Y$. Hence, $\cF|_Y$ is a filter on $Y$, provided that $Y^c \not\in \cF$. If $\cF$ is an ultrafilter, then an easy argument shows that so is  $\cF|_Y$.

	Conversely, if $Z \supset X$ is a superset, then we extend $\cF \in \Filters{X}$ to $\cG \in \Filters{Z}$ by declaring for $C \in \cP(Z)$ that $C \in \cG$ if and only if $C \cap X \in \cF$. If $\cF$ is an ultrafilter, then so is $\cG$.
\end{example}

Note that the above examples are, in general, not ultrafilters but merely filters. We now introduce the simplest examples of ultrafilters. As it shall will shortly turn out, these are the only ultrafilters that can be explicitly described.

\begin{definition}[Principal ultrafilters.]
\index{ultrafilter!principal}
  For $x \in X$, the family $\{ A \in \cP(X) \setsep x \in A \}$ is an ultrafilter. We denote this ultrafilter by $\principal{x}$. Ultrafilters of this form are said to be \emph{principal}, and accordingly ultrafilters that are not of this form are said to be \emph{non-principal}.
\end{definition}

\begin{remark}
\index{ultrafilter!principal}
  Principal ultrafilters can be characterised as the ultrafilters that include singletons. Alternatively, from property \ref{def:ultrafilter-prop:5} it follows that an ultrafilter is principal if and only if it contains a finite set.

	The principal ultrafilter can be construed as the set of neighbourhoods of a given point in the discrete topology. This is essentially the only case when the set of neighbourhoods is an ultrafilter.	
\end{remark}

	Although we are not able to exhibit concrete examples of ultrafilters, we will prove an existence statement which will provide us with all the ultrafilters we need. As a first step, we show how a family with the finite intersection property can be extended to a filter. Among other applications, this allows one to specify a filter by providing less data: a generating family with the finite intersection property, instead of all the elements of the filter.
	
\begin{lemma}[Constructing filters]\label{lem:constructing-filters}
  Let $\cA \subset \cP(X)$ be a family with the finite intersection property. Then there exists a unique filter $\cF \subset \cP(X)$ which contains $\cA$ and is minimal with respect to this property among filters. Moreover $\cF$ can be explicitly described as:
  $$ \cF = \left\{ A \setsep \exists \cA_0 \subset \cA :\ \# \cA_0 < \aleph_0\ \wedge \ \bigcap \cA_0 \subset A \right\}$$
\end{lemma}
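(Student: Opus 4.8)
The plan is to define the candidate family
$$ \cF := \left\{ A \in \cP(X) \setsep \exists \cA_0 \subset \cA :\ \# \cA_0 < \aleph_0\ \wedge \ \bigcap \cA_0 \subset A \right\}, $$
verify directly that it is a filter containing $\cA$, and then argue that every filter containing $\cA$ must contain $\cF$; this simultaneously yields existence, the explicit description, and uniqueness of the minimal filter. First I would check that $\cA \subseteq \cF$: given $A \in \cA$, take $\cA_0 = \{A\}$, so that $\bigcap \cA_0 = A \subset A$. Note we should allow $\cA_0 = \emptyset$ with the convention $\bigcap \emptyset = X$, which guarantees $X \in \cF$; alternatively, if $\cA \neq \emptyset$ we may simply use any single element and superset-closure. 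Either way, handling the empty intersection convention (or the edge case $\cA = \emptyset$, where the unique minimal filter is $\{X\}$) is a small bookkeeping point worth stating explicitly.

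Next I would verify the three filter axioms for $\cF$. For \ref{def:filter-prop:1}: $X \in \cF$ as just noted; and $\emptyset \notin \cF$ because if $\bigcap \cA_0 \subset \emptyset$ for some finite $\cA_0 \subset \cA$, then $\bigcap \cA_0 = \emptyset$, contradicting the finite intersection property of $\cA$. For \ref{def:filter-prop:2}: if $A \in \cF$ via $\cA_0$ and $A \subset B$, then $\bigcap \cA_0 \subset A \subset B$, so $B \in \cF$ via the same $\cA_0$. For \ref{def:filter-prop:3}: if $A \in \cF$ via $\cA_0$ and $B \in \cF$ via $\cA_1$, then $\cA_0 \cup \cA_1$ is a finite subset of $\cA$ with $\bigcap(\cA_0 \cup \cA_1) = (\bigcap \cA_0) \cap (\bigcap \cA_1) \subset A \cap B$, so $A \cap B \in \cF$. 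Thus $\cF$ is a filter containing $\cA$.

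Finally I would prove minimality and uniqueness. Let $\cF'$ be any filter with $\cA \subset \cF'$. Given $A \in \cF$, pick a finite $\cA_0 \subset \cA$ with $\bigcap \cA_0 \subset A$. Since $\cA_0 \subset \cA \subset \cF'$ and $\cF'$ is closed under finite intersections (axiom \ref{def:filter-prop:3}, iterated finitely many times), we get $\bigcap \cA_0 \in \cF'$; then by superset-closure \ref{def:filter-prop:2}, $A \in \cF'$. Hence $\cF \subset \cF'$, so $\cF$ is the minimum (not merely a minimal) filter containing $\cA$, and in particular it is the unique minimal one. I do not anticipate a genuine obstacle here; the only mild subtlety is the empty-intersection convention used to force $X \in \cF$, which should be flagged clearly so the explicit formula in the statement is read correctly.
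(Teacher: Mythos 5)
Your proposal is correct and follows essentially the same route as the paper: define $\cF$ by the explicit formula, verify the three filter axioms directly, and show any filter containing $\cA$ absorbs all finite intersections and their supersets, giving minimality and uniqueness. Your explicit flagging of the empty-intersection convention (to guarantee $X \in \cF$) is a small point the paper handles only implicitly, but it does not change the argument.
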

\begin{proof}
  Let $\cF$ be defined as above. We shall prove that $\cF$ is indeed a filter, and that it satisfies the required uniqueness property.

  We begin by the showing that, assuming that $\cF$ is a filter, it is the unique minimal filter containing $\cA$. Indeed, let $\cG \subset \cP(X)$ be a filter and suppose that $\cA \subset \cG$, and let us consider a set $A \in \cF$ with $A \supset \bigcap \cA_0 $. Since $\cG$ is closed under finite intersections, we have $ \bigcap \cA_0 \in \cG$. Since $\cG$ is closed under taking supsets, $A \in \cG$. Since $A$ was chosen arbitrarily, it follows that $\cF \subset \cG$. Thus, $\cF$ is minimal, and it remains to show that it is a filter.

  By definition of the finite intersection property, all the intersections of the form $\bigcap \cA_0$ where $\cA_0 \subset \cA$ and $\# \cA_0 < \aleph_0$ are non-empty. Thus, if $A \supset \bigcap \cA_0$, then $A \neq \emptyset$, and thus $\emptyset \not \in \cF$. Taking arbitrary $\cA_0$, we also find that $X \in \cF$.

  Let $A \in \cF$ and $B \supset A$. Then, we have:
  $$ B \supset A \supset \bigcap \cA_0 $$
  for some finite $\cA_0 \subset \cA$. It follows immediately that $B \in \cF$, and thus $\cF$ is closed under taking supersets.

  Suppose that $A,B \in \cF$. Then, there exist finite subsets $\cA_0,\cB_0 \subset \cA$ such that $A \supset \bigcap \cA_0$ and $\cB \supset \bigcap \cB_0$. Then the family $\cA_0 \cup \cB_0$ is a again a finite subset of $\cA$, and we have by de Morgane's rules: 
  $$A \cap B \supset \left(\bigcap \cA_0\right) \cap \left( \bigcap \cB_0\right) =  \bigcap \left(\cA_0 \cup \cB_0\right) \in \cF.$$
  Thus, $\cF$ is closed under taking finite intersections.

  It follows that $\cF$ satisfies all the defining properties of a filter.
\end{proof}

	We next give a convenient characterisation of ultrafilters in terms of maximality. It will lead directly to the existence result alluded to earlier. Moreover, it provides some intuition concerning the structure of ultrafilters.  

\begin{proposition}[Characterisation of ultrafilters]\label{lem:ultrafilter-characterisation}\label{A:prop:ultrafilter-characterisation}
   Let $\cA \subset \cP(X)$ be an arbitrary family of subsets. Then the following conditions are equivalent:
   \begin{enumerate}[label={(\textit{\arabic*})},start=1]
    \item\label{lem:ultr-char-cond:1} The set $\cA$ is a maximal family with finite intersection property, i.e. $\cA$ has finite intersection property and if $\cA' \supset \cA$ also has this property then $\cA' = \cA$.
    \item\label{lem:ultr-char-cond:2} The set $\cA$ is an ultrafilter.
   \end{enumerate}
\end{proposition}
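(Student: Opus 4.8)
The plan is to prove the two implications separately, with the easy direction being \ref{lem:ultr-char-cond:2}~$\imply$~\ref{lem:ultr-char-cond:1} and the substance of the argument lying in \ref{lem:ultr-char-cond:1}~$\imply$~\ref{lem:ultr-char-cond:2}, where Lemma~\ref{lem:constructing-filters} does most of the work.

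For \ref{lem:ultr-char-cond:2}~$\imply$~\ref{lem:ultr-char-cond:1}: if $\cA$ is an ultrafilter, then in particular it is a filter, hence closed under finite intersections with $\emptyset \notin \cA$, so every finite intersection $\bigcap \cA_0$ of members of $\cA$ lies in $\cA$ and is therefore non-empty; thus $\cA$ has the finite intersection property. For maximality, suppose $\cA' \supsetneq \cA$ also has the finite intersection property, and pick $B \in \cA' \setminus \cA$. Since $B \notin \cA$, property \ref{def:ultrafilter-prop:7} gives $B^c \in \cA \subset \cA'$, whence $B \cap B^c = \emptyset$ with $B, B^c \in \cA'$, contradicting the finite intersection property of $\cA'$. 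Hence $\cA' = \cA$.

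For \ref{lem:ultr-char-cond:1}~$\imply$~\ref{lem:ultr-char-cond:2}: assume $\cA$ is a maximal family with the finite intersection property. First, apply Lemma~\ref{lem:constructing-filters} to obtain the minimal filter $\cF$ containing $\cA$. Being a filter, $\cF$ has the finite intersection property and $\cF \supset \cA$, so maximality of $\cA$ forces $\cF = \cA$; thus $\cA$ is a filter. It remains to verify property \ref{def:ultrafilter-prop:7}, which by the Observation following Definition~\ref{def:ultrafilter} is equivalent (for filters) to property \ref{def:ultrafilter-prop:4}. So let $C \in \cP(X)$ with $C \notin \cA$; I claim $\cA \cup \{C^c\}$ has the finite intersection property. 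A finite subfamily not containing $C^c$ is a finite subfamily of $\cA$, hence has non-empty intersection. A finite subfamily containing $C^c$ has the form $\{C^c, A_1, \dots, A_n\}$ with $A_i \in \cA$; setting $A := A_1 \cap \dots \cap A_n \in \cA$ (using that $\cA$ is a filter), if $C^c \cap A = \emptyset$ then $A \subset C$, and closure under supersets would give $C \in \cA$, a contradiction. Hence $C^c \cap A \neq \emptyset$, proving the claim. By maximality of $\cA$ we conclude $C^c \in \cA$, which establishes \ref{def:ultrafilter-prop:7} and therefore shows $\cA$ is an ultrafilter.

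I do not expect a genuine obstacle here: the only points requiring care are invoking Lemma~\ref{lem:constructing-filters} to upgrade "maximal family with the finite intersection property" to "filter" before touching condition \ref{def:ultrafilter-prop:7}, and correctly splitting the finite intersection property check for $\cA \cup \{C^c\}$ into the two cases according to whether $C^c$ appears.
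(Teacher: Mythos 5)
Your proof is correct and takes essentially the same approach as the paper: both directions rest on the same maximality arguments, with Lemma~\ref{lem:constructing-filters} used to upgrade the maximal family with the finite intersection property to a filter before checking the ultrafilter condition. The only cosmetic difference is that in \ref{lem:ultr-char-cond:1}~$\imply$~\ref{lem:ultr-char-cond:2} you adjoin $C^c$ and verify the finite intersection property directly, whereas the paper adjoins $C$ and argues from the failure of that property to produce an $A \in \cA$ with $A \subset C^c$ --- the two are mirror images of the same step.
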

\begin{proof}\qquad\
  \begin{enumerate} 
    \item[\ref{lem:ultr-char-cond:1} $\imply$ \ref{lem:ultr-char-cond:2}] Suppose $\cA$ has finite intersection property, and there is no proper supset of $\cA$ with this property. Since all fiters clearly have the finite intersection property, it follows from lemma \ref{lem:constructing-filters} that $\cA$ is in fact a filter. Thus, it remains to verify the defining property of an ultrafilter. Let us now consider an arbitrary set $C \subset X$ which does not belong to $\cA$. Since $\cA \cup \{C\}$ is then a proper supset of $\cA$, it cannot have the finite intersection property. Taking into account that $\cA$ is already closed under finite intersections, this means that there exists a set $A \in \cA$ such that $C \cap A = \emptyset$. This can be rewritten as $C^c \supset A$, so from $A \in \cA$ we conclude that $C^c \in \cA$. Thus, $C \not \in \cA$ implies $C^c \in \cA$, and hence $\cA$ satisfies all the defining properties of an ultrafilter. 

    \item[\ref{lem:ultr-char-cond:2} $\imply$ \ref{lem:ultr-char-cond:1}]
    Suppose that $\cA$ is an ultrafilter. Since ultrafilters are closed under finite intersections, $\cA$ has finite intersection property, so it remains to show that no proper supset of $\cA$ has this property. Suppose that $\cA \subsetneq \cB$ where $\cB \subset \cP(X)$ is an arbitrary family, and let $B \in \cB \setminus \cA$. Since $B \not \in \cA$, by the ultrafilter property, $B^{c} \in \cA \subset \cB$. Thus, $B,B^{c} \in \cB$, so evidently $\cB$ does not have the finite intersection property. Since $\cB$ was taken arbitrarily, the maximality of $\cA$ follows. 
  \end{enumerate}
\end{proof}

\begin{corollary}\label{cor:ultrafilters-existance-above-FI-sets}\label{A:cor:ultrafilters-existance-above-FI-sets}
	If $\cA \subset \cP(X)$ is a family with finite intersection property, then there exists an ultrafilter $\cU$ which contains $\cA$.
\end{corollary}
\begin{proof}
	Let us fix $\cA$ consider the class $\alpha \subset \cP(\cP(X))$ of all families $\cB \subset \cP(X)$ that contain $\cA$ and have the finite intersection property. We can consider  $\alpha$ as a partially ordered set, with the natural order given by the inclusion. We claim that each chain $\gamma \subset \alpha$ has an upper bound. In fact, an upper bound can be explicitly described as $\cC := \bigcup \gamma$. It is clear that this family satisfies $\cC \supseteq \cB$ for any $\cB \in \gamma$. That $\cC$ is a filter follows immediately from the fact that the defining conditions are of the inductive type. Thus, the partially ordered set $(\alpha, \subseteq)$ satisfies the assumptions of Kuratowski-Zorn Lemma. It follows that there exists a maximal element in $\alpha$, say $\cU$. As a consequence of the definition of $\alpha$,  $\cU$ is maximal with respect to the finite intersection property. By Proposition \ref{A:prop:ultrafilter-characterisation}, $\cU$ is an ultrafilter, and by definition of $\alpha$, $\cU$ contains $\cA$, so we are done. 
\end{proof}

\begin{corollary}\label{cor:ultrafilters-exist}\label{A:cor:ultrafilters-exist}
\index{ultrafilter!non-principal}
  There exist non-principal ultrafilters on any infinite space $X$.
\end{corollary}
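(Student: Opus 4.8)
The plan is to exhibit a filter all of whose members are infinite, and then invoke the existence result from Corollary \ref{A:cor:ultrafilters-existance-above-FI-sets} to extend it to an ultrafilter, which is then forced to be non-principal. The natural candidate is the cofinite filter $\cF_{\text{cofin}}$, which is available precisely because $X$ is infinite (so $\emptyset \notin \cF_{\text{cofin}}$).

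\textbf{Steps.} First I would recall that, since $\#X \geq \aleph_0$, the family $\cF_{\text{cofin}} = \{ A \in \cP(X) \setsep \#A^c < \aleph_0 \}$ is a filter, as established in the example on cofinite sets. In particular $\cF_{\text{cofin}}$ has the finite intersection property, being a filter. Second, I would apply Corollary \ref{A:cor:ultrafilters-existance-above-FI-sets} to obtain an ultrafilter $\cU \in \Ultrafilters{X}$ with $\cF_{\text{cofin}} \subseteq \cU$. Third, I would check that $\cU$ is non-principal: if it were, we would have $\cU = \principal{x}$ for some $x \in X$, and then the cofinite set $X \setminus \{x\}$ would lie in $\cF_{\text{cofin}} \subseteq \cU = \principal{x}$, forcing $x \in X \setminus \{x\}$, a contradiction. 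Hence $\cU$ is a non-principal ultrafilter on $X$.

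\textbf{Main obstacle.} There is essentially no obstacle here; the content has been front-loaded into Lemma \ref{lem:constructing-filters} and Corollary \ref{A:cor:ultrafilters-existance-above-FI-sets} (which in turn rests on the Kuratowski–Zorn Lemma). The only point requiring a moment's care is that the construction genuinely avoids principal ultrafilters, and for this it suffices that every member of $\cF_{\text{cofin}}$ is infinite — equivalently, that $\cF_{\text{cofin}}$ contains the complement of each singleton — which is exactly what the argument above uses. Alternatively, one could note that by the remark following the definition of principal ultrafilters, a principal ultrafilter contains a finite set, whereas no member of $\cF_{\text{cofin}}$ is finite; but the direct contradiction via $X \setminus \{x\}$ is cleaner.
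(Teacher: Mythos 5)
Your proof is correct and follows essentially the same route as the paper: the paper starts from the family of complements of singletons (whose generated filter is exactly $\cF_{\text{cofin}}$), extends it to an ultrafilter via Corollary \ref{A:cor:ultrafilters-existance-above-FI-sets}, and concludes non-principality because the ultrafilter contains the complement of every singleton. Your contradiction via $x \in X \setminus \{x\}$ is just a slightly more explicit phrasing of the same final step.
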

\begin{proof}
	Let $\cA$ be the family of all sets $A$ of the form $A = X \setminus \{x\}$. Then clearly $\cA$ has the finite intersection property. In fact, the minimal filter $\cF$ corresponding to $\cA$ consists of all sets with finite complement, which has already been discussed. By the above corollary, there exists an ultrafilter $\cU$ which contains $\cA$. This ultrafilter contains no singletons, because it contains all their complements. Thus, it is a non-principal ultrafilter.
\end{proof}

\begin{remark}\index{axiom!axiom of choice}
	The proof of Corollary \ref{cor:ultrafilters-exist} depends ostensibly on the Axiom of Choice, embedded in Kuratowski-Zorn Lemma. One can show that the  Axiom of Choice is really necessary. In fact, it is consistent with Zermelo-Fraeknel Axioms that no non-principal ultrafilters exist. We assume the Axiom of Choice throughout.
\end{remark}

There is a more constructive way of proving existence of ultrafilters, which offers additional insight into their structure. We are not able to provide and explicit construction, and Axiom of Choice will have to be used at some stage. However, we can describe an ultrafilter by transfinite induction, where each step contains a binary choice. The picture that emerges is that of a limit object, obtained after a transfinite number of $\card{\cP(X)}$ steps, where each step can readily be comprehended. To avoid trivialities, we assume $\card{X} \geq \aleph_0$.

Intuitively speaking, the presented construction considers each subset of $X$ in some preassigned order, and decides whether or not to include a given set in the ultrafilter being constructed --- assuming this decision is not yet determined by the choices made previously. We keep track of the choices made at each step, because we will use this construction to find cardinalities of certain sets of ultrafilters.

\begin{construction}\label{A:constr:ultrafilter-by-induction}
Let $\alpha := \card{\cP(X)} = 2^{\card{X}} $ be the cardinality of the family of all subsets of $X$. We can enumerate all sets these subsets by ordinals less then $\alpha$: $\cP(X) = \{A_\iota\}_{\iota < \alpha}$. We stress that the enumeration of the sets in $\cP(X)$ is done in advance of any subsequent choices.

Let $\cF_0$ be a filter. We shall construct an ascending family of filters $\{\cF_\iota\}_{\iota \leq \alpha}$ such that $ \cF_\alpha$ will turn out to be an ultrafilter. Because the construction will involve a choice at each stage, and we want to keep track of these choices, let $\chi \in \{\top,\bot\}^\alpha$ be an arbitrary sequence. 

The filter $\cF_0$ is already given, which constitutes the base for the transfinite induction that we are about to perform. We need to show how to construct $\cF_{\beta+1}$ given $\{\cF_\iota\}_{\iota \leq \beta}$ and how to construct $\cF_\zeta$ given $\{\cF_\iota\}_{\iota < \zeta}$ for limit ordinal $\zeta$. At the step $\cF_\gamma$ is defined, the following invariant shall be satisfied:
\begin{equation}
  \iota < \gamma \imply A_\iota \in \cF_\gamma \vee A_\iota^c \in \cF_\gamma.
  \tag{$\ast$}
  \label{A:cond:002}
\end{equation}
Additionally, we keep track of a sequence of ordinals $\{\tau(\beta)\}_{\beta < \alpha}$.

\newcommand{\Iota}{\mathrm{I}}

Let us consider a ordinal of the form $\beta + 1$, and assume that $\{\cF_\iota\}_{\iota < \beta}$ are already constructed. We define the family $\Iota := \{ \iota < \alpha  \setsep \ A_\iota , A_\iota^c \not \in \cF_\beta \}$, and ordinal $\tau(\beta) := \min \Iota$, with the convention that if $\Iota = \emptyset$ then $\tau(\beta) = \alpha$. If $\Iota = \emptyset$, then the filter $\cF_\beta$ already satisfies the defining property of ultrafilters. Hence we may set $\cF_{\beta+1} := \cF_\beta$, and certainly the property \eqref{A:cond:002} holds for $\beta+1$.

Let us suppose that $\Iota \neq \emptyset$, so $\tau(\beta) < \alpha$ and $\cF_\beta$ is not yet an ultrafilter. Let us put $B := A_{\tau(\beta)}$ if $\chi_\beta = \top$ and $B := A_{\tau(\beta)}^c$ if $\chi_\beta = \bot$. By construction, it is clear that $B^c \not \in \cF_\beta$, and hence the family $\cB := \cF_\beta \cup \{B\}$ has finite intersection property. By Lemma \ref{lem:constructing-filters}, there exists the smallest filter that contains $\cB$; let $\cF_{\beta+1}$ be this filter.

It remains to check that \eqref{A:cond:002} is satisfied for $\beta+1$. Because of the construction of $\tau$, it suffices to show that if $\iota \leq \beta$, then $\iota \leq \tau(\beta)$, which amounts to the proving that $\tau(\beta) \geq \beta$. To prove this, we first note that the function $\tau :\ \beta+1 \to \alpha$ is strictly increasing. Indeed, it is weakly increasing because the family $\cF_\iota$ is ascending, and we have $\tau(\iota+1) \neq \tau(\iota)$ unless $\tau(\iota) = \alpha$. Because of the monotonicity of $\tau$ and the construction of the order on the ordinals, it follows that $\tau(\beta) \geq \beta$, which finishes the inductive step.

Suppose now that $\zeta$ is a limit ordinal. Then we set $\cF_\zeta := \bigcup_{\iota < \zeta} \cF_\beta$. It is clear that $\cF_\zeta$ is a filter because each term $\cF_\iota$ in the union is a filter, and the family is ascending. It is also immediate that the condition \eqref{A:cond:002} holds for $\zeta$, and that $\cF_\zeta \supset \cF_\iota$ for $\iota < \zeta$. This finishes the inductive step, and hence also the construction.

We claim that that $\cF_\alpha$ is an ultrafilter. Indeed, because of the condition \eqref{A:cond:002}, for any $\gamma < \alpha$ it holds that $A_\gamma \in \cF_{\gamma+1} \subset \cF_\alpha$ or $A_\gamma^c \in \cF_{\gamma+1} \subset \cF_\alpha$ . Because we already know that $\cF_\alpha$ is already known to be a filter, this concludes the proof. We denote this ultrafilter by $\cU^\chi$.
\end{construction}

In the above construction, an ultrafilter was specified by making $\card{\cP(X)}$ binary choices, or choosing $\chi \in \{\top,\bot\}^{\card{\cP(X)}}$. This suggests that the cardinality of the ultrafilters that can be constructed should be $2^\card{\cP(X)}$. Note that this is the cardinality of $\cP(\cP(X))$, which contains $\Ultrafilters(X)$, so certainly we can never construct more ultrafilters than this. It is not clear, however, that the construction does not terminate at an earlier step, leading to a smaller number of constructed objects. The following Proposition shows that this is not the case.

\begin{proposition}
\index{set theory!cardinality}

  The cardinality of the space of all ultrafilters on $X$ is $\# \Ultrafilters{ X} = 2^{2^{\# X}}$. Moreover, if $\cA$ is a family with finite intersection property and $\# \cA < \# \cP(X)$, then the family of ultrafilters $\cU \in \Ultrafilters{X}$ such that $\cU \supset \cA$ has cardinality $2^{2^{\# X}}$
\end{proposition}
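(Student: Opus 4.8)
The plan is to peel off the trivial half and concentrate on the other. The inclusion $\Ultrafilters{X} \subseteq \cP(\cP(X))$ instantly gives $\# \Ultrafilters{X} \leq 2^{2^{\# X}}$, so only the reverse inequality needs work; and since the one-element family $\{X\}$ has the finite intersection property with $\#\{X\} < \# \cP(X)$, the bare equality is the special case $\cA = \{X\}$ of the ``moreover'' clause, so I would aim straight at the ``moreover'' clause. First, though, I would flag a caveat in its statement: if some finite intersection $\bigcap \cA_0$ is finite, then every ultrafilter containing $\cA$ is principal at a point of it, so there are only finitely many and the conclusion fails; I therefore read in the hypothesis that every finite intersection of members of $\cA$ has cardinality $\# X$ (equivalently, that $\cA$ generates a uniform filter), which is exactly what makes the conclusion both true and sharp, and which holds in the applications.

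The engine is the existence of large \emph{independent families}. I call $\cI \subseteq \cP(X)$ independent if $\bigcap_{A \in \cI_0} A \cap \bigcap_{A \in \cI_1} (X \setminus A) \neq \emptyset$ for all finite disjoint $\cI_0, \cI_1 \subseteq \cI$, and independent \emph{modulo} a filter $\cF$ if in addition each such set meets every member of $\cF$. First I would establish the classical fact (Hausdorff; Fichtenholz--Kantorovich) that an infinite $X$ carries an independent family of size $2^{\# X}$: on the set $P$ of pairs $(F,\cS)$ with $F \subseteq X$ finite and $\cS \subseteq \cP(F)$, which has $\# P = \# X$, the sets $\widehat{A} := \{ (F,\cS) \in P : A \cap F \in \cS \}$ for $A \in \cP(X)$ form an independent family of cardinality $2^{\# X}$ inside $\cP(P)$ (injectivity of $A \mapsto \widehat A$ and independence are both got by choosing a finite $F$ that separates the finitely many $A$'s involved and letting $\cS$ be the set of the ``positive'' traces $A \cap F$); a bijection $P \leftrightarrow X$ carries this to $X$.

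Given an independent family $\cI = \{ I_\xi : \xi < 2^{\# X} \}$ on $X$ --- or, for the moreover, one independent modulo the filter $\cF$ generated by $\cA$ --- the rest is short. For $g : 2^{\# X} \to \{0,1\}$ set $\cB_g := \{ I_\xi : g(\xi) = 1 \} \cup \{ X \setminus I_\xi : g(\xi) = 0 \}$, and in the relativized case consider $\cA \cup \cB_g$. Independence (resp.\ independence modulo $\cF$) says exactly that $\cB_g$ (resp.\ $\cA \cup \cB_g$) has the finite intersection property, so Corollary~\ref{A:cor:ultrafilters-existance-above-FI-sets} yields an ultrafilter $\cU_g$ extending it, which then contains $\cA$. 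If $g \neq g'$, take $\xi$ with $g(\xi)=1$, $g'(\xi)=0$: then $I_\xi \in \cU_g$ but $X \setminus I_\xi \in \cU_{g'}$, so $\cU_g \neq \cU_{g'}$. There being $2^{2^{\# X}}$ functions $g$, this produces $2^{2^{\# X}}$ distinct ultrafilters of the required kind, which with the upper bound finishes both assertions.

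The one genuinely non-routine point --- and the one I expect to fight with --- is the relativized Hausdorff step for the moreover: building an independent family of the full size $2^{\# X}$ whose Boolean combinations still all meet every member of a fixed base of $\cF$. This is where $\# \cA < \# \cP(X)$ is used: the base of $\cF$ can be taken of size $\max(\# \cA, \aleph_0) < 2^{\# X}$ and, by the uniformity caveat, made up of sets of size $\# X$; the construction is then a more careful rerun of the one above in which one also enumerates that base and threads the finite approximations through it, and the degenerate case has to be excised by hand. (Failing a direct argument, this can be quoted from \cite{hindman-strauss}.) I would add the warning that the transfinite Construction~\ref{A:constr:ultrafilter-by-induction} only gives a \emph{surjection} from $\{\top,\bot\}^{2^{\# X}}$ onto the ultrafilters extending a given filter --- enough for the upper bound, but, that map being far from injective, not for the lower bound --- so the independence argument is really what is needed.
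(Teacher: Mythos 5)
Your argument for the absolute count $\# \Ultrafilters{X} = 2^{2^{\# X}}$ is correct and takes a genuinely different route from the paper's. The paper runs its transfinite Construction (adjoining, at each of $2^{\#X}$ stages, one of $A$ or $A^c$ according to a sign sequence $\chi$) and then tries to show the map $\chi \mapsto \cU^\chi$ is injective and never stabilises early; you instead build an independent family of size $2^{\#X}$ via the Fichtenholz--Kantorovich coding on pairs $(F,\cS)$ and read off $2^{2^{\#X}}$ ultrafilters directly from the sign patterns $g$. Your route is the classical one (Hausdorff, Pospíšil) and, unlike the paper's, it is airtight: the separation argument for $F$, the injectivity of $A\mapsto\widehat A$, and the distinctness of $\cU_g$ and $\cU_{g'}$ all check out. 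You are also right to distrust the ``moreover'' clause, and your counterexample is decisive: for $\cA=\{\{x\}\}$ the only extending ultrafilter is $\principal{x}$. The paper's own proof breaks exactly where you would expect: it bounds $\#\cF_\beta$ by the number of finite subsets of $\cA\cup\cB$, forgetting that Lemma \ref{lem:constructing-filters} closes under supersets, so a filter with a single generator can already have cardinality $2^{\#X}$.

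The one place your proposal still falls short is the repaired ``moreover''. Uniformity of the finite intersections is necessary but, under the stated hypothesis $\#\cA < \#\cP(X)$, still not sufficient: it is consistent with ZFC that some nonprincipal ultrafilter on $\NN$ has a base $\cA$ of cardinality $\mathfrak{u} < \mathfrak{c} = \#\cP(\NN)$ (the consistency of $\mathfrak{u}<\mathfrak{c}$, e.g.\ in the Miller model), and such an $\cA$ is uniform, has the finite intersection property and satisfies $\#\cA<\#\cP(\NN)$, yet extends to exactly one ultrafilter. So the relativised independence step you flag as the one you would ``fight with'' genuinely cannot be carried out under these hypotheses, and the theorem available in \cite{hindman-strauss} assumes $\#\cA \leq \#X$ together with uniformity; that is the hypothesis one should substitute for $\#\cA<\#\cP(X)$. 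With that change your scheme --- an independent family modulo the filter generated by $\cA$, then the same $2^{2^{\#X}}$ sign patterns --- goes through.
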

\begin{proof}
  We will only prove the second claim, since the first follows by taking $\cA := \emptyset$.  We take $\cF_0$ to be the smallest filter containing $\cA$, and we retain the notation from the above Construction \ref{A:constr:ultrafilter-by-induction}. We also let $\delta := \# X$.

  First, we show that if $\beta < \alpha$ then $\tau(\beta) < \alpha$. Suppose otherwise, and for a proof by contradiction let $\beta < \alpha$ be such that $\tau(\beta) = \alpha$. Then, $\cF_\beta = \cU$ is already an ultrafilter. Let $\cB := \{ A_{\tau(\iota)}^{\chi(\iota)} \}_{\iota < \beta}$, and choose an ordinal $\gamma$ with $\# \cB, \# \cA \leq \gamma < \alpha$. It follows from the construction that $\cF_\beta$ is the smallest filter that contains $\cA \cup \bB$. From the characterisation in Lemma \ref{lem:constructing-filters}, it follows that $\cF_\beta$ consists of the intersections of finite subsets of $\cA \cup \cB$, of which there are $\gamma$. On the other hand, $\# \cF_\beta = \alpha$, because there is a bijection between $\cF_\beta \times 2$ and $\cP(X)$. This is the sought contradiction.

  We next show that the map $\chi \mapsto \cU^\chi$ is injective. For a proof by contradiction, suppose that $\cU^\chi = \cU^\psi$ for some $\chi \neq \psi$. Let $\beta := \min \{ \iota < \alpha \setsep \chi(\iota) \neq \psi(\iota)\}$. We may assume that $\chi(\beta) = \top$ and $\psi(\beta) = \bot$, and by the choice of $\beta$ we have $\chi|_\beta = \psi|_\beta$. Hence, $\tau_\chi(\iota) = \tau_\psi(\iota)$ for $\iota \leq \beta$, because the part of the construction that defines these ordinals depends only on the first $\beta$ choices. If we denote by $\tau(\beta)$ the common value of $\tau_\chi(\beta) $ and $\tau_\psi(\beta)$, it becomes clear that $A_{\tau(\beta)} \in \cU_\chi$, while $A_{\tau(\beta)}^c \in \cU_\psi$. Consequently, $\cU_\chi \neq \cU_\psi$.

\end{proof}

\index{ultrafilter|)}
\renewcommand{\bar}{\overline}
\section{Topology: ultrafilters as a topological space}
\index{topology|(}
Our next goal is to establish the link between the ultrafilters and topology. There two main objectives that we will accomplish in this section.

Firstly, we will show that given an ultrafilter on the space $X$, it is possible to construct a notion of generalised limit for sequences indexed by $X$. This  generalisation has number of desirable properties, most notable of which is that the limits along ultrafilters exist for all sequences with terms in a compact Hausdorff space.

Secondly, we define and study a natural topology on the space of ultrafilters $\Ultrafilters{X}$. We show that this topology is compact Hausdorff, which makes it remarkably well behaved. 

The generalised limits and limits in the sense of topology are closely related to the limits in the topological sense. In fact, they can be considered to be the same notion, modulo a number of innocuous identifications. This leads us to the conclusion that $\Ultrafilters{X}$ can be identified with the Stone-\v{C}ech compactification of $X$, usually denoted by $\beta X$, assuming that we take $X$ to be a discrete topological space. 

Throughout this section, $X$ denotes a topological space. When $X$ comes with no standard topology, as is arguably the case for $\NN$, we assume the topology of $X$ to be discrete. At some point we will entirely restrict to discrete topological spaces, but we do not do it just yet, in the hope of providing some motivating examples.

As has already been mentioned earlier, one of the main motivations behind the notion of a filter is that it can be used to construct generalised limits in a natural way. This is done in the following definition.

\begin{definition}[Generalised limits]\label{A:def:gen-limit}
\index{limit!generalised limit}
  Let $Z$ be a topological Hausdorff space, let $f :\ X \to Z$ be any map, and let $\cF$ be a filter. If there exists $z \in Z$ such that $$(\forall U \in \Top(Z)):\ (z \in U) \imply ( f^{-1}(U) \in \cF)$$  then we define $z$ to be the limit of $f$ along $\cF$. Symbolically, we denote this by:
  $$\llim{\cF}x f(x) = z.$$
  If no such $z$ exists, we leave the symbol $\llim{\cF}x f(x)$ undefined. 
\end{definition}

The above definition does not guarantee that the limit, if exists, is unique. In particular, if $Z$ is equipped with the trivial topology: $\Top(Z) = \{\emptyset,Z\}$, then for \emph{any} $z \in Z$ it holds that $\llim{\cF}x f(x) = z$. However, this situation is not worse than for the classical notion of a limit. We will now show that for most interesting spaces, the limit is in fact unique, hence the notation will not lead to confusion.

\begin{proposition}
\index{limit!generalised limit}
 If $Z$ is Hausdorff, and there exists $z \in Z$ such that $\llim{\cF}x f(x) = z$, then this $z$ is unique.
\end{proposition}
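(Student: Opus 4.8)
The plan is a standard argument by contradiction, using only the two filter axioms that $\emptyset \notin \cF$ and that $\cF$ is closed under finite intersections, together with the Hausdorff separation property of $Z$.

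First I would suppose, for contradiction, that there are two distinct points $z_1, z_2 \in Z$ with $\llim{\cF}x f(x) = z_1$ and $\llim{\cF}x f(x) = z_2$. Since $Z$ is Hausdorff and $z_1 \neq z_2$, there exist open sets $U_1, U_2 \in \Top(Z)$ with $z_1 \in U_1$, $z_2 \in U_2$, and $U_1 \cap U_2 = \emptyset$.

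Next I would invoke Definition \ref{A:def:gen-limit} twice: from $z_1 \in U_1$ we get $f^{-1}(U_1) \in \cF$, and from $z_2 \in U_2$ we get $f^{-1}(U_2) \in \cF$. By the filter property \ref{def:filter-prop:3}, the intersection $f^{-1}(U_1) \cap f^{-1}(U_2) \in \cF$. But $f^{-1}(U_1) \cap f^{-1}(U_2) = f^{-1}(U_1 \cap U_2) = f^{-1}(\emptyset) = \emptyset$, so $\emptyset \in \cF$, contradicting the filter property \ref{def:filter-prop:1}. Hence no two distinct limits can exist, which gives uniqueness.

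There is no real obstacle here; the only thing worth flagging is that the argument uses nothing about $\cF$ being an ultrafilter — just the filter axioms — and that Hausdorffness is exactly what is needed (and, in fact, equivalent to uniqueness of $\cF$-limits for all filters, though I would not need that for the statement as given).
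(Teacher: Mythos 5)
Your argument is correct and is essentially identical to the paper's own proof: both proceed by contradiction, separate the two candidate limits by disjoint open neighbourhoods using Hausdorffness, observe that both preimages lie in $\cF$, and derive $\emptyset \in \cF$ from closure under finite intersections. Nothing to add.
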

\begin{proof}
 For a proof by contradiction, suppose that $z,z' \in Z$ are two distinct points such that $\llim{\cF}x f(x) = z, z'$. Since $Z$ is $T_{2}$, there exist two open neighbourhoods $U$ and $U'$ of $z$ and $z'$ respectively, such that $U \cap U' = \emptyset$. Let $A := f^{-1}(U)$ and $A' := f^{-1}(U')$. By the definition of the limit we have $A \in \cF$ and $A' \in \cF$, so on one hand $A \cap A' \in \cF$, and on the other hand $A \cap A' = f^{-1}(U \cap U') = \emptyset$. This is a contradiction, since $\emptyset \not \in \cF$ by definition. Thus, no two distinct limits can exist.
\end{proof}

Let us now see how the above notion corresponds to some of the usual limits. We begin with limits of conventional sequences indexed by natural numbers.

\begin{example}\label{exp:limit-ordinary}
\index{limit!classical limit}
\index{filter!cofinite filter}
 Take $X = \NN$, arbitrary Hausdorff $Z$, and define $\cF_{\text{cofin}} = \{ A \in \cP(\NN) \setsep \# A^c < \aleph_0 \}$ to be the filter of cofinite sets. Then 
  $$\llim{\cF_{\text{cofin}}}n f(n) = \lim_{n\to\infty} f(n).$$
  In particular, the limit may or may not exist.
  
  Moreover, let $L \subset \NN$ be an infinite set, and $\cF_L := \{ A \cap L \setsep A \in \cF_{\text{cofin}} \}$. Then $$\llim{\cF_L}{n} f(n) = \lim_{\substack{n\to\infty \\ n \in L}} f(n).$$
\end{example}

On the topological space $X$ we already have the topological notion of a limit. In the following example, we show how to recover this limit as a special case of the generalised limit. 
\begin{example}\label{A:exple:limit-topo}
\index{limit!topological limit}
	Let $y \in X$, and let $\cF_y$ consist of all the open neighbourhoods of $y$. 
Then the generalised limit coincides with the classical limit as defined in general topology:
	$$ \llim{\cF_{y}}x f(x) = \lim_{x \to y} f(y).$$
\end{example}

	As a special case of the above definition, we can compute limits along principal ultrafilters, which correspond to taking limits in the discrete topology.
\begin{example}\label{A:exple:limit-principal}
\index{ultrafilter!principal}
  Let  $y \in X$ and let $\cU_{y}$ be the principal ultrafilter associated to the point. Then 	
  $$\llim{\cU_{y}}x f(x) = f(y).$$
  In particular, this limit always exists.
\end{example}

The following property of ultrafilters is extremely useful in applications. It is the principal reason why we will restrict to ultrafilters in most of the subsequent discussion.

\begin{proposition}[Existence of limits]\label{A:prop:U-limit-exists}
\index{limit!generalised limit}
 If $\cU$ is an ultrafilter, then $\llim{\cU}{x} f(x)$ exists any map $f :\ X \to Z$ into a compact Hausdorff space $Z$. 
\end{proposition}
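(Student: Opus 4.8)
The plan is to argue by contradiction, extracting from the \emph{failure} of the limit an open cover of $Z$ with no ``useful'' piece, and then contradicting compactness together with the ultrafilter property. Concretely, suppose no $z \in Z$ satisfies the condition in Definition \ref{A:def:gen-limit}. Then for each $z \in Z$ there is an open neighbourhood $U_z \ni z$ with $f^{-1}(U_z) \notin \cU$. Here is where the ultrafilter hypothesis enters, via property \ref{def:ultrafilter-prop:7}: since $f^{-1}(U_z) \notin \cU$, its complement $f^{-1}(Z \setminus U_z) = f^{-1}(U_z)^c$ belongs to $\cU$.

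Next I would invoke compactness of $Z$. The family $\{U_z\}_{z \in Z}$ is an open cover of $Z$, so there are finitely many points $z_1, \dots, z_n$ with $Z = U_{z_1} \cup \dots \cup U_{z_n}$. Taking preimages and using that preimage commutes with unions and complements, we get
$$ \bigcap_{i=1}^{n} f^{-1}(Z \setminus U_{z_i}) = f^{-1}\!\left( \bigcap_{i=1}^n (Z \setminus U_{z_i}) \right) = f^{-1}\!\left( Z \setminus \bigcup_{i=1}^n U_{z_i} \right) = f^{-1}(\emptyset) = \emptyset. $$
On the other hand, each $f^{-1}(Z \setminus U_{z_i}) \in \cU$ by the previous paragraph, and a filter is closed under finite intersections (property \ref{def:filter-prop:3}), so the left-hand side lies in $\cU$. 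Thus $\emptyset \in \cU$, contradicting property \ref{def:filter-prop:1}. Hence some $z$ works, i.e. $\llim{\cU}{x} f(x)$ exists; uniqueness is already handled by the earlier Proposition since $Z$ is Hausdorff.

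I do not expect a serious obstacle here: the only subtlety is making sure the logical negation of ``$z$ is a limit'' is set up correctly (for \emph{every} $z$ there exists a \emph{bad} neighbourhood, rather than a single bad neighbourhood working for all $z$), and then noticing that these bad neighbourhoods automatically form an \emph{open cover} of $Z$ because every point is in its own bad neighbourhood. Once that observation is in place, compactness plus the ``complement is in $\cU$'' property of ultrafilters and closure under finite intersection finish the argument mechanically. One could alternatively phrase it via the finite intersection property: the family $\{ f^{-1}(C) : C \subset Z \text{ closed}, f^{-1}(C) \in \cU \}$ has the finite intersection property, so by compactness the corresponding closed sets have nonempty intersection, and any point in that intersection is the desired limit — but the open-cover contradiction is the cleanest write-up.
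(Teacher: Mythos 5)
Your proof is correct and follows essentially the same route as the paper: contradiction via the open cover of bad neighbourhoods, compactness to extract a finite subcover, and the ultrafilter property to reach $\emptyset \in \cU$. The only cosmetic difference is that you pass to complements and use closure under finite intersections, whereas the paper works directly with the finite union of sets not in $\cU$ covering $X$ and invokes property \ref{def:ultrafilter-prop:4}; these are dual formulations of the same step.
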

\begin{proof}
 For a proof by contradiction, suppose that $\llim{\cU}{x} f(x)$ does not exist, meaning that for no $z \in Z$ is it true that $\llim{\cU}x f(x) = z$. Then, there exist open neighbourhoods $U_z$ of $z$, such that $f^{-1}(U_z) \not \in \cU$. Since $Z$ is compact, and $\{ U_z \setsep z \in Z\}$ is an open cover, there exists a finite subcover, which is by necessity of the form $\{ U_z \setsep z \in Z_0 \}$ for some finite $Z_0 \subset Z$. Let $A_z := f^{-1}(U_z)$. We have $A_z \not \in \cU$, and $$\bigcup_{z \in Z_0} A_z = f^{-1}\left(\bigcup_{z \in Z_0} U_z \right) = f^{-1}(Z) = X.$$ But since the set $Z_0$ is finite, this is a contradiction with the defining property of ultrafilters \ref{def:ultrafilter-prop:4}.
\end{proof}

It will be convenient to have the following description of the limit along an ultrafilter. It is more concrete, and easier to work with, than the one derived by using the definition verbatim.

\begin{proposition}[Characterisation of limits]\label{prop:characterisation-of-limits}
\index{limit!generalised limit}
 If $\cU$ is an ultrafilter, then 
  $$\{\llim{\cU}x f(x) \} = \bigcap_{A \in \cU} \cl f(A) .$$ 
\end{proposition}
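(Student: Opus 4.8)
The claim is a set equality between the singleton containing the ultrafilter limit $z := \llim{\cU}x f(x)$ (which exists by Proposition \ref{A:prop:U-limit-exists}, provided $Z$ is compact Hausdorff; more generally one argues both inclusions directly) and the intersection $\bigcap_{A \in \cU} \cl f(A)$. I would prove the two inclusions separately.

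\textbf{Inclusion $\subseteq$.}
Suppose $z = \llim{\cU}x f(x)$; I want to show $z \in \cl f(A)$ for every $A \in \cU$. Fix $A \in \cU$ and let $V$ be an arbitrary open neighbourhood of $z$. By the definition of the generalised limit, $f^{-1}(V) \in \cU$. Since $\cU$ is a filter (closed under finite intersection) and $A \in \cU$, we get $A \cap f^{-1}(V) \in \cU$, and in particular $A \cap f^{-1}(V) \neq \emptyset$ because $\emptyset \notin \cU$. Picking any $x$ in this intersection, $f(x) \in f(A) \cap V$, so $V$ meets $f(A)$. As $V$ was an arbitrary neighbourhood of $z$, this says exactly $z \in \cl f(A)$. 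Hence $z \in \bigcap_{A \in \cU} \cl f(A)$.

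\textbf{Inclusion $\supseteq$.}
Conversely, suppose $w \in \bigcap_{A \in \cU} \cl f(A)$; I want $w = \llim{\cU}x f(x)$, i.e. $f^{-1}(V) \in \cU$ for every open neighbourhood $V$ of $w$. Fix such a $V$ and set $B := f^{-1}(V)$. If $B \notin \cU$, then by the ultrafilter property \ref{def:ultrafilter-prop:7} we have $B^c \in \cU$, i.e. $A := f^{-1}(Z \setminus V) \in \cU$. But then $f(A) \subseteq Z \setminus V$, and since $Z \setminus V$ is closed, $\cl f(A) \subseteq Z \setminus V$, so $w \notin \cl f(A)$, contradicting $w \in \bigcap_{A \in \cU} \cl f(A)$. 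Therefore $B = f^{-1}(V) \in \cU$, which is precisely the condition that $w$ is the limit of $f$ along $\cU$. Combined with uniqueness of the generalised limit in the Hausdorff space $Z$, this gives $\{w\} = \{z\}$, and the two inclusions together establish the asserted equality.

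\textbf{Main obstacle.}
There is no serious obstacle — the proof is a direct unwinding of definitions. The only point requiring a little care is the status of the left-hand side as a genuine singleton: if one does not wish to assume $Z$ is compact (so that Proposition \ref{A:prop:U-limit-exists} may not apply), one should read the statement as asserting that $\bigcap_{A\in\cU}\cl f(A)$ is either empty or a singleton, equal to $\{\llim{\cU}x f(x)\}$ whenever the latter is defined; the argument above shows exactly this, since the $\supseteq$ direction identifies every point of the intersection as a generalised limit, and uniqueness (which uses only that $Z$ is Hausdorff) forces the intersection to have at most one element.
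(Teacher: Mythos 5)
Your proof is correct and follows essentially the same route as the paper: both directions are obtained by unwinding the definition of the generalised limit, the inclusion $\subseteq$ via $f^{-1}(V)\cap A\in\cU$ being non-empty, and the inclusion $\supseteq$ by showing $f^{-1}(V)\in\cU$ for every neighbourhood $V$ of $w$. The only cosmetic difference is that you justify the latter step by contraposition through the complement property of ultrafilters, whereas the paper directly invokes that a set meeting every member of $\cU$ must itself belong to $\cU$; these are equivalent, and your added remark about the intersection having at most one point when $Z$ is merely Hausdorff is a harmless refinement.
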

\begin{proof}
 We will first show that $z := \llim{\cU}x f(x)$ lies in $\cl{ f(A) }$ for any $A \in \cU$. It will suffice to show that if $U \in \Top Z$ is a neighbourhood of $z$ then $U \cap f(A) \neq \emptyset$. But we know that $f^{-1}(U) \in \cU$, so $f^{-1}(U \cap f(A) ) \supset  f^{-1}(U) \cap A \in \cU$ and in particular $U \cap f(A)$ cannot be empty.

 Let us now show that if $w \in \bigcap_{A \in \cU} \cl{ f(A) } $, then $\llim{\cU}x f(x) = w$. Let $w \in Z$ be any such point, and let $U \in \Top Z$ be any open neighbourhood of $w$. For any $A \in \cU$ we have that $U \cap f(A) \neq \emptyset$, so $f^{-1}(U) \cap A \neq \emptyset$. Since $A$ was taken arbitrarily, it follows that $f^{-1}(U) \in \cU$. Thus, directly from the definition $z = \llim{\cU}x f(x)$. 
\end{proof}

It is natural to inquire into the connection between the generalised limits we just defined, and the more classical notion of a limit in a topological space. It turns out that this relation is rather close, and generalised limits can be realised as the classical limits for the properly chosen topology on $\Ultrafilters{X}$. The Definition \ref{A:def:gen-limit} suggests that the following sets should be open in the topology we are about to construct.

\begin{definition}[Base clopen sets]\label{def:ultrafilter-topology}
\index{ultrafilter!topological structure}
  For a set $A \in \cP(X)$, define $\bar{A} \in \cP(\Ultrafilters{X})$ to be the set:
  $$ \bar{A} = \{ \cU \in \Ultrafilters{X} \setsep A \in \cU \}$$
\end{definition}

We stress that right now the symbol $\bar{A}$ is not meant to denote closure, but merely the construction in the definition above. It so happens that these sets \emph{will} be closures in the topological sense (up to the natural identification of elements of $X$ with the related principal ultrafilters), but they will also be open sets, and indeed a basis for a topology. A reader accustomed to working with connected topological spaces may find this worrying at first, but a closer inspection shows that this situation merely indicates that the constructed topology will be highly disconnected. 

Before we pass on to using these sets to introduce a topology, let us note some of the convenient properties they satisfy.

\begin{proposition}[Properties of closure]\label{lem:properties-of-bar}
\index{ultrafilter!topological structure}
  The operation $A \mapsto \bar{A}$ defined above has the following properties.
  \begin{enumerate}[label={(\textit{\arabic*})},start=1]
   \item\label{lem:properties-of-bar:1} If $A \in \cP(X)$ then $\bar{A^c} = \bar{A}^c$.
   \item\label{lem:properties-of-bar:2} If $A,B \in \cP(X)$ then $\bar{A \cap B} = \bar{A} \cap \bar{B}$.
   \item\label{lem:properties-of-bar:3} If $A,B \in \cP(X)$ then $\bar{A \cup B} = \bar{A} \cup \bar{B}$.
  \end{enumerate}
\end{proposition}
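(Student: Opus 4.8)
The plan is to verify the three identities directly from the definitions, using the ultrafilter properties established earlier. Each identity reduces to a membership equivalence: an ultrafilter $\cU$ lies in the left-hand set if and only if it lies in the right-hand set, which in turn is an equivalence between statements about which subsets of $X$ belong to $\cU$.

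First I would prove \ref{lem:properties-of-bar:1}. By definition, $\cU \in \bar{A^c}$ means $A^c \in \cU$, while $\cU \in \bar{A}^c$ means $\cU \notin \bar A$, i.e. $A \notin \cU$. The equivalence $A^c \in \cU \iff A \notin \cU$ is exactly the content of property \ref{def:ultrafilter-prop:7} from the Observation following Definition \ref{def:ultrafilter} (one direction also uses \ref{def:filter-prop:3} and \ref{def:filter-prop:1}, since $A$ and $A^c$ cannot both lie in $\cU$).

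Next I would prove \ref{lem:properties-of-bar:2}. Here $\cU \in \bar{A \cap B}$ means $A \cap B \in \cU$, and $\cU \in \bar A \cap \bar B$ means $A \in \cU$ and $B \in \cU$. The forward implication follows from property \ref{def:filter-prop:2} (since $A \cap B \subset A$ and $A \cap B \subset B$), and the reverse from \ref{def:filter-prop:3}. Finally, \ref{lem:properties-of-bar:3} can be obtained either the same way — $\cU \in \bar{A\cup B}$ means $A \cup B \in \cU$, which by the ultrafilter property \ref{def:ultrafilter-prop:4} is equivalent to $A \in \cU$ or $B \in \cU$ (the reverse direction using \ref{def:filter-prop:2} and $A, B \subset A \cup B$) — or more slickly by combining \ref{lem:properties-of-bar:1} and \ref{lem:properties-of-bar:2} via de Morgan: $\overline{A \cup B} = \overline{(A^c \cap B^c)^c} = (\overline{A^c \cap B^c})^c = (\overline{A^c} \cap \overline{B^c})^c = (\bar A^c \cap \bar B^c)^c = \bar A \cup \bar B$.

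There is no real obstacle here; the proof is a routine unwinding of definitions. The only point requiring minor care is making sure each membership equivalence is biconditional — in particular, in \ref{lem:properties-of-bar:1} one must invoke both that $A \notin \cU \Rightarrow A^c \in \cU$ (the genuine ultrafilter property) and that $A \in \cU \Rightarrow A^c \notin \cU$ (which follows from the filter axioms, as $\emptyset \notin \cU$). I would present \ref{lem:properties-of-bar:1} and \ref{lem:properties-of-bar:2} by the direct membership arguments and then derive \ref{lem:properties-of-bar:3} by the de Morgan computation above, since it is the shortest and reuses the parts already proved.
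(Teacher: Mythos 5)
Your proposal is correct and follows essentially the same route as the paper: direct membership equivalences for \ref{lem:properties-of-bar:1} and \ref{lem:properties-of-bar:2} using the filter and ultrafilter axioms, and the de Morgan computation to deduce \ref{lem:properties-of-bar:3} from the first two. The only cosmetic difference is that the paper derives the complement equivalence in \ref{lem:properties-of-bar:1} from property \ref{def:ultrafilter-prop:4} applied to $A \cup A^c = X$, whereas you invoke the equivalent property \ref{def:ultrafilter-prop:7} directly.
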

\begin{proof}
  \begin{enumerate}[label={(\textit{\arabic*})},start=1]
   \item We need to show that an ultrafilter $\cU$ contains $A$ if and only if it does not contain $A^c$. One direction is clear: $\cU$ cannot contain both $A$ and $A^c$, since otherwise it would have to contain $A \cap A^c = \emptyset$ by property \ref{def:filter-prop:3}, which contradicts property \ref{def:filter-prop:1}. Conversely, since $A \cup A^c = X$, by property \ref{def:ultrafilter-prop:4}, the ultrafilter $\cU$ has to contain either $A$ or $A^c$.

   \item We need to show that an ultrafilter $\cU$ contains $A \cap B$ if and only if $p$ contains both $A$ and $B$. For one direction, note that if $\cU$ contains $A \cap B$ then $\cU$ contains all supersets of $A \cap B$ as well by the property \ref{def:filter-prop:2}, so it contains both $A$ and $B$. Conversely, if $\cU$ contains $A$ and $B$, then it contains $A \cap B$ by the property \ref{def:filter-prop:3}.
  
   \item Using the previous points, we find that:
    $$ \bar{A \cup B} 
      = \bar{(A^c \cap B^c)^c} 
      = \bar{(A^c \cap B^c)}^c
      = \bar{A^c} \cap \bar{ B^c}^c
      = (\bar{A}^c \cap \bar{ B}^c)^c
      = \bar{A} \cup \bar{B}.
    $$
  \vspace{-1cm}
  \end{enumerate}    
\end{proof}

We recall a classical result characterising families of sets that can be used to define a topology. Together with the above observations, it will immediately allows us to describe a topology on $\Ultrafilters{X}$.

\begin{theorem}\label{thm:topology-base-characterisation}
\index{topology!topological base}

\newcommand{\Space}{X}
\newcommand{\Base}{\cB}
\newcommand{\Element}{x}
\newcommand{\Topology}{\cT}

  Suppose that $\Space$ is a set and $\Base \subset \cP(\Space)$ is a family of sets such that the following conditions are satisfied:
  \begin{enumerate}
   \item $\bigcup \Base = \Space,$
   \item $(\forall A,B \in \Base) (\forall \Element \in A \cap B) (\exists C \in \Base) :\ \Element \in C \subset A \cap B$
  \end{enumerate}
 Then there exists a unique topology $\Topology$ on $\Space$ for which $\Base$ is a base. This is the coarsest topology for which all sets $B \in \Base$ are open. The open sets in this topology are precisely the sets of the form $\bigcup \Base_0$ for $\Base_0 \subset \Base$.
\end{theorem}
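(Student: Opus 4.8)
The plan is to write the topology down explicitly and then verify each clause of the statement directly. Concretely, I would \emph{define}
$$\cT := \left\{ \bigcup \cB_0 \setsep \cB_0 \subset \cB \right\},$$
the collection of all unions of subfamilies of $\cB$, and then proceed in three stages: (i) check $\cT$ is a topology, (ii) observe that $\cB$ is a base for it, (iii) dispatch the uniqueness and ``coarsest'' clauses, which are purely formal once (i) and (ii) are in hand. The final sentence of the statement is then nothing but the definition of $\cT$, so it requires no separate argument.

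For stage (i): taking $\cB_0 = \emptyset$ shows $\emptyset \in \cT$, and hypothesis~(1) shows $X = \bigcup \cB \in \cT$. Closure under arbitrary unions is immediate, since a union of members of $\cT$ is a union of a (larger) subfamily of $\cB$. The one substantive point — and the \emph{only} place hypothesis~(2) is used — is closure under finite intersections, for which it suffices to handle two sets $U = \bigcup_{i \in I} A_i$ and $V = \bigcup_{j \in J} B_j$ with all $A_i, B_j \in \cB$. Distributivity gives $U \cap V = \bigcup_{(i,j) \in I \times J} (A_i \cap B_j)$, so it is enough to put each $A_i \cap B_j$ into $\cT$; and for this, for every $x \in A_i \cap B_j$ we invoke~(2) to choose $C_x \in \cB$ with $x \in C_x \subset A_i \cap B_j$, whence $A_i \cap B_j = \bigcup_{x \in A_i \cap B_j} C_x \in \cT$. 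Thus $U \cap V \in \cT$, and an obvious induction extends this to all finite intersections.

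For stage (ii): by construction every element of $\cT$ is a union of members of $\cB$, and conversely each $B \in \cB$ lies in $\cT$ via the singleton subfamily $\{B\}$; this is exactly the assertion that $\cB$ is a base for $\cT$. For stage (iii): if $\cT'$ is \emph{any} topology with $\cB$ as a base, then by the meaning of ``base'' its open sets are precisely the unions of subfamilies of $\cB$, i.e. $\cT' = \cT$, giving uniqueness. And any topology in which every $B \in \cB$ is open must — being closed under arbitrary unions — contain every union of a subfamily of $\cB$, hence contain $\cT$; since $\cT$ itself contains $\cB$, it is the coarsest such topology.

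The only genuine obstacle is the finite-intersection axiom in stage~(i), precisely because that is where condition~(2) enters; all other steps are bookkeeping. I would just be mildly careful with the degenerate cases (empty index sets, or $A_i \cap B_j = \emptyset$), but these cause no difficulty since the empty union is $\emptyset$, which lies in $\cT$.
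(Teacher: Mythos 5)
Your proposal is correct and complete: defining $\cT$ as the family of all unions of subfamilies of $\cB$, using hypothesis (2) only for closure under pairwise (hence finite) intersections, and then reading off the base, uniqueness, and coarseness claims is the standard argument, and you handle the degenerate cases properly. Note that the paper itself supplies no proof of this theorem — it is recalled as a classical fact and only invoked later (together with Proposition \ref{lem:properties-of-bar}) to topologise $\Ultrafilters{X}$ — so your write-up fills a gap rather than diverging from an existing argument.
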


\begin{definition}
\index{ultrafilter!topological structure}
 We turn $\Ultrafilters{ X}$ into topological space by declaring the family $\{ \bar{A} \setsep A \in \cP(X) \}$ to be the base of the topology. By Theorem \ref{thm:topology-base-characterisation} and Proposition \ref{lem:properties-of-bar}, this indeed defines a topology.
\end{definition}

We shall now proceed to the study of the topology of $\Ultrafilters{X}$.  This topology turns out to have many desirable properties. Because the topology on  $\Ultrafilters{X}$ does not carry any connection to the topology on $X$, we will be assuming from now on that the topology on $X$ is discrete. Under this assumption, $\Ultrafilters{X}$ can be shown to be the maximal compactification of the discrete space $X$, in a sense that will be made precise soon. 

\begin{proposition}\label{prop:beta-X-is-T2}
\index{ultrafilter!topological structure}
 The topological space $\Ultrafilters{X}$ is Hausdorff.
\end{proposition}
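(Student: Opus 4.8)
The plan is to separate two distinct ultrafilters by a basic clopen set and its complement. Let $\cU, \cV \in \Ultrafilters{X}$ with $\cU \neq \cV$. First I would observe that, since the two ultrafilters are distinct \emph{as families of sets}, there exists a set $A \in \cP(X)$ lying in exactly one of them; without loss of generality assume $A \in \cU$ and $A \notin \cV$. The crucial point is now that $\cV$ is not merely a filter but an ultrafilter: by property \ref{def:ultrafilter-prop:7} (equivalently \ref{def:ultrafilter-prop:4}, as recorded in the Observation following Definition \ref{def:ultrafilter}), $A \notin \cV$ forces $A^c \in \cV$. This is exactly where the maximality of ultrafilters is used; for general filters the argument would break down.

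Next I would exhibit the separating neighbourhoods. By construction $\cU \in \bar{A}$ and $\cV \in \bar{A^c}$, and both $\bar A$ and $\bar{A^c}$ are open, being members of the declared base $\{\bar B \setsep B \in \cP(X)\}$ of the topology on $\Ultrafilters{X}$. It remains to check that these two open sets are disjoint, and for this I would invoke Proposition \ref{lem:properties-of-bar}\ref{lem:properties-of-bar:1}, which gives $\bar{A^c} = \bar A^c$. Hence $\bar A \cap \bar{A^c} = \bar A \cap \bar A^c = \emptyset$, so $\bar A$ and $\bar{A^c}$ are disjoint open neighbourhoods of $\cU$ and $\cV$ respectively. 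Since $\cU$ and $\cV$ were arbitrary distinct points, $\Ultrafilters{X}$ is Hausdorff.

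I do not anticipate a genuine obstacle here: the proof is a two-line consequence of the definition of the topology together with Proposition \ref{lem:properties-of-bar}. The only subtlety worth flagging in the write-up is the appeal to the ultrafilter (rather than filter) axiom to pass from $A \notin \cV$ to $A^c \in \cV$; everything else is bookkeeping with the $\bar{(\cdot)}$ operation. One could equivalently phrase the disjointness via Proposition \ref{lem:properties-of-bar}\ref{lem:properties-of-bar:2}, noting $\bar A \cap \bar{A^c} = \bar{A \cap A^c} = \bar\emptyset = \emptyset$, but the complement formulation is the most direct.
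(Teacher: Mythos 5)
Your proof is correct and follows essentially the same approach as the paper: both separate $\cU$ and $\cV$ by basic open sets of the form $\bar{A}$ whose underlying subsets of $X$ are disjoint, using the ultrafilter complement property and Proposition \ref{lem:properties-of-bar}. In fact your version is slightly more economical --- the paper picks $A \in \cU \setminus \cV$ and $B \in \cV \setminus \cU$ and then passes to $A \setminus B$ and $B \setminus A$, whereas your single pair $\bar{A}$, $\bar{A^c}$ already does the job.
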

\begin{proof}
 Let $\cU$ and $\cV$ be any distinct ultrafilters. By the characterisation of ultrafilters as the maximal families with finite intersection property in \ref{lem:ultrafilter-characterisation}, we see that neither of $\cU$ and $\cV$ is contained in the other. Thus, there exists sets $A,B \in \cP(X)$ such that $A \in \cU \setminus \cV$ and $B \in \cV \setminus \cU$. Now the ultrafilter property \ref{def:ultrafilter-prop:7} ensures that $A^c \in \cV$ and $B^c \in \cU$. If we now denote $A_1 := A \setminus B$ and $B_1 := B \setminus A$ then it follows that $A_1 \in \cU$ and $B_1 \in \cV$ and $A_1 \cap B_1 = \emptyset$. Thus, $\cU \in \bar{A}_1$ and $\cV \in \bar{B}_1$. Finally, $$\bar{A}_1 \cap \bar{B}_1 = \bar{A_1 \cap B_1} = \bar{\emptyset} = \emptyset,$$ by Lemma \ref{lem:properties-of-bar}. Thus, $\bar{A}_1$ and $\bar{B}_1$ are separating neighbourhoods for $\cU$ and $\cV$.
\end{proof}

\begin{proposition}\label{A:prop:beta-X-is-cmpct}
\index{ultrafilter!topological structure}
\index{topology!compactness}
 The topological space $\Ultrafilters{X}$ is compact.
\end{proposition}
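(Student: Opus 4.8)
I will verify compactness directly from the open cover definition, exploiting that the sets $\bar A$ form a base of \emph{clopen} sets and that the existence result for ultrafilters above a family with the finite intersection property (Corollary \ref{cor:ultrafilters-existance-above-FI-sets}) is available. The strategy is the standard one: assume a cover has no finite subcover, and manufacture from this failure a family of subsets of $X$ with the finite intersection property; the ultrafilter extending that family will then be a point missed by the cover, a contradiction.

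\textbf{Step 1: reduce to basic clopen covers.} Let $\{U_i\}_{i \in I}$ be an arbitrary open cover of $\Ultrafilters{X}$. Since $\{\bar A \setsep A \in \cP(X)\}$ is a base, each $U_i$ is a union of basic sets $\bar A$, and collecting all of these yields a cover $\{\bar{A_j}\}_{j \in J}$ refining $\{U_i\}_{i \in I}$, with each $\bar{A_j}$ contained in some $U_i$. A finite subcover of $\{\bar{A_j}\}_{j \in J}$ immediately produces a finite subcover of $\{U_i\}_{i \in I}$, so it suffices to extract a finite subcover from $\{\bar{A_j}\}_{j \in J}$.

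\textbf{Step 2: the finite intersection property.} Suppose for contradiction that no finite subfamily of $\{\bar{A_j}\}_{j \in J}$ covers $\Ultrafilters{X}$. Then for every finite $J_0 \subset J$ there is an ultrafilter $\cV \notin \bigcup_{j \in J_0} \bar{A_j}$, i.e.\ $A_j \notin \cV$ for all $j \in J_0$; by the ultrafilter property \ref{def:ultrafilter-prop:7} this gives $A_j^c \in \cV$ for all $j \in J_0$, hence $\bigcap_{j \in J_0} A_j^c \in \cV$ and in particular $\bigcap_{j \in J_0} A_j^c \neq \emptyset$. Thus the family $\cA := \{A_j^c \setsep j \in J\}$ has the finite intersection property.

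\textbf{Step 3: conclude.} By Corollary \ref{cor:ultrafilters-existance-above-FI-sets} there is an ultrafilter $\cU \supset \cA$. Then $A_j^c \in \cU$ for every $j \in J$, so $A_j \notin \cU$ and hence $\cU \notin \bar{A_j}$ for all $j \in J$. This contradicts the assumption that $\{\bar{A_j}\}_{j \in J}$ covers $\Ultrafilters{X}$. Therefore a finite subcover exists, and $\Ultrafilters{X}$ is compact.

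\textbf{Anticipated difficulty.} There is no deep obstacle here; the only point requiring care is the bookkeeping in Step 1 (passing between the given cover and a basic clopen refinement without losing the finite-subcover conclusion) and making sure the contrapositive manipulations with $A_j$ versus $A_j^c$ are applied consistently. The essential content — converting "no finite subcover" into a family with the finite intersection property — is exactly what Corollary \ref{cor:ultrafilters-existance-above-FI-sets} was set up to handle, so the argument is short once that correspondence is in place.
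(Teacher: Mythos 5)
Your proof is correct and follows essentially the same strategy as the paper's: reduce to a cover by basic clopen sets, convert the failure of a finite subcover into the finite intersection property of the family of complements, and invoke Corollary \ref{cor:ultrafilters-existance-above-FI-sets} to produce an ultrafilter missed by the cover. The only (harmless) difference is that you run the argument directly at the level of $\Ultrafilters{X}$, whereas the paper first shows the sets $A$ cover the discrete space $X$, extracts a finite subcover there, and then lifts it back using the ultrafilter property; your version is marginally more streamlined but rests on the same two ingredients.
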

\begin{proof}
 Let $\cC \subset \operatorname{Top}(\Ultrafilters{X}X)$ be an open cover of $\Ultrafilters{X}$. Replacing $\cC$ by a finer cover if necessary, we can assume $\cC$ consists only of base sets of the form $\bar{A}$ with $A \in \cP(X)$. Thus, we can find a family $\cA \subset \cP(X)$ such that $\cC = \{ \bar{A} \setsep A \in \cA \}$. For any $x \in X$ and the related principal ultrafilter $\principal{x}$ based at $x$, we know that $\principal{x} \in \bar{A}$ if and only if $x \in A$. Thus, $\cA$ is a cover of $X$. 
 
 I claim that one can find a finite subcover of $\cA$. For a proof by contradiction, suppose the sum $\bigcup \cA_0$ of any finite family $\cA_0 \subset \cA$ is not the full space $X$. Let $\cB := \{A^c \setsep A \in \cA \}$ denote the family of complements of sets in $\cA$. We can rephrase the above assumption by saying that for any finite family $\cB_0 \subset \cB$ we have $\bigcap \cB_0 \neq \emptyset$. Thus, $\cB$ has the finite intersection property. By corollary \ref{cor:ultrafilters-existance-above-FI-sets}, there exists an ultrafilter $\cU$ that contains $\cB$. By construction, for any $A \in \cA$ we have $A^c \in \cU$, so $\cU \in \bar{A}^c$. But this means that $\cU$ does not belong to any of the sets $\bar{A}^c$ in the cover $\cC$, which is a contradiction with $\cC$ being a cover.

 Let $\cA_0$ be the finite cover of $\cA$, whose existence we have just proved, and let $\cC_0 := \{ \bar{A} \setsep A \in \cA_0\}$ be the corresponding part of $\cC$. I claim that $\cC$ is then a cover of $\Ultrafilters{X}$. Indeed, let $\cU$ be any ultrafilter. Then $\bigcup \cA_0 = X \in \cU$, so by the ultrafilter property \ref{def:ultrafilter-prop:5} there exists $A \in \cA_0$ such that $A \in \cA$. Thus, $\cU \in \bar{A} \in \cC_0$, as desired.  
\end{proof}

\begin{corollary}
  The topological space $\Ultrafilters{X}$ is normal.
\end{corollary}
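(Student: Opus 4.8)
The plan is to invoke the classical fact that every compact Hausdorff space is normal, and to spell out its proof via the intermediate step of regularity. We already know from Proposition \ref{A:prop:beta-X-is-cmpct} that $\Ultrafilters{X}$ is compact and from Proposition \ref{prop:beta-X-is-T2} that it is Hausdorff, so both ingredients are at hand. The first thing I would record is the auxiliary observation that a closed subset $C$ of a compact space is itself compact: any open cover of $C$, together with the complement of $C$, is an open cover of the whole space, and a finite subcover of the latter restricts to a finite subcover of $C$.

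Next I would prove that $\Ultrafilters{X}$ is regular, i.e.\ that a point $p$ and a closed set $C$ with $p \notin C$ can be separated by disjoint open sets. For each $q \in C$, the Hausdorff property yields disjoint open sets $U_q \ni p$ and $V_q \ni q$. The family $\{V_q : q \in C\}$ is an open cover of the compact set $C$, so finitely many $V_{q_1}, \dots, V_{q_n}$ already cover $C$. Then $V := \bigcup_{i=1}^n V_{q_i}$ is an open set containing $C$, and $U := \bigcap_{i=1}^n U_{q_i}$ is an open set containing $p$; since $U \subset U_{q_i}$ is disjoint from $V_{q_i}$ for each $i$, we get $U \cap V = \emptyset$.

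Finally I would upgrade regularity to normality by repeating the argument one level higher. Given disjoint closed sets $C$ and $D$, each is compact by the first observation. For each $p \in C$, regularity separates $p$ from $D$ by disjoint open sets $U_p \ni p$ and $W_p \supset D$. The sets $\{U_p : p \in C\}$ cover the compact set $C$, so finitely many $U_{p_1}, \dots, U_{p_m}$ suffice; set $U := \bigcup_{j=1}^m U_{p_j} \supset C$ and $W := \bigcap_{j=1}^m W_{p_j} \supset D$. Both are open, and they are disjoint because $U_{p_j} \cap W \subset U_{p_j} \cap W_{p_j} = \emptyset$ for every $j$. Hence $C$ and $D$ admit disjoint open neighbourhoods, so $\Ultrafilters{X}$ is normal.

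There is no serious obstacle here; the only steps requiring a little care are the two finite-subcover extractions, which is exactly where compactness is used, together with the trivial but essential remark that a finite intersection of open sets is open. One could alternatively give a more hands-on argument exploiting the clopen base $\{\bar A : A \in \cP(X)\}$ directly, but the compact-Hausdorff route has the advantage of using nothing specific to $\Ultrafilters{X}$.
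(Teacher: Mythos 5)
Your proposal is correct and follows exactly the route the paper takes: combine Proposition \ref{A:prop:beta-X-is-cmpct} and Proposition \ref{prop:beta-X-is-T2} and invoke the classical fact that compact Hausdorff spaces are normal. The only difference is that the paper simply cites this fact (to \cite{engelking-topology}) whereas you write out its standard proof in full, which is a fine but not substantively different choice.
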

\index{ultrafilter!topological structure}
\begin{proof}
  It is well known that compact Hausdorff spaces are normal. \cite{engelking-topology} 
\end{proof}

As the above results show, the space of all ultrafilters $\Ultrafilters{X}$ is well-behaved from the topological point of view. However, it should be noted that this space is also large, as the following corollary shows.

\begin{corollary}
\index{ultrafilter!topological structure}
\index{set theory!cardinality}
  The topological space $\Ultrafilters{X}$ is not first countable, and in particular not metrizable.  
\end{corollary}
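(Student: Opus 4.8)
The plan is to show that no point of $\Ultrafilters{X}$ admits a countable neighbourhood base; since every metrizable space is first countable, this establishes both assertions at once. (Here $X$ is infinite; for finite $X$ the space $\Ultrafilters{X}$ is finite and discrete, hence trivially metrizable.) By Corollary~\ref{cor:ultrafilters-exist} we may fix a non-principal ultrafilter $p \in \Ultrafilters{X}$, and I claim $p$ is the required witness.

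First I would reduce to a purely combinatorial statement about $p$. Because the sets $\bar{A}$, $A \in \cP(X)$, form a base of the topology and $p \in \bar{A}$ exactly when $A \in p$, any neighbourhood base at $p$ can be refined to one of the form $\{\bar{A} \setsep A \in \cG\}$ with $\cG \subseteq p$; so a countable neighbourhood base would yield a countable family $\{A_n\}_{n \in \NN} \subseteq p$ for which $\{\bar{A_n}\}_n$ is a neighbourhood base at $p$. Replacing $A_n$ by $A_1 \cap \dots \cap A_n$ — still in $p$ since $p$ is a filter, with $\bar{A_1 \cap \dots \cap A_n} = \bar{A_1} \cap \dots \cap \bar{A_n}$ by Proposition~\ref{lem:properties-of-bar} — I may assume $A_1 \supseteq A_2 \supseteq \cdots$, and each $A_n$ is infinite since a non-principal ultrafilter contains no finite set. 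It therefore suffices to produce a set $A \in p$ with $A_n \not\subseteq A$ for every $n$: then $\bar{A}$ is an open neighbourhood of $p$, and for each $n$ any $x \in A_n \setminus A$ gives $\principal{x} \in \bar{A_n} \setminus \bar{A}$, so $\bar{A}$ contains none of the $\bar{A_n}$, contradicting that they form a neighbourhood base.

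The construction of $A$ is the heart of the argument, and I would split according to whether $A_\infty := \bigcap_n A_n$ lies in $p$. If $A_\infty \in p$, then $A_\infty$ is infinite; writing it as a disjoint union $F \sqcup G$ of two infinite sets, exactly one — say $F$ — lies in $p$, and I set $A := A_1 \setminus G \in p$, which works because $G \subseteq A_\infty \subseteq A_n$ forces $\emptyset \neq G \subseteq A_n \setminus A$ for all $n$. If $A_\infty \notin p$, then $A_1 \setminus A_\infty = \bigcup_k E_k \in p$, where the shells $E_k := A_k \setminus A_{k+1}$ are pairwise disjoint and infinitely many of them are non-empty; I enumerate the non-empty ones as $E_{k_1}, E_{k_2}, \dots$ with $k_1 < k_2 < \cdots$ and put $U := \bigcup_{i \text{ odd}} E_{k_i}$, $V := \bigcup_{i \text{ even}} E_{k_i}$. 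Since $U \sqcup V \in p$, one of them, say $U$, is in $p$, hence $V \notin p$ and $A := A_1 \setminus V \in p$; for any $n$, choosing an even index $i$ with $k_i \geq n$ yields $\emptyset \neq E_{k_i} \subseteq A_{k_i} \subseteq A_n$ and $E_{k_i} \subseteq V$, so $A_n \cap V \neq \emptyset$, i.e.\ $A_n \not\subseteq A$; the subcase $V \in p$ is symmetric.

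The main obstacle is exactly this last step: the naive choice ``$A_1$ with one point deleted from each $A_n$'' fails precisely when the deleted points form a set that happens to belong to $p$, and the parity splitting of the shells $E_{k_i}$ (equivalently, restricting $p$ to the countable set $\bigcup_k E_k$ and exploiting non-principality there) is the device that repairs it. As a shorter but less self-contained alternative, one can instead invoke the Proposition computing $\card{\Ultrafilters{X}} = 2^{2^{\card{X}}}$ together with the observation that the principal ultrafilters form a dense subset of cardinality $\card{X}$: a first-countable space with a dense subset of cardinality $\kappa$ has cardinality at most $2^{\kappa}$, which for $\kappa = \card{X}$ is strictly smaller than $2^{2^{\card{X}}}$ — a contradiction.
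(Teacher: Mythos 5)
Your proof is correct, but it takes a genuinely different route from the paper. The paper's proof is exactly your closing ``shorter alternative'': the principal ultrafilters form a dense copy of $X$, first countability would force every point of $\Ultrafilters{X}$ to be the limit of a sequence from this dense set, which (by Hausdorffness) bounds the cardinality by $\card{X}^{\aleph_0} \leq 2^{\card{X}}$, contradicting $\card{\Ultrafilters{X}} = 2^{2^{\card{X}}}$. Your main argument instead fixes a non-principal $p$, observes that $\bar{A_n} \subseteq \bar{A}$ iff $A_n \subseteq A$ (so a countable neighbourhood base at $p$ is the same as a countable base for $p$ as a filter), and then diagonalises against a decreasing sequence $A_1 \supseteq A_2 \supseteq \cdots$ in $p$ by splitting either $\bigcap_n A_n$ or the shells $A_k \setminus A_{k+1}$ into two pieces and discarding the one not in $p$; I checked the case analysis and it is sound, including the point that infinitely many shells are non-empty when $\bigcap_n A_n \notin p$ (otherwise the sequence stabilises and the intersection would lie in $p$). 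What each approach buys: the paper's argument is three lines but leans on the cardinality proposition, which itself rests on the transfinite construction; yours is longer but self-contained, more elementary, and proves the sharper local statement that first countability fails at \emph{every} non-principal ultrafilter (it does hold at the principal ones, which are isolated points), which the global counting argument cannot see. Your parenthetical that the statement requires $X$ infinite is a caveat the paper leaves implicit.
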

\begin{proof}
  The set $X$ is dense in $\Ultrafilters{X}$. If $\Ultrafilters{X}$ was first countable, then all point in $\cl X$ could be described as limits of sequences (indexed by $\omega$) with elements in $X$. The cardinality of such sequences is at most $(\# X)^{\aleph_0} \leq 2^{\# X}$. On the other hand, we have seen that $\Ultrafilters{X} = 2^{2^{\#X} } > 2^{\# X}$, hence $\Ultrafilters{X}$ cannot be first countable. It is known that metrizable spaces are first countable, so $\Ultrafilters{X}$ is in particular not metrizable.
\end{proof}

So far, we have studied the basic topological properties of  $\Ultrafilters{X}$. Note that there is a natural injective map $i :\ X \to \Ultrafilters{X}$ given by $i(x) = \cF_x$, the principal ultrafilter. We will now study the inclusion map in more detail, and show that $X$ can be considered as a subspace of $\Ultrafilters{X}$.

\begin{proposition}\label{prop:closures-are-the-same}
\index{ultrafilter!topological structure}
 If $A \in \cP(X)$ and $i:\ X \to \Ultrafilters{X}$ is the natural inclusion, then $\bar{A} = \cl i(A)$.
\end{proposition}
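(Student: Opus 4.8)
The plan is to prove the two inclusions $\cl\, i(A) \subseteq \bar{A}$ and $\bar{A} \subseteq \cl\, i(A)$ separately, using the algebra of the operation $B \mapsto \bar{B}$ established in Proposition \ref{lem:properties-of-bar} together with the fact that the sets $\bar{B}$ form a base of the topology.

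For the first inclusion, I would argue that $\bar{A}$ is a closed set containing $i(A)$. It contains $i(A)$ because for every $x \in A$ we have $A \in \cF_x$, hence $i(x) = \cF_x \in \bar{A}$. It is closed because by Proposition \ref{lem:properties-of-bar}\ref{lem:properties-of-bar:1} we have $\bar{A}^c = \bar{A^c}$, and $\bar{A^c}$ is a base set, hence open. Since $\cl\, i(A)$ is by definition the smallest closed set containing $i(A)$, it follows that $\cl\, i(A) \subseteq \bar{A}$.

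For the reverse inclusion, I would take an arbitrary $\cU \in \bar{A}$, i.e. an ultrafilter with $A \in \cU$, and show that every basic open neighbourhood of $\cU$ meets $i(A)$. A basic neighbourhood of $\cU$ has the form $\bar{B}$ with $B \in \cU$; then $A \cap B \in \cU$ by property \ref{def:filter-prop:3}, and in particular $A \cap B \neq \emptyset$ since $\emptyset \notin \cU$. Choosing any $x \in A \cap B$, the principal ultrafilter $\cF_x$ contains $B$, so $i(x) \in \bar{B}$, and since $x \in A$ we also have $i(x) \in i(A)$. Hence $\bar{B} \cap i(A) \neq \emptyset$. As this holds for every basic neighbourhood of $\cU$, we conclude $\cU \in \cl\, i(A)$.

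I do not expect any serious obstacle here; the only point requiring a little care is to work with \emph{basic} open neighbourhoods rather than arbitrary ones (legitimate since the $\bar{B}$ form a base), and to invoke the non-emptiness of sets in an ultrafilter to produce the witnessing point $x$. Everything else is a direct unwinding of the definitions of $i$, of $\bar{\,\cdot\,}$, and of topological closure.
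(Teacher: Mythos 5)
Your proposal is correct and follows essentially the same route as the paper: $i(A) \subseteq \bar{A}$ with $\bar{A}$ closed gives one inclusion, and intersecting a basic neighbourhood $\bar{B}$ with $i(A)$ via a point of $A \cap B \in \cU$ gives the other. The only cosmetic difference is that you justify closedness of $\bar{A}$ via $\bar{A}^c = \bar{A^c}$ being a base open set, where the paper simply asserts it from the definition of the topology.
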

\begin{proof}
 For any $x \in A$ we have $A \in \principal{x}$, so $\principal{x} \in \bar{A}$ and consequently $i(A) \subset \bar{A}$. Since $\bar{A}$ is closed by the definition of topology on $\Ultrafilters{X}$, it follows that $\cl i(A) \subset \bar{A}$.

 Conversely, suppose that $\cU \in \bar{A}$, and let us consider any base neighbourhood of $\cU$ of the form $\bar{B}$ with $B \in \cP(X)$. Then, $A \in \cU$ and $B \in \cU$, so $A \cap B \in \cU$. Thus, for any $x \in A \cap B$ we have $\principal{x} \in i(A) \cap \bar{B}$, so in particular  $ i(A) \cap \bar{B}$ is not empty. Since $B$ was chosen arbitrarily, it follows that $\cU \in \cl{i(A)}$. Thus, $\bar{A} \subset \cl{i(A)}$. 

 Since we have inclusions $\cl i(A) \subset \bar{A} \subset \cl{i(A)}$, the sets $\bar{A}$ and $\cl{i(A)}$ are equal.
\end{proof}

\begin{corollary}\label{A:cor:inclusion-is-continuous}
\index{ultrafilter!topological structure}
  If $X$ is discrete, then the inclusion  $i:\ X \to \Ultrafilters{X}$ is a homeomorphism onto its image.
\end{corollary}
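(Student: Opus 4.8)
The plan is to check directly the three defining features of a homeomorphism onto its image: that $i$ is injective, continuous, and open as a map onto $i(X)$ equipped with the subspace topology from $\Ultrafilters{X}$. Injectivity has effectively already been noted: if $\principal{x} = \principal{y}$, then from $\{x\} \in \principal{x}$ we get $\{x\} \in \principal{y}$, i.e. $y \in \{x\}$, so $x = y$. Continuity is free: $X$ carries the discrete topology, so \emph{every} map out of $X$ is continuous, in particular $i$.

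The substance is the openness claim, and here the plan is to establish the single identity
$$ i(U) = \bar{U} \cap i(X) \qquad \text{for all } U \in \cP(X). $$
This follows because $\principal{x} \in \bar{U}$ precisely when $U \in \principal{x}$, which happens precisely when $x \in U$; thus the principal ultrafilters in $\bar{U}$ are exactly the $\principal{x}$ with $x \in U$. Since $\bar{U}$ is by construction a basic open set of $\Ultrafilters{X}$, its trace $\bar{U} \cap i(X)$ is open in the subspace $i(X)$, so $i(U)$ is open in $i(X)$. As $X$ is discrete, arbitrary $U \subseteq X$ are open, so $i$ indeed sends open sets to open subsets of $i(X)$. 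Taking $U = \{x\}$ in the identity gives, as a byproduct, that each $\{\principal{x}\}$ is open in $i(X)$, i.e. $i(X)$ is a discrete subspace — so one could equally phrase the conclusion as: $i$ is a continuous bijection between the discrete spaces $X$ and $i(X)$, hence a homeomorphism.

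Putting the pieces together, $i$ is a continuous injection that is open onto its image, which is exactly the definition of a homeomorphism onto its image. There is no genuine obstacle to overcome here; the only point requiring a modicum of care is conceptual rather than technical — the topology on $X$ plays no role in the construction of $\Ultrafilters{X}$ in general, and enters the argument at exactly one place, namely to guarantee that every subset of $X$ is an open set whose image we are obliged to handle. One may also invoke Proposition~\ref{prop:closures-are-the-same} to identify $\bar{U}$ with $\cl i(U)$, but the direct computation above is shorter and self-contained.
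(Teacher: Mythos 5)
Your proof is correct and follows essentially the same route as the paper: both arguments reduce to the observation that $\principal{x} \in \bar{U}$ exactly when $x \in U$, so the trace of the basic (cl)open sets $\bar{U}$ on $i(X)$ realises every subset of $i(X)$, forcing the subspace topology to be discrete. The only cosmetic difference is that the paper invokes Proposition~\ref{prop:closures-are-the-same} and phrases the conclusion via closed sets, while you compute the identity $i(U) = \bar{U} \cap i(X)$ directly and phrase it via open sets — as you yourself note, these are interchangeable.
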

\begin{proof}
  It is clear that $i$ is injective, and continuous. By the above Proposition \ref{prop:closures-are-the-same}, if $\{\cF_x\}_{x \in A} = i(A)$ is an arbitrary set of principal ultrafilters, then:
  $$ \left( \cl \left\{\cF_x\right\}_{x \in A} \right) \cap i(X) = \bar{A} \cap i(X) =  \{\cF_x\}_{x \in A}.$$
  Hence, arbitrary subset of $i(X)$ is closed in the induced topology, and the topology of $i(X)$ is discrete, which finishes the proof.
\end{proof}
	
\begin{corollary}\label{prop:dense-image-in-beta-X}
\index{ultrafilter!topological structure}
 The image $i(X) $ of the standard inclusion $i :\ X \to \Ultrafilters{X}$ is dense.
\end{corollary}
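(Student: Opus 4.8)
The plan is to read this off immediately from Proposition \ref{prop:closures-are-the-same}. That proposition states $\bar A = \cl i(A)$ for every $A \in \cP(X)$. Applying it with the choice $A = X$ yields $\cl i(X) = \bar X$. Now by Definition \ref{def:ultrafilter-topology} we have $\bar X = \{\cU \in \Ultrafilters{X} \setsep X \in \cU\}$, and every ultrafilter contains $X$ by the filter axiom \ref{def:filter-prop:1}; hence $\bar X = \Ultrafilters{X}$. Combining, $\cl i(X) = \Ultrafilters{X}$, which is precisely the assertion that $i(X)$ is dense.

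If one prefers to avoid invoking Proposition \ref{prop:closures-are-the-same}, there is an equally short direct verification against the base: it suffices to show that every nonempty open set meets $i(X)$, and since the sets $\bar A$ with $A \in \cP(X)$ form a base, it suffices to check this for a nonempty basic open set $\bar A$. Such a set forces $A \neq \emptyset$, because $\bar\emptyset = \emptyset$ (indeed no ultrafilter contains $\emptyset$). Picking any $x \in A$, we have $A \in \principal{x}$, so $i(x) = \principal{x} \in \bar A$; thus $\bar A \cap i(X) \neq \emptyset$, and density follows.

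I do not anticipate any genuine obstacle here: the substantive work was already carried out in Proposition \ref{prop:closures-are-the-same} (and in the proof of compactness, where the same finite-intersection-property machinery appears), and the only observation needed now is the trivial one that $\bar X$ is the entire space $\Ultrafilters{X}$.
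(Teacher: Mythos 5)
Your first argument is exactly the paper's proof: apply Proposition \ref{prop:closures-are-the-same} with $A = X$ and note that $\bar{X} = \Ultrafilters{X}$ since every ultrafilter contains $X$. The proposal is correct and takes the same route (the alternative direct verification you sketch is also fine, but unnecessary).
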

\begin{proof}
 It suffices to apply the previous Proposition \ref{prop:closures-are-the-same} to the full space $X$ to find that:
  $$ \cl{i(X)} = \bar{X} = \Ultrafilters{X} .$$
\end{proof}

Our next step is to study the generalised limits from the topological perspective. The following proposition shows that the generalised limits can be though of essentially as ordinary limits in the space $\Ultrafilters{X}$.

\begin{proposition}\label{prop:U-lim-is-continuous}
\index{topology!continuity}
\index{limit!generalised limit}
 For a fixed map $f :\ X \to Z$ into a compact Hausdorff space, the map $\cU \mapsto \llim{\cU}x f(x)$ is continuous.
\end{proposition}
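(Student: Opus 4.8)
The plan is to verify continuity directly from the definition of the topology on $\Ultrafilters{X}$, using the base clopen sets $\bar{A}$ introduced in Definition \ref{def:ultrafilter-topology}. Write $\Phi :\ \Ultrafilters{X} \to Z$ for the map $\Phi(\cU) = \llim{\cU}x f(x)$, which is well-defined by Proposition \ref{A:prop:U-limit-exists} since $Z$ is compact Hausdorff. Fix an ultrafilter $\cU_0$ and an open neighbourhood $V$ of $z_0 := \Phi(\cU_0)$ in $Z$; since $Z$ is Hausdorff (in fact compact Hausdorff, hence regular), we may choose an open $W$ with $z_0 \in W \subset \cl W \subset V$. I claim $\bar{A}$, where $A := f^{-1}(W) \in \cP(X)$, is an open neighbourhood of $\cU_0$ mapped by $\Phi$ into $V$.

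First, $\cU_0 \in \bar{A}$: by the definition of the generalised limit, $z_0 \in W \in \Top(Z)$ forces $f^{-1}(W) = A \in \cU_0$. Next, take any $\cV \in \bar{A}$, so $A \in \cV$. Then $f(A) \subset W$, hence by Proposition \ref{prop:characterisation-of-limits},
$$\Phi(\cV) \in \bigcap_{B \in \cV} \cl f(B) \subset \cl f(A) \subset \cl W \subset V.$$
Thus $\Phi(\bar{A}) \subset V$, and since $\bar{A}$ is a basic open set containing $\cU_0$, continuity of $\Phi$ at $\cU_0$ follows. As $\cU_0$ was arbitrary, $\Phi$ is continuous.

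I do not expect any serious obstacle here; the only point requiring a little care is the intermediate shrinking step $z_0 \in W \subset \cl W \subset V$, which is exactly what lets us pass from an open set around $z_0$ to a \emph{closed} set that contains the closure $\cl f(A)$ appearing in the characterisation of Proposition \ref{prop:characterisation-of-limits}. This is legitimate because compact Hausdorff spaces are regular (indeed normal, as noted in the corollary following Proposition \ref{A:prop:beta-X-is-cmpct}). Everything else is a direct unwinding of the definitions of $\bar{A}$, of the generalised limit, and of the base of the topology on $\Ultrafilters{X}$.
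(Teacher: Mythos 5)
Your proof is correct and follows essentially the same route as the paper's: both shrink the target neighbourhood using regularity/normality of $Z$, take $A$ to be the preimage of the shrunken open set, and apply Proposition \ref{prop:characterisation-of-limits} to conclude $\Phi(\bar{A}) \subset \cl f(A) \subset V$. The only differences are notational.
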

\begin{proof}
 For ease of notation, define $l(\cU) := \llim{\cU}x f(x)$. We need to prove that $l$ is a continuous map, i.e. that for any open set $W \in \Top Z$, the pre-image $l^{-1}(W)$ is open. For general topological reasons, it will suffice to show that for any $\cU \in l^{-1}(W)$ there exists a set $A \in \cP(X)$ such that $\cU \in \bar{A}$ and $l(\bar{A}) \subset W$. For any $A$ and ultrafilter $\cV \in \bar{A}$ we have by Proposition \ref{prop:characterisation-of-limits} that $l(\cV) \in \bar{f(A)}$, and hence $l(\bar{A}) \subset \cl f(A)$. Since $Z$ is normal, we can find $V \in \Top Z$ such that $\cl V \subset W$. Let $A := f^{-1}(V)$. By definition of the limit, $\cU \in \bar{A}$. By the above observation:
  $$l(\bar{A}) \subset \cl f(A) \ \subset \cl V \subset W $$
\end{proof}

The above considerations show that the space $\Ultrafilters{X}$ is a compactification of $X$ with the rather special property that many maps defined on $X$ can be naturally prolonged to $\Ultrafilters{X}$. This situation has been studied in much depth by topologists in the more general context of locally compact topological spaces.

\begin{definition}[\v{C}ech-Stone compactification]\label{A:def:beta-X}
\index{Cech-Stone compactification@\v{C}ech-Stone compactification}
\index{topology!compactness}
Let $X$ be a locally compact Hausdorff topological space. Let $Y$ be a compact Hausdorff topological space, and $i :\ X \to Y$ a continuous, injective map. Then the pair $(Y,i)$ is said to the \v{C}ech-Stone compactification of $X$ if and only if for any compact Hausdorff topological space $Z$ and continuous map $f:\ X \to Z$, there exists a unique continuous map $g:\ Y \to Z$ such that $f = g \circ i$.
\end{definition}

\begin{proposition}\label{A:prop:beta-X-is-unique}
\index{Cech-Stone compactification@\v{C}ech-Stone compactification}
 If the \v{C}ech-Stone compactification of $X$ exists, then it is unique up to unique isomorphism. More precisely, if $(Y,i)$ an $(Y',i')$ are two Stone-\v{C}ech compactifications, then there exists a unique isomorphism of topological spaces $u :\ Y \to Y'$ such that $u \circ i = i'$.
\end{proposition}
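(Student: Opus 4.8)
The plan is to run the standard universal-property argument, exploiting that the defining property of a \v{C}ech-Stone compactification determines the extension of a map \emph{uniquely}, including in the case where the target space is itself one of the compactifications.

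First I would apply the universal property of $(Y,i)$ from Definition \ref{A:def:beta-X} to the compact Hausdorff space $Y'$ and the continuous map $i' :\ X \to Y'$: this yields a unique continuous map $u :\ Y \to Y'$ with $u \circ i = i'$. Symmetrically, applying the universal property of $(Y',i')$ to the compact Hausdorff space $Y$ and the continuous map $i :\ X \to Y$ produces a unique continuous map $v :\ Y' \to Y$ with $v \circ i' = i$. This already settles the existence of $u$ and, as a by-product of the first application, its uniqueness among continuous maps satisfying $u \circ i = i'$.

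Next I would verify that $u$ and $v$ are mutually inverse. Consider the composite $v \circ u :\ Y \to Y$; it is continuous and satisfies $(v \circ u) \circ i = v \circ i' = i$. The identity $\id_Y$ also satisfies $\id_Y \circ i = i$. Now I invoke the uniqueness clause in the universal property of $(Y,i)$, this time with $Z := Y$ in the role of the auxiliary compact Hausdorff space and $f := i$: there is exactly one continuous $g :\ Y \to Y$ with $g \circ i = i$, hence $v \circ u = \id_Y$. Running the same argument with the roles of $Y$ and $Y'$ interchanged gives $u \circ v = \id_{Y'}$. Consequently $u$ is a homeomorphism with inverse $v$, i.e. an isomorphism of topological spaces, and it satisfies $u \circ i = i'$; since uniqueness was established in the first step, this completes the proof.

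There is no serious obstacle here — the argument is entirely formal. The one point deserving a moment's attention is that the uniqueness half of Definition \ref{A:def:beta-X} has to be applied with the compactification $Y$ itself playing the part of the test space $Z$; this is legitimate precisely because $Y$ is compact Hausdorff, and it is exactly what forces $v \circ u$ to be the identity rather than merely some self-map of $Y$ fixing $i(X)$ pointwise (which, in a general space, need not be the identity).
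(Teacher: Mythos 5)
Your argument is correct and is essentially identical to the paper's proof: both obtain $u$ and its inverse from the two universal properties and then use the uniqueness clause (applied with $Y$ itself as the test space) to force $v \circ u = \id_Y$ and $u \circ v = \id_{Y'}$. Nothing is missing.
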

\begin{proof}
 Suppose that $(Y,i)$ and $(Y',i')$ are two \v{C}ech-Stone compactifications of $X$. Then, applying the definition of \v{C}ech-Stone compactification for $(Y,i)$ to the map $i':\ X \to Y'$, we find that there exists a unique map $g:\ Y \to Y'$ such that $g \circ i = i'$. Similarly, there exists unique $g' :\ Y' \to Y$ such that $g' \circ i' = i$. Then $g' \circ g :\ Y \to Y$ is such that $g' \circ g \circ i = g' \circ i' = i$. Applying the definition of compactification once more, this time to $(Y,i)$ and the map $i:\ X \to Y$ we conclude that $g' \circ g = \id_Y$. Likewise, we show that $g \circ g' = \id_{Y'}$. Thus, $g$ is an isomorphism between $(Y,i)$ and $(Y',i')$, in the sense that it is an isomorphism between $Y$ and $Y'$ and intertwines between $i$ and $i'$. Uniqueness of $g$ follows from uniqueness in the definition the compactification.
\end{proof}

The following theorem affirms existence \v{C}ech-Stone compactification in a situation much more general than we need in our applications.

\begin{theorem}
\index{Cech-Stone compactification@\v{C}ech-Stone compactification}
  If $X$ is a locally compact Hausdorff topological space, then there exists a \v{C}ech-Stone compactification of $X$.
\end{theorem}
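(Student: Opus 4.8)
The plan is to obtain the compactification concretely, as the closure of a canonical embedding of $X$ into a cube. The only external ingredient is the classical fact that every locally compact Hausdorff space is completely regular (indeed Tychonoff): given a point $x \in X$ and a closed set $C \subseteq X$ with $x \notin C$, there is a continuous $h : X \to [0,1]$ with $h(x) = 0$ and $h \equiv 1$ on $C$; see \cite{engelking-topology}. Everything else can be assembled from Tychonoff's theorem together with the separation properties already at hand.

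First I would set $\cF := C(X,[0,1])$, the set of all continuous maps $X \to [0,1]$, and define the evaluation map $e : X \to [0,1]^{\cF}$ by $e(x) = \big(f(x)\big)_{f \in \cF}$. Complete regularity guarantees that $e$ separates points and separates points from closed sets, hence that $e$ is a homeomorphism onto its image. By Tychonoff's theorem the cube $[0,1]^{\cF}$ is compact Hausdorff, so $Y := \cl{e(X)}$, being a closed subspace of it, is again compact Hausdorff, and $e(X)$ is dense in $Y$ by construction. I claim $(Y,e)$ is the \v{C}ech--Stone compactification.

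It remains to check the universal property. Let $Z$ be compact Hausdorff and $\phi : X \to Z$ continuous. Since $Z$ is compact Hausdorff it is in particular completely regular, so the analogous evaluation map $e_Z : Z \to [0,1]^{C(Z,[0,1])}$ is an embedding, and its image is compact, hence closed; thus I may regard $Z$ as a closed subspace of $[0,1]^{C(Z,[0,1])}$. Each $g \in C(Z,[0,1])$ gives $g \circ \phi \in \cF$, and the resulting reindexing induces a continuous map $\Phi : [0,1]^{\cF} \to [0,1]^{C(Z,[0,1])}$ (continuous since each of its coordinate functions is a coordinate projection of the source cube). A direct check gives $\Phi \circ e = e_Z \circ \phi$ on $X$, whence $\Phi(e(X)) \subseteq Z$; since $\Phi$ is continuous and $Z$ is closed, $\Phi(Y) = \Phi(\cl{e(X)}) \subseteq \cl{\Phi(e(X))} \subseteq Z$. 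Setting $\psi := e_Z^{-1} \circ \Phi|_Y : Y \to Z$ then yields a continuous map with $\psi \circ e = \phi$. Uniqueness is automatic: two continuous maps into the Hausdorff space $Z$ that agree on the dense subset $e(X)$ of $Y$ agree on all of $Y$. The main obstacle is precisely this last step — producing the extension $\psi$, which forces one to pass through the cube containing $Z$ and to verify that the reindexing map $\Phi$ is continuous and carries $Y$ into $Z$; once that bookkeeping is settled, the remainder is routine.

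Finally, I would note consistency with the ultrafilter picture developed above (and with the uniqueness in Proposition \ref{A:prop:beta-X-is-unique}): when $X$ is discrete, $\cF$ is the set of \emph{all} functions $X \to [0,1]$, and the map $\Ultrafilters{X} \to [0,1]^{\cF}$ sending $\cU$ to $\big(\llim{\cU}x f(x)\big)_{f \in \cF}$ is continuous by Propositions \ref{A:prop:U-limit-exists} and \ref{prop:U-lim-is-continuous}, injective because distinct ultrafilters are separated by an indicator function, and sends principal ultrafilters to the points of $e(X)$; being a continuous injection from a compact space into a Hausdorff one with dense image $e(X)$, it is a homeomorphism onto $\cl{e(X)} = Y$. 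Thus in the discrete case the construction above recovers exactly $\Ultrafilters{X}$, the object of interest in the sequel.
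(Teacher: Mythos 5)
Your construction is correct, and it is genuinely more than the paper offers: the paper's ``proof'' of this theorem is a bare citation to \cite{engelking-topology}, whereas you give the standard self-contained Tychonoff-cube argument. The only external input you need is that a locally compact Hausdorff space is completely regular, and from there the embedding lemma for the diagonal map $e$, Tychonoff's theorem for compactness of $[0,1]^{\cF}$, and the reindexing map $\Phi$ (continuous because each coordinate $\pi_g \circ \Phi = \pi_{g\circ\phi}$ is a projection) assemble cleanly into the universal property; the density of $e(X)$ in $Y$ makes uniqueness of the extension automatic. One small remark: your closing paragraph, identifying $Y$ with $\Ultrafilters{X}$ in the discrete case, duplicates what the paper proves separately and more directly in Theorem \ref{beta-X-is-CS-compactification} (via $g(\cU) := \llim{\cU}{x} f(x)$ and Propositions \ref{prop:U-lim-is-continuous} and \ref{prop:dense-image-in-beta-X}); it is a nice consistency check but not needed for the statement. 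The trade-off between the two routes is that yours proves existence in the full locally compact generality actually asserted, at the cost of passing through an abstract cube, while the paper only ever constructs the compactification concretely for discrete $X$ and outsources the general case.
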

\begin{proof}
  See \cite{engelking-topology}.
\end{proof}

\begin{definition}
\index{Cech-Stone compactification@\v{C}ech-Stone compactification}
 If $X$ is a locally compact Hausorff topological space, then we denote its \v{C}ech-Stone compactification by $(\beta X,i)$, with the understanding that $\beta X$ is defined only up to the unique isomorphism. If $f:\ X \to Z$ is a map to an arbitrary compact Hausdorff space, then we denote by $\beta f :\ \beta X \to Z$ the unique continuous extension such that $\beta f \circ i = f$.
\end{definition}

With this more general language, we can summarise many of the previous results on the topology of $\Ultrafilters{X}$ is a much more succinct form.

\begin{theorem}\label{beta-X-is-CS-compactification}
\index{Cech-Stone compactification@\v{C}ech-Stone compactification}
 Let $X$ be a discrete topological space. Then the space $\Ultrafilters{X}$ together with the natural inclusion map $i :\ X \to \Ultrafilters{X}$, is the \v{C}ech-Stone compactification of $X$.
\end{theorem}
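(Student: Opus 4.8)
The plan is to verify directly that the pair $(\Ultrafilters{X}, i)$ satisfies the defining universal property of the \v{C}ech-Stone compactification from Definition \ref{A:def:beta-X}. Most of the ingredients are already in place: $\Ultrafilters{X}$ is compact by Proposition \ref{A:prop:beta-X-is-cmpct} and Hausdorff by Proposition \ref{prop:beta-X-is-T2}, while $i$ is injective and continuous by Corollary \ref{A:cor:inclusion-is-continuous}. So the only thing left is to exhibit, for every compact Hausdorff space $Z$ and every continuous $f:\ X \to Z$, a unique continuous $g:\ \Ultrafilters{X} \to Z$ with $f = g \circ i$.

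For existence, I would define $g(\cU) := \llim{\cU}{x} f(x)$. This makes sense because $X$ is discrete, so $f$ is automatically continuous, and Proposition \ref{A:prop:U-limit-exists} guarantees that the limit along any ultrafilter exists and is unique, as $Z$ is compact Hausdorff. Continuity of $g$ is precisely Proposition \ref{prop:U-lim-is-continuous}. To check $g \circ i = f$, observe that for $x \in X$ the ultrafilter $i(x)$ is the principal ultrafilter $\principal{x}$, so Example \ref{A:exple:limit-principal} yields $g(i(x)) = \llim{\principal{x}}{y} f(y) = f(x)$.

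For uniqueness, suppose $g'$ is another continuous map with $g' \circ i = f$. Then $g$ and $g'$ agree on $i(X)$, which is dense in $\Ultrafilters{X}$ by Corollary \ref{prop:dense-image-in-beta-X}; since $Z$ is Hausdorff, two continuous maps into $Z$ agreeing on a dense subset are equal, so $g = g'$. There is no serious obstacle in any of this --- the heavy lifting was done in the earlier propositions, and the only subtlety is the appeal to the standard fact that a continuous map into a Hausdorff space is determined on a dense set, which is what furnishes uniqueness. Together with Proposition \ref{A:prop:beta-X-is-unique}, this also legitimises the identification $\beta X = \Ultrafilters{X}$.
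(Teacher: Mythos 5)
Your proof is correct and follows essentially the same route as the paper: define $g(\cU) := \llim{\cU}{x} f(x)$, get continuity from the earlier proposition, check $g \circ i = f$ via principal ultrafilters, and obtain uniqueness from density of $i(X)$. The only difference is cosmetic --- you cite Proposition \ref{prop:U-lim-is-continuous} for continuity of $g$, which is in fact the right reference (the paper's citation of Corollary \ref{A:cor:inclusion-is-continuous} there appears to be a slip).
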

\begin{proof}
 Let $f :\ X \to Z$ be any continuous map from $X$ to a compact space $Z$. Define $g :\ \Ultrafilters{X} \to Z$ by the formula $g(\cU) := \llim{\cU}{x} f(x)$. Then $g$ is continuous by proposition \ref{A:cor:inclusion-is-continuous}. By Example \ref{A:exple:limit-principal} we have that $g \circ i = f$.

  For uniqueness, suppose that $h:\ \beta X \to Z$ is another continuous function such that $h \circ i = f$. By the choice of $g$ and $h$, we have $h \restrict{i(X)} = g \restrict{i(X)}$. But $i(X)$ is dense in $\beta X$ by Proposition \ref{prop:dense-image-in-beta-X}, so $h = g$, as desired.
\end{proof}

\begin{remark}
  We now have two different notations for the space of ultrafilters on $X$, namely $\beta X$ and $\Ultrafilters{X}$. They are equivalent, but seem to carry slightly different intuitions. We will use the notation $\Ultrafilters{X}$ when topological structure is irrelevant, and consequently denote ultrafilters by $\cU,\cV,\cW,\dots$. This will be done when an ultrafilter is thought of as a family of sets with particular properties. When topological properties become important, especially when considering limits, we will prefer the notation $\beta X$ for the space of ultrafilters on $X$, and use $p,q,r,\dots$ to denote ultrafilters. It will usually be more helpful to think of ultrafilters as limit objects in this case. We keep in mind that an ultrafilter corresponds to a family of sets, but avoid notations like ``set $A \in p$'' for aesthetic reasons.
\end{remark}

Below, we list some of the properties of the extensions of maps provided by the \v{C}ech-Stone compactification. They are very useful when one is faced with the need to compute generalised limits, and mimic the analogous rules for classical limits.

\begin{proposition}\label{prop:beta-f-properties}
\index{Cech-Stone compactification@\v{C}ech-Stone compactification}
  \begin{enumerate}
  \item For any map $f:\ X \to Z$ to a compact Hausdorff space $Z$ we have $\llim{\cU}{x} f(x) = \beta f(\cU)$
  \item For any maps $f:\ X \to Z$ and $g:\ Z \to T$ to compact  Hausdorff spaces $Z,T$, we have $\beta( g \circ f) = g \circ \beta f$. In particular, $\llim{\cU}{x} g \circ f(x) = g( \llim{\cU}{x} f(x))$.
  \item For any maps $f:\ X \to Y$ and $g:\ Y \to Z$ where $X$ and $Y$ are discrete and $Z$ is compact Hausdorff, we have $\beta( g \circ f) = \beta g \circ \beta(i_Y \circ f)$, where $i_Y :\ Y \to \beta Y$ is the inclusion.
  \item For any maps $f:\ X \to Z$ and $g:\ X \to W$ where $X$ is discrete and $Z, W$ are compact Hausdorff, we have $\beta( f \times g) = (\beta f) \times (\beta g)$.
	\item For any maps $f :\ X \to Z$, $g:\ X \to W$ and $h:\ Z \times W \to T$, consider the map $c:\ X \to T$ given by $c(x) = h(f(x),g(x))$. Then $\beta c (x) = h (\beta f(x), \beta g(x))$.
 \end{enumerate}

\end{proposition}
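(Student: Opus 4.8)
The plan is to deduce every item from the \emph{uniqueness} clause in the universal property of the \v{C}ech--Stone compactification (Definition~\ref{A:def:beta-X}, applicable by Theorem~\ref{beta-X-is-CS-compactification}): if $h_1, h_2 :\ \beta X \to Z$ are continuous into a Hausdorff space and $h_1 \circ i = h_2 \circ i$, then $h_1 = h_2$, since $i(X)$ is dense in $\beta X$ by Corollary~\ref{prop:dense-image-in-beta-X}. So for each asserted equality it is enough to verify that the expression on the right is a continuous map with the appropriate domain whose composition with $i$ equals the map whose $\beta$-extension stands on the left; it must then coincide with that $\beta$-extension.

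Item (1) is immediate: the assignment $\cU \mapsto \llim{\cU}{x} f(x)$ is precisely the map produced in the proof of Theorem~\ref{beta-X-is-CS-compactification}, it is continuous by Proposition~\ref{prop:U-lim-is-continuous}, and it restricts to $f$ on $X$ by Example~\ref{A:exple:limit-principal}; hence it is $\beta f$. For item (2), $g \circ \beta f :\ \beta X \to T$ is continuous as a composition of continuous maps, and $(g \circ \beta f) \circ i = g \circ (\beta f \circ i) = g \circ f$; by uniqueness $g \circ \beta f = \beta(g \circ f)$, and the limit formula follows upon applying item (1) to both $f$ and $g \circ f$. For item (3), one first notes that $\beta(i_Y \circ f)$ is meaningful: $i_Y \circ f :\ X \to \beta Y$ maps into the compact Hausdorff space $\beta Y$, so it admits a unique extension $\beta(i_Y \circ f) :\ \beta X \to \beta Y$. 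Then $\beta g \circ \beta(i_Y \circ f) :\ \beta X \to Z$ is continuous, and its composition with $i = i_X$ equals $\beta g \circ i_Y \circ f = g \circ f$ (using $\beta g \circ i_Y = g$); uniqueness gives $\beta g \circ \beta(i_Y \circ f) = \beta(g \circ f)$.

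For item (4), interpret $f \times g$ as the pairing $X \to Z \times W$, $x \mapsto (f(x), g(x))$; the product $Z \times W$ is compact Hausdorff, and $(\beta f) \times (\beta g) :\ \beta X \to Z \times W$, $\cU \mapsto (\beta f(\cU), \beta g(\cU))$, is continuous because both its coordinate maps are, while its composition with $i$ is $x \mapsto (f(x), g(x))$; uniqueness yields $\beta(f \times g) = (\beta f) \times (\beta g)$. Item (5) combines the previous two: writing $c = h \circ (f \times g)$ with $f \times g :\ X \to Z \times W$ as above, item (2) gives $\beta c = h \circ \beta(f \times g)$, and item (4) rewrites $\beta(f \times g) = (\beta f) \times (\beta g)$, so $\beta c(\cU) = h(\beta f(\cU), \beta g(\cU))$. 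I expect no genuine obstacle anywhere here; the only points deserving a moment's attention are the bookkeeping in item (3) --- one must observe that applying $\beta$ to a map with target $\beta Y$ is legitimate exactly because $\beta Y$ is itself a compact Hausdorff space --- and fixing at the outset the convention that $f \times g$ in items (4) and (5) denotes the diagonal pairing $x \mapsto (f(x), g(x))$ rather than a map defined on $X \times X$.
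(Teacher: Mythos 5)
Your proposal is correct and follows essentially the same route as the paper: every item is reduced to the uniqueness clause of the universal property (equivalently, density of $i(X)$ in $\beta X$) by exhibiting the right-hand side as a continuous extension of the relevant map on $X$. The only cosmetic difference is in item (4), where the paper instead composes $\beta(f\times g)$ with the two projections and invokes item (2), while you verify directly that $(\beta f)\times(\beta g)$ is a continuous extension of the pairing; both arguments rest on the same uniqueness principle, and your remarks about the legitimacy of $\beta(i_Y\circ f)$ in item (3) and the diagonal-pairing convention in items (4)--(5) are appropriate.
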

\begin{proof}
 \begin{enumerate}
   \item It follows directly from how the limit was defined.
   \item It suffices to check that $g \circ \beta f$ satisfies the universal property: $ (g \circ \beta f) \circ i = g \circ f$. But this is clear, since $\beta f \circ i = f$, and composition is associative. 

   The statement about the limits follows directly from relation of $\llim{\cU}{x}$ to $\beta f$ from the previous point.
  \item It suffices to check that $\beta g \circ \beta(i_Y \circ f)$ satisfies the universal property: $ \beta g \circ \beta(i_Y \circ f) \circ i_X = g \circ f$. This can be done as follows:
  $$ \beta g \circ \beta(i_Y \circ f) \circ i_X = \beta g \circ i_Y \circ f = g \circ f $$ 
	\item Let $p :\ Z \times W \to Z$ and $q :\  Z \times W \to W$ be the standard projection maps. To verify that $\beta (f \times g) = \beta f \times \beta g$, it sufices to prove that $p \circ \beta  (f \times g) = \beta f$ and $q \circ \beta (f \times g) = \beta g$. From the previous points, we already know that:
 $$p \circ \beta  (f \times g) = \beta( p \circ   (f \times g) ) = \beta f $$
and likewise $p \circ \beta  (f \times g) = \beta g$, hence the claim follows.
	\item Follows immediately from the previous observations.
  \end{enumerate}
\end{proof}

The following special case of the above theorem shows that generalised limits have many of the properties the classical limits have.

\begin{corollary}
\index{Cech-Stone compactification@\v{C}ech-Stone compactification}
	Let $f,g:\ \NN \to \hat{\RR}$ be any maps, where $\hat{\RR} = \RR \cup \{+\infty,-\infty\}$. Then we have:
	\begin{align*}
	\llim{\cU}{x} \left( f(x) + g(x) \right) 
		&= \llim{\cU}{x}f(x) + \llim{\cU}{x} g(x), \\
	\llim{\cU}{x} \left( f(x) \cdot g(x) \right) 
		&= \llim{\cU}{x}f(x) \cdot \llim{\cU}{x} g(x), \\
	\end{align*}
	provided that the application of the operations $+$ and $\cdot$ does not lead to indeterminate symbols $\infty -\infty$, $0 \cdot (\pm \infty)$. Likewise, we have:
	\begin{align*}
	\llim{\cU}{x} \left( f(x) - g(x) \right) 
		&= \llim{\cU}{x}f(x) - \llim{\cU}{x} g(x), \\
	\llim{\cU}{x} \left( f(x) / g(x) \right) 
		&= \llim{\cU}{x}f(x) / \llim{\cU}{x} g(x), \\
	\end{align*}
	provided that the operations can be carried out.

\end{corollary}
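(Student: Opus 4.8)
The plan is to reduce everything to the continuity/universal properties already established, the only genuine issue being the partially defined nature of the arithmetic operations on $\hat{\RR}$. First I would record that $\hat{\RR}$, the two--point compactification of $\RR$, is compact Hausdorff, so by Proposition \ref{A:prop:U-limit-exists} all four generalised limits $\llim{\cU}{x}f(x)=:a$ and $\llim{\cU}{x}g(x)=:b$ exist in $\hat{\RR}$; by Proposition \ref{prop:beta-f-properties} applied to $f\times g:\ \NN\to\hat{\RR}\times\hat{\RR}$ one also gets $\llim{\cU}{x}\bigl(f(x),g(x)\bigr)=(a,b)$. Addition $+$ and multiplication $\cdot$ are total and continuous on $\hat{\RR}\times\hat{\RR}$ away from the excluded ``indeterminate'' configurations (namely $(\pm\infty,\mp\infty)$ for $+$, and $(0,\pm\infty),(\pm\infty,0)$ for $\cdot$); likewise for $-$, and for $/$, the latter being in addition undefined whenever the second argument is $0$.

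The key lemma I would prove is: \emph{if $a\star b$ is well defined} (where $\star$ is one of the four operations), \emph{then the set $B:=\{x\in\NN :\ f(x)\star g(x)\ \text{is not well defined}\}$ does not belong to $\cU$.} Indeed $B$ is a finite union of sets of the form $\{x:\ f(x)=u,\ g(x)=v\}$ over the finitely many forbidden configurations $(u,v)$, together with $\{x:\ g(x)=0\}$ in the case of division. If $B\in\cU$, the ultrafilter property puts one of these pieces in $\cU$; say $A:=\{x:\ f(x)=u,\ g(x)=v\}\in\cU$. Then $f^{-1}(W)\supseteq A\in\cU$ for every neighbourhood $W$ of $u$, forcing $a=u$ by Definition \ref{A:def:gen-limit}, and similarly $b=v$; but $(u,v)$ was chosen so that $u\star v$ is indeterminate, contradicting the hypothesis. (If instead $\{x:\ g(x)=0\}\in\cU$, the same reasoning gives $b=0$, again excluded.)

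Granting the lemma, $B\notin\cU$, so $B^{c}\in\cU$ and the pointwise combination $s:=f\star g$ is well defined on $B^{c}$; extend $s$ to all of $\NN$ arbitrarily (say by $0$ on $B$), which does not affect $\llim{\cU}{x}s(x)$ since it alters $s$ only on a $\cU$--small set. Then I would verify $\llim{\cU}{x}s(x)=a\star b$ directly from Definition \ref{A:def:gen-limit}: given an open $U\ni a\star b$, continuity of $\star$ at $(a,b)$ yields basic open sets $V_1\ni a$, $V_2\ni b$ in $\hat{\RR}$ such that $u\star v\in U$ whenever $(u,v)\in V_1\times V_2$ and $u\star v$ is defined; since $f^{-1}(V_1),\ g^{-1}(V_2),\ B^{c}\in\cU$, their intersection lies in $\cU$ and is contained in $s^{-1}(U)$, so $s^{-1}(U)\in\cU$. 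Subtraction and division can either be handled by this very argument, or deduced from the $+$ and $\cdot$ cases by composing $g$ with the homeomorphism $t\mapsto -t$ (resp.\ the continuous-where-defined map $t\mapsto 1/t$, continuous at $b$ precisely because $b\neq 0$) and invoking Proposition \ref{prop:beta-f-properties}.

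The main obstacle is exactly the bookkeeping around the indeterminate forms: the machinery of $\beta\NN$ extends \emph{continuous total} maps, so the substance of the proof is the key lemma, which confirms that under the stated provisos the combined function is defined $\cU$--almost everywhere and the operation is continuous at the limiting point $(a,b)$. Everything else is a routine unwinding of the definition of the generalised limit.
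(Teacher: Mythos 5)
Your proof is correct and follows the route the paper intends: the paper offers no written proof at all, presenting the corollary as an immediate special case of Proposition \ref{prop:beta-f-properties} (the item $\beta c = h(\beta f, \beta g)$ for $c(x) = h(f(x),g(x))$, combined with $\beta(f\times g) = \beta f \times \beta g$). The one genuine addition in your write-up is the key lemma showing that the set where $f(x)\star g(x)$ is undefined is $\cU$-small and that $\star$ is continuous at $(a,b)$ under the stated provisos; the paper's extension machinery is stated only for continuous total maps into compact Hausdorff spaces, so this bookkeeping is exactly the gap the paper glosses over, and your argument for it (an indeterminate configuration in $\cU$ would force the limits themselves to be that forbidden pair) is sound. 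In short, you have supplied a complete proof of a statement the paper leaves as an unproved "special case".
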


\index{topology|)}
\section{Algebraic structure of filters and ultrafilters}
\newcommand{\Group}{S}
\index{semigroup}
\index{ultrafilter!semigroup structure}
We will presently show how to give $\Ultrafilters{X}$ the structure of a semigroup, assuming that $X$ is a semigroup. The derived structure will be natural, but not the only possible. There are in fact two competing and equally natural notions of semigroup structure, so one has to be careful when consulting the literature.

Throughout this section, we assume that $X$ is a semigroup. We also make $X$ into a topological space by declaring that the topology on $X$ is discrete. We have seen how to endow $\Ultrafilters{X}$ with a natural topological structure.

We begin by giving some algebraic definitions, needed to define the algebraic structure on $\Ultrafilters{X}$.

\renewcommand{\Group}{X}

\begin{definition}\label{A:def:inverses}
  For a set $A \subset \Group$ and $x \in \Group$, we define $\ldiv{x}{A}$ to be the set $\{ y \in \Group \setsep xy \in A \}$, and $\rdiv{x}{A}$ to be the set $\{ y \in \Group \setsep yx \in A \}$.

  Likewise, for a filter $\cF$ we define $\ldiv{\cF}{A}$ to be the set $\{ x \in \Group \setsep \rdiv{x}{A} \in \cF \}$, and $\cF$ we define $\rdiv{\cF}{A}$ to be the set $\{ x \in \Group \setsep \ldiv{x}{A} \in \cF \}$.
\end{definition}

\begin{remark}
  Note that for $\ldiv{\principal{x}}{A}$ we use $\rdiv{x}{A}$ rather than $\ldiv{x}{A}$. This makes sense, since this way we have $\ldiv{\principal{x}}{A} = \ldiv{x}{A}$. The analogous remark applies to $\rdiv{\principal{x}}{A}$.
\end{remark}
\begin{observation}
	Let $A,B \in \cP(X)$, $x \in X$ and $\cF \in \Filters{X}$. Then $\ldiv{x}{A} \cap\ldiv{x}{B} = \ldiv{x}{(A\cap B)}$ and $\rdiv{\cF}{A} \cap \rdiv{\cF}{B} = \rdiv{\cF}{(A\cap B)}$. Analogously, $\rdiv{x}{A} \cap\rdiv{x}{B} = \rdiv{x}{A\cap B}$ and $\ldiv{\cF}{A} \cap \ldiv{\cF}{A} = \ldiv{\cF}{A\cap B}$.
\end{observation}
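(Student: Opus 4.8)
The plan is to reduce everything to elementary set chasing, proving first the two \emph{pointwise} identities for the translates $\ldiv{x}{\cdot}$ and $\rdiv{x}{\cdot}$, and then bootstrapping to the filter-translates using only the two closure properties of a filter.

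First I would establish $\ldiv{x}{A} \cap \ldiv{x}{B} = \ldiv{x}{(A\cap B)}$ and $\rdiv{x}{A} \cap \rdiv{x}{B} = \rdiv{x}{(A\cap B)}$, which are the first and third assertions. Both follow by chasing an element: $y$ lies in the left-hand side of the first identity iff $xy \in A$ and $xy \in B$, i.e. iff $xy \in A\cap B$, i.e. iff $y \in \ldiv{x}{(A\cap B)}$; the second identity is the same computation with $yx$ in place of $xy$.

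Next I would deduce the two filter versions. Recall $\rdiv{\cF}{A} = \{x \setsep \ldiv{x}{A} \in \cF\}$. For a fixed $x \in X$, I claim that the conjunction ``$\ldiv{x}{A} \in \cF$ and $\ldiv{x}{B} \in \cF$'' is equivalent to ``$\ldiv{x}{(A\cap B)} \in \cF$'': the forward direction uses closure of $\cF$ under finite intersections (property \ref{def:filter-prop:3}) together with the pointwise identity just proved, while the backward direction uses $\ldiv{x}{(A\cap B)} \subseteq \ldiv{x}{A}$, $\ldiv{x}{(A\cap B)} \subseteq \ldiv{x}{B}$ and closure of $\cF$ under supersets (property \ref{def:filter-prop:2}). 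Unwinding the definition of $\rdiv{\cF}{\cdot}$, this equivalence is precisely $\rdiv{\cF}{A} \cap \rdiv{\cF}{B} = \rdiv{\cF}{(A\cap B)}$. The last identity, $\ldiv{\cF}{A} \cap \ldiv{\cF}{B} = \ldiv{\cF}{(A\cap B)}$ (the statement as written has a typo, repeating $\ldiv{\cF}{A}$), follows by the identical argument with $\rdiv{x}{\cdot}$ in place of $\ldiv{x}{\cdot}$, since $\ldiv{\cF}{A} = \{x \setsep \rdiv{x}{A} \in \cF\}$.

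There is no genuine obstacle here; the only point that needs a little care is keeping straight the ``crossing'' in Definition \ref{A:def:inverses} --- $\rdiv{\cF}{A}$ is defined through $\ldiv{x}{A}$ and $\ldiv{\cF}{A}$ through $\rdiv{x}{A}$ --- so that the two filter identities invoke the two different pointwise identities respectively. Since both pointwise identities have been checked, this bookkeeping is harmless.
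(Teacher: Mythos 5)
Your proposal is correct and follows essentially the same route as the paper: an element chase for the pointwise identities, followed by the observation that a filter contains two sets if and only if it contains their intersection (closure under intersections one way, closure under supersets the other). You are slightly more explicit than the paper about the superset direction and about the ``crossing'' in the definitions of $\ldiv{\cF}{\cdot}$ and $\rdiv{\cF}{\cdot}$, but the argument is the same.
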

\begin{proof}
	The conditions $y \in \ldiv{x}{A} \cap\ldiv{x}{B} $ and $y \in \ldiv{x}{A\cap B}$ are both equivalent to $xy \in A \cap B$. Likewise, the condition $y \in \rdiv{\cF}{A} \cap \rdiv{\cF}{B} $ is equivalent to $\ldiv{y}{A } \in \cF$ and $\ldiv{y}{B } \in \cF$. This in turn is equivalent to $\ldiv{y}{A } \cap \ldiv{y}{B } = \ldiv{y}{A \cap B} \in \cF$.

	The remaining part of the claim follows by exactly symmetric reasoning.
\end{proof}

\begin{observation}
	If $A \in \cP(X)$, $x \in X$ and $\cU \in \Ultrafilters{X}$, then $(\ldiv{x}{A})^c = \ldiv{x}{(A^c)}$ and $(\rdiv{\cU}{A})^c = \rdiv{\cU}{(A^c)}$. Analogously, $(\ldiv{\cU}{A})^c = \ldiv{\cU}{(A^c)}$.
\end{observation}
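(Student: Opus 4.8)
The plan is to verify each of the claimed complement identities directly by unwinding the definitions from Definition \ref{A:def:inverses}, using the ultrafilter property \ref{def:ultrafilter-prop:7} (namely, $C \notin \cU \iff C^c \in \cU$) in exactly those places where an ultrafilter — as opposed to a mere filter — is required. For the first identity, $(\ldiv{x}{A})^c = \ldiv{x}{(A^c)}$, note that $x$ is an element of the semigroup, not an ultrafilter, so no ultrafilter property is needed at all: for $y \in X$, the condition $y \in (\ldiv{x}{A})^c$ says $xy \notin A$, which is literally the condition $xy \in A^c$, i.e. $y \in \ldiv{x}{(A^c)}$. This is a one-line set-membership chase.

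For the second identity, $(\rdiv{\cU}{A})^c = \rdiv{\cU}{(A^c)}$, I would argue as follows. By definition, $x \in \rdiv{\cU}{A}$ means $\ldiv{x}{A} \in \cU$, so $x \in (\rdiv{\cU}{A})^c$ means $\ldiv{x}{A} \notin \cU$. By the ultrafilter property \ref{def:ultrafilter-prop:7}, this is equivalent to $(\ldiv{x}{A})^c \in \cU$. Now apply the first (already proved) identity to rewrite $(\ldiv{x}{A})^c = \ldiv{x}{(A^c)}$, so the condition becomes $\ldiv{x}{(A^c)} \in \cU$, which is exactly $x \in \rdiv{\cU}{(A^c)}$. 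Chaining these equivalences gives the claimed equality of sets. The third identity, $(\ldiv{\cU}{A})^c = \ldiv{\cU}{(A^c)}$, follows by the symmetric argument with the roles of left and right quotients interchanged: $x \in (\ldiv{\cU}{A})^c$ iff $\rdiv{x}{A} \notin \cU$ iff $(\rdiv{x}{A})^c \in \cU$, and $(\rdiv{x}{A})^c = \rdiv{x}{(A^c)}$ by the evident right-handed analogue of the first identity, so this is equivalent to $x \in \ldiv{\cU}{(A^c)}$.

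There is no real obstacle here — the statement is a routine bookkeeping exercise — but the one point deserving care is making explicit that the ultrafilter hypothesis is genuinely used (and where): it enters solely through the equivalence $C \notin \cU \iff C^c \in \cU$, which fails for general filters, and this is precisely why the earlier observations about $\ldiv{x}{A}$ and $\rdiv{\cF}{A}$ could be stated for filters while the complement identities cannot. I would also note in passing that the left-handed "first identity" $(\ldiv{x}{A})^c = \ldiv{x}{(A^c)}$ has a right-handed counterpart $(\rdiv{x}{A})^c = \rdiv{x}{(A^c)}$ proved identically, since it is invoked in the proof of the third claim. The whole argument is three short equivalence chains and can be written in a few lines.
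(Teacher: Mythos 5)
Your proposal is correct and follows essentially the same route as the paper: a direct membership chase for $(\ldiv{x}{A})^c = \ldiv{x}{(A^c)}$, then the ultrafilter equivalence $C \notin \cU \iff C^c \in \cU$ combined with that first identity to handle $(\rdiv{\cU}{A})^c = \rdiv{\cU}{(A^c)}$, with the third claim by symmetry. Your explicit remark that the right-handed analogue $(\rdiv{x}{A})^c = \rdiv{x}{(A^c)}$ is silently needed for the symmetric case is a small but genuine improvement in precision over the paper's "exactly symmetric reasoning."
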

\begin{proof}
	The condition $y \in (\ldiv{x}{A})^c$ is equivalent to $xy \not \in A$, which is equivalent to $y \in \ldiv{x}{(A^c)}$. Likewise, the condition $y \in (\rdiv{\cU}{A})^c$ is equvalent to $\ldiv{y}{A} \not \in \cU$. Because $\cU$ is an ultrafilter, this says that  $(\ldiv{y}{A})^c = \ldiv{y}{(A^c)} \in \cU$, which is equivalent to $y \in \rdiv{\cU}{(A^c)}$. 

The remaining part of the claim follows by exactly symmetric reasoning.
\end{proof}

We are now ready to define the semigroup structure on $\Ultrafilters{X}$. Hopefully, the definition appears natural, at least on the formal level. It is also noticable that we could have formulated the definition differently, applying the semigroup operation on the reverse side. This choice is far from inconsequential, as shall be seen when we discuss the relation with topology.

\begin{definition}[Semigroup structure of $\Ultrafilters{ \Group}$]\label{A:def:beta-X-as-semigroup}
\index{semigroup}
\index{ultrafilter!semigroup structure}  For filters $\cF, \cG$ we define $\cF \cdot \cG$ to be the family of those sets $A \in \cP(\Group)$ for which the set $\rdiv{\cG}{A}$ belongs to $\cF$:
  $$ \cF \cdot \cG = \{ A \in \cP(X) \setsep \rdiv{\cG}{A} \in \cF \}.$$
\end{definition}

As always, we see how the definition applies in case of principal ultrafilters. 

\begin{example}\index{ultrafilter!principal}
  For principal ultrafilters we have $\principal{x} \cdot \principal{y} = \principal{x \cdot y}$. This follows by expanding the definitions:
  \begin{align*}
'    \principal{x} \cdot \principal{y} 
  &= \left\{ A \in \cP(X) \setsep \{ z \in X \setsep \{w \in X \setsep z \cdot w \in A \} \in \principal{y} \} \in \principal{x} \right\} \\
  &= \left\{ A \in \cP(X) \setsep x \in \{ z \in X \setsep y \in \{w \in X \setsep z \cdot w \in A \} \} \right\} \\
  &= \left\{ A \in \cP(X) \setsep x \cdot y \in A \right\} =  \principal{x \cdot y} 
  \end{align*}
\end{example}

So far, we have defined the operation $(\cF,\cG) \mapsto \cF \cdot \cG$	only as a map $\Filters{X} \times \Filters{X} \to \cP(\cP(X))$. Before we make $ \Filters{X}$ and $ \Ultrafilters{X}$ into semigroups, we need to check that the constructed operation satisfies a number of additional conditions. We begin by verifying that necessary closure properties. Afterwards, we check associativity. 

\begin{proposition}
	If $\cF,\cG \in \Filters{X}$, then $\cF \cdot \cG \in \Filters{X}$. Moreover, if $\cU, \cV \in \Ultrafilters{X}$ then $\cU \cdot \cV \in \Ultrafilters{X}$.
\end{proposition}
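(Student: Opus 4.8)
The plan is to verify, in turn, the three filter axioms for $\cF \cdot \cG$, then the ultrafilter axiom when $\cF, \cG$ are ultrafilters. Throughout, I will lean on the two Observations immediately preceding the statement: that $\rdiv{\cG}{(A \cap B)} = \rdiv{\cG}{A} \cap \rdiv{\cG}{B}$, and that $(\rdiv{\cU}{A})^c = \rdiv{\cU}{(A^c)}$ when $\cU$ is an ultrafilter. These translate the filter/ultrafilter axioms for $\cF \cdot \cG$ into the corresponding axioms for $\cF$, applied to the sets $\rdiv{\cG}{A}$.

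First I would check the filter axioms. For \ref{def:filter-prop:1}: since $\rdiv{\cG}{\emptyset} = \{ x \setsep \ldiv{x}{\emptyset} \in \cG \}$ and $\ldiv{x}{\emptyset} = \emptyset \notin \cG$, we get $\rdiv{\cG}{\emptyset} = \emptyset \notin \cF$, so $\emptyset \notin \cF \cdot \cG$; similarly $\ldiv{x}{X} = X \in \cG$ for all $x$, so $\rdiv{\cG}{X} = X \in \cF$, giving $X \in \cF \cdot \cG$. For \ref{def:filter-prop:2}: if $A \in \cF \cdot \cG$ and $A \subset B$, then $\ldiv{x}{A} \subset \ldiv{x}{B}$ pointwise, hence $\rdiv{\cG}{A} \subset \rdiv{\cG}{B}$ (using that $\cG$ is closed under supersets), and since $\rdiv{\cG}{A} \in \cF$ and $\cF$ is closed under supersets, $\rdiv{\cG}{B} \in \cF$, i.e.\ $B \in \cF \cdot \cG$. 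For \ref{def:filter-prop:3}: if $A, B \in \cF \cdot \cG$ then $\rdiv{\cG}{A}, \rdiv{\cG}{B} \in \cF$, so their intersection $\rdiv{\cG}{A} \cap \rdiv{\cG}{B} = \rdiv{\cG}{(A \cap B)}$ lies in $\cF$, whence $A \cap B \in \cF \cdot \cG$. This establishes $\cF \cdot \cG \in \Filters{X}$.

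Next, for the ultrafilter claim, suppose $\cU, \cV \in \Ultrafilters{X}$. We already know $\cU \cdot \cV$ is a filter, so it suffices to verify property \ref{def:ultrafilter-prop:7}: if $A \notin \cU \cdot \cV$ then $A^c \in \cU \cdot \cV$. Indeed, $A \notin \cU \cdot \cV$ means $\rdiv{\cV}{A} \notin \cU$; since $\cU$ is an ultrafilter, $(\rdiv{\cV}{A})^c \in \cU$, and by the second Observation above $(\rdiv{\cV}{A})^c = \rdiv{\cV}{(A^c)}$, so $\rdiv{\cV}{(A^c)} \in \cU$, which is exactly $A^c \in \cU \cdot \cV$. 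Hence $\cU \cdot \cV$ is an ultrafilter.

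I do not expect any serious obstacle here: the argument is a routine unwinding of definitions, and the two Observations do all the combinatorial work in advance. The one point requiring a little care — more bookkeeping than difficulty — is making sure each monotonicity/closure step invokes the correct property of the correct object (e.g.\ that $\rdiv{\cG}{A} \subseteq \rdiv{\cG}{B}$ genuinely uses that $\cG$ is upward closed, not merely that $\ldiv{x}{A} \subseteq \ldiv{x}{B}$), so that the proof remains valid for filters and not only for ultrafilters where the first half is stated.
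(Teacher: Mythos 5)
Your proof is correct and follows essentially the same route as the paper: verify the filter axioms by reducing them, via the two preceding Observations, to the corresponding properties of $\cF$ applied to the sets $\rdiv{\cG}{A}$, and then obtain the ultrafilter property from $(\rdiv{\cV}{A})^c = \rdiv{\cV}{(A^c)}$. Your treatment of $\emptyset$ and $X$ is in fact slightly more explicit than the paper's, which simply declares those cases clear.
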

\begin{proof}
	We need to check a number of defining properties of filters.

	We clearly have $\emptyset \not\in \cF \cdot \cG$ and $X \in \cF \cdot \cG$. Moreover, if $A \in  \cF \cdot \cG$ and $B \supset A$ then $\rdiv{\cG}{B} \supset \rdiv{\cG}{A}$ and consequently $B \in \cF \cdot \cG$. Finally, if $A,B \in \cG$ then $\rdiv{\cG}{A \cap B} = \rdiv{\cG}{A} \cap \rdiv{\cG}{B} \in \cF$ and hence $A \cap B \in \cF \cdot \cG$.

	For the additional part, consider we already know that $\cU \cdot \cV \in \Filters{X}$, so it remains to check that ultrafilter property. Let $A \in \cP(X)$. Then either $\rdiv{\cV}{A} \in \cU$ or $\rdiv{\cV}{(A^c)} = (\rdiv{\cV}{A})^c \in \cU$, hence either $A \in \cU \cdot \cV$ or $A^c \in \cU \cdot \cV$, which finishes the proof.
\end{proof}

\begin{proposition}
	If $\cF,\cG,\cH \in \Filters{X}$, then $(\cF \cdot \cG) \cdot \cH = \cF \cdot (\cG \cdot \cH)$. 
\end{proposition}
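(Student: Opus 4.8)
The plan is to unwind both triple products, using Definitions~\ref{A:def:inverses} and~\ref{A:def:beta-X-as-semigroup}, to one and the same normal form. The only place where associativity of $X$ itself is used is a single elementary identity about divided sets; once that is isolated, the rest of the argument is a purely formal chain of equivalences. The cleanest route is to prove a ``one-step shift'' lemma for quotients and then bootstrap it twice.

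First I would record the elementary identity
$$\ldiv{y}{\left(\ldiv{x}{A}\right)} = \ldiv{(xy)}{A}\qquad(x,y\in X,\ A\subset X),$$
valid because $w$ lies in either side exactly when $x(yw)=(xy)w\in A$ --- this is the one and only appeal to the semigroup axiom of $X$. Lifting it to a filter $\cH$ gives
$$\rdiv{\cH}{\left(\ldiv{x}{A}\right)} = \ldiv{x}{\left(\rdiv{\cH}{A}\right)},$$
since $y$ belongs to the left-hand side iff $\ldiv{y}{\left(\ldiv{x}{A}\right)}\in\cH$, which by the previous identity is $\ldiv{(xy)}{A}\in\cH$, i.e. $xy\in\rdiv{\cH}{A}$, i.e. $y\in\ldiv{x}{\left(\rdiv{\cH}{A}\right)}$. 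Lifting once more, for filters $\cG,\cH$ one obtains
$$\rdiv{(\cG\cdot\cH)}{A} = \rdiv{\cG}{\left(\rdiv{\cH}{A}\right)},$$
because $x$ lies in the left-hand side iff $\ldiv{x}{A}\in\cG\cdot\cH$, i.e. $\rdiv{\cH}{\left(\ldiv{x}{A}\right)}\in\cG$, which by the preceding step equals $\ldiv{x}{\left(\rdiv{\cH}{A}\right)}\in\cG$, i.e. $x\in\rdiv{\cG}{\left(\rdiv{\cH}{A}\right)}$.

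With these in hand the theorem is immediate: a set $A$ belongs to $(\cF\cdot\cG)\cdot\cH$ iff $\rdiv{\cH}{A}\in\cF\cdot\cG$ iff $\rdiv{\cG}{\left(\rdiv{\cH}{A}\right)}\in\cF$, while $A$ belongs to $\cF\cdot(\cG\cdot\cH)$ iff $\rdiv{(\cG\cdot\cH)}{A}\in\cF$, which by the last displayed identity is the very same condition; hence the two filters have exactly the same members. The main obstacle is purely notational: one must keep straight that in $\cF\cdot\cG$ the outer operation uses the right-quotient $\rdiv{\cG}{A}$ while the inner quotients are left-quotients $\ldiv{x}{A}$, so that the nesting of $\backslash$ and $/$ in the bootstrap steps matches up correctly. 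A brute-force alternative is to expand everything and observe that both sides equal $\left\{A\setsep \{x\setsep\{y\setsep\{w\setsep xyw\in A\}\in\cH\}\in\cG\}\in\cF\right\}$, but the stepwise version is tidier and pinpoints exactly where associativity of $X$ enters.
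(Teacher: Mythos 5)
Your proof is correct and is essentially the paper's own argument: the paper proves the same statement by a single chain of equivalences that unwinds both triple products to the nested form $\{ y \setsep \{ x \setsep \ldiv{yx}{A} \in \cH \} \in \cG \} \in \cF$, using exactly your key identity $\ldiv{y}{(\ldiv{x}{A})} = \ldiv{(xy)}{A}$ at the one step where associativity of $X$ enters. Your packaging of that chain into a reusable ``shift'' lemma $\rdiv{(\cG\cdot\cH)}{A} = \rdiv{\cG}{\left(\rdiv{\cH}{A}\right)}$ is a tidier presentation of the same computation, not a different route.
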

\begin{proof}
	Let $A \in \cP(X)$. We show that $A \in (\cF \cdot \cG) \cdot \cH$ if and only if $A \in \cF \cdot (\cG \cdot \cH)$, using the following transformations.
\begin{align*}
	A \in (\cF \cdot \cG) \cdot \cH
	&\iff \{ x \in X \setsep \ldiv{x}{A} \in \cH \} \in \cF \cdot \cG \\
	&\iff \{ y \in X \setsep \ldiv{y}{\{ x \in X \setsep {\ldiv{x}{A}} \in \cH \}} \in \cG \} \in \cF \\
	&\iff \{ y \in X \setsep \{ x \in X \setsep \ldiv{yx}{A} \in \cH \} \in \cG \} \in \cF \\
	&\iff \{ y \in X \setsep \{ x \in X \setsep \ldiv{x}{\ldiv{y}{A}} \in \cH \} \in \cG \} \in \cF \\
	&\iff \{ y \in X \setsep \rdiv{\cH}{\ldiv{y}{A}} \in \cG \} \in \cF \\
	&\iff \{ y \in X \setsep \ldiv{y}{A} \in \cG \cdot \cH \} \in \cF \\
	&\iff A \in \cF \cdot (\cG \cdot \cH )
\end{align*}
\end{proof}

\begin{corollary}
\index{semigroup}
\index{ultrafilter!semigroup structure}
	The sets $\Filters{\Group}$ and $\Ultrafilters{\Group}$ with the action defined by \ref{A:def:beta-X-as-semigroup} are semigroups.
\end{corollary}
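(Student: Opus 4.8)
The plan is to simply assemble the two propositions immediately preceding the statement. Recall that a \emph{semigroup} is a set equipped with a binary operation that is everywhere defined (the set is closed under it) and associative. The operation under consideration is $(\cF,\cG) \mapsto \cF \cdot \cG$ from Definition \ref{A:def:beta-X-as-semigroup}, which a priori only has values in $\cP(\cP(X))$, so the two things to verify are closure and associativity.

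First I would invoke the first of the two propositions above: it shows that whenever $\cF, \cG \in \Filters{X}$ one has $\cF \cdot \cG \in \Filters{X}$, and whenever $\cU, \cV \in \Ultrafilters{X}$ one has $\cU \cdot \cV \in \Ultrafilters{X}$. Hence $\cdot$ restricts to a genuine binary operation $\Filters{X} \times \Filters{X} \to \Filters{X}$, and, using the inclusion $\Ultrafilters{X} \subseteq \Filters{X}$, also to a binary operation $\Ultrafilters{X} \times \Ultrafilters{X} \to \Ultrafilters{X}$.

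Then I would invoke the second proposition above: for all $\cF, \cG, \cH \in \Filters{X}$ one has $(\cF \cdot \cG) \cdot \cH = \cF \cdot (\cG \cdot \cH)$. Since every ultrafilter is a filter, this associativity identity holds in particular for all $\cU, \cV, \cW \in \Ultrafilters{X}$. Combining closure with associativity, both $(\Filters{X}, \cdot)$ and $(\Ultrafilters{X}, \cdot)$ satisfy the defining axioms of a semigroup, which is the claim.

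There is no genuine obstacle here: the mathematical content is entirely contained in the two propositions already established, and the corollary is merely the observation that, taken together, they verify the definition of a semigroup for each of the two sets in question.
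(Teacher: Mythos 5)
Your proposal is correct and is exactly the intended argument: the paper states this corollary without proof precisely because it is the immediate combination of the preceding closure proposition and the associativity proposition. Nothing is missing.
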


Having verified the semigroup structure of $\Ultrafilters{\Group}$, we proceed to describe the semigroup operation in more detail. Our main objective here is to find the connection between algebraic and topological structure.

\begin{lemma}[Semigroup structure of $\Ultrafilters{\Group}$ --- alternative description]\label{prop:beta-X-as-semigroup-II}
\index{ultrafilter!semigroup structure}
  For ultrafilters $\cU, \cV$ the set $\cU \cdot \cV$ coincides with the ultrafilter $\llim{\cU}{x} \llim{\cV}{y}\ i(x\cdot y) $.
\end{lemma}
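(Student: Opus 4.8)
The plan is to compute each of the two iterated limits — which are taken in the compact Hausdorff space $\beta X = \Ultrafilters{X}$ and hence exist and are genuine ultrafilters by Proposition \ref{A:prop:U-limit-exists} (applied first, with $x$ fixed, to $y \mapsto i(x\cdot y)$, and then to the resulting map $x \mapsto \llim{\cV}{y} i(x\cdot y)$) — directly from Definition \ref{A:def:gen-limit}, and to match each with the set-theoretic product of Definition \ref{A:def:beta-X-as-semigroup}. The observation I would record first is that, for a map $g:\ X \to \beta X$ and an ultrafilter $\cF$, one has $z = \llim{\cF}{x} g(x)$ \emph{if and only if} $g^{-1}(\bar{A}) \in \cF$ for every $A \in z$: indeed the clopen sets $\bar{A}$ containing $z$ form a neighbourhood basis of $z$, so any open $U \ni z$ contains some $\bar{A}$ with $z \in \bar{A} \subset U$, whence $g^{-1}(U) \supset g^{-1}(\bar{A}) \in \cF$, while conversely each $\bar{A}$ is itself open.

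\emph{Inner limit.} Fix $x \in X$ and apply the observation to $g(y) = i(x\cdot y)$. For $A \in \cP(X)$ we have $i(xy) \in \bar{A} \iff xy \in A$, so $g^{-1}(\bar{A}) = \ldiv{x}{A}$. Hence $z = \llim{\cV}{y} i(x\cdot y)$ iff $\ldiv{x}{A} \in \cV$ for all $A \in z$, i.e. iff $z \subset \{ A \in \cP(X) \setsep \ldiv{x}{A} \in \cV \}$. Unwinding Definitions \ref{A:def:inverses} and \ref{A:def:beta-X-as-semigroup}, the set on the right is exactly $\principal{x} \cdot \cV$; since it and $z$ are both ultrafilters, and an ultrafilter is maximal among families with the finite intersection property (Proposition \ref{A:prop:ultrafilter-characterisation}), the inclusion is forced to be an equality, so $\llim{\cV}{y} i(x\cdot y) = \principal{x}\cdot\cV$.

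\emph{Outer limit.} It remains to evaluate $\llim{\cU}{x}(\principal{x}\cdot\cV)$, i.e. to apply the observation to $g(x) = \principal{x}\cdot\cV$. For $A \in \cP(X)$ one has $\principal{x}\cdot\cV \in \bar{A} \iff \ldiv{x}{A} \in \cV \iff x \in \rdiv{\cV}{A}$, so $g^{-1}(\bar{A}) = \rdiv{\cV}{A}$. Therefore $z = \llim{\cU}{x}(\principal{x}\cdot\cV)$ iff $\rdiv{\cV}{A} \in \cU$ for every $A \in z$, i.e. iff $z \subset \{ A \setsep \rdiv{\cV}{A} \in \cU \} = \cU \cdot \cV$; maximality of ultrafilters again upgrades this to $z = \cU\cdot\cV$. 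Combining the two steps gives $\llim{\cU}{x}\llim{\cV}{y} i(x\cdot y) = \cU\cdot\cV$, as claimed.

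The proof is essentially bookkeeping with the definitions of $\ldiv{x}{A}$, $\rdiv{\cV}{A}$ and the product $\cU\cdot\cV$; the one step that genuinely deserves care — and the only place the topology of $\beta X$ really intervenes — is the passage from "the preimage of every \emph{basic clopen} neighbourhood of $z$ lies in $\cF$" to the full defining condition of the generalised limit, together with the uniqueness of limits in the Hausdorff space $\beta X$ that lets us conclude equality of ultrafilters from containment.
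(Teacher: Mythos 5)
Your proof is correct and follows essentially the same route as the paper's: both unwind the definitions of $\cU\cdot\cV$ and of the generalised limit against the basic clopen sets $\bar{A}$, and both close the argument by using maximality of ultrafilters (equivalently, Hausdorffness of $\Ultrafilters{X}$) to upgrade a containment of ultrafilters to an equality. The only cosmetic differences are that you establish the containment in the direction $z \subset \cU\cdot\cV$ whereas the paper shows $\cU\cdot\cV \subset z$ by checking the limit lies in each closed set $\bar{C}$, and that you explicitly identify the inner limit as $\principal{x}\cdot\cV$ — a nice touch consistent with the earlier computation $\principal{x}\cdot\principal{y}=\principal{x\cdot y}$, but not a different method.
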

\begin{proof}
	Let us take any $C \in \cU \cdot \cV$. By the definition, we have $\rdiv{\cV}{C} \in \cU$. For any $x \in \rdiv{\cV}{C}$, which we fix for the time being, we have $\ldiv{x}{C} \in \cV$. For $y \in \ldiv{x}{C}$ we have $x \cdot y \in C$, and hence $i(x\cdot y) \in \bar{C}$. Because $\ldiv{x}{C} \in \cV$ and $\bar{C}$ is closed, we have $\llim{\cV}{y}\ i(x\cdot y) \in \bar{C}$. Likewise, because $\rdiv{\cV}{C} \in \cU$ and the choice of $x$ was arbitrary, another limit transition yields: $\llim{\cU}{x} \llim{\cV}{y}\ i(x\cdot y) \in \bar{C}$. Because $C$ was arbitrary and the space $\Ultrafilters{X}$ is Hausdorff, we have $\cU \cdot \cV = \llim{\cU}{x} \llim{\cV}{y}\ i(x\cdot y) $
\end{proof}

\begin{proposition}
\index{limit!generalised limit}
  For any function $f:\ X \to Z$ into a compact Hausdorff space, $\llim{\cU}{x} \llim{\cV}{y} f(x\cdot y) = \llim{(\cU \cdot \cV)}{z} f(z)$.
\end{proposition}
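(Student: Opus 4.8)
The plan is to reduce the statement to the already-established Lemma \ref{prop:beta-X-as-semigroup-II} together with the functoriality properties of the \v{C}ech–Stone extension collected in Proposition \ref{prop:beta-f-properties}. The key observation is that $\llim{(\cU\cdot\cV)}{z} f(z) = \beta f(\cU\cdot\cV)$ by part (1) of Proposition \ref{prop:beta-f-properties}, so it suffices to show that applying $\beta f$ to the ultrafilter $\cU\cdot\cV$ yields the iterated limit on the left-hand side. By Lemma \ref{prop:beta-X-as-semigroup-II}, $\cU\cdot\cV = \llim{\cU}{x}\llim{\cV}{y}\, i(x\cdot y)$, where $i:\ X\to\Ultrafilters{X}$ is the inclusion; so I need to ``push $\beta f$ through'' the two limit transitions.

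First I would handle the inner limit. For a fixed $x\in X$, consider the map $y\mapsto i(x\cdot y)$ from $X$ to the compact Hausdorff space $\Ultrafilters{X} = \beta X$. Since $\beta f$ is continuous on $\beta X$, Proposition \ref{prop:beta-f-properties}(2) (continuity of generalised limits under composition with a continuous map into a compact Hausdorff space, i.e. $\llim{\cV}{y} g(h(y)) = g(\llim{\cV}{y} h(y))$ for continuous $g$) gives
$$\beta f\left( \llim{\cV}{y} i(x\cdot y)\right) = \llim{\cV}{y} \beta f\bigl(i(x\cdot y)\bigr) = \llim{\cV}{y} f(x\cdot y),$$
using $\beta f\circ i = f$ in the last equality. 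Then I would repeat the argument for the outer limit: the map $x\mapsto \llim{\cV}{y} i(x\cdot y)$ sends $X$ into $\beta X$, so applying $\beta f$ and using continuity of $\beta f$ once more,
$$\beta f\left( \llim{\cU}{x}\llim{\cV}{y} i(x\cdot y)\right) = \llim{\cU}{x} \beta f\left(\llim{\cV}{y} i(x\cdot y)\right) = \llim{\cU}{x}\llim{\cV}{y} f(x\cdot y).$$
Combining these with Lemma \ref{prop:beta-X-as-semigroup-II} and Proposition \ref{prop:beta-f-properties}(1) yields $\llim{(\cU\cdot\cV)}{z} f(z) = \beta f(\cU\cdot\cV) = \llim{\cU}{x}\llim{\cV}{y} f(x\cdot y)$, as desired.

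Alternatively, one could argue directly with sets: for $U$ an open neighbourhood of the claimed limit value $z_0 = \llim{\cU}{x}\llim{\cV}{y} f(x\cdot y)$, shrink to $V$ with $\cl V\subset U$ (using normality of $Z$), and trace through the definitions to show $\rdiv{\cV}{f^{-1}(V)}\in\cU$, hence $f^{-1}(U)\in\cU\cdot\cV$; this shows $\llim{(\cU\cdot\cV)}{z} f(z) = z_0$. The main obstacle in either approach is purely bookkeeping: making sure the iterated limits are genuinely well-defined (which is guaranteed by Proposition \ref{A:prop:U-limit-exists} since $Z$ and $\beta X$ are compact Hausdorff) and that each interchange of $\beta f$ with $\llim{}{}$ is licensed by the continuity statement in Proposition \ref{prop:beta-f-properties}(2) rather than being assumed. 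I expect the functorial route to be the cleaner writeup, so that is the one I would carry out in detail.
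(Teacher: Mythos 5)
Your proposal is correct and follows essentially the same route as the paper: both reduce the claim to Lemma \ref{prop:beta-X-as-semigroup-II} (identifying $\cU\cdot\cV$ with $\llim{\cU}{x}\llim{\cV}{y} i(x\cdot y)$) and then push $\beta f$ through the two limit transitions using the functoriality and continuity statements of Proposition \ref{prop:beta-f-properties}. The only difference is cosmetic — you run the chain of equalities from right to left and invoke part (2) where the paper invokes part (3), but the underlying argument is identical.
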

\begin{proof}
  Using Proposition \ref{prop:beta-f-properties}, we can perform the following transformations (we denote the map $X \ni y \mapsto x \cdot y \in X$ by $\mu_x$):
  \begin{align*}
    \llim{\cU}{x} \llim{\cV}{y} f(x\cdot y) &= 
    \llim{\cU}{x} \llim{\cV}{y} (f \circ \mu_x)(y)  
     = \llim{\cU}{x}  \beta f \circ \beta(i \circ \mu_x)(\cV) 
    \\ &= \beta f ( \llim{\cU}{x}  \beta(i \circ \mu_x)(\cV) )
     = \beta f (  \llim{\cU}{x}  \llim{\cV}{y} i(x\cdot y) )
	 \\ & = \beta f (\cU \cdot \cV)	
     = \llim{(\cU \cdot \cV)}{z} f(z)
  \end{align*}
\end{proof}

\begin{remark}
  Even if $X$ is a commutative group, $\Ultrafilters{ X}$ is not in general neither commutative, nor is it a group. In fact, even in $\Ultrafilters{\ZZ}$, all elements except for the principal ultrafilter are non-invertible and do not commute with the remainder of $\Ultrafilters{\ZZ}$. Non-invertibility is straightforward to prove: it suffices to notive that if $\cU$ is non-principal, then so is $\cU \cdot \cV$ for any $\cV$. One needs a considerable amount of work to prove non-commutativity, so we refrain from further discussion on this purely negative result.
\end{remark}

\begin{corollary}
\index{ultrafilter!semigroup structure}
  The map $\mu: \Ultrafilters{ \Group} \times \Ultrafilters{ \Group} \to \Ultrafilters{ \Group}$ given by $(\cU,\cV) \to \cU \cdot \cV$ is continuous in the left argument.
\end{corollary}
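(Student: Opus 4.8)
The plan is to prove that for each fixed $\cV \in \Ultrafilters{X}$ the right-translation map $\rho_{\cV} :\ \Ultrafilters{X} \to \Ultrafilters{X}$, $\cU \mapsto \cU \cdot \cV$, is continuous. Since the sets $\bar{A}$ with $A \in \cP(X)$ form a base for the topology on $\Ultrafilters{X}$, and these sets are also closed, it is enough to show that $\rho_{\cV}^{-1}(\bar{A})$ is open for every $A \in \cP(X)$; in fact we will see it is again a basic clopen set.

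The computation is a direct unwinding of Definition \ref{A:def:beta-X-as-semigroup}. For an ultrafilter $\cU$ we have $\rho_{\cV}(\cU) \in \bar{A}$ iff $A \in \cU \cdot \cV$, which by definition of the product means precisely $\rdiv{\cV}{A} \in \cU$, i.e. $\cU \in \bar{\rdiv{\cV}{A}}$. Hence $\rho_{\cV}^{-1}(\bar{A}) = \bar{\rdiv{\cV}{A}}$, a basic clopen set, so in particular open. Therefore $\rho_{\cV}$ is continuous, and since $\cV$ was arbitrary, $\mu$ is continuous in the left argument.

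It is worth noting that the statement also drops out of results already established: by Lemma \ref{prop:beta-X-as-semigroup-II} one has $\cU \cdot \cV = \llim{\cU}{x} g(x)$ where $g :\ X \to \Ultrafilters{X}$ is the map $g(x) = \llim{\cV}{y} i(x \cdot y)$, taking values in the compact Hausdorff space $\Ultrafilters{X}$, and then Proposition \ref{prop:U-lim-is-continuous} gives continuity of $\cU \mapsto \llim{\cU}{x} g(x) = \cU \cdot \cV$ at once. There is essentially no obstacle in either approach; the only point demanding care is keeping track of which side the quotient $\rdiv{\cV}{A}$ sits on, so that the preimage computation matches Definition \ref{A:def:beta-X-as-semigroup} and not its mirror image — and, correspondingly, the fact that continuity holds only in the left argument, the right-argument map $\cV \mapsto \cU \cdot \cV$ failing to be continuous in general.
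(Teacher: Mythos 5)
Your main argument is exactly the paper's proof: unwinding Definition \ref{A:def:beta-X-as-semigroup} to see that $\cU' \cdot \cV \in \bar{A}$ iff $\cU' \in \bar{\rdiv{\cV}{A}}$, i.e.\ the preimage of a basic open set is a basic open set. The proposal is correct and takes essentially the same approach (your remark about the alternative route via Lemma \ref{prop:beta-X-as-semigroup-II} and Proposition \ref{prop:U-lim-is-continuous} is a valid bonus, but not needed).
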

\begin{proof}
  It suffices to show that if $\cU \cdot \cV \in \bar{A}$ for some $A \in \cP(\Group)$, then $\cU' \cdot \cV \in \bar{A}$ for $\cU'$ in some open neighbourhood of $\cU$. According to definition \ref{A:def:beta-X-as-semigroup} that the condition $\cU' \cdot \cV \in \bar{A}$ is equivalent to: $ \rdiv{\cV}{A} \in \cU' $. But this is equivalent to $\cU' \in \bar{\rdiv{\cV}{A} }$, so we have just exhibited the desired open neighbourhood and we are done.
\end{proof}

\begin{remark}
	In general, the map $\mu$ in the above corollary is not continuous in the right argument. In fact, if it was, a simple argument would prove commutativity of $\Ultrafilters{X}$. 

	If we had chosen the definition of the semigroup structure differently, the map $\mu$ would have been continuous in the right argument. If $X^{\operatorname{opp}}$ denotes the semigroup with action $x \cdot^{\operatorname{opp}} y := y \cdot x$, then $\Ultrafilters{X^{\operatorname{opp}}}^{\operatorname{opp}}$ is a compactification of $X$ with multiplication continuous in the right argument instead of the left argument.
\end{remark}

Because $\Ultrafilters{X}$ will now have both topological and algebraic structure, we introduce the relevant definition.

\begin{definition}[Topological semigroup]
\index{semigroup!topological semigroup}
  Let $\Group$ be a set, $(\Group,\cdot)$ a semigroup and $(\Group,\cT)$ a topological space. Then the triple $(\Group,\cdot,\cT)$ is said to be a topological semigroup (resp. right/left topological semigroup) if and only if the multiplication map $\mu:\ \Group \times \Group \ni (g,h) \mapsto g \cdot h$ is continuous (resp. continuous in the right/left argument).
\end{definition}

\begin{corollary}
\index{ultrafilter!semigroup structure}
  The space $\Ultrafilters{\Group}$ is a compact Hausdorff left-topological semigroup.
\end{corollary}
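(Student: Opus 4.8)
The statement is a collation of facts already assembled piecemeal in this section, so the plan is simply to invoke them in turn and check that together they match the definition of a compact Hausdorff left-topological semigroup. First I would recall that $\Ultrafilters{\Group}$ carries the semigroup operation of Definition \ref{A:def:beta-X-as-semigroup}: the preceding two propositions established that this operation maps $\Ultrafilters{\Group} \times \Ultrafilters{\Group}$ into $\Ultrafilters{\Group}$ (closure) and that it is associative, so $(\Ultrafilters{\Group}, \cdot)$ is a semigroup.

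Next I would record the topological facts. By Proposition \ref{prop:beta-X-is-T2}, the space $\Ultrafilters{\Group}$ is Hausdorff, and by Proposition \ref{A:prop:beta-X-is-cmpct} it is compact; note that both of these require only that $\Group$ be a set, and are unaffected by the additional semigroup structure we have now imposed. Thus $(\Ultrafilters{\Group}, \cT)$ is a compact Hausdorff space, where $\cT$ is the topology generated by the base $\{\bar A \setsep A \in \cP(\Group)\}$.

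It then remains to check the compatibility condition from the definition of a left-topological semigroup, namely that for each fixed $\cV \in \Ultrafilters{\Group}$ the map $\cU \mapsto \cU \cdot \cV$ is continuous. This is precisely the content of the corollary immediately preceding this statement, whose proof observed that the preimage of a base set $\bar A$ under right multiplication by $\cV$ is again a base set, namely $\bar{\rdiv{\cV}{A}}$. Assembling the three ingredients — semigroup, compact Hausdorff topology, continuity in the left variable — yields exactly the assertion that $\Ultrafilters{\Group}$ is a compact Hausdorff left-topological semigroup.

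There is no real obstacle here: every component has been proved above, and the only thing to be careful about is bookkeeping — that the topology used in the compactness and Hausdorff statements is the same one appearing in the left-continuity statement, and that "left-topological" in the sense of the definition given corresponds to continuity of $\mu$ in its left argument (as opposed to conventions in some of the literature, which is why the paper flagged the choice of side when defining $\cF \cdot \cG$). One could optionally add the remark that $\mu$ is \emph{not} continuous in the right argument, nor jointly continuous, so "left-topological" cannot be strengthened to "topological", but this is not needed for the statement.
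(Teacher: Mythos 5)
Your proposal is correct and follows exactly the paper's approach: the paper's own proof is the one-line remark that the corollary ``follows directly from combining the results obtained previously,'' and you have simply made explicit which results those are (closure and associativity of the operation, compactness and Hausdorffness of the topology, and continuity of $\mu$ in the left argument). The extra bookkeeping remarks about the choice of side and the failure of right-continuity are accurate and consistent with the paper's own remarks in this section.
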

\begin{proof}
  This follows directly from combining the results obtained previously.
\end{proof}

Having established that $\Ultrafilters{X}$ is a compact left-topological semigroup, we now turn to study compact  left-topological semigroups in more generality. Although our key object of interest will be \v{C}ech-Stone compactifications of discrete groups, we keep the discussion general because we need to apply our results to slightly more involved semigroups, such as $(\Ultrafilters{ \NN})^k$. Until the end of this section, $S$ stands for a  compact left-topological semigroup, except some initial definitions.

We note in the passing that all proved statements have their analogues in right-topological semigroups. In fact, if $S$ is a right-topological semigroup, then one can form the semigroup $S_{\operatorname{opp}}$ on the same set with the same topology by declaring $x \cdot_{\operatorname{opp}} y = y \cdot x$. A moment's thought will convince the reader that $S_{\operatorname{opp}}$ is then a left-topological semigroup, to which our results apply.

\renewcommand{\Group}{S}

	Study of ideals, especially the minimal ones, turns out to be essential for understanding the structure of general semigroup. We note that the concept becomes trivial in groups, where the only ideals are the trivial ones --- much as ring ideals make little sense in fields.

\begin{definition}[Ideal]
\index{semigroup!ideal}
	  Let $\Group$ be a semigroup. Then a non-empty set $I \subset \Group$ is defined to be a \emph{left} (resp. \emph{right}) \emph{ideal}, if and only if $\Group \cdot I \subset I$ (resp. $I \cdot \Group \subset I$)\footnote{The operation is taken elementwise, so $A \cdot B = \{a\cdot b \setsep a \in A,\ b \in B\}$ for $A,B \subset S$}. If $I$ is both left and right ideal, we refer to it as a two-sided ideal. By \emph{principal left ideal} (resp. \emph{principal right ideal}) we mean the ideal $\Group \cdot x$ (resp. $x \cdot \Group$). The ideal is said to be minimal, \defiff\ there is no ideal properly contained in it.
\end{definition}

	We will mostly pay attention to left ideals, because they are well-behaved from the topological point of view, as shown in the following lemma. Note that we would not be able to prove the analogous statement for right ideals.
 
\begin{lemma}\label{lem:principal=>closed}
  Let $\Group$ be a compact left-topological semigroup. If $L = \Group \cdot x$ is a principal left ideal in $\Group$, then $L$ is closed. 
\end{lemma}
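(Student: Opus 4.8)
The plan is to realise $L$ as the image of the whole semigroup under a continuous self-map, and then invoke compactness. Concretely, fix the element $x$ and consider the right translation $\rho_x :\ \Group \to \Group$ defined by $\rho_x(s) = s \cdot x$. By the definition of a left-topological semigroup, the multiplication $\mu$ is continuous in its left argument; that is, for each fixed value of the right argument, the induced map on the left argument is continuous. Applying this with the right argument fixed to be $x$ says precisely that $\rho_x$ is continuous.

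Next I would observe that $L = \Group \cdot x = \rho_x(\Group)$ simply by unwinding the definition of the principal left ideal. Since $\Group$ is compact and $\rho_x$ is continuous, the image $\rho_x(\Group) = L$ is a compact subset of $\Group$. Finally, since $\Group$ is Hausdorff — as is the case for all the semigroups relevant to us, in particular $\Ultrafilters{X}$ and finite powers thereof — every compact subset of $\Group$ is closed, and hence $L$ is closed.

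I do not anticipate a genuine obstacle here; the proof is a two-line application of "continuous image of compact is compact" plus "compact subsets of Hausdorff spaces are closed". The only points demanding a word of care are: (i) correctly reading off from the definition that "continuous in the left argument" is exactly the assertion that each right translation $\rho_x$ (the left variable $s$ moving, $x$ held fixed) is continuous; and (ii) noting — as the remark following the lemma already anticipates — that the symmetric argument breaks down for principal right ideals $x \cdot \Group$, since that would require continuity of the left translations $\lambda_x :\ s \mapsto x \cdot s$, which is not part of the left-topological hypothesis.
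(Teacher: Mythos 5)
Your proof is correct and is essentially identical to the paper's: both realise $L$ as the image of the compact space $\Group$ under the continuous right translation $\mu(\cdot,x)$, conclude compactness, and then use the Hausdorff hypothesis to pass from compact to closed. The paper likewise leans on $\Group$ being Hausdorff (as all its compact semigroups are), so your parenthetical care on that point matches its implicit assumption.
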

\begin{proof}
  Note that $L$ is the image of a compact space $\Group$ by the continuous map $\mu(\cdot,x)$. Thus, $L$ is compact, as the image of a compact space by a continuous map. Since $\Group$ is assumed to be Hausdorff, $L$ is hence closed.
\end{proof}

	The following lemma is useful for finding left ideals contained in chains of left ideals. The result does not depend on topology. The analogous statement is true for right and two-sided ideals, but we won't need those results.

\begin{lemma}\label{A:lem:ideals-intersection}
  If $\cL$ is any family of left ideals in a semigroup $\Group$, then $\bigcap \cL$ is either the empty set or a left ideal.
\end{lemma}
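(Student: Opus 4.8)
The plan is to prove that if $\bigcap \cL$ is non-empty, then it is a left ideal; being non-empty and satisfying the absorption property $\Group \cdot (\bigcap \cL) \subset \bigcap \cL$ is exactly the definition. First I would fix the family $\cL$ of left ideals and set $I := \bigcap \cL$. If $I = \emptyset$ there is nothing to prove, so I assume $I \neq \emptyset$; then $I$ is a non-empty subset of $\Group$, and it only remains to verify the absorption condition.

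Next I would take an arbitrary $s \in \Group$ and an arbitrary $a \in I$, and show $s \cdot a \in I$. By definition of the intersection, $a \in L$ for every $L \in \cL$. Since each $L$ is a left ideal, $\Group \cdot L \subset L$, so in particular $s \cdot a \in L$. As this holds for every $L \in \cL$, we conclude $s \cdot a \in \bigcap \cL = I$. Hence $\Group \cdot I \subset I$, so $I$ is a left ideal.

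There is essentially no obstacle here: the only subtlety worth flagging is that the definition of ideal requires non-emptiness, which is precisely why the statement must allow the alternative that $\bigcap \cL$ is empty — for instance when $\cL$ contains two disjoint principal left ideals. No topology is used, exactly as the lemma remarks, and the same one-line argument handles right ideals and two-sided ideals mutatis mutandis.
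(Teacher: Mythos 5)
Your proof is correct and is essentially the same argument as the paper's: the paper writes the single inclusion chain $\Group \cdot \bigcap_{L \in \cL} L \subset \bigcap_{L \in \cL} \Group \cdot L \subset \bigcap_{L \in \cL} L$, which is precisely your elementwise verification stated at the level of sets. Your remark about why the empty case must be allowed is a sensible addition but changes nothing of substance.
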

\begin{proof}
  It follows from direct transformations that:
  $$ \Group \cdot \bigcap_{L \in \cL} L \subset \bigcap_{L \in \cL} \Group \cdot L \subset \bigcap_{L \in \cL} L $$
  Thus, the set $\bigcap \cL$ is closed under multiplication on the right. Hence, as long as it is non-empty, it is a left ideal.
\end{proof}

We are now able to characterise minimal left ideals. Again, the result is independent of topology, and holds also for right ideals.  

\begin{proposition}\label{A:prop:min-ideal-characterisation}
  If $L$ is a left ideal in a semigroup $\Group$, then $L$ is minimal if and only if for any $x \in L$ we have $L = G \cdot x$.
\end{proposition}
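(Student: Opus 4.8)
The plan is to prove both implications directly; no topology is needed, and the only structural fact required is that for every $x \in S$ the principal left ideal $S \cdot x$ is genuinely a left ideal: it is nonempty since $S \neq \emptyset$, and $S \cdot (S \cdot x) = (S \cdot S) \cdot x \subseteq S \cdot x$ by associativity. Note that, in the absence of an identity, we need not have $x \in S \cdot x$; I will be careful not to use this, as it is not available.

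For the forward implication, assume $L$ is a minimal left ideal and fix $x \in L$. Since $L$ is a left ideal and $x \in L$, we have $S \cdot x \subseteq S \cdot L \subseteq L$. As just observed, $S \cdot x$ is a nonempty left ideal contained in $L$, so minimality of $L$ forces $S \cdot x = L$. Since $x \in L$ was arbitrary, this is exactly the desired conclusion.

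For the reverse implication, assume $L = S \cdot x$ for every $x \in L$, and let $L' \subseteq L$ be an arbitrary left ideal; I must show $L' = L$. Pick $x \in L'$ (possible since ideals are nonempty by definition). Because $L'$ is a left ideal, $S \cdot x \subseteq L'$. On the other hand $x \in L' \subseteq L$, so the hypothesis applied to this $x$ gives $L = S \cdot x$. Combining, $L \subseteq L' \subseteq L$, hence $L' = L$, and $L$ is minimal.

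The argument is entirely elementary, so I do not anticipate a genuine obstacle; the only subtleties are the tacit use of $S \neq \emptyset$ (so that principal left ideals qualify as ideals) and the fact that one must not assume $x \in S \cdot x$. The one idea to get right is that in the forward direction the minimality hypothesis should be applied to the left ideal $S \cdot x$, while in the reverse direction the characterising identity $L = S \cdot x$ should be invoked at a point $x$ of the \emph{smaller} ideal $L'$.
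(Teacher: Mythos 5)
Your proof is correct and follows essentially the same route as the paper: the forward direction applies minimality of $L$ to the principal left ideal $S \cdot x$ (which sits inside $L$ because $L$ is a left ideal), and the converse picks a point $x$ of an arbitrary sub-ideal $L'$ and uses the hypothesis $L = S \cdot x$ to force $L \subseteq L'$. If anything, your write-up is the more complete of the two, since the paper dispatches the converse with ``all remaining claims follow readily,'' whereas you spell it out and also flag the one genuine subtlety, namely that one must not assume $x \in S \cdot x$ in a semigroup without identity.
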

\begin{proof}
  If $L$ is minimal, then it does not properly contain any left ideal (principal or otherwise), so we only need to prove the other implication. Let us thus take $L$ as described, and for an arbitrary $x \in L$ consider the left principal ideal $L' := \Group \cdot x$. Then $L' \subset L$, by minimality the assumption on $L$ we have $L' = L$. Thus, $L = \Group \cdot x$ for any $x \in L$, and all remaining claims follow readily.
\end{proof}

	We are finally able to prove an existence statement about minimal left ideals. The result depends heavily on the topology, even though the formulation contains no topological notions. The reader will easily convince himself that simple non-compact semigroups, such as $\NN^k$ or $\cP_{\operatorname{fin}}(\NN)$ contain no minimal ideals.

\begin{proposition}\label{A:prop:min-ideal-existence}
  If $L$ is a left ideal in a compact left-topological semigroup $\Group$, then $L$ contains a minimal left ideal.
\end{proposition}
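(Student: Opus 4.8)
\emph{Proof proposal.} The plan is to apply the Kuratowski-Zorn Lemma to the family of \emph{closed} left ideals contained in $L$, ordered by reverse inclusion, and then to upgrade the resulting minimal closed left ideal to a genuine minimal left ideal of $\Group$.

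First I would introduce $\mathcal{L} := \{ I \subseteq L \setsep I \text{ is a closed left ideal of } \Group \}$, partially ordered by $\supseteq$. This family is non-empty: pick any $x \in L$; then $\Group \cdot x$ is a left ideal, it is closed by Lemma \ref{lem:principal=>closed}, and since $L$ is a left ideal we have $\Group \cdot x \subseteq \Group \cdot L \subseteq L$, so $\Group \cdot x \in \mathcal{L}$.

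Next I would verify the hypothesis of Zorn's Lemma, namely that every chain $\gamma \subseteq \mathcal{L}$ has an upper bound in $(\mathcal{L},\supseteq)$, and the natural candidate is $I_\gamma := \bigcap \gamma$. Three things need checking: (i) $I_\gamma$ is non-empty --- this is where compactness enters, and is the crux of the argument: the members of $\gamma$ are non-empty closed subsets of the compact space $\Group$, and $\gamma$ is a chain, so every finite subfamily has non-empty intersection (it equals the smallest of the finitely many members), hence by compactness $\bigcap \gamma \neq \emptyset$; (ii) $I_\gamma$ is a left ideal --- this is Lemma \ref{A:lem:ideals-intersection}, once we know $I_\gamma \neq \emptyset$; (iii) $I_\gamma$ is closed, being an intersection of closed sets, and clearly $I_\gamma \subseteq L$. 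Thus $I_\gamma \in \mathcal{L}$ and it dominates every element of $\gamma$ in the order $\supseteq$.

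By the Kuratowski-Zorn Lemma, $(\mathcal{L},\supseteq)$ has a maximal element, that is, $\mathcal{L}$ has an element $M$ minimal with respect to inclusion. Finally I would argue that $M$ is a minimal left ideal of $\Group$: for any $x \in M$ the principal left ideal $\Group \cdot x$ is closed (Lemma \ref{lem:principal=>closed}) and satisfies $\Group \cdot x \subseteq \Group \cdot M \subseteq M \subseteq L$, so $\Group \cdot x \in \mathcal{L}$; minimality of $M$ in $\mathcal{L}$ then forces $\Group \cdot x = M$. Since this holds for every $x \in M$, Proposition \ref{A:prop:min-ideal-characterisation} yields that $M$ is a minimal left ideal, and by construction $M \subseteq L$, which is what we wanted. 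The only genuinely delicate point is step (i): without compactness the intersection of a chain of left ideals may well be empty (indeed, as the preceding remark records, $\NN^k$ has no minimal left ideal at all), so it is essential that we work inside the family of \emph{closed} left ideals, where Lemma \ref{lem:principal=>closed} guarantees the supply is non-empty and compactness rescues the chain condition.
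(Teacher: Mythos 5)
Your proposal is correct and follows essentially the same route as the paper: Zorn's lemma applied to the closed left ideals contained in $L$, with non-emptiness supplied by principal ideals via Lemma \ref{lem:principal=>closed}, chain intersections handled by compactness together with Lemma \ref{A:lem:ideals-intersection}, and minimality concluded via Proposition \ref{A:prop:min-ideal-characterisation}. You are somewhat more explicit than the paper at the two delicate points (the finite intersection property of a chain, and the final upgrade from "minimal among closed left ideals" to "minimal left ideal"), which is to your credit but not a different argument.
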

\begin{proof}
  Let $\cL$ be the set of closed left ideals in $\Group$ that are contained in $L$. Then $\cL$ is non-empty, because it contains all principal ideals corresponding to elements of $L$. Let us consider the natural order induced by the inclusion on $\cL$.

  We claim that each chain $\cC \subset \cL$ has a lower bound $M$. In fact, we can just take $M := \bigcap \cC$. With such definition, it is  immediately clear that $M \subset C$ for any $C \in \cC$. What is more, $M$ is the intersection of a descending family of non-empty compact sets, hence it is non-empty and compact. Thus, by Lemma \ref{A:lem:ideals-intersection}, $M$ is a left ideal, and hence an element of $\cL$. 

  It now follows from Kuratowski-Zorn Lemma that $\cC$ contains a minimal element, say $L'$. By construction, $L'$ contains no proper closed left ideal, so by Lemma \ref{A:prop:min-ideal-characterisation}, $L'$ is a minimal left ideal. By definition, $L' \subset L$. Thus, $L'$ is the sought minimal ideal.
\end{proof}

	We shall now introduce the notion of idempotence, which is useful in study of semigroups. 

\begin{definition}[Idempotent]
\index{semigroup!idempotent}
	Let $\Group$ be a semigroup. An element $x \in \Group$ is said to be \emph{idempotent} if and only if $x\cdot x = x$.
\end{definition}

\begin{example}
	If $G$ is a group, on at least a cancellative monoid, the the only idempotent is the unit, $e$. Indeed, from $x \cdot x = x \cdot e$ it follows after cancelling $x$ that $x = e$.

	If $\cE$ is a Banach space, then the idempotent elements in the semigroup of bounded operators $\cB(\cE)$ are the projections onto closed subspaces of $\cE$.  
\end{example}

It is not clear at all that idempotents should exists. For example, $(\mathbb{N},+)$ contains no idempotents, and the only idempotent in $(\mathbb{N}, \cdot)$ is $1$. Generally, if $S$ is a cancellative monoid and $I$ is a proper ideal, then $I$ contains no idempotent. Hence, the following theorem, due to Ellis \cite{Ellis} might come as a welcome surprise.

\begin{theorem}[Existence of idempotents, Ellis] \label{thm:idempotents-exist-in-general} \label{A:thm:idempotents-exist-in-general}
\index{semigroup!idempotent}
  Let $\Group$ be a compact left-topological semigroup. Then $\Group$ contains an idempotent element.
\end{theorem}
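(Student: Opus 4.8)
The plan is to run Ellis's classical argument: build a minimal closed subsemigroup and show that any one of its elements is idempotent. Write $\rho_y\colon S \to S$ for the right translation $x \mapsto x \cdot y$. Since $S$ is left-topological in the sense of the definition above (multiplication continuous in the left argument, with the right argument fixed), each $\rho_y$ is continuous; this is the only topological input the argument needs, together with compactness and the Hausdorff assumption already in force for our compact semigroups.

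First I would produce a minimal non-empty closed subsemigroup. Let $\cS$ be the family of all non-empty closed subsemigroups of $S$, ordered by inclusion; it is non-empty since $S \in \cS$. Given a chain $\cC \subseteq \cS$, its intersection $M = \bigcap \cC$ is closed, is a subsemigroup (if $a,b \in M$ then $ab \in C$ for every $C \in \cC$, so $ab \in M$), and is non-empty because $\cC$ is a downward-directed family of non-empty closed subsets of the compact space $S$ and hence has the finite intersection property. Thus $M \in \cS$ is a lower bound for $\cC$, and the Kuratowski-Zorn Lemma yields a minimal element $T \in \cS$.

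Next, fix any $x \in T$ and look at $Tx = \rho_x(T)$. It is a continuous image of a compact set, hence compact and (by Hausdorffness) closed; it is non-empty; it is contained in $T$ because $T$ is a semigroup; and it is itself a semigroup, since $(t_1x)(t_2x) = t_1 \cdot (xt_2x)$ with $xt_2x \in T$. So $Tx \in \cS$ and $Tx \subseteq T$, whence minimality forces $Tx = T$. In particular $x \in Tx$, so there is $y \in T$ with $yx = x$. Now set $R = \{\, t \in T \setsep tx = x \,\} = T \cap \rho_x^{-1}(\{x\})$. This is non-empty ($y \in R$), closed (the intersection of the closed set $T$ with the preimage of a point under the continuous map $\rho_x$), and a subsemigroup, since $t_1x = t_2x = x$ gives $(t_1t_2)x = t_1(t_2x) = t_1x = x$. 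Thus $R \in \cS$ and $R \subseteq T$, so again by minimality $R = T$, and therefore $x \in R$, i.e. $x \cdot x = x$. Hence $x$ is idempotent.

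The main obstacle here is conceptual rather than computational: one has to hit on the correct object to minimise --- a closed \emph{subsemigroup} of $S$, not one of the left ideals studied earlier --- and then on the two-stage ``peeling'' ($Tx = T$, then $R = T$) that squeezes out the idempotent. The only places the topology is used are the verifications that $Tx$ and $R$ lie in $\cS$, i.e. that they are closed; and those are exactly where continuity of right translations (the content of ``left-topological'' in this paper's convention) enters. Everything else is elementary semigroup bookkeeping.
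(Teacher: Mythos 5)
Your proof is correct and follows essentially the same route as the paper's: Zorn's lemma applied to the family of non-empty closed (compact) subsemigroups to obtain a minimal one $T$, followed by the observation that $\{\, t \in T \setsep tx = x \,\}$ is again a non-empty closed subsemigroup, forcing $x \cdot x = x$. The only (harmless) difference is that you derive $Tx = T$ directly from the minimality of $T$ among closed subsemigroups, whereas the paper detours through its earlier results on existence and characterisation of minimal left ideals; your version is, if anything, slightly more self-contained.
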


    \newcommand{\GroupII}{T}

\begin{proof}
  We divide the proof in two steps.

  \begin{step}
    Among the compact sub-semigroups of $\Group$, there exists a minimal one $\Group'$.
  \end{step}
  \begin{proof}
    Consider the family of $\mathcal\Group$ of all compact sub-semigroups of $\Group$, including $\Group$ itself. To show that there is the minimal in $\mathcal\Group$ with respect to the order induced by inclusion, we use Kuratowski-Zorn Lemma. We need to verify that any chain $\cC \subset \mathcal\Group$ has a lower bound $M$. We just take $M := \bigcap \cC$, which is a subset of any $\GroupII \in \cC$ by definition. Since $M$ is the intersection of a descending family of compact non-empty sets, it is non-empty and compact. It is closed under the semigroup operation, since all $\GroupII \in \cC$ are such, and this is an inductive condition. Thus, $M$ belongs to $\mathcal\Group$, as desired. Now, Kuratowski-Zorn Lemma ensures the existence of the announced minimal element.
  \end{proof}

  \begin{step}
		Any minimal compact sub-semigroup $\Group'$ contains exactly one element.
  \end{step}
  \begin{proof}
    Let $L \subset \Group'$ be a minimal left ideal. We know that for any $x \in L$, we can write $L$ in the form $L = \Group' \cdot x$. Consider the set $\GroupII := \{ y \in \Group' \setsep x = y \cdot x \}$. Note that since $x \in L = \Group' \cdot x$, we have $x = y \cdot x$ for some $y \in \Group'$, and hence $\GroupII$ is non-empty. Since the map $y \mapsto y \cdot x$ is continuous, and $\GroupII$ is the preimage of $\{x\}$, $\GroupII$ is compact. Moreover, if $y,y' \in \GroupII$ then $y'yx = y'x = x$, so $y,y' \in \GroupII$. Thus, $\GroupII$ is a compact sub-semigroup of $\Group'$. 

	We have assumed that $\Group'$ is a minimal compact sub-semigroup of $\Group$. Hence, the above considerations show that $\GroupII = \Group'$. In particular, we have $x \in \GroupII$, which means precisely that $x \cdot x = x$. Thus, $\{x\}$ is a compact sub-semigroup of $\Group$. Using minimality again, we conclude that $\Group' = \{x\}$ consists of precisely one idempotent element.
  \end{proof}

We have established that $\Group$ has a one-element sub-semigroup $\{x\}$. In particular,	we have $x\cdot x = x$, and consequently $x$ is the sought idempotent.

\end{proof}

\begin{corollary}
\index{semigroup!idempotent}
  If $\GroupII \subset \Group$ is a closed sub-semigroup of $\Group$, then $\GroupII$ contains an idempotent.
\end{corollary}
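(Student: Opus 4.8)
The plan is to observe that a closed subset of a compact space is compact, so that $\GroupII$ is itself a compact left-topological semigroup, and then simply invoke Theorem~\ref{thm:idempotents-exist-in-general}.

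In more detail, first I would note that since $\Group$ is compact Hausdorff and $\GroupII \subset \Group$ is closed, $\GroupII$ is compact in the subspace topology. Next I would check that $\GroupII$, equipped with the restriction of the multiplication on $\Group$ and the subspace topology, is a left-topological semigroup: it is closed under the operation by the hypothesis that it is a sub-semigroup, the operation is associative because it is inherited from $\Group$, and for each fixed $y \in \GroupII$ the map $\GroupII \to \GroupII$, $x \mapsto x \cdot y$, is continuous, being the restriction to $\GroupII$ of the map $\Group \to \Group$, $x \mapsto x \cdot y$, which is continuous by the left-topological assumption on $\Group$ (and its corestriction to $\GroupII$ is continuous since $\GroupII \cdot \GroupII \subset \GroupII$ carries the subspace topology). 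Thus $\GroupII$ satisfies the hypotheses of Theorem~\ref{thm:idempotents-exist-in-general}.

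Applying that theorem to $\GroupII$ yields an idempotent $x \in \GroupII$, i.e.\ $x \cdot x = x$, which is exactly the claim. I do not expect any genuine obstacle here; the only thing worth spelling out is the routine verification that ``closed sub-semigroup of a compact left-topological semigroup'' is again an instance of ``compact left-topological semigroup'', after which the corollary is immediate from Ellis's theorem.
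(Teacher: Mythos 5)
Your proof is correct and is essentially the paper's own argument: observe that a closed subset of the compact Hausdorff space $S$ is compact, that the restricted operation makes $T$ a compact left-topological semigroup, and apply Ellis's theorem. The only difference is that you spell out the routine verifications, which the paper leaves implicit (and the paper's proof cites the wrong lemma label, clearly intending Theorem~\ref{thm:idempotents-exist-in-general}).
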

\begin{proof}
  It suffices to note that $\GroupII$ is a compact Hausdorff left topological semigroup in its own right, and apply Lemma \ref{lem:hindman-preparation}. Of course, the property of being idempotent is independent of the semigroup in which we consider the element.
\end{proof}

\begin{corollary}\label{cor:idempotetnes-exist-in-beta-X}
  There exists idempotent ultrafilters in $\Ultrafilters{X}$ for any discrete semigroup $X$. Moreover, if $\GroupII \subset \Ultrafilters{X}$ is a closed sub-semigroup, then $\GroupII$ contains and idempotent.
\end{corollary}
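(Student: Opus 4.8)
The plan is to derive both assertions directly from Ellis's Theorem \ref{A:thm:idempotents-exist-in-general}, which guarantees an idempotent in every compact left-topological semigroup. For the first assertion there is essentially nothing to do: we have already verified that $\Ultrafilters{X}$ is a compact Hausdorff left-topological semigroup (its topology is compact Hausdorff by Propositions \ref{prop:beta-X-is-T2} and \ref{A:prop:beta-X-is-cmpct}, its multiplication is associative and continuous in the left argument), so Theorem \ref{A:thm:idempotents-exist-in-general} applies verbatim and produces an idempotent ultrafilter $\cU \in \Ultrafilters{X}$ with $\cU \cdot \cU = \cU$.

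For the second assertion I would first observe that a closed subset $T$ of the compact space $\Ultrafilters{X}$ is itself compact, and remains Hausdorff in the subspace topology. Since $T$ is assumed to be a sub-semigroup, the multiplication of $\Ultrafilters{X}$ restricts to a map $T \times T \to T$, which is still associative; and left-continuity is preserved, because for fixed $\cV \in T$ the map $\cU \mapsto \cU \cdot \cV$ on $T$ is simply the restriction to $T$ of the corresponding continuous map on $\Ultrafilters{X}$. Hence $T$ is a compact Hausdorff left-topological semigroup in its own right, and a second application of Theorem \ref{A:thm:idempotents-exist-in-general} yields an idempotent in $T$.

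There is no real obstacle here; the only points that warrant a word of care are the passage from \emph{closed} to \emph{compact} (using compactness of $\Ultrafilters{X}$) and the remark that continuity in the left argument is automatically inherited by subspaces with the subspace topology. Both are routine, so the proof is genuinely a one-line invocation of Ellis's theorem applied to $\Ultrafilters{X}$, and then once more to the closed sub-semigroup $T$.
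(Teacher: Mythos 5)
Your proof is correct and follows exactly the paper's route: the paper likewise obtains the first assertion by applying Ellis's Theorem \ref{A:thm:idempotents-exist-in-general} to the compact Hausdorff left-topological semigroup $\Ultrafilters{X}$, and the second by noting that a closed sub-semigroup is itself such a semigroup and applying the theorem again. Your added remarks about closed subsets of compacta being compact and left-continuity passing to subspaces are the same routine observations the paper leaves implicit.
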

\begin{proof}
  We know that $ \Ultrafilters{X}$ is a compact Hausdorff topological semigroup. Thus, the above Theorem \ref{thm:idempotents-exist-in-general} applies.
\end{proof}

The next object of our study are the two-sided ideals. More precisely, we  prove the existence of a unique two-sided ideal. Given that there generally exists a multitude of minimal left ideals and we were not able to guarantee existence of minimal right ideals at all, this may be a surprising fact. Moreover, this unique ideal has useful functional properties, as we will shortly see.

\begin{proposition}\label{A:prop:K-existence}
\index{semigroup!ideal!minimal ideal}
  There exists a unique minimal two-sided ideal in $\Group$.
\end{proposition}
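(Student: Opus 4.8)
The strategy is the classical one: build the minimal two-sided ideal as the union of all minimal left ideals, and get uniqueness essentially for free once one knows that any two minimal left ideals interact in a controlled way. First I would recall from Proposition~\ref{A:prop:min-ideal-existence} that minimal left ideals exist, and that by Proposition~\ref{A:prop:min-ideal-characterisation} a left ideal $L$ is minimal if and only if $L = \Group \cdot x$ for every $x \in L$. The key preliminary observation is: if $L$ is a minimal left ideal and $s \in \Group$ is arbitrary, then $L \cdot s$ is again a minimal left ideal. Indeed $L \cdot s$ is a left ideal since $\Group \cdot (L \cdot s) = (\Group \cdot L) \cdot s \subset L \cdot s$; and the map $x \mapsto x \cdot s$ sends $L$ onto $L \cdot s$ in a way compatible with left multiplication, so any left ideal contained in $L \cdot s$ pulls back (via $\{x \in L : x s \in J\}$, which one checks is a left ideal) to a left ideal inside $L$, forcing it to be all of $L \cdot s$ by minimality of $L$.

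Next I would set $K := \bigcup \{ L : L \text{ is a minimal left ideal of } \Group\}$ and show $K$ is a two-sided ideal. It is a union of left ideals, hence a left ideal (or use Lemma~\ref{A:lem:ideals-intersection}-style reasoning directly — actually a union of left ideals is trivially a left ideal). For the right ideal property, take $x \in K$, so $x \in L$ for some minimal left ideal $L$; then for any $s \in \Group$, $x \cdot s \in L \cdot s$, which is a minimal left ideal by the observation above, hence $x \cdot s \in K$. So $K$ is a two-sided ideal. To see it is the \emph{smallest} two-sided ideal: let $J$ be any two-sided ideal. Pick any minimal left ideal $L$ and any $x \in L$; then for $j \in J$ we have $j \cdot x \in J$ (right ideal) and $j \cdot x \in \Group \cdot x = L$ (since $L$ is minimal), so $J \cap L \neq \emptyset$. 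But $J \cap L$ is a left ideal contained in $L$, so by minimality $L = J \cap L \subset J$. Since this holds for every minimal left ideal $L$, we get $K \subset J$. Thus $K$ is contained in every two-sided ideal, so in particular it is the unique minimal two-sided ideal — minimality is clear (nothing smaller can be a two-sided ideal since such would have to contain $K$), and uniqueness follows because any minimal two-sided ideal must contain $K$ and hence equal $K$.

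The main obstacle, and the one step worth writing carefully, is the claim that $L \cdot s$ is minimal — specifically the pullback argument showing a left ideal inside $L \cdot s$ corresponds to a left ideal inside $L$. One must verify that $\{x \in L : x \cdot s \in J\}$ is indeed a left ideal of $\Group$ (it is closed under left multiplication by elements of $\Group$ precisely because $L$ is a left ideal and $J$ is a left ideal), and that it is non-empty whenever $J \subset L \cdot s$ is non-empty. Once that lemma is in hand, everything else is a short diagram-chase with ideals and the rest is bookkeeping; no topology is needed for this proposition (it was already used to guarantee that minimal left ideals exist at all).
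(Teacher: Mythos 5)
Your proposal is correct and follows the same route as the paper: take $K$ to be the union of all minimal left ideals, show it is a two-sided ideal, and show it is contained in every two-sided ideal via the observation that $I \cap L \neq \emptyset$ and minimality of $L$. The only (minor) difference is in proving that $L \cdot s$ is again a minimal left ideal: you use a pullback argument with the left ideal $\{x \in L \setsep x\cdot s \in J\}$, whereas the paper gets this in one line from Proposition~\ref{A:prop:min-ideal-characterisation} by noting that every element of $L\cdot s$ has the form $y \cdot s$ with $\Group\cdot y\cdot s = L\cdot s$; both verifications are sound.
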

\begin{proof}
	Let $\cL$ denote the family of all minimal left ideals if $\Group$, and define $K := \bigcup \cL$. We claim that $K$ is the sought ideal.

	We first show that if $I$ is a two sided ideal, then $K \subset I$. If $L \in \cL$ is a minimal ideal, then $I \cap L \neq \emptyset$, because $I \cdot L \subset I \cap L$. Because $L \cap I$ is non-empty, it is a left ideal. Because $L$ is minimal we have $L = L \cap I$, or simply $L \subset I$. Taking the union over all choices of $L$, we find that $K \subset I$.

	Because each $L \in \cL$ is itself a left ideal, $K$ is also a left ideal. It remains to see that $K$ is also a right ideal. We will in fact show more, namely that if $L \in \cL$ and $x \in \Group$, then we have $L \cdot x \in \cL$. Because $L \cdot x$ is clearly a left ideal, it remains to see that it is minimal. For this, let us take any element of $L \cdot x$, which is necessarily of the form $y \cdot x$ with $y \in L$, and notice that $ G \cdot y \cdot x = L \cdot x$ because $G \cdot y = L$.
\end{proof}

\begin{definition}\label{A:def:minimal}
\index{semigroup!ideal!minimal ideal}
    We denote be $\Kappa(\Group)$ the unique minimal two sided ideal of $\Group$. If $x \in \Kappa(\Group)$, then we refer to $x$ as \emph{minimal} element of $\Group$. Likewise, $x$ is called a minimal idempotent if $x \in \Kappa(\Group)$ and $x$ is idempotent. 
\end{definition}
\begin{remark}
  The use of the adjective \emph{minimal} in the above definition is customary, but we wish to note, following Hindman and Strauss \cite{hindman-strauss} that it would be more logical to refer to $\Kappa(\Group)$ as the \emph{smallest} two sided ideal. Indeed, the phrase \emph{minimal} suggests that other minimal ideals may exist. This being said, we accept the traditional notation.
\end{remark}

It will be useful to have a criterion for membership in $\Kappa(\Group)$. 

\begin{proposition}\label{A:prop:K-characterisation}
\index{semigroup!ideal!minimal ideal}
  Let $\Group$ be a compact left-topological semigroup, and fix some $x \in \Group$. Then the following conditions are equivalent:
\begin{enumerate}
\item\label{A:prop:K-char:cond:a} $ x \in \Kappa(\Group)$,
\item\label{A:prop:K-char:cond:b} $ x \in L$ for some minimal left ideal $L$,
\item\label{A:prop:K-char:cond:c} for any $y \in \Group$ there exists $z \in \Group$ such that $z \cdot y \cdot x = x$.
\end{enumerate}
\end{proposition}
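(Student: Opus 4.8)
The plan is to establish $\ref{A:prop:K-char:cond:a} \Leftrightarrow \ref{A:prop:K-char:cond:b}$ directly from how $\Kappa(\Group)$ was built, and then to close the cycle with $\ref{A:prop:K-char:cond:b} \Rightarrow \ref{A:prop:K-char:cond:c} \Rightarrow \ref{A:prop:K-char:cond:b}$. The equivalence $\ref{A:prop:K-char:cond:a} \Leftrightarrow \ref{A:prop:K-char:cond:b}$ should be essentially a restatement: in the proof of Proposition~\ref{A:prop:K-existence} the smallest two-sided ideal was exhibited as $\Kappa(\Group) = \bigcup \cL$, where $\cL$ is the family of \emph{all} minimal left ideals of $\Group$, so $x \in \Kappa(\Group)$ holds precisely when $x$ lies in some minimal left ideal.

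For $\ref{A:prop:K-char:cond:b} \Rightarrow \ref{A:prop:K-char:cond:c}$ I would argue as follows. Suppose $x \in L$ for a minimal left ideal $L$ and fix $y \in \Group$. Since $L$ is a left ideal, $y \cdot x \in \Group \cdot L \subset L$, and applying Proposition~\ref{A:prop:min-ideal-characterisation} to the element $y \cdot x \in L$ gives $L = \Group \cdot (y \cdot x)$. As $x \in L$, there is $z \in \Group$ with $z \cdot (y \cdot x) = x$, which is exactly condition~\ref{A:prop:K-char:cond:c}.

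For $\ref{A:prop:K-char:cond:c} \Rightarrow \ref{A:prop:K-char:cond:b}$ I would start from the principal left ideal $\Group \cdot x$, which is closed by Lemma~\ref{lem:principal=>closed}; being a (nonempty) left ideal in a compact left-topological semigroup, it contains a minimal left ideal $L$ by Proposition~\ref{A:prop:min-ideal-existence}. Choosing any $w \in L \subset \Group \cdot x$, write $w = a \cdot x$ with $a \in \Group$, and apply~\ref{A:prop:K-char:cond:c} with $y := a$ to obtain $z \in \Group$ with $z \cdot a \cdot x = x$, i.e.\ $z \cdot w = x$. Since $z \cdot w \in \Group \cdot L \subset L$, we conclude $x \in L$, which is~\ref{A:prop:K-char:cond:b}.

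The one point requiring care — and the reason the last implication is not a one-liner — is that $\Group$ need not be a monoid, so one cannot simply write $x \in \Group \cdot x$ and read off $z$. Routing through a minimal left ideal sitting inside $\Group \cdot x$ is what repairs this, and it is exactly here that compactness and left-topologicity are used, via the closedness of $\Group \cdot x$ (Lemma~\ref{lem:principal=>closed}) and the existence of minimal left ideals inside it (Proposition~\ref{A:prop:min-ideal-existence}); this mirrors the earlier observation that the right-ideal analogue is unavailable.
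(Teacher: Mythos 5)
Your proposal is correct, and two of the three pieces --- the identification of \ref{A:prop:K-char:cond:a} with \ref{A:prop:K-char:cond:b} via the construction in Proposition \ref{A:prop:K-existence}, and the implication \ref{A:prop:K-char:cond:b} $\Rightarrow$ \ref{A:prop:K-char:cond:c} via $y\cdot x \in L$ and $L = \Group\cdot(y\cdot x)$ --- coincide with the paper's argument. Where you diverge is \ref{A:prop:K-char:cond:c} $\Rightarrow$ \ref{A:prop:K-char:cond:b}: you locate \emph{some} minimal left ideal $L \subseteq \Group\cdot x$ by invoking Lemma \ref{lem:principal=>closed} and Proposition \ref{A:prop:min-ideal-existence}, and then use \ref{A:prop:K-char:cond:c} to pull $x$ into $L$. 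The paper instead shows that $\Group\cdot x$ is \emph{itself} minimal: for any $x' = y\cdot x$ in it, condition \ref{A:prop:K-char:cond:c} gives $z$ with $z\cdot x' = x$, whence $\Group\cdot x' \supseteq \Group\cdot x$, and Proposition \ref{A:prop:min-ideal-characterisation} applies. Both are valid, but note that the paper's route is purely algebraic and uses no compactness for this implication, so your closing remark that compactness and left-topologicity are ``exactly'' what is needed here overstates the case --- they are needed only for the particular detour you chose. Relatedly, your worry about $x \in \Group\cdot x$ failing in a non-monoid is moot once \ref{A:prop:K-char:cond:c} is assumed: taking any $y$ and the corresponding $z$ gives $x = (z\cdot y)\cdot x \in \Group\cdot x$ immediately, which is what the paper's argument implicitly exploits. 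Your version buys nothing extra but costs an appeal to Zorn's lemma; the paper's version is the sharper one to internalise.
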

\begin{proof}
	The equivalence of \ref{A:prop:K-char:cond:a} and \ref{A:prop:K-char:cond:b} follows directly from the proof of Proposition \ref{A:prop:K-existence}. 

	Suppose that \ref{A:prop:K-char:cond:b} holds, and let $y \in \Group$ as in \ref{A:prop:K-char:cond:c}. We have $y \cdot x = \Group \cdot x = L$. By the characterisation of minimal ideals, we have $L = \Group \cdot y \cdot x$, so $x \in L$ implies that there exists $z \in \Group$ such that $x = z \cdot y \cdot x$. Hence, \ref{A:prop:K-char:cond:c} holds.

	Finally, suppose that \ref{A:prop:K-char:cond:c} is satisfied, and consider the left ideal $L = \Group \cdot x$. Let us consider an arbitrary $x' \in L$, which is necessarily of the form $x' = y \cdot x$, and let $z$ be such that $z \cdot x' = z \cdot y \cdot x = x$. It follows that:
	$$ \Group \cdot x' \supset \Group \cdot z \cdot  x' = \Group \cdot x = L$$
	Because the other inclusion is clear, we have $L = \Group \cdot x'$, which implies minimality of $L$ by characterisation in Proposition \ref{A:prop:min-ideal-characterisation}.
\end{proof}

Because these are the idempotent elements of $\Kappa(\Group)$ that are of most importance, we derive another criterion for minimality of idempotents. We begin by introducing a partial order on the set of idempotents.

\begin{definition}\label{A:def:idempotents-order}
  Let $p,q \in \Group$ be idempotent elements of a compact left-topological semigroup $\Group$. Then we say that $p \leq q$ if and only if $pq = qp = p$. 
\end{definition}
\index{semigroup!idempotent}
\index{order}
\begin{lemma}
  The relation $\leq$ defined in \ref{A:def:idempotents-order} is a partial order.
\end{lemma}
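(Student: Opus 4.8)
The plan is to verify the three defining properties of a partial order directly from the definition, using only the fact that $p,q,r$ are idempotent (so $pp=p$, etc.) and the semigroup associativity.

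First I would check reflexivity: for an idempotent $p$ we have $p\cdot p = p$, so both equalities $pp = p$ required by $p \leq p$ hold trivially. Next, antisymmetry: suppose $p \leq q$ and $q \leq p$. The first gives in particular $pq = p$, and the second gives in particular $pq = q$ (reading off the ``$qp=pq=q$'' clause). Hence $p = pq = q$, so the two idempotents coincide.

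The only step with any content is transitivity, and even this is a short computation. Assume $p \leq q$ and $q \leq r$, i.e. $pq = qp = p$ and $qr = rq = q$. To show $p \leq r$ I must establish $pr = rp = p$. For the first, $pr = (pq)r = p(qr) = pq = p$, using associativity and then $qr = q$ and $pq = p$. For the second, symmetrically, $rp = r(qp) = (rq)p = qp = p$, using $qp = p$ and $rq = q$. Thus $pr = rp = p$, which is exactly $p \leq r$.

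I do not anticipate any genuine obstacle here: everything reduces to associativity and the idempotent identities, with no appeal to topology or compactness. The only mild subtlety is keeping track of which half of the two-part condition ``$ab = ba = a$'' one uses at each stage, but as shown above each direction needs only one of the two equalities per hypothesis.
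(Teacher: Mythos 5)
Your proof is correct and follows essentially the same route as the paper: reflexivity from $pp=p$, antisymmetry from $p=pq=q$, and transitivity via $pr=pqr=pq=p$ and $rp=rqp=qp=p$. Nothing further is needed.
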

\begin{proof}
  If is clear that the relation is reflexive: $p \leq p$ because $pp = p$. It is also clear that if $p \leq q$ and $q \leq p$ then $p = pq = q$, so the relation is weakly anti-symmetric. Finally, if $p \leq q$ and $q \leq r$ then we have:
  $$ pr = pqr = pq = p, \qquad rp = rqp = qp = p $$
  so $p \leq r$, proving transitivity.
\end{proof}

We are now in position to characterise minimal idempotents as the idempotents minimal with respect to the introduced order.
\begin{proposition}\label{A:prop:minimal-equivalent-def}
\index{semigroup!ideal!minimal ideal}
\index{order}
  Let $\Group$ be a compact left-topological semigroup, and let $p \in \Group$ be idempotent element. Then the following conditions are equivalent:
  \begin{enumerate}
   \item\label{A:prop:minimal-equivalent-def:cond:1} The idempotent $p$ is minimal in the sense of Definition \ref{A:def:minimal}.
   \item\label{A:prop:minimal-equivalent-def:cond:2} The idempotent $p$ is minimal with respect to the order in Definition \ref{A:def:idempotents-order}.
  \end{enumerate}
\end{proposition}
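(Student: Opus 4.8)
The plan is to establish the two implications separately, leaning on the structural facts about minimal left ideals already proven: Proposition \ref{A:prop:min-ideal-characterisation} (a left ideal $L$ is minimal iff $L=\Group\cdot x$ for every $x\in L$), Proposition \ref{A:prop:min-ideal-existence} (every left ideal contains a minimal one), and the membership criterion of Proposition \ref{A:prop:K-characterisation} ($p\in\Kappa(\Group)$ iff $p$ lies in some minimal left ideal).

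For \ref{A:prop:minimal-equivalent-def:cond:1} $\imply$ \ref{A:prop:minimal-equivalent-def:cond:2} I would argue as follows. If $p\in\Kappa(\Group)$, then $p$ lies in a minimal left ideal $L$, whence $L=\Group\cdot p$ by Proposition \ref{A:prop:min-ideal-characterisation}. Let $q$ be an idempotent with $q\le p$, i.e. $qp=pq=q$. Then $q=qp\in\Group\cdot p=L$, so by minimality $L=\Group\cdot q$ as well, and in particular $p\in\Group\cdot q$, say $p=sq$. Multiplying on the right by $q$ and using $qq=q$ gives $pq=sq=p$; but $pq=q$, so $q=p$. Hence no idempotent lies strictly below $p$, which is the asserted minimality with respect to $\le$.

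For \ref{A:prop:minimal-equivalent-def:cond:2} $\imply$ \ref{A:prop:minimal-equivalent-def:cond:1}, the left ideal $\Group\cdot p$ contains a minimal left ideal $L$ by Proposition \ref{A:prop:min-ideal-existence}. Since $L=\Group\cdot x$ for any $x\in L$, it is closed by Lemma \ref{lem:principal=>closed}, and it is a sub-semigroup ($L\cdot L\subset\Group\cdot L\subset L$), so Theorem \ref{A:thm:idempotents-exist-in-general} supplies an idempotent $e\in L$. As $e\in L\subset\Group\cdot p$, write $e=sp$; then $ep=spp=sp=e$. Now put $q:=pe$. One checks directly that $q\in\Group\cdot e\subset L$ (because $L$ is a left ideal containing $e$), that $q$ is idempotent ($qq=p(ep)e=pee=pe=q$), and that $pq=ppe=pe=q$ while $qp=p(ep)=pe=q$, so $q\le p$. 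Minimality of $p$ with respect to $\le$ forces $q=p$, hence $p=q\in L\subset\Kappa(\Group)$ by Proposition \ref{A:prop:K-characterisation}.

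I expect the only real subtlety to be the construction, in the second implication, of an idempotent comparable to $p$ inside the minimal left ideal $L\subset\Group\cdot p$: a bare application of Ellis' theorem only produces \emph{some} idempotent $e\in L$, and one has to massage it — multiplying by $p$ on the left and exploiting $ep=e$, which holds precisely because $e\in\Group\cdot p$ — to land on an idempotent $q$ with $q\le p$. Everything else is routine manipulation of the identities $pp=p$ and $qq=q$ together with the already-available theory of minimal left ideals.
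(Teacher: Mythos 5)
Your proof is correct and follows essentially the same route as the paper: for \ref{A:prop:minimal-equivalent-def:cond:1} $\imply$ \ref{A:prop:minimal-equivalent-def:cond:2} both arguments place $q=qp$ in the minimal left ideal $\Group\cdot p$ and use minimality to write $p$ as a left multiple of $q$, and for \ref{A:prop:minimal-equivalent-def:cond:2} $\imply$ \ref{A:prop:minimal-equivalent-def:cond:1} both take an idempotent $e$ in a minimal left ideal $L\subset\Group\cdot p$ (so that $ep=e$) and pass to $q:=pe$, which is an idempotent $\leq p$. Your write-up is in fact slightly more explicit than the paper's at the one point it glosses over, namely why the minimal left ideal contains an idempotent in the first place.
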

\begin{proof}
  \begin{description}
   \item[ \ref{A:prop:minimal-equivalent-def:cond:1} $ \imply $ \ref{A:prop:minimal-equivalent-def:cond:2}] Suppose that $p \in \Kappa(G)$ and that $q$ is an idempotent with $q \leq p$; we need to check that $q = p$. Let $L = S\cdot p$ be the left ideal generated by $p$. We see that $q = qp \in L$. It follows from Proposition \ref{A:prop:K-characterisation} that we have for some $r \in S$ the relation: $p = rqp$. Consequently:
  $$ p = rqp = rqpq = pq = q,$$
  which finishes the proof that $p = q$.
   \item[ \ref{A:prop:minimal-equivalent-def:cond:2} $ \imply $ \ref{A:prop:minimal-equivalent-def:cond:1}] Suppose that $p$ is minimal with respect to $\leq$, and consider the left ideal $L := S \cdot p$; we need to show that $L$ is minimal. Let $M \subseteq L$ be a minimal left ideal; we know that $M = S \cdot q$ for some idempotent $q$. Because $q \in S \cdot p$, we have $q p = q$. Let us consider the $r := pq = pqp$. It is clear taht $r$ is idempotent:
  $$ rr = (pqp)(pqp) = p(qp)(qp)p = pqqp = pqp = r.$$
  Moreover, we have $pr = rp = r$, so directly by the definition we have $r \leq p$. Because of minimality, we have $r = p$. Consequently, $p = pq \in M$, and by a previously shown characterisation we have $M = L$. 
  \end{description}
\end{proof}

\begin{corollary}
\index{semigroup!ideal!minimal ideal}
\index{order}
 Let $p$ be an idempotent in a a compact left-topological semigroup $\Group$. Then, there exists a minimal idempotent $q$ with $q \leq p$.
\end{corollary}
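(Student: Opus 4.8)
The plan is to extract $q$ from an idempotent sitting inside a minimal left ideal below $p$. First I would pass to the principal left ideal $L := \Group \cdot p$. By Proposition \ref{A:prop:min-ideal-existence}, $L$ contains a minimal left ideal $M$. By Proposition \ref{A:prop:min-ideal-characterisation} we have $M = \Group \cdot x$ for every $x \in M$, so $M$ is a principal left ideal, hence closed by Lemma \ref{lem:principal=>closed}; since also $M \cdot M \subseteq \Group \cdot M \subseteq M$, the set $M$ is a closed, hence compact, left-topological sub-semigroup of $\Group$ in its own right. Applying Theorem \ref{A:thm:idempotents-exist-in-general} to $M$, we obtain an idempotent $e \in M$.

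Next I would record one auxiliary identity. Since $e \in M \subseteq L = \Group \cdot p$, we can write $e = s \cdot p$ for some $s \in \Group$, and therefore, using $p \cdot p = p$,
$$e \cdot p = s \cdot p \cdot p = s \cdot p = e.$$

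Then I would set $q := p \cdot e$ and verify the three required properties. The element $q$ is idempotent, because $q \cdot q = (p e)(p e) = p (e p) e = p \, e \, e = p e = q$, using $ep = e$ and $ee = e$. It satisfies $q \leq p$ in the sense of Definition \ref{A:def:idempotents-order}, since $p \cdot q = p (p e) = p e = q$ and $q \cdot p = (p e) p = p (e p) = p e = q$. Finally $q$ is minimal: as $e \in M$ and $M$ is a left ideal, $q = p e \in M$, so $q$ lies in a minimal left ideal, whence $q \in \Kappa(\Group)$ by Proposition \ref{A:prop:K-characterisation}. Thus $q$ is a minimal idempotent with $q \leq p$, as desired.

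There is no real obstacle; the one point requiring care is that $\Group$ is only left-topological and generally noncommutative, so the choice $q = p \cdot e$ (rather than $e \cdot p$) is what makes both $p \cdot q = q$ and $q \cdot p = q$ come out, the latter relying on the identity $e \cdot p = e$ derived from $e \in \Group \cdot p$.
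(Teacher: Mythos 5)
Your proof is correct, and it is essentially the argument the paper already has in hand: the construction of an idempotent $e$ in a minimal left ideal $M \subseteq \Group \cdot p$ with $e\cdot p = e$, followed by passing to $p\cdot e$, is exactly the device used in the proof of Proposition \ref{A:prop:minimal-equivalent-def} (where the element $r := pq$ plays the role of your $q$), and the paper states the corollary without proof precisely because it drops out of that construction. All the individual steps — minimality of $M$, closedness and compactness of $M$, existence of $e$, the identities $ep=e$, $pq=qp=q=qq$, and membership $q\in M\subseteq\Kappa(\Group)$ — check out against the cited results.
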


We now prove some useful properties of the minimal ideal $\Kappa(\Group)$, depending on $\Group$. We begin with a simple result on Cartesian products, and then consider a slightly more involved result on sub-semigroups. These facts will have unexpected combinatorial applications.

\begin{proposition}\label{A:prop:K-product}
\index{semigroup!ideal!minimal ideal}
\index{semigroup!product}
  If $S,T$ are compact left-topological semigroups, then $\Kappa(S \times T) = \Kappa(S) \times \Kappa(T)$. Moreover, if $S_1,\dots,S_r$ are compact left-topological semigroups, then $\Kappa(\prod_i S_i) = \prod_i \Kappa(S_i)$. 
\end{proposition}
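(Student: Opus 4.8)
The plan is to reduce everything to the membership criterion for $\Kappa$ established in Proposition \ref{A:prop:K-characterisation}, namely the equivalence of conditions \ref{A:prop:K-char:cond:a} and \ref{A:prop:K-char:cond:c}. First one should observe that $S \times T$, equipped with the product topology and coordinatewise multiplication, is again a compact left-topological semigroup: compactness is Tychonoff's theorem, the Hausdorff property is inherited, and continuity of $(u,v) \mapsto (u_1 v_1, u_2 v_2)$ in the left variable follows coordinatewise from the corresponding property of $S$ and of $T$. Hence $\Kappa(S \times T)$ is well defined and Proposition \ref{A:prop:K-characterisation} is available for $S \times T$.

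For the inclusion $\Kappa(S) \times \Kappa(T) \subset \Kappa(S \times T)$, take $(s,t)$ with $s \in \Kappa(S)$ and $t \in \Kappa(T)$, and an arbitrary $(y_1, y_2) \in S \times T$. Applying condition \ref{A:prop:K-char:cond:c} in $S$ to $y_1$ gives $z_1 \in S$ with $z_1 y_1 s = s$, and in $T$ to $y_2$ gives $z_2 \in T$ with $z_2 y_2 t = t$; then $(z_1, z_2)(y_1, y_2)(s,t) = (s,t)$, so $(s,t) \in \Kappa(S \times T)$, again by \ref{A:prop:K-char:cond:c}.

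For the reverse inclusion, suppose $(s,t) \in \Kappa(S \times T)$ and let us check $s \in \Kappa(S)$. Given $y_1 \in S$, fix any $y_2 \in T$ (possible since $T$ is nonempty, indeed $\Kappa(T) \neq \emptyset$ by Proposition \ref{A:prop:K-existence}), and use \ref{A:prop:K-char:cond:c} for $S \times T$ to obtain $(z_1, z_2)$ with $(z_1, z_2)(y_1, y_2)(s,t) = (s,t)$. Reading the first coordinate yields $z_1 y_1 s = s$, so $s$ satisfies \ref{A:prop:K-char:cond:c} in $S$, i.e. $s \in \Kappa(S)$; symmetrically $t \in \Kappa(T)$. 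Together with the previous paragraph this gives $\Kappa(S \times T) = \Kappa(S) \times \Kappa(T)$.

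Finally, the statement for $r$ factors follows by induction on $r$, using associativity of the Cartesian product up to the canonical isomorphism: $\Kappa(S_1 \times \cdots \times S_r) = \Kappa\big(S_1 \times (S_2 \times \cdots \times S_r)\big) = \Kappa(S_1) \times \Kappa(S_2 \times \cdots \times S_r) = \Kappa(S_1) \times \cdots \times \Kappa(S_r)$, the middle equality being the two-factor case just proved and the last one the induction hypothesis (the base case $r=1$ is trivial). I expect no genuine obstacle here; the only points deserving a word of care are verifying that $S \times T$ is a compact left-topological semigroup, so that Proposition \ref{A:prop:K-characterisation} applies, and recording that the factors are nonempty so the auxiliary elements $y_j$ may be chosen.
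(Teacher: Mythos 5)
Your proof is correct. It rests on the same key fact as the paper's — the membership criterion \ref{A:prop:K-char:cond:c} of Proposition \ref{A:prop:K-characterisation} — but you deploy it symmetrically in both inclusions, whereas the paper obtains $\Kappa(S \times T) \subset \Kappa(S) \times \Kappa(T)$ by observing that $\Kappa(S) \times \Kappa(T)$ is a two-sided ideal of $S \times T$ and invoking the minimality of $\Kappa(S\times T)$ among two-sided ideals, and for the reverse inclusion combines criterion \ref{A:prop:K-char:cond:c} with the ideal property of $\Kappa(S \times T)$. The two arguments are of essentially the same length; yours is slightly more uniform and avoids having to check the ideal property of the product, while the paper's first inclusion is a one-liner once that observation is made. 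Your explicit verification that $S \times T$ is again a compact left-topological semigroup (so that Proposition \ref{A:prop:K-characterisation} applies) is a point the paper leaves tacit, and is worth recording.
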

\begin{proof}
  It is clear that $\Kappa(S) \times \Kappa(T)$ is a two sided ideal in $S \times T$, so $\Kappa(S \times T) \subset \Kappa(S) \times \Kappa(T)$. Conversely, consider any $x \in \Kappa(S),\ y \in \Kappa(T)$, and let $(s,t) \in \Kappa(S \times T)$ be arbitrary. Because of minimality of $x,y$, there exist $s' \in S,\ t' \in T$ such that $s\cdot s'\cdot x=x$ and $t\cdot t'\cdot y = y$.  Hence, $(x,y) = (s,t) \cdot (s'\cdot x, t'\cdot x) \in \Kappa(S \times T)$.
  
  The additional claim about products of more than two semigroups follows by a simple induction.
\end{proof}

\begin{proposition}\label{A:prop:K-subsgrp}
\index{semigroup!ideal!minimal ideal}
\index{semigroup!subsemigroup}

  Let $T \subseteq S$ be compact left-topological semigroups, and suppose that $T \cap K(S) \neq \emptyset$. Then $\Kappa(T) = T \cap \Kappa(S)$.
\end{proposition}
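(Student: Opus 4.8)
The plan is to prove the two inclusions $\Kappa(T) \subseteq T \cap \Kappa(S)$ and $T \cap \Kappa(S) \subseteq \Kappa(T)$ separately, the first being easy and the second requiring an idempotent argument. For the easy inclusion I would first check that $I := T \cap \Kappa(S)$ is a two-sided ideal of $T$: it is non-empty by hypothesis, and if $x \in I$ and $t \in T$, then $tx, xt$ lie in $T$ because $T$ is a subsemigroup and in $\Kappa(S)$ because $\Kappa(S)$ is a two-sided ideal of $S$ (Proposition~\ref{A:prop:K-existence}) and $t \in T \subseteq S$; hence $tx, xt \in I$. Since $\Kappa(T)$ is the \emph{smallest} two-sided ideal of $T$, it follows that $\Kappa(T) \subseteq T \cap \Kappa(S)$.

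For the reverse inclusion, fix $x \in T \cap \Kappa(S)$. By the equivalence in Proposition~\ref{A:prop:K-characterisation}, $x$ lies in some minimal left ideal $L$ of $S$, and by Proposition~\ref{A:prop:min-ideal-characterisation} $L = Sx$, so $L$ is closed by Lemma~\ref{lem:principal=>closed}. The set $L \cap T$ is then a left ideal of $T$ (since $T(L\cap T) \subseteq SL \cap TT \subseteq L \cap T$), a subsemigroup (since $(L \cap T)(L \cap T) \subseteq L \cap T$), and closed in $T$; thus $L \cap T$ is itself a compact left-topological semigroup and by Theorem~\ref{thm:idempotents-exist-in-general} contains an idempotent $e$.

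The key point I would then exploit is that $e$ acts as a right identity on all of $L$: since $e \in L$ and $L$ is minimal, Proposition~\ref{A:prop:min-ideal-characterisation} gives $L = Se$, so every $w \in L$ has the form $w = se$ with $s \in S$, whence $we = se\cdot e = se = w$. In particular $be = b$ for every $b \in L \cap T$, which gives $b = be \in Te$ (both factors lie in $T$); together with the trivial inclusion $Te \subseteq Se \cap T \subseteq L \cap T$ this yields $L \cap T = Te$. It remains to see $Te$ is a minimal left ideal of $T$, for then $x \in L \cap T \subseteq \Kappa(T)$ and we are finished. To prove minimality I would pick a minimal left ideal $L'$ of $T$ with $L' \subseteq Te$ (Proposition~\ref{A:prop:min-ideal-existence}); being a minimal left ideal, $L'$ is a closed subsemigroup of $T$ and hence contains an idempotent $f$; since $f \in Te \subseteq L$, the element $f$ is again a right identity on $L$, so $e = ef \in TL' \subseteq L'$; as $L'$ is a left ideal of $T$ this forces $Te \subseteq L' \subseteq Te$, i.e.\ $Te = L'$.

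I expect the main obstacle to be precisely the reason a one-line appeal to the membership criterion of Proposition~\ref{A:prop:K-characterisation} does not suffice: that criterion supplies a multiplier lying in $S$, not necessarily in $T$. The device that circumvents this is producing an idempotent inside $L \cap T$ — which genuinely uses compactness of $T$ and not merely of $S$ — and then leveraging the fact that every idempotent of $L$ is a right identity on $L$, so that all the subsequent manipulations can be carried out with multipliers taken from $T$.
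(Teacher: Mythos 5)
Your proof is correct and rests on the same mechanism as the paper's: produce, via Ellis's theorem, an idempotent $e$ of $T$ associated with $x$, and exploit the fact that $Sx = Se$ forces $xe = x$, so that $x \in Te \subseteq \Kappa(T)$. The only organizational difference is that the paper extracts $e$ directly from a minimal left ideal of $T$ contained in $Tx$ (so minimality of $Te$ comes for free), whereas you take $e$ from $L \cap T$ with $L = Sx$ and then verify minimality of $Te$ separately with a second idempotent — slightly longer, but it yields the extra fact that $T \cap L$ is itself a minimal left ideal of $T$.
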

\begin{proof}
  The inclusion $\Kappa(T) \subseteq T \cap \Kappa(S)$ follows from the simple observation that $T \cap \Kappa(S) \subset T$ is a two sided ideal. It remains to prove the inverse inclusion.

  Let $x \in T \cap \Kappa(S)$. We will show that $x \in \Kappa(T)$. By Lemma \ref{A:prop:min-ideal-existence}, the (principal) ideal $Tx$ contains a minimal ideal, which is of the form $Te$ for some idempotent $e$. Because $x \in \Kappa(S)$, the ideal $Sx$ is minimal, so in particular from $e \in Te \subseteq Tx \subseteq Sx$ it follows that $Sx = Se$, and hence there exist $s \in S$ such that $x = se$. It now remains to notice that $xe = see = se = x$, and hence $x \in Te$. Consequently, $x \in Te \subset \Kappa(T)$, as desired.
\end{proof}

In the case when $\Group$ is commutative, the theory is especially well-behaved. Although the most important compact semigroups for us are the highly non-commutative ones like $\Ultrafilters{\NN}$, it is interesting in its own right to investigate the behaviour of $\Kappa(\Group)$ in a commutative setting.

\begin{proposition}
\index{semigroup!ideal!minimal ideal}
\index{semigroup!commutative}
  Suppose that $\Group$ is a commutative compact left-topological semigroup. Then $\Kappa(\Group)$ is compact. Moreover, there exists a unique idempotent $e \in \Kappa(\Group)$, and $\Kappa(\Group)$ is a group with $e$ as the identity.
\end{proposition}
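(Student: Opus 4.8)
The plan is to exploit the fact that in a commutative semigroup every left ideal is automatically two-sided, which collapses the distinction between minimal left ideals and the unique minimal two-sided ideal $\Kappa(\Group)$ obtained above. First I would do the bookkeeping: since $\Group$ is commutative, $A \cdot B = B \cdot A$ for all $A, B \subseteq \Group$, so left ideals, right ideals and two-sided ideals coincide. Consequently every minimal left ideal is a minimal two-sided ideal, and by the uniqueness statement of Proposition \ref{A:prop:K-existence} there is exactly one minimal left ideal, namely $\Kappa(\Group)$. Applying the characterisation of minimal left ideals in Proposition \ref{A:prop:min-ideal-characterisation} to $\Kappa(\Group)$ yields the key identity $\Kappa(\Group) = \Group \cdot x$ for every $x \in \Kappa(\Group)$. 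Fixing any such $x$, this displays $\Kappa(\Group)$ as a principal left ideal, hence closed by Lemma \ref{lem:principal=>closed}, hence compact as a closed subset of the compact space $\Group$. This settles the first assertion.

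Next I would produce the idempotent. As a two-sided ideal, $\Kappa(\Group)$ is closed under the multiplication, and by the previous paragraph it is closed, so (with the subspace topology) it is itself a compact left-topological semigroup, and Theorem \ref{thm:idempotents-exist-in-general} supplies an idempotent $e \in \Kappa(\Group)$. For uniqueness, let $e, f \in \Kappa(\Group)$ be idempotents. Since $\Group \cdot e = \Kappa(\Group) = \Group \cdot f$ by the key identity, we may write $f = s e$ for some $s \in \Group$, whence $f e = s e^2 = s e = f$; by symmetry $e f = e$. Commutativity then forces $e = e f = f e = f$.

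Finally I would check the group axioms on $\Kappa(\Group)$, which is closed under the operation. For the identity: given $x \in \Kappa(\Group) = \Group \cdot e$, write $x = s e$, so that $x e = s e^2 = s e = x$, and commutativity gives $e x = x$ as well, so $e$ is a two-sided identity. For inverses, using $\Kappa(\Group) = \Group \cdot x$, commutativity, and the fact that $x^2 \in \Kappa(\Group)$ again lies in the minimal left ideal (so that $\Group \cdot x^2 = \Kappa(\Group)$ by Proposition \ref{A:prop:min-ideal-characterisation}), one computes
$$x \cdot \Kappa(\Group) = x \cdot (\Group \cdot x) = \Group \cdot x^2 = \Kappa(\Group).$$
In particular $e \in x \cdot \Kappa(\Group)$, i.e. there is $y \in \Kappa(\Group)$ with $x y = e$, and $y x = e$ by commutativity. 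Hence every element of $\Kappa(\Group)$ is invertible, and $\Kappa(\Group)$ is a group with identity $e$.

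The individual steps are all short; the only point demanding genuine care is the one-sided-versus-two-sided bookkeeping — verifying that commutativity really does let one promote each earlier one-sided statement (about ideals, about the identity axiom, about invertibility) to the two-sided version needed here, and in particular pinning down that $\Kappa(\Group) = \Group \cdot x$ for every $x \in \Kappa(\Group)$, on which every subsequent manipulation rests.
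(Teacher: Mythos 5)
Your proof is correct and follows essentially the same route as the paper's: commutativity collapses left and two-sided ideals so that $\Kappa(\Group)$ is the unique minimal left ideal $\Group\cdot x$, hence closed and compact; Ellis's theorem gives an idempotent, whose uniqueness and the group structure follow from the identity $\Kappa(\Group)=\Group\cdot x$. The only cosmetic difference is in producing the inverse — you show $x\cdot\Kappa(\Group)=\Group\cdot x^2=\Kappa(\Group)$ directly, while the paper takes $y$ with $yx=e$ and multiplies by $e$ to land $ey$ in $\Kappa(\Group)$ — but the two are equivalent.
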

\begin{proof}
  Because $S$ is commutative, the notions of a two-sided ideal and left ideal coincide. Hence, $\Kappa(\Group)$ is a minimal left ideal, and hence it is compact. Because $\Kappa(\Group)$ is a compact semigroup, there exists an idempotent $e \in \Kappa(G)$. If $f \in \Kappa(G)$ was another idempotent, then we would have $e = xf$ and $f = ye$ for some $x,y \in \Group$. Hence, it would follow that:
    $$ e = xf = xff = ef = fe = yee = ye = f.$$
  Thus, $e$ is unique. Finally, if $x \in \Kappa(G)$, we have $\Kappa(G) = \Group \cdot x$, so there exists $y \in \Group$ with $yx = e$. Moreover, we have $eyx = ee = e$ and $ey \in \Kappa(G)$, so  $ey$ is the inverse of $x$. Because $\Kappa(G)$ was already a semigroup, existance of inverses implies that it is a group.
\end{proof}

It is useful to be able to construct compact left-topological sub-semigroups of a given compact left-topological semigroup $S$. If $T_0 \subset S$ is a (non-compact) sub-semigroup, one might be tempted to cojecture that $T := \cl T_0$ is the (smallest possible) compact sub-semigroup containing $T_0$. Unfortunately, upon closer insection there turns out to be no reason to believe this happens in the general situation. Because the semigroup operation is only continuous in the one argument, one cannot argue that $\cl T_0$ is closed under the semigroup operation by means of continuity. Nevertheless, under the additional assumption of commutativity, $\cl T_0$ turns out to indeed be a sub-semigroup. We begin by some relevant definitions and observations. 

\newcommand{\Centre}{\mathrm{Z}}
\begin{definition}
\index{semigroup!centre}
  If $S$ is a semigroup, then by $\Centre(S)$ we denote the \emph{centre} of $S$, given by:
  $$\Centre(S) := \{ x \in S \setsep (\forall y \in S):\ xy = yx \}.$$
\end{definition}

\begin{observation}
\index{semigroup!centre}
\index{semigroup!product}
  Is $S,T$ are semigroups, then $\Centre(S \times T) = \Centre(S) \times \Centre(T)$.
\end{observation}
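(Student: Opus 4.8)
The plan is to verify the asserted equality by proving the two inclusions separately, in each case arguing directly from the fact that multiplication in $S \times T$ is componentwise, i.e.\ $(s,t)(s',t') = (ss', tt')$.

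For the inclusion $\Centre(S) \times \Centre(T) \subseteq \Centre(S \times T)$ I would take $x \in \Centre(S)$, $y \in \Centre(T)$, and an arbitrary element $(s,t) \in S \times T$, and compute $(x,y)(s,t) = (xs,yt) = (sx,ty) = (s,t)(x,y)$, where the middle equality uses that $x$ commutes with $s$ in $S$ and $y$ commutes with $t$ in $T$. Since $(s,t)$ was arbitrary, this shows $(x,y) \in \Centre(S \times T)$.

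For the reverse inclusion, I would take $(x,y) \in \Centre(S \times T)$ and, for an arbitrary $s \in S$, pick any $t \in T$ (possible since semigroups are taken nonempty); commuting $(x,y)$ with $(s,t)$ yields $(xs,yt) = (sx,ty)$, hence $xs = sx$. As $s$ was arbitrary we get $x \in \Centre(S)$, and the symmetric argument applied with an arbitrary $t \in T$ and any fixed $s \in S$ gives $y \in \Centre(T)$, so $(x,y) \in \Centre(S) \times \Centre(T)$.

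There is essentially no obstacle here; the only point worth a word is the use of nonemptiness of the factors to project the commutation relation onto a single coordinate, which is automatic under the standing convention that semigroups are nonempty (and if a factor were empty, both sides of the claimed identity would be empty, so the statement holds trivially).
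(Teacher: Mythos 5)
Your proof is correct and follows essentially the same route as the paper's: both arguments reduce to the observation that, since multiplication in $S \times T$ is componentwise, two elements of the product commute if and only if their respective components commute. Your additional remark about nonemptiness (needed to project the commutation relation onto a single coordinate in the reverse inclusion) is a fine point the paper glosses over, but it does not change the substance of the argument.
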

\index{semigroup!subsemigroup}
\begin{proof}
	It is clear that $(x,x'), (y,y') \in S \times T$ commute if and only if $x,y$ commute and $x',y'$ commute. The claim follows directly by fixing $(x,x')$ and taking $(y,y')$ arbitrary.
\end{proof}

\begin{observation}
\index{semigroup!centre}
\index{ultrafilter!algebraic structure}

If $X$ is a commutative semigroup, then $X \subset \Centre(\Ultrafilters{ X})$. If $X = (\NN,\cdot)$ or $X = (\NN,+)$, then it can be shown that $ \Centre(\Ultrafilters{ X}) = X$, but we don't prove this result.
\end{observation}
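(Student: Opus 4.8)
The plan is to establish only the inclusion $X \subseteq \Centre(\Ultrafilters{X})$; the equality $\Centre(\Ultrafilters{X}) = X$ for $X = (\NN,+)$ or $(\NN,\cdot)$ is deliberately left unproved. Recall that $X$ is regarded as a subset of $\Ultrafilters{X}$ through the embedding $x \mapsto \principal{x}$ into principal ultrafilters, so the statement to prove is that $\principal{x} \cdot \cU = \cU \cdot \principal{x}$ holds for every $x \in X$ and every $\cU \in \Ultrafilters{X}$.

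First I would fix $x \in X$ and $\cU \in \Ultrafilters{X}$, take an arbitrary $A \in \cP(X)$, and unwind both products using Definition \ref{A:def:beta-X-as-semigroup}. On one side, $A \in \principal{x} \cdot \cU$ means $\rdiv{\cU}{A} \in \principal{x}$, i.e. $x \in \rdiv{\cU}{A}$, which by the definition of $\rdiv{\cU}{A}$ is precisely $\ldiv{x}{A} \in \cU$. On the other side, using the identity $\rdiv{\principal{x}}{A} = \rdiv{x}{A}$ recorded in the remark following Definition \ref{A:def:inverses}, $A \in \cU \cdot \principal{x}$ means $\rdiv{x}{A} \in \cU$. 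The crux --- and essentially the only content of the proof --- is then that commutativity of $X$ forces $\ldiv{x}{A} = \{ y : xy \in A \} = \{ y : yx \in A \} = \rdiv{x}{A}$, so the two membership conditions are literally the same statement. Since $A$ was arbitrary, $\principal{x} \cdot \cU$ and $\cU \cdot \principal{x}$ contain exactly the same sets, hence are equal.

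As an alternative I might sketch the limit-theoretic version via Lemma \ref{prop:beta-X-as-semigroup-II}: one has $\principal{x} \cdot \cU = \llim{\principal{x}}{s}\llim{\cU}{t} i(s\cdot t) = \llim{\cU}{t} i(x\cdot t)$ by Example \ref{A:exple:limit-principal}, whereas $\cU \cdot \principal{x} = \llim{\cU}{s}\llim{\principal{x}}{t} i(s\cdot t) = \llim{\cU}{s} i(s\cdot x)$; since $x \cdot t = t \cdot x$ in $X$ the two integrands coincide, and after renaming the bound index so do the limits. Either route is complete in a few lines.

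I do not expect a genuine obstacle: the statement reduces to unwinding the definition of the semigroup operation on $\Ultrafilters{X}$ together with commutativity of $X$. The one place to stay alert is the bookkeeping of the left/right conventions for $\ldiv{\,}{}$, $\rdiv{\,}{}$ and for the product $\cF \cdot \cG$, so that substituting a principal ultrafilter really does collapse the iterated operation to a single left- or right-translation of $A$ at exactly the expected spot.
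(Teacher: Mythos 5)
Your argument is correct and complete: unwinding Definition \ref{A:def:beta-X-as-semigroup} reduces both $A \in \principal{x}\cdot\cU$ and $A \in \cU\cdot\principal{x}$ to $\ldiv{x}{A} \in \cU$ and $\rdiv{x}{A} \in \cU$ respectively, and commutativity of $X$ makes these the same set; the limit-theoretic alternative via Lemma \ref{prop:beta-X-as-semigroup-II} and Example \ref{A:exple:limit-principal} is equally valid. Note that the paper states this Observation without any proof at all, so there is nothing to compare against --- your definitional unwinding is exactly the argument the authors evidently had in mind, and your caution about the left/right bookkeeping (in particular that $\rdiv{\principal{x}}{A}=\rdiv{x}{A}$, not $\ldiv{x}{A}$) is well placed, since that is the only spot where one could slip.
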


We are now ready to give a condition for the closure of a sub-semigroup to be a (compact) subsemigroup.

\newcommand{\E}{T}
\newcommand{\I}{I}
\begin{proposition}\label{A:prop:algebra-closure-of-sgrp-is-sgrp}
\index{semigroup!centre}
\index{semigroup!subsemigroup}
  Suppose that $S$ is a compact left-topological semigroup, and $T_0 \subset \Centre(S)$ is a sub-semigroup contained in the centre of $S$, and let $\E := \cl{\E_0}$ denote the closure of $\E_0$. Then, $\E$ is a (compact) sub-semigroup of $S$. Moreover, if $I_0 \subset \E_0$ is a (by necessity, two-sided) ideal and $I := \cl{I}$, then $I$ is a two sided ideal in $\E$.
\end{proposition}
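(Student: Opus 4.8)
The plan is to lean entirely on the one piece of topological information available: in a left-topological semigroup $S$, for each fixed $w\in S$ the right translation $x\mapsto xw$ is continuous. The tool I would isolate first is the following trivial remark. \emph{If $C\subseteq S$ is closed, $w\in S$, and $D\subseteq S$ satisfies $Dw\subseteq C$, then $\cl{D}\cdot w\subseteq C$.} Indeed $D$ is contained in the preimage of $C$ under the map $x\mapsto xw$, this preimage is closed, hence it contains $\cl D$. Crucially, this remark never invokes continuity in the left variable; whenever the argument would force me to left-translate by an element of $T_0$, I will instead use $T_0\subseteq\Centre(S)$ to turn that left translation into a right translation, and then apply the remark.

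I would first show $T:=\cl{T_0}$ is a sub-semigroup, i.e. $T\cdot T\subseteq T$, in two steps. Fix $w\in T$; to get $Tw\subseteq T$ it suffices, by the remark (with $D=T_0$, $C=T$), to check $T_0 w\subseteq T$, i.e. to prove $T_0\cdot T\subseteq T$. For that, fix $x\in T_0$; since $x\in\Centre(S)$ we have $xT=Tx$, so it suffices to check $Tx\subseteq T$, which by the remark (with $D=T_0$, $w=x$, $C=T$) reduces to $T_0 x\subseteq T$; and $T_0 x\subseteq T_0\cdot T_0\subseteq T_0\subseteq T$ because $T_0$ is a sub-semigroup. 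As $T$ is closed in the compact space $S$ it is compact, so $T$ is a compact sub-semigroup.

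For the ideal $I:=\cl{I_0}$ I would run exactly the same two-step template. To get $T\cdot I\subseteq I$: fix $w\in I$, reduce by the remark ($D=T_0$, $C=I$) to $T_0\cdot I\subseteq I$; then fix $x\in T_0\subseteq\Centre(S)$, use $xI=Ix$, reduce by the remark ($D=I_0$, $w=x$, $C=I$) to $I_0 x\subseteq I$, which holds since $I_0 x\subseteq I_0\cdot T_0\subseteq I_0$. To get $I\cdot T\subseteq I$: fix $w\in T$, reduce by the remark ($D=I_0$, $C=I$) to $I_0\cdot T\subseteq I$; then fix $z\in I_0\subseteq T_0\subseteq\Centre(S)$, use $zT=Tz$, reduce by the remark ($D=T_0$, $w=z$, $C=I$) to $T_0 z\subseteq I$, which holds since $T_0 z\subseteq T_0\cdot I_0\subseteq I_0$. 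Hence $I$ is a two-sided ideal in $T$, and being closed in $S$ it is compact.

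The only point that needs care — and the reason the hypothesis $T_0\subseteq\Centre(S)$, rather than merely commutativity of $T_0$, is used — is the bookkeeping in the reductions above: continuity is available only for right translations, so in every limiting step the set being closed up (always $T_0$ or $I_0$) must appear on the left and the fixed factor on the right, and the two places where the product naturally comes out in the wrong order are repaired precisely by commuting a central element past the closures $\cl{T_0}$ or $\cl{I_0}$. I do not anticipate any further difficulty; everything else is the elementary ideal arithmetic indicated above.
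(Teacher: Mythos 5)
Your proof is correct and follows essentially the same route as the paper's: two nested applications of continuity of right translation, with the hypothesis $T_0\subseteq\Centre(S)$ used exactly once per pass to swap the factor that needs to be "closed up" onto the left. The paper phrases this via iterated limits ($p\cdot q=\lim_{x\to p}x\cdot q$, then $q\cdot x=\lim_{y\to q}y\cdot x$) where you phrase it via preimages of closed sets, and your handling of the ideal case is somewhat more explicit than the paper's one-line remark, but the argument is the same.
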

\begin{proof}
  Let $p,q \in \E$. We need to check that $p \cdot q \in \E$. Because of continuity, we have $p \cdot q = \lim_{x \to p} x \cdot q$. Because $p \in \cl{\E_0}$, it suffices to restrict to $x \in \E_0$ when taking the limit. Because $\E_0 \subset Z(S)$, we have for $x \in \E_0$: $x \cdot q = q \cdot x$. Using continuity again, we can write $q \cdot x = \lim_{y \to q} y \cdot x$. Again, taking this limit we may restrict to $y \in \E_0$. Because $\E_0$ is a semigroup, we have $ y  \cdot  x \in \E_0$. Passing to the limit, we conclude that $q  \cdot  x \in \E$. Passing to the limit again, we finally find $p  \cdot  q \in \E$, as desired.

  For the additional part, note that if either $p \in I$ or $q \in I$, then we might restrict to $x \in I_0$ or $y \in I_0$. In either case, we would have $x \cdot y \in I_0$, and after limit transitions we conclude that $p \cdot q \in I$. Hence, $I$ is an ideal (in $\E$).
\end{proof}

The above results can be generalised somewhat, using the notion of topological centre. As previously, we begin with the necessary definition. Next, we make some simple observations.

\newcommand{\CentreTop}{\mathrm{Z}_{\operatorname{top}}}
\begin{definition}
\index{semigroup!centre}
  If $S$ is a left-topological semigroup, then by $\CentreTop(S)$ we denote the \emph{topological centre} of $S$, which is defined as the set of those $q \in S$ for which the right multiplication by $q$, i.e. the map $S \ni q \mapsto p \cdot q \in S$, is continuous.
\end{definition}

\begin{observation}
\index{semigroup!centre}
\index{semigroup!product}
  Is $S,T$ are left-topological semigroups, then $\Centre(S \times T) = \Centre(S) \times \Centre(T)$.
\end{observation}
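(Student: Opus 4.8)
The plan is to reduce the statement to the elementary topological fact that, for nonempty spaces, a product map $f \times g \colon A \times B \to C \times D$ (with $A\times B$ and $C\times D$ carrying the product topologies) is continuous if and only if both $f$ and $g$ are continuous. Since semigroups are by convention nonempty, this applies directly. The one observation that makes everything work is that right multiplication on a product semigroup \emph{splits} as a product of right multiplications.

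Concretely, I would first unwind the definition of $\CentreTop$. Fix $q = (q_1,q_2) \in S \times T$ and let $\rho_{q_1}, \rho_{q_2}$ denote right multiplication by $q_1$ in $S$ and by $q_2$ in $T$, respectively. Right multiplication by $q$ on $S \times T$ is the map
$$ S \times T \ni (p_1,p_2) \longmapsto (p_1 q_1,\ p_2 q_2) \in S \times T, $$
which is exactly the product map $\rho_{q_1} \times \rho_{q_2}$. As the topology on $S \times T$ is the product topology, $q \in \CentreTop(S \times T)$ means precisely that $\rho_{q_1} \times \rho_{q_2}$ is continuous. Applying the auxiliary fact with $A = C = S$ and $B = D = T$, this holds iff $\rho_{q_1}$ and $\rho_{q_2}$ are each continuous, i.e. iff $q_1 \in \CentreTop(S)$ and $q_2 \in \CentreTop(T)$. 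Chaining these equivalences yields $\CentreTop(S \times T) = \CentreTop(S) \times \CentreTop(T)$.

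For completeness I would record the one-line proof of the auxiliary fact: if $f$ and $g$ are continuous, then the two coordinate functions of $f \times g$, namely $f$ composed with the projection onto $A$ and $g$ composed with the projection onto $B$, are continuous, hence $f \times g$ is continuous; conversely, if $f \times g$ is continuous, then composing with the projection of $C \times D$ onto $C$ gives that $f \circ (\text{projection onto } A)$ is continuous, and precomposing this with the continuous slice inclusion $A \to A \times B$, $a \mapsto (a, b_0)$ for a fixed $b_0 \in B$, recovers $f$; symmetrically for $g$. There is essentially no obstacle here — the only point requiring care is the existence of the base point $b_0$ (and symmetrically $a_0$), which is exactly where nonemptiness of the factors is used, and this is automatic for semigroups. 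The argument runs in parallel to the earlier observation $\Centre(S\times T)=\Centre(S)\times\Centre(T)$, with "commutes with everything" replaced by "right multiplication is continuous".
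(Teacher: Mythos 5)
Your proof is correct and is essentially the paper's own argument written out in full: the paper dispatches this observation in one line by appealing to the characterisation of continuity on product spaces, which is precisely the auxiliary fact you state, prove, and apply after noting that right multiplication by $(q_1,q_2)$ on $S\times T$ splits as the product of the two right multiplications. You also correctly read the misprinted $\Centre$ as the topological centre $\CentreTop$, which is what the context and the paper's proof clearly intend, and your remark that nonemptiness of the factors is needed only for the converse direction of the auxiliary fact is a fair point of care that the paper leaves implicit.
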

\begin{proof}
  The claim follows immediately from characterisation of continuity on product spaces. 
\end{proof}

\begin{observation}
\index{semigroup!subsemigroup}
\index{semigroup!centre}
  If $S$ is a  left-topological semigroup, then $\Centre(S) \subset \CentreTop(S)$.
\end{observation}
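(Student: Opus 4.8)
The plan is to unwind the two relevant definitions; the inclusion then drops out immediately. Recall first that the hypothesis ``$S$ is left-topological'' says precisely that for each fixed $b \in S$ the right-translation map $\rho_b \colon S \ni a \mapsto a \cdot b \in S$ is continuous (this is just ``continuity of the multiplication in the left argument'' with the right-hand coordinate held fixed, exactly as in the corollary showing that $\cU \mapsto \cU\cdot\cV$ is continuous). Recall also that, by definition, $\CentreTop(S)$ is the set of those $q \in S$ for which the left-translation map $\lambda_q \colon S \ni p \mapsto q \cdot p \in S$ is continuous.

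Now I would take an arbitrary $q \in \Centre(S)$ and show $q \in \CentreTop(S)$. By the defining property of the algebraic centre, $q \cdot p = p \cdot q$ for every $p \in S$, so the two maps $\lambda_q$ and $\rho_q$ are literally the same function $S \to S$. Since $S$ is left-topological, $\rho_q$ is continuous; hence $\lambda_q = \rho_q$ is continuous, and therefore $q \in \CentreTop(S)$. As $q \in \Centre(S)$ was arbitrary, $\Centre(S) \subseteq \CentreTop(S)$.

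There is no genuine obstacle here: the statement is a one-line consequence of the definitions. The only point that warrants a moment's attention is the bookkeeping of sides — one must invoke that it is the \emph{right}-translations that come for free in a \emph{left}-topological semigroup, so that the function identity $\lambda_q = \rho_q$ furnished by centrality is used to transport continuity in the correct direction. (The analogous statement for topological and for right-topological semigroups, with the roles of $\lambda$ and $\rho$ interchanged, is proved identically.)
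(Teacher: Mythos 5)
Your proof is correct and is essentially the paper's own argument: for a central element the left- and right-translation maps coincide as functions, and the one that is continuous by the left-topological hypothesis transports continuity to the other, so the element lies in the topological centre. Your extra care in tracking which translation comes for free is welcome, since the paper's definition of $\CentreTop$ is worded loosely, but the substance is identical.
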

\begin{proof}
  It suffices to notice that for the elements of the centre, left multiplication and right multiplication coincite, and left multiplication is continuous by assumption.
\end{proof}

\begin{observation}
\index{ultrafilter!algebraic structure}
\index{semigroup!centre}
  If $X$ is a discrete semigroup, then $X \subset \CentreTop( \Ultrafilters{X})$.
\end{observation}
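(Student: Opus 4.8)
The plan is to show that for each $x \in X$ --- identified with the principal ultrafilter $\principal{x} = i(x)$ --- both translation maps $\cU \mapsto \cU \cdot \principal{x}$ and $\cU \mapsto \principal{x} \cdot \cU$ on $\Ultrafilters{X}$ are continuous, whence $\principal{x} \in \CentreTop(\Ultrafilters{X})$; letting $x$ range over $X$ then gives $X \subset \CentreTop(\Ultrafilters{X})$.

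The map $\cU \mapsto \cU \cdot \principal{x}$ is continuous by the Corollary asserting that $\mu$ is continuous in the left argument (take the fixed right-hand factor to be $\principal{x}$). For $\cU \mapsto \principal{x} \cdot \cU$ I would invoke the alternative description of the product from Lemma~\ref{prop:beta-X-as-semigroup-II}, $\cU \cdot \cV = \llim{\cU}{y}\llim{\cV}{z}\, i(y \cdot z)$, and collapse the principal factor using Example~\ref{A:exple:limit-principal} (where $\llim{\principal{x}}{y} g(y) = g(x)$), obtaining $\principal{x} \cdot \cU = \llim{\cU}{z}\, i(x \cdot z)$. Hence $\cU \mapsto \principal{x} \cdot \cU$ is precisely the map $\cU \mapsto \llim{\cU}{z} f(z)$ for the fixed function $f \colon X \to \Ultrafilters{X}$ given by $f(z) = i(x \cdot z)$, whose codomain is the compact Hausdorff space $\Ultrafilters{X}$; Proposition~\ref{prop:U-lim-is-continuous} then yields continuity at once. (Without the limit machinery, one can instead unwind Definition~\ref{A:def:beta-X-as-semigroup} to get $\principal{x} \cdot \cU = \{ A \in \cP(X) \setsep \ldiv{x}{A} \in \cU \}$, so that the preimage of a basic clopen set $\bar{A}$ under $\cU \mapsto \principal{x} \cdot \cU$ is $\{ \cU \setsep \ldiv{x}{A} \in \cU \} = \overline{\ldiv{x}{A}}$, which is open.)

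There is essentially no obstacle: the statement is an immediate consequence of material already in place --- the Corollary on left-continuity of $\mu$ together with Proposition~\ref{prop:U-lim-is-continuous}. The only points requiring a little care are the bookkeeping with the quotient operations of Definition~\ref{A:def:inverses} when simplifying $\principal{x} \cdot \cU$, and remembering that the identification $x \leftrightarrow \principal{x}$ is the one realising $X$ as a discrete subspace of $\Ultrafilters{X}$.
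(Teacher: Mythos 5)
Your proof is correct and is essentially the paper's own argument: the paper disposes of this observation with the single remark that it ``follows directly from how the semigroup structure is defined'', and your parenthetical computation $\principal{x}\cdot\cU=\{A\in\cP(X)\setsep \ldiv{x}{A}\in\cU\}$, with preimage $\overline{\ldiv{x}{A}}$ of the basic clopen set $\bar{A}$, is exactly that direct argument made explicit. The route through Lemma~\ref{prop:beta-X-as-semigroup-II} and Proposition~\ref{prop:U-lim-is-continuous} is a sound alternative packaging of the same fact, and verifying continuity of $\cU\mapsto\cU\cdot\principal{x}$ as well is harmless though redundant, since that direction is automatic in a left-topological semigroup.
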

\begin{proof}
  This follows directly from how semigroup structure is defined of $\Ultrafilters{X}$.
\end{proof}

The following is refinement of Proposition \ref{A:prop:algebra-closure-of-sgrp-is-sgrp}, with centre replaced by topological centre. 

\begin{proposition}\label{A:prop:algebra-closure-of-sgrp-is-sgrp-II}
\index{semigroup!centre}
\index{semigroup!subsemigroup}
Suppose that $S$ is a compact left-topological semigroup, and $\E_0 \subset \CentreTop	(S)$ is a sub-semigroup contained in the centre of $S$, and let $\E := \cl{\E_0}$ denote the closure of $\E_0$. Then, $\E$ is a (compact) sub-semigroup of $S$. Moreover, if $I_0 \subset \E_0$ is a (by necessity, two-sided) ideal and $I := \cl{I}$, then $I$ is a two sided ideal in $\E$.
\end{proposition}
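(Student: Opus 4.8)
The strategy is to reproduce the proof of Proposition~\ref{A:prop:algebra-closure-of-sgrp-is-sgrp} almost verbatim, the only change being that the single place where commutativity was used --- the identity $x\cdot q=q\cdot x$ for $x\in\Centre(S)$ --- is replaced by a direct appeal to continuity of left translation by elements of the topological centre. Since $S$ need not be first countable, all limits below are net limits; recall that $\E=\cl{\E_0}$ is closed by construction, hence compact, and that every point of $\E$ is the limit of a net lying in $\E_0$ (and similarly $I:=\cl{I_0}$ is the closure of $I_0$). I will use throughout that, $S$ being left-topological, the right translation $g\mapsto g\cdot h$ is continuous for every $h\in S$, and that, by the defining property of $\CentreTop(S)$, the left translation $g\mapsto x\cdot g$ is continuous for every $x\in\CentreTop(S)$; in particular for every $x\in\E_0$ and every $x\in I_0$.

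To see that $\E$ is a sub-semigroup, fix $p,q\in\E$ and a net $(x_\alpha)$ in $\E_0$ with $x_\alpha\to p$. Continuity of right translation by $q$ gives $p\cdot q=\lim_\alpha x_\alpha\cdot q$, so, $\E$ being closed, it suffices to check that $x\cdot q\in\E$ for each fixed $x\in\E_0$. For such an $x$, choose a net $(y_\beta)$ in $\E_0$ with $y_\beta\to q$; since $x\in\CentreTop(S)$, left translation by $x$ is continuous, whence $x\cdot q=\lim_\beta x\cdot y_\beta$. Each $x\cdot y_\beta$ lies in $\E_0$ because $\E_0$ is a sub-semigroup, so $x\cdot q\in\cl{\E_0}=\E$, and the two limit transitions together give $p\cdot q\in\E$.

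The ideal statement is the same computation, with one of the two factors approximated inside $I_0$ rather than $\E_0$. For $\E\cdot I\subseteq I$: given $p\in\E$ and $q\in I$, write $p\cdot q=\lim_\alpha x_\alpha\cdot q$ with $x_\alpha\in\E_0$; for fixed $x\in\E_0$ pick a net $(y_\beta)$ in $I_0$ with $y_\beta\to q$ and obtain $x\cdot q=\lim_\beta x\cdot y_\beta$ by continuity of left translation by $x$ (legitimate since $q\in\cl{I_0}$); since $I_0$ is a two-sided ideal of $\E_0$ we have $x\cdot y_\beta\in I_0$, hence $x\cdot q\in\cl{I_0}=I$, and then $p\cdot q\in I$ because $I$ is closed. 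The inclusion $I\cdot\E\subseteq I$ follows symmetrically, approximating $p\in I$ by a net in $I_0$ and using $x\cdot y_\beta\in I_0\cdot\E_0\subseteq I_0$. Thus $I$ is a two-sided ideal of $\E$.

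I expect no serious obstacle; the one point that genuinely needs attention --- and which I would flag in the write-up --- is checking that every ``pull the limit inside the product'' step is licensed by continuity in the variable that is actually moving: the outer transition uses that $S$ is left-topological (right translations are always continuous), while the inner transition uses that the fixed left factor lies in $\CentreTop(S)$. Because no continuity in an unavailable variable is ever invoked, the weakened hypothesis $\E_0\subseteq\CentreTop(S)$ suffices, exactly as the stronger hypothesis $\E_0\subseteq\Centre(S)$ did in Proposition~\ref{A:prop:algebra-closure-of-sgrp-is-sgrp}.
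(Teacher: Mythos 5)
Your proof is correct and is exactly the adaptation the paper intends: the paper's own proof of this proposition just says ``essentially the same as in Proposition \ref{A:prop:algebra-closure-of-sgrp-is-sgrp}'', and your replacement of the single commutativity step $x\cdot q = q\cdot x$ by continuity of left translation by $x\in\CentreTop(S)$ is precisely the modification required. The bookkeeping of which translation is continuous at each limit transition, and the two-sided treatment of the ideal claim, are both right.
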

\begin{proof}
    Essentially the same as in Proposition \ref{A:prop:algebra-closure-of-sgrp-is-sgrp}.
\end{proof}

\renewcommand{\E}{}
\renewcommand{\I}{}

 \section{Finitely additive measures}
\index{measure!finitely additive measure}
  Yet another way to look at ultrafilters is as a special kind of measures. More precisely, we show that there is a natural way to identify an ultrafilter $\cU \in \Ultrafilters{X}$ with a $\{0,1\}$-valued finitely additive measure on $X$. Note that this is a rather specific kind of a probabilistic measure, where each set is assigned measure either $0$ or $1$; we take the apportunity to cite an amusing way of putting this, found at the $n$-Category Caf\'{e }:
\begin{quotation}
If probability indicates your degree of belief, an ultrafilter is a probability measure for fundamentalists.
\end{quotation}
This approach leads to new intuitions, and allows us to borrow some new language  from measure theory.

We begin by defining a measure corresponding to an ultrafilter, and vice versa. Afterward, we prove that this correspondence is bijective.

\begin{definition}
\index{measure!finitely additive measure}
 Let $\cU$ be an ultrafilter. Then we define the associated finitely additive, $\{0,1\}$-valued measure $\mu$ on $X$ by the formula:
  $$ \mu(A) = 
    \begin{cases}
     1 & \text{ if } A \in \cU \\
     0 & \text{ if } A \not \in \cU
    \end{cases}
  $$
 Conversely, if $\mu$ is a finitely additive, $\{0,1\}$-valued measure on $X$ then we define the associated ultrafilter $\cU$ by the formula:
  $$ \cU = \{ A \in \cP(X) \setsep \mu(A) = 1\}. $$
\end{definition}

\begin{proposition}
\index{measure!finitely additive measure}
 The above definition gives a bijective correspondence between ultrafilters and finitely additive, $\{0,1\}$-valued measures.
\end{proposition}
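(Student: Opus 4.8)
The plan is to check three things in turn: (i) the rule $\cU \mapsto \mu$ really produces a finitely additive $\{0,1\}$-valued measure --- by which I mean a function $\mu\colon \cP(X) \to \{0,1\}$ with $\mu(\emptyset)=0$, $\mu(X)=1$, and $\mu(A\cup B)=\mu(A)+\mu(B)$ whenever $A\cap B=\emptyset$; (ii) the rule $\mu \mapsto \cU$ really produces an ultrafilter; and (iii) the two rules are mutually inverse. Point (iii) will be a one-liner once (i) and (ii) are settled, since in both constructions the set of ``measure-one'' sets is literally the ultrafilter, so composing either way returns the original object.

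For (i): the conditions $\mu(\emptyset)=0$, $\mu(X)=1$ are exactly filter axiom \ref{def:filter-prop:1}. Finite additivity I would verify by cases on which of the disjoint sets $A,B$ lie in $\cU$. The case $A,B\in\cU$ cannot occur, since then $\emptyset=A\cap B\in\cU$ by \ref{def:filter-prop:3}, contradicting \ref{def:filter-prop:1}. If exactly one lies in $\cU$, say $A$, then $A\subseteq A\cup B$ forces $A\cup B\in\cU$ by \ref{def:filter-prop:2}, so $\mu(A\cup B)=1=1+0$. If neither lies in $\cU$, then $A\cup B\notin\cU$ by the ultrafilter axiom \ref{def:ultrafilter-prop:4}, so all three values are $0$.

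For (ii): writing $\cU=\{A:\mu(A)=1\}$, axiom \ref{def:filter-prop:1} is immediate from $\mu(\emptyset)=0$, $\mu(X)=1$. For \ref{def:filter-prop:2}, if $A\subseteq B$ then $B$ is the disjoint union of $A$ and $B\setminus A$, so $\mu(B)=\mu(A)+\mu(B\setminus A)\geq\mu(A)$; since $\mu$ takes values in $\{0,1\}$ this gives $\mu(B)=1$ whenever $\mu(A)=1$. For \ref{def:filter-prop:3}, I would use the observation that $A\setminus B$ and $B\setminus A$ are disjoint: if $\mu(A\cap B)=0$, then from $A=(A\cap B)\cup(A\setminus B)$ and $B=(A\cap B)\cup(B\setminus A)$ (both disjoint unions) we get $\mu(A\setminus B)=\mu(B\setminus A)=1$, hence $\mu\bigl((A\setminus B)\cup(B\setminus A)\bigr)=2\notin\{0,1\}$, a contradiction; so $\mu(A\cap B)=1$. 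Finally, for axiom \ref{def:ultrafilter-prop:4}, from $\mu(A\cup B)=1$ and the disjoint decomposition $A\cup B=A\cup(B\setminus A)$ we get $\mu(A)+\mu(B\setminus A)=1$, so $\mu(A)=1$ or $\mu(B\setminus A)=1$; in the latter case the monotonicity just proven gives $\mu(B)=1$.

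There is no serious obstacle: the whole argument is a bookkeeping exercise in the filter and ultrafilter axioms. If one step deserves a pause, it is closure under intersection, where the trick is to detect the value of $\mu(A\cap B)$ via the disjoint pair $A\setminus B$, $B\setminus A$ rather than directly. I would finish by remarking that, by construction, ``$A\in\cU$'' and ``$\mu(A)=1$'' are the same statement, so the two assignments compose to the identity in either order, which establishes the bijection.
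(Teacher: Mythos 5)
Your proof is correct and follows essentially the same route as the paper: a direct case-by-case verification of the measure axioms from the ultrafilter axioms and vice versa, with the mutual-inverse claim reduced to the tautology that ``$A\in\cU$'' and ``$\mu(A)=1$'' are the same statement. The only (cosmetic) differences are that you verify the union form of the ultrafilter axiom where the paper verifies the equivalent complementation form, and you detect $\mu(A\cap B)$ via the disjoint pair $A\setminus B$, $B\setminus A$ where the paper uses the decomposition $A=(A\cap B)\cup(A\cap B^c)$ together with the already-established complement property.
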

\begin{proof}
  Let $\cU$ be an ultrafilter, and $\mu$ the associated measure. We need first to verify that $\mu$ is indeed what we declared it to be. It is immediate from the definition that it is $\{0,1\}$-valued. Property \ref{def:filter-prop:1} translates into $\mu(\emptyset) = 0$ and $\mu(X) = 1$. Finite additivity is equivalent to the statement that $\mu(A \cup B) = \mu(A) + \mu(B)$ for any disjoint sets $A,B$. Since $\cU$ is closed under finite intersections, it cannot be the case that $\mu(A) = \mu(B) = 1$. If $\mu(A)$ and $\mu(B)$ are $0$ and $1$ (in either order), then $A \cup B \in \cU$, and hence $\mu(A \cup B) = 1$, and the equality agrees. Finally, if $A,B \not \in \cU$ then by property \ref{def:ultrafilter-prop:6} we have $\mu(A \cup B) = 0$, so the equality agrees again.

  Similarly, let $\mu$ be a finitely additive, $\{0,1\}$-valued measure, and $\cU$ be the additional ultrafilter. Again, we need to verify that $\cU$ is really an ultrafilter. We have $\mu(X) = 0$ and $\mu(\emptyset) = 0$, so $X \in \cU$ and $\emptyset \not \in \cU$, so property \ref{def:filter-prop:1} holds. If $A \subset B$ and $A \in \cU$, then we can find $C$ such that $B = A \cup C$ where the sum is disjoint, namely $C:= B \setminus A$. We find that $\mu(B) = \mu(A) + \mu(C) \geq 1$, so $B \in \cU$ --- and thus property \ref{def:filter-prop:1} is satisfied. If $A \in \cP(X)$, then $\mu(A) \cup \mu(A^c) = \mu(X) = 1$, hence exactly one of $A,A^c$ belongs to $\cC$, and property \ref{def:ultrafilter-prop:6} is satisfied. Finally, if $A,B \in \cU$, then $1 = \mu(A) = \mu(A\cap B) + \mu(A \cap B^c)$, thus $A \cap B \in \cU$ as soon as $A \cap B^c \not \in \cU$. However, this follows directly from what has already been shown, since $A \cap B^c \subset B^c \not \in \cU$.
  
  It remains to show that the described relations are mutually inverse. Thus, suppose that $\cU$ is an ultrafilter, $\mu$ is the associated measure, and $\cV$ is the ultrafilter associated to the measure $\mu$. We then have:
  $$ A \in \cV \iff \mu(A) = 1 \iff A \in \cU, $$
  and thus $\cU = \cV$. In the same spirit, if $\mu$ is a finitely additive, $\{0,1\}$-valued measure, $\cU$ is the associated ultrafilter, and $\nu$ is the measure associated to $\cU$, then
  $$ \mu(A) = 1 \iff A \in \cU \iff \nu(A) = 1, $$
  and since the measures take only two values, $\mu = \nu$ and  we are done.
\end{proof}

It is fairly easy to describe the correspondence explicitly in the one concrete example of an ultrafilter we have.

\begin{example}
\index{ultrafilter!principal ultrafilter}
 If $\principal{x}$ is the principal ultrafilter associated to a point $x \in X$ then the associated measure $\mu$ is the point measure $\delta_x$ centered at $x$.
\end{example}

\begin{remark}
\index{limit!generalised limit}
\index{measure!finitely additive measure}

  The generalised limit can be thought of as the natural generalisation of the integral. Because there does not seem to be a standard way of defining the integral for finitely additive measures, we don't make a rigorous statement out of this observation. Instead, we note that if $\cU \in \Ultrafilters{X}$ is an ultrafilter with associated measure $\mu$, and $f:\ X \to \hat{\RR}$ is a function with $\llim{\cU}{x} f(x) = \alpha$, then for any $\varepsilon > 0$, for $\mu$-almost all $x$ we have $\alpha - \varepsilon < f(x) < \alpha + \varepsilon$, and we encourage the reader work out the details.

  Similarly, extending the group operation to the ultrafilters can be though of as a generalisation of convolution of measures. Again, we leave the details to the reader. 
\end{remark}

  Thinking of ultrafilters in terms of measures suggests that it makes sense to speak of statements being true $\cU$-almost everywhere, where we identify the  ultrafilter $\cU$ with the corresponding measure. For completeness' sake, we define a statement (involving the variable $x$) to be true $\cU$-almost everywhere (abbreviated $\cU$-a.e.) with respect to $x$ if and only if the set of those $x$ for which the statement is true is $\cU$-big, or belongs to $\cU$; alternatively, we may say that the statement is true for $\cU$-almost all $x$ (abbreviated $\cU$-a.e. $x$).

  This is a well behaved notion, and has the additional useful property that either a statement is true $\cU$-a.e. or it's negation is true is true $\cU$-a.e. Conjunction of finitely many statements that are true $\cU$-a.a. is again true $\cU$-a.a., but this rule does not hold for countable conjunction (as is the case in ordinary measure theory) since $\cU$ corresponds to merely a finitely additive measure (as opposed to countably additive one). A more subtle caveat is that this type of quantification is sensitive to ordering: For a statement $\phi$ in $x$ and $y$, one could first make the statement that for $\cU$-a.a. $y$, $\phi$ is true, and then say that this statement holds true for $\cU$-a.a. $x$. This is a useful statement, however it is not equivalent to the similar statement has reversed order of quantification. As an extreme example, one may say that for $\cU$-a.a. $n$, for $\cU$-a.a. $m$ it holds that $m > n$, but of course it is not true when quantification is interchanged. This issue can be traced back to the failure of Fubini's theorem's analogue for finitely additive measures.

\chapter{Combinatorial applications.}
\label{C:chapter} 
\lhead{Chapter \ref{C:chapter}. \emph{Ramsey theory}} 

In this chapter study the interplay between ultrafilters and combinatorics. On one hand, application of ultrafilters allows us to prove interesting statements about combinatorial objects, such as arithmetic progressions or sets of finite sums, especially the ones related to partitions. Most of the results derived here were first proved by more classical means, but ultrafilter approach tends to produce proofs that are more succinct and to be more amenable to generalisations.

As a warm-up, we prove the Ramsey theorems for graphs and hypergraphs. Our goal there is not so much derivation of the result, which is classical and not too difficult, but showing how ultrafilters can be applied in combinatorics, and how they produce significantly shorter and more elegant proofs. In the subsequent sections, we investigate some notions of largeness and their relation to ultrafilters. We begin with $\IP$-sets, which are fairly natural from the combinatorial viewpoint, and are connected to idempotent ultrafilters. These can be used to derive Hindman theorem. Secondly, we treat $\C$-sets, which are more special than $\IP$-sets and are easiest to define with help of ultrafilters. As an elegant application of $\C$-sets, we prove van der Waerden's theorem and Hales-Jewett theorem.

Perhaps the most important motivation behind our present inquiry is that it provides a connection between ultrafilters and notions of largeness which are of independent interest. The most important results we prove in Chapter \ref{B:chapter} assert that a given set of ``return times'' belongs to an ultrafilter, assuming that some additional conditions are satisfied. Given that ultrafilers are not a part of ``mainstream'' mathematics, these results are not immediately appealing in the basic form. However, statements including $\IP$-sets and $\C$-sets are much more easily understood.

Most of the results presented in this chapter can be found in \cite{hindman-strauss}. The proof of Ramsey's theorem is inspired by \cite{arrows-by-galvin}. Extremely accessible treatment of many of the topics discussed here can be found in \cite{hindman-survey-1} and \cite{hindman-survey-2}. A good discussion of Hindman's Theorem for general semigroup can be found in \cite{golan-hindman}. For the original ultrafilter proof of van der Waerden, see \cite{vd-waerden-with-ultrafilters}. For a discussion of partition theorems like Hales-Jewett, see \cite{partitions-of-words}.  

\section{Ramsey theorem}

	We begin by discussing simple applications of ultrafilters to Ramsey theory for graphs. These applications are basic insofar as they only use existence of a single ultrafilter, with no reference to any algebraic or topological properties. Intuitively speaking, for these applications ultrafilters may be seen as a limit objects attached to an infinite number of restrictions to subsequences. Indeed, the classical proofs of the discussed results follow by repeated transitions to subsequences.

	To formulate the theorems, we introduce some notation. The reader familiar with graph theory will find the following definition standard.

\begin{definition}\index{graph}
  An (undirected, simple) graph $G = (V,E)$ consists of a set of vertices $V$ and a set of edges $E \subset \binom{V}{2} := \{ \{u,v\} \setsep u,v \in V, u \neq v \}$. The full graph with vertex set $V$ is the one with the maximal possible set of edges: $E = \binom{V}{2}$.

  An $r$-colouring of the graph $G$ is an arbitrary map $c:\ E \to [r]$, where we . If $e \in E$ is an edge and $c(e) = \gamma$, we say that $e$ has colour $\gamma$. If $u, v \in V$, we write $c(u,v)$ rather than $c(\{u,v\})$ for the colour of the edge $\{u,v\}$; this way $c$ can be identified with a symmetric function on a subset of $V^2$.

  A subset $U \subset V$ is said to be monochromatic if and only if there is a colour $\gamma$ such that $c(e) = \gamma$ for all edges $e \in \binom{U}{2} \cap E$.
\end{definition}

	Note that a colouring of a graph is essentially the same as finite partition of the set of egdes. The difference is purely notational: we find it more convenient to speak of monochromatic sets than of sets whose edges which lie entirely in a one cell of a given partition.

We are now able to formulate and prove the classical Ramsey theorem for graphs. It is a fundamental result which lies at the very basis of Ramsey theory. 

\begin{theorem}[Ramsey theorem, infinitary version]\label{C:thm:ramsey-inf}
  Let $G $ be the full graph with an infinite set of vertices $V$, and let $c: \binom{V}{2} \to [r]$ be a colouring of the edges of $V$ with $r$ colours. Then there exists an infinite monochromatic subgraph of $V$.  
\end{theorem}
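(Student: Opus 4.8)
The plan is to use a single non-principal ultrafilter on $V$ as a device for making a globally consistent choice of colour at each vertex, in the spirit of the informal remark that ultrafilters act like limit objects attached to infinitely many passages to subsequences. No algebraic or topological structure of $\Ultrafilters{V}$ is needed, only the existence of one non-principal ultrafilter.

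First I would fix a non-principal ultrafilter $\cU \in \Ultrafilters{V}$, which exists by Corollary \ref{cor:ultrafilters-exist} since $V$ is infinite. For each vertex $v \in V$ and each colour $\gamma \in [r]$ put $N_\gamma(v) := \{ w \in V \setsep w \neq v,\ c(v,w) = \gamma \}$. The sets $N_1(v), \dots, N_r(v)$ are pairwise disjoint and their union is $V \setminus \{v\}$, which lies in $\cU$ because $\cU$ is non-principal. Hence exactly one of them belongs to $\cU$: at least one by property \ref{def:ultrafilter-prop:5}, at most one by disjointness together with \ref{def:filter-prop:3} and \ref{def:filter-prop:1}. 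Let $\phi(v) \in [r]$ be that colour; this defines a map $\phi :\ V \to [r]$, i.e. an $r$-colouring of the vertices. Since $V = \bigcup_{\gamma \in [r]} \phi^{-1}(\gamma) \in \cU$ and $[r]$ is finite, property \ref{def:ultrafilter-prop:5} gives a colour $\gamma_0$ with $W := \phi^{-1}(\gamma_0) \in \cU$; in particular $W$ is infinite, and by construction $N_{\gamma_0}(v) \in \cU$ for every $v \in W$.

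Then I would construct the monochromatic subgraph by a recursion that maintains a descending chain of members of $\cU$. Set $W_0 := W$ and pick any $v_1 \in W_0$. Given $v_1, \dots, v_n$ and $W_n \in \cU$ with $W_n \subseteq W$ and $v_n \in W_{n-1}$, choose $v_{n+1} \in W_n$ (possible since $\emptyset \notin \cU$, so $W_n \neq \emptyset$), and set $W_{n+1} := W_n \cap N_{\gamma_0}(v_{n+1})$. As $v_{n+1} \in W_n \subseteq W$ we have $N_{\gamma_0}(v_{n+1}) \in \cU$, so $W_{n+1} \in \cU$ by \ref{def:filter-prop:3}, and $W_{n+1} \subseteq W_n \subseteq W$. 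For $i < j$ one has $v_j \in W_{j-1} \subseteq W_i \subseteq N_{\gamma_0}(v_i)$; since $v_i \notin N_{\gamma_0}(v_i)$ this forces $v_i \neq v_j$, and by definition of $N_{\gamma_0}(v_i)$ it gives $c(v_i, v_j) = \gamma_0$. Thus $\{v_1, v_2, \dots\}$ is an infinite monochromatic subset of $V$, which proves the theorem.

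There is no real obstacle once the ultrafilter is introduced; the only point requiring care is the bookkeeping in the recursion — verifying that each $W_n$ stays in $\cU$ (hence nonempty, so the next vertex exists) and observing that the single inclusion $v_j \in W_i$ for $j > i$ simultaneously yields distinctness of the chosen vertices and the required colour equality. The same scheme extends to the hypergraph version by induction on the size of the coloured sets, colouring each $(k-1)$-set by the $\cU$-almost-everywhere colour of its one-element extensions.
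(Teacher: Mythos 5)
Your proposal is correct and is essentially the paper's own argument: your $\phi(v)$ is exactly the paper's $c(u) := \llim{\cU}{v} c(u,v)$ (a $\cU$-limit in a finite discrete space is just the colour whose preimage is $\cU$-large), your $W$ is its set $A$, your $N_{\gamma_0}(v)$ for $v \in W$ is its $A(v)$, and your descending chain $W_n$ is the same recursion as intersecting $A$ with $A(v_0), \dots, A(v_{n-1})$ at each step. The only (welcome) addition is that you make the distinctness of the $v_i$ explicit via $v_i \notin N_{\gamma_0}(v_i)$, which the paper leaves implicit.
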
 \index{Ramsey}
\begin{proof}
  Let $\cU$ be a non-principal ultrafilter on the set of vertices $V$.

  Define $c(u) := \llim{\cU}{v} c(u,v)$ and $c := \llim{\cU}{v} c(v)$. Note that $c(u)$ is well defined because $c(u,v)$ makes sense for $\cU$-a.a. $v$. This says that for any $u$, the edge $c(u,v)$ has colour $c(u)$ for $\cU$-a.a. $v$, and the colour $c(u)$ is $c$ for $\cU$-a.a. $u$. Additionally, let $A := \{ u \in V \setsep c(u) = c \}$ denote the set of those $u$ whose typical edge colour $c(u)$ takes the typical value $c$. Also, let $A(u) := \{ v \in V \setsep c(u,v) = c(u) \}$ denote for each $u$ the set of the endpoints $v$ of edges $(u,v)$ whose colour $c(u,v)$ takes the typical value $c(u)$. Note that $A \in \cU$ and for any $u$ also $A(u) \in \cU$.

  We will now construct by induction a sequance of vertices $\seq{v}{n}{\NN}$ so that $c(v_i,v_j) = c$ for $i < j$. For $n=0$, we take $v_0$ to be any element of $A$. For $n = 1$, we take $v_1$ to be any element of $A \cap A(v_0)$. Generally, if $v_0,v_1,\dots,v_{n-1}$ have been constructed, let $v_n$ be any element of the set $A \cap A(v_0) \cap A(v_2) \cap \dots \cap A(v_{n-1})$, which is non the empty set because it belongs to $\cU$. Since $v_i \in A$ for any $i \in \NN$, we have $c(v_i) = c$. Since $v_j \in A(v_i)$ for $j \in \NN,\ j> i$, we have $c(v_i,v_j) = c(v_i) = c$. Thus, all edges between $v_n$ indeed have the colour $c$, as claimed. 
\end{proof}

	The above theorem speaks of infinite graphs. We can formulate a finitary version of the above theorem, replacing infinite graphs by arbitrary large finite graphs. Arguably, the finite version of the theorem is more concrete. 

\begin{theorem}[Ramsey theorem, finitary version]\label{C:thm:ramsey-fin}
  Suppose that a number of colours $r$ and size $M$ are fixed. Then, there exists $N$ (dependent on $M$ and $r$) such that if $G$ is a full graph with at least $N$ vertices, whose edges are coloured in $r$ colours, then $G$ contains a monochromatic subgraph with at least $M$ vertices. 
\end{theorem}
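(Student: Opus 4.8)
The plan is a compactness argument: deduce the finitary statement from the infinitary one, Theorem \ref{C:thm:ramsey-inf}, by gluing together arbitrarily large finite colourings along an ultrafilter. Suppose, for contradiction, that the conclusion fails for some fixed $r$ and $M$. Then for every $N$ there is an $r$-colouring of the full graph on $N$ vertices with no monochromatic set of $M$ vertices; restricting to the first $N$ of these vertices, we may assume it is a map $c_N :\ \binom{[N]}{2} \to [r]$, where $[N] := \{1,\dots,N\}$, with no monochromatic $M$-subset. It suffices to manufacture from the $c_N$ a single $r$-colouring of the full graph on the infinite vertex set $\NN$ having no infinite monochromatic subgraph, which contradicts Theorem \ref{C:thm:ramsey-inf}.

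Fix a non-principal ultrafilter $\cU$ on $\NN$. For an edge $\{i,j\}$ with $i<j$, the colour $c_N(i,j)$ is defined whenever $N>j$, that is, for $\cU$-almost all $N$; modifying $c_N(i,j)$ arbitrarily for the remaining ($\cU$-small) set of $N$, we obtain a genuine map $\NN \to [r]$, and since $[r]$ is a finite — hence compact Hausdorff — discrete space, Proposition \ref{A:prop:U-limit-exists} guarantees that
$$c(i,j) := \llim{\cU}{N} c_N(i,j) \in [r]$$
exists and is independent of the modification. This defines a colouring $c$ of the full graph on $\NN$; note that, $[r]$ being discrete, $c(i,j) = c_N(i,j)$ for $\cU$-almost all $N$.

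Finally I would verify that $c$ has no monochromatic subgraph on $M$ vertices (a fortiori none that is infinite). If $U = \{u_1 < \dots < u_M\}$ were monochromatic of colour $\gamma$ under $c$, then for each of the $\binom{M}{2}$ edges $\{u_a,u_b\}$ the set $\{\,N \setsep c_N(u_a,u_b)=\gamma\,\}$ belongs to $\cU$; intersecting these finitely many members of $\cU$ yields a member of $\cU$, which is in particular infinite, so it contains some $N > u_M$. For that $N$ the set $U \subseteq [N]$ is monochromatic of colour $\gamma$ under $c_N$, contradicting the choice of $c_N$. Hence $c$ contradicts Theorem \ref{C:thm:ramsey-inf}, proving the finitary version. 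The whole argument is routine once arranged this way; the only point needing a moment's care is the legitimacy of forming $\llim{\cU}{N} c_N(i,j)$ — namely that the expression is defined $\cU$-almost everywhere and takes values in a compact Hausdorff space — which is exactly what the remarks above and Proposition \ref{A:prop:U-limit-exists} supply.
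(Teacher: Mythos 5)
Your proof is correct, but it takes a different route from the one in the paper. The paper deduces the finitary version model-theoretically: it introduces the first-order theory of $r$-coloured graphs, observes that failure of the finitary statement yields, for every $N$, a model with at least $N$ elements and no monochromatic $M$-subset, and invokes \L{}o\'{s}'s Theorem to produce an infinite model with no monochromatic $M$-subset, contradicting Theorem \ref{C:thm:ramsey-inf}. Your argument is instead the ``classical compactness argument'' that the paper itself sketches in the remark immediately following the theorem: you take the hypothetical bad colourings $c_N$ and glue them into a single colouring of the full graph on $\NN$ by taking $\llim{\cU}{N} c_N(i,j)$ along a non-principal ultrafilter, then check that monochromatic $M$-sets would survive back to some finite $c_N$. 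All the steps are sound: the limit exists by Proposition \ref{A:prop:U-limit-exists} since $[r]$ is finite discrete, the ``$c(i,j)=c_N(i,j)$ for $\cU$-almost all $N$'' observation is exactly right, and the finite intersection of $\binom{M}{2}$ members of $\cU$ being infinite (hence containing some $N>u_M$) closes the contradiction. What your version buys is self-containedness at this point in the text: it uses only Chapter \ref{A:chapter} material, whereas the paper's proof leans on the ultraproduct machinery that is only developed in Chapter \ref{D:chapter}. What the paper's version buys is brevity and immediate generalisation (the identical sentence works verbatim for hypergraphs and for any property expressible by first-order sentences), at the cost of a forward reference.
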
 \index{Ramsey}
\begin{proof}
  Suppose that for some fixed $r,\ M$, the claim fails. Let $\Theory$ be the theory of $r$-coloured graphs (the language contains $r$ binary relations $R_1, R_2, \dots, R_r$ corresponding to the colouring of the edges, and the axioms are such that each edge has exactly one colour). The statement that the claim fails for the fixed values of $r$ and $M$ means that for any $N$ we can find a model for $\Theory$ which has at least $N$ elements and has no $M$-element monochromatic subgraph --- both conditions easily expressible as first order sentences. From \L o\'s Theorem it follows that $\Theory$ needs to have a model in which all these sentences are true: this means that the model has to be infinite, and have no $M$-element monochromatic subgraphs. But this contradicts the infinitary version of the theorem, since the infinite model needs to have an infinite monochromatic subgraph, let alone $M$-element.
\end{proof}

\begin{remark}
  The finitary version of Ramsey theorem can also be deduced by a more classical compactness argument. In this argument, we start with a countably infinite vertex set $V = \seq{v}{i}{\NN}$, and use the failure of the finitary version to construct for each $N$ an $r$-colouring of $V_N = \seq{v}{i}{[N]}$ with no monochromatic subgraph of size $M$. %By assumption, on any initial segment $\seq{v}{i}{\NN}$ there exists a colouring.
The difficulty lies in combining the many finite colourings into one. This can be done by multiple passing to subsequences, using compactness of $[r]^V$, or using ultrafilter limits.
\end{remark}

We can repeat very simialar considerations for hypergraphs, which are a natural generalisation of graphs. We begin by giving the relevant definitions.

\begin{definition}\index{graph!hypergraph} \index{graph!colouring}
  A $k$-hypergraph $G = (V,E)$ consists of a set of vertices $V$ and a set of edges $E \subset \binom{V }{ k} := \{ \{u_1,u_2,\dots,u_k\} \setsep u_i \in V,\ i \neq j \imply u_i \neq u_j \}$. The full $k$-hypergraph with vertex set $V$ is the one with the maximal possible set of edges: $E = \binom{V}{ k}$.

  An $r$-colouring of the $k$-hypergraph $G$ is an arbitrary map $c:\ E \to [r]$. If $e \in E$ is an edge and $c(e) = \gamma$, we say that $e$ has colour $\gamma$. If $u_1,u_2,\dots,u_k \in V$, we write $c(u_1,u_2,\dots,u_k)$ rather than $c(\{u_1,u_2,\dots,u_k\})$ for the colour of the edge $\{u_1,u_2,\dots,u_k\}$; this way $c$ can be identified with a symmetric function on a subset of $V^k$.

  A subset $U \subset V$ is said to be monochromatic if and only if $U$ there is a colour $\gamma$ such that $c(e) = \gamma$ for all edges $e \in \binom{U }{ k}$.
\end{definition}

The follwing theorem is the obvious generalisation of Theorem \ref{C:thm:ramsey-inf} to hypergraphs. The proof is almost a verbatim copy of the the proof for graphs.

\begin{theorem}[Ramsey theorem for hypergraphs, infinitary version]
\index{Ramsey}\index{graph!hypergraph}
  Let $G $ be the full $k$-hypergraph on with an infinite set of vertices $V$, and let $c:\ \binom{V }{ k} \to [r]$ be a colouring of the edges of $V$ with $r$ colours. Then there exists an infinite monochromatic subgraph of $V$.  
\end{theorem}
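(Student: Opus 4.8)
The plan is to follow the proof of Theorem~\ref{C:thm:ramsey-inf} nearly verbatim, replacing the single ultrafilter limit used there by a cascade of $k-1$ nested limits. Fix a non-principal ultrafilter $\cU$ on $V$. For every tuple $\bar u = (u_1, \dots, u_j)$ of pairwise distinct vertices with $0 \le j \le k-1$, define
$$ \tilde c(u_1, \dots, u_j) := \llim{\cU}{v}\ \tilde c(u_1, \dots, u_j, v),$$
where the recursion is anchored at length $k-1$ by declaring $\tilde c(u_1, \dots, u_k) := c(u_1, \dots, u_k)$ for distinct $u_1,\dots,u_k$. Each of these limits exists and is unambiguous: for a fixed distinct tuple $(u_1, \dots, u_j)$ the quantity $\tilde c(u_1, \dots, u_j, v)$ is defined for all $v$ outside the finite set $\{u_1, \dots, u_j\}$, hence for $\cU$-almost all $v$, and $[r]$ is finite, so in particular a compact Hausdorff space; Proposition~\ref{A:prop:U-limit-exists} then applies. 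In particular the empty tuple yields a single colour $c := \tilde c(\,)$.

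For a distinct tuple $\bar u$ of length $\le k-1$ set $A(\bar u) := \{ v \in V \setsep \tilde c(\bar u, v) = \tilde c(\bar u) \}$; directly from the definition of the limit, $A(\bar u) \in \cU$. I would then construct a sequence $\seq{v}{n}{\NN}$ of pairwise distinct vertices with the property that $c(v_{i_1}, \dots, v_{i_k}) = c$ whenever $i_1 < \dots < i_k$. Suppose $v_0, \dots, v_{n-1}$ are already chosen. There are only finitely many increasing tuples of length $\le k-1$ with entries drawn from $\{v_0, \dots, v_{n-1}\}$, so the set
$$ B_n := \bigl( V \setminus \{v_0, \dots, v_{n-1}\} \bigr) \cap \bigcap_{\bar u} A(\bar u),$$
with $\bar u$ ranging over exactly those tuples, is a finite intersection of members of $\cU$ (the set $V \setminus \{v_0,\dots,v_{n-1}\}$ is cofinite, hence in $\cU$ because $\cU$ is non-principal). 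Therefore $B_n \in \cU$ is non-empty, and we let $v_n$ be any of its elements.

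It remains to read off monochromaticity. Fix $i_1 < \dots < i_k$. Since $v_{i_k}$ was picked from $B_{i_k} \subset A(v_{i_1}, \dots, v_{i_{k-1}})$, we get $c(v_{i_1}, \dots, v_{i_k}) = \tilde c(v_{i_1}, \dots, v_{i_k}) = \tilde c(v_{i_1}, \dots, v_{i_{k-1}})$; since $v_{i_{k-1}} \in A(v_{i_1}, \dots, v_{i_{k-2}})$ this equals $\tilde c(v_{i_1}, \dots, v_{i_{k-2}})$, and so on, the chain terminating at $\tilde c(v_{i_1}) = \tilde c(\,) = c$. Hence $c(v_{i_1}, \dots, v_{i_k}) = c$, so $\{ v_n \setsep n \in \NN \}$ is the desired infinite monochromatic subgraph. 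The only delicate point is the bookkeeping of the telescoping chain together with the nested limits; this is precisely the $k=2$ reasoning of Theorem~\ref{C:thm:ramsey-inf} unrolled $k-1$ times, and one could equally well organise the argument as an induction on $k$, feeding the $(k-1)$-hypergraph colouring $\bar u \mapsto \tilde c(\bar u)$ on $(k-1)$-element sets into the inductive hypothesis.
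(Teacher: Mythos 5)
Your proposal is correct and is essentially the paper's own proof: the paper likewise extends $c$ to all subsets of size $\le k$ by iterated $\cU$-limits, defines the sets $A(f)$ of vertices realising the typical colour, and picks $v_n$ from a finite intersection of such $\cU$-large sets, reading off monochromaticity by the same telescoping chain. The only difference is cosmetic — you index the extended colouring by ordered tuples while the paper uses subsets — and your remark about distinctness via the cofinite set is a small point the paper handles implicitly through $A(f) \subset V \setminus f$.
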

\begin{proof}
  Let $\cU$ be a non-principal ultrafilter on the set of vertices $V$.

  We begin by extending the colouring $c$ to all subsets of $V$ with cardinality not greater than $k$ by the following inductive procedure; thus extended $c$ will describe the colour that is typical for a given subset of vertices. For $k$-element subsets, the map $c\ \binom{V}{k} \to [r]$ is already given. Suppose that for some $l < k$, the map $c:\ \binom{V}{l+1} \to [r]$ has  been defined. Then, for an $l$-element set distinct verices $f = \{u_1,u_2,\dots,u_l\}$, the colour $c(f \cup \{v\})$ is defined for all $v \not \in f$, so in particular for $\cU$-a.a. $v$. It therefore makes sense to define the typical colour for $f$ as: $c(f) := \llim{\cU}{v}c(f \cup \{v\})$. We additionally define the set $A(f) \in \cU$ to be the set where the typical colour for $f$ is realised: $A(f) := \{ v \in V \setminus f \setsep c(f \cup \{v\}) = c(f) \}$; by definition $A(f) \in \cU$. This finishes the inductive step. In particular, we have defined $c(\emptyset)$, which we will denote simply by $c$.

  We will now construct by induction a sequance of vertices $\seq{v}{n}{\NN}$ so that for all $f \subset \set{v}{i}{\NN}$ with $\# f \leq k$ we have $c(f) = c$. This will finish the proof, because the condition implies that in particular for edges $e \subset \set{v}{i}{\NN}$ we have $c(e) = c$, and therefore $\set{v}{i}{\NN}$ is monochromatic. Suppose that for some $n \geq 0$, the vertices $v_0,v_1,\dots,v_{n-1}$ have already been constructed so that for $f \subset \set{v}{i}{[n]}$ with $\# f \leq k$ we have $c(f) = c$ (we allow $n=0$, where no previous vertices are constructed). Let us define 
  $$B := \bigcap \{ A(f) \setsep f \subset \set{v}{i}{[n]},\ \# f \leq k \}.$$
  Since all the sets $A(f)$ in the intersection are $\cU$-big, also $B$ is $\cU$-big, and in particular non-empty. We claim that for any choice of $v_n$ in $B$, the required conditions will hold for $v_n$. We need to check that for any $f' \subset \set{v}{i}{[n+1]}$ with $\# f' \leq k$ we have $c(f') = c$. If $v_n \not \in f'$, this is satisfied by the inductive assumption, so we may assume that $f' = f \cup \{v_n\}$ with $f \subset \set{v}{i}{[n]}$ and $\# f < k$. Since $v_n \in B \subset A(f)$, we have, by the definition of $A(f)$, the equality $c(f') = c(f \cup \{v_n\}) = c(f)$. By the inductive assumption, $c(f) = c$. Thus, we have $c(f') = c$, as claimed. This finishes the inductive step, and thus the proof.
  
\end{proof}

	Just as before, we can make the theorem more concrete by referring to finite objects. We again prefer model-theoretic methods of deriving the finitary version, but a more classical solution is to use compactness arguments.

\begin{theorem}[Ramsey theorem for hypergraphs, finitary version]
\index{Ramsey}\index{graph!hypergraph}
  For any number of colours $r$ and size $M$, there exists $N$ (dependent on $M$ and $r$) such that if $G$ is a full $k$-hypergraph with at least $N$ vertices, whose edges are coloured in $r$ colours, then $G$ contains a monochromatic subgraph with at least $M$ vertices. 
\end{theorem}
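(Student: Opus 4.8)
The plan is to imitate, essentially verbatim, the model-theoretic proof of the finitary Ramsey theorem for graphs (Theorem~\ref{C:thm:ramsey-fin}), with the infinitary graph theorem replaced by the infinitary hypergraph theorem just established. So I would fix $r$, $k$, $M$ and argue by contradiction, assuming that for no $N$ does every full $k$-hypergraph on $N$ vertices, $r$-coloured, contain a monochromatic subset of size $M$.

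Next I would set up the first-order theory $\Theory$ of $r$-coloured $k$-hypergraphs: a language with $r$ relation symbols of arity $k$, together with axioms forcing these relations to be symmetric, to ignore repeated arguments (or alternatively working only with $k$-element subsets), and to assign to each $k$-set of distinct vertices exactly one of the $r$ colours. The crucial observation is that two kinds of properties are expressible by first-order sentences over $\Theory$: for each $N$, the sentence ``there exist at least $N$ distinct vertices''; and the single sentence ``there is no set of $M$ distinct vertices all of whose $k$-subsets receive one common colour'' — this is genuinely one sentence, being a finite combination ranging over the $r$ colours and over the $\binom{M}{k}$-many $k$-subsets of an $M$-element set, both finite in number. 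The assumed failure of the theorem for $(r,k,M)$ says precisely that for each $N$ there is a finite $r$-coloured full $k$-hypergraph, hence a model of $\Theory$, satisfying both ``at least $N$ vertices'' and ``no monochromatic $M$-set''.

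Then I would invoke compactness. The set of sentences consisting of the axioms of $\Theory$, the sentence ``no monochromatic $M$-set'', and all the sentences ``at least $N$ vertices'' for $N \in \NN$ is finitely satisfiable by the previous step, hence has a model; concretely one may take the ultraproduct of the witnessing finite hypergraphs along a non-principal ultrafilter on $\NN$ and apply \L o\'s's Theorem. The resulting structure is an $r$-colouring of the full $k$-hypergraph on an infinite vertex set with no monochromatic subset of size $M$, and therefore certainly no infinite monochromatic subset — contradicting the infinitary Ramsey theorem for hypergraphs. This contradiction furnishes the desired $N$. (As in the graph case, a more hands-on compactness argument — colouring an initial segment $V_N$ of a fixed countable vertex set and extracting a limit colouring via repeated passage to subsequences, or via ultrafilter limits — would work equally well.)

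I do not anticipate any real obstacle: the only points needing a line of care are verifying that ``no monochromatic $M$-subset'' is a single first-order sentence (it is, both relevant index sets being finite) and that ``full $r$-coloured $k$-hypergraph'' is finitely axiomatizable; everything else is identical to the proof already given for graphs.
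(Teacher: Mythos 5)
Your proposal is correct and follows essentially the same route as the paper: reduce to the infinitary hypergraph theorem by a compactness argument, realised via an ultraproduct of the putative finite counterexamples and \L o\'s's Theorem, after observing that ``at least $N$ vertices'' and ``no monochromatic $M$-subset'' are first-order sentences over the theory of $r$-coloured $k$-hypergraphs. The extra care you take in axiomatising symmetry and in checking that the second condition is a single sentence (a finite disjunction/conjunction over the $r$ colours and the $\binom{M}{k}$ subsets) is a welcome refinement of details the paper leaves implicit, but the argument is the same.
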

\begin{proof}
  Suppose that for some fixed $r,M$, the claim fails. Let $\Theory$ be the theory of $r$-coloured $k$-hypergraphs (the language contains $r$ relations $R_1, R_2, \dots, R_r$ with $k$ arguments corresponding to the colouring of edges, and the axioms are such that each edge has exactly one colour). The statement that the claim fails for the fixed values of $r,M$ means that for any $N$ we can find a model for $\Theory$ which has at least $N$ elements and has no $M$-element monochromatic subgraph --- both conditions easily expressible as first order sentences. From \L o\'s Theorem it follows that $\Theory$ needs to have a model in which all these sentences are true: this means that the model has to be infinite (at least $N$ vertices for every $N$), and have no $M$-element subgraphs. But this contradicts the infinitary version of the theorem, since the infinite model needs to have an infinite monochromatic subgraph, let alone $M$-element.
\end{proof}

\section{$\IP$-sets and idempotent ultrafilters}
\renewcommand{\Group}{X}
\index{IP@$\IP$!IP-set@$\IP$-set}
\index{ultrafilter!idempotent|(}
	In this section we introduce and study an important notion of combinatorial largeness: $\IP$-sets and $\IP^*$-sets. These make sense in an arbitrary commutative semigroup %\footnote{
	%If truthe be told, things work just as well in a non-commutative 	setting, except one has to be careful about the order of operations. Rewriting this section in a non-commutative situation is on the top of the to-do list.}
, but as usual the most interesting example is provided by the natural numbers. In fact, one can even work with non-commutative semigroups and obtain similar results, but we restrict to the commutative case. On one hand these concepts are fairly natural, and tied closely to even more natural syndeticity. On the other hand, they bear a close relation to ultrafilters. This makes them a convenient bridge between ultrafilters and concrete mathematics.
	Our main result in this section is the ultrafilter proof Hindman's theorem. The presented proof is due to Galvin and Glazer, and is commonly considered to be one of the most elegant results in combinatorial number theory. The first known proof is due to Hindman \cite{Hindman-original} and uses only elementary tools --- at the price of being rather lengthy and complicated. We follow the treatment by Hindman-Strauss \cite{hindman-strauss} and Bergelson \cite{Berg-survey-IP}.

	Throughout this section, $X$ stands for a commutative discrete semigroup. As usual, we begin with some definitions.

\begin{definition}[Finite sums and products]\index{FS}\index{FP}\index{finite sum|see{PS}}\index{finite product|see{FP}} 
  Let $\bx = \seq{x}{n}{\NN}$ be a sequence of elements of a commutative semigroup $\Group$. If $\Group$ is written multiplicatively, i.e. $\Group = (\Group,\cdot)$, then we define the family of finite products of $\bx$ to be:
  $$ \FP{ \bx } := \left\{ \prod_{i \in I} x_i \setsep I \subset \NN,\ 0 < \card{I} < \aleph_0 \right\}.$$
  Likewise, if $\Group = (\Group,+)$ is written additively, then we define the family of finite sums of $x$ to be:
  $$ \FS{ \bx } := \left\{ \sum_{i \in I} x_i\setsep I \subset \NN,\ 0 < \card{I} < \aleph_0 \right\}.$$
\end{definition}

\begin{definition}[$\IP$-sets]
\index{IP@$\IP$!IP-set@$\IP$-set}
\index{FS}
  A set $A \subset \Group$ is said to be an additive $\IP$-set by definition if and only if $A \supset \FS{\bx}$ for some sequence $\bx$. ($\IP$ stands for idempotent, \cite{Berg-survey-IP}, or infinite-dimensional parellopiped \cite{tao-blog-3}). Similarly, $A$ is said to be a multiplicative $\IP$-set by definition if and only if $A \supset \FP{\bx}$ for some sequence $\bx$.
\end{definition}

\begin{remark}
  In the setting of an arbitrary commutative semigroup, the difference between $\FS{\cdot}$ and $\FP{\cdot}$ is purely notational. In practice, these notions are normally applied in the context where both $+$ and $\cdot$ have fixed conventional meanings, such as in $\NN$.

  Some authors require $\IP$-sets to be \emph{precisely} of the form $\FS{\bx}$ (respectively $\FP{\bx}$) for some sequence $\bx$. However, following the rationalisation of Bergelson et al. \cite{}, we require a weaker condition of inclusion, since begin an IP-set should be a notion of ``largeness'', and thus should be preserved under taking supersets. On a more practical note, it makes the characterisation which we will prove shortly more elegant.
\end{remark}

\begin{example}
	Consider $X = \NN$, and take $x_n := 10^n$. Then $\FS{\bx}$ consists precisely of the positive integers whose digits are only $0$'s and $1$'s.

	We never assume that $x_n$ have to be distinct, nor that their sums need to be distinct. If $e \in X$ is idempotent, then taking $x_n := e$ shows that $\{e\}=\FS{\bx}$ is an $\IP$-set. In this case it also happens that the principal ultrafilter $\principal{e}$ is idempotent. 
\end{example}

The definition of $\IP$-sets we provided used only elementary properties of $X$, without ever mentioning ultrafilters. The purpose of the following two lemmas is to establish a connectio between $\IP$-sets and idempotent ultrafilters. First, we show how algebraic structure of an ultrafilter implies combinatorial richness of its members.

\begin{lemma}\label{lem:hindman-preparation}\label{C:lem:IP<=in-idempotent}
\index{Hindman}
\index{ultrafilter!idempotent}

\index{IP@$\IP$!IP-set@$\IP$-set}
   If $(\Group,+)$ is a commutative semigroup, $\cU \in \Ultrafilters{X}$ is an idempotent ultrafilter and $A \in \cU$, then $A$ is an $\IP$-set.
\end{lemma}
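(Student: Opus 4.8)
The plan is to construct the sequence $\bx = \seq{x}{n}{\NN}$ by induction, maintaining at each stage a "tail set" $A_n \in \cU$ with the property that $A_n \subset A$ and $\FS{(x_i)_{i \geq n}}$ will be forced to stay inside $A_0 = A$. The fundamental tool is the following observation, which I would isolate as a preliminary claim: if $\cU$ is idempotent and $B \in \cU$, then the set $B^\star := \{ y \in B \setsep \ldiv{y}{B} \in \cU \}$ again belongs to $\cU$, and moreover for every $y \in B^\star$ we have $\ldiv{y}{B^\star} \in \cU$ (so that $B^\star \cap \ldiv{y}{B^\star} \in \cU$). The first assertion follows because $B \in \cU = \cU \cdot \cU$ means exactly $\{y \setsep \ldiv{y}{B} \in \cU\} \in \cU$, and intersecting with $B \in \cU$ gives $B^\star \in \cU$. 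The second assertion is the real point: for $y \in B^\star$, membership $\ldiv{y}{B^\star} \in \cU$ should be checked by writing out $\ldiv{y}{B^\star}$ in terms of $\ldiv{y}{B}$, $\ldiv{y}{\{z \setsep \ldiv{z}{B} \in \cU\}}$, and using idempotency once more; this is the slightly fiddly computation I would not grind through here, but it is the standard Galvin--Glazer lemma.

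Granting the claim, here is the induction. Set $A_0 := A^\star$ (so $A_0 \in \cU$, $A_0 \subset A$). Having chosen $x_0, \dots, x_{n-1}$ and a set $A_n \in \cU$ with $A_n \subset A$ and such that for every nonempty $I \subset \{0,\dots,n-1\}$ the partial sum $\sum_{i \in I} x_i$ lies in $A$, and additionally $A_n \subset B_n^\star$ where $B_n$ is an auxiliary set carrying the inductive bookkeeping, I pick any $x_n \in A_n$. Then I define $A_{n+1} := A_n \cap \ldiv{x_n}{A_n}$, which is in $\cU$ by the claim (applied with $B = A_n$, or rather its $\star$-version, using that $x_n \in A_n \subset B_n^\star$). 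The point of intersecting with $\ldiv{x_n}{A_n}$ is that any element $y \in A_{n+1}$ satisfies both $y \in A$ and $x_n + y \in A_n \subset A$; iterating, any finite sum built from $x_n$ together with later terms lands in $A$.

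The verification that $\FS{\bx} \subset A$ then goes by a second (finite, downward) induction on the structure of a finite nonempty index set $I \subset \NN$: writing $m = \min I$ and $I' = I \setminus \{m\}$, if $I' = \emptyset$ then $\sum_{i \in I} x_i = x_m \in A_m \subset A$; if $I' \neq \emptyset$ then by the nesting $A_{m+1} \supset A_{\min I' }$ one shows $\sum_{i \in I'} x_i \in A_{m+1} \subset \ldiv{x_m}{A_m}$, whence $x_m + \sum_{i \in I'} x_i = \sum_{i \in I} x_i \in A_m \subset A$. Thus $A \supset \FS{\bx}$, so $A$ is an $\IP$-set by definition.

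The main obstacle is purely the set-theoretic juggling in the preliminary claim — getting $\ldiv{y}{B^\star} \in \cU$ for $y \in B^\star$ — since this is where idempotency $\cU \cdot \cU = \cU$ is used in an essential, slightly non-obvious way via the definition $B \in \cU \cdot \cU \iff \{y \setsep \ldiv{y}{B} \in \cU\} \in \cU$. Everything after that is routine bookkeeping with nested sets in $\cU$ and the elementary identity $\ldiv{x}{(A \cap B)} = \ldiv{x}{A} \cap \ldiv{x}{B}$ already recorded in the excerpt. I would also take care, when invoking commutativity, only to use it to identify $\FS{\bx}$ with the set of sums over index sets (so that the order of summation is immaterial); the ultrafilter manipulations themselves only need the semigroup axioms.
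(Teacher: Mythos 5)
Your proposal is the Galvin--Glazer argument, which is also what the paper uses, but packaged differently. The paper never introduces the operation $B \mapsto B^\star$: instead it keeps as its inductive invariant the single condition $A_n := \bigcap_{I \subset [n]} \left(A - \sum_{i \in I} x_i\right) \in \cU$, invokes idempotence afresh at each step to get $B := \{x \setsep A_n - x \in \cU\} \in \cU$, picks $x_n \in A_n \cap B$, and then verifies $A_{n+1} \in \cU$ by the one-line identity $A_{n+1} = (A_n - x_n) \cap A_n$. Your version front-loads all uses of idempotence into the preliminary $\star$-claim and reduces the induction to bookkeeping; the paper's version avoids stating any auxiliary lemma but pays for it by re-deriving the relevant $\cU$-membership inside the induction. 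Both are standard and both work; the paper's invariant is arguably cleaner than your ``auxiliary set $B_n$ carrying the inductive bookkeeping,'' which as written is vague --- the honest statement is that $A_n$ equals the intersection $\bigcap_{I}\ldiv{\left(\sum_{i \in I} x_i\right)}{A^\star}$ over all $I \subset \{0,\dots,n-1\}$, and each new term $\ldiv{\left(x_n + \sum_{i\in I}x_i\right)}{A^\star}$ lies in $\cU$ because $x_n + \sum_{i\in I}x_i \in A^\star$.

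The one genuine gap is the second assertion of your preliminary claim, which you explicitly decline to prove: that $\ldiv{y}{B^\star} \in \cU$ for every $y \in B^\star$. This is not routine bookkeeping --- it is the entire content of the lemma, the only place idempotence is used in an essential way, and the step the paper does spell out (in its own formulation). Your parenthetical sketch is the right one and it does close: $\ldiv{y}{B^\star} = \ldiv{y}{B} \cap \{z \setsep \ldiv{z}{\left(\ldiv{y}{B}\right)} \in \cU\}$, the first factor is in $\cU$ since $y \in B^\star$, and the second is in $\cU$ because $\ldiv{y}{B} \in \cU = \cU \cdot \cU$. In a submitted proof this computation must appear; as it stands the argument defers exactly the point being examined.
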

\begin{proof}
  We will inductively construct a sequence $\seq{x}{i}{\NN}$ for which $\FS{\bx} \in A$. In $n$-th step, the initial fragment $\seq{x}{i}{[n]}$ is assumed to be given, and we construct $x_{n}$. At each step of the construction, we require that two conditions should be satisfied.  Firstly, for any $\emptyset \neq I \subset [n]$, we require that $\sum_{i \in I} x_i \in A$. Secondly, let $A_n := \displaystyle\bigcap_{I \subset [n]} \left(A - \sum_{i \in I} x_i \right)$;
\footnote{We remind that $A - x = \{y \in X \setsep y+x \in A\}$. Here $A - \sum_{i \in \emptyset}\dots := A$.}
  we require that $A_n \in \cU$.

  Suppose that for some $n \in \NN$, the initial fragment $\seq{x}{i}{[n]}$ has already been constructed so that the requirements are satisfied. We allow $n=0$, which corresponds to no elements being constructed, and the requirement that $A_0 = A \in \cU$. We wish to construct $x_n$. By the inductive assumption $A_n \in \cU$. Since $\cU + \cU = \cU$, we can equivalently express this condition by saying that the set $B:= \{x \in \Group \setsep A_n - x \in \cU\}$ belongs to $\cU$. Since $\cU$ is closed under intersections, we also have $A_n \cap B \in \cU$. In particular, $A_n \cap B \neq \emptyset$, so we may select $x_n \in A_n \cap B$. We claim that any such $x_n$ satisfies the requirements.

  Note that by definition we have $A_{n+1} = (A_n-x_n) \cap A_n$. Thus, $A_{n+1} \in \cU$ follows from $A_n-x_n \in \cU$ and $A_n \in \cU$. The first of these conditions follows from $x_n \in B$, and the definition of $B$, while the second is the inductive assumption.
  
  Similarly, note that any index set $\emptyset \neq I \subset [n+1]$ is either contained in $[n]$, or can be written as $I = \{x_n\} \cup I'$ with $I' \subset[n]$. We need to show that $\sum_{i \in I} x_i \in A$. In the case $I \subset [n]$, this follows form the inductive assumption, so suppose that $I =  \{x_n\} \cup I'$ with $I' \subset[n]$. Then the requirement can be equivalently expressed as: $x_n \in A - \sum_{i \in I'} x_i$. This follows immediately from $x_n \in A_n$.

\end{proof}

\begin{remark}
	Note that the above proof really shows more than is stated in the theorem formulation. Namely, at each step, we choose $x_n$ as an arbitrary element of a given set which is $\cU$-large. This means, that we can make additional requirements of $x_n$. Most importantly, if $X = \NN$ we can require that $x_n$ be arbitrarily large (with respect to $x_1,x_2,\dots,x_{n-1}$).
\end{remark}

As the next step, we prove a statement converse to the above Lemma  \ref{lem:hindman-preparation}: we show that given combinatorially rich set, one can find an algebraically interesting ultrafilter to which it belongs.

\begin{lemma}\label{lem:hindman-converse}\label{C:lem:IP=>in-idempotent}
\index{Hindman}
\index{ultrafilter!idempotent}
\index{IP@$\IP$!IP-set@$\IP$-set}
   If $(\Group,+)$ is a commutative semigroup, and $A \in \cP(X)$ is an $\IP$-set, then there exists an idempotent ultrafilter $\cU \in \Ultrafilters{X}$ such that $A \in \cU$.

  Moreover, if $\bx = \seq{x}{n}{\NN}$ is a sequence and $\sigma $ denotes the left shift on $X^\NN$ (so that $\sigma^m \bx = \left( x_{m+n} \right)_{n \in \NN}$) then there exists an idempotent ultrafilter $\cU \in \Ultrafilters{X}$ such that $\FS{\sigma^m \bx} \in \cU$ for all $m$.
\end{lemma}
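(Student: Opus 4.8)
The plan is to derive the first assertion from the second. Indeed, if $A$ is an $\IP$-set then $A\supseteq\FS{\bx}$ for some $\bx=\seq{x}{n}{\NN}$, and $\FS{\bx}=\FS{\sigma^0\bx}$; so once we produce an idempotent $\cU\in\Ultrafilters{X}$ with $\FS{\sigma^m\bx}\in\cU$ for every $m$, upward closure of the filter $\cU$ gives $A\in\cU$. Thus it suffices to prove the ``moreover'' part, and from now on $\bx$ is a fixed sequence. Working inside $\beta X=\Ultrafilters{X}$, put
\[
 T:=\bigcap_{m\in\NN}\overline{\FS{\sigma^m\bx}}.
\]
Each $\overline{\FS{\sigma^m\bx}}$ is clopen by Proposition \ref{lem:properties-of-bar} and nonempty, since $\FS{\sigma^m\bx}\neq\emptyset$ and $i(\FS{\sigma^m\bx})\subseteq\overline{\FS{\sigma^m\bx}}$. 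Because a tail of $\bx$ is contained in every earlier tail, we have $\FS{\sigma^{m+1}\bx}\subseteq\FS{\sigma^m\bx}$, so the sets $\overline{\FS{\sigma^m\bx}}$ form a decreasing chain of nonempty closed subsets of the compact space $\beta X$ (Proposition \ref{A:prop:beta-X-is-cmpct}); hence $T$ is nonempty, and it is closed, being an intersection of closed sets.

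The heart of the argument is checking that $T$ is a subsemigroup of $(\beta X,\cdot)$. Let $p,q\in T$ and fix $m$; by Definition \ref{A:def:beta-X-as-semigroup}, written additively, it must be shown that the set $\{x\in X:\FS{\sigma^m\bx}-x\in q\}$ lies in $p$. I claim this set contains $\FS{\sigma^m\bx}$, which suffices since $\FS{\sigma^m\bx}\in p$. So fix $x\in\FS{\sigma^m\bx}$, say $x=\sum_{i\in I}x_i$ with $\emptyset\neq I\subseteq\{m+1,m+2,\dots\}$ finite, and set $k:=\max I$. For any $y\in\FS{\sigma^k\bx}$, say $y=\sum_{j\in J}x_j$ with $\emptyset\neq J\subseteq\{k+1,k+2,\dots\}$ finite, the index sets $I$ and $J$ are disjoint subsets of $\{m+1,m+2,\dots\}$, so $x+y=\sum_{\ell\in I\cup J}x_\ell\in\FS{\sigma^m\bx}$. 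Hence $\FS{\sigma^k\bx}\subseteq\FS{\sigma^m\bx}-x$, and since $\FS{\sigma^k\bx}\in q$ (as $q\in T$), upward closure gives $\FS{\sigma^m\bx}-x\in q$, proving the claim. Therefore $\FS{\sigma^m\bx}\in p\cdot q$, and since $m$ was arbitrary, $p\cdot q\in T$.

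To finish: $T$ is a closed subsemigroup of the compact Hausdorff left-topological semigroup $\beta X$, hence is itself such a semigroup, so by Ellis's theorem in the form of Corollary \ref{cor:idempotetnes-exist-in-beta-X} it contains an idempotent ultrafilter $\cU$. By construction $\FS{\sigma^m\bx}\in\cU$ for every $m$, which is the ``moreover'' statement, and specialising to $m=0$ together with $A\supseteq\FS{\bx}$ and upward closure of $\cU$ yields $A\in\cU$. The one genuinely non-routine point is the closure of $T$ under $\cdot$; this is exactly why the lemma is stated with all shifts $\sigma^m\bx$ rather than with $\FS{\bx}$ alone, since the slack of passing to a further tail $\sigma^k\bx$ is precisely what lets the summand $x$ be absorbed into an $\FS$-set.
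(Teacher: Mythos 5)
Your proposal is correct and follows essentially the same route as the paper: the same compact intersection $\bigcap_m \overline{\FS{\sigma^m\bx}}$ of the closures of the tails, the same verification that it is a subsemigroup by absorbing a summand $x\in\FS{\sigma^m\bx}$ into a further tail $\FS{\sigma^{\max I}\bx}$, and the same appeal to Ellis's theorem for a closed subsemigroup. No gaps.
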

\begin{proof}
  By taking $\bx$ with $\FS{\bx} \subset A$, we see that it will suffice to prove the second part of the statement. 

  Let us define $A_n := \FS{\sigma^n \bx}$, and take $\Gamma := \bigcap_{n \in \NN} \bar{A}_n$. Since for all $n$ we have $A_n \supseteq A_{n+1}$ and consequently $\bar{A}_n \supseteq \bar{A}_{n+1}$, the set $\Gamma$ is the intersection of a descending family of non-empty compact sets, and hence is a non-empty and compact. 

  We shall prove that $\Gamma$ is also a sub-semigroup of $\Ultrafilters{X}$. We need to check for that for $\cU,\cV \in \Gamma$ we have $\cU + \cV \in \Gamma$. It will suffice to show that for any $n \in \NN$, we have $\cU + \cV \in \bar{A}_n$, or equivalently $A_n \in \cU + \cV$. By definition of $\cU + \cV$, this is equivalent to:
  $$ B:= \{ y \in X \setsep A_n - y \in \cV \} \in \cU $$

  We claim that if $y \in A_n$, then $A_n - y \in \cV$. Indeed, any such $y$ can be expressed as $y = \sum_{i \in I} x_i $ with $\min I \geq n$. If we set $m := \max I + 1$, then arbitrary element of $A_m$ is of the form $z = \sum_{j \in J} x_j$ where $\min J > \max I$. In particular, $I \cap J = \emptyset$ and $\min I \cup J \geq n$, so $y + z = \sum_{i \in I \cup J} x_i \in A_n$. This means that $A_m \subset A_n - y$. Since $A_m \in \cV$ by $\Gamma \subset \bar{A}_m$, we hence have $A_n - y \in \cV$, as claimed.

  From the above claim it follows that: $ B \supset A_n $. Since $A_n \in \cU$, we thus have $ B \in \cU$, which finishes the proof that $\Gamma$ is a sub-semigroup.

  We have shown that $\Gamma$ is a compact sub-semigroup of $\Ultrafilters{X}$. By Corollary \ref{cor:idempotetnes-exist-in-beta-X}, $\Gamma$ contains an idempotent. By construction, this idempotent contains all sets $A_n$.
\end{proof}

Finally, we combine the two Lemmas and reach ultrafilter characterisation of ultrafilters.

\begin{corollary}[Characterisation of $\IP$-sets via ultrafilters]\label{cor:hindman-iff-verison}\label{C:cor:IP-characterisation-via-ultrafilters}
\index{Hindman}
\index{ultrafilter!idempotent}
\index{IP@$\IP$!IP-set@$\IP$-set}
	For a set $A \in \cP(X)$, the following conditions are equivalent:
\begin{enumerate}
\item The set $A$ is an $\IP$-set.
\item There exists an idempotent ultrafilter $\cU \in \Ultrafilters{X}$ such that $A \in \cU$.
\end{enumerate}
\end{corollary}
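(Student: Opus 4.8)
The plan is to obtain the corollary as an immediate combination of the two preceding lemmas, which between them already contain both implications; nothing beyond assembling them is needed.

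For the implication $(1) \imply (2)$: assume $A$ is an $\IP$-set, so by definition there is a sequence $\bx = \seq{x}{n}{\NN}$ of elements of $X$ with $\FS{\bx} \subset A$. Apply Lemma \ref{lem:hindman-converse} (in its second, stronger form) to this sequence: it yields an idempotent ultrafilter $\cU \in \Ultrafilters{X}$ with $\FS{\sigma^m \bx} \in \cU$ for all $m$, in particular $\FS{\bx} \in \cU$. Since filters are closed under supersets and $\FS{\bx} \subset A$, we conclude $A \in \cU$, which is exactly the ultrafilter required by $(2)$.

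For the implication $(2) \imply (1)$: assume there is an idempotent ultrafilter $\cU \in \Ultrafilters{X}$ with $A \in \cU$. Then Lemma \ref{lem:hindman-preparation} applies verbatim and tells us that $A$ is an $\IP$-set.

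There is no real obstacle here, since each direction reduces directly to one of the lemmas; the only subtlety worth flagging is that the argument uses the convention adopted in this text that an $\IP$-set need only \emph{contain} some $\FS{\bx}$ (rather than equal it), which is what makes both the appeal to Lemma \ref{lem:hindman-converse} and the passage from $\FS{\bx} \in \cU$ to $A \in \cU$ legitimate. One could also note in passing that the equivalence immediately gives the ``finite sums'' form of Hindman's theorem: if $X = A_1 \cup \dots \cup A_r$, fix any idempotent $\cU \in \Ultrafilters{X}$ (which exists by Corollary \ref{cor:idempotetnes-exist-in-beta-X}), pick the unique $i$ with $A_i \in \cU$, and apply $(2) \imply (1)$ to $A_i$.
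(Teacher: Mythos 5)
Your proof is correct and follows exactly the route the paper intends: the corollary is obtained by combining Lemma \ref{lem:hindman-preparation} for one direction and Lemma \ref{lem:hindman-converse} for the other (indeed, the first part of the latter lemma is already verbatim the implication from $\IP$-set to membership in an idempotent, so your detour through the shifted sums $\FS{\sigma^m\bx}$ is harmless but unnecessary). Nothing is missing.
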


The characterisation in the above corollary justifies the resolution of $\IP$ to ``idempotent'', as in ``sets which are members of idempotent ultrafilters''. To see how one can justify the resolution involving parallelopiped, recall that the $n$-dimensional parallelopipeds are the figures given (up to a translation) by the formula:
	$$ P = \left\{ \sum_{i=1}^n t_i x_i \setsep t_i \in [0,1] \right\} 
	= \operatorname{Conv} \Big( \FS{x_1,\dots,x_n} \cup \{0\} \Big)
,$$
	where $x_1,x_2,\dots,x_n \in \RR^n$ are some linearly independent vectors and $\operatorname{Conv}$ denotes the convex hull.  
The author prefers to connect $\IP$-sets with idempotence, but acknowledges that opinions on this issue may differ.

\newcommand{\Idempotents}{E}
It is possible to refine the above characterisation slightly, to include a wider class of ultrafilters. 

\begin{observation}\label{C:obs:IP-characterisation-via-ultrafilters-refined}
\index{Hindman}
\index{ultrafilter!idempotent}
\index{IP@$\IP$!IP-set@$\IP$-set}
  Let $\Idempotents$ denote the set of all idempotent ultrafilters on $X$. For a set $A \in \cP(X)$, the following conditions are equivalent:
\begin{enumerate}
\item The set $A$ is an $\IP$-set.
\item There exists an ultrafilter $\cU \in \cl \Idempotents$ such that $A \in \cU$.
\end{enumerate}
Moreover, if $\cU$ is an ultrafilter such that for any $A \in \cU$ is an $\IP$-set, then $\cU \in \cl E$.
\end{observation}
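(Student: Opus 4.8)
The plan is to reduce everything to Corollary \ref{C:cor:IP-characterisation-via-ultrafilters} together with the description of the topology on $\Ultrafilters{X}$. Recall from Definition \ref{def:ultrafilter-topology} that the sets $\bar{B}$ with $B \in \cP(X)$ form a base of open sets, and that $\cU \in \bar{B}$ holds precisely when $B \in \cU$; hence $\{\bar{B} \setsep B \in \cU\}$ is a neighbourhood base at $\cU$. With this dictionary in hand, ``$\cU$ lies in $\cl \Idempotents$'' translates into ``for every $A \in \cU$, the basic set $\bar{A}$ meets $\Idempotents$''.

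First I would dispatch the equivalence. If $A$ is an $\IP$-set, Corollary \ref{C:cor:IP-characterisation-via-ultrafilters} produces an idempotent ultrafilter $\cU$ with $A \in \cU$; since $\cU \in \Idempotents \subseteq \cl \Idempotents$, condition (2) holds. Conversely, suppose $\cU \in \cl \Idempotents$ and $A \in \cU$. Then $\bar{A}$ is an open neighbourhood of $\cU$, so it must intersect $\Idempotents$: there is an idempotent ultrafilter $\cV$ with $\cV \in \bar{A}$, i.e.\ $A \in \cV$. By Lemma \ref{C:lem:IP<=in-idempotent} (the easy implication of Corollary \ref{C:cor:IP-characterisation-via-ultrafilters}), $A$ is an $\IP$-set.

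For the ``moreover'' statement I would argue the same way. Assume every $A \in \cU$ is an $\IP$-set, and let $\bar{A}$ be an arbitrary basic neighbourhood of $\cU$, so that $A \in \cU$. By hypothesis $A$ is an $\IP$-set, so Corollary \ref{C:cor:IP-characterisation-via-ultrafilters} (using Lemma \ref{C:lem:IP=>in-idempotent}) furnishes an idempotent ultrafilter $\cV$ with $A \in \cV$, that is, $\cV \in \bar{A} \cap \Idempotents$. Since every basic neighbourhood of $\cU$ meets $\Idempotents$, we conclude $\cU \in \cl \Idempotents$. (Note that the forward direction of the equivalence is really the special case of this statement in which $\cU$ is already known to lie in $\cl \Idempotents$, so a single argument covers both.)

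There is essentially no obstacle here: all the substance has already been packed into Corollary \ref{C:cor:IP-characterisation-via-ultrafilters} and Lemmas \ref{C:lem:IP<=in-idempotent}--\ref{C:lem:IP=>in-idempotent}. The only point requiring a small amount of care is the routine translation between ``$\bar{A}$ is a neighbourhood of $\cU$'' and ``$A \in \cU$'', which is immediate from the definition of the base $\{\bar{B}\}$, and keeping straight which of the two implications of the $\IP$/idempotent correspondence is being invoked at each step.
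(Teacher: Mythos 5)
Your proof is correct and follows essentially the same route as the paper: both reduce the statement to Corollary \ref{C:cor:IP-characterisation-via-ultrafilters} plus the observation that the basic open sets $\bar{A}$ with $A \in \cU$ form a neighbourhood base at $\cU$, so that membership in $\cl \Idempotents$ amounts to every such $\bar{A}$ meeting $\Idempotents$. Your remark that the forward implication of the equivalence is subsumed by the ``moreover'' clause is a small but accurate streamlining of the paper's presentation.
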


\begin{proof}
  If $A$ is an $\IP$-set, then we already know by \ref{C:cor:IP-characterisation-via-ultrafilters} that there exists $\cU \in \Idempotents$ such that $A \in \cU$. In the other direction, the condition that $A \in \cU$ for some $\cU \in \cl \Idempotents$ is equivalent to $\bar{A} \cap \cl \Idempotents \neq \emptyset$. By the definition of the closure, this means that $\bar{A} \cap  \Idempotents \neq \emptyset$, which in turn is equivalent to $A$ being an $\IP$-set by \ref{C:cor:IP-characterisation-via-ultrafilters}.

  For the additional part, suppose $\cU$ is such that $A \in \cU$ implies that $A$ is $\IP$. It follows that that any base neighbourhood of $\cU$ of the form $\bar{A}$ has non-trivial intersection with $ \Idempotents$. Hence, $\cU \in \cl \Idempotents$, as claimed.
\end{proof}

Having established the above characterisation, we are able to derive partition regularity of $\IP$-sets. 

\begin{definition}[Partition regularity]\label{C:def:partition-regularity}
\label{partition regular}
	Let $\cA \subset \cP(X)$ be a family of sets. Then $\cA$ is said to be \emph{partition regular} if and only for any $A \in \cA$ and any finit partition $A = A_1 \cup A_2 \cup \dots \cup A_k$, we have $A_i \in \cA$ for some $i$.
\end{definition}

\begin{corollary}[Partition regularity of $\IP$ sets]\label{C:cor:IP-part-reg}
	If $A \in \cP(X)$ is an $\IP$-set, and $A = A_1 \cup A_2 \cup \dots \cup A_k$ is a finite partition of $A$, then $A_i$ is an $\IP$-set for some $i$. Equivalently, the family of $\IP$-sets is partition regular.
\end{corollary}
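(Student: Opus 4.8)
The plan is to reduce the corollary directly to the ultrafilter characterisation in Corollary \ref{C:cor:IP-characterisation-via-ultrafilters}, exploiting the fact that membership in a fixed ultrafilter is itself a partition-regular notion. This is precisely the sort of situation where the ultrafilter machinery pays off: partition regularity of a combinatorial family becomes almost a triviality once the family is identified with ``sets belonging to some ultrafilter from a prescribed class''.

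First I would invoke Corollary \ref{C:cor:IP-characterisation-via-ultrafilters}: since $A$ is an $\IP$-set, there exists an idempotent ultrafilter $\cU \in \Ultrafilters{X}$ with $A \in \cU$. Next, given the finite partition $A = A_1 \cup A_2 \cup \dots \cup A_k$, I would apply the ultrafilter property \ref{def:ultrafilter-prop:5} (the finite-union form of the defining property): from $A_1 \cup \dots \cup A_k = A \in \cU$ it follows that $A_i \in \cU$ for some index $i$. Finally, since $\cU$ is idempotent and $A_i \in \cU$, Lemma \ref{lem:hindman-preparation} (equivalently, the reverse implication of Corollary \ref{C:cor:IP-characterisation-via-ultrafilters}) shows that $A_i$ is an $\IP$-set. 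This completes the argument, and the equivalent reformulation in terms of the family of $\IP$-sets being partition regular is immediate from Definition \ref{C:def:partition-regularity}.

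There is essentially no obstacle here; the entire content has been front-loaded into Corollary \ref{C:cor:IP-characterisation-via-ultrafilters}, whose proof in turn rested on the two Hindman lemmas and the existence of idempotents in compact sub-semigroups. The only point deserving a word of care is that the partition is of $A$, not of all of $X$; but this is harmless, because $A \in \cU$ and $A$ is the union of the $A_i$, so property \ref{def:ultrafilter-prop:5} applies verbatim to the sets $A_i$ without any need to pass to a partition of the ambient semigroup. I would also remark in passing that the same three-line argument proves partition regularity of any family of the form $\{A : A \in \cU \text{ for some } \cU \in \Lambda\}$ for an arbitrary class $\Lambda$ of ultrafilters, which is why the result is so cheap once the characterisation is in hand.
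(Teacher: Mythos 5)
Your proof is correct and follows exactly the paper's own argument: obtain an idempotent ultrafilter containing $A$ via Lemma \ref{C:lem:IP=>in-idempotent}, pass to a cell $A_i \in \cU$ by the finite-union ultrafilter property, and conclude $A_i$ is $\IP$ by Lemma \ref{C:lem:IP<=in-idempotent}. The extra remark that the partition is of $A$ rather than of $X$ is a harmless and valid clarification.
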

\begin{proof}
	Since $A$ is an $\IP$-set, by Lemma \ref{C:lem:IP=>in-idempotent}, it belongs to some idempotent ultrafilter $\cU$. By the ultrafilter property \ref{def:ultrafilter-prop:4}, $\cU$ contains $A_i$ for some $i$. By Lemma \ref{C:lem:IP<=in-idempotent}, $A_i$ contains an IP-set.
\end{proof}

We now formulate the celebrated Hindman's theorem, in the many versions in which it appears. We strip the formulation from the fancy terminology to make sure we avoid trivial $\IP$ sets. Additionally, since this is an end result, we prefer to keep it as transparent as possible.  

\begin{theorem}[Hindman; integer version]
\index{Hindman}
\index{ultrafilter!idempotent}
\index{IP@$\IP$!IP-set@$\IP$-set}
	Suppose that $\NN =  A_1 \cup A_2 \cup \dots \cup A_k$ is a finite partition of the set of natural numbers. Then, there exists an increasing sequence of integers $\bx = \seq{x}{n}{\NN}$ and an index $i$ with $\FS{\bx} \subset A_i$.  Likewise, there exists an increasing sequence of integers $\by = \seq{y}{n}{\NN}$ and an index $j$ with $\FP{\by}\subset A_j$.  
\end{theorem}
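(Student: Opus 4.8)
The plan is to deduce both assertions directly from the $\IP$-set machinery already in place: Hindman's theorem is essentially nothing more than the partition regularity of $\IP$-sets (Corollary \ref{C:cor:IP-part-reg}) combined with the fact that members of idempotent ultrafilters are $\IP$-sets and conversely (Lemmas \ref{C:lem:IP=>in-idempotent} and \ref{lem:hindman-preparation}).

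First I would note that $\NN$ is itself an additive $\IP$-set, since the finite sums of the constant sequence $\seq{x}{n}{\NN}$ with $x_n = 1$ already exhaust $\NN$. Given the partition $\NN = A_1 \cup \dots \cup A_k$, Corollary \ref{C:cor:IP-part-reg} then yields an index $i$ with $A_i$ an $\IP$-set; by Lemma \ref{C:lem:IP=>in-idempotent} there is an idempotent $\cU \in \Ultrafilters{\NN}$ with $A_i \in \cU$, and Lemma \ref{lem:hindman-preparation} constructs a sequence $\bx$ with $\FS{\bx} \subset A_i$. To obtain an \emph{increasing} $\bx$ I would appeal to the Remark following Lemma \ref{lem:hindman-preparation}: at each stage $x_n$ is chosen from a $\cU$-large set, and since $(\NN,+)$ is cancellative it has no idempotent element, so $\cU$ is non-principal; every member of a non-principal ultrafilter on $\NN$ is infinite, hence unbounded, so $x_n$ can be taken larger than $x_1,\dots,x_{n-1}$. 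This settles the additive half.

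For the multiplicative half I would run the analogous argument in $(\NN,\cdot)$, with a minor adjustment to avoid the degenerate idempotent $\principal{1}$, which is the unique principal idempotent of $(\NN,\cdot)$. Set $S := \{\, 2^m : m \in \NN \,\}$; this is a multiplicative $\IP$-set, as $S = \FP{\seq{y}{n}{\NN}}$ for the constant sequence $y_n = 2$, and $1 \notin S$. Partitioning $S = \bigcup_{j}(S \cap A_j)$ and applying partition regularity of multiplicative $\IP$-sets, some $S \cap A_j$ is a multiplicative $\IP$-set; it lies in an idempotent ultrafilter $\cU$ by Lemma \ref{C:lem:IP=>in-idempotent}, and $1 \notin S \cap A_j \in \cU$ forces $\cU \neq \principal{1}$, so $\cU$ is non-principal. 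Lemma \ref{lem:hindman-preparation} and its Remark then supply an increasing $\by$ with $\FP{\by} \subset S \cap A_j \subset A_j$. (Alternatively, one may reduce to the additive case: the bijection $m \mapsto 2^m$ is an isomorphism $(\NN,+) \to (S,\cdot)$, so applying the additive theorem to the partition $\NN = \bigcup_j \{\, m \in \NN : 2^m \in A_j \,\}$ and exponentiating the resulting sequence yields the desired $\by$.)

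The proof is mostly bookkeeping over the earlier results; the only points requiring care are the spurious idempotent $\principal{1}$ in the multiplicative monoid, dealt with by passing to $S$, and the need to produce genuinely increasing sequences, which is exactly what the Remark after Lemma \ref{lem:hindman-preparation} provides. In this sense no serious obstacle remains once the $\IP$-set theory has been developed.
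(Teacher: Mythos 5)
Your proposal is correct and follows essentially the same route as the paper, which simply deduces the theorem from the partition regularity of $\IP$-sets (Corollary \ref{C:cor:IP-part-reg}). The extra care you take over producing genuinely increasing sequences and over the degenerate multiplicative idempotent $\principal{1}$ is sound and fills in details the paper leaves implicit.
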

\begin{proof}
	This follows immediately from Corollary \ref{C:cor:IP-part-reg}.
\end{proof}

\newcommand{\balpha}{\boldsymbol{\alpha}}

 Another semigroup which is useful in applications consists in the finite subsets of $\NN$, $\cP_{\operatorname{fin}}(\NN)$, with the union of sets as the semigroup operation. For this special case, because the operation is the union, we use the notation $\FU{\balpha}$ for the finite unions of the sequence of finite sets $\seq{\alpha}{n}{\NN}$. Note that this semigroup is badly non-cancellative --- in fact, each $\alpha \in \cP_{\operatorname{fin}}(\NN)$ is idempotent. This makes the notion of $\IP$-sets, as we defined it, essentially useless, since each non-empty subset of $\cP_{\operatorname{fin}}(\NN)$ is $\IP$. Interesting structure in  $\FU{\balpha}$ only emerges when additional conditions are imposed on $\alpha_n$.

	For $\alpha,\beta \in \cP_{\operatorname{fin}}(\NN)$ we say that $\alpha < \beta$ if and only if $\max \alpha < \min \beta$.

\begin{theorem}[Hindman; finite sets version]\label{C:thm:Hindman-sets}
	Suppose that $\cP_{\operatorname{fin}}(\NN) =  A_1 \cup A_2 \cup \dots \cup A_k$ is a finite partition of the family of finite sets of natural numbers. Then, there exists a sequence of finite sets $\balpha = \seq{\alpha}{n}{\NN}$ with $\alpha_n < \alpha_{n+1}$, an index $i$ with $\FU{\balpha} \subset A_i$.
\end{theorem}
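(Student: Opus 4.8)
The obstruction to imitating the integer case directly is that in $(\cP_{\operatorname{fin}}(\NN),\cup)$ every element is idempotent, so ``being an $\IP$-set'' is vacuous and Lemma~\ref{C:lem:IP<=in-idempotent} on its own only yields finite unions of a possibly constant sequence. The plan is instead to produce an idempotent ultrafilter that remembers the ordering, and then to run the construction of Lemma~\ref{C:lem:IP<=in-idempotent} with an extra ``push to the right'' requirement imposed at each step.

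To that end, for $n\geq 1$ set $B_n:=\{\alpha\in\cP_{\operatorname{fin}}(\NN)\setsep\alpha\neq\emptyset,\ \min\alpha\geq n\}$. Each $B_n$ is a sub-semigroup of $(\cP_{\operatorname{fin}}(\NN),\cup)$, since the union of two non-empty sets with minimum $\geq n$ is again non-empty with minimum $\geq n$; the $B_n$ decrease with $n$; and $\emptyset\notin B_n$. Because $\cP_{\operatorname{fin}}(\NN)$ is commutative we have $B_n\subset X\subset\Centre(\Ultrafilters{\cP_{\operatorname{fin}}(\NN)})$, so Proposition~\ref{A:prop:algebra-closure-of-sgrp-is-sgrp} (together with Proposition~\ref{prop:closures-are-the-same}, which identifies $\bar B_n$ with $\cl i(B_n)$) shows that each $\bar B_n$ is a compact sub-semigroup. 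Each $\bar B_n$ is non-empty, as it contains $\principal{\{n\}}$, so $\Gamma:=\bigcap_{n\geq 1}\bar B_n$ is a non-empty compact set, and it is closed under the semigroup operation because each $\bar B_n$ is; hence $\Gamma$ is a non-empty compact sub-semigroup of $\Ultrafilters{\cP_{\operatorname{fin}}(\NN)}$. By Corollary~\ref{cor:idempotetnes-exist-in-beta-X}, $\Gamma$ contains an idempotent $\cU$, and by construction $B_n\in\cU$ for every $n\geq 1$; in particular $\{\emptyset\}\notin\cU$.

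Now fix $i$ with $A:=A_i\in\cU$, where $\cP_{\operatorname{fin}}(\NN)=A_1\cup\dots\cup A_k$ is the given partition, and build $\balpha$ by induction exactly as in the proof of Lemma~\ref{C:lem:IP<=in-idempotent}, writing $A-\beta:=\{\gamma\setsep\gamma\cup\beta\in A\}$. Suppose non-empty sets $\alpha_1<\dots<\alpha_{n-1}$ have been chosen so that $\bigcup_{i\in I}\alpha_i\in A$ for every non-empty $I\subseteq\{1,\dots,n-1\}$ and so that $A_{n-1}:=\bigcap_{I\subseteq\{1,\dots,n-1\}}\bigl(A-\bigcup_{i\in I}\alpha_i\bigr)\in\cU$ (for $n=1$ this reads $A_0=A\in\cU$). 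Since $\cU$ is idempotent, the set $C:=\{\beta\setsep A_{n-1}-\beta\in\cU\}$ lies in $\cU$; put $m:=1+\max\alpha_{n-1}$ (and $m:=1$ if $n=1$), so that $B_m\in\cU$ as well. Pick $\alpha_n\in A_{n-1}\cap C\cap B_m$, which is non-empty as an intersection of three members of $\cU$. Membership of $\alpha_n$ in $B_m$ forces $\alpha_n$ non-empty with $\min\alpha_n>\max\alpha_{n-1}$, i.e. $\alpha_{n-1}<\alpha_n$; membership in $A_{n-1}$ and in $C$ gives, verbatim as in Lemma~\ref{C:lem:IP<=in-idempotent}, that $\bigcup_{i\in I}\alpha_i\in A$ for every non-empty $I\subseteq\{1,\dots,n\}$ and that $A_n\in\cU$, so the induction continues. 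The resulting sequence satisfies $\alpha_n<\alpha_{n+1}$ and $\FU{\balpha}\subseteq A=A_i$, which is the claim.

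Essentially everything here is a transcription of the proofs of Lemmas~\ref{C:lem:IP=>in-idempotent} and~\ref{C:lem:IP<=in-idempotent}; the one new ingredient is the filtration $(B_n)$, whose purpose is to supply, at the $n$-th step, a $\cU$-large set forcing $\alpha_n$ far to the right without disturbing the rest of the Hindman construction. The only point that requires care — and the thing I expect to be easiest to get wrong — is to exclude $\emptyset$ from the $B_n$, so that the idempotent $\cU\in\Gamma$ cannot be the (useless) principal ultrafilter $\principal{\emptyset}$ and so that the sequence produced is genuinely strictly increasing.
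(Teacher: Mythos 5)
Your proof is correct and follows essentially the same route as the paper: the paper also first manufactures an idempotent $\cU$ containing, for every $m$, the set of non-empty finite sets with minimum at least $m$ (it realises these sets as $\FU{\sigma^m\boldsymbol{\gamma}}$ for $\gamma_n=\{n\}$ and invokes Lemma~\ref{C:lem:IP=>in-idempotent}, which is exactly your nested compact-subsemigroup argument in disguise), and then reruns the construction of Lemma~\ref{C:lem:IP<=in-idempotent} with the additional $\cU$-large constraint forcing $\alpha_n$ to the right of $\alpha_{n-1}$. Your explicit attention to excluding $\emptyset$ and to writing out the inductive step is a welcome elaboration of details the paper leaves to the reader.
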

\begin{proof}	
	We first notice that there exists an idempotent ultrafilter $\cU \in \Ultrafilters{\cP_{\text{fin}}(\NN)}$ such that for any $A \in \cU$, for any $\beta \in \cP_{\text{fin}}(\NN)$, we can find $\alpha \in A$ with $\beta < \alpha$. Let $\gamma_n := \{n\}$ Applying the construction from Lemma \ref{C:lem:IP=>in-idempotent}, we find that there exists an idempotent ultrafilter $\cU$ such that for any $m$ we have $\FU{\sigma^m\boldsymbol{\gamma}} \in \cU$, were $(\sigma^m\gamma)_{n} = \gamma_{n+m} = \{n+m\}$. We show that this choice of $\cU$ works. Let $A \in \cU$ be arbitrary, and let $\beta \in \cP_{\text{fin}}(\NN)$ with $m := \max \beta + 1$. Consider the set $A' := \FU{\sigma^m\boldsymbol{\gamma}} \cap A \in \cU$. Clearly, if $\alpha \in A'$ is arbitrary, then $\min \alpha \geq m$, so $\alpha > \beta$. It follows that $\cU$ satisfies the required properties.

	Let $\cP_{\text{fin}}(\NN) =  A_1 \cup A_2 \cup \dots \cup A_k$ be a partition, and let $\cU$ be the ultrafilter just constructed. We can find $i$ with $A_i \in \cU$. Revisiting the proof of Lemma \ref{C:lem:IP<=in-idempotent} and the subsequent Remark, we find that one can construct a sequence $\balpha = \seq{\alpha}{n}{\NN}$ such that $\FU{\balpha} \subset A_i$, and additionally $\alpha_n < \alpha_{n+1}$ for all $n$.
\end{proof}

\begin{remark}
  We have derived the Hindman theorem for a general commutative semigroup, and seen several special cases. One might suspect that the general case of the theorem should be significantly more difficult than the cases of particular semigroups. Somewhat surprisingly, this turns out not to be the case. In fact, it is possible to derive from Hindman's Theorem for finite sets \ref{C:thm:Hindman-sets} the partition regularity of $\IP$-sets as in Corollary \ref{C:cor:IP-part-reg}, which lies at the foundation of our subsequent applications. We refer to \cite{Berg-survey-IP} for details. However, operating with general semigroups does not increase the complexity of the reasoning significantly, so we have no reason to work specifically with $\cP_{\text{fin}}(\NN)$.   
\end{remark}

In case $X = \NN$, we have two natural structures of a semigroup: additive and multiplicative. Hence, if $\NN = A_1 \cup A_2 \cup \dots \cup$ is a finite partition, then we can find $i$ such that $A_i$ is additively $\IP$, and $j$ such that $A_j$ is multiplicatively $\IP$. It is natural to ask if one can find $i$ such that $A_i$ is both additively and multiplicatively $\IP$. The answer turns out to be positive, as shown in \cite{Berg-survey-IP}. 

\begin{theorem}
\index{Hindman}
\index{ultrafilter!idempotent}
\index{IP@$\IP$!IP-set@$\IP$-set}
\index{partition regular}
  Suppose that $\NN =  A_1 \cup A_2 \cup \dots \cup A_k$ is a finite partition of the set of natural numbers. Then, there exist increasing sequences of integers $\bx = \seq{x}{n}{\NN}$, $\by = \seq{y}{n}{\NN}$ index $i$ with $\FS{\bx} \subset A_i$ and $\FP{\by}\subset A_i$.  
\end{theorem}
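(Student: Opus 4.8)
The plan is to manufacture a single ultrafilter on $\NN$ that simultaneously witnesses additive and multiplicative $\IP$-ness of one of its members. Write $E_+$ for the set of idempotents of $(\Ultrafilters{\NN},+)$ and $E_\times$ for those of $(\Ultrafilters{\NN},\cdot)$. It will be enough to produce $p \in \cl E_+ \cap E_\times$: given a finite partition $\NN = A_1 \cup \dots \cup A_k$, the ultrafilter property puts some $A_i$ in $p$; being a member of the multiplicative idempotent $p$, it is a multiplicative $\IP$-set by Lemma~\ref{C:lem:IP<=in-idempotent} applied to $(\NN,\cdot)$, so $\FP{\by} \subset A_i$ for some $\by$; and since $p \in \cl E_+$ while $\overline{A_i}$ is an open neighbourhood of $p$, the intersection $\overline{A_i} \cap E_+$ is non-empty, so $A_i$ belongs to some additive idempotent and is therefore also an additive $\IP$-set by the same lemma for $(\NN,+)$, giving $\FS{\bx} \subset A_i$. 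The Remark following Lemma~\ref{lem:hindman-preparation} permits us to take the sequences $\bx,\by$ increasing, which is the stated form; note that both conclusions concern the same cell $A_i$.

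Thus everything reduces to exhibiting a multiplicative idempotent inside $\cl E_+$, and for that it suffices to check that $\cl E_+$ is a closed sub-semigroup of $(\Ultrafilters{\NN},\cdot)$ and invoke Ellis' Theorem~\ref{A:thm:idempotents-exist-in-general} (via Corollary~\ref{cor:idempotetnes-exist-in-beta-X}). The engine is the distributive law. For $n \in \NN$ let $\lambda_n :\ \NN \to \NN$, $\lambda_n(m) = nm$; this is a homomorphism of $(\NN,+)$, and by the limit formula in Lemma~\ref{prop:beta-X-as-semigroup-II} one sees $\beta\lambda_n(p) = \principal{n}\cdot p$, i.e. $\beta\lambda_n$ is left multiplication by $\principal n$ in $(\Ultrafilters{\NN},\cdot)$. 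I would then prove $\beta\lambda_n(p+q) = \beta\lambda_n(p) + \beta\lambda_n(q)$ by unwinding both sides into iterated $\llim{}{}$'s, using the $\beta$-calculus of Proposition~\ref{prop:beta-f-properties} to push the continuous map $\beta\lambda_n$ through the limits together with the substitution identity $\llim{\beta\lambda_n(p)}{c}F(c) = \llim{p}{m}F(\lambda_n(m))$; I expect this computation to be the main obstacle, and it is the one place where the multiplicative structure genuinely enters. Granting it, $\beta\lambda_n$ carries additive idempotents to additive idempotents, so $\beta\lambda_n(E_+)\subset E_+$; since $\beta\lambda_n$ is continuous (Proposition~\ref{prop:U-lim-is-continuous}) and $\cl E_+$ is closed, $\beta\lambda_n(\cl E_+) \subset \cl E_+$, i.e. $\principal n\cdot p \in \cl E_+$ for every $p \in \cl E_+$.

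Finally I would pass from integers to arbitrary ultrafilters. Fix $p \in \cl E_+$; left multiplication $q' \mapsto q'\cdot p$ is continuous by the left-topological property, and since the principal ultrafilters are dense we may write $q = \llim{q}{n}\principal n$, whence $q\cdot p = \llim{q}{n}(\principal n\cdot p)$, a limit of points of the closed set $\cl E_+$, so $q\cdot p \in \cl E_+$. Therefore $\Ultrafilters{\NN}\cdot \cl E_+ \subset \cl E_+$: the set $\cl E_+$ is a compact left ideal of $(\Ultrafilters{\NN},\cdot)$, in particular a closed sub-semigroup, and Ellis' theorem supplies an idempotent $p \in \cl E_+$. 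This is the ultrafilter sought in the first paragraph, which then finishes the proof.
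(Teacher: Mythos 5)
Your proposal is correct and follows the same architecture as the paper's proof: show that the closure of the set of additive idempotents is a closed left ideal of $(\Ultrafilters{\NN},\cdot)$, extract a multiplicative idempotent from it via Ellis' theorem, and observe that any cell of the partition belonging to that ultrafilter is simultaneously a multiplicative $\IP$-set (by idempotence) and an additive $\IP$-set (by membership in $\cl E_+$, exactly Observation~\ref{C:obs:IP-characterisation-via-ultrafilters-refined}). The one place you diverge is the sub-step showing $\principal{n}\cdot p \in \cl E_+$ for $p \in \cl E_+$: the paper argues combinatorially (if $B \in n\cdot\cU$ then $\ldiv{n}{B}\in\cU$ contains some $\FS{\bx}$, so $B \supset \FS{n\bx}$ is additive $\IP$), whereas you argue algebraically that $\beta\lambda_n$ is an additive endomorphism of $\beta\NN$ and hence preserves $E_+$, then use continuity. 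The step you flag as the main obstacle does go through: $\beta\lambda_n(p+q)=\llim{p}{a}\llim{q}{b}\left(\principal{\lambda_n(a)}+\principal{\lambda_n(b)}\right)$, and the inner limit may be pulled past $\principal{\lambda_n(a)}$ because principal ultrafilters lie in the topological centre of $(\Ultrafilters{\NN},+)$, while the outer limit uses left-continuity; so both routes are sound, the paper's being a line shorter and yours isolating a reusable algebraic fact.
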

\begin{proof}
  Let $\Idempotents$ denote the set of all additively idempotent ultrafilters on $\NN$. We claim that $\cl \Idempotents$ is a left multiplicative ideal. Clearly, $\Idempotents$ is non-empty. Because the multiplication is continuous in the left argument, it will suffice to check that $\cl \Idempotents$ is closed under multiplication by $\NN$. Let us consider $\cU \in \cl \Idempotents$, and $n \in \NN$; we need to show that $n \cdot \cU \in \cl \Idempotents$. Thanks to characterisation in \ref{C:obs:IP-characterisation-via-ultrafilters-refined}, it will suffice to show that if $B \in n \cdot \cU$ then $B$ is an additive $\IP$-set. Let $A := \ldiv{n}{B}$; we know that $A \in \cU$ so $A$ contains a set $\FS{\bx}$. Hence, $B$ is an additive $\IP$-set, because it contains the set $\FS{n\bx}$ (operation taken coordinate-wise).

  Because $\cl E$ is a closed left multiplicative ideal, it is clearly also a compact multiplicative sub-semigroup. Hence, by theorem \ref{A:thm:idempotents-exist-in-general}, $\cl E$ contains a multiplicative idempotent $\cV$. Let $i$ be such that $A_i \in \cV$. Because $\cV \in \cl E$, we know from \ref{C:obs:IP-characterisation-via-ultrafilters-refined} that $A_i$ is an additive $\IP$-set. Because $\cV$ is multiplicatively idempotent, from \ref{C:cor:IP-characterisation-via-ultrafilters} it follows that $A_i$ is a multiplicative $\IP$-set. Finally, we notice that $\cV$ is clearly not principal, so the sequences $\bx$ and $\by$ from the theorem formulation can be assumed to be increasing. 
\end{proof}

We finish this section by introducing the notion of $\IP^*$-sets. We will not yet make much use of it in this chapter, but in dynamical applications this concept will be crucial. We begin by giving a defining the operation $\mathsf{A} \mapsto \mathsf{A}^*$ in more generality than really needed. 

\index{IP@$\IP$}
\begin{definition}\label{C:def:A-star-sets}
	Let $\cA \subset \cP(X)$ be a family of sets. Then $\cA^*$ is defined to be the family of all $B \in \cP(X)$ such that for any $A \in \cA$ we have $A \cap B \neq \emptyset$.
	In particular, $B \in \cP(X)$ is an $\IP^*$-set if and only if for any $\IP$-set $A$ we have $A \cap B \neq \emptyset$.
\end{definition}

We make some simple observations about the operation we just defined. Apart from developing some intuition, we aim at an application to an ultrafilter characterisation of $\IP^*$.

\begin{proposition}\label{C:prop:star-properties}
\index{IP*-set@$\IP^*$-set}
	\begin{enumerate}
	\item\label{C:prop:star-prop:1} Let $\cA,\cB \subset \cP(X)$. If $\cA \subset \cB$ then $\cB^* \subset \cA^*$.
	\item\label{C:prop:star-prop:2} Let $I$ be a set,  $\cA_i \subset \cP(X)$ for $i \in I$. Then we have $\left( \bigcup_{i \in I} \cA_i \right)^* = \bigcap_{i \in I} \cA_i^*$.
	\item\label{C:prop:star-prop:3} If $\cA \subset \cP(X)$ is partition regular and $X \in \cA$, then $\cA^* \subset \cA$ and $\cA^*$ is closed under finite intersections. If $\cF \in \Filters{X}$, then $\cF \subset \cF^*$. If $\cU \in \Ultrafilters{X}$, then $\cU^* = \cU$. 
	\end{enumerate}
\end{proposition}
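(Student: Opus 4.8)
The plan is to prove the three items in order, each time simply unwinding the definition of the operation $\cA \mapsto \cA^*$; partition regularity enters only in item~\ref{C:prop:star-prop:3}. For~\ref{C:prop:star-prop:1}, I would take $B \in \cB^*$ and an arbitrary $A \in \cA$; since $\cA \subseteq \cB$ we have $A \in \cB$, hence $B \cap A \neq \emptyset$, and as $A$ was arbitrary this gives $B \in \cA^*$. For~\ref{C:prop:star-prop:2}, the membership $B \in \left( \bigcup_{i \in I} \cA_i \right)^*$ unwinds to: $B \cap A \neq \emptyset$ for every $A \in \bigcup_{i \in I} \cA_i$; splitting the quantifier over the index set, this is equivalent to demanding $B \cap A \neq \emptyset$ for every $i \in I$ and every $A \in \cA_i$, i.e.\ to $B \in \cA_i^*$ for all $i$, i.e.\ to $B \in \bigcap_{i \in I} \cA_i^*$. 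Both are one-line set-theoretic manipulations with no hidden content.

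The substance is in~\ref{C:prop:star-prop:3}. To see $\cA^* \subseteq \cA$, take $B \in \cA^*$; since $X \in \cA$ and $X = B \cup B^c$ is a finite partition, partition regularity forces $B \in \cA$ or $B^c \in \cA$, and the latter is impossible because $B \in \cA^*$ would give $B \cap B^c \neq \emptyset$, so $B \in \cA$ (the degenerate cases $B = \emptyset$ and $B = X$ are immediate). For closure of $\cA^*$ under finite intersections it suffices, by induction, to treat two sets $B_1, B_2 \in \cA^*$; I must show $A \cap B_1 \cap B_2 \neq \emptyset$ for every $A \in \cA$. Partitioning $A = (A \cap B_1) \cup (A \cap B_1^c)$, partition regularity puts one cell in $\cA$, but $A \cap B_1^c$ is disjoint from $B_1 \in \cA^*$ and so cannot belong to $\cA$; hence $A \cap B_1 \in \cA$, and then $B_2 \in \cA^*$ applied to this member gives $B_2 \cap (A \cap B_1) \neq \emptyset$, as required. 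For a filter $\cF$, if $B \in \cF$ then for every $A \in \cF$ we have $A \cap B \in \cF$ and $\emptyset \notin \cF$, so $A \cap B \neq \emptyset$ and $B \in \cF^*$; thus $\cF \subseteq \cF^*$. Finally, for an ultrafilter $\cU$ the inclusion $\cU \subseteq \cU^*$ is the filter case, while $\cU$ is partition regular by property~\ref{def:ultrafilter-prop:5} and contains $X$, so the first part of~\ref{C:prop:star-prop:3} yields $\cU^* \subseteq \cU$; hence $\cU^* = \cU$.

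I do not anticipate a real obstacle here: the only step that is not a pure definition-chase is the closure of $\cA^*$ under finite intersections, where one has to choose the right partition of $A$ --- namely into $A \cap B_1$ and $A \cap B_1^c$ --- and notice that the ``wrong'' cell is excluded precisely because $B_1 \in \cA^*$. Everything else is bookkeeping, and in particular no topology or algebra of $\Ultrafilters{X}$ is needed.
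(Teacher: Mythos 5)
Your proof is correct and follows essentially the same route as the paper's: items (1) and (2) are the same definition-unwinding, and for item (3) you use exactly the paper's partition $A = (A\cap B_1)\cup(A\setminus B_1)$ together with the observation that the second cell is excluded by $B_1\in\cA^*$, then conclude the ultrafilter case from $\cU^*\subset\cU\subset\cU^*$. No gaps.
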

\begin{proof}
	\begin{enumerate}
	\item[\ref{C:prop:star-prop:1}.] Follows directly from the definition, since universal quntification over $\cB$ leads to a stronger condition than universal quantification over $\cA$.
	\item[\ref{C:prop:star-prop:2}.] For $B \in \cP(X)$, the requirement $B \in \left( \bigcup_{i \in I} \cA_i \right)^*$ is equivalent to $(\forall i \in I) (\forall A \in \cA_i):\ A \cap B \neq \emptyset$. For a fixed $i$, the condition $(\forall A \in \cA_i):\ A \cap B \neq \emptyset$ is equivalent to $B \in \cA_i$  
so the condition $B \in \bigcap_{i \in I} \cA_i^*$. Hence, $B \in \left( \bigcup_{i \in I} \cA_i \right)^*$ is equivalent to  $(\forall i \in I) B \in \cA^*_i$, which is what was to be shown.
	\item[\ref{C:prop:star-prop:3}.] If $A \in \cA^*$, then $A \cap A^c = \emptyset$, so $A^c \not \in \cA$. Because $X = A \cup A^c$, it follows that $A \in \cA$. Thus, $\cA^* \subset \cA$. For closure under finite intersections, let $B_1,B_2 \in \cA^*$ and $A \in \cA$; we need to verify that $A \cap B_1 \cap B_2 \neq \emptyset$. Because $\cA$ is partition regular, either $A\cap B_1 \in \cA$ or $A \setminus B_1 \in \cA$. The latter is impossible, bacause $A \setminus B_1$ is disjoint from $B_1 \in \cA^*$. Hence, $A\cap B_1 \in \cA$, and $A \cap B_1 \cap B_2 \neq \emptyset$ because $B_2 \in \cA^*$. 

	Likewise, if $A \in \cF$, then for $B \in \cF$ we have $A \cap B \in \cF$ so in particular $A \cap B \neq \emptyset$. Thus, $A \in \cF^*$, and consequently $\cF \subset \cF^*$. Finally, because ultrafilters are partition regular filters, we have by the previous assertion $\cU^* \subset \cU \subset \cU^*$, so indeed $\cU = \cU^*$.
	\end{enumerate}
\end{proof}

The practical consequence of the above Proposition is that we can charaterise $\IP^*$-sets in terms of ultrafilters, just as we did for $\IP$-sets. Note that in particular a finite intersection of $\IP^*$-sets is again an  $\IP^*$-sets, and intersection of an $\IP$-set with and $\IP^*$-set is again an $\IP$-set.

\index{IP*-set@$\IP^*$-set}

\begin{corollary}[Characterisation of $\IP^*$-sets via ultrafilters]\label{cor:IPst-characterisation}
	For a set $A \in \cP(X)$, the following conditions are equivalent:
\begin{enumerate}
\item The set $A$ is $\IP^*$.
\item For any idempotent ultrafilter $\cU \in \Ultrafilters{X}$ it holds that $A \in \cU$.
\end{enumerate}
\end{corollary}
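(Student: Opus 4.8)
The plan is to read the equivalence off the $\IP$/idempotent dictionary already established (Lemmas \ref{C:lem:IP<=in-idempotent} and \ref{C:lem:IP=>in-idempotent}, summarised in Corollary \ref{C:cor:IP-characterisation-via-ultrafilters}), together with the formal properties of the $\ast$-operation recorded in Proposition \ref{C:prop:star-properties}. I would prove both implications by contraposition, since that keeps the combinatorial content transparent.

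For $(2)\imply(1)$: assume $A$ is not $\IP^*$. By Definition \ref{C:def:A-star-sets} there is an $\IP$-set $B$ with $A \cap B = \emptyset$, i.e.\ $B \subseteq A^c$. By Lemma \ref{C:lem:IP=>in-idempotent} there is an idempotent ultrafilter $\cU$ with $B \in \cU$; since $\cU$ is a filter and $B \subseteq A^c$, property \ref{def:filter-prop:2} gives $A^c \in \cU$, hence $A \notin \cU$, so $(2)$ fails. For $(1)\imply(2)$: assume there is an idempotent ultrafilter $\cU$ with $A \notin \cU$. By the ultrafilter property \ref{def:ultrafilter-prop:7}, $A^c \in \cU$; by Lemma \ref{C:lem:IP<=in-idempotent}, $A^c$ is then an $\IP$-set, and $A \cap A^c = \emptyset$ witnesses that $A$ is not $\IP^*$, so $(1)$ fails.

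I do not expect a genuine obstacle here: essentially all the work has gone into the preceding lemmas. As an alternative presentation, one can make the proof a single computation. Let $E$ be the set of idempotent ultrafilters on $X$, which is nonempty by Corollary \ref{cor:idempotetnes-exist-in-beta-X}. Corollary \ref{C:cor:IP-characterisation-via-ultrafilters} says the family of $\IP$-sets equals $\bigcup_{\cU \in E}\cU$; then Proposition \ref{C:prop:star-properties}\ref{C:prop:star-prop:2} gives that the family of $\IP^*$-sets equals $\bigcap_{\cU\in E}\cU^*$, and Proposition \ref{C:prop:star-properties}\ref{C:prop:star-prop:3} replaces each $\cU^*$ by $\cU$, so the family of $\IP^*$-sets is exactly $\bigcap_{\cU\in E}\cU$ --- which is precisely the assertion. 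The only point requiring a word of care on this second route is that the hypotheses of Proposition \ref{C:prop:star-properties}\ref{C:prop:star-prop:3} must be checked, namely that the $\IP$-sets form a partition regular family (Corollary \ref{C:cor:IP-part-reg}) containing $X$; both are immediate.
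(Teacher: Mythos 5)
Your proposal is correct, and your second, ``single computation'' presentation is essentially verbatim the paper's own proof: the paper writes the family of $\IP$-sets as $\bigcup_{i\in I}\cU_i$ over the idempotents via Corollary \ref{C:cor:IP-characterisation-via-ultrafilters}, applies Proposition \ref{C:prop:star-properties}\ref{C:prop:star-prop:2} to turn the union into $\bigcap_{i\in I}\cU_i^*$, and then uses the clause $\cU^*=\cU$ to conclude. Your primary presentation by double contraposition is a genuinely more elementary unwinding of the same content --- it trades the formal $\ast$-calculus for two three-line arguments using Lemmas \ref{C:lem:IP<=in-idempotent} and \ref{C:lem:IP=>in-idempotent} directly, which makes the combinatorial mechanism (an $\IP$-set disjoint from $A$ lives in an idempotent; the complement of a non-member of an idempotent is $\IP$) visible at the cost of a little repetition. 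One small correction to your closing remark: the hypothesis you propose to check (partition regularity of the family of $\IP$-sets, via Corollary \ref{C:cor:IP-part-reg}) is not what the computation actually uses. The step $\cU_i^*=\cU_i$ invokes the \emph{ultrafilter} clause of Proposition \ref{C:prop:star-properties}\ref{C:prop:star-prop:3}, whose proof rests on each $\cU_i$ being a partition regular filter; partition regularity of the $\IP$-sets themselves plays no role in this corollary. The check is harmless but superfluous.
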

\begin{proof}
	Let $\set{\cU}{i}{I}$ be the set of all idempotent ultrafilters on $\cU$. Then Corollary \ref{C:cor:IP-characterisation-via-ultrafilters} implies that the family of all $\IP$-sets is $\bigcup_{i \in I} \cU$. Using the above Proposition \ref{C:prop:star-properties}, it follows that the family of all $\IP$-sets is:
	$$\Big( \bigcup_{i \in I} \cU_i \Big)^* = \bigcap_{i \in I} \cU_i^* =  \bigcap_{i \in I} \cU_i $$
	The above characterisation follows immediately.
\end{proof}

To finish this section, we give a simple example of $\IP^*$-sets.

\index{IP*-set@$\IP^*$-set}

\begin{proposition}\label{prop:C:IP*-includes-finite-index-subgroups}
	Suppose that $X$ is a commutative group, and that $Y \subset X$ is a subgroup of finite index: $\card{X/Y} < \infty$. Then $Y$ is an $\IP^*$-set.
\end{proposition}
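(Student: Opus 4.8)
The plan is to use the defining property of $\IP^*$-sets directly (Definition~\ref{C:def:A-star-sets}): it suffices to show that $Y$ meets every $\IP$-set. The engine of the argument will be the pigeonhole principle applied to the \emph{finite} quotient group $X/Y$.

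First I would fix an arbitrary $\IP$-set $A$, so that $A \supset \FS{\bx}$ for some sequence $\bx = \seq{x}{n}{\NN}$, and reduce the goal to producing an element of $\FS{\bx} \cap Y$. Writing $m := \card{X/Y}$, I would then consider the $m+1$ partial sums $s_0 := 0$, $s_1 := x_1$, $s_2 := x_1 + x_2$, $\dots$, $s_m := x_1 + \cdots + x_m$, and pass to their images in $X/Y$. Since $X/Y$ has only $m$ elements, two of these partial sums, say $s_i$ and $s_j$ with $0 \le i < j \le m$, must land in the same coset of $Y$; hence $s_j - s_i \in Y$. The key observation is that $s_j - s_i = \sum_{k=i+1}^{j} x_k$ is itself an element of $\FS{\bx}$, the index set $\{i+1,\dots,j\}$ being finite and nonempty. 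Thus $s_j - s_i \in Y \cap \FS{\bx} \subset Y \cap A$, so $Y \cap A \neq \emptyset$; as $A$ was arbitrary, $Y$ is an $\IP^*$-set.

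An alternative route I would at least mention is the ultrafilter one: by Corollary~\ref{cor:IPst-characterisation} it is enough to check that $Y$ belongs to every idempotent ultrafilter, and the quotient map $X \to X/Y$ carries an idempotent ultrafilter to an idempotent of the finite group $X/Y$, which can only be the principal ultrafilter at the identity coset; this forces the preimage $Y$ into the original ultrafilter. This is essentially the same argument repackaged, so I would present the elementary pigeonhole version as the main proof and relegate this remark to a parenthetical.

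I do not anticipate a genuine obstacle; the proof is short. The only points requiring a moment's care are that the chosen index set $\{i+1,\dots,j\}$ is nonempty (guaranteed by $i < j$) and that allowing $i = 0$ is legitimate, since $s_0$ is the group identity, which lies in $Y$. It is also worth flagging that this is precisely where the hypothesis that $X$ is a group, rather than merely a commutative semigroup, is used: the passage to cosets and the formation of the difference $s_j - s_i$ both rely on the group structure.
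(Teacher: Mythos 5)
Your pigeonhole proof is correct, but it is a genuinely different argument from the one in the paper. The paper works entirely on the ultrafilter side: by Corollary~\ref{cor:IPst-characterisation} it suffices to show $Y$ belongs to every idempotent ultrafilter $\cU$, so it partitions $X$ into the $r = \card{X/Y}$ cosets $Y_j = x_j + Y$, picks the coset $Y_j \in \cU$, and uses idempotence ($Y_j - x \in \cU$ for $\cU$-a.a.\ $x$, in particular for some $x \in Y_j$) to conclude $Y = Y_j - x \in \cU$. That is essentially the "alternative route" you relegate to a parenthetical, just executed by a direct coset computation rather than by pushing the ultrafilter forward to the finite group $X/Y$. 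Your main argument instead goes straight to the definition of $\IP^*$: given $A \supset \FS{\bx}$, the $m+1$ partial sums $s_0,\dots,s_m$ collide in $X/Y$, and $s_j - s_i = \sum_{k=i+1}^{j} x_k$ is a nonempty finite sum lying in $\FS{\bx} \cap Y$. All the delicate points are handled: the index set $\{i+1,\dots,j\}$ is nonempty, including $s_0 = 0$ is harmless, and the group structure is used exactly where you say it is. What your version buys is that it is entirely elementary — no ultrafilters, no appeal to the existence of idempotents (hence no Zorn's Lemma) — and it even yields the quantitative refinement that $Y$ meets $\FS{x_1,\dots,x_m}$ for the finite initial segment of length $m = \card{X/Y}$. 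What the paper's version buys is uniformity with the chapter's machinery and the slightly stronger-sounding intermediate conclusion, stated directly, that $Y$ lies in \emph{every} idempotent ultrafilter; of course the two conclusions are equivalent by Corollary~\ref{cor:IPst-characterisation}.
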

\begin{proof}
	It suffices to show that for any idempotent ultrafilter $\cU \in \Ultrafilters{X}$ we have $Y \in \cU$. Let us fix such $U$. Because $r:= \card{X/Y} < \infty$, we can partition $X$ into finitely many disjoint sets $Y_i$ of the form $Y_i = x_i + Y$ for some $x_i \in X$, $i \in [r]$. Because $\cU$ is an ultrafilter, for some $j$ we have $Y_j \in \cU$. Because $\cU$ is idempotent, it follows that $Y_j - x \in \cU$ for $\cU$-a.a. $x$. In particular, there exists $x \in Y_j$ such that $Y_j - x \in \cU$. Because for any such $x$ we have $Y_j = x + Y$ it follows that $Y = Y_j - x \in \cU$, as desired.
\end{proof}

\index{ultrafilter!idempotent|)}
\section{$\C$-sets and minimal idempotent ultrafilters}
\index{C*-set@$\C^*$-set}
\index{C-set@$\C$-set|(}

Another important notion of largeness is provided by $\C$-sets (also known as central sets
\footnote{We will usually avoid referring to $\C$-sets as ''central sets'', because this would leave no satisfactory name for the $\C^*$ sets. It is frequent in literature to use names $\text{central}$ and $\text{central}^*$, but we dislike the latter on aesthetic grounds.}) 
and $\C^*$-sets. Much like the $\IP$-sets are related to idempotent ultrafilters, the central sets are related to \emph{minimal} ultrafilters. We are again able to derive a strong combinatorial result by consideration of ultrafilters: this time these are van der Waerden's theorem and Hales-Jewitt theorem. As in the previous section, $X$ denotes a discrete (not necessarily commutative) semigroup throughout this section. However, we will often need to specialise to concrete semigroups.

The following definition of $\C$-sets is obviously inspired by the ultrafilter characterisation of $\IP$-sets. 

\begin{definition}\label{C:C-definition}
	Let $A \in \cP(X)$ be a set. Then $A$ is a $\C$-set if and only if there exists a minimal idempotent\footnote{This means that $\cU + \cU = \cU$ and $\cU \in \Kappa(\Ultrafilters{X})$, where $\Kappa(\Ultrafilters{X})$ denotes the minimal two sided ideal in $\Ultrafilters{X}$} ultrafilter $\cU \in \Ultrafilters{X}$ such that $A \in \cU$.
\end{definition}
\begin{observation}
\index{C*-set@$\C^*$-set}
	Let $A \in \cP(X)$ be a set. Then $A$ is a $\C^*$-set if and only if for any a minimal idempotent ultrafilter $\cU \in \Ultrafilters{X}$ we have $A \in \cU$.	
\end{observation}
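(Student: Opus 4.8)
The plan is to follow exactly the route used for $\IP^*$-sets in Corollary \ref{cor:IPst-characterisation}, replacing "idempotent ultrafilter" by "minimal idempotent ultrafilter" throughout. The entire content is an application of the formal properties of the operation $\mathsf{A}\mapsto\mathsf{A}^*$ recorded in Proposition \ref{C:prop:star-properties}.

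First I would record the ultrafilter description of the family of $\C$-sets. Let $\{\cU_i\}_{i\in I}$ enumerate all minimal idempotent ultrafilters on $X$. Straight from Definition \ref{C:C-definition}, a set $A\in\cP(X)$ is a $\C$-set if and only if $A\in\cU_i$ for some $i\in I$; equivalently, the family of all $\C$-sets is precisely $\bigcup_{i\in I}\cU_i$. I would also note in passing that $I\neq\emptyset$: by Proposition \ref{A:prop:min-ideal-existence} the semigroup $\Ultrafilters{X}$ contains a minimal left ideal $L$, which is closed by Lemma \ref{lem:principal=>closed} and hence a compact left-topological sub-semigroup, so by Corollary \ref{cor:idempotetnes-exist-in-beta-X} it contains an idempotent, and that idempotent lies in $\Kappa(\Ultrafilters{X})$ by Proposition \ref{A:prop:K-characterisation}. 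This only ensures the statement is non-vacuous; the argument below is valid regardless.

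Next I would feed this into Proposition \ref{C:prop:star-properties}. By the definition of the star operation (Definition \ref{C:def:A-star-sets}), the family of $\C^*$-sets is $\left(\bigcup_{i\in I}\cU_i\right)^*$. Part \ref{C:prop:star-prop:2} of Proposition \ref{C:prop:star-properties} turns this into $\bigcap_{i\in I}\cU_i^*$, and part \ref{C:prop:star-prop:3} of the same proposition gives $\cU_i^*=\cU_i$ for each $i$, since every ultrafilter is a partition-regular filter. Hence the family of $\C^*$-sets equals $\bigcap_{i\in I}\cU_i$.

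Finally, unwinding this identity: $A$ is a $\C^*$-set if and only if $A\in\bigcap_{i\in I}\cU_i$, that is, if and only if $A\in\cU_i$ for every $i\in I$, which is exactly the assertion that $A$ belongs to every minimal idempotent ultrafilter on $X$. I do not anticipate any real obstacle; the only point that deserves a moment's care is the non-emptiness of the index set $I$, which I have dealt with above.
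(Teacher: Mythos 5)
Your proof is correct and is essentially the argument the paper intends: the Observation is left unproved in the text precisely because it is the verbatim analogue of Corollary \ref{cor:IPst-characterisation}, whose proof you reproduce with ``idempotent'' replaced by ``minimal idempotent'' throughout, using Definition \ref{C:def:A-star-sets} and Proposition \ref{C:prop:star-properties}. Your extra remark securing the non-emptiness of the family of minimal idempotents is sound and harmless, though, as you note, the set-theoretic identity holds vacuously even without it.
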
 

It is unfortunate that $\C$-sets do not allow such a natural definition in terms of the basic semigroup structure as $\IP$-sets do. However, there does exist an equivalent definition in terms of dynamical systems, at least in the most important case $X = (\NN,+)$. The dynamical definition of centrality is due to Furstenberg in \cite{Furstenberg-central}, and was introduced a long time before the connection to ultrafilters was discovered. 

The following result is in no way easy. We cite it (without a proof) as an additional motivation behind the study of $\C$-sets.

\begin{theorem}
	Let $A \subset \NN$. Then, the following conditions are equivalent:
	\begin{enumerate}
	\item The set $A$ is $\C$-set in the sense of Definition \ref{C:C-definition} 
	\item There exists a (topological) dynamical system $(X,T)$, uniformly recurrent point $y \in X$, point $x \in X$ proximal to $y$ and open neighbourhood $U \ni y$ such that $A$ has the form:
	$$ A = \{ n \in \NN \setsep T^nx \in x \}.$$
	\end{enumerate}
\end{theorem}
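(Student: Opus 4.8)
The plan is to prove the two implications via the action of $\beta\NN$ on compact dynamical systems. For a compact Hausdorff system $(X,T)$ and $p\in\beta\NN$, put $T^{p}x:=\llim{p}{n}T^{n}x$; by Proposition~\ref{prop:U-lim-is-continuous} the map $p\mapsto T^{p}x$ is continuous, by Proposition~\ref{prop:beta-f-properties} (together with the iterated-limit identity $\llim{\cU}{a}\llim{\cV}{b}f(a\cdot b)=\llim{\cU\cdot\cV}{c}f(c)$) one has $T^{p+q}=T^{p}\circ T^{q}$, and if $U$ is open and $T^{p}x\in U$ then $\{n\setsep T^{n}x\in U\}\in p$. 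I would also freely use three standard dictionary facts: a point $y$ is uniformly recurrent iff $Y:=\cl{\{T^{n}y\setsep n\in\NN\}}$ is a minimal subsystem; in that case $\{T^{s}z\setsep s\in\beta\NN\}=Y$ for every $z\in Y$ (it is a nonempty compact $T$-invariant subset of the minimal $Y$, and also coincides with the orbit closure of $z$); and $x$ is proximal to $y$ iff $T^{p}x=T^{p}y$ for some $p\in\beta\NN$.

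For (1)$\Rightarrow$(2) I would take $X=\{0,1\}^{\NN}$ with the shift $T$, the point $x=1_{A}$, and $U=\{z\in X\setsep z(0)=1\}$, so that $A=\{n\setsep T^{n}x\in U\}$ by construction. Pick, using hypothesis (1), a minimal idempotent $\cU\in\Kappa(\beta\NN)$ with $A\in\cU$, and set $y:=T^{\cU}x$. Since $A\in\cU$ we get $y(0)=\llim{\cU}{n}1_{A}(n)=1$, so $y\in U$; since $\cU$ is idempotent, $T^{\cU}y=T^{\cU+\cU}x=T^{\cU}x=y$, so $x$ is proximal to $y$ (take $p=\cU$). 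Finally $\cl{\{T^{n}y\setsep n\in\NN\}}=\{T^{v}x\setsep v\in\beta\NN+\cU\}$, and $\beta\NN+\cU$ is a minimal left ideal because $\cU\in\Kappa(\beta\NN)$ is idempotent; repeating the same computation from an arbitrary point $T^{v}x$ of this set (using that $\beta\NN+v=\beta\NN+\cU$ whenever $v$ lies in the minimal left ideal $\beta\NN+\cU$) shows it is a minimal system, so $y$ is uniformly recurrent, giving (2).

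For (2)$\Rightarrow$(1), let $(X,T)$, $y$, $x$, $U$ be as in (2). First produce a \emph{minimal} $p$ with $T^{p}x=T^{p}y$: choose any $q$ with $T^{q}x=T^{q}y$, a minimal left ideal $L$ and $v\in L$, and put $p:=v+q\in\Kappa(\beta\NN)$, so $T^{p}x=T^{v}(T^{q}x)=T^{v}(T^{q}y)=T^{p}y$. Since $y$ is uniformly recurrent and $T^{p}y\in Y$, there is $r$ with $T^{r}(T^{p}y)=y$; then $q_{0}:=r+p\in\Kappa(\beta\NN)$ satisfies $T^{q_{0}}x=T^{r}(T^{p}x)=T^{r}(T^{p}y)=y$ and $T^{q_{0}}y=y$. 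Let $L_{0}$ be a minimal left ideal with $q_{0}\in L_{0}$ and set $S:=\{s\in L_{0}\setsep T^{s}x=y\ \text{and}\ T^{s}y=y\}$. Then $S$ is nonempty ($q_{0}\in S$), closed (the intersection of the closed set $L_{0}$ with two preimages of $\{y\}$ under continuous maps), hence a compact subsemigroup of $\beta\NN$; by Ellis' theorem (Theorem~\ref{thm:idempotents-exist-in-general}) it contains an idempotent $\cU$. As $\cU\in L_{0}\subseteq\Kappa(\beta\NN)$, the idempotent $\cU$ is minimal, and $T^{\cU}x=y\in U$ forces $A=\{n\setsep T^{n}x\in U\}\in\cU$; hence $A$ is a $\C$-set.

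The main obstacle is the second implication, and specifically the step that manufactures a \emph{minimal} idempotent (rather than merely some idempotent) satisfying $T^{\cU}x=y$: one must combine proximality and uniform recurrence so as to fall inside a single minimal left ideal, and then invoke Ellis' theorem on precisely the right closed subsemigroup. Keeping the left/right conventions for the $\beta\NN$-action and its ideals consistent throughout is where essentially all the care lies; the preliminary dictionary between the dynamical picture (proximality, uniform recurrence, orbit closures) and the algebra of $\beta\NN$ is routine but takes some space to set up.
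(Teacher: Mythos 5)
The paper does not actually prove this theorem: it is stated ``without a proof'' and deferred to \cite{Berg-survey-minimal}, so there is no in-paper argument to compare against. Judged on its own, your proposal is correct and is essentially the standard Bergelson/Hindman--Strauss argument for the dynamical characterisation of central sets. The algebra is set up consistently with the paper's conventions: $T^{p+q}=T^{p}\circ T^{q}$ follows from the iterated-limit identity, $s\mapsto T^{s}x$ is continuous by Proposition \ref{prop:U-lim-is-continuous}, and minimal left ideals are closed by Lemma \ref{lem:principal=>closed}, which is exactly what you need to make $S=\{s\in L_{0}\setsep T^{s}x=y,\ T^{s}y=y\}$ a compact subsemigroup to which Ellis's theorem applies; that $S$ sits inside a single minimal left ideal is precisely the point that forces the resulting idempotent to be minimal, and your two-step reduction (first push $q$ into $\Kappa(\beta\NN)$ by left-multiplying by an element of a minimal left ideal, then use minimality of $Y$ to return $T^{p}y$ to $y$) handles it correctly. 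The forward direction via the shift on $\{0,1\}^{\NN}$ with $x=1_{A}$, $y=T^{\cU}x$ is likewise the standard construction, and your verification that the orbit closure of $y$ equals $\{T^{w}x\setsep w\in\beta\NN+\cU\}$ with $\beta\NN+\cU$ a minimal left ideal is the right way to get uniform recurrence. The only items left unproved are the three ``dictionary'' facts you flag explicitly (uniform recurrence $\Leftrightarrow$ minimal orbit closure, minimality $\Leftrightarrow$ every $\beta\NN$-orbit is everything, proximality $\Leftrightarrow$ $T^{p}x=T^{p}y$ for some $p$); these are genuinely standard and each is provable with the compactness and limit-characterisation tools already in Chapter \ref{A:chapter}, so citing them is acceptable, though a fully self-contained write-up should include the proximality equivalence since the paper never defines proximality. (You also silently correct the typo in the statement, which should read $T^{n}x\in U$ rather than $T^{n}x\in x$.)
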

\begin{proof}
	See \cite{Berg-survey-minimal}.
\end{proof}

The above theorem can be generalised to characterise $\C$-sets in arbitrary semigroups by dynamical properties. However, we will not use them much in our applications.

Armed in the arithmetic preliminaries from Chapter \ref{A:chapter}, we are able to prove the following theorem about arithmetic progressions in central sets with remarkably little work. This result is similar in the spirit to Lemma \ref{C:lem:IP<=in-idempotent}: algebraic properties of an ultrafilter imply combinatorial richness of a set. To make notation more succinct, we begin with a definition.

\newcommand{\AP}{\mathsf{AP}}
\begin{definition}
	Let $A \in \cP(\NN)$ be a set. We say that $A$ is $\AP_r$ if and only if $A$ contains an arithmetic progression of length $r$, i.e. a configuration $\{a,a+b,\dots,a+(r-1)b\}$ for some $a \in \NN$ and $b \in \NN$. Moreover, $A$ is said to be $\AP$ if it is $\AP_r$ for all $r$, i.e. it contains arbitrarily long arithmetic progressions.
\end{definition}

\newcommand{\NNz}{\NN_0}

\begin{theorem}\label{C:thm:C->AP}
  Let $\cV \in \Ultrafilters{ \NN }$ be a minimal idempotent, and suppose that $A \in \cV$. Then, $A$ is $\AP$-set, i.e. for any $r \in \NNz$, the set $A$ contains an arithmetic progression of length $r$.  
\end{theorem}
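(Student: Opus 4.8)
The plan is to transport the statement into the product semigroup $\left(\beta\NN\right)^{r}$, where it becomes a fact about the smallest ideal. The cases $r\le 1$ being trivial (as $A\ne\emptyset$), assume $r\ge 2$ and work in $\left(\beta\NN\right)^{r}$ with the coordinatewise operation, a compact left-topological semigroup whose smallest ideal is $\Kappa\!\left((\beta\NN)^{r}\right)=\Kappa(\beta\NN)^{r}$ by Proposition~\ref{A:prop:K-product}. Inside the dense discrete part $\NN^{r}$ I would single out
$$ T_{0}:=\left\{(a,a+d,\dots ,a+(r-1)d)\setsep a,d\in\NN\right\},\qquad T_{0}':=T_{0}\cup\left\{(a,a,\dots ,a)\setsep a\in\NN\right\}, $$
and record the routine facts that both are subsemigroups of $(\NN^{r},+)$, that $T_{0}$ is a two-sided ideal of $T_{0}'$, and that $T_{0}'\subset\NN^{r}\subset\CentreTop\!\left((\beta\NN)^{r}\right)$. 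Proposition~\ref{A:prop:algebra-closure-of-sgrp-is-sgrp-II} then makes $T':=\cl{T_{0}'}$ a compact subsemigroup of $(\beta\NN)^{r}$ and $I:=\cl{T_{0}}$ a two-sided ideal of $T'$.

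The crux is to show that the diagonal ultrafilter $\vec{\cV}:=(\cV,\dots ,\cV)$ belongs to $I$. It is idempotent since $\cV$ is; it lies in $T'$ because it is the image of $\cV$ under the continuous diagonal embedding $\beta\NN\to(\beta\NN)^{r}$, which sends $\NN$ into $T_{0}'$ and hence $\beta\NN=\cl{\NN}$ into $\cl{T_{0}'}=T'$; and since $\cV\in\Kappa(\beta\NN)$, Proposition~\ref{A:prop:K-product} gives $\vec{\cV}\in\Kappa(\beta\NN)^{r}=\Kappa\!\left((\beta\NN)^{r}\right)$. So $T'\cap\Kappa\!\left((\beta\NN)^{r}\right)\ne\emptyset$, and Proposition~\ref{A:prop:K-subsgrp} gives $\Kappa(T')=T'\cap\Kappa\!\left((\beta\NN)^{r}\right)\ni\vec{\cV}$. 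Since $\Kappa(T')$ is the smallest two-sided ideal of $T'$ and $I$ is a two-sided ideal of $T'$, Proposition~\ref{A:prop:K-existence} forces $\Kappa(T')\subset I$, whence $\vec{\cV}\in I=\cl{T_{0}}$.

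It then remains to unwind this membership, which is immediate: because $A\in\cV$, the set $\left\{(u_{1},\dots ,u_{r})\in(\beta\NN)^{r}\setsep A\in u_{i}\ \text{for }i=1,\dots ,r\right\}$ is a clopen neighbourhood of $\vec{\cV}$, so it must meet the dense subset $T_{0}$ of $I$; this produces $a,d\in\NN$ with $a+id\in A$ for every $i\in\{0,1,\dots ,r-1\}$, and $d\ge 1$ by construction of $T_{0}$, i.e. a genuine arithmetic progression of length $r$ inside $A$. I expect the only real obstacle to be spotting the right auxiliary semigroups: the point is that the closure of the honest, strictly increasing progressions is not just a subsemigroup but a two-sided \emph{ideal} of the slightly larger ``progressions-or-constants'' semigroup, which is exactly what pins the diagonal copy of the minimal idempotent $\cV$ inside it; note that this step uses the \emph{minimality} of $\cV$ in an essential way (mere idempotence would not do, consistently with a general $\IP$-set failing to be $\AP_{3}$), everything else being bookkeeping.
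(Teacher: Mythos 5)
Your proposal is correct and follows essentially the same route as the paper: the paper works in $S=\prod_{i\in[r]}\beta\NN$ with $E_0=\{(a+ib)_{i\in[r]}\setsep a\in\NN,\,b\in\NN_0\}$ and $I_0=\{(a+ib)_{i\in[r]}\setsep a\in\NN,\,b\in\NN\}$, which are exactly your $T_0'$ and $T_0$, and then applies the same chain of Propositions (\ref{A:prop:algebra-closure-of-sgrp-is-sgrp}/\ref{A:prop:algebra-closure-of-sgrp-is-sgrp-II}, \ref{A:prop:K-product}, \ref{A:prop:K-subsgrp}) to place the diagonal image of $\cV$ in $\cl I_0$ and intersect $\bar{A}^{\times r}$ with $I_0$. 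The only cosmetic differences are your continuity argument for $\vec{\cV}\in\cl T_0'$ in place of the paper's explicit neighbourhood check, and your (correct) remark that only minimality, not idempotence, of $\cV$ is actually used.
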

\begin{proof}
  Consider the semigroup $S := \prod_{i \in [r]} \beta \NN$, and the sets:
  \begin{align*}
    E_0 &:= \{ \left(a+ib\right)_{i\in [r]} \setsep a \in \NN, b \in \NN_0 \} \\
    I_0 &:= \{ \left(a+ib\right)_{i\in [r]} \setsep a \in \NN, b \in \NN \} \\
  \end{align*}
  Note that with these definitions, existence of an arithmetic progression $a,a+b,\dots,a+(r-1)b$ in a set $B \in \cP(\NN)$ is equivalent to existence of a vector $ \left(a+ib\right)_{i\in [r]} \in B^{\times n} \cap I_0$. The same argument shows that if  $B \in \cP(\NN)$ is non-empty, then $B^{\times n}$ contains a common element with $E_0$, namely any constant sequence.

  It is clear that $E_0$ is a semigroup, and that $I_0 \subset E_0$ is an ideal. What is more, $\Centre(S) = \prod_{i \in [r]} \Centre(\Ultrafilters{\NN}) = \prod_{i \in [r]} \NN$, so clearly $E_0 \subset \Centre(S)$. It follows by Proposition \ref{A:prop:algebra-closure-of-sgrp-is-sgrp} that $E := \cl E$ is a semigroup and $I := \cl I_0$ is an ideal in $E$. 

  Let $\delta :\ \Ultrafilters{\NN} \to S$ be the diagonal map $\cU \mapsto (\cU)_{i \in [r]}$. We note that $\delta(\cU) \in E$ for any $\cU$, or equivalently $E_0 \cap U \neq \emptyset$ for any open neighbourhood $U \ni \cU$. For a proof, consider any neighbourhood of $\delta(\cU)$, which can be assumed to be of the form $\prod_{i} \bar{B}_i$, because of how topology on $\Ultrafilters{\NN}$ and product topologies are defined. Taking $B := \bigcap_i B_i$, we may further restrict to the neighbourhood $\bar{B}^{\times n}$. But now for any $b \in B$ we have $\delta(b) \in \bar{B}^{\times n} \cap E$, so the intersection is indeed non-empty.

  Because of Lemma \ref{A:prop:K-product}, we have $\delta(\cU) \in \Kappa(S)$ for $\cU \in \Kappa(\Ultrafilters{\NN})$. In particular, $\Kappa(S) \cap E \neq \emptyset$, and it follows from Lemma \ref{A:prop:K-subsgrp} that we in fact have $\Kappa(S) \cap E = \Kappa(E)$. Because $I$ is an ideal, by the definition of $\Kappa$ we have:
  $$I \supseteq \Kappa(E) = \Kappa(S) \cap E \supseteq \delta(\Kappa(\Ultrafilters{\NN})). $$
  
  Let us now consider the minimal idempotent $\cV$, and set $A \in \cV$. Then $\bar{A}^{\times r}$ is a neighbourhood of $\delta(\cV)$ in $S$. The above considerations show that $\delta(\cV) \in I$. Hence $\bar{A}^{\times r} \cap I_0 \neq \emptyset$, which is, as we noted above, equivalent to existence of an arithmetic progression of length $r$ in $A$.
\end{proof}

The reason why the above theorem is important is that it allows us to prove the classical van der Waerden theorem. 

\begin{theorem}[van der Waerden]
	Suppose $\NN = A_1 \cup A_2 \cup \dots \cup A_k$ is a finite partition of the natural numbers. Then, for some $i$, the set $A_i$ is $\AP$, i.e. contains arbitrarily long arithmetic progressions.	
\end{theorem}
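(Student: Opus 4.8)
The plan is to read this off from Theorem \ref{C:thm:C->AP} the moment we know that a minimal idempotent ultrafilter exists on $\NN$. So the first step is to produce a minimal idempotent $\cV \in \Ultrafilters{\NN}$. Recall from Chapter \ref{A:chapter} that $\Ultrafilters{\NN}$, with the addition inherited from $(\NN,+)$, is a compact Hausdorff left-topological semigroup. By Proposition \ref{A:prop:K-existence} it has a smallest two-sided ideal $\Kappa(\Ultrafilters{\NN})$, which is the union of all minimal left ideals; pick any minimal left ideal $L$. By Proposition \ref{A:prop:min-ideal-characterisation}, $L = \Ultrafilters{\NN}\cdot x$ for any $x \in L$, hence $L$ is closed by Lemma \ref{lem:principal=>closed}, hence a compact left-topological semigroup in its own right. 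By Ellis' Theorem \ref{A:thm:idempotents-exist-in-general}, $L$ contains an idempotent $\cV$; since $\cV \in L \subseteq \Kappa(\Ultrafilters{\NN})$, this $\cV$ is a minimal idempotent in the sense of Definition \ref{A:def:minimal}.

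Next I would use the partition. Since $\cV$ is a filter, $\NN \in \cV$, so $A_1 \cup A_2 \cup \dots \cup A_k \in \cV$, and the ultrafilter property \ref{def:ultrafilter-prop:5} gives an index $i$ with $A_i \in \cV$. Now Theorem \ref{C:thm:C->AP}, applied to the minimal idempotent $\cV$ and the set $A_i \in \cV$, says precisely that $A_i$ is an $\AP$-set, i.e. for every $r$ it contains an arithmetic progression of length $r$. That is the assertion to be proved. One may also note in passing that $\cV$ is automatically non-principal, since a principal ultrafilter $\principal{n}$ cannot be idempotent in $(\NN,+)$ because $n+n \neq n$; this is not strictly needed here but reassures that the progressions obtained are genuine.

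There is essentially no remaining obstacle: all the real content sits in Theorem \ref{C:thm:C->AP} (which handled the product semigroup $\prod_{i\in[r]}\beta\NN$ and the centrality machinery) together with the structural results on $\Kappa(\Group)$. The only part demanding any care is the bookkeeping in the first paragraph — the chain ``minimal left ideal $\Rightarrow$ principal $\Rightarrow$ closed $\Rightarrow$ compact subsemigroup $\Rightarrow$ contains an idempotent $\Rightarrow$ that idempotent lies in $\Kappa$'' — but each link is an already-established lemma, so writing it out is routine.
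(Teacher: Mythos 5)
Your proof is correct and follows exactly the paper's route: take a minimal idempotent ultrafilter, use the partition-regularity of ultrafilters to find $i$ with $A_i \in \cV$, and invoke Theorem \ref{C:thm:C->AP}. The only difference is that you spell out the existence of a minimal idempotent via the minimal-left-ideal and Ellis machinery, whereas the paper takes this as already established from Chapter \ref{A:chapter}; the extra bookkeeping is accurate.
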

\begin{proof}
	Let $\cV$ be an arbitrary minimal idempotent ultrafilter. There exists $i$ such that $A_i \in \cV$. It follows from the above theorem that $A_i$ is $\AP$.
\end{proof}

It comes as no surprise that the above theorem also has a finite version. The derivation of this finite version is standard and can be done in many ways, so we omit the proof.

\begin{theorem}[van der Waerden, finite version]
	Let length $r$ and number $k$ be fixed. Then there exists $N \in \NN$ such that for any partition $[N] = A_1 \cup A_2 \cup \dots \cup A_k$ into $k$ pieces, there exists $i$ such that the set $A_i$ is $\AP_r$, i.e. contains an arithmetic progression of length $r$.	
\end{theorem}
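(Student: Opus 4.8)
The plan is to derive the finite version from the infinitary van der Waerden theorem by a compactness argument, mirroring exactly the technique used earlier for the finitary Ramsey theorem (Theorem~\ref{C:thm:ramsey-fin}).

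First I would fix the length $r$ and the number of colours $k$ and argue by contradiction: suppose that for \emph{every} $N \in \NN$ there is a partition $[N] = A_1^{(N)} \cup \dots \cup A_k^{(N)}$ in which no cell contains an arithmetic progression of length $r$. I would set up a first-order theory $\Theory$ whose language has $k$ unary predicates $P_1,\dots,P_k$ (recording the colour of each element), together with axioms asserting that $P_1,\dots,P_k$ partition the universe (every element satisfies exactly one $P_i$), and that the underlying order is a discrete linear order with a least element --- essentially the theory of an initial segment of $(\NN,<)$ equipped with a $k$-colouring. For each $N$, the failure of the claim at $N$ gives a model of $\Theory$ of size $\geq N$ (namely $[N]$ with its bad colouring) satisfying the sentence ``no monochromatic arithmetic progression of length $r$'' --- this is expressible as a single first-order sentence $\varphi_{r,k}$, since ``$x,x+d,\dots,x+(r-1)d$'' can be spelled out using the successor/order structure with $r$ bounded quantifiers. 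Hence $\Theory \cup \{\varphi_{r,k}\} \cup \{\text{``there are at least $N$ elements''} : N \in \NN\}$ is finitely satisfiable, so by \L o\'s's theorem (compactness, via an ultraproduct of these finite models) it has a model $M$, which is necessarily infinite, carries a $k$-colouring, and contains no monochromatic $r$-term arithmetic progression.

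Next I would extract from the infinite model $M$ a genuine counterexample to the infinitary van der Waerden theorem. The subtlety here is that $M$ need not be order-isomorphic to $(\NN,<)$; however, $M$ contains a copy of $\NN$ as an initial segment (generated by the least element under successor), and this copy inherits a $k$-colouring $\NN = B_1 \cup \dots \cup B_k$. Since $M$ has no monochromatic arithmetic progression of length $r$ --- and arithmetic progressions within the standard part of $M$ are exactly arithmetic progressions of natural numbers --- none of the $B_i$ contains an $r$-term arithmetic progression. But by the infinitary van der Waerden theorem just proved, some $B_i$ contains arbitrarily long arithmetic progressions, in particular one of length $r$. This is the desired contradiction, so $N = N(r,k)$ exists.

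The main obstacle is the bookkeeping in the model-theoretic step: one must be careful that ``contains an arithmetic progression of length $r$'' is genuinely first-order (which it is, for fixed $r$, but not uniformly in $r$), and that the infinite model produced really does contain a faithful copy of $(\NN,<,+\text{-progressions})$ so that the contradiction with the infinitary theorem is valid. An alternative, cleaner route --- which I would mention as a remark --- avoids model theory entirely: equip $[k]^{\NN}$ with the product (Tychonoff) topology, and use compactness directly. If the finite statement failed, for each $N$ pick a bad colouring $c_N : [N] \to [k]$, extend it arbitrarily to $\NN$, and pass to a limit point $c$ of $(c_N)_N$ in $[k]^{\NN}$ (equivalently, an ultrafilter limit $c = \llim{\cU}{N} c_N$ using an ultrafilter on $\NN$, which exists by Proposition~\ref{A:prop:U-limit-exists} since $[k]^{\NN}$ is compact Hausdorff); then $c$ is a $k$-colouring of $\NN$ with no $r$-term monochromatic progression, again contradicting the infinitary theorem. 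Either way the proof is short, which is why the paper elects to omit it.
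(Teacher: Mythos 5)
The paper gives no proof here --- it explicitly declares the derivation standard and leaves it as an exercise --- so there is nothing to compare against line by line; your proposal supplies exactly the kind of argument the author had in mind, mirroring the paper's own treatment of the finitary Ramsey theorem. Your second route (compactness of $[k]^{\NN}$ in the product topology, or equivalently an ultrafilter limit of the bad colourings) is correct and complete as sketched; the only nit is that the ultrafilter used to form $\llim{\cU}{N} c_N$ must be non-principal, since a principal ultrafilter would just return one of the arbitrarily extended $c_{N_0}$, about which nothing is known beyond $[N_0]$.

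The model-theoretic route, however, contains one genuine error: you claim that ``contains an arithmetic progression of length $r$'' can be ``spelled out using the successor/order structure with $r$ bounded quantifiers.'' It cannot. The condition that $y_1,\dots,y_r$ form an arithmetic progression requires asserting that the consecutive differences are equal, and equidistance is not first-order definable over a discrete linear order with successor (the common difference $d$ is unbounded, so it cannot be eliminated by any fixed number of applications of the successor function). To make the argument work you must put addition into the language --- say a ternary relation $A(x,y,z)$ interpreted as $x+y=z$ in each $[N]$ --- and then express the progression as $(\exists d)(d \neq 0 \wedge A(y_1,d,y_2) \wedge \dots \wedge A(y_{r-1},d,y_r))$. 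With that repair the rest of your argument (the ultraproduct is infinite, its standard part is a copy of $\NN$ on which atomic formulas are preserved and reflected, so a monochromatic $r$-progression in the induced colouring of $\NN$ would violate the sentence $\varphi_{r,k}$) goes through, and the contradiction with the infinitary theorem is valid. Since your topological route avoids the issue entirely and is shorter, it is the one to prefer.
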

\begin{proof}
	Left as an exercise to the reader.
\end{proof}

Using the finite version of the van der Waerden Theorem, we can derive the following elegant corollary.
\begin{corollary}
	The family of subset of $\NN$ which are $\AP$ is partition regular.
\end{corollary}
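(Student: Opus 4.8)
The plan is to reduce the statement to the finite version of van der Waerden's theorem stated just above, by a straightforward colour--transfer argument, arguing by contradiction.

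First I would suppose $A$ is $\AP$, fix a partition $A = A_1 \cup A_2 \cup \dots \cup A_k$, and assume towards a contradiction that none of the $A_i$ is $\AP$. Then for each $i$ there is a threshold $r_i \in \NN$ such that $A_i$ is not $\AP_{r_i}$, i.e.\ $A_i$ contains no arithmetic progression of length $r_i$. Put $r := \max_{1 \le i \le k} r_i$. Since any arithmetic progression of length $r$ contains one of length $r_i$, it follows that no $A_i$ is $\AP_r$.

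Next I would invoke the finite van der Waerden theorem with parameters $r$ and $k$ to obtain $N \in \NN$ such that every partition of $[N]$ into $k$ pieces has a piece that is $\AP_r$. Since $A$ is $\AP$, it contains an arithmetic progression $P = \{a, a+b, \dots, a+(N-1)b\}$ of length $N$. The affine bijection $\varphi \colon [N] \to P$ given by $\varphi(j) = a + (j-1)b$ is order--preserving and sends arithmetic progressions to arithmetic progressions; the only point deserving a word of justification is that a sub-arithmetic-progression of $P$ is again an arithmetic progression in $\NN$, which is immediate because $\varphi$ multiplies a common difference $d$ by $b$. Pulling the partition of $A$ back along $\varphi$, we get a partition $[N] = \bigcup_{i=1}^k \varphi^{-1}(P \cap A_i)$ into $k$ pieces, so some $\varphi^{-1}(P \cap A_i)$ is $\AP_r$; applying $\varphi$, the set $P \cap A_i \subseteq A_i$ contains an arithmetic progression of length $r$, contradicting that no $A_i$ is $\AP_r$. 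Hence some $A_i$ is $\AP$, which is precisely the assertion that the family of $\AP$ subsets of $\NN$ is partition regular.

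I do not expect a genuine obstacle here: once the finite van der Waerden theorem is granted, the argument is entirely elementary. The only mildly delicate points are the bookkeeping of combining the thresholds $r_i$ by taking their maximum, and the verification that a monochromatic progression found inside $P$ transports to an honest arithmetic progression of $\NN$ inside the corresponding cell $A_i$.
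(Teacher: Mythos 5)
Your proof is correct and follows essentially the same route as the paper: both reduce to the finite van der Waerden theorem and transfer a monochromatic progression from $[N]$ into a length-$N$ progression contained in $A$ via the affine map. The only cosmetic difference is that you argue by contradiction with $r := \max_i r_i$, whereas the paper argues directly that for every $r$ some cell is $\AP_r$ and then pigeonholes over the finitely many cells.
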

\begin{proof}
	Suppose that $A$ is $\AP$, and that $A = A_1 \cup A_2 \cup \cdots \cup A_k$ is a partition. We will show for any $r$ that one of $A_i$ is $\AP_r$. It will follow immediately that one of $A_i$ is $\AP_r$ for arbitrarily large $r$, and hence also $\AP$.

	Let $r$ be fixed. Let $N$ be such that whenever $[N]$ is partitioned into $k$ parts, one of the parts is $\AP_r$. Because $A$ is $\AP$, it contains an arithmetic progression of length $N$, say $P = \{ a + t b\}_{t=0}^{N-1}$. Let $P_i := P \cap A_i$. Because scaling and shifts do not alter the propery of being an arithemetic progression, it follows than that one of $P_i$ is $\AP_r$. Thus, also $A_i$ is $\AP_r$, which finishes the proof.
\end{proof}

\begin{remark}
	We note that, unlike in the case of $\IP$-sets in Lemma \ref{C:lem:IP=>in-idempotent}, Theorem \ref{C:thm:C->AP} does not admit a converse: it is not at all true that if $A$ is an $\AP$-set then $A \in \cU$ for some minimal idempotent $\cU$. For example, the set $2 \NN + 1$ contains an infinite arithmetic progression, but is not even $\IP$. One can notice that  $2 \NN + 1$  is in fact a translate of $2\NN$ (which is even $\IP^*$). With a little more work, it is possible to find sets with $\AP$ but not being translates of $\IP$-sets.
\end{remark}

	A theorem closely related to van der Waerden theorem is attributed to Hales and Jewett.	While van der Waerden theorem concerned the commutative semigroup $(\NN,+)$, Hales-Jewett speaks of a highly non-commutative situation of words over a finite alphabet. To begin with, we make some definitions.

\begin{definition}
	Let $\Sigma$ be a set, referred to from now on as ``the alphabet''. The free semigroup $F(\Sigma)$ generated by $\Sigma$ is the set of all non-empty sequences $w:\ [n] \to \sigma$, $n \in \NN_1$, together with the operation of concatenation\footnote{If $w = a_1a_2\dots a_n$ ($a_i \in \Sigma$) and $w' = a'_1a'_2\dots a'_{n'}$ ($a'_i \in \Sigma$) then the concatenation of $w$ and $w'$ is the word $ww' = a_1a_2\dots a_na'_1\dots a'_{n'}$} $(w,w') \mapsto ww'$.	
\end{definition}
\begin{remark}
	This definition is justified by a unique factorisation property. It is not hard to discover that if $f:\ \Sigma \to S$ is any map from $\Sigma$ to a semigroup $S$, then there exists a unique map $\tilde{f} :\ F(\Sigma) \to S$ such that $\tilde{f} \circ \iota = f$, where $\iota$ is the natural inclusion map.
\end{remark}

A variable world is the analogue of an affine non-constant function $f(v) = av + b$. 

\begin{definition}
	Let $v$ be a variable (formally, we just need $v \not \in \Sigma$). Then a variable word $w(v)$ is any element of $F(\Sigma \cup \{v\}) \setminus F(\Sigma)$, i.e. a word over the alphabet enriched by $v$, in which $v$ appears at least once.
\end{definition}

	If $w(v)$ is a variable word, then for $a \in \Sigma$, $w(a)$ is the word in $F(\Sigma)$ obtained from $w(v)$ by substitution of $a$ for the variable:
$$ 
	w(a)_i = \begin{cases}
		w(v)_i & \text{ if } w(v)_i \neq v, \\
		a & \text{ if } w(v)_i = v.
	\end{cases}
$$

	A combinatorial line is the analogue of an arithmetic progression. It is obtained from a variable word in the same way an arithmetic progression is obtained from an affine function.
\begin{definition}
	A combinatorial line is a set of the form $\{ w(a) \setsep a \in \Sigma \}$, where $w(v)$ is a variable word.
\end{definition}

Having introduced the notation, we are able to state the Hales-Jewett theorem. Its formulation is, as we have emphasised, analogous to van der Waerden's theorem. Quite surprisingly, the proof of the new result can be obtained from the earlier proof almost by verbatim repetition. The reader will notice that most of the parts are precisely the same, except one replaces arithmetic sequences by combinatorial lines, and instead of Proposition \ref{A:prop:algebra-closure-of-sgrp-is-sgrp} we need to use the more refined Proposition \ref{A:prop:algebra-closure-of-sgrp-is-sgrp-II}.

\begin{theorem}[Hales-Jewett]\label{C:thm:Hales-Jewett}
  Let $F(\Sigma) = A_1 \cup A_2 \cup \dots \cup A_k$ be a finite partition of the space of finite words over an alphabet $\Sigma$. Then one of the cells $A_i$ contains a combinatorial line.
\end{theorem}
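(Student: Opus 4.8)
The plan is to transcribe the proof of Theorem~\ref{C:thm:C->AP} almost word for word, replacing arithmetic progressions by combinatorial lines, the additive semigroup $\NN$ by the free semigroup $F(\Sigma)$, and --- crucially, since $F(\Sigma)$ is non-commutative --- Proposition~\ref{A:prop:algebra-closure-of-sgrp-is-sgrp} by its topological-centre refinement Proposition~\ref{A:prop:algebra-closure-of-sgrp-is-sgrp-II}. We assume throughout that $\Sigma$ is finite (as in ``finite alphabet''), since Proposition~\ref{A:prop:K-product} is only available for finite products.

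First I would set up the ambient semigroup $S := \prod_{a \in \Sigma} \beta F(\Sigma)$, indexed by the letters of the alphabet, and inside it the two sets
\begin{align*}
 E_0 &:= \left\{ \left( w(a) \right)_{a \in \Sigma} \setsep w(v) \in F(\Sigma \cup \{v\}) \right\}, \\
 I_0 &:= \left\{ \left( w(a) \right)_{a \in \Sigma} \setsep w(v) \in F(\Sigma \cup \{v\}) \setminus F(\Sigma) \right\}.
\end{align*}
The point of this definition is that if $w(v)$ genuinely contains the variable then the tuple $(w(a))_{a\in\Sigma}$ enumerates the combinatorial line $\{w(a)\setsep a \in \Sigma\}$; hence a set $B \subset F(\Sigma)$ contains a combinatorial line precisely when $B^{\times\Sigma} \cap I_0 \neq \emptyset$, while $B^{\times\Sigma}$ meets $E_0$ (via a constant tuple, coming from a word with no $v$) as soon as $B \neq \emptyset$. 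Using that concatenation of words commutes with substitution of a letter for $v$, one checks directly that $E_0$ is a subsemigroup of $S$ and that $I_0$ is a two-sided ideal of $E_0$.

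Next I would observe that $E_0 \subset \CentreTop(S)$: by the observations on topological centres of products, $\CentreTop(S) = \prod_{a\in\Sigma}\CentreTop(\beta F(\Sigma))$, and $F(\Sigma) \subset \CentreTop(\beta F(\Sigma))$ because $F(\Sigma)$ is discrete; since every coordinate of an element of $E_0$ is an honest word, $E_0 \subset \prod_{a\in\Sigma} F(\Sigma) \subset \CentreTop(S)$. Proposition~\ref{A:prop:algebra-closure-of-sgrp-is-sgrp-II} then gives that $E := \cl{E_0}$ is a compact subsemigroup of $S$ and $I := \cl{I_0}$ is a two-sided ideal of $E$. The diagonal map $\delta :\ \beta F(\Sigma) \to S$, $\cU \mapsto (\cU)_{a\in\Sigma}$, satisfies $\delta(\cU) \in E$ for every $\cU$: any basic neighbourhood $\prod_{a\in\Sigma}\bar{B}_a$ of $\delta(\cU)$ contains, for $B := \bigcap_{a}B_a \in \cU$ and any $w \in B$, the constant tuple $\delta(w) \in E_0$. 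By Proposition~\ref{A:prop:K-product}, $\Kappa(S) = \prod_{a\in\Sigma}\Kappa(\beta F(\Sigma))$, so $\delta$ maps $\Kappa(\beta F(\Sigma))$ into $\Kappa(S) \cap E$; in particular this set is non-empty, so Proposition~\ref{A:prop:K-subsgrp} yields $\Kappa(E) = E \cap \Kappa(S)$, and since $I$ is an ideal of $E$ we get
$$ I \supseteq \Kappa(E) = E \cap \Kappa(S) \supseteq \delta\bigl(\Kappa(\beta F(\Sigma))\bigr). $$

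Finally I would pick a minimal idempotent $\cV \in \beta F(\Sigma)$ (a minimal left ideal is principal, hence closed and compact, hence a compact subsemigroup, so it contains an idempotent, which is then minimal by Proposition~\ref{A:prop:K-characterisation}). Some cell $A_i$ belongs to $\cV$, so $\bar{A_i}^{\times\Sigma}$ is a neighbourhood of $\delta(\cV)$ in $S$; since $\delta(\cV) \in I = \cl{I_0}$, the intersection $\bar{A_i}^{\times\Sigma} \cap I_0$ is non-empty, which --- exactly as at the end of the proof of Theorem~\ref{C:thm:C->AP} --- produces a variable word $w(v)$ with $w(a) \in A_i$ for all $a \in \Sigma$, i.e. a combinatorial line contained in $A_i$. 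The only step that is not pure transcription, and the reason Proposition~\ref{A:prop:algebra-closure-of-sgrp-is-sgrp} is insufficient, is the verification that $E_0$ lies in the \emph{topological} centre rather than the centre of $S$; this is precisely where the non-commutativity of $F(\Sigma)$ must be accommodated. I expect that to be the main (minor) obstacle, everything else being a routine copy of the van der Waerden argument.
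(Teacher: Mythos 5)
Your proposal is correct and follows essentially the same route as the paper's own proof: the product semigroup $\prod_{a\in\Sigma}\beta F(\Sigma)$, the sets $E_0$ and $I_0$ of substitution tuples, the closure via the topological-centre refinement (Proposition~\ref{A:prop:algebra-closure-of-sgrp-is-sgrp-II}), the diagonal map, and the chain $I \supseteq \Kappa(E) = E\cap\Kappa(S) \supseteq \delta(\Kappa(\beta F(\Sigma)))$ evaluated at a minimal idempotent. Your identification of the topological-centre step as the only genuine deviation from the van der Waerden argument matches exactly the remark the paper makes before stating the theorem.
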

\begin{proof}
  Consider the semigroup $S := \prod_{i \in [r]} \Ultrafilters{F(\Sigma)}$, and the sets:
  \begin{align*}
    E_0 &:= \{ \left( w(c) \right)_{c \in \Sigma} \setsep w \in F(\Sigma \cup \{v\} ) \} \\
    I_0 &:= \{  \left( w(c) \right)_{c \in \Sigma} \setsep w \in F(\Sigma \cup \{v\} ) \setminus F(\Sigma) \} \\
  \end{align*}
  Note that with these definitions, existence of a combinatorial line $\left\{ w(c) \right\}_{c \in \Sigma} $ in a set $B \in \cP(F(\Sigma))$ is equivalent to existence of a vector $ \left( w(c) \right)_{c \in \Sigma} \in B^{\times n} \cap I_0$. The same argument shows that if  $B \in \cP(F(\Sigma))$ is non-empty, then $B^{\times n}$ contains a common element with $E_0$, namely any constant sequence.

  It is clear that $E_0$ is a semigroup, and that $I_0 \subset E_0$ is an ideal. We need to check that the same is true of $E$ and $I$. We know that, $\CentreTop(S) = \prod_{i \in [r]} \Centre(\Ultrafilters{\NN}) = \prod_{i \in [r]} \NN$, so clearly $E_0 \subset \Centre(S)$. It follows by Proposition \ref{A:prop:algebra-closure-of-sgrp-is-sgrp-II} that $E := \cl E$ is a semigroup and $I := \cl I_0$ is an ideal in $E$. 

  Let $\delta :\ \Ultrafilters{\NN} \to S$ be the diagonal map $\cU \mapsto (\cU)_{i \in [r]}$. We note that $\delta(\cU) \in E$ for any $\cU$, or equivalently $E_0 \cap U \neq \emptyset$ for any open neighbourhood $U \ni \cU$. For a proof, consider any neighbourhood of $\delta(\cU)$, which can be assumed to be of the form $\prod_{i} \bar{B}_i$, because of how topology on $\Ultrafilters{\NN}$ and product topologies are defined. Taking $B := \bigcap_i B_i$, we may further restrict to the neighbourhood $\bar{B}^{\times n}$. But now for any $b \in B$ we have $\delta(b) \in \bar{B}^{\times n} \cap E$, so the intersection is indeed non-empty.

  Because of Lemma \ref{A:prop:K-product}, we have $\delta(\cU) \in \Kappa(S)$ for $\cU \in \Kappa(\Ultrafilters{F(\Sigma)})$. In particular, $\Kappa(S) \cap E \neq \emptyset$, and it follows from Lemma \ref{A:prop:K-subsgrp} that we in fact have $\Kappa(S) \cap E = \Kappa(E)$. Because $I$ is an ideal, by the definition of $\Kappa$ we have:
  $$I \supseteq \Kappa(E) = \Kappa(S) \cap E \supseteq \delta(\Kappa(\Ultrafilters{F(\Sigma)})). $$
  
  Let us now consider the minimal idempotent $\cV$, and set $A \in \cV$. Then $\bar{A}^{\times r}$ is a neighbourhood of $\delta(\cV)$ in $S$. The above considerations show that $\delta(\cV) \in I$. Hence $\bar{A}^{\times r} \cap I_0 \neq \emptyset$, which is, as we noted above, equivalent to existence of an arithmetic progression of length $r$ in $A$.
\end{proof}
\index{C-set@$\C$-set|)}

\chapter{Applications in ergodic theory.}

\label{B:chapter} 
\lhead{Chapter \ref{B:chapter}. \emph{Ergodic theory}} 

In this chapter, we study the applications of ultrafilters to ergodic theory. We will prove that certain sets of ``return times'' have the combinatorial structure of $\IP^*$-sets of $\C^*$-sets. 

We begin by some general considerations about polynomials. We introduce the notion of discrete derivative, which allows us to characterise the polynomials only in terms of the additive structure. This leads to the notion of polynomial maps between general commutative groups. 

Our first application is to polynomial maps on a torus; it can also be construed as a polynomial recurrence result for rotations. By explicit computation of generalised limits, we are able to show $\IP^*$-set property of certain interesting sets. These results are meant to foreshadow subsequent applications to general dynamical systems.

Thanks to the characterisation of polynomials in terms of discrete derivatives, we are able to introduce a generalisation of the notion of polynomials, which we refer to as ``almost polynomials'', for want of a better name. These are very closely related to $p$-$\VIP$-systems, and extend the so-called ``generalised polynomials''. Adapting proofs for standard polynomials, we obtain very similar recurrence results.

Finally, we turn to applications to general dynamical systems. We re-derive and strengthen Khintchine's theorem: instead of a statement about a given set of returns being merely syndetic, we show the $\IP^*$ property. We then derive some results similar to those of Schnell, except we deal with minimal idempotents rather than general ones. 

We make extensive use of papers by Bergelson, McCutcheon and Knutson, like \cite{BergMc2010-SzThm-GenPoly}, \cite{BergKnuMc2006} and \cite{Berg-RamseyErgo-update}. The paper by Schnell \cite{Schnell}, re-deriving results by Bergelson, Furstenberg and McCutcheon, is also very relevant to our inquiry. The surveys by Bergelson \cite{Berg-survey-IP} and \cite{Berg-survey-minimal} were also very helpful.

%%%%%%%%%%%%%%%%%%%%%%%%%%%%%%%%%%%%%%%%%%%%%
% SECTION
%%%%%%%%%%%%%%%%%%%%%%%%%%%%%%%%%%%%%%%%%%%%%
\section{Polynomials and discrete derivative}

In this section we will study the properties of polynomial maps. Given that polynomial maps are among the simplest maps one can imagine, interest in them hardly needs justification. We will take a rather different approach that is common in algebra. For our purposes, a polynomial map will first and foremost be a particularly regular map, and the algebraic aspects will play a secondary role. To begin with, we define polynomials in the simplest possible situation.

\begin{definition}[Polynomial]\label{def:polynomial}
\index{polynomial}
  A map $f:\ \ZZ \to \ZZ$ is said to be a \emph{polynomial} if and only if $f$ is a polynomial with coefficients in $\QQ$ in the usual sense (i.e. $f$ is of the form $f(x) = \sum_i q_i x^i$) and $f(\ZZ) \subset \ZZ$.
\end{definition}

\begin{remark}
  These polynomials include, but are not restricted to, polynomials with integer coefficients. An example of $f:\ \ZZ \to \ZZ$ which is a polynomial, but not a polynomial with integer coefficients, is $f(x) = \frac{x(x+1)}{2}$. We shall shortly see that the assumption that the coefficients of $f$ lie in $\QQ$ is not restrictive, in the sense that the definition would not change if we allowed more general coefficients, for instance in $\CC$. 
\end{remark}

One of our objectives is to extend the notion of a polynomial to maps between a commutative semigroup and a commutative group.\footnote{A reason for interest in such extensions is that a dynamical system can be construed as a measure preserving action of the additive \emph{semigroup} $\NN$. Results about polynomial recurrence then become statements about polynomial maps in $\NN$. It is natural to inquire into generalisations of such statements to measure preserving actions of more general (commutative) semigroups.} Hence, we need to understand what characterises polynomials in terms of the additive structure. The reader will recall that polynomial in $\RR$ or $\CC$ are characterised by the vanishing of sufficiently high derivatives. To make use of this insight in the discrete setup, the notion of the discrete derivative will be useful.

\newcommand{\stepi}{a}
\newcommand{\stepii}{b}
\begin{definition}[Discrete derivative operator]\label{def:Delta-operator}
\index{discrete derivative}
  For a function $f:\ X \to Y$ from a commutative semigroup $(X,+)$ to a commutative group $(Y,+)$, we define for $\stepi \in X$ the \emph{discrete derivative} $\Delta_\stepi f :\ X \to Y$ by the formula $\Delta_\stepi f(x) := f(x+\stepi) - f(x)$. Occasionally, we also refer to $\Delta_\stepi f$ as the \emph{finite difference}\footnote{Some authors refer to the expression $f(x+a) - f(x)$ as the finite difference, and to $\frac{f(x+a)-f(x)}{a}$ as the discrete derivative. However, we use these two terms interchangeably.}.

If $R$ is a domain (commutative ring with unit) of characteristic $0$, then polynomials in $R[x]$ can be identified with a subset of functions. Because $\Delta_a f$ is a polynomial whenever $f$ is a polynomial, we will refer to derivatives of polynomials again as polynomials without further mention. 
  
\end{definition}
\begin{remark}
\index{polynomial}
\index{discrete derivative}
	Note that in finite rings it may happen that a polynomial is not uniquely determined by its values. For example, in $\FF_p[x]$, the polynomial $x^p-x$ and the $0$ polynomial give rise to the same map, but are clearly distinct as polynomials. In general, definitions similar to the one above make sense for arbitrary commutative rings with unit, but we restrict to characteristic $0$ domains for the ease of presentation. In particular, we wish to avoid having to make a distinction between polynomials and polynomial maps.   
\end{remark}

Before we make use of the introduced notion of the discrete derivative, we point out some of the elementary properties.

\begin{proposition}\label{lem:Delta-properties-algebra}
\index{semigroup}
\index{discrete derivative}
	Let $f,g:\ X \to Y$ be maps from a semigroup $X$ to a group $Y$, and let $a,b \in X$. Then, the following properties hold true:
  \begin{enumerate}
   \item $\Delta_\stepi (f+g) = \Delta_\stepi f+ \Delta_\stepi g$.
   \item $\Delta_\stepi (f \cdot g) = \Delta_\stepi f \cdot \Delta_\stepi g + \Delta_\stepi f \cdot g + f \cdot \Delta_\stepi g$.
   \item $\Delta_\stepi \Delta_\stepii f = \Delta_\stepii \Delta_\stepi f = \Delta_{a+b} f - \Delta_a f - \Delta_b f$.
  \end{enumerate}

\end{proposition}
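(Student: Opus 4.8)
The plan is to verify each identity by unwinding the definition $\Delta_a f(x) = f(x+a) - f(x)$ pointwise; all three are purely formal consequences of that definition together with commutativity and associativity of $+$ in $X$ (so that $x+a+b = x+b+a$). I would first remark that in part (2) the target $Y$ is implicitly a commutative ring of characteristic $0$, as in the ambient discussion, since a product of $Y$-valued functions only makes sense there; with that understood, no further hypotheses are needed.

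For part (1), I would simply compute, for arbitrary $x \in X$,
\[
\Delta_a(f+g)(x) = f(x+a) + g(x+a) - f(x) - g(x) = \Delta_a f(x) + \Delta_a g(x),
\]
and since $x$ is arbitrary the functional identity follows. For part (2), the key algebraic trick is to substitute $f(x+a) = f(x) + \Delta_a f(x)$ and $g(x+a) = g(x) + \Delta_a g(x)$ into $f(x+a)g(x+a)$, expand the product into four terms, and cancel the $f(x)g(x)$ coming from $\Delta_a(fg)(x) = f(x+a)g(x+a) - f(x)g(x)$; the three surviving terms are exactly $\Delta_a f(x)\,g(x) + f(x)\,\Delta_a g(x) + \Delta_a f(x)\,\Delta_a g(x)$. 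For part (3), I would evaluate the iterated difference directly:
\[
\Delta_a\Delta_b f(x) = \bigl(f(x+a+b) - f(x+a)\bigr) - \bigl(f(x+b) - f(x)\bigr) = f(x+a+b) - f(x+a) - f(x+b) + f(x),
\]
observe that this expression is visibly symmetric in $a$ and $b$ (giving $\Delta_a\Delta_b f = \Delta_b\Delta_a f$, here using commutativity of $X$ so that $x+a+b = x+b+a$), and then check that expanding $\Delta_{a+b}f(x) - \Delta_a f(x) - \Delta_b f(x) = (f(x+a+b) - f(x)) - (f(x+a) - f(x)) - (f(x+b) - f(x))$ yields the same four-term expression.

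There is no real obstacle here: this is a routine pointwise calculation, and the only point worth flagging explicitly is the implicit ring structure on $Y$ needed to make sense of the product in (2). If one wanted to be slightly slicker, one could note that all three statements are instances of the fact that $\Delta_a$ is the operator $T_a - \mathrm{id}$, where $T_a$ denotes the shift $T_a f(x) = f(x+a)$; then (1) is linearity of $T_a - \mathrm{id}$, (2) follows from $T_a(fg) = (T_a f)(T_a g)$ by writing $T_a = \mathrm{id} + \Delta_a$, and (3) follows because the shift operators commute, so $(T_a - \mathrm{id})(T_b - \mathrm{id}) = T_{a+b} - T_a - T_b + \mathrm{id}$. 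I would probably present the bare pointwise computations, as they are shortest and require no extra notation.
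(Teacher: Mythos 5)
Your proof is correct and matches the paper's approach exactly: the paper's entire proof is the one-line remark that all three identities follow from direct substitution into the definition, which is precisely the pointwise computation you carry out. Your observation that part (2) implicitly requires a ring (or at least bilinear) structure on $Y$ is a fair point that the paper glosses over as well.
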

\begin{proof}
  All the equalities follow from direct substitution into the definition.
\end{proof}

We recall some standard notation related to polynomials. The reader will surely find these standard, but we give a detailed definition to avoid ambiguities.

\begin{definition}[Degree and leading coefficient]
\index{polynomial!degree}
\index{polynomial!leading coefficient}
  If $R$ is an arbitrary commutative ring with unit and $f \in R[x]$ is a non-zero polynomial, then $\deg f$ stands for the polynomial degree of $f$ in $x$, and $\lc f$ stands for the leading coefficient. We take $\deg 0 := -\infty$ and $\lc 0 := 0$ by definition, so generally $\deg f \in \NN \cup \{-\infty\}$. Additionally, when speaking of degrees, we assume the convention that if $\deg f < k$ then $\deg f - k := -\infty$, and also for any $k$ we have $-\infty \pm k = -\infty$.

  With these conventions, for any $f \in R[x]$ we have the decomposition:
  $$ f(x) = \lc f \cdot x^{\deg f} + g,$$
  where $\deg g \leq \deg f - 1$.
\end{definition}

Much like with the standard derivative, application of the discrete derivative to a polynomial decreases the degree by $1$, as shown in the following lemma.

\begin{observation}\label{B:lem:Delta-properties-deg-and-lc}
\index{polynomial!degree}
\index{polynomial!leading coefficient}
  If $R$ is a characteristic $0$ domain and $f \in R[x]$ is a non-zero polynomial, then for any $a \in R \setminus \{0\}$ we have $\deg \Delta_\stepi f = \deg f - 1$ and $\lc \Delta_\stepi f = \deg f \cdot \stepi \cdot \lc f$, with the understanding that $-\infty \cdot 0 = 0$.
\end{observation}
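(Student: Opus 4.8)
The plan is to prove both claims simultaneously by writing $f$ in the decomposed form $f(x) = c\, x^d + g(x)$ where $d = \deg f$, $c = \lc f \neq 0$, and $\deg g \leq d-1$, and then computing $\Delta_a f$ explicitly. Since $\Delta_a$ is additive by the first part of Proposition \ref{lem:Delta-properties-algebra}, we have $\Delta_a f = c\,\Delta_a(x^d) + \Delta_a g$. The key computation is the binomial expansion
\[
  \Delta_a(x^d) = (x+a)^d - x^d = \sum_{k=0}^{d} \binom{d}{k} a^{d-k} x^k - x^d = \sum_{k=0}^{d-1} \binom{d}{k} a^{d-k} x^k,
\]
whose top-degree term is $\binom{d}{d-1} a\, x^{d-1} = d\, a\, x^{d-1}$. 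So $c\,\Delta_a(x^d)$ has degree exactly $d-1$ (here is where characteristic $0$ enters: $d \cdot a \neq 0$ in $R$ because $R$ is a characteristic $0$ domain, $d \geq 1$, and $a \neq 0$) with leading coefficient $d\,a\,c$.

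Next I would handle the lower-order contributions. By induction on $\deg g$ — or simply by the degree bound already established applied to $g$ in place of $f$ — we get $\deg \Delta_a g \leq \deg g - 1 \leq d - 2 < d-1$. Therefore in $\Delta_a f = c\,\Delta_a(x^d) + \Delta_a g$ the term $c\,\Delta_a(x^d)$ strictly dominates in degree, so $\deg \Delta_a f = d - 1 = \deg f - 1$ and $\lc \Delta_a f = d\,a\,c = \deg f \cdot a \cdot \lc f$, which is exactly the asserted formula. One should check the edge cases against the stated conventions: if $d = 0$, i.e. $f$ is a nonzero constant, then $\Delta_a f = 0$, and indeed $\deg \Delta_a f = -\infty = 0 - 1 = \deg f - 1$ and $\lc \Delta_a f = 0 = 0 \cdot a \cdot c$ using the convention $-\infty \cdot 0 = 0$; if $d = 1$, then $\Delta_a f = a\,c$ is a nonzero constant of degree $0 = d-1$ with leading coefficient $a\,c = 1 \cdot a \cdot c$, consistent.

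The only genuine subtlety — and the step I would flag as the main obstacle, though it is a small one — is making sure the domain and characteristic $0$ hypotheses are invoked correctly: one needs that $d \cdot 1 \neq 0$ in $R$ (characteristic $0$) and that $R$ has no zero divisors (domain) so that $d \cdot a \cdot c \neq 0$ and hence the degree does not unexpectedly drop. Without these, the leading term $d\,a\,c\,x^{d-1}$ could vanish (as in $\FF_p[x]$, echoing the remark after Definition \ref{def:Delta-operator}). Everything else is the routine binomial computation and a one-line induction on degree, so I would present the argument compactly rather than belaboring it.
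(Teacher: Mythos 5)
Your proof is correct and follows essentially the same route as the paper's: both decompose $f$ as $\lc f \cdot x^{\deg f} + g$ with $\deg g < \deg f$, extract the top term $\deg f \cdot a \cdot \lc f \cdot x^{\deg f - 1}$ from the binomial expansion, handle the remainder by induction on degree, and invoke the characteristic-$0$ domain hypothesis to see that the leading coefficient does not vanish. Your explicit check of the $d=0$ and $d=1$ edge cases and of the $-\infty$ conventions is a nice touch but does not change the substance.
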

\begin{proof}
  We proceed by induction on $\deg f$. The case $\deg f = -\infty$, i.e. $f = 0$, is clear. In the case when $\deg f = 0$, we have that $f(x) = c \in R \setminus \{0\}$ is a constant polynomial, so $\Delta_\stepi f = 0$ for any $\stepi$, hence the claim holds. 

  Suppose now that $\deg f \geq 1$, and the claim holds for all polynomials of degree strictly smaller than $\deg f$. We can write $f$ in the form $f(x) = \lc f \cdot x^{\deg f} + g(x)$, where $\deg g < \deg f$. We then have:
  \begin{align}
   \Delta_a f(x) &= \lc f \sum_{k=0}^{\deg f} \binom{{\deg f} }{k} a^k x^{{\deg f}-k} - x^{\deg f} + \Delta_a g(x)
\\& = \deg f \cdot  a \cdot \lc f \cdot x^{{\deg f}-1} + \left( \lc f \cdot \sum_{k=2}^{\deg f} \binom{\deg f}{ k} a^k x^{{\deg f}-k} + \Delta_a g(x) \right).
\end{align}
 
  By inductive assumption, $\deg \Delta_a g(x) \leq {\deg f}-2$, and hence the expression in the parenthesis has degree at most ${\deg f}-2$. Since $\deg f \cdot a \cdot \lc f \neq 0,$ we have $$\deg \left( \deg f \cdot a \cdot \lc  \cdot f x^{\deg f -1} \right) = \deg f -1.$$ It follows that $\deg \Delta_a f = \deg f - 1 $ and $\lc \Delta_a f = \deg f \cdot a \cdot \lc f$, as desired.

\end{proof}
\begin{remark}
\index{polynomial!polynomial over a ring}
\index{polynomial!degree}
\index{polynomial!leading coefficient}

  In finite characteristic, it can happen that for a polynomial $f$ we have $f(x+a) - f(x) = 0$ as polynomials, even though $\deg f \gg 1$. For instance, in $\FF_p$ we have for $f(x) = x^p - x$: 
  $$f(x+a) - f(x) = (x+a)^p - (x+a) - x^p + x = \sum_{k=1}^p \binom{p }{ k} a^k x^{p-k} - a = a^p - a = 0.$$
\end{remark}

The above lemma suggests the following generalisation of the notion of polynomials to maps between commutative (semi)groups. 

\begin{definition}[Polynomials in general groups] \label{B:def:poly-general-groups}
\index{polynomial}
\index{semigroup}
  Let $(X,+)$ be a commutative semigroup, and let $(Y,+)$ be commutative group, written additively. We define polynomials $X \to Y$ inductively, as follows:
\begin{enumerate}
\item The unique polynomial of degree $-\infty$ is the zero map $x \mapsto 0_Y$.
\item The polynomials of degree $0$ are the non-zero constant maps $x \mapsto c$.
\item A map $f :\ X \to Y$ is a polynomial of degree $d \geq 1$ if and only if for any $a \in X$, the map $\Delta_a f$ is a polynomial of degree at most $d-1$.
\end{enumerate}
\end{definition}

From Observation \ref{B:lem:Delta-properties-deg-and-lc} it follows that for a characteristic $0$ domain, the standard polynomials in $R[x]$ are polynomials in the sense of the above definition. More generally, if $R \subset S$ is an extension of characteristic $0$ domains, and $f \in S[x]$ is such that $f(R) \subset R$, then the same lemma shows that $f$ is a polynomial in the above sense. We shall now make the correspondence between polynomials in $R[x]$ and polynomial maps $R \to R$ more precise.

\begin{lemma}
\index{polynomial!polynomial over a ring}
  Let $R$ be a characteristic $0$ domain, with field of fractions $Q$. Suppose that $f:\ R \to R$ is a polynomial in the sense of Definition \ref{B:def:poly-general-groups}. Then $f \in Q[x]$, i.e. $f$ can be represented as a polynomial all of whose coefficients lie in $Q$. Moreover, any such polynomial is a combination of the polynomials $\binom{x}{n} := \frac{x^{\underline{n}}}{n!}$ for $n \in \NN$ with coefficients in $R$. Here, $x^{\underline{n}} := \prod_{k=0}^{n-1} (x-k)$.
\end{lemma}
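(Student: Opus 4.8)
The plan is to induct on the degree $d$ of $f$ (in the sense of Definition \ref{B:def:poly-general-groups}), proving simultaneously that $f$ is a polynomial expression of degree $d$ in $x$ over $Q$ and that in fact $f$ lies in the $R$-span of $\left\{ \binom{x}{n} \setsep 0 \leq n \leq d \right\}$. The base cases $d = -\infty$ and $d = 0$ are immediate, since $\binom{x}{0} = 1$ and the constant maps $x \mapsto c$ with $c \in R$ are exactly the $R$-multiples of $\binom{x}{0}$.

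For the inductive step, suppose $f :\ R \to R$ has degree $d \geq 1$. First I would record the key identity $\Delta_1 \binom{x}{n} = \binom{x}{n-1}$, which is a direct computation from $\binom{x}{n} = \frac{x^{\underline{n}}}{n!}$ and the usual Pascal-type recurrence $x^{\underline{n}} = (x-n+1) x^{\underline{n-1}}$, equivalently $(x+1)^{\underline{n}} - x^{\underline{n}} = n \cdot x^{\underline{n-1}}$. Now $\Delta_1 f$ is, by Definition \ref{B:def:poly-general-groups}, a polynomial $R \to R$ of degree at most $d-1$, so by the inductive hypothesis we may write $\Delta_1 f = \sum_{n=0}^{d-1} c_n \binom{x}{n}$ with $c_n \in R$. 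Set $g := \sum_{n=0}^{d-1} c_n \binom{x}{n+1}$; then $g \in Q[x]$, $g$ maps $R$ into $R$ (each $\binom{x}{n+1}$ is integer-valued on $R$, a fact I would note follows since $\binom{x}{n+1}$ is a polynomial over $\QQ$ whose associated map $\ZZ \to \ZZ$ restricts appropriately — more simply, one proves $\binom{m}{n} \in \ZZ$ for $m \in \ZZ$ by Pascal's rule and transfers this to $R$ via the canonical map $\ZZ \to R$), and $\Delta_1 g = \Delta_1 f$ by the identity above.

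It then remains to analyse $h := f - g$, which satisfies $\Delta_1 h = 0$, i.e. $h(x+1) = h(x)$ for all $x \in R$. I would argue that such an $h$ must be constant: since $R$ has characteristic $0$, the elements $0, 1, 2, \dots$ are distinct, and $h$ takes the same value $h(0)$ on all of them; but $h$ is also a polynomial map of degree $\leq d$ over $Q$ (being a difference of two such), and a nonzero polynomial of degree $\leq d$ over the field $Q$ cannot vanish at the infinitely many points $0,1,2,\dots$ — apply this to $h - h(0)$ — so $h = h(0) =: c_{-1} \in R$. Hence $f = g + c_{-1}\binom{x}{0} = \sum_{n=0}^{d-1} c_n \binom{x}{n+1} + c_{-1}\binom{x}{0}$, which exhibits $f$ as an $R$-combination of the $\binom{x}{n}$, $0 \leq n \leq d$, completing the induction. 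In particular $f \in Q[x]$.

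The main obstacle I anticipate is the step ``$\Delta_1 h = 0 \Rightarrow h$ constant'': one must be careful to separate the polynomial identity in $Q[x]$ from the map it induces. The clean way is the one above — first get that $h$ agrees with a genuine element of $Q[x]$ (which we have, since $f, g \in Q[x]$), then invoke that over an infinite integral domain a polynomial is determined by its values, so $\Delta_1 h = 0$ as a map forces $h(x+1) - h(x) = 0$ as a polynomial, whence $\deg h \leq 0$. A secondary minor point worth stating explicitly is integrality of the binomial polynomials on $R$; I would dispatch it by the observation that $\binom{x}{n}$ has integer values on $\ZZ$ (Pascal) and that the unique ring homomorphism $\ZZ \to R$ is compatible with evaluation, so $\binom{r}{n} \in \operatorname{im}(\ZZ \to R) \subseteq R$ for $r$ in the prime subring — but since $f$ is only assumed defined on $R$ we actually only need that $\binom{x}{n}$, viewed in $Q[x]$, sends $R$ to $R$, which again reduces to the characteristic-$0$ prime-subring computation together with the fact that a $Q$-polynomial taking integer values on a cofinal set of integers takes integer values on all integers.
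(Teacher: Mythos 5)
Your strategy coincides with the paper's: induct on the degree, use $\Delta_1\binom{x}{n}=\binom{x}{n-1}$ to manufacture $g$ with $\Delta_1 g=\Delta_1 f$ from the inductive expression for $\Delta_1 f$, and then argue that $h:=f-g$ is constant. The difficulty is concentrated exactly where you said it would be, but your resolution of it is circular: you justify ``$\Delta_1 h=0\Rightarrow h$ constant'' by saying $h$ agrees with a genuine element of $Q[x]$ ``since $f,g\in Q[x]$'' --- yet $f\in Q[x]$ is the \emph{conclusion} of the lemma, not something available mid-induction. At that point all you know about $f$ is that it is a polynomial map in the abstract sense of Definition \ref{B:def:poly-general-groups}, so $h$ is an abstract polynomial map satisfying $h(x+1)=h(x)$, and the ``a polynomial over $Q$ cannot vanish at infinitely many points'' argument does not apply. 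The gap is not cosmetic: for a general characteristic-$0$ domain the implication is false. Take $R=\ZZ[t]$ and $h=d/dt$. This map is additive, so $\Delta_a h\equiv h(a)$ is constant for every $a$, making $h$ a polynomial map of degree $\leq 1$ with $\Delta_1 h=0$; yet $h$ is not constant and is induced by no element of $Q[x]$ (from $h(0)=h(1)=0$ an affine polynomial would have to vanish identically, but $h(t)=1$). The paper's own proof has the same soft spot --- it invokes Observation \ref{B:lem:Delta-properties-deg-and-lc}, a statement about elements of $R[x]$, for the abstract map $f-g$ --- and the lemma is really only sound when every element of $R$ is reachable from $0$ by repeatedly adding $1$, i.e.\ essentially for $R=\ZZ$, the only case used later. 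For $R=\ZZ$ your step closes immediately and needs no appeal to $Q[x]$ at all: $h(x+1)=h(x)$ for all $x\in\ZZ$ gives $h\equiv h(0)$ by walking from $0$ to any integer.

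A secondary point: your parenthetical claim that each $\binom{x}{n}$ is integer-valued on $R$ is also false in general (e.g.\ $\binom{t}{2}=t(t-1)/2\notin\ZZ[t]$); indeed the paper explicitly warns after the corollary that nothing guarantees $x\mapsto\binom{x}{n}$ preserves $R$. Fortunately you never need it: the argument only requires $h=f-g$ to be a well-defined map into the group $(Q,+)$, and the constant you extract, $h(0)=f(0)-g(0)=f(0)$, lies in $R$ automatically. That discussion can simply be deleted.
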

\begin{proof}
  Let us denote $e_n(x) := \binom{x }{ n}$. By a direct computation, we check that $\Delta_1 e_n = e_{n-1}$ for $n \geq 1$, and $\Delta_1 e_0 = 0$. Indeed, we have for $n \geq 1$:
$$\Delta_1 e_n(x) 
= \frac{(x+1)^{\underline{n}}-x^{\underline{n}}}{n!} 
= \frac{x^{\underline{n-1}}((x+1)-(x-n+1))}{n!} 
= e_{n-1}(x).$$
  Let us now take a polynomial $f$ as described in the assumptions. We show by induction on $\deg f $ that $f$ lies in the $R$-linear span of $e_i$. The case $\deg f \leq 0$ is immediate, so let us suppose $\deg f  \geq 1$ and the claim holds for polynomials of lower degrees. By Lemma \ref{B:lem:Delta-properties-deg-and-lc}, we find that $\deg \Delta_1 f = \deg f - 1$, so by the inductive assumptions, we can write $\Delta_1 f$ in the form:
$$ \Delta_1 f = \sum_{i=1}^{\deg f} c_i e_{i-1},$$
where $c_i \in R$. Let us consider the polynomial $g := \sum_{i=1}^{\deg f} c_i e_{i} \in K[x]$. Because of the preliminary observation, we have:
$$ \Delta_1 g =  \sum_{i=1}^{\deg f} c_i e_{i-1} = \Delta_1 f.$$
Hence, $\Delta_1( f - g) = 0$, and Lemma \ref{B:lem:Delta-properties-deg-and-lc} ensures that $\deg (f-g) \leq 0$. In other words, there exists a constant $c_0 \in K$ such that $f = g + c_0$. Evaluation at $0$ yields $c_0 = f(0) \in R$. Because $e_0 = 1$, we now have the expression:
$$ f =  \sum_{i=0}^{\deg f} c_i e_{i}.$$
Hence, $f$ is a combination of $e_0,e_1,\dots,e_{\deg f}$ with coefficients in $R$, as claimed.
\end{proof}

\begin{corollary}
  \index{polynomial!polynomial over a ring}
  Let $R$ be a characteristic $0$ domain, with field of fractions $Q$, and let $S$ be a ring containing $Q$ as a subring. If $f \in S[x]$ is a polynomial such that $f(R) \subset R$, then $f \in Q[x]$, and moreover is a combination of the polynomials $\binom{x }{ n}$ with coefficients in $R$.
\end{corollary}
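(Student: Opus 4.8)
The plan is to deduce this from the preceding Lemma. Since $f(R) \subseteq R$, the restriction $f\restrict{R}$ is a well-defined map $R \to R$, and the crucial claim is that it is a polynomial in the sense of Definition \ref{B:def:poly-general-groups}. Once this is established, the Lemma produces a polynomial $g \in Q[x]$, which is moreover an $R$-linear combination of the $\binom{x}{n}$, such that $g$ and $f$ induce the same map $R \to R$; it then remains only to check that $f$ and $g$ coincide as elements of $S[x]$, so that $f$ itself is the asserted combination.

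For the crucial claim I would induct on the degree of $f$ as an element of $S[x]$. If $\deg f \leq 0$, then $f$ is the constant $f(0)$, which lies in $R$, so $f\restrict{R}$ is either the zero map (degree $-\infty$) or a nonzero constant map (degree $0$), hence a polynomial of degree $\leq 0$. If $\deg f = d \geq 1$, then for each $a \in R$ the finite difference $\Delta_a f$, computed in $S[x]$, has $\deg \Delta_a f \leq d - 1$, because the degree-$d$ terms of $f(x+a)$ and $f(x)$ are both $\lc(f)\, x^d$ and cancel; moreover $\Delta_a f$ again maps $R$ into $R$, since $x + a \in R$ whenever $x, a \in R$. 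By the inductive hypothesis each $\Delta_a f\restrict{R}$ is a polynomial map of degree $\leq d - 1$, so clause (3) of Definition \ref{B:def:poly-general-groups} shows that $f\restrict{R}$ is a polynomial of degree $\leq d$. Note that this argument uses only that $\Delta_a$ cannot raise the degree, and no domain hypothesis on $S$; in particular Observation \ref{B:lem:Delta-properties-deg-and-lc} is not needed here.

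It remains to identify $f$ with the polynomial $g \in Q[x]$ supplied by the Lemma. The difference $h := f - g$ lies in $S[x]$; if $h \neq 0$ it has some degree $N \geq 0$, and it vanishes at the integers $0, 1, \dots, N$, all of which lie in $\ZZ \subseteq Q \subseteq S$. Dividing successively by the monic linear factors $x, x-1, \dots$ and evaluating at $1, 2, \dots$ in turn, at the $k$-th stage one obtains $k!\cdot q_k(k) = 0$ for the current quotient $q_k$; since $S$ contains $\QQ$, every nonzero integer is a unit in $S$, hence a non-zero-divisor, so $q_k(k) = 0$ and the factorization continues. After $N + 1$ steps the quotient has negative degree, forcing $h = 0$, a contradiction. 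Hence $f = g \in Q[x]$ is a combination of the $\binom{x}{n}$ with coefficients in $R$, as claimed. The one genuinely delicate point is this last step: because $S$ is allowed to have zero divisors, one cannot invoke the usual bound on the number of roots of a polynomial over a domain, and the substitute is precisely the observation that, $S$ being a $\QQ$-algebra, the relevant integers are units.
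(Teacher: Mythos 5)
Your proof is correct and follows the same route the paper intends: show that $f$ restricted to $R$ is a polynomial map in the sense of Definition \ref{B:def:poly-general-groups}, then invoke the preceding Lemma. In fact you are more careful than the paper at two points it glosses over: the paper's preparatory remark only covers the case where $S$ is a domain (via Observation \ref{B:lem:Delta-properties-deg-and-lc}), whereas you correctly observe that only the degree drop of $\Delta_a$ is needed and that this holds over any ring; and your final step identifying $f$ with the polynomial $g \in Q[x]$ inside $S[x]$ — using that nonzero integers are units in $S$ to run the factor-theorem argument despite possible zero divisors — addresses a genuine gap, since over a non-domain one cannot simply identify polynomials with the maps they induce.
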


Note that the above lemma and corollary contain implications only in one direction: there is no guarantee that the map $x \mapsto \binom{x }{ n}$ should preserve the ring $R$. However, for $R = \ZZ$ we have a full characterisation.

\begin{corollary}\label{B:cor:form-of-polynimials-Z-to-Z}
\index{polynomial!polynomial over a ring}
	Let $K$ be a characteristic $0$ field. Then, the polynomials $f \in K[x]$ such that $f(\ZZ) \subset \ZZ$ are precisely the combinations of the polynomials $\binom{x }{ n}$ for $n \in \mathbb{N}$, with integer coefficients.
\end{corollary}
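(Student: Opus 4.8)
The plan is to prove the two inclusions of the asserted equality separately. Almost everything needed is already available: the forward inclusion is an immediate instance of the preceding Corollary, and the only genuinely new point — the converse — is precisely the phenomenon flagged in the preceding Remark, namely that for $R = \ZZ$ (unlike for a general characteristic $0$ domain) the maps $x \mapsto \binom{x}{n}$ do preserve $R$.

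For the inclusion asserting that every polynomial $f \in K[x]$ with $f(\ZZ) \subset \ZZ$ is a $\ZZ$-combination of the $\binom{x}{n}$, I would simply apply the preceding Corollary. A field $K$ of characteristic $0$ contains a copy of $\QQ$ as a subring, and $\QQ$ is the field of fractions of $\ZZ$; so taking $R = \ZZ$, $Q = \QQ$, $S = K$ in that Corollary immediately yields that such an $f$ lies in $\QQ[x]$ and is a combination of the polynomials $\binom{x}{n}$ with coefficients in $\ZZ$.

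For the reverse inclusion I must check that every $\ZZ$-combination of the $\binom{x}{n}$ is a polynomial in $K[x]$ mapping $\ZZ$ into $\ZZ$. Membership in $K[x]$ is clear since each $\binom{x}{n}$ has rational coefficients and $\QQ \subseteq K$; and a $\ZZ$-linear combination of integer-valued functions is integer-valued, so it suffices to show each map $x \mapsto \binom{x}{n}$ takes integer values on all of $\ZZ$. For $n = 0$ this is the constant $1$. For $n \geq 1$ I would use the identity $\Delta_1 \binom{x}{n} = \binom{x}{n-1}$ recorded in the proof of the preceding Lemma, i.e. $\binom{m+1}{n} = \binom{m}{n} + \binom{m}{n-1}$, together with $\binom{0}{n} = 0$: an upward induction on $m \geq 0$ (with the case $n-1$ serving as the induction hypothesis in $n$) gives $\binom{m}{n} \in \ZZ$ for all $m \geq 0$, and rewriting the same identity as $\binom{m-1}{n} = \binom{m}{n} - \binom{m}{n-1}$ and inducting downward gives $\binom{m}{n} \in \ZZ$ for all negative integers $m$ as well.

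The two inclusions together give the claim. There is no real obstacle here: the forward direction is a one-line appeal to the preceding Corollary, and the converse — the only part not subsumed by earlier results — is a short double induction relying on $\Delta_1 \binom{x}{n} = \binom{x}{n-1}$ and on the fact that $\ZZ$ is generated by $1$ under addition and subtraction, which is exactly the feature that makes the statement special to $R = \ZZ$.
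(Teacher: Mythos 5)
Your proof is correct, and the forward inclusion is exactly the paper's: a direct appeal to the preceding Corollary with $R = \ZZ$, $Q = \QQ$, $S = K$. Where you diverge is the converse, i.e.\ the integrality of $\binom{x}{n}$ on all of $\ZZ$. The paper handles $x \in \NN$ by the combinatorial interpretation of $\binom{x}{n}$ and then extends to negative $x$ by observing that the condition $n! \mid x^{\underline{n}}$ depends only on $x$ modulo $n!$, so checking $n!$ consecutive values suffices. You instead run a double induction (upward and downward in $m$, with an outer induction on $n$) off the identity $\Delta_1 \binom{x}{n} = \binom{x}{n-1}$ already established in the preceding Lemma, anchored at $\binom{0}{n} = 0$. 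Your route is more self-contained — it reuses only the finite-difference machinery of the section and needs no combinatorial interpretation or divisibility/periodicity observation — at the cost of being slightly longer; the paper's is shorter but imports an external fact. One small slip: your downward-induction identity should read $\binom{m-1}{n} = \binom{m}{n} - \binom{m-1}{n-1}$ rather than $\binom{m}{n} - \binom{m}{n-1}$; this does not affect the argument, since the inductive hypothesis for $n-1$ covers all integer arguments, but the index should be corrected.
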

\begin{proof}
	The above theorem shows that if $f \in K[x]$ is such that $f(\ZZ) \subset \ZZ$, then $f$ is a combination of $\binom{x }{ n}$ for $n \in \mathbb{N}$. Conversely, we show that $\binom{x }{ n} \in \ZZ$ for any $n \in \NN$ and  $x \in \ZZ$. If $x \in \NN$, then $\binom{x }{ n}$ has the combinatorial interpretation of the number of ways to choose $n$ elements out of $x$ elements, and hence surely is an integer. For general $x$, we note that the statement that $\binom{x }{ n}$ is an integer is equivalent to the statement that $n! | x^{\underline{n}}$, which depends only on the equivalence class of $x$ modulo $n!$. Hence, it suffices to check that $\binom{x }{ n}$ for $n!$ consecutive values of $x$, which we have already done.
\end{proof}

\begin{remark}
  The assumption of commutativity is essential for our considerations. It is natural to ask if the theory can be extended to a non-commutative setting. There seems to be little hope of developing a theory for general non-commutative (semi)groups. However, Leibman \cite{Lei-Poly-Groups} proposed a fairly successful theory of polynomials in general nilpotent groups. We do not go into more details on this matter, because for our applications the commutative context is more than sufficient.
\end{remark}

We will now introduce the symmetric discrete derivative. Although the standard discrete derivative is more natural, the following variation will be more useful for our purposes. We take time to develop some algebraic properties before we move on to applications in the consecutive sections.

\newcommand{\DeltaS}{\bar{\Delta}}
\newcommand{\Val}{\mathrm{V}}
\begin{definition}[Symmetric finite derivative]
\label{def:DeltaS-operator}
\index{discrete derivative!symmetric}
  For a function $f:\ X \to Y$ from a semigroup $(X,+)$ to a group $(Y,+)$, we define for $\stepi \in X$ the \emph{symmetric discrete derivative} $\DeltaS_\stepi f :\ X \to Y$ by the formula $\DeltaS_\stepi f(x) := f(x+\stepi) - f(x) - f(\stepi)$.

  Moreover, we define the $k$-fold symmetric discrete derivative:	
 $$\DeltaS^k f(x_0,x_1,\dots,x_k) := \DeltaS_{x_1} \DeltaS_{x_{2}}\dots\DeltaS_{x_k} f(x_0).$$

  If $R$ is a characteristic $0$ domain and $f \in R[x]$, then $\DeltaS_a f \in R[x]$ for any $a$, so we reserve the right to refer to $\DeltaS_a f $ as a polynomial in this situation. Moreover, it is true that $\DeltaS^k f(x_0,x_1,\dots,x_k) \in R[x_0,x_1,\dots,x_k]$.
\end{definition}

\begin{observation}\label{B:lem:DeltaS-commute}
\index{discrete derivative!symmetric}
	The symmetric discrete derivatives commute: $\DeltaS_\stepi \DeltaS_\stepii f = \DeltaS_\stepii \DeltaS_\stepi f$. 
\end{observation}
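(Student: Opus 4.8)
The statement to prove is that the symmetric discrete derivatives commute: $\DeltaS_a \DeltaS_b f = \DeltaS_b \DeltaS_a f$.

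\medskip

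The plan is to reduce this to a direct computation, expanding both sides by the definition $\DeltaS_a g(x) = g(x+a) - g(x) - g(a)$. First I would compute $\DeltaS_b f(x) = f(x+b) - f(x) - f(b)$, and then apply $\DeltaS_a$ to the resulting function of $x$ (treating $b$ as a fixed parameter). A mild subtlety: when we apply $\DeltaS_a$ to the function $g := \DeltaS_b f$, the term $g(a)$ means $\DeltaS_b f$ evaluated at $x = a$, i.e. $f(a+b) - f(a) - f(b)$. Carrying this out gives
\begin{align*}
\DeltaS_a \DeltaS_b f(x)
&= \DeltaS_b f(x+a) - \DeltaS_b f(x) - \DeltaS_b f(a) \\
&= \bigl( f(x+a+b) - f(x+a) - f(b) \bigr) - \bigl( f(x+b) - f(x) - f(b) \bigr) \\
&\quad {}- \bigl( f(a+b) - f(a) - f(b) \bigr) \\
&= f(x+a+b) - f(x+a) - f(x+b) + f(x) - f(a+b) + f(a) + f(b).
\end{align*}
The key observation is that the final expression is manifestly symmetric under interchanging $a$ and $b$ (the terms $f(x+a)$ and $f(x+b)$ swap, $f(a)$ and $f(b)$ swap, and all other terms are already symmetric). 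Hence running the same computation for $\DeltaS_b \DeltaS_a f(x)$ yields the identical expression, and the two are equal.

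\medskip

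There is essentially no obstacle here: the proof is a one-line substitution followed by the remark that the resulting formula is symmetric in $a$ and $b$. The only things to be careful about are the bookkeeping of the constant terms $f(a)$, $f(b)$, $f(a+b)$ arising from the inhomogeneous part of $\DeltaS$, and the use of commutativity of $(X,+)$ (so that $x+a+b = x+b+a$) and of $(Y,+)$ (so that the seven terms can be freely reordered). Since the paper assumes $X$ is a commutative semigroup and $Y$ a commutative group throughout this section, both are available. I would simply write: "All equalities follow from direct substitution into the definition, after which one checks that $\DeltaS_a \DeltaS_b f(x)$ equals the symmetric expression $f(x+a+b) - f(x+a) - f(x+b) + f(x) - f(a+b) + f(a) + f(b)$, which is invariant under swapping $a$ and $b$."
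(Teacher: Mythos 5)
Your proof is correct and is essentially the paper's argument: a direct expansion from the definition, landing on an expression manifestly symmetric in $a$ and $b$. The paper phrases the same computation slightly more compactly, writing both sides as $\Delta_a\Delta_b f(x) - \DeltaS_a f(b)$ and invoking the already-established commutativity of the ordinary discrete derivatives together with the evident symmetry $\DeltaS_a f(b) = \DeltaS_b f(a)$, but the content is identical to your seven-term expansion.
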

\begin{proof}
  Both terms are equal to $\Delta_{a}\Delta_{b} f(x) - \DeltaS_a f(b) = \Delta_{b}\Delta_{a} f(x) - \DeltaS_b f(a)$.
\end{proof}

The following observation justifies the used terminology and motivates the above definition.
\begin{observation}\label{B:lem:DeltaS-symmetric}
	The $k$-fold symmetric discrete derivative $\DeltaS^k f :\ X^k \to Y$ is symmetric (i.e. invariant under the permutation of arguments).
\end{observation}
\begin{proof}
  Because the operators $\DeltaS_{x_i}$ commute, the value of $\DeltaS^k f(x_0,x_1,\dots,x_k)$ is invariant under the permutation of $x_1, x_{2}, \dots, x_k$. From the definition of $\DeltaS$ it also follows that for any $a,b \in X$ and $g:\ X \to X$ we have $\DeltaS_a g(b) = \DeltaS_b g(a)$. Applying this rule to $g = \DeltaS_{x_{1}}\DeltaS_{x_{2}} \dots \DeltaS_{x_k} f$, $a = x_0$ and $b = x_1$, we see that $\DeltaS^k f(x_0,x_1,\dots,x_k)$ is invariant under swapping of $x_0$ and $x_1$. Since any permutation can be expressed as a composition of permutations already considered, the claim follows.
 \end{proof}

	It is possible to derive an explicit formula for the $k$-fold finite difference. Having an explicit formula is often useful; in particular, it can be used to re-derive some of the previous two observations immediately.

\begin{proposition}[Explicit finite difference]
\label{B:lem:DeltaS^k-explicit}
\index{discrete derivative!symmetric}
  The symmetric finite difference is given by the formula:
  $$ \DeltaS^k f(x_0,x_1,\dots,x_k) = \sum_{\emptyset \neq I \subset [k+1]} (-1)^{k+1-\#{I}} f\left( \sum_{i \in I} x_i \right).$$
\end{proposition}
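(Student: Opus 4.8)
The claimed identity
$$\DeltaS^k f(x_0,x_1,\dots,x_k) = \sum_{\emptyset \neq I \subset [k+1]} (-1)^{k+1-\#I}\, f\!\left( \sum_{i \in I} x_i \right)$$
is a statement about iterated applications of the single operator $\DeltaS_a g(b) = g(a+b) - g(a) - g(b)$, so the natural approach is induction on $k$. I would index the variables as $x_0, x_1, \dots, x_k$ and the index set as $[k+1] = \{0,1,\dots,k\}$, matching the statement. The base case $k=0$ reads $\DeltaS^0 f(x_0) = f(x_0)$, which is just the convention that the zero-fold derivative is $f$ itself, and the right-hand side is the single term $I = \{0\}$ with sign $(-1)^{0} = 1$; so the base case is immediate (one could also start at $k=1$, where the left side is $\DeltaS_{x_1} f(x_0) = f(x_0+x_1) - f(x_0) - f(x_1)$ and the right side sums over the three nonempty subsets of $\{0,1\}$ with signs $-,-,+$ respectively, agreeing term by term).

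\textbf{Inductive step.} Assume the formula holds for $k-1$. By definition $\DeltaS^k f(x_0,\dots,x_k) = \DeltaS_{x_k}\big(\DeltaS_{x_{1}}\cdots\DeltaS_{x_{k-1}} f\big)(x_0)$. Write $g := \DeltaS^{k-1} f(\,\cdot\,, x_1,\dots,x_{k-1})$, viewed as a function of its first slot; by the inductive hypothesis, for any argument $y$ in that slot,
$$g(y) = \sum_{\emptyset \neq I \subset \{0,1,\dots,k-1\}} (-1)^{k-\#I}\, f\!\Big( y\cdot[0\in I] + \sum_{i \in I\setminus\{0\}} x_i \Big),$$
where I am using the convention that the $0$-th variable slot is occupied by $y$; more cleanly, split the sum over $I \subset \{0,\dots,k-1\}$ into those containing $0$ and those not. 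Then I apply $\DeltaS_{x_k}$ in the first slot: $\DeltaS_{x_k} g(x_0) = g(x_0 + x_k) - g(x_0) - g(x_k)$. Substituting the inductive expression into each of these three terms and carefully tracking which $f$-arguments arise gives: from $g(x_0+x_k)$, terms $f(\sum_{i\in I}x_i)$ for $I \ni 0$ and $I \ni k$ (since $x_0+x_k$ replaces the slot); from $-g(x_0)$, terms for $I \ni 0$, $I \not\ni k$; from $-g(x_k)$, terms for $I \not\ni 0$ but with $x_k$ in the "$0$-slot", i.e. effectively $I' = (I\setminus\{0\})\cup\{k\}$... The bookkeeping here is the crux.

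\textbf{Main obstacle and cleaner alternative.} The direct substitution above works but the sign and subset bookkeeping is the delicate part — one must verify that the three pieces assemble to exactly the sum over all nonempty $I \subset \{0,\dots,k\}$ with the correct sign $(-1)^{k+1-\#I}$, and in particular that no subset is double-counted or dropped. I expect this combinatorial reconciliation to be the main place where care is needed. A cleaner route, which I would actually prefer to present, is to factor the operator: observe that $\DeltaS_a = (E_a - E_0 - \mathrm{ev}_a)$ where $E_a$ is the shift $g \mapsto g(\cdot + a)$ and $\mathrm{ev}_a$ contributes the constant $-f(a)$ at the appropriate stage; more slickly, using Proposition~\ref{lem:Delta-properties-algebra} and the relation $\DeltaS_a g(x) = \Delta_a g(x) - g(a)$ together with the fact (from Observation~\ref{B:lem:DeltaS-commute}) that the $\DeltaS_{x_i}$ commute, one can write $\DeltaS^k f(x_0,\dots,x_k)$ as the result of applying the commuting product $\prod_{j=1}^k(E_{x_j} - \mathrm{id})$ to $f$ at $x_0$ and then subtracting the lower-order "constant" terms; expanding $\prod_{j=1}^k (E_{x_j} - \mathrm{id})$ by the binomial-type distributive law yields $\sum_{J \subset \{1,\dots,k\}} (-1)^{k-\#J} E_{\sum_{j\in J} x_j}$ applied to $f$ at $x_0$, i.e. $\sum_{J} (-1)^{k-\#J} f(x_0 + \sum_{j \in J} x_j)$, and one checks that absorbing the subtracted constants and re-indexing $I = J \cup \{0\}$ versus $I = J$ produces precisely the stated alternating sum over all nonempty $I \subseteq [k+1]$. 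Either way, once the sign is pinned down by checking the term $I = [k+1]$ (coefficient $(-1)^{k+1-(k+1)} = 1$, matching the leading $E_{x_0+\dots+x_k}$ term) and the symmetry from Observation~\ref{B:lem:DeltaS-symmetric} is invoked to confirm the answer is symmetric as it must be, the proof is complete.
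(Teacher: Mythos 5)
Your main line of attack is the same as the paper's: induction on $k$, peeling off one symmetric derivative, expanding $\bar{\Delta}_a g(b) = g(a+b) - g(a) - g(b)$, and regrouping the resulting values of $f$ according to which subset $J \subset \{0,\dots,k\}$ indexes their arguments. (The paper peels off the \emph{innermost} operator, applying the inductive hypothesis to the function $\bar{\Delta}_{x_k}f$ in the variables $x_0,\dots,x_{k-1}$, which yields three classes of subsets; you peel off the outermost one. These are mirror images of one another and both succeed.) The problem is that you stop precisely at the step that constitutes the proof, and your partial accounting of $g(x_0+x_k) - g(x_0) - g(x_k)$ is missing a class of terms: you list the subsets $J$ with $0,k\in J$, with $0\in J$ and $k\notin J$, and with $0\notin J$ and $k\in J$, but omit those with $0\notin J$ and $k\notin J$. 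These arise from the terms of the inductive expression with $0\notin I$, which do not involve the first slot and hence occur identically in all three of $g(x_0+x_k)$, $g(x_0)$, $g(x_k)$, surviving with net coefficient $1-1-1=-1$ --- exactly the sign change from $(-1)^{k-\#I}$ to $(-1)^{k+1-\#I}$ that the formula for $k$ demands. With that class restored, and the signs of the other three checked (each gains either one element of the index set or one factor of $-1$, in both cases turning $(-1)^{k-\#I}$ into $(-1)^{k+1-\#J}$), the induction closes; as written, the decisive verification is absent.

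Your proposed shortcut via $\prod_{j=1}^k(E_{x_j}-\mathrm{id})$ correctly accounts for the terms with $0\in I$ (this is the classical expansion of $\Delta_{x_1}\cdots\Delta_{x_k}f(x_0)$), but the phrase ``absorbing the subtracted constants'' hides all the remaining work: the leftover terms are not constants but the full alternating sum over nonempty $I\subset\{1,\dots,k\}$, and identifying them either reduces to the same inductive bookkeeping or to the paper's later lemma relating $\bar{\Delta}^k$ to the iterated $\Delta$ at $0_X$ --- which presupposes a neutral element that the semigroup setting of this proposition does not supply.
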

\begin{proof}
  We prove the claim by induction on $k$. If $k = 0$, then the claim clearly holds. Suppose we want to prove the claim for $k$, while we know it holds for $k-1$. We can explicitly transform:
	\begin{align*}
		\DeltaS^k f(x_0,\dots,x_k) &= \DeltaS_{x_1} \DeltaS_{x_2} \dots \DeltaS_{x_k} f (x_0) 
	=  \sum_{\emptyset \neq I \subset [k]} (-1)^{k-\#{I}} \DeltaS_{x_k} f\left( \sum_{i \in I} x_i \right)
	\\ &= 
	\sum_{\emptyset \neq I \subset [k]} (-1)^{k-\#{I}} f\left( x_k + \sum_{i \in I} x_i \right) 
	\\& \quad - \sum_{\emptyset \neq I \subset [k]} (-1)^{k-\#{I}} f\left( \sum_{i \in I} x_i \right) 
	- \sum_{\emptyset \neq I \subset [k]} (-1)^{k-\#{I}} f\left( x_k \right)
	\\&= \sum_{\substack{ \emptyset \neq J \subset [k+1] \\ k \in J }} (-1)^{k+1-\#{J}} f\left( \sum_{i \in J} x_i \right) 
	\\& \quad + \sum_{\substack{ \emptyset \neq J \subset [k+1] \\ k \not \in J }} (-1)^{k+1-\#{J}} f\left( \sum_{i \in J} x_i \right) 
	- f\left( x_k \right)
	\\&= \sum_{ \emptyset \neq J \subset [k+1]}(-1)^{k+1-\#{J}} f\left( \sum_{i \in J} x_i \right). 
	\end{align*}
	This formula is the one we were aiming for, which finishes the proof of the inductive claim.
\end{proof}

Having derived an explicit formula for $k$-fold symmetric finite difference, our next step is to find a relation to the standard (non-symmetric) finite difference. This is established in the following lemma.

\begin{lemma}\label{B:lem:DeltaS-and-Delta}
\index{discrete derivative!symmetric}
\index{discrete derivative}
	If $X$ is a commutative monoid with neutral element $0_X$, and $f:\ X \to Y$ is a map to a commutative group $Y$, then the following relation holds:
	\begin{equation}
	\DeltaS^{k} f(x_0,x_1,\dots x_k) - (-1)^k f(0_X) = \Delta_{x_0}  \Delta_{x_1} \dots  \Delta_{x_k}f (0_X).
	\label{eq:01:lem:2.14}
	\end{equation}
\end{lemma}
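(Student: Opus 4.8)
The plan is to reduce both sides of \eqref{eq:01:lem:2.14} to explicit sums of values of $f$ and to compare them termwise. For the left-hand side this work has already been done: Proposition \ref{B:lem:DeltaS^k-explicit} gives
$$\DeltaS^k f(x_0,x_1,\dots,x_k) = \sum_{\emptyset \neq I \subset [k+1]} (-1)^{k+1-\#I}\, f\!\Big( \sum_{i \in I} x_i \Big),$$
where $[k+1] = \{0,1,\dots,k\}$. So the only thing left to do is to expand the iterated ordinary difference $\Delta_{x_0}\Delta_{x_1}\cdots\Delta_{x_k} f$, evaluated at $0_X$, in an analogous fashion.

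First I would establish, by a straightforward induction on $n$, the general expansion
$$\Delta_{a_1}\Delta_{a_2}\cdots\Delta_{a_n} f(x) = \sum_{I \subset \{1,\dots,n\}} (-1)^{n-\#I}\, f\!\Big( x + \sum_{i \in I} a_i \Big),$$
valid for any map $f$ from a commutative monoid into a commutative group and any increments $a_1,\dots,a_n$ (with the convention $\sum_{i\in\emptyset} a_i = 0_X$). The case $n=1$ is just the definition of $\Delta_{a_1}$; for the inductive step one applies $\Delta_{a_{n+1}}$ to the formula for $n$ and splits the two resulting sums according to whether $n+1$ does or does not belong to the index set, using that the operators $\Delta_a$ commute (Proposition \ref{lem:Delta-properties-algebra}). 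Note that the index $I=\emptyset$ is allowed and contributes the term $(-1)^n f(x)$.

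Specializing this to $x = 0_X$ and to the $n = k+1$ increments $x_0,x_1,\dots,x_k$ gives
$$\Delta_{x_0}\Delta_{x_1}\cdots\Delta_{x_k} f(0_X) = \sum_{I \subset \{0,1,\dots,k\}} (-1)^{k+1-\#I}\, f\!\Big( \sum_{i \in I} x_i \Big) = (-1)^{k+1} f(0_X) + \sum_{\emptyset \neq I \subset [k+1]} (-1)^{k+1-\#I}\, f\!\Big( \sum_{i \in I} x_i \Big),$$
where the $I=\emptyset$ summand has been pulled out. By the displayed formula of Proposition \ref{B:lem:DeltaS^k-explicit} the remaining sum equals $\DeltaS^k f(x_0,\dots,x_k)$, and since $(-1)^{k+1} = -(-1)^k$ this is precisely $\DeltaS^k f(x_0,\dots,x_k) - (-1)^k f(0_X)$, which is \eqref{eq:01:lem:2.14}. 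There is no genuine obstacle here beyond keeping the index sets and the signs straight, and observing that the single term "missing" from the symmetric-difference expansion (relative to the full power-set expansion of the ordinary iterated difference) is exactly the correction $(-1)^k f(0_X)$.
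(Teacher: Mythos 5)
Your proof is correct, but it takes a genuinely different route from the paper's. The paper argues by induction on $k$ directly at the level of the difference operators: it uses the recursion $\DeltaS^k f(x_0,\dots,x_k) = \DeltaS^{k-1}f(x_0+x_1,x_2,\dots,x_k) - \DeltaS^{k-1}f(x_0,x_2,\dots,x_k) - \DeltaS^{k-1}f(x_1,x_2,\dots,x_k)$ together with the base case and the operator identity $\Delta_{a+b} - \Delta_a - \Delta_b = \Delta_a\Delta_b$ from Proposition \ref{lem:Delta-properties-algebra}, so the inductive step collapses three iterated ordinary differences into one and the $(-1)^k f(0_X)$ bookkeeping falls out of the three correction terms. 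You instead expand both sides completely as alternating sums over subsets: the left side via Proposition \ref{B:lem:DeltaS^k-explicit}, the right side via the expansion $\Delta_{a_1}\cdots\Delta_{a_n}f(x) = \sum_{I\subset\{1,\dots,n\}}(-1)^{n-\#I}f\bigl(x+\sum_{i\in I}a_i\bigr)$, which you prove by its own easy induction, and then observe that the only discrepancy is the $I=\emptyset$ term $(-1)^{k+1}f(0_X)$. Your route costs one extra (standard) expansion lemma but makes the origin of the correction term completely transparent, and it reuses the explicit formula the paper has already proved; the paper's route is shorter on the page and stays entirely within operator identities. One small point: in your inductive step you apply $\Delta_{a_{n+1}}$ on the outside, so you are really computing $\Delta_{a_{n+1}}\Delta_{a_1}\cdots\Delta_{a_n}f$; the appeal to commutativity of the $\Delta_a$'s (or simply to the visible symmetry of the resulting formula in $a_1,\dots,a_{n+1}$) is indeed needed to identify this with $\Delta_{a_1}\cdots\Delta_{a_{n+1}}f$, and you correctly flag it.
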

\begin{proof}
	For $k = 0,1$, the formula can be verified directly. For $k \geq 2$, we proceed by induction. Using the claim for $1$ and for $k-1$, we conclude that:
  \begin{align*}
  \DeltaS^k f(x_0,x_1,\dots,x_k) &= \DeltaS^{k-1} f(x_0+x_1,\dots,x_k) - \DeltaS^{k-1} f(x_0,\dots,x_k) 
  \\& \quad  - \DeltaS^{k-1} f(x_1,\dots,x_k)
  \\&= \Delta_{x_0+x_1}  \Delta_{x_2} \dots  \Delta_{x_k}f (0) + (-1)^{k-1}f(0)
  \\& \quad - \Delta_{x_0}  \Delta_{x_2} \dots  \Delta_{x_k}f (0) - (-1)^{k-1}f(0)
\\& \quad  - \Delta_{x_1}  \Delta_{x_2} \dots  \Delta_{x_k}f (0) - (-1)^{k-1}f(0)
  \\&= \left( \Delta_{x_0+x_1}  - \Delta_{x_0} - \Delta_{x_1}  \right) \Delta_{x_2} \dots  \Delta_{x_k}f (0) + (-1)^{k}f(0)
  \\&= \Delta_{x_0} \Delta_{x_1} \Delta_{x_2} \dots  \Delta_{x_k}f (0) + (-1)^{k}f(0)
  \end{align*}
	This finishes the inductive proof.
\end{proof}
\begin{remark}
	The assumption of $X$ being a monoid is not restrictive. If $X$ is merely a commutative semigroup, one can make $X$ into a monoid by artificially adding a neutral element $0_X$. One can then extend $f$ by assigning any value to $f(0_X)$.
\end{remark}

\begin{corollary}\label{lem:2.14}
\index{polynomial!polynomial over a ring}
  If $R$ is a characteristic $0$ domain and $f \in R[x]$ is a polynomial, then the polynomial $\DeltaS^{k} f(x_0,x_1,\dots x_k) - (-1)^k f(0) \in R[x_1,\dots,x_k]$ is divisible by $x_i$ for any $i$. In particular, it has degree at most $\deg f - k$ in any variable $x_i$, and has the constant term equal to $0$. Moreover, it holds true that $\DeltaS^{\deg f} f = (-1)^{\deg f} f(0)$.
\end{corollary}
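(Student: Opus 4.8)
The plan is to reduce all four assertions to one structural statement about the polynomial
$$P_k(x_0,\dots,x_k)\ :=\ \Delta_{x_0}\Delta_{x_1}\cdots\Delta_{x_k}f\,(0),$$
which by Lemma \ref{B:lem:DeltaS-and-Delta} is exactly $\DeltaS^{k}f(x_0,\dots,x_k)-(-1)^k f(0)$; since $\Delta_a$ sends $R[x]$ into $R[x]$ and evaluation at $0$ stays in $R$, this is genuinely a polynomial over $R$. So the substantive input is already in hand, and the remaining work is to describe the monomials of $P_k$.

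The core is an induction on the number of discrete derivatives. Writing $Q_j:=\Delta_{x_{k-j+1}}\cdots\Delta_{x_k}f$ — an iterated discrete derivative of $f$, hence a polynomial in the running variable $y$ whose coefficients are polynomials in $x_{k-j+1},\dots,x_k$ — I would prove that every monomial of $Q_j(y)$ has total degree $\le\deg f$ and contains each of $x_{k-j+1},\dots,x_k$ to a power $\ge 1$. For $j=1$ this is immediate from $f(y+x_k)-f(y)=\sum_m c_m\sum_{l\ge1}\binom{m}{l}x_k^{\,l}y^{m-l}$. For the inductive step one uses $Q_j(y)=Q_{j-1}(y+x_{k-j+1})-Q_{j-1}(y)$: expanding $(y+x_{k-j+1})^a-y^a$ inside each monomial $c\,y^a\prod x_i^{b_i}$ of $Q_{j-1}$ produces monomials $x_{k-j+1}^{\,l}y^{a-l}\prod x_i^{b_i}$ with $l\ge1$, which leaves the total degree equal to $a+\sum b_i$ and introduces $x_{k-j+1}$ with positive exponent, while keeping all previous exponents positive. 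Taking $j=k+1$ gives $Q_{k+1}=\Delta_{x_0}\cdots\Delta_{x_k}f$, and then setting $y=0$ annihilates every monomial involving $y$; hence $P_k$ is an $R$-combination of monomials $\prod_{i=0}^k x_i^{a_i}$ with each $a_i\ge1$ and $\sum_i a_i\le\deg f$.

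The conclusions now read off. Every monomial of $P_k$ has $a_i\ge1$, so $x_i\mid P_k$ for each $i$ (in particular $P_k$ has zero constant term); and $\deg_{x_i}P_k$ equals the largest such $a_i$, which satisfies $a_i\le\deg f-\sum_{j\ne i}a_j\le\deg f-k$ because the remaining $k$ exponents are each $\ge1$. Finally, if $k=\deg f$ there can be no monomial at all, since $k+1$ exponents each $\ge1$ cannot sum to $\le k$; thus $P_k=0$, i.e.\ $\DeltaS^{\deg f}f=(-1)^{\deg f}f(0)$. The degenerate cases $\deg f\in\{-\infty,0\}$ are handled directly (for $\deg f=0$ one already has $\DeltaS^k f=(-1)^k f(0)$) and are consistent with the stated conventions on $\deg$.

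I expect no serious obstacle here: the only care needed is the bookkeeping in the induction, keeping the running variable $y$ distinct from the ``step'' variables $x_i$ and remembering that it is precisely the evaluation $y=0$ that turns ``each monomial has positive $y$-degree or positive $x_i$-degree'' into ``each monomial has positive $x_i$-degree''. A slightly slicker alternative route to divisibility, avoiding the induction, is to note that $\DeltaS^k f$ restricted to the hyperplane $x_i=0$ equals $(-1)^k f(0)$ — either by commutativity of the $\Delta_a$ (Proposition \ref{lem:Delta-properties-algebra}), since $\Delta_0=0$, or by pairing the index sets $I\ni i$ with $I\not\ni i$ in the explicit formula of Proposition \ref{B:lem:DeltaS^k-explicit} — so $P_k$ vanishes on each $\{x_i=0\}$ and, $R[x_0,\dots,x_k]$ being a domain, is divisible by $\prod_i x_i$; combined with the bound $\deg P_k\le\deg f$ visible from Proposition \ref{B:lem:DeltaS^k-explicit}, this again gives $\deg_{x_i}P_k\le\deg f-k$ and the vanishing when $k=\deg f$.
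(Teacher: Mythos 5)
Your proof is correct, and your primary argument takes a more explicit route than the paper, while your closing ``slicker alternative'' is in fact essentially the paper's own proof. The paper argues in three quick strokes: by Lemma \ref{B:lem:DeltaS-and-Delta} the quantity in question equals $\Delta_{x_0}\Delta_{x_1}\cdots\Delta_{x_k}f(0)$, which is of the form $g(x_0)-g(0)$ and hence divisible by $x_0$ by the factor theorem; by the symmetry of $\DeltaS^k f$ (Observation \ref{B:lem:DeltaS-symmetric}) it is divisible by every $x_i$; and a total-degree count $\leq \deg f$ then forces degree $\leq \deg f - k$ in each variable and vanishing at $k=\deg f$. Your main proof replaces the appeal to the factor theorem and to symmetry by a direct monomial-tracking induction on the iterated derivatives $Q_j$, which buys you something the paper leaves implicit: the paper's degree count really needs that divisibility by each $x_i$ \emph{separately} upgrades to divisibility by the product $\prod_i x_i$ (true because the $x_i$ are pairwise non-associate primes in the UFD $R[x_0,\dots,x_k]$, but unstated), whereas your induction delivers directly that every surviving monomial contains every $x_i$ to a positive power, making the bound $a_i \leq \deg f - \sum_{j\neq i} a_j \leq \deg f - k$ and the vanishing at $k = \deg f$ entirely transparent. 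The cost is more bookkeeping; the paper's version is shorter but leans on the symmetry observation and on the UFD step. Both are sound, and your explicit handling of the degenerate cases $\deg f \in \{-\infty, 0\}$ is a small point the paper skips.
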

\begin{proof}
	From Bezout's theorem, it follows that in general that for $g \in R[x]$, $\Delta_y g(x) = g(x+y) - g(x)$ is divisible by $y$, as polynomials. From the above lemma, it follows that $\DeltaS^{k} f(x_0,x_1,\dots x_k) - (-1)^k f(0)$ is divisible by $x_0$. By symmetry, it is divisible by $x_i$ for all $i$. Since the total degree of this polynomial is at most $\deg f$, the degree in any of the $k+1$ variables cannot exceed $\deg f - k$. The last assertion is an immediate consequence of taking $k = \deg f$.
\end{proof}

\begin{corollary}
\index{polynomial}
\index{discrete derivative}
	Let $f:\ X \to Y$ be a polynomial map. If $X$ is a monoid, then 
	$$\DeltaS^k f(x_0,\dots,x_k) = (-1)^k f(0_X),$$ 
	for $k \geq \deg f$. In general, if $X$ is only a semigroup, there exists a constant $C(f) \in Y$ such that
	$$\DeltaS^k f(x_0,\dots,x_k) = (-1)^k C(f),$$ 
	for $k \geq \deg f$.
\end{corollary}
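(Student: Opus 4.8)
The plan is to treat the two assertions separately: the monoid statement follows quickly from Lemma~\ref{B:lem:DeltaS-and-Delta}, while the bare semigroup case is handled by a direct induction on $\deg f$, exploiting the way $\DeltaS_a$ acts on degrees together with the symmetry recorded in Observation~\ref{B:lem:DeltaS-symmetric}. Throughout set $d := \deg f$; the case $d = -\infty$ (i.e.\ $f \equiv 0_Y$, so every $\DeltaS^k f$ is $0$) is trivial, so assume $d \in \NN$.

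For the monoid case, first note that, directly from Definition~\ref{B:def:poly-general-groups}, applying $\Delta_a$ to a polynomial map decreases $\deg$ by at least $1$ (constants and the zero map are sent to the zero map). Hence a straightforward induction gives that $\Delta_{x_1}\Delta_{x_2}\cdots\Delta_{x_k} f$ is a polynomial map of degree at most $d-k$, and so $\Delta_{x_0}\Delta_{x_1}\cdots\Delta_{x_k} f$ has degree at most $d-k-1$. For $k \ge d$ this degree is negative, so this iterated derivative is the zero map and in particular vanishes at $0_X$. Substituting into the identity of Lemma~\ref{B:lem:DeltaS-and-Delta},
\[
\DeltaS^{k} f(x_0,\dots,x_k) - (-1)^k f(0_X) = \Delta_{x_0}\Delta_{x_1}\cdots\Delta_{x_k} f(0_X),
\]
yields $\DeltaS^k f(x_0,\dots,x_k) = (-1)^k f(0_X)$ for every $k \ge d$, which is the first assertion (with $C(f) = f(0_X)$).

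For a general commutative semigroup $X$, the key observation is that $\DeltaS_a$ again lowers degrees, except that the degree cannot drop below $0$. Since $\DeltaS_a g(x) = \Delta_a g(x) - g(a)$, the map $\DeltaS_a g$ is $\Delta_a g$ shifted by a constant; and adding a constant to a polynomial map of degree $\le r$ produces a polynomial map of degree $\le \max(r,0)$, because constants are annihilated by every $\Delta_b$. Therefore $\DeltaS_a g$ is a polynomial map of degree $\le \max(\deg g - 1, 0)$. Iterating $k$ times, $\DeltaS_{x_1}\DeltaS_{x_2}\cdots\DeltaS_{x_k} f$ is a polynomial map of degree $\le \max(d-k,0)$, so for $k \ge d$ it has degree $\le 0$, i.e.\ it is a constant map. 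Thus $\DeltaS^k f(x_0,\dots,x_k) = \DeltaS_{x_1}\cdots\DeltaS_{x_k} f(x_0)$ does not depend on $x_0$, and by the symmetry in Observation~\ref{B:lem:DeltaS-symmetric} it does not depend on any $x_i$; call this constant value $c_k \in Y$. Now for $k \ge d$ the map $\DeltaS_{x_2}\cdots\DeltaS_{x_{k+1}} f$ is the constant map $c_k$, whence $\DeltaS^{k+1} f(x_0,\dots,x_{k+1}) = \DeltaS_{x_1}(c_k)(x_0) = c_k - c_k - c_k = -c_k$, i.e.\ $c_{k+1} = -c_k$. Consequently $c_k = (-1)^{k-d} c_d = (-1)^k\bigl((-1)^d c_d\bigr)$ for all $k \ge d$, and putting $C(f) := (-1)^d c_d$ finishes the second assertion.

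The routine ingredients are the two degree-counting inductions. The only point requiring care is the semigroup case: one is tempted to reduce it to the monoid case by adjoining an identity $0_X$ and extending $f$, but an arbitrary extension is in general not a polynomial map of finite degree (only the extension with $f(0_X) := C(f)$ is), so this would be circular. Running the $\DeltaS$-based argument directly avoids the issue, and it is precisely the symmetry of $\DeltaS^k f$ that upgrades ``constant in $x_0$'' to ``globally constant'', after which tracking the sign through the one-step recursion $c_{k+1} = -c_k$ is immediate.
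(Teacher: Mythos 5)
Your proof is correct. The monoid half is exactly the argument the paper intends: iterating the degree-lowering clause of Definition~\ref{B:def:poly-general-groups} kills $\Delta_{x_0}\cdots\Delta_{x_k}f$ once $k\geq\deg f$, and Lemma~\ref{B:lem:DeltaS-and-Delta} then hands you $(-1)^k f(0_X)$. Where you genuinely diverge is the semigroup half, and your divergence is an improvement: the paper's only hint for this case is the Remark after Lemma~\ref{B:lem:DeltaS-and-Delta}, which suggests adjoining a neutral element and extending $f$ by ``any value'' at $0_X$. That remark is harmless for the identity \eqref{eq:01:lem:2.14} itself (which holds for arbitrary maps), but it does not suffice for the corollary, for exactly the reason you flag --- an arbitrary extension is not a polynomial map (e.g.\ $f(n)=n$ on $(\NN,+)$ extended by $f(0):=5$ has $\Delta_1\Delta_1\cdots\Delta_1 f$ nonzero at $0$ for every number of iterations), so one cannot conclude that $\Delta_{x_0}\cdots\Delta_{x_k}f(0_X)$ vanishes, and choosing the one good value at $0_X$ is circular. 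Your direct route --- $\DeltaS_a$ lowers degree to $\max(\deg-1,0)$ since it differs from $\Delta_a$ by a constant, so $\DeltaS_{x_1}\cdots\DeltaS_{x_k}f$ is constant in $x_0$ for $k\geq\deg f$; symmetry (Observation~\ref{B:lem:DeltaS-symmetric}) promotes this to global constancy; and the one-step recursion $c_{k+1}=c_k-c_k-c_k=-c_k$ fixes the sign --- is self-contained, stays inside the semigroup, and actually supplies the existence of the constant $C(f)$ that the paper's sketch takes for granted. What the paper's intended reduction would buy, had it worked, is brevity; what your argument buys is an actual proof of the second assertion.
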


%%%%%%%%%%%%%%%%%%%%%%%%%%%%%%%%%%%%%%%%%%%%%
% SECTION
%%%%%%%%%%%%%%%%%%%%%%%%%%%%%%%%%%%%%%%%%%%%%
\section{Polynomial maps to the torus}
\index{torus}
Having deepened our understanding of polynomials, we now turn to a simple example of an explicit computation of a generalised limit. We begin with a general case, and then proceed to draw some surprising conclusions.
 %application to the polynomial maps on the torus. We start with and explicit computation of certain generalised limits, and then proceed to draw some surprising conclusions.

\newcommand{\aalpha}{}
\begin{proposition}\label{B:lem:p-lim-of-polynomials-I}
\index{polynomial}
\index{limit!generalised limit}
\index{torus}
  Let $f: \ X \to T$ be a polynomial map from commutative monoid to compact commutative group $T$, and let $p \in \beta X$ be an idempotent ultrafilter. Then we have: 
  $$ \llim{p}{n} \aalpha f(n) = \aalpha f(0).$$
  In particular, if $f(0) = 0$, then: $$ \llim{p}{n} \aalpha f(n) = 0.$$ 
\end{proposition}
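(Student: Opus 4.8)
The plan is to argue by induction on the degree $d$ of the polynomial map $f$, keeping the inductive statement quantified over \emph{all} compact commutative target groups $T$ (this uniformity is what lets the inductive hypothesis be applied to a discrete derivative). The base case is $d \leq 0$: then $f$ is constant (the zero map if $d = -\infty$, a nonzero constant if $d = 0$), so trivially $\llim{p}{n} f(n) = f(0)$.

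For the inductive step assume $d \geq 1$ and that the claim holds for every polynomial map of degree $< d$. Since $p$ is idempotent, $p + p = p$, so the proposition identifying the limit along a product ultrafilter with an iterated limit (applied with $\cU = \cV = p$) gives
$$\llim{p}{n} f(n) = \llim{p}{n} \llim{p}{m} f(m+n).$$
Now, for each fixed $n$, decompose $f(m+n) = f(m) + \Delta_n f(m)$ — this is exactly the definition $\Delta_a g(x) = g(x+a) - g(x)$ with $a = n$, $x = m$ — and observe that, read as a function of its own argument with $n$ held fixed, $\Delta_n f$ is by Definition \ref{B:def:poly-general-groups} a polynomial map $X \to T$ of degree at most $d-1$. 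Because the group operation of $T$ is continuous, generalised limits are additive (Proposition \ref{prop:beta-f-properties}), and all the relevant limits exist since $T$ is compact Hausdorff (Proposition \ref{A:prop:U-limit-exists}). Writing $\ell := \llim{p}{m} f(m)$ and applying the inductive hypothesis to $\Delta_n f$, we get $\llim{p}{m} \Delta_n f(m) = \Delta_n f(0) = f(n) - f(0)$, hence
$$\llim{p}{m} f(m+n) = \ell + f(n) - f(0).$$

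Taking the outer limit over $n$, using additivity again together with $\llim{p}{n} f(n) = \ell$ (and that the limit of a constant is that constant), yields
$$\ell = \llim{p}{n} \bigl( \ell + f(n) - f(0) \bigr) = \ell + \ell - f(0) = 2\ell - f(0),$$
so in the group $T$ we conclude $\ell = f(0)$. This closes the induction, and the final clause (if $f(0) = 0$ then $\llim{p}{n} f(n) = 0$) is an immediate specialisation.

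There is no deep obstacle here; the one point that genuinely needs care — and the thing I would be most explicit about — is the variable bookkeeping: $\Delta_n f$ must be treated as a polynomial map in its \emph{argument} (with $n$ a parameter) so that the inductive hypothesis applies to it, and one must keep straight that the inner $p$-limit is taken in $m$ while the outer one is in $n$, so that the idempotence identity $\llim{p}{n}\llim{p}{m} f(m+n) = \llim{p}{z} f(z)$ is invoked correctly.
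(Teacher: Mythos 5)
Your proof is correct and takes essentially the same route as the paper's: induction on the degree, idempotence of $p$ to rewrite $\llim{p}{n} f(n)$ as an iterated limit of $f(m+n)$, and the resulting fixed-point equation $\ell = 2\ell - f(0)$ in the group $T$. The only (cosmetic) difference is that the paper decomposes $f(n+m)$ using the symmetric derivative $f(n+m)-f(m)-f(n)$ and applies the inductive hypothesis twice to evaluate its double limit as $-f(0)$, whereas you use the ordinary derivative $\Delta_n f$ and apply the hypothesis once before taking the outer limit; both arrive at the same equation.
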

\begin{proof}
  The proof follows by induction on the degree of $f$. For $\deg f \leq 0$ the claim is trivial. Thus, let $\deg f > 0$, and suppose that the claim holds for all polynomials of smaller degrees. Let $\lambda$ denote the limit $ \llim{p}{n} \aalpha f(n)$ --- our goal is to show that $\lambda = \aalpha f(0)$. Note that $f(n+m) = \DeltaS f(m,n) + f(m) + f(n)$. Since $p$ is idempotent, we have:
  \begin{align*}
  \lambda = \llim{p}{n} \aalpha f(n) 
  &=  \llim{p}{m} \llim{p}{n} \aalpha f(n+m) 
  \\&= \llim{p}{m} \llim{p}{n} \left( \aalpha \DeltaS f(m,n) + \aalpha f(m) + \aalpha f(n) \right)
  \\&= \llim{p}{m} \llim{p}{n} \aalpha \DeltaS f(m,n) + 2 \lambda.
  \end{align*}
  For a fixed $m$, the polynomial $ \DeltaS f(m,n)$ in the variable $n$ has degree strictly smaller than $\deg f$. Likewise, for fixed $n$, $ \DeltaS f(m,n)$ is a polynomial in $m$ of degree strictly smaller than $n$. Thus, the inductive assumption applies:
  $$  \llim{p}{m} \llim{p}{n} \aalpha \DeltaS f(m,n) =  \llim{p}{m} \aalpha \DeltaS f(m,0) = \aalpha \DeltaS f(0,0) = - \aalpha f(0).$$
  Hence, the above computation leads to:
  $$ \lambda = 2 \lambda - \aalpha f(0).$$
  This is equivalent to $\lambda = \aalpha f(0)$, which was our claim.
\end{proof}

We can make the above result more concrete by applying it to a particular choice of spaces and explicitly describing polynomial maps. Our choice is to investigate polynomials $\ZZ \to \TT$, but similar considerations are possible for other choices; in particular we can derive multi-dimensional analogues by considering polynomials $\ZZ^k \to \TT^l$.

\begin{corollary}\label{cor:p-lim-of-polynomials-II}
\index{polynomial}
\index{limit!generalised limit}
\index{torus}
	Let $p \in \beta \ZZ$ be a fixed idempotent ultrafilter. For any $\alpha \in \TT$ and polynomial $f:\ \ZZ \to \ZZ$ we have:
	$$ \llim{p}{n} \alpha f(n) = \alpha f(0).$$
	Moreover, for any $\alpha_i \in \TT$, $1 \leq i \leq d$, we have:
  $$ \llim{p}{n} \sum_{i=1}^d \alpha_i n^i = 0.$$
\end{corollary}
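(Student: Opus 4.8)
The plan is to deduce Corollary \ref{cor:p-lim-of-polynomials-II} directly from Proposition \ref{B:lem:p-lim-of-polynomials-I}, which is the substantive result; the corollary is essentially a matter of unwinding definitions and applying the structural results about polynomials from the previous section.

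For the first claim, I would argue as follows. Fix $\alpha \in \TT$ and a polynomial $f : \ZZ \to \ZZ$ in the sense of Definition \ref{def:polynomial}. Consider the map $g : \ZZ \to \TT$ defined by $g(n) := \alpha f(n)$ (i.e. $n \mapsto f(n) \cdot \alpha$ in the circle group, where $f(n) \in \ZZ$ acts on $\alpha \in \TT$). The key point is that $g$ is a polynomial map in the sense of Definition \ref{B:def:poly-general-groups}: since $f$ is an honest polynomial with rational coefficients sending $\ZZ$ to $\ZZ$, Observation \ref{B:lem:Delta-properties-deg-and-lc} (applied over $\QQ$ or $\ZZ$) shows that iterated discrete derivatives $\Delta_a f$ eventually vanish, and composing with the group homomorphism $\ZZ \to \TT$, $m \mapsto m\alpha$, commutes with $\Delta_a$ — so $\Delta_a g = \alpha (\Delta_a f)$ and hence $g$ has finite degree $\leq \deg f$. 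Now $\ZZ$ is a commutative monoid and $\TT$ is a compact commutative group, so Proposition \ref{B:lem:p-lim-of-polynomials-I} applies and gives $\llim{p}{n} \alpha f(n) = \llim{p}{n} g(n) = g(0) = \alpha f(0)$, as desired.

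For the second claim, fix $\alpha_1, \dots, \alpha_d \in \TT$ and set $h(n) := \sum_{i=1}^d \alpha_i n^i$, a map $\ZZ \to \TT$. By the same reasoning as above, each $n \mapsto \alpha_i n^i$ is a polynomial map $\ZZ \to \TT$ of degree $\leq i$ (the monomial $n^i$ is a polynomial $\ZZ \to \ZZ$), and by Proposition \ref{lem:Delta-properties-algebra}\,(1) the discrete derivative is additive, so a finite sum of polynomial maps is again a polynomial map; thus $h$ is a polynomial map $\ZZ \to \TT$ with $h(0) = \sum_i \alpha_i \cdot 0^i = 0$. Applying Proposition \ref{B:lem:p-lim-of-polynomials-I} to $h$ — using precisely the ``in particular'' clause for maps vanishing at $0$ — yields $\llim{p}{n} h(n) = h(0) = 0$.

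I do not anticipate a genuine obstacle here: the only thing requiring a little care is the bookkeeping that ``polynomial $\ZZ \to \ZZ$ composed with the quotient map $\ZZ \to \TT$ is a polynomial map $\ZZ \to \TT$ in the inductive sense of Definition \ref{B:def:poly-general-groups}.'' This follows because the discrete derivative operator $\Delta_a$ is natural with respect to the group homomorphism $\ZZ \to \TT$ (it is defined purely in terms of addition, which the homomorphism respects), so finiteness of degree transfers. Once that observation is in place, everything reduces to two invocations of Proposition \ref{B:lem:p-lim-of-polynomials-I}, and the corollary follows.
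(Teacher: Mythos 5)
Your proof is correct and follows exactly the route the paper intends: the corollary is stated without proof as an immediate consequence of Proposition \ref{B:lem:p-lim-of-polynomials-I}, and your two observations --- that composing a polynomial $\ZZ \to \ZZ$ with the homomorphism $m \mapsto m\alpha$ yields a polynomial map $\ZZ \to \TT$ in the sense of Definition \ref{B:def:poly-general-groups} (since $\Delta_a$ commutes with group homomorphisms), and that finite sums of polynomial maps are polynomial by Proposition \ref{lem:Delta-properties-algebra} --- are precisely the bookkeeping needed. Nothing is missing.
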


\begin{remark}
\index{polynomial}
\index{limit!generalised limit}
\index{torus}
  The above corollary is a particular property of idempotent ultrafilters as opposed to general ultrafilters. As we will see, limits along arbitrary ultrafilters do not show nearly as much regularity.
\end{remark}

A useful consequence of the above results is the following approximation result. It speaks of real valued polynomials, which are a very natural object to study.

\newcommand{\eps}{\varepsilon}
\begin{corollary}[Integral approximation]
\index{polynomial}
\index{limit!generalised limit}
\index{torus}
  Let $g \in \RR[x]$ be a polynomial with real coefficients with $g(0) = 0$. For any $\varepsilon > 0$, consider the set of those integers which are mapped by $g$ to $\varepsilon$-almost integers: 
  $$ A_\varepsilon := \{ n \in \ZZ \setsep \operatorname{dist}(g(n),\ZZ) < \varepsilon \}.$$
  Then the set $A_\varepsilon$ is an $\IP^*$-set.
\end{corollary}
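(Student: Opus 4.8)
The plan is to reduce the statement to the torus result in Proposition~\ref{B:lem:p-lim-of-polynomials-I} (more precisely, to its specialization Corollary~\ref{cor:p-lim-of-polynomials-II}), using the ultrafilter characterization of $\IP^*$-sets from Corollary~\ref{cor:IPst-characterisation}. By that corollary, it suffices to show that $A_\varepsilon \in \cU$ for every idempotent ultrafilter $\cU \in \Ultrafilters{\ZZ}$.

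First I would fix an idempotent ultrafilter $p \in \beta\ZZ$ and consider the map $\bar{g} :\ \ZZ \to \TT$ obtained by composing $g$ with the quotient $\RR \to \RR/\ZZ = \TT$. The key point is that $\bar{g}$ is a polynomial map from the monoid $\ZZ$ to the compact commutative group $\TT$ in the sense of Definition~\ref{B:def:poly-general-groups}: since $\Delta_a g \in \RR[x]$ whenever $g \in \RR[x]$, the discrete derivatives of $\bar g$ are reductions of real polynomials of strictly smaller degree, and an easy induction on $\deg g$ confirms this. Since $\bar g(0) = g(0) + \ZZ = 0$ in $\TT$, Proposition~\ref{B:lem:p-lim-of-polynomials-I} applies and gives $\llim{p}{n} \bar g(n) = 0$ in $\TT$.

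Next I would unwind what this limit statement means combinatorially. Equip $\TT$ with its usual metric $d_\TT(x,y) = \operatorname{dist}(x-y,\ZZ)$; the set $V_\varepsilon := \{ t \in \TT \setsep d_\TT(t,0) < \varepsilon \}$ is an open neighbourhood of $0$ in $\TT$. By the definition of the generalised limit (Definition~\ref{A:def:gen-limit}), $\llim{p}{n} \bar g(n) = 0$ means precisely that $\bar g^{-1}(V_\varepsilon) \in p$. But $n \in \bar g^{-1}(V_\varepsilon)$ if and only if $\operatorname{dist}(g(n),\ZZ) < \varepsilon$, i.e. $\bar g^{-1}(V_\varepsilon) = A_\varepsilon$. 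Hence $A_\varepsilon \in p$. Since $p$ was an arbitrary idempotent ultrafilter, Corollary~\ref{cor:IPst-characterisation} yields that $A_\varepsilon$ is an $\IP^*$-set.

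I do not expect a serious obstacle here; the content has already been done in Proposition~\ref{B:lem:p-lim-of-polynomials-I}. The only points requiring care are: (i) checking that $\bar g :\ \ZZ \to \TT$ genuinely falls under Definition~\ref{B:def:poly-general-groups} — this is the small induction on degree noted above, using that reduction mod $\ZZ$ is a group homomorphism so it commutes with the difference operator $\Delta_a$; and (ii) being careful that "$\varepsilon$-almost integer" translates to membership in the \emph{open} ball $V_\varepsilon$, which is exactly the open neighbourhood the definition of generalised limit requires (one could alternatively note $A_\varepsilon \supseteq \bar g^{-1}(V_{\varepsilon'})$ for any $\varepsilon' < \varepsilon$ and use that $\IP^*$ is upward closed, but this is not needed). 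One should also remark that the statement is vacuous-proof-safe for large $\varepsilon$ since $\ZZ$ itself is $\IP^*$, though this is automatically covered.
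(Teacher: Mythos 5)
Your proposal is correct and follows essentially the same route as the paper: project $g$ to the torus, invoke the idempotent-limit result for polynomial maps (the paper cites Corollary~\ref{cor:p-lim-of-polynomials-II}, the specialization of Proposition~\ref{B:lem:p-lim-of-polynomials-I} to $\sum_i \pi(g_i) n^i$, while you verify directly that $\pi \circ g$ satisfies Definition~\ref{B:def:poly-general-groups}), and conclude that $A_\varepsilon$ belongs to every idempotent ultrafilter, hence is $\IP^*$. Your write-up is merely a little more explicit about the two points the paper leaves implicit.
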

\begin{proof}
  If $\pi:\ \RR \to \TT$ denotes the standard projection, we have the relation:
  $$\operatorname{dist}(g(n),\ZZ) = \operatorname{d}(\pi(g(n)),0),$$ 
  where $\operatorname{d}$ denotes the standard distance in $\TT$. If $g(x) = \sum_{i=1}^d g_i x^i$, then $\pi(g(n)) = \sum_{i=1}^d \pi(g_i) n^i$. From the above Corollary \ref{cor:p-lim-of-polynomials-II}, it follows that for any idempotent ultrafilter $p$ we have $ \llim{p}{n} \sum_{i=1}^d \pi(g_i) n^i = 0$. If follows that the set $A_\varepsilon$ is $p$-large for any $\eps > 0$. Since $A_\eps$ is $p$-large, it is an $\IPst$ set in view of $p$ being arbitrary. 
\end{proof}

In all of the above results, we relied on the assumption that the ultrafilter $p \in \beta X$ used for taking limits was an idempotent: $p+p = p$. It is natural to ask if anything specific can be said about limits along arbitrary ultrafilters. It turns out that for limits these limits can exhibit fairly arbitrary behaviour, as we see shortly. 

We will use the following classical equidist results, due mostly to Weyl. Similar results can be proved in more generality. For a derivation of these results, see \cite{Ergo-by-Einsiedler-Ward}

\begin{theorem}[Weyl]\label{B:thm:Weyl-equidistribution}
\index{equidistribution}
\index{Weyl}
\index{torus}
  Let $\alpha \in \RR$ 	be irrational. Then the sequence $n \alpha \pmod{1},\ n \in \NN$ is equidistributed in $\TT$.

  More generally, if $g:\RR \to \RR$ is a polynomial with at least one irrational coefficient except for the constant term, then the sequence $g(n) \pmod{1},\ n \in \NN$ is equidistributed in $\TT$.
\end{theorem}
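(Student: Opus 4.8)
The plan is to prove the general polynomial statement, since it contains the linear case as the degree-$1$ instance. The tool is \emph{Weyl's criterion}: a sequence $(x_n)$ in $\TT$ is equidistributed if and only if for every nonzero $k \in \ZZ$ one has $\frac{1}{N}\sum_{n=1}^{N} e^{2\pi i k x_n} \to 0$ as $N \to \infty$. The reduction to this criterion is routine — approximate the indicator of an arc of $\TT$ from above and below by trigonometric polynomials via Stone--Weierstrass — so the real content lies in estimating the exponential sums $S_N(g) := \frac{1}{N}\sum_{n=1}^{N} e^{2\pi i k g(n)}$ for polynomials $g$.

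First I would dispose of the base case $\deg g = 1$, i.e. $g(n) = \alpha_1 n + \alpha_0$ with $\alpha_1$ irrational. Here $S_N(g)$ is, up to the unimodular factor $e^{2\pi i k\alpha_0}$, the geometric average $\frac{1}{N}\sum_{n=1}^{N} e^{2\pi i k\alpha_1 n}$, whose modulus is at most $\frac{2}{N\,\lvert 1 - e^{2\pi i k\alpha_1}\rvert}$; since $k\alpha_1 \notin \ZZ$ the denominator is a fixed positive number, so $S_N(g) \to 0$.

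For the inductive step I would argue by induction on $d = \deg g$, the claim being that for every $g$ of degree $d$ having an irrational coefficient among the non-constant ones and every nonzero $k \in \ZZ$, $S_N(g) \to 0$. Write $g(x) = \alpha_d x^d + \cdots + \alpha_1 x + \alpha_0$ and let $j \ge 1$ be the largest index with $\alpha_j$ irrational. If $j = d$, I would invoke van der Corput's inequality: there is an absolute constant $C$ so that for all $1 \le H \le N$,
$$\left\lvert S_N(g)\right\rvert^2 \;\le\; \frac{C}{H} \;+\; \frac{C}{H}\sum_{h=1}^{H} \left\lvert \frac{1}{N}\sum_{n=1}^{N-h} e^{2\pi i k\left(g(n+h)-g(n)\right)} \right\rvert .$$
For each fixed $h \neq 0$ the polynomial $g_h(n) := g(n+h)-g(n)$ has degree $d-1$ and leading coefficient $d\,\alpha_d\,h$, which is irrational, so the induction hypothesis makes every inner average tend to $0$; hence $\limsup_N \lvert S_N(g)\rvert^2 \le C/H$ for every $H$, and thus $S_N(g) \to 0$. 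If instead $j < d$, then $\alpha_{j+1},\dots,\alpha_d$ are rational; pick $q \in \NN$ with $q\alpha_i \in \ZZ$ for all $i > j$, split $n = qm+r$ with $0 \le r < q$, and observe that $m \mapsto g(qm+r)$ is a polynomial in $m$ whose coefficients in degrees $> j$ are integers (by the choice of $q$), hence discardable modulo $1$, while the coefficient of $m^j$ equals $q^j\alpha_j$ plus a rational number, hence is irrational. The induction hypothesis applied to the resulting degree-$(\le j)$ polynomial gives $\frac{1}{M}\sum_{m=1}^{M} e^{2\pi i k h_r(m)} \to 0$, and averaging these $q$ sums (each over $\sim N/q$ values of $n$) yields $S_N(g) \to 0$.

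The main obstacle is the inductive step and, within it, the two differencing/splitting devices: quoting (or proving) van der Corput's inequality cleanly, and especially the bookkeeping in the case $j < d$ — verifying that the irrationality of $\alpha_j$ genuinely survives the substitution $n = qm+r$ and the truncation modulo $1$. The Weyl-criterion reduction and the degree-$1$ base case are straightforward.
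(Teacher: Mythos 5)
The paper does not actually prove this theorem: it is quoted as a classical result with a pointer to the literature (the Einsiedler--Ward reference), so there is no in-text argument to compare yours against. Your proposal is the standard and correct Weyl differencing proof, and it is consistent with the paper's own toolkit, since the reduction to exponential sums is exactly the Weyl criterion the paper records as Theorem \ref{B:thm:Weyl-criterion}. The two delicate points check out: in the case $j=d$, the differenced polynomial $g(n+h)-g(n)$ has degree $d-1\ge 1$ with irrational leading coefficient $d\alpha_d h$ for $h\neq 0$, so the induction hypothesis applies and van der Corput's inequality gives $\limsup_N\lvert S_N(g)\rvert^2\le C/H$ for every $H$; and in the case $j<d$, the coefficient of $m^i$ in $g(qm+r)$ for $i>j$ is $\sum_{i'\ge i}\alpha_{i'}\binom{i'}{i}q^i r^{i'-i}$, which is indeed an integer once $q\alpha_{i'}\in\ZZ$ for all $i'>j$ (since $i\ge 1$), while the coefficient of $m^j$ is $q^j\alpha_j$ plus a rational number, hence irrational, so the degree genuinely drops and the hypothesis survives. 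The only ingredient you quote without proof is van der Corput's inequality itself, which is a two-line Cauchy--Schwarz computation and is standard; including it (or a precise citation) would make the argument self-contained. In short, the proof is complete and correct; it simply supplies a derivation where the paper chose to cite one.
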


A generic tool for extending eqidistribution results is the following criterion. In particular, it allows one to generalise results about one dimensional equidistribution into higher dimensioins.

\begin{theorem}[Weyl equidistribution criterion]\label{B:thm:Weyl-criterion}
\index{Weyl}
\index{equidistribution}
\index{torus}
\index{Fourier analysis}

Let $\seq{\alpha}{n}{\NN}$ be a sequence with terms in $\TT^d$. Then the following conditions are equivalent:
\begin{enumerate}
 \item The sequence $\seq{\alpha}{n}{\NN}$  is equidistributed.
  \item For any $k \in \ZZ^d$ one has: $$ \lim_{N\to \infty} \sum_{n=1}^N e^{2 \pi i k \cdot \alpha_n}=0,$$ where $k \cdot \gamma := \sum_{i\in [d]} k_i \gamma_i$.
\end{enumerate}
\end{theorem}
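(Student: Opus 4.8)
The plan is to prove the equivalence by reducing equidistribution to convergence of Cesàro averages against all continuous functions, and then exploiting density of trigonometric polynomials; the classical argument of Weyl, in the spirit of \cite{Ergo-by-Einsiedler-Ward}, works without change. Recall that $\seq{\alpha}{n}{\NN}$ being equidistributed in $\TT^d$ means that for every box $B \subset \TT^d$ (a product of arcs) one has $\frac{1}{N}\#\{\,n \leq N :\ \alpha_n \in B\,\} \to \lambda(B)$, where $\lambda$ is the normalised Haar measure; by the standard sandwiching of the indicator $\mathbf{1}_B$ between continuous functions from below and above this is equivalent to requiring
$$\frac{1}{N}\sum_{n=1}^N f(\alpha_n) \xrightarrow[N\to\infty]{} \int_{\TT^d} f\, d\lambda \qquad \text{for every } f \in C(\TT^d),$$
and I would dispatch this reduction in one sentence, citing \cite{Ergo-by-Einsiedler-Ward}. (Throughout, the exponential sums in (2) are understood with the normalising factor $\tfrac1N$.)

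First, the direction \emph{equidistributed} $\imply$ \emph{vanishing exponential sums}. Fix $k \in \ZZ^d$, $k \neq 0$; the character $\chi_k(\gamma) := e^{2\pi i k\cdot\gamma}$ is continuous on $\TT^d$, and $\int_{\TT^d}\chi_k\, d\lambda = 0$ (integrate in a coordinate $i$ with $k_i \neq 0$). Applying the continuous-function form of equidistribution to $f = \chi_k$ gives $\frac{1}{N}\sum_{n=1}^N e^{2\pi i k\cdot\alpha_n} \to 0$. (The case $k = 0$ is the trivial exception, where the average is identically $1$, so condition (2) is tacitly for $k \neq 0$.)

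For the converse, let $f \in C(\TT^d)$ and $\varepsilon > 0$. By the Stone--Weierstrass theorem — concretely, by Fejér summation of the Fourier series of $f$ — the trigonometric polynomials, i.e.\ finite $\CC$-linear combinations $P = \sum_{k \in F} c_k \chi_k$ with $F \subset \ZZ^d$ finite, are uniformly dense in $C(\TT^d)$, so choose such a $P$ with $\norm{f - P}_\infty < \varepsilon$. Then
\begin{align*}
\left| \frac{1}{N}\sum_{n=1}^N f(\alpha_n) - \int f \right|
&\leq \left| \frac{1}{N}\sum_{n=1}^N (f-P)(\alpha_n) \right| + \left| \frac{1}{N}\sum_{n=1}^N P(\alpha_n) - \int P \right| + \left| \int (P-f) \right| \\
&\leq 2\varepsilon + \left| \frac{1}{N}\sum_{n=1}^N P(\alpha_n) - \int P \right|.
\end{align*}
Now $\int P = c_0$ and $\frac{1}{N}\sum_{n=1}^N P(\alpha_n) = c_0 + \sum_{0 \neq k \in F} c_k\cdot\frac{1}{N}\sum_{n=1}^N e^{2\pi i k\cdot\alpha_n}$, where each inner average tends to $0$ by hypothesis (2) and $F$ is finite; hence the last term tends to $0$. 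Therefore $\limsup_N \bigl| \frac{1}{N}\sum_{n=1}^N f(\alpha_n) - \int f \bigr| \leq 2\varepsilon$, and since $\varepsilon$ was arbitrary the average converges to $\int f$, i.e.\ $\seq{\alpha}{n}{\NN}$ is equidistributed.

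The only substantive input beyond the triangle inequality is the uniform density of trigonometric polynomials in $C(\TT^d)$ together with the box-to-continuous-function reduction for equidistribution; this is the one place a fully self-contained treatment would need to invoke Stone--Weierstrass / Fejér, so I expect that to be the main point worth flagging, and I would handle it exactly as the preceding Weyl theorems are handled — by citing \cite{Ergo-by-Einsiedler-Ward}.
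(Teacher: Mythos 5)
Your proof is correct, and it is the standard Weyl argument: reduce equidistribution to convergence of Ces\'{a}ro averages against continuous test functions, handle the forward direction by testing against the character $\chi_k$, and handle the converse by uniform approximation with trigonometric polynomials via Stone--Weierstrass/Fej\'{e}r. The paper itself offers no proof of this theorem --- it is stated as a classical fact and deferred to \cite{Ergo-by-Einsiedler-Ward} --- so there is no in-paper argument to compare against; your write-up simply supplies the expected proof, at the expected level of detail, with the same external input (density of trigonometric polynomials) that any self-contained treatment would need. You were also right to flag, and silently repair, two defects in the statement as printed: the exponential sum must carry the normalising factor $\frac{1}{N}$, and condition (2) must exclude $k=0$, where the average is identically $1$. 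Both are genuine typos in the paper's formulation, not in your argument.
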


Using the above equidistribution results, we are in position to describe make the aforementioned statements about limits along general ultrafilters. The following example can be juxtaposed with Lemma \ref{B:lem:p-lim-of-polynomials-I}. 

\begin{example}
\index{torus}
\index{ultrafilter!generic ultrafilter}
Let $\seq{\alpha}{i}{[d]} \in \TT^d$ be a sequence with at least one irrational entry. By Weyl Theorem \ref{B:thm:Weyl-equidistribution}, the sequence $\phi(n) := \sum_{i=1}^d \alpha_i n^i$ is equidistributed in $\TT$. In particular, for any fixed $\gamma \in \TT$, the sets $A_\varepsilon := \{ n \in \ZZ \setsep d( \phi(n), \gamma) < \varepsilon \}$ are nonempty for $\varepsilon > 0$, and hence the family of set $\cA := \{ A_\varepsilon \setsep \varepsilon > 0 \}$ trivially has the finite intersection property. Applying Lemma \ref{A:cor:ultrafilters-existance-above-FI-sets}, we conclude that $\cA$ is contained in some ultrafilter $p$, for which we necessarily have $\llim{p}{n} \phi(n) = \gamma$.
\end{example}

The above result concerns a single polynomial of arbitrary degree. Even more can be said for linear polynomials. It is clear that for a fixed ultrafilter $p$ the map $\lambda_p :\ \TT \ni \alpha \mapsto \llim{p}{n} n \alpha \in \TT$ is additive, in the sense that $\lambda_p(\alpha + \beta) = \lambda_p(\alpha) + \lambda_p(\beta),\alpha,\beta \in \TT$. We have shown that for idempotent $p$, the map $\lambda_p$ is identically $0$. Similar statement is true if $p = \beta f(q)$ for polynomial $f:\ \ZZ \to \ZZ$ with $f(0) = 0$, which is a consequence of Lemma \ref{B:lem:p-lim-of-polynomials-I}. It is natural to ask if any additional restriction can be placed on $\lambda_p$ for arbitrary $p$. It turns out this is not the case, as the below observation shows.

\begin{proposition}\label{B:prop:p-lim-arbitrary-linear}
\index{torus}
\index{ultrafilter!generic ultrafilter}
\index{additive map}
  Let $\phi:\ \mathbb{T} \to\ \mathbb{T}$ satisfy $\phi(\alpha + \beta) = \phi(\alpha) + \phi(\beta)$. Then, there exists an ultrafilter $p \in \beta \ZZ$ such that $\phi = \lambda_p$, where $\lambda_p$ is defined by $\lambda_p(\alpha) = \llim{p}{n} n \alpha$.
\end{proposition}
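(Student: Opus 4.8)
The plan is to realise $\phi$ as a pointwise limit, along a suitable ultrafilter, of the multiplication maps $n \mapsto (\alpha \mapsto n\alpha)$. First I would work in the space $\TT^{\TT}$ of all functions $\TT \to \TT$, equipped with the product topology; it is compact by Tychonoff's theorem and Hausdorff as a product of Hausdorff spaces. Define $F :\ \ZZ \to \TT^{\TT}$ by $F(n)(\alpha) := n\alpha$. By Proposition \ref{A:prop:U-limit-exists}, for every $p \in \beta\ZZ$ the limit $\llim{p}{n} F(n)$ exists in $\TT^{\TT}$, and it equals $\beta F(p)$. Composing with the continuous evaluation map $\mathrm{ev}_\alpha :\ \TT^{\TT} \to \TT$ and applying Proposition \ref{prop:beta-f-properties}, we get, for each $\alpha$, that the $\alpha$-coordinate of $\llim{p}{n} F(n)$ equals $\mathrm{ev}_\alpha\big(\llim{p}{n} F(n)\big) = \llim{p}{n}\, n\alpha = \lambda_p(\alpha)$; hence $\beta F(p) = \llim{p}{n} F(n) = \lambda_p$. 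Therefore $\{\lambda_p :\ p \in \beta\ZZ\} = \beta F(\beta\ZZ)$, which is compact, hence closed in $\TT^{\TT}$, contains $F(\ZZ)$, and — by continuity of $\beta F$ together with density of $\ZZ$ in $\beta\ZZ$ (Corollary \ref{prop:dense-image-in-beta-X}) — is contained in $\cl F(\ZZ)$. Thus $\{\lambda_p :\ p \in \beta\ZZ\} = \cl F(\ZZ)$, so it suffices to show $\phi \in \cl F(\ZZ)$; any $p$ with $\beta F(p) = \phi$ then witnesses the claim.

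By definition of the product topology, $\phi \in \cl F(\ZZ)$ is equivalent to the following approximation statement: for every finite set $\alpha_1,\dots,\alpha_k \in \TT$ and every $\varepsilon > 0$ there exists $n \in \ZZ$ with $\mathrm{d}(n\alpha_i,\phi(\alpha_i)) < \varepsilon$ for all $i$, where $\mathrm{d}$ is the usual metric on $\TT$. To prove this, fix $\alpha_1,\dots,\alpha_k$, set $x := (\alpha_1,\dots,\alpha_k) \in \TT^k$ and $y := (\phi(\alpha_1),\dots,\phi(\alpha_k)) \in \TT^k$, and let $H$ be the closure in $\TT^k$ of the cyclic subgroup $\{nx :\ n \in \ZZ\}$. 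The key claim is that $y \in H$. Here I would invoke Pontryagin duality (equivalently, the structure theory of closed subgroups of a torus): $H$ is precisely the set of $z \in \TT^k$ with $m \cdot z = 0$ for every $m$ in the annihilator $\Lambda := \{m \in \ZZ^k :\ m \cdot x = 0\}$, where $m \cdot z := \sum_i m_i z_i \in \TT$. Now let $m \in \Lambda$, so $\sum_i m_i \alpha_i = 0$ in $\TT$. Since $\phi$ is additive it is $\ZZ$-linear (indeed $\phi(0) = 0$, $\phi(-\alpha) = -\phi(\alpha)$, and $\phi(n\alpha) = n\phi(\alpha)$ by induction on $n$), so $m \cdot y = \sum_i m_i \phi(\alpha_i) = \phi\!\left(\sum_i m_i \alpha_i\right) = \phi(0) = 0$. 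Hence $y$ is annihilated by every $m \in \Lambda$, so $y \in H$.

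Finally, since $y \in H = \cl\{nx :\ n \in \ZZ\}$, there is $n \in \ZZ$ with $nx$ within $\varepsilon$ of $y$ in $\TT^k$, i.e. $\mathrm{d}(n\alpha_i,\phi(\alpha_i)) < \varepsilon$ for all $i$. This establishes the approximation statement, hence $\phi \in \cl F(\ZZ) = \{\lambda_p :\ p \in \beta\ZZ\}$, and choosing $p$ with $\lambda_p = \phi$ completes the proof.

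The step I expect to be the crux is the group-theoretic claim $y \in H$, since it is the only place additivity of $\phi$ enters and it rests on the standard but non-elementary fact that a closed subgroup of $\TT^k$ is the common zero set of its annihilator in $\ZZ^k$; everything else is routine manipulation of generalised limits and the product topology. If one wishes to avoid citing duality outright, one can argue by contradiction: if $y \notin H$ then the nonzero class $y + H$ in the compact abelian group $\TT^k/H$ is separated from $0$ by some character, which pulls back to a character $z \mapsto m \cdot z$ of $\TT^k$ vanishing on $H$ (so $m \cdot x = 0$) yet with $m \cdot y \neq 0$, contradicting the $\ZZ$-linearity computation above.
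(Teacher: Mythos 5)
Your proof is correct, but it takes a genuinely different route from the one in the paper. The paper also reduces, via compactness of $\beta\ZZ$ and closedness of the sets $\Gamma_\alpha = \{p \setsep \lambda_p(\alpha) = \phi(\alpha)\}$, to a finite approximation problem; but it then handles that problem by hand: it passes to a $\QQ$-basis of $\lin_\QQ(\tilde A \cup \{1\})$, rescales so that the basis consists of $1/N$ together with elements $\beta_j$ with $\{1\} \cup \{N\beta_j\}$ $\QQ$-linearly independent, treats the rational generator by a congruence condition ($\phi(1/N) = k/N$, so one needs $n \equiv k \pmod N$), and invokes the multidimensional Weyl equidistribution theorem for the irrational part. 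You instead identify $\{\lambda_p \setsep p \in \beta\ZZ\}$ with $\cl F(\ZZ)$ in $\TT^{\TT}$ and settle the finite approximation problem in one stroke by the annihilator description of the closed subgroup $\cl\{nx \setsep n \in \ZZ\} \le \TT^k$ — i.e.\ the general (inhomogeneous, possibly rationally dependent) form of Kronecker's approximation theorem, which is where the additivity of $\phi$ enters. Your argument is more uniform: it needs no basis extraction and no separate treatment of rational versus irrational coordinates, since rational dependencies among the $\alpha_i$ are exactly the annihilator relations that $\ZZ$-linearity of $\phi$ respects. The price is that you import Pontryagin duality (or Kronecker's theorem in character form), which is not among the tools the paper has set up, whereas the paper leans only on the Weyl equidistribution results it has already quoted; the two external inputs are of comparable depth, so this is a matter of taste rather than of rigour. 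Both proofs are complete.
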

\begin{proof}
  Let $\Gamma_\alpha := \{ p \in \beta \ZZ \setsep \lambda_p(\alpha) = \phi(\alpha) \}$. The claim is equivalent to existence of $p$, such that $p \in \Gamma_\alpha$ for all $\alpha$, hence it will suffice to show that $\bigcap_{\alpha \in \TT} \Gamma_\alpha \neq \emptyset$. Because the map $p \mapsto \lambda_p(\alpha)$ is continuous for any fixed $\alpha$, the sets $\Gamma_\alpha$ are closed. Thus, because $\beta \ZZ$ is compact, it will be enough to show that the finite intersections of the form $\bigcap_{\alpha \in A} \Gamma_\alpha$, with $A \subset \TT$ and $A$ --- finite, are non-empty.
  
  Let $\tilde{A} \subset [0,1) \subset \RR$ denote the set corresponding to $A$ under the natural identification\footnote{The natural projection map $\pi:\ \RR \to \TT = \RR/\ZZ$ maps $[0,1)$ to $\TT$ bijectively. Some authors identify $\TT$ and $[0,1)$ implicitly, but in this case the distinction is important.} of $\TT$ and $[0,1)$. Consider the $\QQ$-linear space $ V := \lin_\QQ \left( \tilde{A} \cup \{1\} \right)$. Let $\tilde{A}_0 \subset \RR$ be a basis of $V$, so that $1 \in \tilde{A}_0$ and any element of $\tilde{A}$ is a $\QQ$-linear combination of elements of $\tilde{A}_0$. Putting $\tilde{A}_1 := \frac{1}{N} \tilde{A}_0$ for properly chosen integer $N$, we can assure that $\tilde{A}_1$ is  $\QQ$-linearly independent, $\frac{1}{N} \in  \tilde{A}_1 $ and each element of $\tilde{A}$ is a $\ZZ$-linear combination of elements of $ \tilde{A}_1 $. Finally, let us write $\tilde{A}_1 = \tilde{B} \cup \{1/N\}$, and let $B \subset \TT$ be the projection of $\tilde{B}$. At is clear that $\Gamma_{1/N} \cap \bigcap_{\alpha \in B} \Gamma_\alpha \subset \bigcap_{\alpha \in A} \Gamma_\alpha$, so it will suffice the former set is non-empty. 
  Because $N \phi(1/N) = \phi(1) = 0$, we have $\phi(1/N) = k/N$ for some $k$. Hence $p \in \Gamma_{1/N}$ if and only if $k+N\ZZ$ is $p$-large. Let us enumerate $B = \seq{\beta}{j}{J}$. Again, a classical theorem ensures the equidistribution of the sequence of vectors $\left(  m N\beta_j \right)_{j \in J} $ (for $m \in \ZZ$) in $\TT^J$, because of the $\QQ$-linear independence of $\{1\} \cup \left\{ N\beta_j\right\}_{J}$. It follows that the vectors $\left(  (m N+k)\beta_j \right)_{j \in J} $ ($m \in \ZZ$) are also equidistributed, and in particular form a dense set.

Hence, there exists a sequence $\seq{m}{t}{\NN}$ such that $\lim_{t \to \infty} (Nm_t+k) \beta_j = \phi(\beta_j)$ for all$j \in J$. It follows that any ultrafilter $p$ for which $\{ Nm_t+k \setsep t \in \NN\}$ is $p$-large, belongs to $\Gamma_{1/N} \cap \bigcap_{\alpha \in B} \Gamma_\alpha$. Since such ultrafilters clearly exists, this finishes the proof.
\end{proof}

The above lemma shows that the class of the maps $\alpha \mapsto \llim{p}{n} n \alpha$ for $p \in \beta \ZZ$ is rather rich: Any map $\TT \to \TT$ which satisfies the necessary condition of being additive can be represented in this form for some $p$.

\index{automatic continuity}
A natural question arises as to the richness of the class of additive maps $\TT \to \TT$. The obvious examples are ``multiplication'' maps $\alpha \mapsto k \alpha$ for some fixed $k \in \ZZ$. It is difficult to think of a different example, and there is a good reason for this. We state the following proposition without the proof, which can be obtained by the suitable adaptation of the classical reasoning for Cauchy functional equation. 

\begin{proposition}\label{B:prop:additive+measurable->multiplication}
\index{torus}
\index{additive map}
  Let $\phi:\ \TT \to \TT$ be an additive map. Then, the following conditions are equivalent:
  \begin{enumerate}
   \item\label{B:prop:a+m->mult:cond:measurable} The map $\phi$ is Lebesgue measurable.
   \item\label{B:prop:a+m->mult:cond:continuous} The map $\phi$ is continuous.
   \item\label{B:prop:a+m->mult:cond:multiplication} The map $\phi$ is of the form $\phi(\alpha) = k \alpha$ for some $k \in \ZZ$.
  \end{enumerate}
\end{proposition}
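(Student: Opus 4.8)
The plan is to establish the cycle of implications $\ref{B:prop:a+m->mult:cond:multiplication} \imply \ref{B:prop:a+m->mult:cond:continuous} \imply \ref{B:prop:a+m->mult:cond:measurable} \imply \ref{B:prop:a+m->mult:cond:multiplication}$. The first two implications are immediate: a multiplication map $\alpha \mapsto k\alpha$ on $\TT$ is manifestly continuous, and any continuous map on $\TT$ is Lebesgue measurable. So the entire content of the proposition sits in the last implication, $\ref{B:prop:a+m->mult:cond:measurable} \imply \ref{B:prop:a+m->mult:cond:multiplication}$, which is the discrete (Steinhaus-type) version of the classical automatic-continuity theorem for the Cauchy functional equation. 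Since the paper explicitly says this proposition is stated without proof, I would give only a sketch of the argument here, or defer entirely to a reference; but the honest plan is the following.

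First I would lift $\phi$ to a map on $\RR$: write $\pi:\ \RR \to \TT$ for the projection, and (using measurability of $\phi$ and of $\pi$, plus a measurable section of $\pi$ on $[0,1)$) produce a Lebesgue-measurable $\Phi:\ \RR \to \RR$ with $\pi \circ \Phi = \phi \circ \pi$ and $\Phi(x+y) - \Phi(x) - \Phi(y) \in \ZZ$ for all $x,y$; one checks this integer-valued cocycle is then constant in each variable on a set of full measure, so after subtracting an affine correction one may assume $\Phi$ is genuinely additive on $\RR$, i.e. $\Phi(x+y) = \Phi(x) + \Phi(y)$ for a.e. $(x,y)$, and measurable. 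Next I would run the Steinhaus argument: $\Phi$ additive and measurable implies $\Phi$ is bounded on a set of positive measure (Lusin's theorem gives a compact set of positive measure on which $\Phi$ is continuous, hence bounded), and Steinhaus' theorem says the difference set of a positive-measure set contains an interval around $0$; additivity then propagates local boundedness to boundedness on a neighbourhood of $0$, and a standard doubling/bisection argument upgrades this to continuity of $\Phi$ at $0$, hence everywhere. A continuous additive map $\RR \to \RR$ is $x \mapsto cx$ for some real $c$; compatibility with the $\ZZ$-periodicity forced by descending to $\TT$ (namely $\Phi(x+1) - \Phi(x) \in \ZZ$, so $c \in \ZZ$) then gives $\phi(\alpha) = k\alpha$ with $k := c \in \ZZ$.

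The main obstacle is the bookkeeping in the very first step: passing from the measurable map $\phi$ on the compact group $\TT$ to a genuinely additive measurable map on $\RR$ without losing control of the integer-valued coboundary terms. This is where one must be careful that "measurable section" and "almost-everywhere additivity" can be cleaned up to "everywhere additivity" — the standard trick is to note that an a.e.-defined measurable solution of the Cauchy equation agrees a.e. with an everywhere-defined one, using Fubini on the relation $\Phi(x+y) = \Phi(x) + \Phi(y)$. Once past that reduction, the Steinhaus/Lusin machinery is entirely routine, which is presumably why the paper elects to cite it rather than reproduce it. I would therefore write the proof as: "immediate" for the two easy implications, and for the hard one either a one-paragraph sketch along the above lines or a pointer to the classical treatment of the Cauchy functional equation (e.g. via Steinhaus' theorem and Lusin's theorem).
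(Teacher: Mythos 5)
The paper states this proposition without proof (it only remarks which implications are easy and points to the automatic-continuity literature), so there is no official argument to compare against; your overall architecture --- two trivial implications plus a Steinhaus/Lusin automatic-continuity argument for the hard one --- is exactly what the paper has in mind. But your sketch of the hard implication contains one step that is genuinely false as stated, namely the claim that the integer-valued cocycle $c(x,y) := \Phi(x+y)-\Phi(x)-\Phi(y)$ of the section-lift ``is constant in each variable on a set of full measure, so after subtracting an affine correction one may assume $\Phi$ is genuinely additive.'' Take $\phi = \id_{\TT}$, the very map you are trying to recover: the lift through the section $[0,1)$ is $\Phi(x) = x - \floor{x}$, and then $c(x,y) = -\floor{(x-\floor{x})+(y-\floor{y})}$, which for a.e.\ fixed $x$ takes both values $0$ and $-1$ on sets of positive measure in $y$. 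So the cocycle is not constant even in the best-behaved measurable case, and no affine correction kills it. The difficulty is not cosmetic: an additive $\phi:\ \TT \to \TT$ need not be induced by any additive $\Phi:\ \RR \to \RR$ at all (the abstract endomorphism ring of $\TT$ is far larger than the image of $\operatorname{End}(\RR,+)$), so producing a genuinely additive measurable lift requires a real argument --- in effect the vanishing of a measurable $H^2(\RR,\ZZ)$ --- and cannot be waved through.

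The clean repair is to not lift at all. Steinhaus's theorem holds for Haar measure on the compact group $\TT$ itself: given $\eps > 0$, cover $\TT$ by finitely many $\eps/2$-balls, pick one whose preimage $E$ under $\phi$ has positive measure; then $E - E$ contains a neighbourhood $U$ of $0$, and for $u = e_1 - e_2 \in U$ additivity gives $\phi(u) = \phi(e_1)-\phi(e_2)$ within $\eps$ of $0$. Hence $\phi$ is continuous at $0$, hence everywhere, which is \ref{B:prop:a+m->mult:cond:measurable} $\imply$ \ref{B:prop:a+m->mult:cond:continuous}. For \ref{B:prop:a+m->mult:cond:continuous} $\imply$ \ref{B:prop:a+m->mult:cond:multiplication} one may now lift safely but only \emph{locally}: near $0$ there is a unique continuous lift $\Phi$ with $\Phi(0)=0$, local additivity forces $\Phi(x)=cx$ there with $c \in \RR$, and $n\,\phi(1/n) = \phi(0) = 0$ in $\TT$ forces $c \in \ZZ$; additivity then propagates $\phi(\alpha)=c\alpha$ to all of $\TT$. (Equivalently: compose with $\alpha \mapsto e^{2\pi i \alpha}$ and invoke the classification of continuous characters of $\TT$.) The rest of your proposal --- the cycle of implications and the observation that \ref{B:prop:a+m->mult:cond:multiplication} $\imply$ \ref{B:prop:a+m->mult:cond:continuous} $\imply$ \ref{B:prop:a+m->mult:cond:measurable} is immediate --- is fine.
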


Of course, the condition \ref{B:prop:a+m->mult:cond:multiplication} implies \ref{B:prop:a+m->mult:cond:measurable}. The implication from \ref{B:prop:a+m->mult:cond:continuous} to \ref{B:prop:a+m->mult:cond:multiplication} is relatively straightforward, and can be deduced from the similar fact for additive maps $\RR \to \RR$. The implication from \ref{B:prop:a+m->mult:cond:measurable} to \ref{B:prop:a+m->mult:cond:continuous} is an example of a more widely discussed phenomenon known as automatic continuity. Much research into this area was done by Frech\'{e}t, Sierpi\'{n}ski and Steinhaus, and more recently by Weil, as is well discussed for example by Rosendal in \cite{AutomaticContinuity-by-Rosendal}. 

One can show by the suitable adaptation of the classical reasoning for Cauchy functional equation that any different additive maps $\TT \to \TT$ is not Lebesgue measurable at any interval. It is relatively straightforward to show that a continuous additive map $\TT \to \TT$ has to be a multiplication by an integer. 

To complete the picture, let us consider the maps $\alpha \mapsto \llim{p}{n} f(n) \alpha$, where $f:\ \ZZ \to \ZZ$ is a fixed polynomial map and $p$ ranging over $\beta \ZZ$. At first, one might again hope that given a non-constant polynomial $f:\ \ZZ \to \ZZ$, any additive map $\TT \to \TT$ is of the form $\alpha \mapsto \llim{p}{n} f(n) \alpha$ for appropriately chosen $p$. Our earlier result shows that this is indeed true for $f(n) = n$. However, taking $f(n) = 2n$ or $f(n) = n^2$ and evaluating $\llim{p}{n} f(n) \alpha$ at $\alpha = \frac{1}{2}$ we see that this naive hope is not realised. However, a slightly weaker statement is true, as shown in the following result.
 
\begin{proposition}\label{B:prop:gen-lim-for-polynomials}\label{B:prop:p-lim-arbitrary-polynomial}
\index{polynomial}
\index{torus}
\index{ultrafilter!generic ultrafilter}
\index{additive map}
  Let $f:\ \ZZ \to \ZZ$ be a non-constant polynomial map. Let $A = \set{\alpha}{i}{I} \subset \TT$ be a sequence such that $A \cup \{1\}$ is linearly independent\footnote{To be precise, we should specify that $\alpha_i \neq \alpha_j$ for $i \neq j$, and that to consider linear independence we take representatives in $[0,1)$. We hope that nevertheless it is clear to the reader what is meant.} over $\QQ$, and let $B = \set{\beta}{i}{I}  \subset \TT$ be arbitrary. Then, there exists an ultrafilter $p$ such that $\llim{p}{n} f(n) \alpha_i = \beta_i$ for all $i \in I$.

\end{proposition}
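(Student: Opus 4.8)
The plan is to mimic the proof of Proposition \ref{B:prop:p-lim-arbitrary-linear}, replacing the linear map $n \mapsto n$ by the polynomial $f$ and using a polynomial equidistribution theorem in place of the linear one. First I would set $\Gamma_i := \{ p \in \beta \ZZ \setsep \llim{p}{n} f(n) \alpha_i = \beta_i \}$; as before, each $p \mapsto \llim{p}{n} f(n) \alpha_i$ is continuous by Proposition \ref{prop:U-lim-is-continuous}, so each $\Gamma_i$ is closed, and by compactness of $\beta \ZZ$ it suffices to show every finite intersection $\bigcap_{i \in I_0} \Gamma_i$ (with $I_0 \subset I$ finite) is non-empty. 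This in turn reduces, exactly as in the earlier proof, to finding a single sequence $\seq{m}{t}{\NN}$ of integers along which $f(m_t) \alpha_i \to \beta_i$ simultaneously for all $i \in I_0$: any ultrafilter containing $\{ f(m_t) \setsep t \in \NN \}$ will then lie in the intersection. Equivalently, I need the set $\{ \left( f(m) \alpha_i \right)_{i \in I_0} \setsep m \in \ZZ \}$ to be dense in $\TT^{I_0}$.

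Next I would invoke Weyl's theorem on polynomial equidistribution (Theorem \ref{B:thm:Weyl-equidistribution}) together with the Weyl criterion (Theorem \ref{B:thm:Weyl-criterion}) to establish the required density. The vector sequence $\left( f(m) \alpha_i \right)_{i \in I_0}$ is equidistributed in $\TT^{I_0}$ precisely when, for every non-zero integer vector $(k_i)_{i \in I_0}$, the scalar polynomial sequence $\left( \sum_{i \in I_0} k_i \alpha_i \right) f(m) \pmod 1$ is equidistributed in $\TT$. Write $f(x) = \sum_{j=0}^{d} c_j x^j$ with $d = \deg f \geq 1$ and $c_d \neq 0$; then this scalar sequence is $g_k(m) := \sum_{j=1}^{d} \left( \sum_{i} k_i \alpha_i \right) c_j\, m^j \pmod 1$ (the constant term being irrelevant for equidistribution). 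To apply Theorem \ref{B:thm:Weyl-equidistribution} I must check that at least one non-constant-term coefficient of $g_k$ is irrational. The leading coefficient is $c_d \sum_i k_i \alpha_i$; since $(k_i)$ is a non-zero \emph{integer} vector and $\{ \alpha_i \setsep i \in I_0 \} \cup \{ 1 \}$ is $\QQ$-linearly independent, the combination $\sum_i k_i \alpha_i$ is irrational (it is not in $\QQ$, as otherwise we would get a non-trivial rational relation among the $\alpha_i$ and $1$); multiplying by the non-zero integer $c_d$ keeps it irrational. Hence $g_k$ has an irrational coefficient and is equidistributed, so by the Weyl criterion the vector sequence is equidistributed, in particular dense.

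Having density, I pick $\seq{m}{t}{\NN}$ with $\left( f(m_t) \alpha_i \right)_{i \in I_0} \to (\beta_i)_{i \in I_0}$, let $\cA$ be the family of sets of the form $\{ m_t \setsep t \geq T \}$ (which has the finite intersection property), extend it to an ultrafilter $p$ by Corollary \ref{cor:ultrafilters-existance-above-FI-sets}, and observe that $\llim{p}{n} f(n) \alpha_i$ must equal the unique limit point $\beta_i$ for each $i \in I_0$, since $p$ refines the tail filter of the convergent sequence $f(m_t) \alpha_i$. This shows $p \in \bigcap_{i \in I_0} \Gamma_i$, completing the compactness argument and hence the proof.

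The main obstacle I anticipate is purely bookkeeping: being careful that $c_d$ is a non-zero \emph{rational} and the $k_i$ non-zero \emph{integers}, so that rationality is preserved and the $\QQ$-linear independence hypothesis genuinely forces irrationality of the leading coefficient of $g_k$ — this is where the hypothesis on $A \cup \{1\}$ is used, and it is exactly the point at which the naive statement ``any additive map is realised'' fails (as the remark before the proposition notes with $f(n) = 2n$, $\alpha = 1/2$). One should also make sure that the footnote's convention (choosing representatives in $[0,1)$, $\alpha_i$ distinct) is respected so that ``linearly independent over $\QQ$'' is unambiguous; beyond that, no delicate estimates are needed since Weyl's theorems are quoted wholesale.
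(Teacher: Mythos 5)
Your proposal is correct and follows essentially the same route as the paper's own proof: closed sets $\Gamma_i$, compactness of $\beta\ZZ$ to reduce to finite intersections, Weyl equidistribution for density, and an ultrafilter refining the tail filter of a convergent subsequence. The only difference is that you spell out the Weyl-criterion verification (irrationality of the leading coefficient $c_d\sum_i k_i\alpha_i$ via the $\QQ$-linear independence of $A\cup\{1\}$), which the paper leaves implicit -- a welcome addition rather than a deviation.
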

\begin{proof}
  Define $\Gamma_i := \{ p \in \beta \ZZ \setsep \llim{p}{n} f(n) \alpha_i = \beta_i\}$. It is clear that $\Gamma_i$ are closed, and that the claim will follow once we prove that $\bigcap_{i \in I} \Gamma_i \neq \emptyset$. Because $\beta \ZZ$ is compact, it will suffice to show that the finite intersections $\bigcap_{i \in I_0} \Gamma_i$ ($I_0 \subset I$, finite) are non-empty. Once again, Theorem \ref{B:thm:Weyl-equidistribution} ensures that $\left( f(n) \alpha_i \right)_{i \in I_0} \in \TT^{I_0}$ is equidistributed, hence dense. It follows that there exists a sequence $\seq{n}{t}{\NN}$ such that $\lim_{t \to \infty} f(n_t) \alpha_i = \beta_i$ and consequently there exists an ultrafilter $p$ with $\llim{p}{t} f(n_t) \alpha_i = \beta_i$ for $i \in I_0$. This ultrafilter $p$ lies in $\bigcap_{i \in I_0} \Gamma_i$, which finishes the proof.

\end{proof}

\begin{remark}
\index{polynomial}
\index{torus}
\index{ultrafilter!generic ultrafilter}
\index{additive map}
  Proceeding along similar lines as in Proposition \ref{B:prop:p-lim-arbitrary-linear}, one can modify the above Proposition \ref{B:prop:p-lim-arbitrary-polynomial} to the following statement:

  \textit{Given a  a non-constant polynomial map $f$, and an addive map $\phi:\ \TT \to \TT$, and a set $C \subset \TT$ such that $1$ does not lie in $\QQ$-linear span\footnote{Again, we identify the set $C \subset \TT = \RR/\ZZ$ with the set of representatives of its elements in $[0,1) \subset \RR$.} of $C$, we can find an ultrafilter $p$ such that $\llim{p}{n} f(n) \alpha = \phi(\alpha)$ for $\alpha \in C$.}

  Somewhat regrettably, we cannot take $C = \TT$ in the above statement. 
\end{remark}

To close this section, we use the results obtained so far to obtain somre results about the group structure and cardinality of $\beta \NN$, foreshadowing the latter developements. We begin by re-deriving the formula for the cardinality of $\beta \NN$ is a short way, and show that the idempotent ultrafilters constitute a very small part of $\beta \NN$ in certain sense.

\begin{example}
\index{torus}
\index{ultrafilter!cardinality}

  Let $A = \set{\alpha}{i}{I} \subset \TT$ be such that $1 \cup A$ is $\QQ$-linearly independent, and $\# A = \mathfrak{c}$. We can consider the map from $\Phi:\ \beta \NN  \to \TT^I$, given by $p \mapsto \left( \llim{p}{n} \alpha_\iota n \right)_{i \in I}$, which can easily be verified to be a morhphism of compact commutative semigroups.

	Let us consider the image of $\Phi$, $\Phi(\beta \NN)$. Proposition \ref{B:prop:gen-lim-for-polynomials} asserts that for any choice of $\beta_i \in \TT,\ i \in I$ there exists $p \in \beta \NN$ such that $\llim{p}{n} \alpha_i n = \beta_i$. It follows that for this choice of $p$ we have $\Phi(p) = \seq{\beta}{i}{I}$. Since $\beta_i$ were chosen arbitrarily, we conclude that $\Phi$ is surjective: $\Phi(\beta \NN) = \TT^I$.  In particular, we see that  $\card{ \beta \NN } \geq \card{\TT^I} = \mathfrak{c}^\mathfrak{c} = 2^\mathfrak{c}$. Because the reverse inequality is obvious, we have  $\# \beta \NN = 2^\mathfrak{c}$.

  By Lemma \ref{B:lem:p-lim-of-polynomials-I}, it holds for any idempotent $p \in \beta \NN$ and integer polynomial $f$ with $f(0) = 0$ that:
 $$\Phi(\beta f(p)) = \llim{p}{n} \left(\alpha_i f(n)\right)_{i \in I} =  \left(0\right)_{i \in I} =: 0.$$
 On the other hand, let us consider $\Gamma := \{ p \in \beta \NN \setsep \Phi(p) = 0 \}$. Because the map $\Phi$ is continuous, $\Gamma$ is compact. Because $\Phi$ is a semigroup homomorphism and $\{0\}$ is a semigroup, $\Gamma$ is a semigroup. Moreover, $\Gamma$ contains the idempotent ultrafilters, and even ultrafilters of the form $\beta f(p)$ for $f$ --- polynomial with $0 \mapsto 0$. (We will see that the function $f$ in the last statement can be chosen from an even richer family.) In particular, $\Gamma$ contains the smallest compact semigroup that contains the idempotents.

  We will call a subset of $T \subset \beta \NN$ a \emph{generalised translate} of $\Gamma$ if it is equal to $\Gamma$, or if it consists of a single ultrafilter $p \in \beta \NN$, or if it is of the form $T_1 + T_2$ where $T_1,\ T_2$ are generalised translates of $\Gamma$ constructed earlier. Hence, we are considering sets like $\Gamma + p$, $p + \Gamma$, $p+\Gamma + q$, $p + \Gamma + q + \Gamma$, and so on. It is easily shown by structural induction that if $T$ is a generalised translate of $\Gamma$, then the image $\Phi(T)$ consists of a signle element. It follows by a short argument that $\beta \NN$ cannot be covered by less than $2^\mathfrak{c}$ generalised translates of $\Gamma$. 
\end{example}

%%%%%%%%%%%%%%%%%%%%%%%%%%%%%%%%%%%%%%%%%%%%%
% SECTION
%%%%%%%%%%%%%%%%%%%%%%%%%%%%%%%%%%%%%%%%%%%%%
\section{Almost polynomials}
\index{polynomial!almost polynomial|(}

\renewcommand{\AP}{\cA}
\newcommand{\NAP}{\cA_{0}}
\newcommand{\ap}{almost polynomial}
\newcommand{\nice}{admissible}

\newcommand{\Szemeredi}{Szemer{\'{e}}di}
\newcommand{\Sarkozy}{S{\'{a}}rk{\"{o}}zy}
\newcommand{\Furstenberg}{F{\"{u}}rstenberg}

Polynomials are an extremely well behaved class of functions, satisfying a range of recurrence results. For a trivial example, we have Lemma \ref{B:lem:p-lim-of-polynomials-I} which describes the form of the limit $\llim{p}{x} f(x)$ (for $f$ --- polynomial, $p$ --- idempotent ultrafilter), together with the corollaries concerning the approximation of real-valued polynomials by integers. For a more serious example, one can consider \Furstenberg-\Sarkozy's theorem, asserting that a set of integers with positive Banach density contains two elements differing by the value of any integral polynomial which maps $0$ to $0$. A very profound result which one might hope to generalise is Polynomial \Szemeredi Theorem (see for example \cite{BergLeib-polySzemeredi}), which itself is a generalisation of the classical \Szemeredi Theorem for arithmetic progressions. It is fairly natural to search for a generalisation of the notion of a polynomial which preserves some of the regularity used in the proofs of these results.

In this section, we introduce the notion of ``\ap{s}'' to formalise the idea that a function shows similar behaviour to a polynomial with respect to taking finite differences, but relativised with respect to a chosen ultrafilter $p$. An almost polynomial is essentially synonymous with $p$-$\VIP$ system (modulo a constant term and possibly the generality of definition), but we prefer to use a name that has an intuitive content. Additionally, we take a different point of departure, and only later will it become apparent that our definition is closely related to the classical one. We begin by building some general theory, which will mostly be applied to maps $\ZZ \to \ZZ$. 

The following definition is inspired by Definition \ref{B:def:poly-general-groups}.  

\begin{definition}[Almost polynomials] \label{B:def:gen-poly} \index{almost polynomial}
  Let $(X,+)$ be a commutative semigroup, and let $(Y,+)$ be commutative group, and let $p \in \beta X$ be an ultrafilter. We define the family of $p$-\ap{s} $X \to Y$ inductively, as follows:
\begin{enumerate}
\item A map $f :\ X \to Y$ is a $p$-almost polynomial of degree $-\infty$ if and only if $f = 0_Y$ $p$-a.e.
\item A map $f :\ X \to Y$ is a $p$-almost polynomial of degree $0$ if and only if there is a constant $c \in Y \setminus \{0_Y\}$ such that $f = c$ $p$-a.e.
\item A map $f :\ X \to Y$ is a polynomial of degree at most $d \geq 1$ if and only if for $p$-almost all $a \in X$, the map $\Delta_a f$ is an \ap\ of degree at most $d-1$.
\end{enumerate}
We denote the collection of all $p$-\ap{s} by $ \AP^p(X,Y)$. If $f \in \AP^p(X,Y)$, then $\deg^p f$ denotes the degree of $f$ as $p$-almost polynomial. If $f \not \in \AP^p$, then we define $\deg^p f = + \infty$, so that the statement $\deg^p f \leq d$, $d \in \NN$, includes the assumption that $f \in \AP^p$.
\end{definition}

\begin{convention}
Throughout this section, $X = (X,+)$ will stand for a commutative semigroup, $(Y,+)$ will stand for a commutative group, and $p \in \beta X$ will stand for an ultrafilter on $X$, unless specified otherwise. We abbreviate the notation $\AP^p(X,Y)$ to $\AP^p$ or even $\AP$ when no confusion is possible. Likewise, we omit $p$ in $\deg^p f$ and similar expressions.
\end{convention}

We have seen that the constant term plays an essential role for properties of ordinary polynomials. The following definition gives the right generalisation of the constant term for the \ap{s}.

\begin{definition}\label{B:def:C-f}
   For a map $f:\ X \to Y$, ultrafilter $p \in \beta X$, and integer $d \in \NN$ we define:
$$
  C_d^p(f) := (-1)^d \llim{p}{m_0,\dots,m_d} \DeltaS^d f(m_0,m_1,\dots,m_d),
$$ 
with the understanding that the topology of $Y$ is discrete, and if the limit does not exist in $Y$, then $C_d^p(f)$ remains undefined. If $p$ is understood from the context, we skip the upper index and write simply $C_d(f)$.
\end{definition}

We now prove several results which show why $C_d(f)$ is an interesting object.

\begin{proposition}\label{B:lem:C-vs-gen-poly}
  In the situation of the above definition, for $d \geq 1$ it holds for $p$-a.a. $a \in X$ that $C_{d}(f) = -C_{d-1}(\DeltaS_a f)$, in the sense that is one of the limits converges then so does the other, and the values agree. In particular, $f \in \cA$ and $\deg^p f \leq d$ if and only if $C_d(f)$ exists.
\end{proposition}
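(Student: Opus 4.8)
The statement to prove is the recursion $C_d(f) = -C_{d-1}(\bar\Delta_a f)$ for $p$-a.a.\ $a$, together with the equivalence ``$f \in \cA$ and $\deg^p f \le d$ iff $C_d(f)$ exists''. The plan is to derive the recursion first, by unwinding the definitions of $C_d$ and of the iterated symmetric derivative, and then bootstrap the equivalence by induction on $d$ using Definition \ref{B:def:gen-poly}.

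First I would establish the recursion. By Definition \ref{B:def:C-f},
$$C_d(f) = (-1)^d \llim{p}{m_0,\dots,m_d} \bar\Delta^d f(m_0,\dots,m_d),$$
and by definition of the iterated symmetric derivative $\bar\Delta^d f(m_0,\dots,m_d) = \bar\Delta_{m_d}\bigl(\bar\Delta^{d-1} f(\cdot,m_1,\dots,m_{d-1})\bigr)(m_0)$; using the symmetry from Observation \ref{B:lem:DeltaS-symmetric} I can instead pull out the variable $m_d$ as the \emph{outermost} derivation variable, writing $\bar\Delta^d f(m_0,\dots,m_d) = \bar\Delta_{m_d}^{\phantom{d}}\! g_{m_d}(m_0,\dots,m_{d-1})$ where $g_a := \bar\Delta_a f$ and $\bar\Delta^{d-1}$ acts on the remaining $d$ arguments. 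Concretely, $\bar\Delta^d f(m_0,\dots,m_d) = \bar\Delta^{d-1}(\bar\Delta_{m_d} f)(m_0,\dots,m_{d-1})$, which follows from the commutation of symmetric derivatives (Observation \ref{B:lem:DeltaS-commute}) applied repeatedly. Taking the $p$-limit in $m_0,\dots,m_{d-1}$ first (legitimate since the multi-limit, if it exists, can be taken in any order when $Y$ is discrete — the limit is then eventually constant) gives, for $p$-a.a.\ $m_d =: a$,
$$\llim{p}{m_0,\dots,m_{d-1}} \bar\Delta^{d-1}(\bar\Delta_a f)(m_0,\dots,m_{d-1}) = (-1)^{d-1} C_{d-1}(\bar\Delta_a f),$$
and hence $C_d(f) = (-1)^d \llim{p}{a} (-1)^{d-1} C_{d-1}(\bar\Delta_a f) = -C_{d-1}(\bar\Delta_a f)$ for $p$-a.a.\ $a$, since the $p$-limit of a function that is $p$-a.e.\ constant equals that constant. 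I would also note the base case: $C_0(f) = \llim{p}{m_0} f(m_0)$, which exists iff $f$ is $p$-a.e.\ constant, i.e.\ iff $\deg^p f \le 0$.

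Then the equivalence follows by induction on $d$. For $d = 0$: $C_0(f)$ exists $\iff$ $f$ is $p$-a.e.\ equal to some constant in $Y$ $\iff$ $f \in \cA$ with $\deg^p f \le 0$ (covering both the $\deg = -\infty$ and $\deg = 0$ cases of Definition \ref{B:def:gen-poly}). For the inductive step, assume the equivalence for $d-1$. If $\deg^p f \le d$, then for $p$-a.a.\ $a$ we have $\deg^p(\bar\Delta_a f) \le d-1$, so by the inductive hypothesis $C_{d-1}(\bar\Delta_a f)$ exists for $p$-a.a.\ $a$; moreover it equals (by the recursion, or directly) a value not depending on $a$ for $p$-a.a.\ $a$ — this needs a small argument that ``$p$-a.a.\ $a$: $\deg^p \bar\Delta_a f \le d-1$'' forces the limit $\llim{p}{a} C_{d-1}(\bar\Delta_a f)$ to exist, which I would get by observing that $C_{d-1}(\bar\Delta_a f)$, as a function of $a$, is itself a $p$-almost polynomial argument or simply that the outer $p$-limit of any $Y$-valued function into a discrete space need not exist in general — so here is the subtlety. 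The honest route is: $C_d(f)$ is \emph{defined} as a single $p$-limit over the $(d{+}1)$-tuple, and the reordering shows this single limit exists iff the iterated limit exists iff for $p$-a.a.\ $a$ the inner limit $C_{d-1}(\bar\Delta_a f)$ exists \emph{and} the resulting function of $a$ has a $p$-limit; but the latter $p$-limit over $a$ automatically exists once the values are $p$-a.e.\ constant, which is exactly what ``$\deg^p \bar\Delta_a f \le d-1$ for $p$-a.a.\ $a$, with a common degree/leading behaviour'' should give via the inductive hypothesis. Conversely, if $C_d(f)$ exists, the reordering shows $C_{d-1}(\bar\Delta_a f)$ exists for $p$-a.a.\ $a$, so by the inductive hypothesis $\deg^p(\bar\Delta_a f) \le d-1$ for $p$-a.a.\ $a$, whence $\deg^p f \le d$ by Definition \ref{B:def:gen-poly}.

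\textbf{Main obstacle.} The delicate point is the interchange of the multi-variable $p$-limit with the single-variable iterated $p$-limits, and arguing that existence of the outer $p$-limit over $a$ of $a \mapsto C_{d-1}(\bar\Delta_a f)$ is automatic (not an extra hypothesis) in the forward direction. I would handle this by invoking the caveat already flagged in the measure-theoretic remark at the end of Section on finitely additive measures: iterated $p$-``almost all'' quantifiers over different variables do not commute in general, but a multi-index $p$-limit into a discrete space, \emph{when it exists}, does equal every iterated limit — so I must be careful to phrase everything in terms of the genuine multi-index limit $\llim{p}{m_0,\dots,m_d}$ from Definition \ref{B:def:C-f} and only pass to iterated limits in the direction that is valid. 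This bookkeeping, rather than any hard estimate, is where the care is needed.
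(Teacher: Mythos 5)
Your proof follows the same route as the paper's own: the recursion $C_d(f)=-C_{d-1}(\DeltaS_a f)$ is obtained by using the commutativity/symmetry of the iterated symmetric derivative to make one derivation variable the outermost limit variable, discreteness of $Y$ converts that outer $p$-limit into a $p$-a.e.\ identity, and the equivalence with $\deg^p f\leq d$ is then an induction on $d$ through Definition \ref{B:def:gen-poly}. The obstacle you single out --- that existence of $C_{d-1}(\DeltaS_a f)$ for $p$-a.a.\ $a$ does not by itself force the outer limit $\llim{p}{a}\, C_{d-1}(\DeltaS_a f)$ to exist, since a $Y$-valued function defined $p$-a.e.\ need not be $p$-a.e.\ constant when $Y$ is infinite and discrete --- is genuine, but the paper's proof elides it in exactly the same way (it simply declares existence of $C_{d-1}(\DeltaS_a f)$ for $p$-a.a.\ $a$ to be ``equivalent to'' existence of $C_d(f)$), so you are not missing a trick that the paper supplies; your bookkeeping about which direction of the limit interchange is automatic is, if anything, more careful than the source.
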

\begin{proof}
  The first part of the statement follows from the observation that:
  $$ - \llim{p}{a} C_{d-1}(\DeltaS_a f) = (-1)^d \llim{p}{a} \llim{p}{m_0,\dots,m_{d-1}} \DeltaS^d f(m_0,m_1,\dots,m_{d-1},a) $$
  which is the same as the definition of $C_{d}(f)$, up to renaming variables and using the symmetry of $\DeltaS^d f$. Because $Y$ is given the discrete topology, we for $p$-a.a. it holds that  $-C_{d-1}(\DeltaS_a f) = C_{d}(f)$, assuming either limit exists.

For the second part of the statement, we use induction. The case $d = 0$ is clear, so let us suppose $d \geq 1$. Then, for $p$-a.a. $a \in X$, the statement that $\deg^p f \leq d$ is equivalent to $\deg^p \DeltaS_a f \leq d-1$, which by induction is equivalent to existence of $C_{d-1}(\DeltaS_a f) = C_d(f)$.
\end{proof}

\begin{lemma}\label{B:lem:C-independent-of-d}
  If $f \in \AP$ and $\deg f \leq d$, and $p \in \beta X$ is idempotent, then $C_{d}(f) = C_{\deg f}(f)$. 
\end{lemma}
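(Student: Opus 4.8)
The plan is to prove the slightly stronger statement that $C_{d+1}(f)=C_d(f)$ whenever $d\ge\deg f$; the lemma then follows by finite downward induction, chaining from $C_d(f)$ down to $C_{\deg f}(f)$. (I will assume $\deg f\in\NN$; the degenerate case $\deg f=-\infty$ goes through with the obvious conventions.) Throughout I will use that Proposition \ref{B:lem:C-vs-gen-poly} already guarantees $C_{d'}(f)$ exists in $Y$ for every $d'\ge\deg^p f$ (since $f\in\cA$ with $\deg^p f\le d$), so none of the iterated $p$-limits below has a convergence issue and only their values need to be identified. I write $g:=\DeltaS^d f$, which is symmetric in its $d+1$ arguments by Observation \ref{B:lem:DeltaS-symmetric}, and set $L:=\llim{p}{m_0,\dots,m_d}g(m_0,\dots,m_d)$, so that $C_d(f)=(-1)^d L$.

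The first step is a purely algebraic identity: for all $x_0,\dots,x_{d+1}$,
$$\DeltaS^{d+1}f(x_0,x_1,\dots,x_{d+1})=g(x_0+x_1,x_2,\dots,x_{d+1})-g(x_0,x_2,\dots,x_{d+1})-g(x_1,x_2,\dots,x_{d+1}).$$
This comes from writing, via commutativity of the operators $\DeltaS_a$ (Observation \ref{B:lem:DeltaS-commute}) together with symmetry, $\DeltaS^{d+1}f(x_0,\dots,x_{d+1})=\DeltaS_{x_0}\bigl[\,y\mapsto\DeltaS^d f(y,x_2,\dots,x_{d+1})\,\bigr](x_1)$ and expanding the outer $\DeltaS_{x_0}$ by its definition. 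I deliberately arrange the ``merged'' pair $x_0,x_1$ into the two leading slots; this is harmless because the iterated $p$-limit of a symmetric function does not depend on the order in which one exhausts the variables (swapping two neighbouring inner limits only uses the function's symmetry in those two variables, in contrast with the general failure of ``Fubini'' for $p$-limits noted in Chapter \ref{A:chapter}).

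The second step is to take the iterated $p$-limit of this identity, in the order $x_0,x_1,x_2,\dots,x_{d+1}$. Setting $F(w):=\llim{p}{x_2,\dots,x_{d+1}}g(w,x_2,\dots,x_{d+1})$, I note that $\llim{p}{w}F(w)=L$, which — since $Y$ carries the discrete topology — says precisely that $F\equiv L$ on some set $A\in p$. For the first term the inner limits collapse it to $F(x_0+x_1)$, and idempotency of $p$ (via the limit-shift identity $\llim{p}{x}\llim{p}{y}h(x+y)=\llim{p}{z}h(z)$ of Chapter \ref{A:chapter}) yields $\llim{p}{x_0}\llim{p}{x_1}F(x_0+x_1)=\llim{p}{z}F(z)=L$. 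The second term is independent of $x_1$, so its iterated limit is $\llim{p}{x_0}F(x_0)=L$; the third term is independent of $x_0$, so its iterated limit is $\llim{p}{x_1,\dots,x_{d+1}}g=L$. Adding up, $\llim{p}{x_0,\dots,x_{d+1}}\DeltaS^{d+1}f=L-L-L=-L$, whence $C_{d+1}(f)=(-1)^{d+1}(-L)=(-1)^d L=C_d(f)$, as required.

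The main obstacle — and essentially the only subtle point — is the legitimacy of interchanging the $p$-limits: $p$-limits into the discrete group $Y$ genuinely fail to commute in general, so every reordering above must be justified either by symmetry of the integrand or by idempotency of $p$, which are exactly the two hypotheses available. Everything else (the algebraic identity, and the remark that a $p$-limit into a discrete space is the same thing as the integrand being $p$-almost-everywhere constant, which is what legitimizes applying the Chapter \ref{A:chapter} limit-shift identity to the $Y$-valued function $F$) is routine.
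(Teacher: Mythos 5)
Your proof is correct and follows essentially the same route as the paper's: both reduce to showing $C_{d+1}(f)=C_d(f)$ for $d\ge\deg f$, expand the extra $\DeltaS$ into the three terms $g(x_0+x_1,\cdot)-g(x_0,\cdot)-g(x_1,\cdot)$, collapse the merged-variable term via idempotence of $p$, and drop the limits over non-occurring variables to get $L-L-L=-L$. Your added care about why the limit interchanges are legitimate (symmetry plus discreteness of $Y$) is a welcome refinement but not a different argument.
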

\begin{proof}
  We will show that for $d \geq \deg f$ it holds that $C_{d+1}(f) = C_d(f)$. Once this is shown, the rest of the claim follows by simple induction.

  We begin by writing out the formula for $C_{d+1}(f)$. It will be convenient to distinguish two of the variables by giving them different names. Note that we may shuffle the variables at the first step because of the symmetry of $\DeltaS^{d+1}f$.
\begin{align*} 
C_{d+1}(f) &= (-1)^{d+1} \llim{p}{a}\llim{p}{b} \llim{p}{m_1,\dots,m_d} \DeltaS_{a} \DeltaS_{m_{1},m_2 \dots, m_d} f (b)
\\ &= (-1)^{d+1} \llim{p}{a}\llim{p}{b} \llim{p}{m_1,\dots,m_d} 
\DeltaS_{m_{1},m_2 \dots, m_d} f (b+a) \\
&+  (-1)^{d} \llim{p}{a}\llim{p}{b} \llim{p}{m_1,\dots,m_d}  \DeltaS_{m_{1},m_2 \dots, m_d} f (a) \\
&+  (-1)^{d} \llim{p}{a}\llim{p}{b} \llim{p}{m_1,\dots,m_d}  \DeltaS_{m_{1},m_2 \dots, m_d} f (b). 
	\end{align*}
Note that in the first of the three resulting summands, $a$ and $b$ occur only in the expression $a+b$, so using the idempotence of $p$, this can be condensed to:
	$$ (-1)^{d+1} \llim{p}{n} \llim{p}{m_1,\dots,m_d} 
\DeltaS_{m_{1},m_2 \dots, m_d} f (n) = - C_d(f).$$
The remaining two limits are equal to $C_d(f)$, because the limits over non-occurring variables can be cancelled. The claim follows:
	$$ C_{d+1}(f) = - C_d(f) + C_d(f) + C_d(f) = C_d(f). \qquad \qedhere$$ 
\end{proof}

\begin{remark}
  Note that the proof relies on the idempotence of $p$ already for $\deg f = 0$. Indeed, we have:
  $$ C_1^p(f) = -\llim{p}{m,n} \DeltaS_m f(n) = -\llim{p}{m,n}\left( f(n+m) - f(n) - f(m) \right) = -C_0^{p+p}(f) + 2 C_0^p(f),$$
  which is not the same as $C_0^p(f)$ in general, unless $p+p = p$. For a concrete example, take $X = \NN$, $Y = \ZZ$ and $f(n) = n \cdot \chi_{2 \NN}(n)$ (that is $f(2m) = 2m$ and $f(2m+1) = 0$), and let $p$ be such that $p \in \bar{2\NN +1}$. Then, $f = 0$ $p$-a.e., so $\deg f = -\infty$ and $C_0^p(f) = 0$. However, because $(p + p) \in \bar{2 \NN }$, we have $f(n) = n$ for $(p+p)$-a.a. $n$. Hence $f$ is definitely \emph{not} constant almost everywhere with respect to $p+p$, and $ C_1^p(f) = -C_0^{p+p}(f)$ remains undefined. 
\end{remark}

\begin{convention}
  From this point on we assume that $p$ is idempotent, except when explicitly mentioned otherwise.
 
  The above lemma shows that $C_d(f)$ does not depend on $d$, provided that $d$ is large enough for $C_d(f)$ to be defined. Hence, we shorten the notation to $C(f)$ when $d$ is immaterial.
\end{convention}

  Polynomials which map $0$ to $0$ exhibit particularly nice properties with respect to recurrence. The following is the analogue for \ap{s}.

\begin{definition}\label{B:def:gen-poly-admissible}
\index{polynomial!almost polynomial!admissible}
 An \ap\ $f$ is said to be \nice\ if and only if $C(f) = 0$. We denote the set of \nice\ \ap\ by $\NAP^p$, or $\NAP$ if $p$ is understood from the context.
\end{definition}

\begin{example}	
If $f:\ X \to Y$ is a polynomial map in the sense of Definition \ref{B:def:poly-general-groups}, then $f$ is an \ap, and $\deg f \leq \deg^p f$, where $\deg$ stands for the degree of $f$ as a polynomial. Moreover, in this case $f$ is \nice\ if and only if $f(0) = 0$, because for any $a \in X$ we have $\Delta_a f(0) = - f(0)$.
\end{example}

\begin{lemma}\label{B:lem:gen-poly-admissible-vs-C}
  If $f:\ X \to Y$ is \ap\ from commutative group $X$ to commutative group $Y$ then $C(f)$ is the unique constant such that $f - C(f)$ is an \nice\ \ap.
\end{lemma}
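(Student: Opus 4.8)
The plan is to prove two things: that $f - C(f)$ is an admissible almost polynomial (i.e. lies in $\NAP^p$), and that $C(f)$ is the \emph{unique} constant with this property. Throughout we use that $p$ is idempotent (as per the standing convention), and the key computational tools are the linearity of $\DeltaS^d$ in $f$ together with Proposition \ref{B:lem:DeltaS^k-explicit} and Lemma \ref{B:lem:C-independent-of-d}.

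First I would record the behaviour of $C_d^p$ on constants. If $c \in Y$ is a constant map, then by the explicit formula in Proposition \ref{B:lem:DeltaS^k-explicit} we have $\DeltaS^d c(m_0,\dots,m_d) = c \sum_{\emptyset \neq I \subset [d+1]} (-1)^{d+1-\#I} = c\,(-1)^{d+1}\bigl((1-1)^{d+1} - (-1)^{d+1}\bigr) = -(-1)^{d+1} c = (-1)^d c$ for $d \geq 1$ (the $d=0$ case giving $\DeltaS^0 c = c$ directly). Hence $C_d^p(c) = (-1)^d \cdot (-1)^d c = c$ for every $d \geq 0$. Next, since $\DeltaS^d$ is additive in its function argument, $\DeltaS^d(f - c) = \DeltaS^d f - \DeltaS^d c$, so taking $p$-limits (which exist because $f \in \cA$, by Proposition \ref{B:lem:C-vs-gen-poly}, and the constant term converges trivially) gives $C_d^p(f - c) = C_d^p(f) - c$. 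In particular, choosing $c = C(f) = C_d^p(f)$ for $d = \deg f$, we get $C_d^p(f - C(f)) = 0$. Combined with the second half of Proposition \ref{B:lem:C-vs-gen-poly} — which says $C_d^p(g)$ exists iff $g \in \cA$ with $\deg^p g \leq d$ — this shows $f - C(f) \in \cA$ with $\deg^p(f - C(f)) \leq \deg f$, and that its $C$-value is $0$; that is, $f - C(f) \in \NAP^p$ by Definition \ref{B:def:gen-poly-admissible}.

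For uniqueness, suppose $c' \in Y$ is another constant with $f - c' \in \NAP^p$, meaning $C(f - c') = 0$. By the computation above, $0 = C(f - c') = C(f) - c'$, so $c' = C(f)$. This closes the argument. The only mild subtlety — and the place I would be most careful — is bookkeeping about \emph{which} index $d$ is used: $C(f)$ is defined via Lemma \ref{B:lem:C-independent-of-d} as the common value $C_d^p(f)$ for all $d \geq \deg f$, so when I write $C_d^p(f - c) = C_d^p(f) - c$ I should use a single $d \geq \max(\deg f, \deg^p(f-c))$ and invoke that lemma to identify $C_d^p$ with $C$ on both sides; since $\deg^p(f - c) \leq \max(\deg f, 0) \leq \deg f$ whenever $\deg f \geq 0$ (and the degenerate case $\deg f = -\infty$, i.e. $f = 0$ $p$-a.e., forces $C(f) = 0$ and is handled separately), this is routine. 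I do not expect any genuine obstacle here; it is essentially a one-line consequence of additivity of $\DeltaS^d$ plus the value of $C$ on constants.
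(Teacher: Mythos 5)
Your proof is correct and follows the same route the paper intends: the paper's proof is just the one-line remark that the claim follows from how $\DeltaS_a$ acts on constants together with linearity of $C$, and your argument is precisely that observation written out in full (with the value $C_d(c)=c$ verified and the uniqueness step made explicit). No gaps.
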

\begin{proof}
This is a direct consequence of how $\DeltaS_a$ acts on constants, and linearity of $C$.
\end{proof}

\begin{proposition}
  Let $f,g:\ X \to Y$ be maps such that $f = g$ $p$-a.e. Then $\deg f = \deg g$ and $C(f) = C(g)$.
\end{proposition}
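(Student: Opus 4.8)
The plan is to show that the property of being a $p$-almost polynomial of a given degree is insensitive to modifications on a $p$-null set, and then to deduce the statement about $C$ from this together with the fact that the symmetric finite differences $\DeltaS^d f$ are built out of finitely many translates of $f$.

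First I would prove by induction on $\deg^p f$ that if $f = g$ $p$-a.e., then $f \in \AP$ if and only if $g \in \AP$, and $\deg^p f = \deg^p g$. The base cases are immediate: if $f = 0_Y$ $p$-a.e., then $g = 0_Y$ $p$-a.e. by transitivity of "agree $p$-a.e.", so $\deg^p g = -\infty$; similarly for degree $0$, since $f = c$ $p$-a.e. forces $g = c$ $p$-a.e. For the inductive step with $\deg^p f \leq d$, $d \geq 1$: I need to see that $\Delta_a f = \Delta_a g$ for $p$-a.a.\ $a$. Here is the one genuinely nontrivial point — the main obstacle. Fixing $x$, the set of $a$ with $f(x + a) = g(x + a)$ is $p$-large only if the set where $f = g$ has the right form under translation, and we want to conclude that for $p$-a.a.\ $a$ we have $\Delta_a f(x) = \Delta_a g(x)$ for $p$-a.a.\ $x$. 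Let $N := \{ z \in X \setsep f(z) \neq g(z) \}$, so $N^c \in p$. For $\Delta_a f$ and $\Delta_a g$ to agree $p$-a.e.\ in $x$, it suffices that $\{ x \setsep x + a \notin N \text{ and } x \notin N \} \in p$, i.e.\ that $N^c - a \in p$ (we then intersect with $N^c \in p$). Now $N^c - a \in p$ for $p$-a.a.\ $a$: indeed $\{ a \setsep N^c - a \in p \} = \rdiv{p}{N^c}$ (in the notation of Definition \ref{A:def:inverses}), and since $p$ is idempotent, $p \cdot p = p$, so $N^c \in p \cdot p$ means exactly $\rdiv{p}{N^c} \in p$. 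Hence for $p$-a.a.\ $a$, the maps $\Delta_a f$ and $\Delta_a g$ agree $p$-a.e., and since $\deg^p \Delta_a f \leq d-1$ for $p$-a.a.\ $a$, the inductive hypothesis gives $\deg^p \Delta_a g \leq d-1$ for $p$-a.a.\ $a$, i.e.\ $\deg^p g \leq d$. By symmetry the two degrees are equal, proving $\deg f = \deg g$.

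Next, for the claim $C(f) = C(g)$: by Proposition \ref{B:lem:C-independent-of-d} (the convention that $C$ is computed at any sufficiently large $d$) it suffices to show $C_d(f) = C_d(g)$ for $d = \deg^p f = \deg^p g$. By Proposition \ref{B:lem:DeltaS^k-explicit}, $\DeltaS^d f(m_0, \dots, m_d) = \sum_{\emptyset \neq I \subset [d+1]} (-1)^{d+1 - \#I} f\left( \sum_{i \in I} m_i \right)$, a finite sum (over the fixed index set of nonempty subsets of $[d+1]$) of evaluations of $f$ at sums of the $m_i$'s. For each such $I$, the set of tuples $(m_0, \dots, m_d)$ with $\sum_{i \in I} m_i \notin N$ has $p^{d+1}$-full measure in the iterated sense — this follows from the same idempotence trick applied repeatedly: for $p$-a.a.\ $m_0$, for $p$-a.a.\ $m_1$, \dots, the partial sum $\sum_{i \in I} m_i$ avoids $N$, because at each stage one uses $N^c - (\text{current partial sum}) \in p$, which holds $p$-a.e.\ by $\rdiv{p}{N^c} \in p$. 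Intersecting over the finitely many $I$, we get that for $p$-a.a.\ $m_0, \dots, m_d$ (nested), $\DeltaS^d f(m_0, \dots, m_d) = \DeltaS^d g(m_0, \dots, m_d)$. Since the limits $C_d(f), C_d(g)$ are taken in the discrete topology on $Y$, two functions agreeing $p$-a.e.\ have the same iterated $p$-limit (if one exists so does the other, and they are equal). Hence $C_d(f) = C_d(g)$, so $C(f) = C(g)$, completing the proof.

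I expect the routine part to be the careful bookkeeping of the iterated "$p$-almost all" quantifiers (the paper itself flags in the final remark of Chapter \ref{A:chapter} that these are order-sensitive), and the only real idea is the repeated use of idempotence $p \cdot p = p$ in the form $\rdiv{p}{A} \in p$ whenever $A \in p$, which converts "$N^c \in p$" into "for $p$-a.a.\ $a$, $N^c - a \in p$." Everything else is an induction skeleton copied from the structure of Definition \ref{B:def:gen-poly} and an application of the explicit formula in Proposition \ref{B:lem:DeltaS^k-explicit}.
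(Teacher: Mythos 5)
Your proof is correct, and its second half is essentially the paper's own argument: expand $\DeltaS^d f - \DeltaS^d g$ via the explicit formula of Proposition \ref{B:lem:DeltaS^k-explicit}, reduce to the individual terms $f\bigl(\sum_{i\in I} m_i\bigr) - g\bigl(\sum_{i\in I} m_i\bigr)$, and use idempotence of $p$ to identify the iterated limit over the sum with a limit along $p+\dots+p=p$, so that $f=g$ $p$-a.e.\ finishes the job (your phrasing via $\rdiv{p}{N^c}\in p$ at each stage and the paper's phrasing via $q=p+\dots+p=p$ are the same computation). Where you diverge is the degree claim: you prove $\deg^p f=\deg^p g$ by a separate induction on Definition \ref{B:def:gen-poly}, pushing the exceptional set through $\Delta_a$ using $\rdiv{p}{N^c}\in p$. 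The paper skips this induction entirely: since $Y$ carries the discrete topology, the vanishing of the limit of the difference already shows that $C_d(f)$ exists if and only if $C_d(g)$ does (and that they agree), and Proposition \ref{B:lem:C-vs-gen-poly} translates existence of $C_d$ into $\deg^p\leq d$, so equality of degrees comes for free. Your induction is therefore correct but redundant given that characterisation; its compensating virtue is that it is self-contained and isolates precisely where idempotence is used, which matters since (as the remark following the proposition in the paper shows) the statement genuinely fails for non-idempotent $p$.
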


\begin{proof}
	The claim will follow immediately, once we show that for any $d \in \NN$ we have the equality:
	$$ \llim{p}{m_0,\dots,m_d} \left( \DeltaS^d f(m_0,m_1,\dots,m_d) - \DeltaS^d g(m_0,m_1,\dots,m_d) \right) = 0, $$
	which implies in particular that $C_d(f) = C_d(g)$ provided that either constant is defined.
	Using the formula for $\DeltaS^d$ from Lemma \ref{B:lem:DeltaS^k-explicit}, we observe that it suffices to show that for any index set $I \subset \{m_0,m_1,\dots,m_d\}$ we have the equality:
	$$
	\llim{p}{m_0,\dots,m_d} \left( f \left( \sum_{i \in I} m_i \right) - g \left( \sum_{i \in I} m_i \right) \right) = 0. 
	$$	
	Note that for $i \not \in I$, the expression whose limit we are taking is independent of $m_i$, and thus the operation of taking $\llim{p}{m_i}$ is just the identity, and may thus be dropped. Using the symmetry, we may assume that $I = \{0,1,\dots,r\}$ for some $r$, which reduces the problem to showing that:
	$$
	\llim{p}{m_0,\dots,m_r} \left( f\left( \sum_{i =1}^r m_i \right) - g \left( \sum_{i =1}^r m_i \right) \right) = 0. 
	$$	
	Using the idempotence of $p$, we see that this is equivalent to:
	$$
	\llim{q}{n} \left( f( n ) - g(n) \right) = 0, 
	$$	
	or, more naturally, $f = g$ $q$-a.e., where $q = p+p+\dots+p$, where $r$ copies are in place. Thanks to the fact that $p$ is idempotent, we have $q = p$, so we arrive at the assumption. Hence, the claim holds. 
\end{proof}

\begin{remark}
	If it was not the case that $p$ is idempotent, we would need a much stronger condition, of $f$ and $f'$ being equal $p$-a.e., $(p+p)$-a.e., and generally $(p+p+\dots+p)$-a.e. for any number of repetitions of $p$. It should be taken as a strong hint that in our considerations, the assumption of idempotence cannot be weakened.
\end{remark}

\begin{observation}
	If $f,f' \in \AP$, then $f+f' \in \AP$. Moreover, $C(f+f') = C(f) + C(f')$ and $\deg(f+f') \leq \max\{\deg f, \deg f'\}$.
\end{observation}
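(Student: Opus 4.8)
The statement has three parts: closure of $\AP$ under addition, additivity of $C$, and subadditivity of the degree. My plan is to prove all three simultaneously by induction on $d := \max\{\deg f, \deg f'\}$, using the elementary fact that $\Delta_a$ is linear, i.e. $\Delta_a(f+f') = \Delta_a f + \Delta_a f'$ (a special case of Proposition \ref{lem:Delta-properties-algebra}(1)), and that the limit operation $\llim{p}{}$ is additive on $\hat{Y}$-valued functions (with $Y$ discrete, the limit $\llim{p}{} g(x)$ is just the $p$-essentially constant value of $g$, so additivity is immediate provided both limits exist).

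\textbf{Base case.} If $d = -\infty$, then $f = f' = 0_Y$ $p$-a.e., so $f + f' = 0_Y$ $p$-a.e.; hence $f+f' \in \AP$ with $\deg(f+f') = -\infty = \max\{\deg f,\deg f'\}$, and $C(f+f') = 0 = C(f) + C(f')$. If $d = 0$, then $f = c$ and $f' = c'$ $p$-a.e.\ for constants $c,c' \in Y$ (one of them possibly $0_Y$ if that summand has degree $-\infty$); then $f + f' = c + c'$ $p$-a.e., so $f+f' \in \AP$ with $\deg(f+f') \leq 0$, and $C(f+f') = c + c' = C(f) + C(f')$ by the definition of $C_0$ and linearity.

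\textbf{Inductive step.} Suppose $d \geq 1$ and the claim holds for all pairs of almost polynomials whose maximal degree is less than $d$. For $p$-a.a.\ $a \in X$, the map $\Delta_a f$ is an \ap\ of degree at most $\deg f - 1 \leq d - 1$ and $\Delta_a f'$ is an \ap\ of degree at most $\deg f' - 1 \leq d-1$; intersecting these two $p$-large sets, we get that for $p$-a.a.\ $a$ \emph{both} hold. By the inductive hypothesis, for such $a$ we have $\Delta_a f + \Delta_a f' = \Delta_a(f+f') \in \AP$ with $\deg\big(\Delta_a(f+f')\big) \leq \max\{\deg f, \deg f'\} - 1 = d - 1$. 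Since this holds for $p$-a.a.\ $a$, the definition of $\AP$ gives $f+f' \in \AP$ with $\deg(f+f') \leq d = \max\{\deg f, \deg f'\}$. For the constant, I will use Proposition \ref{B:lem:C-vs-gen-poly}: $C(f+f') = C_d(f+f') = -C_{d-1}\big(\DeltaS_a(f+f')\big)$ for $p$-a.a.\ $a$ — but here I must be a little careful, since the recursion in Proposition \ref{B:lem:C-vs-gen-poly} is phrased with $\DeltaS_a$ rather than $\Delta_a$. The cleanest route is to work directly with the defining limit: $C(f+f') = (-1)^d \llim{p}{m_0,\dots,m_d} \DeltaS^d(f+f')(m_0,\dots,m_d)$, expand $\DeltaS^d(f+f') = \DeltaS^d f + \DeltaS^d f'$ (immediate from the explicit formula in Proposition \ref{B:lem:DeltaS^k-explicit}, since each term $h(\sum_{i\in I} x_i)$ is additive in $h$), and split the limit using additivity of $\llim{p}{}$ in the discrete target; this yields $C(f+f') = C_d(f) + C_d(f') = C(f) + C(f')$, where the last equality uses Lemma \ref{B:lem:C-independent-of-d} to identify $C_d$ with $C$.

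\textbf{Main obstacle.} The only genuine subtlety is the usual one with $p$-almost-everywhere statements: one must take the \emph{intersection} of the $p$-large sets on which $\Delta_a f$ and $\Delta_a f'$ have controlled degree before applying the inductive hypothesis, and one must make sure the limit-splitting step $\llim{p}{}(g + g') = \llim{p}{}g + \llim{p}{}g'$ is only invoked once both constituent limits are known to exist — which is exactly what the inductive hypothesis (applied to the derivatives) guarantees. Everything else is routine bookkeeping with the linearity of $\Delta_a$, $\DeltaS^k$, and $C$. No use of idempotence of $p$ is needed for the closure and degree bounds; idempotence enters only through Lemma \ref{B:lem:C-independent-of-d} to make the notation $C(f)$ (rather than $C_d(f)$) meaningful, which is fine under the standing convention that $p$ is idempotent in this part of the section.
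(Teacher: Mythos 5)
Your proof is correct, and its computational core --- $\DeltaS^d(f+f') = \DeltaS^d f + \DeltaS^d f'$ by the explicit formula of Proposition \ref{B:lem:DeltaS^k-explicit}, hence $C_d(f+f') = C_d(f) + C_d(f')$ with $d = \max\{\deg f,\deg f'\}$, followed by Lemma \ref{B:lem:C-independent-of-d} to drop the subscript --- is precisely the paper's entire argument. Where you diverge is that the paper treats this single identity as sufficient for \emph{all three} claims, because Proposition \ref{B:lem:C-vs-gen-poly} states that $f+f' \in \AP$ with $\deg(f+f') \leq d$ \emph{if and only if} $C_d(f+f')$ exists; so once the right-hand side $C_d(f)+C_d(f')$ is known to exist (which it is, since $\deg f, \deg f' \leq d$), membership in $\AP$ and the degree bound come for free. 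Your separate induction on $d$ for the closure and degree statements is therefore redundant, though not wrong --- it amounts to reproving the ``$C_d$ exists $\Rightarrow$ degree $\leq d$'' direction of that equivalence by hand. The one thing your longer route buys is independence from idempotence of $p$ for the closure and degree bounds, which you correctly note; the paper's shortcut, as stated, also only needs idempotence at the final step of identifying $C_d$ with $C$. Your care with intersecting the two $p$-large sets of admissible increments $a$, and with only splitting a $p$-limit once both summand limits are known to exist, is exactly the right bookkeeping.
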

\begin{proof}
	Let $d := \max\{\deg f, \deg f'\}$. It is clear from the previous considerations that:
	$$ C_d(f+f') = C_d(f) + C_d(f') = C(f) + C(f'). $$ 
Hence, the claim follows.
\end{proof}

\begin{observation}
	Suppose that $f \in \AP(X,Y)$ is an almost polynomial and $g \in \Hom(Y,Z)$ is a morphism of commutative groups. Then $g \circ f \in \AP(X,Z)$ is again an almost polynomial. Moreover, $\deg g \circ f \leq \deg f$ and $C(g \circ f) = g(C(f))$.
\end{observation}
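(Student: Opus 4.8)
The plan is to prove the three assertions — that $g\circ f\in\AP(X,Z)$, that $\deg g\circ f\le\deg f$, and that $C(g\circ f)=g(C(f))$ — simultaneously by induction on $d:=\deg^p f$, exploiting that $g$ being a group homomorphism commutes with the discrete-derivative operator $\Delta_a$ and with taking ultrafilter limits (the latter by Proposition \ref{prop:beta-f-properties}, since $g$ viewed as a map to the discrete group $Z$ is automatically continuous, and $Z$ is discrete Hausdorff so limits are unique when they exist). The key elementary identity is $\Delta_a(g\circ f)=g\circ(\Delta_a f)$, which holds because $g$ is additive: $g(f(x+a))-g(f(x))=g(f(x+a)-f(x))=g(\Delta_a f(x))$.

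First I would handle the base cases. If $\deg^p f=-\infty$, then $f=0_Y$ $p$-a.e., so $g\circ f=g(0_Y)=0_Z$ $p$-a.e., hence $g\circ f$ is a $p$-almost polynomial of degree $-\infty$, and $C(g\circ f)=0_Z=g(0_Y)=g(C(f))$. If $\deg^p f=0$, then $f=c$ $p$-a.e. for some $c\in Y\setminus\{0_Y\}$; then $g\circ f=g(c)$ $p$-a.e., which is a $p$-almost polynomial of degree $0$ (if $g(c)\neq 0_Z$) or $-\infty$ (if $g(c)=0_Z$), in either case of degree $\le 0$, and $C(g\circ f)=g(c)=g(C(f))$ using linearity of $C$ from Lemma \ref{B:lem:gen-poly-admissible-vs-C} together with the formula $C$ of a constant map equals that constant (which follows since $\DeltaS^0 h=h$).

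For the inductive step, suppose $d:=\deg^p f\ge 1$ and the claim holds for all almost polynomials of degree at most $d-1$. Then for $p$-a.a. $a\in X$, $\Delta_a f$ is an \ap\ of degree at most $d-1$, so by the inductive hypothesis $\Delta_a(g\circ f)=g\circ(\Delta_a f)$ is an \ap\ of degree at most $d-1$ for $p$-a.a. $a$; by Definition \ref{B:def:gen-poly} this means $g\circ f\in\AP$ with $\deg^p(g\circ f)\le d=\deg^p f$. For the constant, I would use Proposition \ref{B:lem:C-vs-gen-poly}: for $p$-a.a. $a$ we have $C_d(g\circ f)=-C_{d-1}(\DeltaS_a(g\circ f))$. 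Now $\DeltaS_a(g\circ f)(x)=g(f(x+a))-g(f(x))-g(f(a))=g(\DeltaS_a f(x))=g\circ(\DeltaS_a f)(x)$, again by additivity of $g$; and $\DeltaS_a f$ has degree $\le d-1$, so the inductive hypothesis gives $C(\DeltaS_a f)=g$-image... wait — more carefully, $C_{d-1}(g\circ(\DeltaS_a f))=g(C_{d-1}(\DeltaS_a f))$ by induction applied to the almost polynomial $\DeltaS_a f$. Combining, $C_d(g\circ f)=-g(C_{d-1}(\DeltaS_a f))=g(-C_{d-1}(\DeltaS_a f))=g(C_d(f))$, where the middle equality uses that $g$ is a group homomorphism and the last again uses Proposition \ref{B:lem:C-vs-gen-poly}. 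Since $Z$ is discrete, all limits involved exist on the relevant $p$-large sets, and $C(g\circ f)=g(C(f))$ follows.

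The main obstacle is essentially bookkeeping rather than a genuine difficulty: one must be careful that the ``$p$-a.a.'' quantifiers compose correctly (the set of good $a$ for $f$ and the set of good $a$ for the various $C$-identities must be intersected, which is fine since finite intersections of $p$-large sets are $p$-large) and that one invokes Proposition \ref{B:lem:C-vs-gen-poly} and Lemma \ref{B:lem:C-independent-of-d} only in the regime where the constants are defined — but this is guaranteed precisely because $\deg^p f=d<\infty$. Alternatively, one could give a slicker non-inductive proof of the $C$-identity directly from the explicit formula in Proposition \ref{B:lem:DeltaS^k-explicit}: $\DeltaS^d(g\circ f)(m_0,\dots,m_d)=\sum_{\emptyset\neq I\subset[d+1]}(-1)^{d+1-\#I}g(f(\sum_{i\in I}m_i))=g\bigl(\sum_{\emptyset\neq I\subset[d+1]}(-1)^{d+1-\#I}f(\sum_{i\in I}m_i)\bigr)=g(\DeltaS^d f(m_0,\dots,m_d))$, and then pass to the limit using continuity of $g$ into the discrete space $Z$; I would likely present this direct computation for the $C$-part and keep the induction only for membership in $\AP$ and the degree bound.
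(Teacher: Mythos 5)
Your argument is correct and, in substance, the same as the paper's: the paper's entire proof is the one-line observation that $C_d(g\circ f)=g\left(C_d(f)\right)$ for $d:=\deg f$, which together with Proposition \ref{B:lem:C-vs-gen-poly} (existence of $C_d$ characterises membership in $\AP$ and the degree bound) yields all three claims at once --- exactly the ``slicker'' direct computation via Proposition \ref{B:lem:DeltaS^k-explicit} that you sketch in your final paragraph. The induction you present first is a correct but more laborious route to the same facts.
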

\begin{proof}
	It is clear that for $d := \deg f$ we have:
	$$ C_d(g \circ f) = g\left( C_d(f) \right).$$
	Hence, the claim follows.
\end{proof}

\begin{lemma}\label{B:lem:gen-poly-closure}
	Let $f \in \AP(X,Y)$ and $f' \in \AP(X,Y')$, where $Y,Y'$ are both commutative groups. Suppose additionally that a bi-additive\footnote{By bi-additive map we mean that with one argument fixed, the map is a morphism of semigroups.} map $Y \times Y' \ni (y,y') \mapsto y \cdot y' \in Z$ is defined, where $Z$ is a commutative group. Then, $f \cdot f' :\ X \to Z$ (defined pointwise), is an almost polynomial. Moreover, $\deg f \cdot f' \leq \deg f + \deg f'$ and $C(f \cdot f') = C(f) C(f')$.
\end{lemma}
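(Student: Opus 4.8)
The natural strategy is induction on $\deg f + \deg f'$, mirroring the structure of Definition \ref{B:def:gen-poly} and the way earlier closure statements (the observation on sums, the observation on post-composition with a homomorphism) were handled. The base cases are when one of $\deg f$, $\deg f'$ is $-\infty$ (then $f \cdot f' = 0_Z$ $p$-a.e., since $f = 0_Y$ $p$-a.e., so it is an almost polynomial of degree $-\infty$, and $C(f \cdot f') = 0 = C(f)C(f')$), and when both degrees are $0$ (then $f = c$, $f' = c'$ $p$-a.e. for constants $c, c'$, so $f \cdot f' = c \cdot c'$ $p$-a.e., which is an almost polynomial of degree $\leq 0$, and $C(f \cdot f') = c \cdot c' = C(f) C(f')$, using that $C$ of a $p$-a.e.-constant map is that constant — here I would invoke Lemma \ref{B:lem:gen-poly-admissible-vs-C} or the direct computation of $C_0$).

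\textbf{Inductive step.} For the inductive step, the key is to compute $\Delta_a(f \cdot f')$ for $p$-almost all $a$ and recognize it as a sum of almost polynomials of strictly smaller total degree. The relevant identity is the discrete Leibniz rule from Proposition \ref{lem:Delta-properties-algebra}, adapted to the bi-additive pairing: $\Delta_a(f \cdot f') = (\Delta_a f) \cdot (\Delta_a f') + (\Delta_a f) \cdot f + f \cdot (\Delta_a f')$ — one checks this by substituting the definition $\Delta_a g(x) = g(x+a) - g(x)$ and using bi-additivity to expand $f(x+a) \cdot f'(x+a)$. Now for $p$-a.a. $a$, $\Delta_a f \in \AP$ with $\deg \Delta_a f \leq \deg f - 1$, and similarly for $\Delta_a f'$; intersecting the two $p$-large sets of good $a$'s (a finite intersection, hence still $p$-large), we get that for $p$-a.a. $a$ all three summands are, by the inductive hypothesis applied to pairs of strictly smaller total degree, almost polynomials, with degrees bounded by $(\deg f - 1) + (\deg f' - 1)$, $(\deg f - 1) + \deg f'$, and $\deg f + (\deg f' - 1)$ respectively — all at most $\deg f + \deg f' - 1$. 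By the closure of $\AP$ under sums (the earlier observation, with the degree bound), $\Delta_a(f \cdot f')$ is then an almost polynomial of degree $\leq \deg f + \deg f' - 1$ for $p$-a.a. $a$, which by Definition \ref{B:def:gen-poly} means $f \cdot f' \in \AP$ with $\deg(f \cdot f') \leq \deg f + \deg f'$.

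\textbf{The constant $C(f \cdot f')$.} This is the part I expect to require the most care. The cleanest route is via Proposition \ref{B:lem:C-vs-gen-poly}, which says $C_d(g) = -C_{d-1}(\DeltaS_a g)$ for $p$-a.a. $a$ — but note the bilinear Leibniz rule above is stated for the non-symmetric $\Delta_a$, whereas $C$ is defined through $\DeltaS$. So I would first establish the analogous symmetric-derivative Leibniz identity, or else relate $C$ back to iterated $\Delta_a$ via Lemma \ref{B:lem:DeltaS-and-Delta} and Corollary \ref{lem:2.14} (which gives $\DeltaS^{\deg} = (-1)^{\deg} C(\cdot)$). Concretely: let $d = \deg f$, $d' = \deg f'$; apply $\Delta_{a_1}\cdots\Delta_{a_{d+d'}}$ to $f \cdot f'$ for $p$-a.a. choices of the $a_i$'s, expand using the iterated Leibniz rule into a sum of terms each of the form $(\Delta_{S} f) \cdot (\Delta_{T} f')$ over partitions/subsets, and observe that the only term whose total derivative order can simultaneously kill $f$ down to its constant $C(f)$ and $f'$ down to $C(f')$ is the one distributing exactly $d$ derivatives to $f$ and $d'$ to $f'$; all other terms leave an almost polynomial of positive degree on one factor, whose iterated difference over the remaining $p$-a.a. parameters is a map that is $0$ $p$-a.e. times something, hence contributes $0$ to the limit. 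Taking the $\llim{p}{}$ over all the $a_i$ and tracking the sign $(-1)^{d+d'}$ then yields $C(f \cdot f') = C(f) \cdot C(f')$. The bookkeeping of signs and of which limits can be dropped (limits over non-occurring variables act as the identity, as used repeatedly in the proof of the preceding proposition) is the main obstacle, and idempotence of $p$ is used exactly as before to collapse sums like $a_i + a_j$ and to guarantee $C$ is well-defined independent of the number of derivatives (Lemma \ref{B:lem:C-independent-of-d}).
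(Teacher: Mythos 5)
Your argument that $f\cdot f'$ is an almost polynomial with $\deg(f\cdot f')\leq \deg f + \deg f'$ is correct and is essentially the paper's argument: the same induction on total degree, the same degenerate base cases, and the same three-term Leibniz decomposition of the derivative of the product (the paper phrases it for $\DeltaS_a$ rather than $\Delta_a$, but the two differ only by constants, so the degree count is identical). The gap is in the multiplicativity of the constant, and it is not merely deferred bookkeeping. Your plan rests on the claim that applying $\deg f$ ordinary derivatives $\Delta_{a_1}\cdots\Delta_{a_{\deg f}}$ ``kills $f$ down to its constant $C(f)$.'' That is false: such an iterated derivative is $p$-a.e.\ constant in the base point, but the constant depends on the increments $a_1,\dots,a_{\deg f}$ and is not $C(f)$. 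Already for $f(x)=f'(x)=x$ on $\ZZ$ one has $\Delta_{a_1}\Delta_{a_2}(ff')(x)=2a_1a_2$, whose $p$-limit over $a_1,a_2$ does not even exist in the discrete topology, while $C(ff')=0$. Only the \emph{symmetric} combination $\DeltaS^{d}f$ collapses to a genuine constant (Corollary \ref{lem:2.14}, Definition \ref{B:def:C-f}); the correction terms in Lemma \ref{B:lem:DeltaS-and-Delta} are exactly what separates the divergent iterated $\Delta$'s from the convergent $C$. So the term-by-term analysis of the fully expanded iterated Leibniz rule, as you describe it, does not go through.

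The paper avoids the full expansion entirely by doing a single symmetric-derivative step and letting the induction do the rest. One checks the identity
$$\DeltaS_a(f\cdot f') = \left(\DeltaS_a f + f(a)\right)\cdot\left(\DeltaS_a f' + f'\right) + \left(\DeltaS_a f + f\right)\cdot f'(a) + f\cdot \DeltaS_a f',$$
in which every summand is a product of two almost polynomials whose total degree is strictly smaller than $\deg f + \deg f'$, so the inductive hypothesis --- which must include the multiplicativity of $C$ --- applies to each. Since $C(\DeltaS_a f + f) = C(\DeltaS_a f)+C(f) = 0$ and likewise $C(\DeltaS_a f' + f')=0$ for $p$-a.a.\ $a$, the first two summands have vanishing constant, and the third gives $C(f)\cdot C(\DeltaS_a f') = -C(f)C(f')$; hence $C(\DeltaS_a(f\cdot f')) = -C(f)C(f')$ and $C(f\cdot f')=C(f)C(f')$ by Proposition \ref{B:lem:C-vs-gen-poly}. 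I recommend replacing your final step with this: carry $C(gh)=C(g)C(h)$ as part of the inductive hypothesis and apply it to the three factors of the grouped identity, rather than unwinding all $\deg f + \deg f'$ derivatives at once.
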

\begin{proof}
We begin by considering a few special cases. In the case when $\deg f = -\infty$ or $\deg f' = -\infty$, then $f\cdot f' = 0_Z$ $p$-a.e., and hence $f\cdot f' \in \AP(X,Z)$ and $\deg f \cdot f' = -\infty$. In the case when $\deg f = 0$, there exists a constant $c \in Y$ such that $f = c$ for $p$-a.e.. It follows that $f \cdot f' = c \cdot f'$ $p$-a.e.. Because the map $y' \mapsto c \cdot y'$ is a morphism, the claim follows from the above observation. The same reasoning applies when $\deg f' = 0$.

	For the general case, we use induction on the degrees $(\deg f, \deg f')$. Proving the theorem for $f,f'$, we may assume that the claim holds for $g,g'$ with $\deg g + \deg  g' < \deg f + \deg f'$. Because of the above considerations, we may assume that $\deg f > 0$ and $\deg f' > 0$.

	To show that $f\cdot f' \in \AP(X,Z)$, it will suffice to check that $\DeltaS_a (f\cdot f') \in \AP$ for $p$-a.a. values of $a$. Directly by writing out the formulas, one can verify that:
	\begin{equation}
	\DeltaS_a (f\cdot f') =
		\left( \DeltaS_a f + f(a) \right) \cdot \left( \DeltaS_a f' + f' \right)
		+ \left( \DeltaS_a f + f \right) \cdot f(a) + f \cdot \DeltaS_a f'.
	\label{B:lem:product-eq:01}
	\end{equation} 
	From the inductive Definition \ref{B:def:gen-poly}, it follows that for $p$-a.a. values of $a$ we have the expected bounds for degrees: $\deg \left( \DeltaS_a f + f(a) \right) \leq \deg f - 1$, $\deg \left( \DeltaS_a f' + f' \right) \leq \deg f' - 1$, $ \deg \left( \DeltaS_a f + f \right) \leq \deg f $, $\deg \DeltaS_a f' \leq \deg f' - 1$. This means that the inductive assumption can be applied to each of the three products in equation \eqref{B:lem:product-eq:01}. It follows that $f \cdot f' \in \AP(X,Z)$. More precisely, we have:
\begin{align*}
	\deg \DeltaS_a (f\cdot f') &\leq \max\Big\{
		\deg \left( \DeltaS_a f + f(a) \right) \cdot \left( \DeltaS_a f' + f' \right),
		\\ & \qquad \deg \left( \DeltaS_a f + f \right) \cdot f(a) , \ \deg f \cdot \DeltaS_a f' \Big\}
  \\ & \leq \deg f + \deg f' - 1.
	\end{align*} 
Hence $\deg f\cdot f' = \deg \DeltaS_a (f\cdot f') +1 \leq \deg f + \deg f' $, which proves one part of the claim.

For the final part of the claim, we again use equation \eqref{B:lem:product-eq:01} and the inductive assumption. Note that we have:
	 $$C(\DeltaS_a f + f)= C(\DeltaS_a f) + C(f) = -C(f) + C(f) = 0.$$
and likewise $C(\DeltaS_a f' + f') = 0$. This identity, together with the inductive assumption, allow us to perform the following computation:
	\begin{align*}
	C\left(\DeltaS_a \left(f\cdot f'\right)\right) &=
		C\left( \left( \DeltaS_a f + f\left(a\right) \right) \cdot \left( \DeltaS_a f' + f' \right) \right)
		+ C\left( \left( \DeltaS_a f + f \right) \cdot f\left(a\right)\right)
  	   \\& \quad  + C\left( f \cdot \DeltaS_a f'\right) \\
&=
		C \left( \DeltaS_a f + f\left(a\right) \right) \cdot C\left( \DeltaS_a f' + f' \right) 
		+ C \left( \DeltaS_a f + f \right) \cdot C\left(f\left(a\right)\right)
  	   \\& \quad  + C\left( f \right) \cdot C\left(\DeltaS_a f'\right)
\\&= - C\left(f\right)C\left(f'\right)
	\end{align*} 
Hence, we have $C(f\cdot f') = - C\left(\DeltaS_a (f\cdot f')\right) = C(f)C(f')$.
\end{proof}

\begin{corollary}
	Suppose that $(Y,+,\cdot)$ is a ring. If $f,f' \in \NAP$ and $g \in \AP$, then $f+f' \in \NAP$ and $f \cdot g \in \NAP$. In other words, $\NAP$ constitutes an ideal in $\AP$.
\end{corollary}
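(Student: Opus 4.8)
The plan is to deduce this corollary directly from the preceding results, namely the additivity observation (if $f,f' \in \AP$ then $f+f' \in \AP$ with $C(f+f') = C(f)+C(f')$) and Lemma \ref{B:lem:gen-poly-closure} applied with $Y = Y' = Z$ and the ring multiplication as the bi-additive map. No genuinely new computation is required; the entire content is bookkeeping about the constant $C$ and about what it means to be an ideal.

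First I would observe that $\NAP$ is nonempty, since the zero map lies in it. Then, to show $\NAP$ is closed under addition, take $f,f' \in \NAP$. By the additivity observation $f+f' \in \AP$, and $C(f+f') = C(f) + C(f') = 0 + 0 = 0$, so $f+f' \in \NAP$ by Definition \ref{B:def:gen-poly-admissible}. Next, for closure under multiplication by arbitrary almost polynomials, take $f \in \NAP$ and $g \in \AP$. Since $(Y,+,\cdot)$ is a ring, the multiplication $Y \times Y \to Y$ is bi-additive in the sense required, so Lemma \ref{B:lem:gen-poly-closure} applies and gives $f \cdot g \in \AP$ with $C(f \cdot g) = C(f) \cdot C(g) = 0 \cdot C(g) = 0$; hence $f \cdot g \in \NAP$. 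The same argument with the roles reversed (or commutativity of $Y$) handles $g \cdot f$. I would also note that $\NAP \subset \AP$ by definition, so $\NAP$ is an additive subgroup of the additive structure of $\AP$ that absorbs multiplication by $\AP$, which is precisely the assertion that it is an ideal.

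I expect there to be no real obstacle here — this is a formal consequence of the two cited results. The only point worth a sentence of care is making explicit that a ring multiplication qualifies as a ``bi-additive map'' in the sense of the footnote to Lemma \ref{B:lem:gen-poly-closure}, i.e. that distributivity is exactly the statement that $y \mapsto y\cdot z$ and $z \mapsto y \cdot z$ are each semigroup morphisms of $(Y,+)$. One should also make sure that the degree bounds coming from the cited results (namely $\deg(f+f') \leq \max\{\deg f,\deg f'\}$ and $\deg(f\cdot g)\leq \deg f + \deg g$) are not needed for the ideal claim — they are automatically fine since membership in $\AP$ is all that is asserted — so the proof can safely ignore them.
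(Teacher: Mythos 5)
Your proof is correct and follows the same route as the paper: the paper's own argument is the one-line observation that $C(f+f') = C(f)+C(f') = 0$ and $C(f\cdot g) = C(f)C(g) = 0$, citing the same two preceding results. Your additional remarks (that ring multiplication is bi-additive by distributivity, and that the degree bounds are not needed) are harmless elaborations of the same argument.
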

\begin{proof}
	By previous results we have $C(f+f') = C(f) + C(f') = 0$ and $C(f \cdot g) = C(f)C(g) = 0$.
\end{proof}

To finish our considerations on \ap{s}, we cite a fundamental structure theorem for such maps. The following result is taken from \cite{BergMc2010-SzThm-GenPoly}, and needs to be modified slightly to fit our treatment. We denote by $\cP_{d}(S)$ the family of non-empty subsets of $S$ with cardinality at most $d$. 
\begin{theorem}\label{B:thm:gen-poly-characterisation-finite-subsets}
  Let $X$ be a commutative semigroup and $Y$ --- a commutative group, let $p \in \beta X$ be idempotent, let $f:\ X \to Y$ be a map, and fix $d \in \NN$. Then, the following conditions are equivalent:
  \begin{enumerate}
   \item\label{B:thm:gen-poly-char-fs:cond:1} The map $f$ is an \ap\ with degree at most $d$: $f \in \NAP^p(X,Y)$ and $\deg^p f \leq d$.
   \item\label{B:thm:gen-poly-char-fs:cond:2} There exists a map $u:\cP_d(X) \to Y$ and a constant $c \in Y$ such that the following formula is satisfied for any $r \in \NN$:
    $$ \llim{p}{a_1,\dots,a_r} f\left( \sum_{i=1}^r a_i\right) - \sum_{\substack{ \alpha \in \cP_{\!d}\left( X \right) \\ \alpha \subset \{a_i\}_{i=1}^r}} u(\alpha) = c.$$
  \end{enumerate}
  Moreover, if the above conditions are satisfied, then $c = C(f)$.
\end{theorem}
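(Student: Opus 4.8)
\textbf{Proof plan for Theorem \ref{B:thm:gen-poly-characterisation-finite-subsets}.}

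The plan is to prove the two implications separately, working by induction on $d$ in both directions, and to track the constant $c$ throughout so that the final claim $c = C(f)$ comes for free. The base case $d = 0$ is essentially a restatement of definitions: if $f \in \AP^p$ with $\deg^p f \le 0$ then $f = c$ $p$-a.e. for some constant $c$ (or $f = 0$ $p$-a.e., in which case $c = 0$), and the summand over $\alpha \in \cP_0(X)$ is empty since $\cP_d(X)$ consists of \emph{non-empty} subsets of cardinality at most $d$; so the formula in \ref{B:thm:gen-poly-char-fs:cond:2} reads $\llim{p}{a_1,\dots,a_r} f(\sum a_i) = c$, which by idempotence of $p$ (so that $p + \dots + p = p$) is equivalent to $f = c$ $p$-a.e. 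And in that case $C_0(f) = f(0_X)$-type reasoning, or more precisely $C(f) = c$ directly from Definition \ref{B:def:C-f} together with the fact that $\DeltaS^0 f = f$ and the limit is $p$-a.e. equal to $c$.

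For the implication \ref{B:thm:gen-poly-char-fs:cond:1} $\imply$ \ref{B:thm:gen-poly-char-fs:cond:2}: given $f \in \AP^p$ with $\deg^p f \le d$, I would apply the inductive hypothesis to the discrete derivatives. For $p$-a.a. $a \in X$, the map $\DeltaS_a f$ (which differs from $\Delta_a f$ only by the constant $f(a)$, so has the same degree bound $\le d-1$ as a $p$-almost polynomial after subtracting its constant) admits a representing function $u_a : \cP_{d-1}(X) \to Y$ and constant $c_a = C(\DeltaS_a f) = -C(f)$ by Proposition \ref{B:lem:C-vs-gen-poly}. The task is then to assemble these $u_a$ into a single $u$ on $\cP_d(X)$. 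The natural guess is to define $u(\alpha)$ for $\abs{\alpha} = k \le d$ by picking a distinguished element $a \in \alpha$ and setting, roughly, $u(\alpha) := u_a(\alpha \setminus \{a\})$, and to handle singletons by absorbing $f$ itself; one then verifies using the telescoping identity $f(\sum_{i=1}^r a_i) = f(a_1) + \sum (\text{terms involving } \DeltaS_{a_1})$ and the symmetry of $\DeltaS^k f$ (Observation \ref{B:lem:DeltaS-symmetric}) that the alternating sum collapses correctly. The constant that emerges is $c = c_a + (\text{correction}) = C(f)$, consistent with Lemma \ref{B:lem:C-independent-of-d} which guarantees $C_d(f)$ does not depend on $d$ once $d \ge \deg^p f$.

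For the converse \ref{B:thm:gen-poly-char-fs:cond:2} $\imply$ \ref{B:thm:gen-poly-char-fs:cond:1}: assume the formula holds with data $(u, c)$. I would show $f \in \AP^p$ with $\deg^p f \le d$ by checking that $\DeltaS_a f$, for $p$-a.a. $a$, satisfies the analogue of \ref{B:thm:gen-poly-char-fs:cond:2} with $d$ replaced by $d-1$, and invoking induction. Concretely, writing out $\DeltaS_a f(\sum_{i=1}^r a_i) = f(a + \sum a_i) - f(\sum a_i) - f(a)$, substituting the hypothesis for each of the three terms (the first with $r+1$ summands, one of which is $a$; here we use that the limit formula holds for every $r$), and using idempotence of $p$ to collapse repeated limits, the $u$-sums involving $\alpha \not\ni a$ cancel between the first and second terms, leaving exactly a sum over $\alpha' \in \cP_{d-1}$ of a function $u_a(\alpha') := u(\alpha' \cup \{a\})$ plus a constant; that constant works out to $-c$, matching $C(\DeltaS_a f) = -C(f)$ and closing the induction. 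The main obstacle — the step I expect to be genuinely fiddly rather than routine — is the bookkeeping in both directions around how a subset $\alpha \in \cP_d(X)$ of the index variables gets split according to whether it contains the distinguished variable being differentiated, and making sure the symmetrization (so that $u$ really is a function of the \emph{set} $\alpha$, independent of which element we singled out) is consistent; this is where the idempotence of $p$ and the symmetry of $\DeltaS^k$ must be used carefully and repeatedly. Once the combinatorial identity is set up correctly, tracking $c$ through gives $c = C(f)$ automatically from Definition \ref{B:def:C-f} and Lemma \ref{B:lem:C-independent-of-d}.
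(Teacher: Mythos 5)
Your argument for the direction \ref{B:thm:gen-poly-char-fs:cond:2} $\imply$ \ref{B:thm:gen-poly-char-fs:cond:1} is correct but takes a genuinely different route from the paper's. The paper proves this implication in one shot, with no induction on $d$: it substitutes the representation from \ref{B:thm:gen-poly-char-fs:cond:2} into the explicit expansion of $\DeltaS^d f(x_0,\dots,x_d)$ from Lemma \ref{B:lem:DeltaS^k-explicit}, exchanges the order of summation, and annihilates every term $u(\alpha)$ via the identity $\sum_{j}(-1)^j\binom{m}{j}=(1-1)^m=0$, which applies precisely because $\card{\alpha}\le d$ forces $m=d+1-\card{I(\alpha)}>0$; what survives is $(-1)^d c$, and Proposition \ref{B:lem:C-vs-gen-poly} then yields $\deg^p f\le d$ and $C(f)=c$ in one stroke. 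You instead descend one derivative at a time: fixing $a$ for $p$-a.a.\ $a$, splitting the $u$-sum according to whether $\alpha\ni a$, and using the $r=1$ instance of \ref{B:thm:gen-poly-char-fs:cond:2} (which gives $u(\{a\})-f(a)=-c$ for $p$-a.a.\ $a$) to see that $\DeltaS_a f$ satisfies the hypothesis with $u_a(\alpha'):=u(\alpha'\cup\{a\})$ and constant $-c$; induction and Proposition \ref{B:lem:C-vs-gen-poly} then close the argument. Both work; the paper's computation is shorter once Lemma \ref{B:lem:DeltaS^k-explicit} is available, while yours trades the inclusion--exclusion bookkeeping for an induction and makes the identification $c=C(f)$ appear more gradually.

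For the converse \ref{B:thm:gen-poly-char-fs:cond:1} $\imply$ \ref{B:thm:gen-poly-char-fs:cond:2} the paper gives no proof at all --- it defers to \cite{BergMc2010-SzThm-GenPoly} --- so there is nothing to compare against, but be aware that your sketch has the soft spot you yourself flag: the inductive hypothesis hands you $u_a$ only for $p$-a.a.\ $a$ and only up to $p$-a.e.\ ambiguity, so ``pick a distinguished element of $\alpha$'' does not obviously produce a well-defined, choice-independent $u$ on all of $\cP_d(X)$ for which the identity holds for every $r$ simultaneously. A cleaner assembly is to define $u$ explicitly and symmetrically by M\"obius inversion over the subset lattice, $u(\{a_1,\dots,a_k\}):=\sum_{\emptyset\ne I\subset[k]}(-1)^{k-\card{I}}\bigl(f\bigl(\sum_{i\in I}a_i\bigr)-c\bigr)$, i.e.\ essentially $\DeltaS^{k-1}f$ evaluated on the elements of $\alpha$ corrected by a multiple of $c$, and then shift all the work into verifying the identity in the iterated $p$-limit. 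As written, that direction is a plan rather than a proof. Finally, condition \ref{B:thm:gen-poly-char-fs:cond:1} as printed says $f\in\NAP^p$, which together with the closing claim $c=C(f)$ would force $c=0$; both your reading and the paper's own computation treat it as $f\in\AP^p$, which is surely what is intended.
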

\begin{proof}[Partial proof]
  \begin{description}
   \item[\ref{B:thm:gen-poly-char-fs:cond:2} $\imply$ \ref{B:thm:gen-poly-char-fs:cond:1}.]
   If \ref{B:thm:gen-poly-char-fs:cond:2} holds, then we can replace $f\left( \sum_{i=1}^r a_i\right)$ by the expression $\sum_{\substack{ \alpha \in \cP_{\!d}\left( X \right), \alpha \subset \{a_i\}_{i=1}^r}} u(\alpha) + c$ under the generalised limit $\llim{p}{a_1,\dots,a_r}\!\!$. This allows us to compute the limit $$\llim{p}{x_0,\dots,x_d} \DeltaS^d f(x_0,\dots,x_d),$$ using the explicit formula from Lemma \ref{B:lem:DeltaS^k-explicit}. Once we prove that the above limit exists, it will follow from previous considerations that $f \in \NAP^p$ with $\deg^p f \leq d$, and the value of the limit equals $(-1)^{d}C^p(f)$. For brevity of notation, let $u(\emptyset) := c$. We can  compute that:
  \begin{align*}
    \llim{p}{x_0,\dots,x_d} \DeltaS^d f(x_0,\dots,x_d) 
  &= \llim{p}{x_0,\dots,x_d} \sum_{\substack{ I \subset [d+1] \\ I \neq \emptyset }} (-1)^{d+1-\card{I}} f\left( \sum_{i \in I} x_i \right) \\ 
  &= \llim{p}{x_0,\dots,x_d} \sum_{\substack{ I \subset [d+1] \\ I \neq \emptyset }} (-1)^{d+1-\card{I}} \sum_{\substack{\alpha \in \cP_d(X)\cup\{\emptyset\} \\ \alpha \subset \{x_i\}_{i \in I}}}u(\alpha). \\
%  &= \llim{p}{x_0,\dots,x_d} \sum_{\substack{\alpha \in \cP_d(X)\cup\{\emptyset\}}} u(\alpha) \sum_{\substack{ I \subset [d+1] \\ \{x_i: i \in I\} \supset \alpha,\  I \neq \emptyset  }} (-1)^{k+1-\card{I}} \\
%  &= (-1)^{d+1} c + \llim{p}{x_0,\dots,x_d} \sum_{\substack{\alpha \in \cP_d(X)\cup\{\emptyset\}}} u(\alpha) \sum_{\substack{ I \subset [d+1] \\ \{x_i: i \in I\} \supset \alpha }} (-1)^{d+1-\card{I}} 
  \end{align*}
  The restriction $I \neq \emptyset$ in the above sum is awkward, but we can dispose of it by artificially adding the term $(-1)^{d+1} c$ to both sides. After that, we can easily change the order of summation, leading to:
  \begin{align*}
    \llim{p}{x_0,\dots,x_d} \DeltaS^d f(x_0,\dots,x_d) - (-1)^d c 
    &= \llim{p}{x_0,\dots,x_d} \sum_{\substack{\alpha \in \cP_d(X)\cup\{\emptyset\}}} u(\alpha) \sum_{\substack{ I \subset [d+1] \\ \{x_i: i \in I\} \supset \alpha }} (-1)^{d+1-\card{I}}.
  \end{align*}
  For a fixed $\alpha$ the inner sum over $I$ can be computed explicitly. If we allow $I(\alpha) := \{ i \setsep x_i \in \alpha \}$ then the sum can be rewritten as:
  \begin{align*}
    \sum_{\substack{ I \subset [d+1] \\ \{x_i: i \in I\} \supset \alpha }} (-1)^{d+1-\card{I}} 
    = \sum_{I(\alpha) \subset I \subset[d+1]} (-1)^{d+1-\card{I}}
    = (-1)^{d+1-\card{I(\alpha)}} \sum_{J \subset[d+1]\setminus I(\alpha)} (-1)^{\card{J}}.
  \end{align*}
  If we denote $m := d+1-\card{I(\alpha)}$ and group the terms in the above sum with respect to $j := \card{J}$, we see that:
  $$ \sum_{J \subset[d+1]\setminus I(\alpha)} (-1)^{\card{J}} = \sum_{ 0 \leq j \leq m} (-1)^{j} \binom{m}{j} = (1 - 1)^m = 0.$$
  Note that we rely on $m > 0$, which is a consequence of $\card{\alpha} \leq d$. Hence, the previously considered limit trivialises:
    \begin{align*}
    \llim{p}{x_0,\dots,x_d} \sum_{\substack{\alpha \in \cP_d(X)\cup\{\emptyset\}}} u(\alpha) \sum_{\substack{ I \subset [d+1] \\ \{x_i: i \in I\} \supset \alpha }} (-1)^{d+1-\card{I}} = 0_Y.
  \end{align*}
  This leads to the desired formula, finishing the proof of this implication: 
\begin{align*}
    \llim{p}{x_0,\dots,x_d} \DeltaS^d f(x_0,\dots,x_d) = (-1)^d c.
\end{align*}

\item[\ref{B:thm:gen-poly-char-fs:cond:1} $\imply$ \ref{B:thm:gen-poly-char-fs:cond:2}.]   
  See \cite{BergMc2010-SzThm-GenPoly}. \qedhere
  \end{description}
\end{proof}

The above result can lead to shorter proofs of some of our claims, most notably Lemma \ref{B:lem:gen-poly-closure}. We choose a different perspective, relying more on induction than explicit structure theorems. The function $u$ in the above theorem is sometimes referred to as the generating function for $f$. One of the consequences of the implication we proved is a practical way of verifying that a given function $f$ is an \ap: it suffices to find the corresponding generating function $u$ and check the relation in \ref{B:thm:gen-poly-char-fs:cond:2}. A word of warning is due at this point: it is not the case that for an arbitrary choice of the function $u:\ \cP_d(X) \to Y$ one can find a corresponding \ap\ $f \in \NAP(X,Y)$.

%%%%%%%%%%%%%%%%%%%%%%%%%%%%%%%%%%%%%%%%%%%%%
% SECTION
%%%%%%%%%%%%%%%%%%%%%%%%%%%%%%%%%%%%%%%%%%%%%
\section{Integer almost polynomials}

We now turn to applications of the previously introduced theory to maps $\ZZ \to \ZZ$. Similar results can be obtained for multivariate polynomials $\ZZ^k \to \ZZ^l$, but we sacrifice some of the generality for the sake of simplicity.

Our goal is to generalise previous results about integer-valued polynomials. We would like, in particular, to allow for non-rational coefficients. Since multiplication on the torus by a non-integer is not well defined, we need to incorporate a notion of the integral part. It seems to be the most morally justified to use the ``closest integer'' function, rather that the ``floor'' function, since the former is better behaved, as shall be seen in the considerations below.

\begin{definition}\index{integer part}
  For $x \in \RR$ by $\ci{x}$ we denote the closest integer to $x$, given by $\ci{x} := \left\lfloor{x + \frac{1}{2}}\right\rfloor$. By $\fp{x}$ we denote the ``fractional part'': $\fp{x} := x - \ci{x}$. 
\end{definition}

The following result is the aforementioned generalisation of Lemma \ref{B:lem:p-lim-of-polynomials-I} to the context of generalised polynomials. 

\begin{proposition}\label{B:lem:p-lim-of-gen-poly}
  If $f:\ \ZZ \to \ZZ$ is an almost polynomial then for any $\alpha \in \TT$ it holds that 
  $$ \llim{p}{n} \alpha f(n) = \alpha C(f). $$
	In particular, if $f$ is \nice, then:
  $$ \llim{p}{n} \alpha f(n) = 0.$$	
\end{proposition}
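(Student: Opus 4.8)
The plan is to induct on $\deg^p f$, mirroring the proof of Proposition \ref{B:lem:p-lim-of-polynomials-I}. The base cases are immediate: if $\deg^p f = -\infty$ then $f = 0$ $p$-a.e., so $\llim{p}{n} \alpha f(n) = 0 = \alpha C(f)$; if $\deg^p f = 0$ then $f = c$ $p$-a.e.\ for a constant $c$ with $C(f) = c$ (by Lemma \ref{B:lem:gen-poly-admissible-vs-C} applied with $d=0$, since $\DeltaS^0 f = f$ and the limit over $p$ of a $p$-a.e.-constant map is that constant), and then $\llim{p}{n} \alpha f(n) = \alpha c = \alpha C(f)$.

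For the inductive step, suppose $\deg^p f = d \geq 1$ and the claim holds for all $p$-almost polynomials of strictly smaller degree. Write $\lambda := \llim{p}{n} \alpha f(n)$; note this limit exists because $\TT$ is compact Hausdorff (Proposition \ref{A:prop:U-limit-exists}). Using idempotence of $p$ and the identity $f(n+m) = \DeltaS_m f(n) + f(m) + f(n)$, I would compute
\begin{align*}
\lambda &= \llim{p}{m} \llim{p}{n} \alpha f(n+m) \\
&= \llim{p}{m} \llim{p}{n} \left( \alpha \DeltaS_m f(n) + \alpha f(m) + \alpha f(n) \right) \\
&= \llim{p}{m} \llim{p}{n} \alpha \DeltaS_m f(n) + 2\lambda,
\end{align*}
where the step passing the limits through the group operations on $\TT$ uses the corollary to Proposition \ref{prop:beta-f-properties} (limits commute with $+$). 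Now for $p$-a.a.\ $m$, the map $n \mapsto \DeltaS_m f(n)$ is a $p$-almost polynomial of degree at most $d-1$ (Definition \ref{B:def:gen-poly}, together with the fact that $\DeltaS_m f = \Delta_m f - f(m)$, which differs from $\Delta_m f$ by a constant hence has the same degree and shifted constant $C$). So the inductive hypothesis applies: $\llim{p}{n} \alpha \DeltaS_m f(n) = \alpha C(\DeltaS_m f)$. By Proposition \ref{B:lem:C-vs-gen-poly}, for $p$-a.a.\ $m$ we have $C(\DeltaS_m f) = C_{d-1}(\DeltaS_m f) = -C_d(f) = -C(f)$. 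Taking $\llim{p}{m}$ of the (now constant, $p$-a.e.) expression gives $\llim{p}{m}\llim{p}{n}\alpha\DeltaS_m f(n) = -\alpha C(f)$, so $\lambda = -\alpha C(f) + 2\lambda$, i.e.\ $\lambda = \alpha C(f)$. The final sentence of the statement is the special case $C(f) = 0$, which is the definition of $f$ being admissible (Definition \ref{B:def:gen-poly-admissible}).

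The main obstacle I anticipate is bookkeeping around the symmetric versus ordinary discrete derivative and the constant $C$: one must be careful that $\DeltaS_m f$ really is a $p$-almost polynomial of degree $\leq d-1$ for $p$-a.a.\ $m$ (the definition is phrased using $\Delta_m f$, but $\DeltaS_m f$ and $\Delta_m f$ differ by the constant $f(m)$, so Definition \ref{B:def:gen-poly} transfers once one knows constants have degree $\leq 0$), and that the relation $C(\DeltaS_m f) = -C(f)$ holds $p$-a.e.\ rather than identically — this is exactly the content of Proposition \ref{B:lem:C-vs-gen-poly}, so the argument is really just assembling the pieces already developed. There is also a subtle point that all the ``$p$-a.e.'' qualifications combine correctly: finitely many conditions each holding $p$-a.e.\ hold simultaneously $p$-a.e., which is fine, and we only intersect a bounded number of $p$-large sets here, so no issue with finite additivity arises.
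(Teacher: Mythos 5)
Your proposal is correct and follows essentially the same route as the paper's own proof: induction on the degree, the idempotence trick $\lambda = \llim{p}{m}\llim{p}{n}\alpha f(n+m)$, the decomposition via $\DeltaS_m f$, and the resulting identity $\lambda = 2\lambda - \alpha C(f)$. The only cosmetic difference is which earlier results you cite for the facts that $\DeltaS_m f$ has smaller degree and $C(\DeltaS_m f) = -C(f)$ for $p$-a.a.\ $m$ (the paper invokes Lemma \ref{B:lem:gen-poly-closure}, you assemble it from the definition and Proposition \ref{B:lem:C-vs-gen-poly}), which is immaterial.
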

\begin{proof}
  We reason in full analogy to the case of ordinary polynomials, and use induction on $d$. Let $\lambda$ denote the sought limit $\llim{p}{n} \alpha f(n)$. If $d \leq 0$, then $\alpha f(n) = C(f)$ for $p$-a.a. $n$, so clearly $\lambda = \alpha C(f)$. Thus, we may suppose $f$ is an \ap\ of degree $d \geq 1$, and the claim holds for \ap{s} of all smaller degrees.

  Using idempotence of $p$ and elementary transformations, we find:
  \begin{align*}
  \lambda = \llim{p}{n} \alpha f(n) 
  &=  \llim{p}{m} \llim{p}{n} \alpha f(n+m) 
  \\&= \llim{p}{m} \llim{p}{n}\alpha f(n) + \llim{p}{m} \llim{p}{n}\alpha f(m) + \\ &\phantom{\mbox{} = \mbox{}} \llim{p}{m} \llim{p}{n}  \alpha \DeltaS f(n,m) 
	\\&= 2 \lambda + \llim{p}{m} \llim{p}{n}  \alpha \DeltaS_m f(n) .
  \end{align*}
  Using Lemma \ref{B:lem:gen-poly-closure}, for $p$-a.a. $m$, the expression $\DeltaS_m f(n)$ is an \ap\ in $n$ with degree strictly smaller than $d$, with $C(\DeltaS_m f(n)) = - C(f)$. Thus, the inductive assumption applies, and:
  $$  \llim{p}{m} \llim{p}{n}  \alpha \DeltaS_m f(n) = \llim{p}{m} \alpha C(\DeltaS_m f(n)) = - \alpha C(f) .$$
  Hence, the above computation leads to:
  $$ \lambda = 2 \lambda - \alpha C(f).$$
  Thus, $\lambda = \alpha C(f)$, as claimed. 
\end{proof}

\begin{corollary}\label{B:cor:gen-poly-adm-closure-wrt-floor}
  For any scalars $\alpha_i \in \RR$, and any \ap{s} $f_i$ of degree $d_i$, where $i = 1,2, \dots, N$, consider the function $g:\ \ZZ \to \RR$ given by the formula:
  \begin{equation}    g(n) :=  \sum_{i=1}^N \alpha_i f_i(n).
    \label{eq:claim-002}
  \end{equation}
	Suppose additionally that $\abs{ \sum_{i=1}^N \alpha_i C(f_i) } < \frac{1}{2}$. Then, we have:
 $$\llim{p}{n} \fp{g(n)} = \sum_{i=1}^N \alpha_i C(f_i).$$
\end{corollary}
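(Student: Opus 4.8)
The strategy is to combine the preceding Proposition \ref{B:lem:p-lim-of-gen-poly} with the basic algebra of generalised limits and a splitting of each real scalar into its integer and fractional parts. First I would write each $\alpha_i = n_i + r_i$ where $n_i := \ci{\alpha_i} \in \ZZ$ and $r_i := \fp{\alpha_i} \in (-\tfrac12,\tfrac12]$, so that
\begin{equation*}
  g(n) = \sum_{i=1}^N n_i f_i(n) + \sum_{i=1}^N r_i f_i(n) =: P(n) + R(n),
\end{equation*}
where $P(n) \in \ZZ$ for every $n$ (being an integer combination of integer-valued \ap{s}), so $\ci{g(n)} = P(n) + \ci{R(n)}$ and hence $\fp{g(n)} = \fp{R(n)}$. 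This reduces the problem to computing $\llim{p}{n}\fp{R(n)}$, which is cleaner because the scalars $r_i$ are small.

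Next I would pass to the torus. Let $\pi :\ \RR \to \TT = \RR/\ZZ$ be the standard projection; since $\fp{x}$ is the unique representative of $\pi(x)$ in $(-\tfrac12,\tfrac12]$, we have $\pi(R(n)) = \sum_{i=1}^N \pi(r_i f_i(n)) = \sum_{i=1}^N \pi(r_i)\, f_i(n)$ in $\TT$ (the last equality because $f_i(n) \in \ZZ$ and multiplication of a torus element by an integer is well-defined). By Proposition \ref{B:lem:p-lim-of-gen-poly}, for each $i$ we have $\llim{p}{n} \pi(r_i) f_i(n) = \pi(r_i) C(f_i)$ in $\TT$; summing over $i$ (using that generalised limits respect finite sums, Proposition \ref{prop:beta-f-properties} / the corollary on limit arithmetic, and continuity of addition on $\TT$) gives
\begin{equation*}
  \llim{p}{n} \pi(R(n)) = \sum_{i=1}^N \pi(r_i)\, C(f_i) = \pi\!\left( \sum_{i=1}^N r_i C(f_i) \right) = \pi\!\left( \sum_{i=1}^N \alpha_i C(f_i) \right),
\end{equation*}
the last step because $\sum_i n_i C(f_i) \in \ZZ$.

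Finally I would lift back from $\TT$ to $\RR$ using the hypothesis $\abs{\sum_{i=1}^N \alpha_i C(f_i)} < \tfrac12$. Since $Y = \TT$ is being given the discrete topology when we speak of $C_d$, but here the relevant limit is of the $\TT$-valued sequence $\pi(R(n))$ taken in the \emph{usual} (compact) topology of $\TT$, the limit exists and equals the point $t_0 := \pi(\sum_i \alpha_i C(f_i))$ computed above. Because $\fp{\cdot}$ is continuous on the complement of the single point $\pi(\tfrac12)$ in $\TT$, and because $t_0$ lies strictly inside the open arc $\pi((-\tfrac12,\tfrac12))$ by the hypothesis, $\fp{\cdot}$ is continuous at $t_0$; applying the composition rule for generalised limits (Proposition \ref{prop:beta-f-properties}, item on $g \circ \beta f$) yields $\llim{p}{n}\fp{g(n)} = \llim{p}{n}\fp{\pi(R(n))} = \fp{t_0} = \sum_{i=1}^N \alpha_i C(f_i)$, as claimed.

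The main obstacle I anticipate is the bookkeeping around continuity of $\fp{\cdot}$ at the endpoint: one must genuinely use the strict inequality $\abs{\sum_i \alpha_i C(f_i)} < \tfrac12$ to ensure the limit point avoids the discontinuity of $\fp{\cdot}$ on $\TT$, and one should be slightly careful that the generalised $\TT$-valued limit really does exist (this is where $f_i \in \AP^p$ and Proposition \ref{B:lem:p-lim-of-gen-poly} do the work, rather than the automatic existence from compactness which would only give existence along a sub-ultrafilter). Everything else — the integer/fractional splitting, additivity of the limit, the identity $\fp{g} = \fp{R}$ — is routine.
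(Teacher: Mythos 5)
Your proof is correct and follows essentially the same route as the paper's, which simply observes that by Proposition \ref{B:lem:p-lim-of-gen-poly} both $\llim{p}{n}\fp{g(n)}$ and $\sum_{i=1}^N \alpha_i C(f_i)$ represent the same element of $\TT$ and both lie in $\left(-\tfrac12,\tfrac12\right)$, hence coincide. The integer/fractional splitting of the $\alpha_i$ is an unnecessary (though harmless) detour, and note that the paper's convention gives $\fp{x} \in \left[-\tfrac12,\tfrac12\right)$ rather than $\left(-\tfrac12,\tfrac12\right]$; neither point affects the argument.
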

\begin{proof}
  From the previous theorem, we know that both $\llim{p}{n} \fp{g(n)}$ and $\sum_{i=1}^N \alpha_i C(f_i)$ represent the same element of $\TT$, and lie in $(-\frac{1}{2},\frac{1}{2})$. Hence, they are equal.
\end{proof}

The above Proposition \ref{B:lem:p-lim-of-gen-poly} describes behaviour of limits of \ap{s}. However, it does not give any description of $\AP$ other than the somewhat indirect one in Definition \ref{B:def:gen-poly}. We shall now give an operation that can be used to obtain \ap{s} that are not ordinary polynomials.

\begin{lemma}
  For arbitrary scalars $\alpha_i \in \RR$, and functions $f_i \in \AP$, where $i = 1,2, \dots, N$, consider the function $g:\ \ZZ \to \RR$ given by the formula:
  \begin{equation}
    g(n) :=  \sum_{i=1}^N \alpha_i f_i(n).
    \label{eq:claim-003}
  \end{equation}
  Then, $\ci{g} \in \AP$ and $\deg g \leq \max_i d_i$. What is more, if additionally we assume that $\abs{ \sum_{i=1}^N \alpha_i C(f_i) } < \frac{1}{2}$, then $\ci{g} \in \NAP$.
\end{lemma}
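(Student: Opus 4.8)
The plan is to prove this by induction on $d := \max_i d_i$, mirroring the inductive structure already used for almost polynomials, and reducing the ``closest integer'' operation to the ordinary discrete derivative via an identity for $\Delta_a \ci{g}$. First I would record the base case: if $d \leq 0$, then each $f_i$ is constant $p$-a.e., so $g$ is constant $p$-a.e., hence $\ci{g}$ is constant $p$-a.e.\ and is an almost polynomial of degree $\leq 0$; moreover $C(\ci{g})$ is that constant, namely $\ci{\sum_i \alpha_i C(f_i)}$, which vanishes under the hypothesis $\abs{\sum_i \alpha_i C(f_i)} < \frac12$. For the inductive step I would fix $a \in X = \ZZ$ and examine $\Delta_a \ci{g}(n) = \ci{g(n+a)} - \ci{g(n)}$. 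Writing $g(n+a) = g(n) + \Delta_a g(n)$ and using the definition of $\fp{\cdot}$, one gets the key identity
$$\Delta_a \ci{g}(n) = \Delta_a g(n) + \fp{g(n)} - \fp{g(n+a)} = \ci{\Delta_a g(n) + \fp{g(n)}} + \big(\text{integer correction}\big),$$
but the cleanest route is to observe $\ci{g(n+a)} - \ci{g(n)} = \ci{\Delta_a g(n) + \fp{g(n)} } $ whenever the quantities line up, so $\Delta_a \ci{g}$ is, up to a $p$-a.e.\ manageable term, of the form $\ci{h}$ for $h(n) = \Delta_a g(n) + \fp{g(n)}$.

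The crux is then to show $h$ is again of the form $\sum_j \beta_j h_j$ with $h_j \in \AP$ and $\deg h_j \leq d-1$, so the inductive hypothesis applies to $\ci{h}$. Here $\Delta_a g(n) = \sum_i \alpha_i \Delta_a f_i(n)$, and by Definition \ref{B:def:gen-poly} (the inductive clause), for $p$-a.a.\ $a$ each $\Delta_a f_i$ is an almost polynomial of degree $\leq d_i - 1 \leq d-1$. The term $\fp{g(n)}$ is more delicate: it is not obviously an almost polynomial of lower degree. My plan is to rewrite $\fp{g(n)} = g(n) - \ci{g(n)}$ and then, rather than treating $\fp{g}$ directly, to apply $\Delta_a$ to the whole expression $\ci{g}$ and note that $\Delta_a \ci{g}(n) = \Delta_a g(n) - \Delta_a \fp{g}(n)$ is not immediately helpful; instead I expect the right move is the standard trick for generalised polynomials: $\ci{g(n+a)} - \ci{g(n)} - \ci{\Delta_a g(n)}$ takes values in a bounded set of integers (since $\fp{g(n)}, \fp{g(n+a)}, \fp{\Delta_a g(n)}$ all lie in $[-\tfrac12,\tfrac12)$, the difference $\fp{g(n)} - \fp{g(n+a)} - (-\fp{\Delta_a g(n)})$ is bounded, so the integer it equals takes finitely many values), hence by the ultrafilter pigeonhole is $p$-a.e.\ equal to a constant. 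Therefore $\Delta_a \ci{g} = \ci{\Delta_a g} + c_a$ $p$-a.e., and $\ci{\Delta_a g}$ has the required form $\ci{\sum_i \alpha_i (\Delta_a f_i)}$ with all $\Delta_a f_i \in \AP$ of degree $\leq d-1$; by induction $\ci{\Delta_a g} \in \AP$ with degree $\leq d-1$, so adding the constant $c_a$ keeps it in $\AP$ with degree $\leq d-1$. Since this holds for $p$-a.a.\ $a$, by Definition \ref{B:def:gen-poly} we conclude $\ci{g} \in \AP$ with $\deg \ci{g} \leq d$.

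For the last assertion, that $\ci{g} \in \NAP$ when $\abs{\sum_i \alpha_i C(f_i)} < \tfrac12$, I would invoke Proposition \ref{B:lem:p-lim-of-gen-poly} indirectly, or more directly Corollary \ref{B:cor:gen-poly-adm-closure-wrt-floor}: from that corollary $\llim{p}{n} \fp{g(n)} = \sum_i \alpha_i C(f_i)$, and since $\ci{g(n)} = g(n) - \fp{g(n)}$ while $g(n) = \sum_i \alpha_i f_i(n)$, we have $\llim{p}{n} \alpha \ci{g(n)}$ in $\TT$ computable; but the cleanest statement is that $C(\ci{g})$ is the $p$-limit of the closest integer of $g$, and one shows $C(\ci{g}) = 0$ because $\ci{g}$ already takes integer values and $\llim{p}{n} \alpha \ci{g}(n) = \alpha C(\ci g)$ for all $\alpha \in \TT$ by Proposition \ref{B:lem:p-lim-of-gen-poly}; testing against $\alpha = \fp{g}$-values is circular, so instead I note $C(\ci g) = C(g) - C(\fp{g})$ in $\RR$ (using that $C$ is additive on maps equal to almost polynomials plus bounded corrections), $C(g) = \sum_i \alpha_i C(f_i)$ and $C(\fp g) = \sum_i \alpha_i C(f_i)$ by Corollary \ref{B:cor:gen-poly-adm-closure-wrt-floor}, whence $C(\ci g) = 0$. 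I expect the main obstacle to be making rigorous the claim that $\ci{g(n+a)} - \ci{g(n)} - \ci{\Delta_a g(n)}$ is $p$-a.e.\ constant: one must carefully check the boundedness of the relevant combination of fractional parts (it lies in a bounded interval, hence meets finitely many integers, hence is $p$-a.e.\ constant since the preimages partition $X$ and exactly one lies in $p$), and then thread this constant through the inductive bookkeeping without it accumulating and spoiling the degree bound — which it does not, since a constant is an almost polynomial of degree $\leq 0 \leq d-1$.
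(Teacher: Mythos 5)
Your inductive argument for the first assertion is essentially the paper's own proof. The paper computes $\DeltaS_m\ci{g} = \ci{\DeltaS_m g} - \ci{\DeltaS_m\fp{g}}$ and notes that the second term takes values in $\{-1,0,1\}$, hence is an almost polynomial of degree at most $0$; your version with the ordinary derivative $\Delta_a$ and the pigeonhole constant $c_a$ is the same computation (the two derivatives differ by the constant $\ci{g}(a)$, which is immaterial once $d \geq 1$), and the application of the inductive hypothesis to $\ci{\Delta_a g} = \ci{\sum_i \alpha_i \Delta_a f_i}$ is exactly as in the paper.

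Where you genuinely diverge is the admissibility claim, and there your argument has a gap. You write $C(\ci{g}) = C(g) - C(\fp{g})$ and assert $C(\fp{g}) = \sum_i\alpha_i C(f_i)$ ``by Corollary \ref{B:cor:gen-poly-adm-closure-wrt-floor}''. That corollary gives $\llim{p}{n}\fp{g(n)} = \sum_i\alpha_i C(f_i) =: \gamma$ in the \emph{usual} topology of $\RR$, whereas $C_d(\fp{g})$ is by Definition \ref{B:def:C-f} an iterated limit of $\DeltaS^d\fp{g}$ in the \emph{discrete} topology; these do not coincide for free, and $\fp{g}$ is typically not $p$-a.e.\ constant, so ``boundedness'' alone does not justify the identification. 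Moreover, once you know $C_d(\fp{g})$ exists (which it does, but only because $\fp{g} = g - \ci{g}$ and both summands have $C_d$ defined), the identity $C(\ci{g}) = C(g) - C(\fp{g})$ is a tautological rearrangement, so you still owe an independent evaluation of $C_d(\fp{g})$. One can supply it: expand $\DeltaS^d\fp{g}$ by Proposition \ref{B:lem:DeltaS^k-explicit}, use idempotence of $p$ to send each term $\fp{g\left(\sum_{i\in I}x_i\right)}$ to $\gamma$ in the usual topology (this is where $\abs{\gamma}<\tfrac12$ is used, via the cited corollary), sum the signs to get $(-1)^d\gamma$, and then note that a function which is iterated-$p$-a.e.\ equal to a constant and also converges to $\gamma'$ in the usual topology must have that constant equal to $\gamma'$. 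With this inserted, $C(\ci{g})=0$ follows. The paper avoids the issue by staying inside the induction: it shows $\llim{p}{m}\llim{p}{n}\DeltaS_m\fp{g}(n) = -\gamma$ and uses continuity of $\ci{\cdot}$ at $-\gamma$ to conclude $\ci{\DeltaS_m\fp{g}}\in\NAP$ for $p$-a.a.\ $m$, whence $C(\ci{g}) = -C(\DeltaS_m\ci{g}) = 0$.
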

\begin{proof}
  Let us denote $d := \max_i \deg f_i$; we will apply induction with respect to $d$. The case $d = -\infty$ is clear, because then for all $i$ we have $f_i = 0$ $p$-a.e..
  If $d = 0$, then for all $i$ it holds that $f_i = C(f_i)$ $p$-a.e., and hence:
  \begin{equation*}   
    g = \sum_{i=1}^N \alpha_i C(f_i) \quad \text{$p$-a.e.}
  \end{equation*}
	Thus, $g$ is constant $p$-a.e., and so is $\ci{g}$. It follows that $\ci{g} \in \AP$ and $\deg \ci{g} = 0$. Moreover, if the additional assumption holds, then $g \in \left(-\frac{1}{2},\frac{1}{2}\right)$, and hence $\ci{g} = 0$ $p$-a.e., and consequently $\ci{g} \in \NAP$.

  For the inductive step with $d \geq 1$, let us note that:
  \begin{align*}
	\DeltaS_m \ci{g} &= \DeltaS_m \left( g- \fp{g } \right) 
	=\DeltaS_m g - \DeltaS_m  \fp{ g } 
	= \ci{ \DeltaS_m g} + \fp{\DeltaS_m g} -  \DeltaS_m  \fp{ g } 
	\\ &= \ci{ \DeltaS_m g} + \fp{\DeltaS_m \fp{g} } -  \DeltaS_m  \fp{ g }
	= \ci{ \DeltaS_m g} - \ci{ \DeltaS_m  \fp{ g } }.
  \end{align*} 
  Note that we have $\DeltaS_m g = \sum_{i=1}^N \alpha_i \DeltaS_m f_i$. Because $\deg \DeltaS_m f_i = \deg f_i - 1$ for $p$-a.a. $m$ (provided that $\deg f_i \geq 1$) the inductive assumption can be applied to conclude that $\ci{ \DeltaS_m g} \in \AP$ and $\deg \ci{ \DeltaS_m g} \leq d-1$. Moreover, because for any $m$ it holds (pointwise) that $\abs{ \DeltaS_m  \fp{ g } } < \frac{3}{2}$, we conclude that $\ci{ \DeltaS_m  \fp{ g } } \in \{-1,0,1\}$, and consequently $\ci{ \DeltaS_m  \fp{ g } } \in \cA$ with $\deg \ci{ \DeltaS_m  \fp{ g } } \leq 0$. Therefore, we have for $p$-a.a. $m$:
  $$ \deg \DeltaS_m \ci{g} \leq \max \left\{ \deg \ci{ \DeltaS_m g} , \deg \ci{ \DeltaS_m  \fp{ g } } \right\} \leq d-1. $$
  Hence $\ci{g} \in \AP$ and $\deg \ci{g} = \deg \DeltaS_m \ci{g} + 1 \leq d$.

  Let us now suppose that the additional assumption holds, so that we have $$\gamma := \sum_{i=1}^N \alpha_i C(f_i) \in \left( -\frac{1}{2},\frac{1}{2} \right).$$ Thanks to the above Corollary \ref{B:cor:gen-poly-adm-closure-wrt-floor}, we have $\llim{p}{n} \fp{g(n)} = \gamma$.  
 We can then compute:
    \begin{align*}
      \llim{p}{m} \llim{p}{n} \DeltaS_m \fp{g}(n) &= \llim{p}{m} \llim{p}{n} \left( \fp{g(n+m)} - \fp{g(n)} - \fp{g(m)} \right) = - \gamma.
    \end{align*}
    Because $\abs{\gamma} < \frac{1}{2}$, the closest integer map $\ci{\cdot}$ is continuous at $\gamma$, and thus: 
    \begin{align*}
      \llim{p}{m} \llim{p}{n} \ci{ \DeltaS_m  \fp{ g } (n)} =  \ci{-\gamma} = 0.
    \end{align*}
  Hence, for $p$-a.a. $m$ we have $\ci{ \DeltaS_m  \fp{ g } } \in \NAP$. (Let us remark that this part of the proof works under a weaker assumption $\sum_{i=1}^N \alpha_i C(f_i) \not \in \ZZ + \frac{1}{2}$.) 

  For the other term, we notice that for $p$-a.a. $m$ it holds that $C(\DeltaS_m f_i) = - C(f_i)$, and hence the the additional assumption also implies that $ \ci{ \DeltaS_m g} \in \NAP$. Therefore:
  $$ C(\ci{g}) = - \llim{p}{n} C(\DeltaS_m \ci{g}) = - \llim{p}{n}\left(  C( \ci{\DeltaS_m g}) - C( \ci{\DeltaS_m \fp{g}}) \right) = 0 .$$
\end{proof}

\begin{corollary}
	For any constants $\alpha_i \in \RR$ with $i =0,1,\dots,N$, the function $f:\ \ZZ \to \ZZ$ given by:	
	$$ g(n) := \sum_{i = 0}^N \alpha_i n^i, \qquad\qquad f(n) := \ci{ g(n) } $$	
is an \ap. Moreover, $f$ is an \nice\ \ap, provided that $\abs{\alpha_0} < \frac{1}{2}$.
\end{corollary}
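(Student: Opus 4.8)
The plan is to specialize the preceding Lemma (the one built around equation \eqref{eq:claim-003}) to the basis functions $f_i(n) := n^i$ for $i = 0,1,\dots,N$. First I would recall that each $f_i$ is an ordinary polynomial map $\ZZ \to \ZZ$ in the sense of Definition \ref{B:def:poly-general-groups}; by the discussion identifying such polynomials with $p$-\ap{s}, we have $f_i \in \AP$ with $\deg^p f_i \leq \deg f_i = i$. Moreover, for $i \geq 1$ we have $f_i(0) = 0$, so $f_i$ is \nice\ and $C(f_i) = 0$; while $f_0 \equiv 1$ is the constant map, for which $C(f_0) = 1$ (equivalently, by Lemma \ref{B:lem:gen-poly-admissible-vs-C}, $C(f_0)$ is the unique constant $c$ with $f_0 - c$ \nice, namely $c = 1$).

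Next I would observe that $g(n) = \sum_{i=0}^N \alpha_i f_i(n)$ is exactly of the form \eqref{eq:claim-003}, the only cosmetic difference being that the sum starts at $i = 0$ rather than $i = 1$, which is immaterial since the Lemma holds verbatim for any finite index set. Applying the Lemma gives at once that $\ci{g} \in \AP$ (with $\deg^p \ci{g} \leq \max_i \deg^p f_i \leq N$), i.e. $f = \ci{g}$ is an almost polynomial, proving the first assertion.

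Finally, for the ``moreover'' clause I would compute $\sum_{i=0}^N \alpha_i C(f_i) = \alpha_0 \cdot 1 + \sum_{i=1}^N \alpha_i \cdot 0 = \alpha_0$. Hence the hypothesis $\abs{ \sum_{i=0}^N \alpha_i C(f_i) } < \frac{1}{2}$ of the Lemma is precisely the stated assumption $\abs{\alpha_0} < \frac{1}{2}$, and under it the Lemma yields $\ci{g} \in \NAP$, that is, $f$ is \nice. There is no genuine obstacle here: the corollary is a transparent instance of the Lemma. The only bookkeeping worth stating explicitly is that $C$ of a genuine polynomial equals its value at $0$ (equivalently, vanishes exactly when the polynomial maps $0$ to $0$), so that only the constant term $\alpha_0$ contributes to $\sum_i \alpha_i C(f_i)$.
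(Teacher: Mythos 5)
Your proof is correct and is exactly the intended argument: the paper states this corollary without proof precisely because it is the direct specialization of the preceding lemma to $f_i(n)=n^i$, with the only substantive observation being that $C$ of a genuine polynomial equals its value at $0$, so $\sum_{i}\alpha_i C(f_i)=\alpha_0$. Nothing is missing.
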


\begin{corollary}
\begin{enumerate}
 \item The class of \ap\ is closed under taking sums, products, and the operation $f \mapsto \ci{ \alpha f + \beta}$ for $\alpha, \beta \in \RR$.

 \item The class of \nice\ \ap\ is closed under sums, multiplication by an \ap, and the operation $f \mapsto \ci{ \alpha f + \beta}$ for $\alpha, \beta \in \RR$ with $\abs{\beta} < \frac{1}{2}$.

 \item	Any function constructed by applying these operations, starting with ordinary polynomials, is an (\nice) \ap\ regardless of the choice of the idempotent ultrafilter $p$. In particular, if $f:\ \ZZ \to \ZZ$ is thus constructed \nice\ \ap\ and $\alpha \in \RR$ is a constant, then the set 
	$$A_\eps := \{ n \in \ZZ \setsep \operatorname{dist}(\alpha f(n),\ZZ ) <\eps\}$$ 
is $\IPst$ for all $\varepsilon > 0$.
\end{enumerate}
\end{corollary}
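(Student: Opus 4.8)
The three assertions are in increasing order of strength, and each is essentially a corollary of the preceding material, so the plan is to unwind them one by one. For part (1), the closure under sums and products is exactly the content of the preceding observation on sums of almost polynomials and of Lemma~\ref{B:lem:gen-poly-closure} (applied with the bi-additive map being multiplication $\ZZ \times \ZZ \to \ZZ$); closure under $f \mapsto \ci{\alpha f + \beta}$ is the content of the Lemma immediately preceding (take $N=1$, $f_1 = f$, $\alpha_1 = \alpha$, and absorb the constant $\beta$ as a degree-$0$ almost polynomial, so that $g(n) = \alpha f(n) + \beta$ and $\ci{g} \in \AP$). For part (2), closure of $\NAP$ under sums and under multiplication by an element of $\AP$ is the Corollary just before Theorem~\ref{B:thm:gen-poly-characterisation-finite-subsets} (using $C(f+f') = C(f)+C(f') = 0$ and $C(f\cdot g) = C(f)C(g) = 0$); closure under $f \mapsto \ci{\alpha f + \beta}$ with $\abs{\beta} < \tfrac12$ follows from the same Lemma, since then $\abs{\alpha C(f) + \beta} = \abs{\beta} < \tfrac12$ when $f \in \NAP$, which is precisely the hypothesis $\abs{\sum \alpha_i C(f_i)} < \tfrac12$ in that Lemma, giving $\ci{g} \in \NAP$.

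For part (3), the key point is that all the statements quoted in (1) and (2) hold for an \emph{arbitrary} idempotent $p$, and moreover that the notion ``$f$ is a $p$-almost polynomial'' agrees with ``$f$ is a polynomial in the sense of Definition~\ref{B:def:poly-general-groups}'' at the base of the induction (an ordinary polynomial $\ZZ\to\ZZ$ is a $p$-almost polynomial for every $p$, by the Example following Definition~\ref{B:def:gen-poly-admissible}, with $\deg^p f \le \deg f$ and $C^p(f) = f(0)$ independent of $p$, in particular $C^p(f)=0$ when $f(0)=0$). So I would argue by structural induction on the way $f$ is built: at each stage the relevant closure lemma is applied, and since its hypotheses ($\abs{\beta}<\tfrac12$ etc.) do not mention $p$, the conclusion ``$f \in \AP^p$ (resp.\ $\NAP^p$)'' holds simultaneously for all idempotent $p$. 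Once we know $f \in \NAP^p$ for every idempotent $p$, Proposition~\ref{B:lem:p-lim-of-gen-poly} gives $\llim{p}{n} \alpha f(n) = \alpha C^p(f) = 0$ in $\TT$ for every idempotent $p$ and every $\alpha \in \RR$ (viewing $\alpha$ as an element of $\TT$ via the projection $\pi$). Translating back: $\operatorname{dist}(\alpha f(n), \ZZ) = \operatorname{d}(\pi(\alpha f(n)), 0)$, so for fixed $\eps > 0$ the set $A_\eps = \{ n \setsep \operatorname{dist}(\alpha f(n),\ZZ) < \eps \}$ satisfies $\pi(\alpha f(n)) \to 0$ along $p$, hence $A_\eps \in p$ for every idempotent $p$. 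By Corollary~\ref{cor:IPst-characterisation}, a set belonging to every idempotent ultrafilter is exactly an $\IP^*$-set, so $A_\eps$ is $\IP^*$ for all $\eps > 0$, as claimed.

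I do not expect any genuine obstacle here: the statement is a packaging corollary, and every component has already been proved in the section. The only point requiring a little care is the bookkeeping in part (3) --- making explicit that the quantifier ``for every idempotent $p$'' can be carried through the structural induction, because the closure lemmas were stated for a fixed-but-arbitrary idempotent $p$ and their hypotheses are $p$-independent. A second minor point is the harmless identification of a real scalar $\alpha$ with its image in $\TT$ and the observation that $\operatorname{dist}(\,\cdot\,,\ZZ)$ on $\RR$ corresponds to the metric on $\TT=\RR/\ZZ$; this was already used verbatim in the proof of the Integral approximation corollary for ordinary polynomials, so it can simply be cited. Thus the write-up is short: prove (1) and (2) by quoting the closure lemmas, prove (3) by the induction-plus-Proposition~\ref{B:lem:p-lim-of-gen-poly}-plus-Corollary~\ref{cor:IPst-characterisation} chain.
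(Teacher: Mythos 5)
Your proposal is correct and takes essentially the same route as the paper, which states this corollary without proof as an immediate packaging of the preceding observation on sums, Lemma~\ref{B:lem:gen-poly-closure}, the lemma on $\ci{g}$, and the chain Proposition~\ref{B:lem:p-lim-of-gen-poly} plus Corollary~\ref{cor:IPst-characterisation} already used in the Integral approximation corollary. You also correctly flag the one genuine care point — that the hypothesis $\abs{\sum_i \alpha_i C^p(f_i)}<\tfrac12$ is $p$-independent only because the inductive hypothesis forces $C^p=0$ for all idempotent $p$ — a subtlety the paper itself only acknowledges in a later remark.
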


\begin{remark}
	In the above results, it was essential that if $f$ is \nice\ then so is  $\ci{ \alpha f + \beta}$ for $\alpha, \beta \in \RR$, provided that $\abs{\beta} < \frac{1}{2}$. It is natural to inquire what happens in the case of more general values of $\beta$. If $\beta = b + \beta_0$, with $\abs{\beta_0} < \frac{1}{2}$ and $b \in \ZZ$, then $\ci{ \alpha f + \beta} =\ci{ \alpha f + \beta_0} + b$, so although $\ci{ \alpha f + \beta}$ is not \nice, it can be made \nice\ by subtracting a constant. This essentially reduces the question to $\beta = \frac{1}{2}$, or equivalently to considering the almost polynomial $\floor{\alpha f}$. It is clear from above considerations that, for a fixed idempotent $p$, either $\floor{\alpha f}$ is \nice, or $\floor{\alpha f} + 1$ is \nice. It is, however, not the case that one of those functions is \nice\ for any idempotent $p$. This is an obstacle to sets of recurrence or good approximation being $\IPst$.
\end{remark}

\begin{example}
	Let $\alpha \in \RR$ be irrational. Then there exist two idempotent ultrafilters $p$ and $q$, such that $n \mapsto \floor{\alpha n} = \ci{\alpha n - \frac{1}{2}}$ is \nice\ with respect to $p$, and  $n \mapsto \ceil{\alpha n}  = \ci{\alpha n + \frac{1}{2}}$ is \nice\ with respect to $q$. Moreover, for any idempotent ultrafilter, exactly one of these functions is \nice.

	In particular, for any constants $\beta \in \RR \setminus \ZZ$, $c \in \{0,1\}$, and $\eps > 0$, the set of $n \in \ZZ$ such that $ \beta \left( \floor{\alpha n} + c \right)$ is $\eps$-close to an integer:
	$$A_\varepsilon(c) := \{ n \in \ZZ \setsep \operatorname{d}\left( \beta \left( \floor{\alpha n} + c \right), \ZZ\right) < \eps \}$$
	is not an $\IPst$ set, although $A_\eps(0) \cup A_\eps(1)$ is an $\IPst$ set. 
\end{example}
\begin{proof}
	The key observation is that $\fp{\alpha n}$ approaches $0$ along idempotent ultrafilters, and the limit value can be approached either from above or from below.

	More precisely, note that since $\alpha$ is irrational, a standard result shows that the sequence $\{ \fp{ \alpha n } \}_{n \in \NN}$ is equidistributed in $\TT$. For a sequence $\seq{\eps}{i}{\NN}$ with $\eps_i > 0$ and $\sum_{i} \eps_i < \frac{1}{2}$, we may choose $n_i \in \NN$ such that $\fp{\alpha n_i} \in (0,\eps_i)$. For $n \in \FS{\seq{n}{i}{\NN}}$ we then have $\fp{\alpha n} \in \left(0,\sum_{i} \eps_i\right) \subset \left( 0, \frac{1}{2} \right)$. By Lemma \ref{C:lem:IP<=in-idempotent}, there exists an idempotent $p$ such that $\FS{\seq{n}{i}{\NN}} \in p$, and in particular $\fp{\alpha n} \in \left( 0, \frac{1}{2} \right)$ for $p$-a.a. $n$. It follows that $\ci{\alpha n - \frac{1}{2}} = \ci{\alpha n}$ for $p$-a.a. $n$, and hence the function $n \mapsto \ci{\alpha n - \frac{1}{2}}$ is \nice\ with respect to $p$.

	Likewise, repeating the construction, but choosing $\eps_i < 0$ with $\sum_{i} \eps_i > -\frac{1}{2}$, we arrive at an idempotent $q$ such that $\ci{\alpha n + \frac{1}{2}} = \ci{\alpha n}$ for $q$-a.a. $n$. Hence, the function $n \mapsto \ci{\alpha n + \frac{1}{2}}$ is \nice\ with respect to $q$.

	The claim about either of the functions $\ci{\alpha n \pm \frac{1}{2}}$ being \nice\ is a direct consequence of the observation that for any $n$ either $\ci{\alpha n - \frac{1}{2}} = \ci{\alpha n}$ or $\ci{\alpha n + \frac{1}{2}} = \ci{\alpha n}$. 
\end{proof}

  Another question that naturally comes to mind is whether the recurrence properties that have been considered so far are a special feature of (almost) polynomials, or if there is a wider class of functions for which analogous results hold. We will show that for a function increasing more slowly than linear, there always exists an idempotent $p$ such that the limit $\llim{p}{n} \alpha f(n)$ is in general non-zero. We extract the following technical lemma before we proceed with the proof.

\begin{lemma}
  Let $\eps_i$ and $m_i$ be sequences such that $\frac{2\eps_i}{m_i} > \frac{3}{m_{i+1}}$. Denote $A_i := \{ \alpha \in \TT \setsep m_i \alpha \in (\gamma - \eps_i, \gamma+ \eps_i)$. Then, $\bigcap_{i \in \NN}A_i$ is not the empty set. 
\end{lemma}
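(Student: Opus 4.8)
The plan is to realize each $A_i$ concretely as a union of equally spaced arcs on $\TT$ and then run a nested-intervals argument. The map $\alpha\mapsto m_i\alpha$ is an $m_i$-fold covering of $\TT$ by itself, so the preimage of the arc $(\gamma-\eps_i,\gamma+\eps_i)$ is the union of $m_i$ open arcs, each of length $\ell_i:=\tfrac{2\eps_i}{m_i}$, centered at the $m_i$ points $c$ with $m_ic=\gamma$; consecutive such "centers" are at distance $\tfrac1{m_i}$ in $\TT$. Thus $A_i$ is exactly this union of arcs, and the hypothesis $\tfrac{2\eps_i}{m_i}>\tfrac3{m_{i+1}}$ says: \emph{every arc constituting $A_i$ is longer than three times the center-spacing $\tfrac1{m_{i+1}}$ of $A_{i+1}$}. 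We may assume $0<\eps_i<\tfrac12$ for all $i$: the hypothesis forces $\eps_i>0$, while if $\eps_i\ge\tfrac12$ then $A_i$ is cofinite in $\TT$ and that index may be dropped without changing $\bigcap_iA_i$. Under this assumption $\ell_i\le\tfrac1{m_i}\le1$, hence $\tfrac3{m_{i+1}}<\ell_i<1$, so in particular each number $\tfrac3{m_{i+1}}$ is $<1$.

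Next I would construct by induction a decreasing sequence of closed arcs $I_1\supseteq I_2\supseteq\cdots$ in $\TT$ with $I_i\subseteq A_i$ and $|I_i|=\tfrac3{m_{i+1}}$. For $I_1$: pick any arc making up $A_1$; it has length $\ell_1>\tfrac3{m_2}$, so it contains a closed subarc of length $\tfrac3{m_2}$, taken to be $I_1$. Given $I_i$, write $I_i=[a,\,a+\tfrac3{m_{i+1}}]$ and consider its middle subarc $M:=[a+\tfrac1{m_{i+1}},\,a+\tfrac2{m_{i+1}}]$, of length $\tfrac1{m_{i+1}}$. Since the centers of $A_{i+1}$ are $\tfrac1{m_{i+1}}$-periodic, $M$ contains at least one such center $c$, and then $c$ lies at distance $\ge\tfrac1{m_{i+1}}$ from each endpoint of $I_i$. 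The arc of $A_{i+1}$ around $c$ is $(c-r,c+r)$ with $r=\tfrac{\eps_{i+1}}{m_{i+1}}<\tfrac1{m_{i+1}}$; hence $(c-r,c+r)$ lies in the interior of $I_i$ and inside $A_{i+1}$, and its length is $2r=\ell_{i+1}>\tfrac3{m_{i+2}}$. So it contains a closed subarc of length $\tfrac3{m_{i+2}}$, which we take as $I_{i+1}$; then $I_{i+1}\subseteq I_i$ and $I_{i+1}\subseteq A_{i+1}$, completing the inductive step.

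Finally, $(I_i)_{i\in\NN}$ is a decreasing sequence of nonempty compact subsets of the compact space $\TT$, so $\bigcap_{i\in\NN}I_i\neq\emptyset$; any $\alpha$ in this intersection satisfies $\alpha\in I_i\subseteq A_i$ for every $i$, and therefore $\bigcap_{i\in\NN}A_i\neq\emptyset$. The only genuinely delicate point is the bookkeeping in the inductive step: that a subarc of length $\tfrac1{m_{i+1}}$ necessarily meets the center set of $A_{i+1}$, and — crucially — that the constant $3$ (rather than $2$) in the hypothesis is precisely what leaves enough room to keep the new arc strictly inside $I_i$ while remaining long enough to feed the next step. The reduction to $\eps_i<\tfrac12$ is the other thing to verify; everything else is elementary.
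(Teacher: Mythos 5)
Your proof is correct and follows essentially the same route as the paper's: both arguments exploit the description of $A_i$ as $m_i$ equally spaced open arcs of length $2\eps_i/m_i$ and run a nested-intervals induction in which the hypothesis $\frac{2\eps_i}{m_i}>\frac{3}{m_{i+1}}$ guarantees that the interval retained at stage $i$ swallows a full arc of $A_{i+1}$, after which compactness of $\TT$ finishes the job. If anything your version is slightly more careful than the paper's (you pass to closed subarcs before invoking compactness, whereas the paper applies the finite-intersection property directly to the open sets $B_k$, and you flag the degenerate case $\eps_i\ge\tfrac12$), so there is nothing to fix.
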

\begin{proof}
  Let $B_k := \bigcap_{i \leq k} A_i$. We claim that $B_k$ contains an interval of length $\frac{2 \varepsilon_k}{m_k}$. For $k = 0$, this is clear. Suppose for some $k$ the claim holds. Then, $B_{k+1} = B_k \cap A_{k+1}$. From the form $A_{k+1}$ has, it is immediate that $\TT$ can be partitioned into $m_{k+1}$ intervals $I_1,I_2,\dots,I_{m_k}$ of length $\frac{1}{m_{k+1}}$ such that $J_s := A_{k+1} \cap I_s$ is an interval of length $\frac{2\eps_{k+1}}{m_{k+1}}$. By inductive assumption, $B_k$ contains an interval of length at least $\frac{2 \varepsilon_k}{m_k} > \frac{3}{m_{k+1}}$. This means that there exists $s$ such that $B_k \supset I_s \supset J_s$, and the claim follows.

  Because $B_k$ is a descending family of compact non-empty sets, $\bigcap_{i \in \NN}A_i = \bigcap_{k \in \NN}B_k \neq \emptyset$.

\end{proof}

\begin{proposition}
  Let $f : \NN \to \RR$ be such that $\lim_{n \to \infty} f(n) = \infty$. Suppose additionally that $\lim_{n \to \infty} f(n) - f(n+1) = 0$; for example $f(n) = o(n)$ and $f$ is increasing. Then, there exists an idempotent $p$ such for any $\gamma \in \TT$ there exists $\alpha \in \TT$ such that $\llim{p}{n}{\alpha \ci{f(n)}} = \gamma$. Moreover, $\alpha$ can be chosen in any interval of positive measure.
\end{proposition}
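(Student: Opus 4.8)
The plan is to reduce everything to the integer‑valued function $g(n):=\ci{f(n)}$, which still satisfies $g(n)\to\infty$, and to manufacture one idempotent $p$ together with a super‑rapidly growing ``reference sequence'' of values of $g$ along which the previous lemma (the one with the sequences $\eps_i,m_i$) can be applied. Concretely, I would first build recursively a strictly increasing sequence $\seq{n}{j}{\NN}$ with the following properties, writing $\omega(n):=\sup_{k\geq n}\abs{f(k+1)-f(k)}$ (so $\omega(n)\to0$), $m_j:=\ci{f(n_j)}$, $\Sigma_j:=n_1+\dots+n_j$ and $\eps_j:=2^{-j}$: (i) $\abs{f(n_j)-m_j}<\tfrac18$; (ii) $m_1<m_2<\cdots$ with $m_{j+1}>3\cdot2^{\,j-1}m_j$, i.e.\ $\tfrac{2\eps_j}{m_j}>\tfrac{3}{m_{j+1}}$; (iii) $\Sigma_{j-1}\,\omega(n_j)<\tfrac18$. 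Each step is possible: fix $N_0$ with $\omega(N_0)<\tfrac18$; for every large integer $M$ the point $n(M):=\min\{n\geq N_0:f(n)>M-\tfrac18\}$ satisfies $f(n(M))\in(M-\tfrac18,M)$, hence $\ci{f(n(M))}=M$ and $\abs{f(n(M))-M}<\tfrac18$; moreover $n(M)\to\infty$, so $\omega(n(M))\to0$. Thus, given $n_1,\dots,n_j$, I choose $m_{j+1}=M$ large enough to beat $3\cdot2^{\,j-1}m_j$ and to force $\Sigma_j\,\omega(n(M))<\tfrac18$, and set $n_{j+1}:=n(m_{j+1})$.

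The key point — and the main obstacle, precisely because $g$ is genuinely nonlinear and only slowly increasing — is that $g$ is \emph{exactly constant on the levels of} $\FS{\seq{n}{j}{\NN}}$: if $n=\sum_{i\in F}n_i$ with $F\subset\NN$ finite nonempty and $j^*:=\max F$, then $g(n)=m_{j^*}$. Indeed, writing $n=n_{j^*}+s$ with $0\leq s\leq\Sigma_{j^*-1}$, a telescoping estimate gives $\abs{f(n)-f(n_{j^*})}\leq s\,\omega(n_{j^*})\leq\Sigma_{j^*-1}\,\omega(n_{j^*})<\tfrac18$ by (iii), and combining with (i) yields $\abs{f(n)-m_{j^*}}<\tfrac14$, so $\ci{f(n)}=m_{j^*}$. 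Once this is in hand, the rest is soft.

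Next I would invoke Lemma~\ref{C:lem:IP=>in-idempotent} (second part) to obtain an idempotent $p\in\beta\NN$ with $\FS{\sigma^m\mathbf n}\in p$ for every $m\geq0$; equivalently, for $p$‑almost all $n$ the representation $n=\sum_{i\in F}n_i$ has $\min F>m$, hence $j^*=\max F>m$. Now fix $\gamma\in\TT$ and an interval $I$ of positive length; choose $J$ with $4/m_J<\abs I$ and apply the previous lemma to the tails $(m_j)_{j\geq J}$, $(\eps_j)_{j\geq J}$, whose hypothesis is (ii). Running that lemma's proof but intersecting the nested sets $B_k$ with a fixed closed subinterval $I'\subseteq I$ of length $>3/m_J$ works verbatim: at stage $k=J$ the set $A_J$ is a union of $m_J$ equally spaced arcs at spacing $1/m_J$, so $A_J\cap I'$ still contains one arc, and the inductive step is unchanged. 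This produces $\alpha\in I'\subseteq I$ with $\operatorname{d}(m_j\alpha,\gamma)<\eps_j=2^{-j}$ for all $j\geq J$.

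Finally I would conclude $\llim{p}{n}\alpha g(n)=\gamma$: given $\delta>0$, pick $m\geq J$ with $2^{-m}<\delta$; then for $p$‑almost all $n$ we have $n\in\FS{\sigma^m\mathbf n}$, so $g(n)=m_{j^*}$ with $j^*>m$, whence $\operatorname{d}(\alpha g(n),\gamma)=\operatorname{d}(\alpha m_{j^*},\gamma)<2^{-j^*}<\delta$. Thus $\{n:\operatorname{d}(\alpha g(n),\gamma)<\delta\}\supseteq\FS{\sigma^m\mathbf n}\in p$; since $\delta$ was arbitrary and $\TT$ is Hausdorff, $\llim{p}{n}\alpha\ci{f(n)}=\gamma$ with $\alpha\in I$. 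As $p$ was constructed before $\gamma$ and $I$ were named, the same $p$ serves for every target and every interval, which is the full assertion.
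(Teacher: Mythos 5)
Your proof is correct and follows essentially the same route as the paper's: build $n_j$ so that $\ci{f}$ is constant (equal to $m_{j^*}$) on each level of $\FS{\seq{n}{j}{\NN}}$, take an idempotent $p$ containing all the tails $\FS{\sigma^m\mathbf{n}}$, and feed the resulting $m_j$ into the nested-arcs lemma to locate $\alpha$. You additionally supply the details the paper waves at — the explicit recursion with $\omega(n)$ guaranteeing constancy on the FS-levels, and the localization of $\alpha$ to a prescribed interval by starting the nested-interval induction at a stage $J$ with $3/m_J$ smaller than the interval's length — both of which are needed and correct.
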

\begin{proof}
  Let us fix a sequence $\eps_i$ with $\lim_{i \to \infty} \eps_i = 0$. Because of the assumption on $f$, it is easy to construct an increasing sequence of integers $n_i$ such that $\ci{f}$ is constant on $[n_i,n_i + n_{i-1} + \dots + n_1]$. What is more, $n_i$ can also be chosen to be increasing steeply enough so that the assumptions of the Lemma above are satisfied for $m_i := \ci{f(n_i)}$. Under this assumption, it is clear that $\{ \ci{f(n)} \setsep n \in \FS{\seq{n}{i}{\NN}} \} = \{ \ci{f(n_i)} \setsep i \in \NN \} = \seq{m}{i}{\NN}$. Let $A_i := \{ \alpha \in \TT \setsep m_i \alpha \in (\gamma - \eps_i, \gamma+ \eps_i)$, as in the lemma above, and let $\alpha \in \bigcap_{i \in \NN}A_i$. Because $\alpha \in A_i$, we have $d( \alpha m_i, \gamma ) < \eps_i$. It follows that $\lim_{i \to \infty} \alpha m_i = \gamma$. Hence, for any $p$ such that $\FS{\seq{n}{i}{\NN}} \in p$, we have $\llim{p}{n} \alpha f(n) = \gamma$.
\end{proof}

We conjecture that similar results should be true for $f$ with any order of growth which is polynomially bounded, but different than polynomial. More precisely, if $f: \NN \to \RR$ is an increasing function such that for some integer $k$ we have\footnote{We say that a function $f$ has order of growth $\omega(g)$ if $\lim_{n \to\ \infty} \frac{f(n)}{g(n)} = \infty$. Likewise, we say that $f$ has order of growth $o(g)$ if $\lim_{n \to\ \infty} \frac{f(n)}{g(n)} = 0$. These definitions are normally only applied to positive and monotonous $g$. Assuming thatt $f$ and $g$ are positive and monotonous, the conditions $f = \omega(g)$ and $g = o(f)$ are equivalent.
}
 $f = \omega(n^k)$ and $f = o(n^{k+1})$, we believe that there exist $\alpha \in \TT$ an idempotent $p \in \beta \NN$ such that $\llim{p}{n}{\alpha \ci{f(n)}} \neq 0$ (possibly under some additional assumption, such as $f$ being restriction to $\NN$ of a function which is analytic, or belongs to a Hardy field).

%\section{Comparison with classical results}
To close this section, we compare our considerations with more well-established notions, and offer some examples. The class of (\nice) \ap{s} is, as the perspicacious reader might have already observed, closely related to the more classical notions of (admissible) generalised polynomials.

\begin{definition}[Generalised polynomials]\index{polynomial!generalised polynomial}
	The family of \emph{generalised polynomials} is the smallest family $\cG$ of maps $\ZZ \to \ZZ$ such that the following are satisfied:
	\begin{itemize}
		\item generalised polynomials extend ordinary polynomials: $\ZZ[x] \subset \cG$;
		\item generalised polynomials form an algebra: if $g,h \in \cG$ then $g\cdot h, g+h \in \cG$;
		\item generalised polynomials are closed under the floor map: if $\seq{g}{i}{[n]} \in \cG^n$ and $\seq{\alpha}{i}{[n]} \in \RR$ then $\floor{\sum_{i \in [n]} \alpha_i g_i} \in \cG$.
	\end{itemize}
\end{definition}

\begin{definition}[Admissible generalised polynomials]
\index{admissible generalised polynomial}
	The family of \emph{admissible generalised polynomials} is the smallest family $\cG_a \subset \cG$ of maps $\ZZ \to \ZZ$ such that the following are satisfied:
	\begin{itemize}
		\item polynomials vanishing at $0$ are admissible: $ x\ZZ[x] \subset \cG_a$;
		\item admissible generalised polynomials form an ideal in $\cG$: if $g \in \cG_a, h \in \cG$ then $g\cdot h \in \cG_a$, and if $g,h \in \cG_a$ then $g+h \in \cG_a$;
		\item generalised polynomials are closed under a ``shifted'' floor map: if $\eps \in (0,1)$, $\seq{g}{i}{[n]} \in \cG_a^n$ and $\seq{\alpha}{i}{[n]} \in \RR$ then $\floor{\sum_{i \in [n]} \alpha_i g_i + \eps} \in \cG_a$.
	\end{itemize}
\end{definition}

Thanks to Lemma \ref{B:lem:gen-poly-closure}, it is visible that generalised polynomials are \ap{s}, and admissible generalised polynomials are \nice\ \ap{s}. Hence, our results naturally yield results in the more classical terms.

One might wonder whether the classes we define here are really more general. It turns out that they indeed are, as the following examples show.  We stress that the following ideas are strongly inspired by $\IP$-systems, and more generally $\VIP$-systems, whose domain is the the family of finite sets of natural numbers, $\cP_{\operatorname{fin}}(\NN)$. More detailed discussion of such examples can be found in \cite{BergKnuMc2006}.

\begin{example}[Base change]\label{B:exple:base-change}
\index{IP*-set@$\IP^*$-set}
  Consider the map $f:\ \NN \to \NN$ defined by the condition that for $\alpha \in \cP_{\operatorname{fin}}(\NN_0)$ we have:
  $$ f\left( \sum_{i \in \alpha} 2^{i} \right) = \sum_{i \in \alpha} 3^i.$$
  Note that the above definition makes sense, because each integer has a unique binary expansion. Descriptively, $f(n)$ is the value one obtains by writing $n$ base $2$, and then reinterpreting this as an expansion base $3$. For $n \in \NN$, let $\alpha(n)$ denote the unique set for which we have $n = \sum_{i \in \alpha(n)} 2^{i}$, so that we have the relation:
  $$ f(n) = \sum_{i \in \alpha(n)} 3^i.$$
  It is easy to see that we have the linear relation:
  $$ f(n + m) = f(n) + f(m),$$
  provided that $\alpha(n) \cap \alpha(m)$ are disjoint. Note that this condition is satisfied as soon as $2^k | n$ for some $k$ with $2^k > m$. Because $2^k \NN$ is $\IP^*$ by Proposition \ref{prop:C:IP*-includes-finite-index-subgroups}, we have for any idempotent $p$:
  $$ \llim{p}{m}\llim{p}{n} \DeltaS^2 f(n,m) = \llim{p}{m}\llim{p}{n} \big(f(n + m) -  f(n) + f(m) \big) = 0.$$
  As a consequence, $\deg^p f = 1$ for arbitrary idempotent $p$. 
\end{example}

  Above, we exploited the uniqueness and existence of the binary expansion. Below we show how the same idea can be applied to more general bases. Additionally, there was nothing special about base $3$: we could have selected an arbitrary sequence $(b_i)_i$ in place of $\left( 3^i\right)_i$. Because the only allowable digits base $2$ are $0$ and $1$, above we could conveniently identify binary expansion of a number with a set of its non-zero digits; for general bases we need to proceed differently.

\begin{example}\label{B:exple:base-change-generalised}
\index{IP*-set@$\IP^*$-set}
  Suppose that we are given a sequence $\seq{a}{i}{\NN_0}$ and a sequence $\seq{d}{i}{\NN_0}$, such that each $n \in \NN$ has unique expansion:
  $$ n = \sum_{i \in \NN_0} \mu_i(n) a_i,$$
  with $\mu_i(n) \in [d_i]$ for all $i \in \NN_0$. Concretely, one can take $a_i := a^i$ and $d_i := a$ for some $a \geq 2$, leading to the expansion base $a$. Consider an arbitrary sequence $\seq{b}{n}{\NN_0}$ and a define the map $f:\ \NN \to \NN$ given by the formula:
  $$ f(n) = \sum_{i = 0}^\infty \mu_i(n) b_i.$$
  Assume additionally that the sets $A_i := \{ n \in \NN \setsep \mu_i(n) = 0 \}$ are $\IP^*$. It is easy to verify that this condition is satisfied if for all $i$ we have $a_i < a_{i+1}$ and $a_i | a_{i+1}$. Then, for a fixed $m$ and $n$ belonging to the $\IP^*$-set $\bigcap_{i: \mu_i(m) \neq 0} A_i$ it holds that for any $i$ either $\mu_i(m) = 0$ or $\mu_i(n) = 0$. Consequently, for such $m,n$ we have $\mu_i(n+m) = \mu_i(n) + \mu_i(m)$ for all $i$, and consequently $f(n+m) = f(n) + f(m)$. Hence, for any idempotent $p$ it holds that
  $$ \llim{p}{m}\llim{p}{n} \DeltaS^2 f(n,m) = 0,$$
  and thus $\deg^p f = 1$ (except for degenerate choices of $b$, leading to $\deg^p f \leq 0$).

  As a special case, for any $a \geq 2$ and $b \geq 1$, we may choose $a_i = a^i$ and $b_i = b^i$. Then the map $f$ described by reinterpreting expansion base $a$ as expansion base $b$ is an almost polynomial of degree $1$. Note that for $b = 1$, the value $f(n)$ is the sum of digits of $n$ base $a$.
\end{example}

	We can yet another example of class of \ap{s}, which is based on a somewhat more peculiar positional system. While the previous examples are well known, to the best of our knowledge the following example new.

\begin{example}[Fibonacci base]\label{B:exple:base-change-Fibonacci}
\index{IP*-set@$\IP^*$-set}

  Let $f_i$ be the $i$-th Fibonacci number (starting with $f_0 = 1,\ f_1 = 2$). It is a classical fact attributed to Zeckendorf (cf. \cite{Fibonacci-by-Zeckendorf}) that any integer $n$ can be represented in the form:
  $$ n = \sum_{i \in \NN_0} \mu_i(n) f_i,$$
  where $\mu_i(n) \in \{0,1\}$ and for no $i$ does it hold that $\mu_i(n) = \mu_{i+1}(n) = 1$. Such representation is often referred to as Fibonacci base or Zeckendorf expansion, and has been studied in some detail, see for example \cite{Fibonacci-by-Hoggatt} or \cite[pp. 295-296]{Knuth-ConcreteMathematics}.

Suppose that we can show that $A_i := \{ n \in \NN \setsep \mu_i(n) = 0 \}$ are $\IP^*$. Using the same arguments as previously, we can check that given an idempotent $p$ and a fixed $m$, for $p$-many $n$ it holds that $\mu_i(n+m) = \mu_i(n) + \mu_i(m)$ for all $i$. Consequently, we can derive that any function of the form $\sum_i \mu_i \cdot b_i$ is \ap\ of degree $1$.

  We now show that the sets $A_i $ indeed are $\IP^*$. For a proof by contradiction, suppose that $B$ is an $\IP$-set with $A_k \cap B = \emptyset$, i.e. such that $\mu_k(n) = 1$ for all $n \in B$. Fix a sufficiently large integer $j$, and for $n \in \NN$ let $t(n)$ denote the ``tail'' of $n$, obtained by restricting to the $j$ terminal digits:
  $$ t(n) := \sum_{i \in [j]} \mu_i(n) f_i.$$
  Let $p$ be an arbitrary idempotent in $\bar{B}$. Because $t(n)$ takes only finitely many values, the limit $a := \llim{p}{n} t(n)$ exists. Because $p$ is idempotent, we can easily find $n,m \in B$ such that $t(n) = t(m) = t(n+m) = a$. We can write $n = n' + a,\ m = m' + a, n+m = s' + a$, where $t(n') = t(m') = t(s') = 0$. We then have the relation: 
  $$ n' + m' + a = s'.$$
  It is not difficult to convince oneself that $\mu_i(n'+m') = 0$ if $i < j-2$, so we can write $n'+m' = r' + b$ with $t(r') = 0$ and $b$ having at most one non-zero digit at position $j-1$ or $j-2$. Consequently, we have the relation:
  $$ r' + a = s' - b.$$
  Using the relation $f_i + \sum_{t=0}^{s-1} f_{i+1+2t} = f_{i+2s}$, we conclude that $\mu_i(s' - b) = 0$ for $i < j-3$. On the other hand, $\mu_k(r'+a) = 1$, which leads to a contradiction, provided that $j > k+3$. This finishes our considerations for the Fibonacci base.
\end{example}

  We conjecture that similar reasonings should work for more general positional systems. In particular, some recursively defined sequences other than the Fibonacci sequence can be used to construct other positional systems in analogous manner. 

 We can go yet a step further and construct a fairly general class of ``automatic'' functions, which turn out to be degree $1$ \ap{s}. We need a preliminary concerning automata. The following definition is taken from \cite{AutSeq}.

\begin{definition}\index{automaton}
	A \emph{deterministic finite automaton with output} $\cA$ (DFAO or automaton, for short) over a finite finite alphabet $\Sigma$, consists of the following data:
\begin{enumerate}
\item set of states $Q$, with a distinguished initial state $q_{init}$;
\item transition function $\tau:\ Q \times \Sigma \to Q$;
\item output function $\lambda:\ Q \times \Sigma \to \NN_0$.
\end{enumerate}
\end{definition}

The intuition behind an automaton is the following. The automaton starts in the state $q_{init}$. A sequence $\alpha_0,\alpha_1,\alpha_2,\dots$ of symbols from $\Sigma$ is provided on input. The automaton accepts them one by one, and if it accepts a symbol $\alpha$ when it is in state $q$, then it passes to state $q' = \tau(q,\alpha)$. After each such transition, the symbol $\lambda(q,\alpha)$ is produced on output.

Let $a,b \in \NN_2$ be fixed bases, and for $n \in \NN$, let $n = \sum_{i=0}^\infty \mu_i(n) a^i$ be the unique decomposition of $n$ in base $a$. Suppose that $\cA = (Q,\tau,\lambda)$ is an automaton over the alphabet $\Sigma = [a]$. We can think of $\cA$ as generating a function $f_\cA:\ \NN \to \NN$ in the following way. We begin by indentifying $n$ with a sequence of its digits base $a$, then we apply $\cA$ and interpret the result as a number base $b$. More formally, let us fix $n \in \NN$ with expansion $n = \sum_{i=0}^\infty \mu_i a^i$; we will define $f_\cA(n)$. First, we denote the consecutive steps $q_0 := q_{init}$ and $q_{i+1} := \tau(q_i,\mu_i)$. Next, we denote the outputs $\lambda_i := \lambda(q_i,\mu_i)$; note that $\lambda_i$ depends implicitly on $n$. Finally, we put:
$$ f_\cA(n) := \sum_{i=0}^\infty \lambda_i b^i.$$ 
We call a function $f:\ \NN \to \ZZ$ an \emph{automatic function} if it is of the form $f_\cA$ for some automaton $\cA$. 

We leave the following result without proof, which is not difficult, but rather technical.
\begin{proposition}
	Let $\cA = (Q,q_{init},\tau,\lambda)$ be an automaton, and let $f$ be the corresponding function. Suppose that the map $\tau(\cdot,a):\ Q \to Q$ is bijective for any $a \in \Sigma$. Then $f$ is an \ap\ with degree at most $1$ with respect to any idempotent $p$.
\end{proposition}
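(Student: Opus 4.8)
The statement to prove is that an automatic function $f = f_\cA$ arising from a DFAO $\cA = (Q, q_{init}, \tau, \lambda)$ whose transition maps $\tau(\cdot, a)\colon Q \to Q$ are bijections is, for any idempotent $p \in \beta\NN$, an almost polynomial of degree at most $1$. The natural strategy is to reduce this to the criterion $\deg^p f \leq 1$, which by Definition \ref{B:def:gen-poly} means showing that for $p$-almost all $a \in \NN$ the map $\DeltaS_a f$ is an almost polynomial of degree at most $0$, i.e.\ constant $p$-a.e. Equivalently, as in Examples \ref{B:exple:base-change}--\ref{B:exple:base-change-Fibonacci}, it suffices to prove that
$$\llim{p}{m}\llim{p}{n} \DeltaS^2 f(n,m) = \llim{p}{m}\llim{p}{n}\big( f(n+m) - f(n) - f(m) \big) = 0,$$
since then $\deg^p f \leq 1$ follows from the characterisation in Proposition \ref{B:lem:C-vs-gen-poly} (existence of $C_1(f)$ in discrete $Y = \ZZ$ forces $f \in \AP$ with degree $\leq 1$; and $C_1(f) = C_0(f) \in \ZZ$ records the constant, here presumably $0$). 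So the whole game is a \emph{linearity-up-to-$p$} statement for the addition of base-$a$ expansions.

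\textbf{Key steps.} First I would recall the digit combinatorics: writing $n = \sum_i \mu_i(n) a^i$ with $\mu_i(n) \in [a]$, addition $n + m$ is carried out digitwise with carries, and $\mu_i(n+m) = \mu_i(n) + \mu_i(m)$ for all $i$ exactly when no carry ever occurs, which happens iff $\mu_i(n) + \mu_i(m) < a$ for all $i$. The crucial point (already used in Example \ref{B:exple:base-change-generalised}) is that for fixed $m$, the set of $n$ with $\mu_i(n) = 0$ for all $i$ with $\mu_i(m) \neq 0$ is an $\IP^*$-set: it is a finite-index subgroup coset intersection, since $\{n : \mu_i(n) = 0\} \supset a^{i+1}\NN$ up to a bounded correction, and $a^k\NN$ is $\IP^*$ by Proposition \ref{prop:C:IP*-includes-finite-index-subgroups}. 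Hence for $p$-a.a.\ $n$ we have $\mu_i(n+m) = \mu_i(n) + \mu_i(m)$ digitwise. Now comes the automaton part. Run $\cA$ on the digit string of $n$, reaching states $q_0 = q_{init}, q_1, q_2, \dots$ with $q_{i+1} = \tau(q_i, \mu_i(n))$ and outputs $\lambda_i(n) = \lambda(q_i, \mu_i(n))$; then $f(n) = \sum_i \lambda_i(n) b^i$. I would argue: since each $\tau(\cdot,a)$ is a bijection of the finite set $Q$, for any fixed initial prefix the state reached is a permutation of $Q$ in each coordinate, and — this is the heart — for $n$ in a suitable $\IP^*$-set (a joint condition: the digits of $n$ in all positions where $m$ is nonzero are $0$, and moreover $n$ has a long run of zero digits at a position beyond the support of $m$, which forces the automaton to ``reset'' in the sense that the bijective transitions on a block of zeros act as a fixed permutation) the computations of $f(n+m)$, $f(n)$ and $f(m)$ decouple: the digit string of $n+m$ is (a shifted copy of) the digit string of $m$ concatenated with that of $n$, and the automaton processes them essentially independently, whence $f(n+m) = f(m) + (\text{shift by } b^{|m|}) \cdot(\text{something depending only on the state after processing } m) $. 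Taking $\llim{p}{n}$ with $p$ idempotent kills the $n$-dependence modulo the constant, and a second $\llim{p}{m}$ closes it; the bijectivity of $\tau(\cdot,a)$ is exactly what guarantees the ``state after a block of zeros'' stabilises $p$-a.e., so the shift factor and the correction are $p$-a.e.\ constant.

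\textbf{Main obstacle.} The genuinely delicate point is making precise the claim that the automaton's computation on $n + m$ ``factors through'' the computations on $m$ and on $n$ for $p$-a.a.\ $n$ and $p$-a.a.\ $m$. The bijectivity of the transitions is essential — it ensures that after reading a long block of zero digits the state lies in a $p$-a.e.\ determined value (here one uses that $p$ is idempotent to align the states after prefixes of $n$, of $m$, and of $n+m$, in the spirit of the Fibonacci-base argument in Example \ref{B:exple:base-change-Fibonacci}), but without it the state could collapse and $\DeltaS^2 f$ would pick up an uncontrolled correction term. So the plan is: (i) fix $m$; (ii) choose an $\IP^*$-set $B(m)$ of $n$'s on which $\mu_i(n+m) = \mu_i(n) + \mu_i(m)$ digitwise and on which $n$ starts with enough zero digits past the support of $m$; (iii) on $B(m)$, track the automaton state precisely, using injectivity of $\tau(\cdot,0)$ to see that the state reached after the zero-padding depends only on $n$'s higher digits and equals, for $p$-a.a.\ $n$, the state reached after processing $m$ alone; (iv) conclude $f(n+m) - f(n) - f(m)$ is $p$-a.e.\ (in $n$) a function of $m$ only, then take $\llim{p}{m}$; (v) check the resulting constant is $0$ (e.g.\ by evaluating along $n = m = 0$ or using that $f(0) = 0$), so $C(f) = 0$ is not needed for the degree bound but the ``degree at most $1$'' conclusion follows regardless. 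I expect steps (iii) and (iv) — the careful bookkeeping of automaton states modulo $p$ — to be the technically demanding core; the digit-carry analysis in (i)--(ii) is routine given Proposition \ref{prop:C:IP*-includes-finite-index-subgroups}.
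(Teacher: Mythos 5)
The paper offers no proof of this proposition (it is explicitly ``left without proof''), so I can only judge your proposal on its own terms. Your reduction — show that $\Delta_m f$ is $p$-a.e.\ constant for $p$-a.a.\ $m$, then invoke Proposition \ref{B:lem:C-vs-gen-poly} — and your digit/carry analysis in steps (i)--(ii) are fine. The gap sits exactly where you predicted, in steps (iii)--(iv), and it is not just a matter of bookkeeping: the claim that for a \emph{fixed} $m$ the computations of $f(n+m)$ and $f(n)$ decouple so that $f(n+m)-f(n)$ depends only on $m$ is false. Write $\tau_0 := \tau(\cdot,0)$ and let $Q(m)$ be the state reached after reading the $k := k(m)$ digits of $m$. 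For $n \in a^{K}\NN$ with $K \geq k$, the tail of the run on $n+m$ from position $K$ onward starts in the state $\tau_0^{K-k}(Q(m))$, while the tail of the run on $n$ starts in $\tau_0^{K}(q_{init})$; these coincide iff $Q(m) = \tau_0^{k}(q_{init})$, and when they do not, the two tails are runs of the automaton on the \emph{same} digit string from \emph{different} states, whose difference genuinely depends on the high digits of $n$. Concretely, take $a=2$, $Q = \ZZ/2$, $\tau(q,d) = q+d$, $\lambda(q,d) = qd$, so that $f(n) = \sum_{i:\,\mu_i(n)=1} (s_i(n) \bmod 2)\, b^i$ with $s_i$ the partial digit sum; then $\Delta_1 f(2^K) = b^K$, which is unbounded in $K$, so $\Delta_1 f$ is constant on no cofinite subset of $\{2^K\}$ and the decoupling fails already for $m=1$. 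A run of zeros of length divisible by the order of $\tau_0$ only returns the state to its value before the run; it does nothing to align $Q(m)$ with $\tau_0^{k}(q_{init})$, so the ``reset'' idea does not close this gap.

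The missing idea is that $Q(m) = \tau_0^{k(m)}(q_{init})$ \emph{does} hold for $p$-almost all $m$, and proving this is where bijectivity and idempotence must actually be combined. Set $\sigma(m) := \tau(\cdot,\mu_{k-1}(m))\circ\cdots\circ\tau(\cdot,\mu_0(m))$ and $h(m) := \tau_0^{-k(m)}\circ\sigma(m)$; by hypothesis $h$ takes values in the \emph{finite group} $\mathrm{Sym}(Q)$ (this is the only place bijectivity is used in an essential way). Your concatenation analysis gives $h(n+m) = h(n)\circ h(m)$ for every $m$ and every $n$ in the $\IP^*$-set $a^{k(m)}\NN$. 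Since $\mathrm{Sym}(Q)$ is finite and discrete, $H := \llim{p}{n} h(n)$ exists, and idempotence of $p$ yields $H = \llim{p}{m}\llim{p}{n} h(n+m) = H\circ H$; the only idempotent of a group is the identity, so $h(m) = e$ for $p$-a.a.\ $m$, i.e.\ $Q(m) = \tau_0^{k(m)}(q_{init})$ for $p$-a.a.\ $m$. Only for such $m$ do the two tails coincide, giving $\Delta_m f(n) = \sum_{i<k}\bigl(\lambda_i(m) - \lambda(\tau_0^{i}(q_{init}),0)\bigr)b^i$ for all $n \in a^{k}\NN$, which is the required constancy. Your proposal asserts the conclusion of this step without an argument, and the argument is not a refinement of the zero-padding idea but a separate algebraic one (an idempotent-in-a-finite-group computation, in the spirit of Proposition \ref{prop:C:IP*-includes-finite-index-subgroups}). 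A minor further point: for $f_{\cA}$ to be well defined one must tacitly assume $\lambda(q,0)=0$ on the $\tau_0$-orbit of the terminal state, but that is a defect of the paper's definition rather than of your proof.
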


  As the reader might have noticed, we do not actually prove that the \ap{s} we just presented are not generalised polynomials. Because generalised polynomials are always polynomially bounded while functions constructed above need not be, one can show that not all of these \ap{s} presented are generalised polynomials. We believe that in general none of the above \ap{s} are generalised polynomials, except for some degenerate cases, but we have no way of showing this rigorously.	

  The examples presented above are somewhat far-fetched: it is not clear that anyone would be interested in thus defined functions in the first place. There turn out to be more natural examples of \nice\ \ap{s} that are not admissible generalised polynomials. Note that the proof of Lemma \ref{B:lem:gen-poly-closure} in fact shows that for a fixed $g \in \cG$, there are just finitely many values that $C^p(g)$ can take, depending on the idempotent $p$. It follows that for $\seq{g}{i}{[n]} \in \cG$ and $\seq{\alpha}{i}{[n]} \in \RR$ we have $\ci{\sum_{i \in [n]} \alpha_i g_i } \in \NAP^p$ for all $p$, provided that $\abs{\alpha_i}$ are small enough that for any $p$ we have $\abs{\sum_{i \in [n]} \alpha_i C^p(g_i)} < \frac{1}{2}$. For an explicit example, $\floor{\frac{\floor{\pi n}}{43} + \frac{1}{2e} }$ lies in $\NAP^p$ for any $p$, but probably not in $\cG_a$. That being said, for $\eps \in (0,1)$, the map $\floor{\frac{\floor{\pi n + \eps}}{43} + \frac{1}{2e} }$ lies in $\cG_a$, so the difference does not appear to be very significant.

  To close the discussion about \ap{s}, we stress some pitfalls and oddities that one can encounter.

  Firstly, the property of being an \ap\ depends on $p$, even though the examples we encountered so far did not take $p$ into consideration. We have noted that for bounded maps $b_i :\ \ZZ \to \ZZ$, $i \in [r]$, the map $f$ given by $f(n) := \sum_{i \in [r]} b_i(n) n^k$ is an \ap\ with respect to any $p$. In fact, if $b_i^p := \llim{p}{n} b_i(n)$ and $f^p(n) := \sum_{i \in [r]} b^p_i n^k$ then $f(n) = f^p(n)$ for $p$-a.a. $n$, and consequently $f$ and $f^p$ are indistinguishable as members of $\AP^p$. However, for different $p$ the polynomials $f^p$ may very well be different. In particular, it may well be that $\deg^p f \neq \deg^q f$ and $C^p(f) \neq C^q(f)$ for $p \neq q$. 

  In fact, given two different idempotents $p$ and $q$ and arbitrary \ap{s} $f^p \in \AP^p,\ f^q \in \AP^q$, one can construct $f \in \AP^p \cap \AP^q$ such that $f(n) = f^p(n)$ for $p$-a.a. $n$ and $f(n) = f^q(n)$ for $q$-a.a. $n$. This can be achieved quite simply. Note that there exist a set $A$ which is $p$-large but not $q$-large, as well as a set $B$ which is $q$-large but not $p$-large. Taking $f:= \chi_A \cdot f + \chi_B \cdot g$ we can easily verify that $f$ has the mentioned relation to $f^p$ and $f^q$.

  Another aspect we wish to stress is that it is not the case that \ap{s} have the order of growth expected of polynomials. Of course, this is not much of a surprise, given that we can modify a member of $\AP^p$ on a $p$-small set without changing its properties as an almost polynomial. At this point, one might yet be hoping that some notion of order of growth relative to $p$ would work. However, even with naturally defined \ap{s} which can be approximated by ordinary polynomials up to a constant factor, this hope fails. For example, the map given by $\sum_i \mu_i a^i \mapsto \sum_i \mu_i b^i$ from Example \ref{B:exple:base-change-generalised} has degree $1$, but has the order of growth%
\footnote{We say that a function $f$ has the order of growth $\Theta(g)$ if there exist constants $C_1,C_2 > 0$ such that $C_1 g(n) < f(n) < C_2 g(n)$ for sufficiently large $n$.}
  $\Theta\left(n^{\ln b/ \ln a}\right)$, instead of the expected $\Theta(n)$. The \ap{} given by the formula $f(n) = \ci{\alpha n \fp{\beta n}}$ with $\alpha, \beta \in \RR \setminus \QQ$ is clearly
\footnote{We say that a function $f$ has the order of growth $\mathrm{O}(g)$ if there exist a constant $C > 0$ such that $f(n) < C g(n)$ for sufficiently large $n$. In particular, $f$ is $\Theta(g)$ if and only if $f$ is $\mathrm{O}(g)$ and $g$ is $\mathrm{O}(f)$.}
$\mathrm{O}(n)$ and one can even check that for any idempotent $p$ we have $\llim{p}{n} \frac{f(n)}{n} = 0$. However, explicit computation of $\DeltaS f$ shows that $\deg^p f = 2$, so $f$ is far from satisfying the expected approximation $\Theta(n^2)$.

\index{polynomial!almost polynomial|)}
\section{Dynamical applications}

  We shall now see how theory developed so far can be applied to measure preserving preserving systems. For this purpose, we will be considering averages of powers of operators on Banach spaces. The link between Banach spaces and dynamical systems uses the Koopman operator given by $U_T(f) = f \circ T$. Mostly, we will be interested in the Hilbert spaces $L^2$, but for the time as as far as it is possible we use develop our methods in the most general context.

\newcommand{\slim}[2]{\operatorname{s}\!-\llim{#1}{#2}}
\begin{definition}\label{B:def:operators-p-lim}
\index{limit!operator limit}
\index{limit!generalised limit}
  Let $\cE$ be a reflexive Banach space, let $\seq{A}{n}{X} \in \cB(\cE)^X$ an bounded sequence of (continuous linear) operators on $\cE$, indexed by a set $X$, and let $p \in \beta X$ be an arbitrary ultrafilter. By the standard symbol $\llim{p}{n} A_n$ we denote the generalised limit taken in the weak topology. The limit exists because of reflexivity of $\cE$ and Banach-Alaoglu theorem. 
  
  Since strong convergence implies weak convergence, we do not intorduce additional symbol for the strong limit. When convergence is strong, we will note this explicitly.
\end{definition}

	We prove some basic properties of limits related to commutativity.

\begin{lemma}\label{B:lem:operators-commutativity}
\index{limit!operator limit}
\index{limit!generalised limit}
  Let $\seq{A}{n}{X}$ and $\seq{B}{n}{X}$ be bounded sequences of operators on a reflexive Banach space $\cE$, and suppose that $A_n$ commutes with $B_n$ for all $n$. Then, for any ultrafilter $p \in \beta X$, the limits $\llim{p}{n} A_n$ and $\llim{p}{n} B_n$ is commute.	
  
  In particular, if $\seq{A}{n}{\ZZ}$ is a uniformly bounded sequences of operators on a Hilbert space $\cH$, and $A_n$ is normal for each $n$, then the limit $\llim{p}{n} A_n$ is normal.
\end{lemma}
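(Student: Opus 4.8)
The plan is to reduce the whole statement to one soft observation: for a \emph{fixed} bounded operator $C$, taking the generalised limit commutes with left and right multiplication by $C$. Indeed, if $\seq{S}{n}{X}$ is a bounded sequence of operators on $\cE$ with weak‑operator limit $S = \llim{p}{n} S_n$ (which exists in $\cB(\cE)$ by Definition \ref{B:def:operators-p-lim} --- for each $x$ the net $(S_n x)$ lies in a bounded, hence relatively weakly compact, subset of the reflexive space $\cE$, and the resulting linear map is bounded by weak lower semicontinuity of the norm), then for all $x \in \cE$, $\phi \in \cE^*$ one has $\langle C S_n x, \phi\rangle = \langle S_n x, C^*\phi\rangle \to \langle Sx, C^*\phi\rangle = \langle CSx,\phi\rangle$ and $\langle S_n C x, \phi\rangle = \langle S_n (Cx), \phi\rangle \to \langle SCx,\phi\rangle$; so $\llim{p}{n} C S_n = CS$ and $\llim{p}{n} S_n C = SC$. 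This is exactly what lets me peel off one factor at a time and so avoid any interchange of iterated $p$‑limits.

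First I would fix $n \in X$ and apply this to the sequence $\seq{B}{m}{X}$ with the fixed operator $C := A_n$, obtaining $\llim{p}{m} A_n B_m = A_n B$ and $\llim{p}{m} B_m A_n = B A_n$. Since $A_n B_m = B_m A_n$ for every $m$ by hypothesis, and since the generalised limit of a map into $\cB(\cE)$ with the (Hausdorff) weak operator topology is unique, it follows that $A_n B = B A_n$. Thus every $A_n$ commutes with the \emph{operator} $B$. Next I would apply the observation once more, now to the sequence $\seq{A}{n}{X}$ with the fixed operator $C := B$: this gives $\llim{p}{n} A_n B = A B$ and $\llim{p}{n} B A_n = B A$, and since $A_n B = B A_n$ for every $n$ by the previous step, uniqueness of the limit forces $A B = B A$. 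This proves the first assertion.

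For the ``in particular'' part I would set $B_n := A_n^*$, so normality of $A_n$ is exactly $A_n B_n = B_n A_n$. It remains to identify $\llim{p}{n} A_n^*$ with $A^*$: for $x,y \in \cH$ we have $\langle A_n^* x, y\rangle = \overline{\langle A_n y, x\rangle}$, and since complex conjugation is continuous it passes through the ultrafilter limit (Proposition \ref{prop:beta-f-properties}), so $\llim{p}{n}\langle A_n^* x, y\rangle = \overline{\langle Ay,x\rangle} = \langle A^* x, y\rangle$; hence $\llim{p}{n} A_n^* = A^*$ in the weak operator topology. Applying the first part to $(A_n)$ and $(A_n^*)$ then yields that $A = \llim{p}{n} A_n$ commutes with $A^* = \llim{p}{n} A_n^*$, i.e. $A$ is normal.

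I do not expect a genuine obstacle here; the only real pitfall is the tempting but false identity $AB = \llim{p}{n} A_n B_n$, which would require interchanging two nested $p$‑limits --- precisely the failure of a Fubini‑type statement for finitely additive ``measures'' flagged earlier in the text. The two‑step ``one factor at a time'' argument above is exactly the device that circumvents it, and the only technical care needed is to know the weak‑operator limits are honest bounded operators, which is already part of Definition \ref{B:def:operators-p-lim}.
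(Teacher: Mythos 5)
Your argument is correct and is essentially the paper's own proof: both rest on the separate weak-operator continuity of multiplication by a fixed operator (applied once on each side), and both handle the normal case via weak continuity of the adjoint; you have merely unfolded the paper's single chain of iterated $p$-limits, $LM=\llim{p}{m}\llim{p}{n}A_mB_n=\llim{p}{m}\llim{p}{n}B_nA_m=ML$, into two explicit one-factor-at-a-time steps. Note only that, exactly like the paper, you actually invoke the cross-commutation $A_nB_m=B_mA_n$ for all $n,m$ rather than the literal diagonal hypothesis $A_nB_n=B_nA_n$; this stronger reading is what the proof needs (and is what holds in the intended applications, where the $A_n$ are powers of a single unitary, so that the required commutation with the adjoints follows as well).
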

\begin{proof}
	Let $L:=\llim{p}{n} A_n$ and $M:=\llim{p}{n} B_n$. By direct computation, using separate weak continuity of operator multiplication, we check that:
	$$ LM = \llim{p}{m} A_m \llim{p}{n} B_n = \llim{p}{m} \llim{p}{n} A_m B_n = \llim{p}{m} \llim{p}{n} B_n A_m = ML.$$

  Because the adjoint is continuous in the weak topology, the additional claim follows by applying the previous part with $\cE = \cH$ and $B_n = A_n^*$.
\end{proof}

We now discuss a very special case of theorems that shall be considered afterwards. The obtained result is not of much interest on its own, but serves as a motivation for what follows. In applications, we will be mostly interested in the limits of powers of the Koopman operator of a dynamical system.

\begin{proposition}\label{B:prop:operator-p-lim-linear-is-projection}\label{B:lem:p-lim-U^n=P}
\index{limit!operator limit}
\index{limit!generalised limit}
	Let $A \in \cB(\cE)$ be a power-bounded\footnote{An operator $A$ is \emph{power-bounded} if the sequence $\norm{A^n}$, $n \in \NN$ is bounded.} operator on a reflexive Banach space $\cE$, and let $p \in \beta \NN$ be an idempotent ultrafilter. Then $P:=\llim{p}{n} A^n$ is idepotent: $P^2 = P$.

	In particular, if $A \in \cB(\cH)$ is a normal operator on a Hilbert space $\cH$ with $\norm{A} \leq 1$, then $P:=\llim{p}{n} A^n$ is an orthogonal projection.
\end{proposition}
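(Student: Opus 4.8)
The plan is to prove $P^2 = P$ for $P := \llim{p}{n} A^n$ directly from the idempotence of $p$, exploiting the weak-continuity properties of the generalised limit of operators established in Definition \ref{B:def:operators-p-lim} and Lemma \ref{B:lem:operators-commutativity}. The key computational identity is $A^{n+m} = A^n A^m$, which mirrors the semigroup structure exploited in Proposition \ref{prop:beta-X-as-semigroup-II} and in Lemma \ref{B:lem:p-lim-of-polynomials-I}. Since $p + p = p$, we should be able to write
$$ P = \llim{p}{k} A^k = \llim{p}{m} \llim{p}{n} A^{n+m} = \llim{p}{m} \llim{p}{n} A^n A^m.$$
The first equality uses that $p$ is idempotent together with the fact (Proposition after Lemma \ref{prop:beta-X-as-semigroup-II}) that iterated $p$-limits along the semigroup operation compute the $p$-limit along $p+p$; concretely, for any bounded function $g$ into a compact space, $\llim{p}{m}\llim{p}{n} g(n+m) = \llim{(p+p)}{k} g(k) = \llim{p}{k} g(k)$, and here $g(k) = A^k$ taken in the weak topology, with the bounded operators living in a weak-* compact ball by power-boundedness and Banach--Alaoglu.

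First I would nail down the justification that the inner limit $\llim{p}{n} A^n A^m$ equals $P A^m$ for each fixed $m$. This is just separate weak continuity of operator multiplication (multiplication by the fixed operator $A^m$ on the left of the weakly convergent net $A^n$), exactly as used in the proof of Lemma \ref{B:lem:operators-commutativity}. Then I would take the outer limit: $\llim{p}{m} P A^m$. Again by separate weak continuity, left multiplication by the fixed operator $P$ is weakly continuous, so $\llim{p}{m} P A^m = P \llim{p}{m} A^m = P P = P^2$. Combining the chain of equalities gives $P = P^2$, which is the first assertion.

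For the second assertion, suppose $A \in \cB(\cH)$ is normal with $\norm{A} \leq 1$. By Lemma \ref{B:lem:operators-commutativity} (applied with $B_n = A_n^* = (A^*)^n$, noting $A^n$ is normal for each $n$ since $A$ is), the weak limit $P = \llim{p}{n} A^n$ is normal. A normal idempotent operator on a Hilbert space is an orthogonal projection: indeed $P = P^2$ and $P P^* = P^* P$ imply, via $\norm{P x}^2 = \scalar{P^*P x, x} = \scalar{P P^* x, x} = \norm{P^* x}^2$ for all $x$, that $\ker P = \ker P^*$; since for an idempotent $\ker P = (\ran P)^{\perp}$ precisely when $\ker P = \ker P^*$, we get $\ran P \perp \ker P$, i.e.\ $P$ is self-adjoint and hence an orthogonal projection. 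I would state this last step cleanly, perhaps citing the standard fact that a normal idempotent is self-adjoint.

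I expect the main obstacle to be purely bookkeeping around the topology: the generalised limit in Definition \ref{B:def:operators-p-lim} is taken in the \emph{weak} topology of $\cB(\cE)$ (the weak operator topology, or more precisely the topology making $\cE$ into the relevant compact space fibrewise), and one must be careful that the double-limit manipulation $\llim{p}{m}\llim{p}{n} A^{n+m} = \llim{(p+p)}{k} A^k$ is legitimate in this setting. The cleanest route is to fix $x \in \cE$ and $\phi \in \cE^*$ and reduce everything to the scalar-valued function $(n,m) \mapsto \phi(A^{n+m} x)$, to which Proposition \ref{prop:beta-f-properties} and the iterated-limit formula for $p \cdot p$ apply verbatim (the target space being a compact disc in $\CC$, or $\hat\RR$-style compactification of a bounded set of scalars); then one concludes the operator identity holds because it holds after pairing with every $\phi$ and $x$. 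The separate weak continuity of multiplication is elementary but should be invoked explicitly, exactly as in Lemma \ref{B:lem:operators-commutativity}. No genuinely hard estimate is needed; the content is entirely in transporting the idempotence of $p$ through the limit.
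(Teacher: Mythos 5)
Your proposal is correct and follows essentially the same route as the paper: use idempotence of $p$ to write $P = \llim{p}{m}\llim{p}{n}A^{n+m}$, factor $A^{n+m}=A^mA^n$, and pass the limits through by separate weak continuity of operator multiplication to get $P = P^2$, then obtain the orthogonal-projection statement from normality of $P$ via Lemma \ref{B:lem:operators-commutativity}. Your extra care about reducing the double-limit manipulation to scalar pairings, and your sketch of why a normal idempotent is self-adjoint, only fill in details the paper leaves implicit.
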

\begin{proof}
	Using idempotence of $P$, we first transform:
	$$P=\llim{p}{n}A^n = \llim{p}{m} \llim{p}{n} A^{n+m} =  \llim{p}{m} \llim{p}{n} A^{m}U^{n}.  $$
	Using the fact that operator multiplication is separately continuous in the weak topology, we can transform further:
	$$ P = \llim{p}{m} \llim{p}{n} A^{m}A^{n} =  \llim{p}{m} A^{m} \llim{p}{n} A^{n} = P^2.$$
	Therefore, $P = P^2$, as required.

	For the additional part of the claim, note that is $A$ is normal, then so is $P$, thanks to Lemma \ref{B:lem:operators-commutativity}. Hence, $P$ is an orthogonal projection thanks to the well-known criterion.
\end{proof}

The following lemma shows how to transform statements like the above into more concrete results about recurrence. We give the most general formulation first, and then apply it to the situation at hand. To begin with, we briefly recall the relevant definitions.

\newcommand{\fM}{\mathfrak{M}}
\newcommand{\X}{\mathsf{X}}

\begin{definition}\label{B:def:measure-preserving-system}
\index{measure preserving system}
  A \emph{measure preserving system} is a quadruple $\X = (X,\fM,\mu,T)$ where $X$ is a compact topological space, $\fM$ is a $\sigma$-algebra on $X$, $\mu$ is a probability measure on $\fM$ and $T: X \to X$ is a  measure preserving transformation. The Koopman operator $U_T \in \cB(L^2(X,\mu))$ associated to $T$ is the operator given by $U_T(f) = f \circ T$. In general, $U_T$ is an isometry. If $T$ is invertible, then $U_T$ is unitary and $U_T^{-1} = U_{T^{-1}}$.
\end{definition}

\begin{lemma}\label{B:lem:Khintchine-preparatory}
\index{limit!operator limit}
\index{limit!generalised limit}
\index{Khintchine}

  Let $(X,\fM,\mu)$ be a measure space. Let $\seq{T}{n}{X}$ be a family of measure preserving invertible transformations, and let $U_n \in \cB(L^2(X,\mu))$ be the associated Koopman operators. Suppose that the limit $P := \llim{p}{n}{U_n}$ is a projection. Finally, let $A \in \fM$ be such that $\mu(A) > 0$.  Then:
  $$ \llim{p}{n} \mu(A \cap T^{-1}_nA ) \geq \mu(A)^2. \qedhere $$
\end{lemma}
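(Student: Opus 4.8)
The plan is to reduce the claim to a short Hilbert-space argument applied to the indicator function of $A$. First I would set $f := \mathbf{1}_A \in L^2(X,\mu)$ and let $\mathbf{1} := \mathbf{1}_X$ denote the constant function. Since each $T_n$ is measure preserving, $U_n f = \mathbf{1}_A \circ T_n = \mathbf{1}_{T_n^{-1}A}$, and hence $\mu(A \cap T_n^{-1}A) = \int_X \mathbf{1}_A \cdot \mathbf{1}_{T_n^{-1}A}\,d\mu = \langle f, U_n f\rangle$ for every $n$. Taking generalised limits along $p$, and using that $P = \llim{p}{n} U_n$ is the limit in the weak operator topology (Definition \ref{B:def:operators-p-lim}) together with the fact that generalised limits commute with continuous maps (Proposition \ref{prop:beta-f-properties}), this rewrites the left-hand side as $\llim{p}{n} \mu(A \cap T_n^{-1}A) = \langle f, Pf\rangle$. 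So the whole statement comes down to showing $\langle f, Pf\rangle \geq \mu(A)^2$.

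Next I would record two structural facts about $P$. Since the $T_n$ are invertible and measure preserving, each Koopman operator $U_n$ is unitary, hence normal; by Lemma \ref{B:lem:operators-commutativity} the limit $P$ is normal, and being idempotent by hypothesis it is an orthogonal projection (a normal idempotent on a Hilbert space is self-adjoint, by the same spectral criterion used in Proposition \ref{B:prop:operator-p-lim-linear-is-projection}). Secondly, because $U_n \mathbf{1} = \mathbf{1}\circ T_n = \mathbf{1}$ for all $n$, we also have $P\mathbf{1} = \mathbf{1}$. With $P = P^* = P^2$ in hand, $\langle f, Pf\rangle = \langle Pf, Pf\rangle = \norm{Pf}^2$, and Cauchy--Schwarz against $\mathbf{1}$, together with self-adjointness of $P$ and $\norm{\mathbf{1}}^2 = \mu(X) = 1$, gives $\norm{Pf}^2 = \norm{Pf}^2\norm{\mathbf{1}}^2 \geq |\langle Pf,\mathbf{1}\rangle|^2 = |\langle f, P\mathbf{1}\rangle|^2 = |\langle f,\mathbf{1}\rangle|^2 = \mu(A)^2$. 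Chaining the identities finishes the proof.

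I do not expect a serious obstacle: the argument is routine once the dictionary ``return-time measure $=$ matrix coefficient $\langle f, U_n f\rangle$'' is in place. The one point genuinely requiring care is that $P$ must be an \emph{orthogonal} projection, not merely idempotent, since the final Cauchy--Schwarz step uses self-adjointness of $P$; this is precisely where normality of the Koopman operators (hence of $P$, via Lemma \ref{B:lem:operators-commutativity}) enters, and it explains why the hypothesis is phrased in terms of the $U_n$ rather than an abstract power-bounded sequence. A minor secondary check is that the weak operator limit really yields $\langle f, U_n f\rangle \to \langle f, Pf\rangle$ along $p$, which is immediate from the definition of $\llim{p}{n}$ taken in the weak topology, and the implicit standing assumption that $\mu$ is a probability measure (so $\norm{\mathbf{1}} = 1$), consistent with Definition \ref{B:def:measure-preserving-system}.
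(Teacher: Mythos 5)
Your argument is correct and is essentially the paper's own proof: the same dictionary $\mu(A\cap T_n^{-1}A)=\langle 1_A,U_n1_A\rangle$, the same use of $P1_X=1_X$, and the same Cauchy--Schwarz step $\norm{P1_A}^2\norm{1_X}^2\ge\left|\langle P1_A,1_X\rangle\right|^2=\mu(A)^2$. Your explicit remark that $P$ must be an \emph{orthogonal} projection (obtained from normality of the $U_n$ via Lemma \ref{B:lem:operators-commutativity}) is a worthwhile clarification of a point the paper leaves implicit in the word ``projection''.
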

\begin{proof}
	Let $1_A$ denote the characteristic function of $A$, let $1_X$ denote the constant function $1$. Note that $U_n 1_X = 1_X$, and thus also $P 1_X = 1_X$. We can now translate the statements about measures of sets into statements about scalar products, in particular $\mu(A) = \scalar{1_A,1_X}$ and $\mu(A \cap T_{n}^{-1}A ) = \scalar{1_A,U_n 1_A}$. It follows that:
	\begin{align*}
	 \llim{p}{n} \mu(A \cap T_{n}A ) &= \llim{p}{n} \scalar{1_A,U^n 1_A} = \scalar{1_A, \llim{p}{n} U^n 1_A} 
	\\&= \scalar{1_A, P 1_A} = \norm{P 1_A}^2 = \norm{P 1_A}^2 \norm{P 1_X}^2 
	\\& \geq \scalar{P 1_A ,P 1_X}^2 = \scalar{1_A ,P 1_X}^2
	= \scalar{1_A , 1_X}^2 = \mu(A)^2
	\end{align*}
\end{proof}

\begin{corollary}[Khintchine]\label{B:cor:khintchine-linear}
\index{limit!operator limit}
\index{limit!generalised limit}
\index{Khintchine}
\index{IP@$\IP$!IP-set@$\IP$-set}

	Let $(X,\fM,\mu,T)$ be a measure preserving system, let $A \in \fM$ be such that $\mu(A) > 0$, and let $p \in \beta \ZZ$ be an idempotent ultrafilter. Then:
	$$ \llim{p}{n} \mu(A \cap T^{-n}A ) \geq \mu(A)^2 .$$
	In particular, we have:
	$$ \limsup_{n \to \infty} \mu(A \cap T^{-n}A ) \geq \mu(A)^2 .$$
	Moreover, for any $\eps > 0$, the set of return times:
	$$ R_\eps := \{ n \in \ZZ \setsep \mu(A \cap T^{-n}A ) > \mu(A)^2 - \eps \} $$
	is $p$-large, and therefore is an $\IPst$ set.
\end{corollary}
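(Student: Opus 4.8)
The plan is to combine the abstract operator-theoretic lemma \ref{B:lem:p-lim-U^n=P} with the concrete estimate in Lemma \ref{B:lem:Khintchine-preparatory}, and then invoke the $\IP^*$-characterisation from Corollary \ref{cor:IPst-characterisation}. First I would fix the measure preserving system $(X,\fM,\mu,T)$, let $U := U_T$ be the associated Koopman operator, and recall that since $T$ is measure preserving, $U$ is an isometry of $L^2(X,\mu)$, hence power-bounded (indeed $\norm{U^n} = 1$ for all $n$). If $T$ is invertible, $U$ is unitary, in particular normal with $\norm{U} \le 1$; in the non-invertible case one works instead with the family $\seq{T}{n}{\NN}$ of forward iterates, or passes to the natural extension — I would state the argument for the invertible case and remark that only power-boundedness of the relevant Koopman operators is used. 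Applying Proposition \ref{B:lem:p-lim-U^n=P} to $A := U$ and the idempotent $p \in \beta\ZZ$, we get that $P := \llim{p}{n} U^n$ is an orthogonal projection on the Hilbert space $L^2(X,\mu)$.

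Next I would feed this into Lemma \ref{B:lem:Khintchine-preparatory}: taking $T_n := T^n$ so that $U_n = U^n$, the hypothesis that $P = \llim{p}{n} U_n$ is a projection is exactly what we just established, and the lemma yields directly
$$ \llim{p}{n} \mu(A \cap T^{-n}A) \ge \mu(A)^2. $$
The $\limsup$ statement is then immediate: a generalised limit along an ultrafilter $p$ of a bounded real sequence is always a limit point of that sequence, hence is $\le \limsup_{n\to\infty} \mu(A \cap T^{-n}A)$ (one can see this from Proposition \ref{prop:characterisation-of-limits}, since $\llim{p}{n} x_n \in \bigcap_{B \in p} \cl\{x_n : n \in B\}$, and any such point is approached by the sequence along the infinite set $B$). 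Combining the two inequalities gives $\limsup_{n\to\infty}\mu(A \cap T^{-n}A) \ge \mu(A)^2$.

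For the statement about $R_\eps$, fix $\eps > 0$ and set $\alpha := \llim{p}{n}\mu(A \cap T^{-n}A) \ge \mu(A)^2 > \mu(A)^2 - \eps$. The interval $(\mu(A)^2 - \eps, \infty)$ is an open neighbourhood of $\alpha$ in $\hat\RR$ (or in $[0,1]$), so by the very definition of the generalised limit (Definition \ref{A:def:gen-limit}) its preimage under $n \mapsto \mu(A\cap T^{-n}A)$ belongs to $p$; but that preimage is precisely $R_\eps$. Hence $R_\eps$ is $p$-large. Since $p$ was an \emph{arbitrary} idempotent ultrafilter on $\ZZ$, the set $R_\eps$ belongs to every idempotent ultrafilter, so by the ultrafilter characterisation of $\IP^*$-sets in Corollary \ref{cor:IPst-characterisation}, $R_\eps$ is an $\IP^*$-set.

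I do not anticipate a genuine obstacle here — the theorem is essentially a bookkeeping assembly of results already proved. The one point that needs a little care is making sure the hypotheses of the cited lemmas are literally met: Proposition \ref{B:lem:p-lim-U^n=P} needs $U$ normal with $\norm{U}\le 1$ (fine for $T$ invertible), and Lemma \ref{B:lem:Khintchine-preparatory} is stated for a family of invertible transformations with $P$ a projection. If one wants the non-invertible case, the mildest fix is to note that $P = \llim{p}{n}U^n$ is still idempotent for any power-bounded $U$ by Proposition \ref{B:prop:operator-p-lim-linear-is-projection}, and that the computation in Lemma \ref{B:lem:Khintchine-preparatory} only uses $U^n 1_X = 1_X$, $\scalar{1_A, U^n 1_A} = \mu(A \cap T^{-n}A)$, and $P$ being an orthogonal projection (which for an idempotent contraction on a Hilbert space again follows once $P$ is self-adjoint, e.g. via normality of $U$ passed through Lemma \ref{B:lem:operators-commutativity}). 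So the only mild subtlety is the invertible-versus-general phrasing, and I would simply match the statement to the invertible setting already used in Lemma \ref{B:lem:Khintchine-preparatory}.
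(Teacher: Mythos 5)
Your proposal is correct and follows essentially the same route as the paper: combine Proposition \ref{B:lem:p-lim-U^n=P} (the limit of powers along an idempotent is an orthogonal projection) with Lemma \ref{B:lem:Khintchine-preparatory}, then read off the $\limsup$ and $R_\eps$ statements from the definition of the generalised limit and Corollary \ref{cor:IPst-characterisation}. Your remark about the invertible-versus-general discrepancy between the corollary's hypotheses and those of the cited lemmas is a legitimate point of care that the paper's two-sentence proof silently glosses over.
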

\begin{proof}
	The first statement is an immediate application of the above preparatory Lemma \ref{B:lem:p-lim-U^n=P}. The additional parts of the statement are just equivalent ways of expressing the convergence, and quantifying over all idempotents $p$.
\end{proof}

Our next goal is to give more general theorems describing when the operators of the form $\llim{p}{n}{U^{f(n)}}$ are projections. We will need some preliminary results. 

The following decomposition is a classical theorem. It will be important for applications of minimal ultrafilters. A detailed proof and discussion can be found in Eisner's \cite{Tanja-operators}. With the theory on compact semigroups developed in Chapter \ref{A:chapter} we could re-derive it without too much work, but it would take us too far afield, so we merely cite it instead.

\begin{theorem}[Jacobs-Glicksberg-de Leeuw decomposition]\label{B:thm:JGL-decomposition}
\index{Jacobs-Glicksberg-de Leeuw decomposition}
  Let $\cE$ be a reflexive Banach space, and let $A \in \cB(\cE)$ be an operator with $\norm{A} \leq 1$. Then, $\cE$ decomposes into the direct sum $\cE_r \oplus \cE_s$, where:
  \begin{align*}
    \cE_r &:= \bar\lin\{ f \in \cE \setsep (\exists \gamma \in \CC,\ \abs{\gamma} = 1)\ A f = \gamma f \}, \\
    \cE_s &:= \{ f \in \cE \setsep 0 \in \cl^{\operatorname{weak}}\{A^nf\}_{n \in \NN} \}.
  \end{align*}
  The minimal idempotent $Q$ in the semigroup generated by $A$ is the orthogonal projection onto $\cE_r$.
\end{theorem}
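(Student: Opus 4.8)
The plan is to recover this theorem from the compact-semigroup machinery of Chapter \ref{A:chapter}, together with one input from the representation theory of compact groups; the full argument is carried out in \cite{Tanja-operators}, and the sketch below indicates the structure one would follow. First I would form $S := \cl\{A^n \setsep n \in \NN\} \subset \cB(\cE)$, the closure taken in the weak operator topology. Since $\norm{A} \le 1$ and $\cE$ is reflexive, the closed unit ball of $\cB(\cE)$ is weak-operator compact (Banach--Alaoglu together with Tychonoff), so $S$ is compact; and for fixed $T$ the map $R \mapsto RT$ is weak-operator continuous, so $S$ is a compact left-topological semigroup in the sense of Chapter \ref{A:chapter}. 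Moreover $A$ commutes with every $A^n$, hence (by density of $\{A^n\}$ and separate continuity) with every element of $S$, so $S$ is in fact commutative. Applying the structure theory of commutative compact left-topological semigroups, $\Kappa(S)$ is a compact group, and it contains a unique idempotent $Q$ — this is the minimal idempotent of the statement, whose existence is guaranteed by Theorem \ref{A:thm:idempotents-exist-in-general}. Since $Q$ commutes with $A$, both $\ran{Q}$ and $\ker{Q}$ are closed $A$-invariant subspaces with $\cE = \ran{Q} \oplus \ker{Q}$, and everything reduces to showing $\ran{Q} = \cE_r$ and $\ker{Q} = \cE_s$.

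Next I would identify $\ker{Q} = \cE_s$. Because $Q \in S = \cl\{A^n\}$, there is a net $A^{n_\alpha} \to Q$ in the weak operator topology, so $A^{n_\alpha} f \to Qf$ weakly for every $f$; hence $Qf = 0$ implies $0 \in \cl^{\operatorname{weak}}\{A^n f\}$, giving $\ker{Q} \subseteq \cE_s$. For the reverse inclusion, if $0 \in \cl^{\operatorname{weak}}\{A^n f\}$, then applying the weak-operator-continuous operator $Q$ and using $QA^n = A^n Q$ yields $0 \in \cl^{\operatorname{weak}}\{A^n (Qf)\}$; but $Qf$ lies in $\ran{Q}$, on which (by the next step) $\Kappa(S)$ acts through a compact group of invertible operators whose orbits are weakly compact, hence weakly closed, so $0 = R(Qf)$ for some invertible $R$, forcing $Qf = 0$. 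Thus $\cE_s = \ker{Q}$, and in particular $\cE_s$ is a linear subspace.

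For $\ran{Q} = \cE_r$: on a unimodular eigenvector, $Af = \gamma f$ with $\abs{\gamma}=1$ gives $A^n f = \gamma^n f$, so every $R \in S$ acts on $f$ by a scalar of modulus $1$, and the only idempotent such scalar is $1$; thus $Qf = f$ and $\cE_r \subseteq \ran{Q}$. For the reverse inclusion I would restrict the group $\Kappa(S) = SQ$ to $\ran{Q}$, obtaining a compact group $G$ of uniformly bounded invertible operators with $A\restrict{\ran{Q}} = (AQ)\restrict{\ran{Q}} \in G$. By the Peter--Weyl theorem applied to the uniformly bounded, weak-operator-continuous representation of the compact group $G$ on the reflexive space $\ran{Q}$, that space is the closed linear span of its finite-dimensional $G$-invariant subspaces; on each of these $A$ belongs to a compact abelian subgroup of the general linear group, hence is diagonalisable with unimodular eigenvalues. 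Therefore $\ran{Q}$ is spanned by unimodular eigenvectors of $A$, so $\ran{Q} = \cE_r$, and $Q$ is the projection onto $\cE_r$ along $\cE_s$. In the Hilbert case with $\norm{A} \le 1$ one additionally has $A^*f = \bar{\gamma}f$ whenever $Af = \gamma f$ with $\abs{\gamma}=1$, so distinct unimodular eigenspaces are mutually orthogonal; running the same construction for $A^*$ gives $\ran{Q^*} = \cE_r = \ran{Q}$, which forces $\ker{Q} = (\ran{Q})^{\perp}$, i.e. $Q$ is an orthogonal projection.

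The main obstacle is the Peter--Weyl step: passing from the abstract fact that $\ran{Q}$ carries an action of a compact group to the concrete spectral decomposition of $A$ on $\ran{Q}$ into unimodular eigenspaces. This is where the semigroup formalism stops being sufficient and genuine harmonic analysis enters, and it also requires care about the precise continuity of the $G$-action (weak-operator continuity plus uniform boundedness on a reflexive space) needed to invoke Peter--Weyl. For this reason we merely cite \cite{Tanja-operators}.
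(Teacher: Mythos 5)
The paper does not actually prove this theorem: it explicitly defers to Eisner \cite{Tanja-operators}, remarking only that the compact-semigroup machinery of Chapter \ref{A:chapter} would suffice to re-derive it. Your sketch fills in exactly the outline the paper alludes to, and it is essentially correct: $S = \cl\{A^n\}$ in the weak operator topology is a compact commutative semigroup by Proposition \ref{A:prop:algebra-closure-of-sgrp-is-sgrp-II} (both left and right multiplications are WOT-continuous here, so the closure argument goes through), the commutative structure theory gives a unique minimal idempotent $Q$ with $\Kappa(S)=SQ$ a compact group, and your identifications $\ker{Q}=\cE_s$ (via weak compactness of the orbit $\Kappa(S)(Qf)$ and invertibility in the group) and $\cE_r\subseteq\ran{Q}$ (unimodular scalars, idempotent scalar equals $1$) are sound. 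Two points deserve flagging. First, the inclusion $\ran{Q}\subseteq\cE_r$ is the genuinely hard step, and you have correctly isolated it; but note that invoking Peter--Weyl requires knowing that the compact \emph{semitopological} group $\Kappa(S)\restrict{\ran{Q}}$ is in fact a \emph{topological} group (this is Ellis' joint-continuity theorem, a nontrivial input beyond anything in Chapter \ref{A:chapter}), so your caveat about ``care about the precise continuity'' is hiding a real theorem, not a routine verification. Second, the theorem as stated in the paper speaks of an ``orthogonal projection'' on a general reflexive Banach space, which only literally makes sense in the Hilbert case; your separate treatment of that case (via $Af=\gamma f$, $\abs{\gamma}=1$ forcing $A^*f=\bar{\gamma}f$ for a contraction) is the right way to make sense of it. On the whole your proposal does more than the paper does, and does it correctly at the level of a sketch.
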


The above decomposition allows us to consider the operator limits of powers of $A$ on the two spaces $\cE_r$ and $\cE_s$ independently. The situation of $\cE_r$ is especially simple, as the following observation shows.

\begin{observation}\label{B:lem:p-lim-on-E_r}
\index{limit!operator limit}
\index{limit!generalised limit}
\index{polynomial!almost polynomial!admissible}
  Let $p \in \beta \ZZ$, and $f \in \NAP^p(\ZZ,\ZZ)$ be such that $f > 0$ $p$-a.e.. Let $\cE$ be a reflexive Banach space, let $A \in \cB(\cE)$ be an operator with $\norm{A} \leq 1$, and let $\cE = \cE_r \oplus \cE_s$ be the Jacobs-Glicksberg-de Leeuw decomposition of $\cE$ with respect to $A$. Then the limit $P := \llim{p}{n}{A^{f(n)}}$, restricted to $\cE_r$, is the identity operator $I_{\cE_r}$.
\end{observation}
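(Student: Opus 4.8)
The plan is to work on the dense subspace $\cE_r^0 := \lin\{ f \in \cE \setsep (\exists \gamma \in \CC,\ \abs{\gamma} = 1)\ Af = \gamma f\}$, i.e.\ finite linear combinations of unimodular eigenvectors, and then extend to $\cE_r$ by continuity. So first I would fix an eigenvector $v$ with $Av = \gamma v$, $\abs{\gamma} = 1$. Then $A^{f(n)} v = \gamma^{f(n)} v$, so $\llim{p}{n} A^{f(n)} v = \left( \llim{p}{n} \gamma^{f(n)} \right) v$, where the scalar limit is taken in the compact group $\TT$ (identifying $\gamma = e^{2\pi i \theta}$, the map $n \mapsto \gamma^{f(n)}$ is, via $\TT$-valued reasoning, $\theta f(n) \bmod 1$). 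The crux is then to show $\llim{p}{n} \gamma^{f(n)} = 1$.

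The key observation is that $f \in \NAP^p(\ZZ,\ZZ)$ means precisely that $f$ is an admissible ($\nice$) $p$-almost polynomial, i.e.\ $C(f) = 0$ (Definition \ref{B:def:gen-poly-admissible}). By Proposition \ref{B:lem:p-lim-of-gen-poly}, for any $\alpha \in \TT$ we have $\llim{p}{n} \alpha f(n) = \alpha C(f) = 0$ in $\TT$. Applying this with $\alpha$ corresponding to $\theta$ (the argument of $\gamma$) gives $\llim{p}{n} \gamma^{f(n)} = 1$ in $\TT$, hence as a complex number. Therefore $\llim{p}{n} A^{f(n)} v = v$. By linearity, $P$ agrees with the identity on all of $\cE_r^0$. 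Note the hypothesis $f > 0$ $p$-a.e.\ is what makes $A^{f(n)}$ well-defined $p$-a.e.\ (so that the generalised limit makes sense even when $A$ is a non-invertible isometry rather than invertible); in the eigenvector computation it simply ensures the exponents are genuine nonnegative powers.

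To finish, I would extend from $\cE_r^0$ to $\cE_r = \cl{\cE_r^0}$. Here I use that $P = \llim{p}{n} A^{f(n)}$ is a bounded operator with $\norm{P} \leq 1$ (each $A^{f(n)}$ has norm $\leq 1$, and this is preserved under weak limits), so $P$ is continuous; the identity operator is also continuous; two continuous operators agreeing on a dense subset agree everywhere. Hence $P\restrict{\cE_r} = I_{\cE_r}$, as claimed. The main obstacle, such as it is, is purely bookkeeping: making sure the identification of the scalar computation $\gamma^{f(n)}$ with a $\TT$-valued sequence $\alpha f(n)$ is clean so that Proposition \ref{B:lem:p-lim-of-gen-poly} applies verbatim, and checking that one really is allowed to pass the generalised limit through the (linear, weakly continuous) action on a fixed vector — but that is immediate from the definition of $\llim{p}{}{}$ in the weak topology together with weak continuity of evaluation at a vector.
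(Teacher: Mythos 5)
Your proposal is correct and follows essentially the same route as the paper: reduce to a unimodular eigenvector $v$ with $Av=\gamma v$, note $A^{f(n)}v=\gamma^{f(n)}v$, invoke Proposition \ref{B:lem:p-lim-of-gen-poly} (with the section's standing convention that $p$ is idempotent) to get $\llim{p}{n}\gamma^{f(n)}=1$, and pass to all of $\cE_r$ by linearity and density. Your explicit continuity/norm-bound argument for extending from the linear span to its closure is a detail the paper glosses over with ``such $f$ span $\cE_r$'', but it is the same proof.
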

\begin{proof}
  We first note that for $f \in \cE$ such that $Af = \gamma f$ with $\abs{\gamma} = 1$, we have $P f = f$. This is true because $A^{f(n)} x = \gamma^{f(n)} x$, and by assumption on $f$ and Lemma \ref{B:lem:p-lim-of-gen-poly} we have $ \llim{p}{n}  \gamma^{f(n)}  = 1$. Because such $f$ span $\cE_r$, it follows that $P|_{\cE_r} = I_{\cE_r}$.
\end{proof}

  It follows form the above observation that the situation on $\cE_r$ is clear in the most generality we can hope for. On $\cE_s$ we only have the simple result provived by Proposition \ref{B:lem:p-lim-U^n=P}. This is as much as we are able to say for general (reflexive) Banach spaces. 

To make further progress we need to restrict to Hilbert spaces, which allows us to use the version of van der Corput Lemma for generalised limits. This lemma is of vital importance for many inductive arguments.

\begin{lemma}[van der Corput, \cite{Schnell}]\label{B:lem:v-d-Corput}
\index{van der Corput}
\index{limit!operator limit}
\index{limit!generalised limit}
  Let $\cH$ be a Hilbert space, let $(X,+)$ be a semigroup, and let $\seq{x}{n}{X} \in \cH^X$ be a bounded family indexed by elements of $X$, and let $p \in \beta X$ be an idempotent ultrafilter. Suppose additionally that $\llim{p}{m} \llim{p}{n} \scalar{x_{n+m},x_n} = 0$. Then it also holds true that $\llim{p}{n} x_n = 0$. 
\end{lemma}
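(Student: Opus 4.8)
\textbf{Proof plan for the van der Corput Lemma (Lemma \ref{B:lem:v-d-Corput}).}

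The plan is to prove the statement in two stages: first show that the conclusion $\llim{p}{n} x_n = 0$ holds in the \emph{weak} topology, and then upgrade to norm convergence if that is what is intended; in fact for this formulation weak convergence is the natural target since the hypothesis is stated in terms of inner products. Let me write $y := \llim{p}{n} x_n$, which exists in the weak topology because $\cH$ is a Hilbert space (hence reflexive), the family $\seq{x}{n}{X}$ is bounded, and Proposition \ref{A:prop:U-limit-exists} guarantees existence of generalised limits into the weakly-compact ball. My goal is to show $\scalar{y,y} = 0$, hence $y = 0$.

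The key computation proceeds as follows. Using idempotence of $p$ and the rule $\llim{(p+p)}{n} = \llim{p}{m}\llim{p}{n}$ with $n$ replaced by $n+m$ (Proposition on iterated limits from the section on algebraic structure), I would write
$$
\scalar{y, y} = \llim{p}{m} \scalar{x_m, y} = \llim{p}{m} \scalar{x_m, \llim{p}{n} x_n} = \llim{p}{m}\llim{p}{n} \scalar{x_m, x_n}.
$$
Here the first equality uses $y = \llim{p}{m} x_m$ together with weak continuity of $v \mapsto \scalar{v, y}$, and the second uses weak continuity of $v \mapsto \scalar{x_m, v}$ for fixed $m$, both applied through Proposition \ref{prop:beta-f-properties}. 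Now, again exploiting idempotence, I replace the index: since $p + p = p$, for any bounded scalar family $a_{m,n}$ one has $\llim{p}{m}\llim{p}{n} a_{m,n} = \llim{p}{m}\llim{p}{n} a_{m, n+m}$ — this is the substitution $n \mapsto n+m$ inside the inner limit, valid because $\llim{p}{n} a_{m,n+m} = \llim{(p+p)}{n}$ applied to the shifted family equals $\llim{p}{n} a_{m,n}$ when we also track the outer variable (one has to be slightly careful and I would phrase it as: apply the identity $\llim{p}{n} g(n) = \llim{p}{m}\llim{p}{n} g(n+m)$ to $g(n) = \scalar{x_m, x_n}$ for each \emph{fixed} $m$, which is exactly idempotence of $p$ acting on the $\cH$-valued family $n \mapsto x_n$ composed with the continuous map $v \mapsto \scalar{x_m, v}$). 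This yields
$$
\scalar{y, y} = \llim{p}{m}\llim{p}{n} \scalar{x_m, x_{n+m}} = \llim{p}{n'}\llim{p}{n} \scalar{x_{n'}, x_{n+n'}}
$$
after renaming, and reversing the conjugate-symmetric inner product and swapping the (independent) outer variables, this equals $\overline{\llim{p}{m}\llim{p}{n} \scalar{x_{n+m}, x_n}} = \overline{0} = 0$ by hypothesis.

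The main obstacle — and the step deserving the most care — is the legitimacy of the index substitution $n \mapsto n+m$ under the double generalised limit, because $X$ is only a semigroup (not a group), so there is no literal reindexing bijection; everything must be routed through the idempotence identity $\llim{p}{n} h(n) = \llim{p}{m}\llim{p}{n} h(n+m)$ for $\cH$-valued (or scalar-valued) bounded $h$, which itself follows from the alternative description of the semigroup operation on $\Ultrafilters{X}$ (Lemma \ref{prop:beta-X-as-semigroup-II}) together with $p + p = p$. A secondary subtlety is interchanging the two outer limits over $m$ and the (now decoupled) variable: this is fine because after the substitution the two iterated-limit expressions $\llim{p}{m}\llim{p}{n}\scalar{x_m,x_{n+m}}$ and $\llim{p}{m}\llim{p}{n}\scalar{x_{n+m},x_n}$ involve genuinely the same double-indexed scalar family up to conjugation and relabeling of which copy of $p$ is ``outer'', so I would simply match them termwise rather than appeal to any Fubini-type statement (which, as the excerpt warns, fails for finitely additive measures). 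Once $\scalar{y,y}=0$ is established, $y = 0$ is immediate, completing the proof.
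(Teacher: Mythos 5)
There is a genuine gap, and it sits exactly where you planted your warning flag. The idempotence identity actually available is $\llim{p}{n} g(n) = \llim{p}{a}\llim{p}{b}\, g(a+b)$, and the two variables it produces are \emph{fresh}: applying it to the inner limit of $\scalar{y,y} = \llim{p}{m}\llim{p}{n}\scalar{x_m,x_n}$ yields the triple limit $\llim{p}{m}\llim{p}{a}\llim{p}{b}\scalar{x_m,x_{a+b}}$, not $\llim{p}{m}\llim{p}{n}\scalar{x_m,x_{n+m}}$; identifying the fresh outer variable with the already-bound $m$ is not licensed by $p+p=p$. Indeed the general identity $\llim{p}{m}\llim{p}{n}a_{m,n} = \llim{p}{m}\llim{p}{n}a_{m,n+m}$ you assert for bounded scalar families is false: for a fixed $m$ the inner limit $\llim{p}{n}h(n+m)$ is a limit along $\principal{m}+p$, which is a different ultrafilter from $p$. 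Concretely, take $X=\NN$, $\bx=(4^i)_{i}$, $C=\FS{\bx}$, $p$ an idempotent containing every $\FS{\sigma^k\bx}$, and $a_{m,n}:=\chi_{m+C}(n)$; then $a_{m,n+m}=\chi_C(n)$ gives inner limit $1$ for every $m$, while for each fixed $m\geq 1$ the set $m+C$ misses $\FS{\sigma^k\bx}$ for all large $k$ (uniqueness of base-$4$ digits), so $m+C\notin p$ and the unshifted inner limit is $0$; the two iterated limits are $1$ and $0$. A second, independent problem is the final step: $\llim{p}{m}\llim{p}{n}\scalar{x_m,x_{n+m}}$ and $\overline{\llim{p}{m}\llim{p}{n}\scalar{x_{n+m},x_n}}=\llim{p}{m}\llim{p}{n}\scalar{x_n,x_{n+m}}$ are related by exchanging which of the two \emph{coupled} indices the outer limit governs (the families $a_{m,n}=\scalar{x_m,x_{n+m}}$ and $b_{m,n}=\scalar{x_n,x_{n+m}}$ satisfy $a_{m,n}=b_{n,m}$), so ``matching termwise'' here is precisely the interchange of iterated $p$-limits, i.e.\ the Fubini-type statement the paper explicitly warns fails.

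More fundamentally, no purely formal reindexing can bridge the gap between the hypothesis, which only controls correlations $\scalar{x_{n+m},x_n}$ of an element with its own shift, and $\norm{y}^2=\llim{p}{m}\llim{p}{n}\scalar{x_m,x_n}$, where the two indices are decoupled. The paper supplies the missing mechanism, namely the classical van der Corput averaging: by idempotence $y$ admits, for every $s$, the representation $y=\llim{p}{n_1,\dots,n_s}x_{n_1+\cdots+n_s}$, hence $y=\frac{1}{N}\sum_{s=1}^{N}\llim{p}{n_1,\dots,n_s}x_{n_1+\cdots+n_s}$; pulling the sum under a single multi-variable limit and using weak lower semicontinuity of the norm, the hypothesis (iterated via idempotence) kills every off-diagonal term $r\neq s$ in the expansion of $\norm{\sum_{s=1}^{N}x_{n_1+\cdots+n_s}}^2$, leaving $\norm{y}^2\leq\frac{1}{N}\llim{p}{n}\norm{x_n}^2$ for every $N$, whence $y=0$. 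Your framing of the target (weak limit $y$, reduce to $\scalar{y,y}=0$) is correct and compatible with the paper, but the middle of your argument must be replaced by this averaging step.
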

\begin{proof}
	Let us denote $y :=  \llim{p}{n} x_n$. As an immediate application of idempotence of $p$, we notice that for any positive interger $s$ we have can also express $y$ as:
$$y = \llim{p}{n_1,\dots,n_s} x_{n_1+\dots + n_s}.$$
 Likewise, we notice that the condition $\llim{p}{m} \llim{p}{n} \scalar{x_{n+m},x_n} = 0$, together with idempotence of $p$, implies that we have for any $r,s \geq 1$:
$$\llim{p}{m_1,\dots,m_r} \llim{p}{n_1,\dots,n_s} \scalar{x_{n_1+\dots + n_s+m_1 +\dots+m_r },x_{n_1+\dots + n_s}} = 0.$$

For any $N$ we therefore have: 
$$ y = \frac{1}{N} \sum_{s=1}^N \llim{p}{n_1,\dots,n_s} x_{n_1+\dots + n_s}. $$
In particular, because norm is semi-continuous from below in the weak topology:
\begin{align*} \norm{y}^2 &= \frac{1}{N^2} \norm{ \llim{p}{n_1,\dots,n_N} \sum_{s=1}^N  x_{n_1+\dots + n_s}}^2 \\
&\leq  \frac{1}{N^2} \llim{p}{n_1,\dots,n_N} \norm{  \sum_{s=1}^N  x_{n_1+\dots + n_s}}^2 \\
&=  \frac{1}{N^2} \sum_{r,s=1}^N \llim{p}{n_1,\dots,n_N} \scalar{ x_{n_1+\dots + n_s}, x_{n_1+\dots + n_r}}.
\end{align*}
As a direct application of the remark about scalar products, if $r \neq s$ we have  $$\llim{p}{n_1,\dots,n_N} \scalar{ x_{n_1+\dots + n_s}, x_{n_1+\dots + n_r}} = 0.$$ It allows us to simplify the above expression:
\begin{align*} \norm{y}^2 
&\leq \frac{1}{N^2} \sum_{s=1}^N \llim{p}{n_1,\dots,n_N} \norm{x_{n_1+\dots + n_s}}^2 \\
&=  \frac{1}{N}  \llim{p}{n} \norm{x_n}^2.
\end{align*}
Because $N$ was chosen arbitrarily and $\llim{p}{n} \norm{x_n}^2$ is a constant independent of $N$, it follows that $\norm{y}$ has to be equal to $0$. Thus, $y = 0$, as desired.
\end{proof}

With this tool we are able to obtain a general result on limits along \emph{minimal} idempotents. Under some additional assumptions, we are able to identify the limit quite explicitly as the minimal projection. Somewhat surprisingly, the case of degree $1$ almost polynonials seems to be the most problematic. The following lemma is somewhat unsatisfactory --- in a sense, it formalises an induction procedure, but does not secure the basic step. To avoid repetition, we introduce the situation which will be common in a number of consecutive results. 

	\begin{convention}\label{B:situation:Fp}
We let $\cH$ denote a Hilbert space, $p \in \beta \ZZ$ an ultrafilter, and $U$ a fixed unitary operator on $\cH$. Let $Q$ denote the minimal projection generated by $U$, as in Theorem \ref{B:thm:JGL-decomposition}. Finally, let the class of maps $\cF^p$ be defined by:
  $$ \cF^p = \{ f \in \NAP^p(\ZZ,\ZZ) \setsep  \llim{p}{n} U^{f(n)} = Q \}. $$	
	\end{convention}

\begin{proposition}\label{B:lem:min-rec-generic}	
\index{polynomial!almost polynomial!admissible}
\index{recurrence}
\index{ultrafilter!minimal}
	Assume notation as in {B:situation:Fp}, and suppose that $f \in \NAP^p$ is such that for $p$-a.a. $a$ we have $\DeltaS_a f \in \cF^p$. Then $f \in \cF^p$.
\end{proposition}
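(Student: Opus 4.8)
The statement formalises the inductive step for showing that an admissible almost polynomial generates the minimal projection $Q$ via generalised limits. The natural tool is the van der Corput Lemma \ref{B:lem:v-d-Corput}, applied to a suitably chosen family of vectors in the Hilbert space $\cH$. Concretely, for a fixed $x \in \cH$ I would set $x_n := U^{f(n)}x - Qx$, and try to show $\llim{p}{n} x_n = 0$ in the weak topology; since this would hold for every $x$, it gives $\llim{p}{n} U^{f(n)} = Q$, i.e. $f \in \cF^p$. (One should first check the reduction is legitimate: $Q$ commutes with $U$ and is the minimal projection, so $U^{f(n)}Qx = Qx$ using Observation \ref{B:lem:p-lim-on-E_r} on $\cE_r = \operatorname{ran} Q$; hence it suffices to handle $x \in \cE_s = \ker Q$, where the target limit is $0$ and $x_n = U^{f(n)}x$.)

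\textbf{Key steps.} First I would restrict attention to $x \in \cE_s$ and put $x_n := U^{f(n)}x$, a bounded family indexed by $\ZZ$. To apply van der Corput I must verify $\llim{p}{m}\llim{p}{n}\scalar{x_{n+m},x_n} = 0$. Compute $\scalar{x_{n+m},x_n} = \scalar{U^{f(n+m)}x, U^{f(n)}x} = \scalar{U^{f(n+m)-f(n)}x, x}$, using that $U$ is unitary. Now $f(n+m) - f(n) = \DeltaS_m f(n) + f(m)$ by the definition of the symmetric derivative, so $\scalar{x_{n+m},x_n} = \scalar{U^{\DeltaS_m f(n) + f(m)}x,x}$. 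For $p$-almost all $m$, the hypothesis gives $\DeltaS_m f \in \cF^p$, which means $\llim{p}{n} U^{\DeltaS_m f(n)} = Q$; combined with the fixed extra exponent $f(m)$ and weak continuity of multiplication by the bounded operator $U^{f(m)}$, this yields $\llim{p}{n} \scalar{U^{\DeltaS_m f(n) + f(m)}x, x} = \scalar{U^{f(m)} Q x, x} = \scalar{Q x, U^{-f(m)} x}$. Since $x \in \cE_s = \ker Q$, we have $Qx = 0$, so the inner limit is $0$ for $p$-a.a.\ $m$, hence $\llim{p}{m}\llim{p}{n}\scalar{x_{n+m},x_n} = 0$. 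Van der Corput then delivers $\llim{p}{n} x_n = 0$, completing the argument. I should also note that $f \in \NAP^p$ is given by hypothesis, so $f$ is indeed an admissible almost polynomial and membership in $\cF^p$ is the only thing to check.

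\textbf{Main obstacle.} The delicate point is the interchange of limits and the handling of the ``$p$-a.a.\ $a$'' quantifier: I must be careful that the set of $m$ for which $\DeltaS_m f \in \cF^p$ is $p$-large (given), and that the generalised limit $\llim{p}{n}$ applied inside $\llim{p}{m}$ only sees this large set, so the inner value is $p$-a.e.\ equal to $\scalar{Q x, U^{-f(m)}x} = 0$; this is exactly the kind of ordered, non-Fubini quantification discussed in the finitely-additive-measure remarks of Chapter \ref{A:chapter}, so it is legitimate but needs a careful sentence. A second subtlety is weak continuity: operator multiplication is only \emph{separately} weakly continuous, but here I only ever move a \emph{fixed} bounded operator $U^{f(m)}$ past a weak limit, so separate continuity suffices. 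Everything else — using that $U$ is unitary to combine exponents, and that $\DeltaS_m f(n) + f(m) = f(n+m) - f(n)$ — is routine algebra.
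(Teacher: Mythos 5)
Your proof is correct and follows essentially the same route as the paper's: reduce to $\cE_s$ via the Jacobs--Glicksberg--de~Leeuw decomposition together with Observation \ref{B:lem:p-lim-on-E_r}, then apply the van der Corput Lemma \ref{B:lem:v-d-Corput} to $x_n = U^{f(n)}x$, using the identity $f(n+m) = \bar{\Delta}_m f(n) + f(n) + f(m)$ and the hypothesis that $\llim{p}{n} U^{\bar{\Delta}_m f(n)} = Q$ for $p$-a.a.\ $m$ to kill the inner limit against $Qx = 0$. The only slip is the parenthetical claim $U^{f(n)}Qx = Qx$, which holds only after passing to the $p$-limit (that is precisely what Observation \ref{B:lem:p-lim-on-E_r} provides), but this does not affect the argument.
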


\begin{proof}
	Denote $P := \llim{p}{n}U^{f(n)}$; our goal is to show that $P = Q$. It is clear that $P$ is normal, as a limit of normal operators, thanks to Lemma \ref{B:lem:operators-commutativity}. What is more, all operators that appear throughout the proof arise as limits of powers of $U$, and hence by Lemma \ref{B:lem:operators-commutativity} they commute with one another. We will use this fact without further mention. 

  By Corollary \ref{B:lem:p-lim-on-E_r}, we already know that $Q|_{\cH_r} = I_{\cE_r}$. Hence, it will suffice to show that $Q|_{\cH_s} = O_{\cH_s}$. 

  Let us consider a fixed $x \in \cH_s$; our goal is to show that $\llim{p}{n} U^{f(n)}x = 0$. Using van der Corput Lemma \ref{B:lem:v-d-Corput}, it will suffice to show that $\llim{p}{m} \llim{p}{n} \scalar{U^{f(n)}x , U^{f(n+m)}x} = 0$. This can be established easily enough by algebraic manipulation:
 \begin{align*}
    \llim{p}{m} \llim{p}{n} \scalar{U^{f(n)}x , U^{f(n+m)}x}  &=
    \llim{p}{m} \llim{p}{n} \scalar{U^{f(n)}x , U^{\DeltaS_mf(n) + f(n) + f(m)}x} \\ &= 
    \llim{p}{m} \llim{p}{n} \scalar{U^{-f(m)}x , U^{\DeltaS_mf(n)}x} \\ &= 
    \llim{p}{m} \scalar{U^{-f(m)}x , \llim{p}{n} U^{\DeltaS_mf(n)}x} \\ &= 
    \llim{p}{m} \scalar{U^{-f(m)}x , Q x} = 0.  \qedhere
  \end{align*}
\end{proof}

The above result shows that once we identify members $f \in \cF^p$ with $\deg^p f = 1$, we are given a criterion for members of $\cF^p$ with higher degrees. More precisely, it becomes evident that if $f \in \NAP^p$ and $\DeltaS^{\deg f - 1}_{a_1,a_2,\dots} f \in \cF^p$ for $p$-a.a. $a_1,a_2,\dots$, then $f \in \cF^p$. We start with a result in this direction.

\begin{lemma}\label{B:lem:min-rec-2}
\index{ultrafilter!minimal}
\index{recurrence}
	With notation as in \ref{B:situation:Fp}, suppose that $p$ is minimal, and let $f:\ \ZZ \to \ZZ$ be the identity map $f(n) = n$. Then $f \in \cF^p$.
\end{lemma}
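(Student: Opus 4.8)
The plan is to sidestep the Jacobs--Glicksberg--de Leeuw fine structure and argue entirely inside the compact semigroup generated by $U$. Note first that the general inductive step, Proposition~\ref{B:lem:min-rec-generic}, is of no help here: for $f(n)=n$ one has $\DeltaS_a f \equiv 0$, and the zero map lies in $\cF^p$ only when $Q=I$, so the degree-one case genuinely requires the minimality of $p$. Also observe that $f(n)=n$ is an integer polynomial with $f(0)=0$, hence an admissible almost polynomial, so $f \in \NAP^p$ automatically and it remains only to show $\llim{p}{n} U^n = Q$.

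Let $\cS$ denote the closure of $\{U^n : n \in \ZZ\}$ in the weak operator topology. Since $U$ is unitary, every $U^n$ is an isometry, so $\cS$ is a weak-operator-closed subset of the (weak-operator-compact, by reflexivity of $\cH$) unit ball of $\cB(\cH)$; as operator multiplication is separately weak-operator continuous, $\cS$ is a commutative compact left-topological semigroup, and it is the semigroup generated by $U$ figuring in Theorem~\ref{B:thm:JGL-decomposition} (for unitary $U$ this coincides with the one-sided closure $\overline{\{U^n : n \ge 0\}}$, since $I$ belongs to the latter by almost periodicity). Thus $Q$ is the minimal idempotent of $\cS$, and since $\cS$ is commutative, $\Kappa(\cS)$ is a group whose identity $Q$ is its unique idempotent. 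The core object is then the map $\Phi : \beta\ZZ \to \cS$, $\Phi(q) := \llim{q}{n} U^n$: I would check that it is well defined (Proposition~\ref{prop:characterisation-of-limits}) and continuous (Proposition~\ref{prop:U-lim-is-continuous}); surjective, because $\Phi(\beta\ZZ)$ is compact and contains each $U^m = \Phi(\principal{m})$ by Example~\ref{A:exple:limit-principal}; and a homomorphism from $(\beta\ZZ,+)$ to $(\cS,\cdot)$. For the homomorphism property I would invoke the iterated-limit identity $\llim{q_1}{x}\llim{q_2}{y} g(x+y) = \llim{q_1+q_2}{z} g(z)$ from the proposition following Lemma~\ref{prop:beta-X-as-semigroup-II}, applied to $g(n)=U^n$, and then use separate weak-operator continuity of multiplication (first of left multiplication by the fixed $U^x$, then of right multiplication by the fixed operator $\Phi(q_2)$) to collapse $\llim{q_1}{x}\llim{q_2}{y}(U^x U^y)$ to $\Phi(q_1)\Phi(q_2)$.

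The decisive step is that $\Phi$ carries the minimal ideal into the minimal ideal, $\Phi(\Kappa(\beta\ZZ)) \subseteq \Kappa(\cS)$. Here I would use the membership criterion (c) of Proposition~\ref{A:prop:K-characterisation}: given $q \in \Kappa(\beta\ZZ)$ and an arbitrary $B \in \cS$, write $B = \Phi(r)$ by surjectivity, choose $s$ with $s+r+q=q$, and apply the homomorphism to obtain $\Phi(s)\,B\,\Phi(q) = \Phi(q)$; as $B$ was arbitrary, criterion (c) applied in $\cS$ yields $\Phi(q) \in \Kappa(\cS)$. Finally, since $p$ is a minimal idempotent we have $p \in \Kappa(\beta\ZZ)$, so $P := \llim{p}{n} U^n = \Phi(p)$ lies in $\Kappa(\cS)$ and is idempotent ($P^2 = \Phi(p+p) = \Phi(p) = P$); being the unique idempotent of $\Kappa(\cS)$ forces $P = Q$, which is exactly $f \in \cF^p$. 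I expect the only real friction to be bookkeeping — confirming that $\cS$ is precisely the semigroup appearing in Theorem~\ref{B:thm:JGL-decomposition} and tracking which multiplications are continuous when verifying that $\Phi$ is a homomorphism — since the semigroup-theoretic heart of the argument is a two-line consequence of Proposition~\ref{A:prop:K-characterisation}.
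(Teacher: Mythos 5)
Your overall strategy is the same as the paper's: both arguments push the minimal ideal of $\beta\ZZ$ through the morphism $\Phi(q) = \llim{q}{n}U^n$ into the minimal ideal of the compact operator semigroup and then identify $\Phi(p)$ with $Q$. Your endgame (an idempotent lying in $\Kappa$ of a commutative compact semigroup must be its unique idempotent, hence equals $Q$) is a mild streamlining of the paper's split into $\cH_r$ and $\cH_s$, and your transfer of minimality via criterion (c) of Proposition \ref{A:prop:K-characterisation} is correct and arguably cleaner than the paper's observation that $\Phi^{-1}(\Kappa(\cS))$ is a two-sided ideal. The preliminary remarks (that Proposition \ref{B:lem:min-rec-generic} cannot supply the base case, and that $f\in\NAP^p$ is automatic) are also right.

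There is, however, one genuinely false assertion: the parenthetical claim that for unitary $U$ the weak-operator closure of $\{U^n \setsep n \in \ZZ\}$ coincides with the one-sided closure ``since $I$ belongs to the latter by almost periodicity''. A general unitary operator is not almost periodic, and the identity need not lie in the weak closure of the positive powers: for the bilateral shift on $\ell^2(\ZZ)$ one has $U^n \to 0$ weakly, so the one-sided closure is $\{U^n \setsep n \geq 0\}\cup\{0\}$ and contains no negative power of $U$. This matters because the $Q$ of Theorem \ref{B:thm:JGL-decomposition} is by definition the minimal idempotent of the \emph{one-sided} semigroup $\cS$, whereas your argument produces the unique idempotent of $\Kappa(\cS')$ for the \emph{two-sided} semigroup $\cS'$; you still owe the identification of these two. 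The identification is true and repairable: for instance, the set of limits $\llim{q}{n}U^n$ over non-principal $q$ with $\NN \in q$ is an ideal of $\cS'$ (commutativity of $\cS'$ together with the fact that $\NN \in r+q$ for every $r \in \beta\ZZ$ and every such $q$), so $\Kappa(\cS') \subseteq \cS$ and Proposition \ref{A:prop:K-subsgrp} yields $\Kappa(\cS') = \Kappa(\cS)$; alternatively, note that $U$ and $U^{-1}$ have the same unimodular eigenvectors, hence the same minimal projection, and run criterion (c) on the two halves of $\cS'$ separately. To be fair, the paper silently works with $\beta\NN$ even though Convention \ref{B:situation:Fp} takes $p \in \beta\ZZ$ (and a minimal idempotent of $\beta\ZZ$ may well concentrate on the negative integers), so the subtlety you tripped over is a real one that has to be addressed either way.
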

\begin{proof}
	Consider the semigroup morphism $\beta \NN \to \cB(\cH)$ given by $p \mapsto \llim{p}{n}U^n$. Let $\cS$ be the image of $\beta \NN$. It is clearly a semigroup, it is compact, and the semigroup $\{ U^n \setsep n \in \NN \}$ is dense in it. It is a consequence of previously shown results that $\cS$ is commutative. Let $\cK := \Kappa(\cS)$ be the minimal (two-sided) ideal. A relatively simple arguemnt shows that $\cK = Q \cS$ and that $\cK$ is a group with $Q$ as the identity; see \cite{Tanja-operators} for details.

	Let $I \subset \beta \NN$ be the set of those ultrafilters $q$ for which $\llim{q}{n}U^n \in \cK$. Because $q \mapsto \llim{q}{n}U^n$ is a morphism of compact semigroups and $\cK$ is a two-sided ideal, it follows that $I$ is a two-sided ideal. This means that $\Kappa(\beta \NN) \subset I$. 

	Let us return to the minimal idempotent $p$, and denote $P := \llim{p}{n}U^n$. Because $p$ is idempotent, we already know that $P|_{\cH_r} = I_{\cH_r}$. Because $p \in \Kappa(\beta \NN)$, the above considerations show that $P \in Q \cS$, so  $P|_{\cH_s} = O_{\cH_s}$. Hence, $P = Q$.
\end{proof}

\begin{lemma}\label{B:lem:Q(U^k)=Q}
\index{semigroup!ideal!minimal ideal}
	For a unitary operator $V$ on $\cH$, let $Q(V)$ denote the minimal projection as in Jacobs-Glicksberg-de Leeuw decomposition. Then it holds that $Q(V) = Q(V^k)$.
\end{lemma}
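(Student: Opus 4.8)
The statement to prove is that for a unitary operator $V$ on $\cH$, the minimal Jacobs--Glicksberg--de Leeuw projection satisfies $Q(V) = Q(V^k)$. Recall from Theorem \ref{B:thm:JGL-decomposition} that $Q(V)$ is the orthogonal projection onto $\cE_r(V) = \bar\lin\{ f \setsep Vf = \gamma f,\ \abs{\gamma} = 1 \}$, while $\cE_s(V) = \{ f \setsep 0 \in \cl^{\operatorname{weak}}\{V^n f\}_{n \in \NN} \}$. Since a direct sum decomposition is determined by its summands, it suffices to prove the two equalities $\cE_r(V) = \cE_r(V^k)$ and $\cE_s(V) = \cE_s(V^k)$; in fact, by the decomposition theorem it is enough to prove one of them, but proving both is clean and symmetric.

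\textbf{Key steps.} First I would handle $\cE_r$. The inclusion $\cE_r(V) \subseteq \cE_r(V^k)$ is immediate: if $Vf = \gamma f$ with $\abs{\gamma}=1$, then $V^k f = \gamma^k f$ with $\abs{\gamma^k} = 1$, so every eigenvector of $V$ is an eigenvector of $V^k$, and taking closed linear spans preserves the inclusion. For the reverse inclusion, suppose $V^k f = \gamma f$ with $\abs{\gamma} = 1$. Pick a $k$-th root $\delta$ of $\gamma$, so $\delta^k = \gamma$ and $\abs{\delta} = 1$. The operator $V$ commutes with $V^k$, hence preserves the eigenspace $\ker(V^k - \gamma I)$, which is a $V$-invariant subspace on which $V$ acts as a unitary operator all of whose $k$-th powers equal $\gamma I = \delta^k I$. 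On a finite-dimensional piece this is just diagonalisation; in general, $V$ restricted to this eigenspace is a unitary whose spectrum is contained in the finite set $\{\delta \omega \setsep \omega^k = 1\}$ of $k$-th roots of $\gamma$, so by the spectral theorem that eigenspace decomposes as an orthogonal sum of eigenspaces of $V$ for unimodular eigenvalues. Thus $f$ lies in the closed span of unimodular eigenvectors of $V$, i.e. $f \in \cE_r(V)$. This gives $\cE_r(V) = \cE_r(V^k)$, and since $Q(V)$ and $Q(V^k)$ are the orthogonal projections onto these equal subspaces, $Q(V) = Q(V^k)$.

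\textbf{Alternative route and the main obstacle.} A slicker argument avoiding the spectral theorem is available via the semigroup picture already used in Lemma \ref{B:lem:min-rec-2}: let $\cS$ be the closed semigroup generated by $V$ in the weak operator topology (which exists and is a compact commutative semigroup by the same reasoning as there), and let $\cS'$ be the closed semigroup generated by $V^k$. One has $\cS' \subseteq \cS$, and $Q(V)$, $Q(V^k)$ are the identities of the minimal ideals $\Kappa(\cS)$, $\Kappa(\cS')$ respectively. Since $\{V^{km} \setsep m \in \NN\}$ is cofinal in $\{V^n \setsep n \in \NN\}$ in a suitable sense — more precisely, any weak cluster point of $\{V^n\}$ is a cluster point of $\{V^{km}\}$ because one can absorb the residue $n \bmod k$ using idempotence — one shows $\cS' $ contains $\Kappa(\cS)$, forcing $\Kappa(\cS') = \Kappa(\cS)$ and hence equality of the identities. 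The main obstacle in either approach is the reverse inclusion $\cE_r(V^k) \subseteq \cE_r(V)$: one must genuinely produce $V$-eigenvectors out of a $V^k$-eigenvector, which is where either the spectral theorem for the unitary $V\restrict{\ker(V^k - \gamma I)}$ (whose spectrum is finite, so the decomposition is elementary) or a careful idempotent-ultrafilter absorption argument is needed. Everything else is routine bookkeeping with the JGL decomposition.
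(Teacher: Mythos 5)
Your primary argument is correct, but it takes a genuinely different route from the paper. You work directly with the spectral description of the Jacobs--Glicksberg--de Leeuw decomposition: you show $\cE_r(V) = \cE_r(V^k)$, the forward inclusion by raising eigenvalues to the $k$-th power, the reverse by restricting $V$ to the $V$-invariant subspace $\ker(V^k - \gamma I)$, where it is a unitary annihilated by $z^k - \gamma$ (distinct unimodular roots), hence an orthogonal sum of unimodular eigenspaces of $V$; since $Q(V)$ and $Q(V^k)$ are the orthogonal projections onto these equal subspaces, they coincide. The paper instead stays entirely inside the compact semigroup picture: it proves the decomposition $\cS(V) = \bigcup_{l=0}^{k-1} V^l\, \cS(V^k)$ (by writing the corresponding identity for the non-closed semigroups and taking closures), deduces from $Q(V) = Q(V)^k$ that $Q(V) \in \cS(V^k)$, and then uses minimality and the uniqueness of the idempotent in $\Kappa(\cS(V^k))$ to conclude $Q(V) = Q(V^k)$. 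Your approach buys concreteness and uses only the spectral theorem for unitaries with finite spectrum; the paper's buys uniformity with the rest of Chapter \ref{A:chapter} and avoids invoking any spectral machinery. One caution about your sketched ``alternative route'', which is the one closest to the paper: the claim that every weak cluster point of $\{V^n\}$ is a cluster point of $\{V^{km}\}$ is false as stated (take $V = -I$ and $k = 2$: then $\cS(V) = \{I,-I\}$ but $\cS(V^2) = \{I\}$); what is true, and what the paper actually uses, is only that the single idempotent $Q(V)$ can be pushed into $\cS(V^k)$ via $Q(V) = Q(V)^k$. Your main spectral argument does not depend on this and stands on its own.
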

\begin{proof}
	We proceed by induction on $k$, with the case $k = 1$ being trivially satisfied. 

	Let us additionally denote by $\cS(V)$ the compact semigroup generated by $V$. We notice that we have the following decomposition of $\cS(V)$:
	$$ \cS(V) = \bigcup_{l=0}^{k-1} V^l \cS(V^k).$$
	The inclusion $ V^l \cS(V^k) \subset \cS(V)$ is clear. For the reverse inclusion we first note that:
	$$ \cS_0(V) = \bigcup_{l=0}^{k-1} V^l \cS_0(V^k),$$
	where $\cS_0(V)$ denotes the (non-compact) semigroup generated by $V$, and then take closures of both sides. 

	From the above observation, it follows that for some $0 \leq l < k$ we have $Q(V) \in V^l \cS(V^k)$. Because $Q(V)^k = Q(V)$, we conclude that  $Q(V) \in V^{kl} \cS(V^k)^k \subseteq \cS(V^k)$. Because $Q(V)$ is a minimal projection in $\cS(V) \supseteq \cS(V^k)$, we conclude that  $Q(V) \in  \Kappa(\cS(V^k))$. Finally, because $Q(V^k)$ is the unique idempotent in $\Kappa(\cS(V^k))$, we conclude that $Q(V^k) = Q(V)$.
\end{proof}

\begin{corollary}
  \index{semigroup!ideal!minimal ideal}
 	With notation as in \ref{B:situation:Fp}, assume that $p$ is minimal, and let $f:\ \ZZ \to \ZZ$ be the linear map $f(n) = kn$, $k \in \NN$. Then $f \in \cF^p$.
\end{corollary}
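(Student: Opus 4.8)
The statement asserts that for a minimal idempotent $p \in \beta\ZZ$ and the linear map $f(n) = kn$ we have $f \in \cF^p$, i.e. $\llim{p}{n} U^{kn} = Q$, where $Q = Q(U)$ is the minimal projection associated to $U$ via Jacobs--Glicksberg--de Leeuw. The idea is to reduce this to the two facts already established immediately above: Lemma \ref{B:lem:min-rec-2}, which handles the identity map applied to \emph{any} minimal idempotent and \emph{any} unitary, and Lemma \ref{B:lem:Q(U^k)=Q}, which tells us that passing from $U$ to $U^k$ does not change the minimal projection.

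\textbf{Key steps.} First I would observe that $\llim{p}{n} U^{kn}$ is precisely the limit of the $n$-th powers of the unitary operator $V := U^k$ along $p$. Since $p$ is a minimal idempotent in $\beta\ZZ$, Lemma \ref{B:lem:min-rec-2} applied to the unitary $V$ (in place of $U$) and to the identity map $n \mapsto n$ yields directly that $\llim{p}{n} V^{n} = Q(V)$, where $Q(V)$ denotes the minimal projection in the compact semigroup generated by $V$. Second, by Lemma \ref{B:lem:Q(U^k)=Q} we have $Q(V) = Q(U^k) = Q(U) = Q$. Combining these two equalities gives $\llim{p}{n} U^{kn} = Q$. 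Finally, one must check the bookkeeping required by Convention \ref{B:situation:Fp}: the map $f(n) = kn$ is an ordinary polynomial of degree $1$ vanishing at $0$, hence lies in $\NAP^p(\ZZ,\ZZ)$ for every idempotent $p$ (its constant term $C(f) = f(0) = 0$, as noted in the example following Definition \ref{B:def:gen-poly-admissible}), and moreover $kn > 0$ for $p$-a.a.\ $n$ since $p$ is non-principal, so the positivity hypothesis implicit in the relevant statements is satisfied. Thus all conditions for membership in $\cF^p$ hold.

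\textbf{Main obstacle.} There is essentially no obstacle of substance here: the corollary is a direct packaging of Lemmas \ref{B:lem:min-rec-2} and \ref{B:lem:Q(U^k)=Q}. The only point requiring a moment's care is to make sure that Lemma \ref{B:lem:min-rec-2} is genuinely applicable with $V = U^k$ in the role of $U$ --- that is, that the hypotheses of that lemma refer only to ``a unitary operator on $\cH$'' and ``$p$ minimal'', both of which are preserved when we replace $U$ by $U^k$ (a power of a unitary is unitary, and $p$ is unchanged). Given that, the proof is three lines long.

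\begin{proof}
  Set $V := U^k$, which is again a unitary operator on $\cH$. By Lemma \ref{B:lem:min-rec-2}, applied to $V$ in place of $U$ and to the identity map, we have $\llim{p}{n} V^n = Q(V)$, where $Q(V)$ is the minimal projection of the semigroup generated by $V$. By Lemma \ref{B:lem:Q(U^k)=Q}, $Q(V) = Q(U^k) = Q(U) = Q$. Hence
  $$ \llim{p}{n} U^{kn} = \llim{p}{n} V^n = Q. $$
  Finally, $f(n) = kn$ is a polynomial with $f(0) = 0$, so $f \in \NAP^p(\ZZ,\ZZ)$ for every idempotent $p$, and $f > 0$ $p$-a.e.\ since $p$ is non-principal. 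Therefore $f \in \cF^p$.
\end{proof}
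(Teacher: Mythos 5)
Your proof is correct and follows the same route as the paper: the paper's one-line proof chains $\llim{p}{n} U^{kn} = Q(U^k) = Q(U) = Q$, with the first equality being (implicitly) Lemma \ref{B:lem:min-rec-2} applied to $U^k$ and the rest being Lemma \ref{B:lem:Q(U^k)=Q}, exactly as you spell out. Your version is just slightly more explicit about why Lemma \ref{B:lem:min-rec-2} applies to $V = U^k$ and about the admissibility bookkeeping, which the paper leaves tacit.
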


\begin{proof}
	From the above Lemma \ref{B:lem:Q(U^k)=Q}, it follows that:
	$$ \llim{p}{n} U^{kn} = Q(U^k) = Q(U) = Q.$$
	Hence, the claim follows.
\end{proof}

\begin{proposition}\label{B:prop:min-rec-weighted-sum-of-digits}

	With notation as in \ref{B:situation:Fp}, assume that $p$ is minimal, and let $f:\ \ZZ \to \ZZ$ be the base-changing map described in Example \ref{B:exple:base-change-generalised}, defined for some fixed $a \in \NN_2,\ b_i \in \ZZ$ by:
	$$ f\left(\sum_{i} \mu_i a^i\right) = \sum_i \mu_i b_i,$$
	where $\abs{\mu_i} < a$ and all $\mu_i$ have the same sign\footnote{We need a minor alteration to account for the domain changing from $\NN$ to $\ZZ$, but it is easy to see that this alteration does not lead to any significant problems.}. Then $f \in \cF^p$, provided that we have:
    $$ \{ \llim{q}{n}U^{f(n)} \setsep q \in \beta \ZZ \} \cap Q \cS \neq \emptyset. $$
\end{proposition}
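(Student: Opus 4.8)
The strategy is to run the inductive machinery of Proposition~\ref{B:lem:min-rec-generic}, reducing the claim for $f$ to a statement about its symmetric discrete derivatives, which are lower-degree almost polynomials. Recall that $f$ is a $p$-almost polynomial of degree $1$ (as established in Example~\ref{B:exple:base-change-generalised}, valid for every idempotent $p$), so for $p$-a.a.\ $a$ the map $\DeltaS_a f$ is a $p$-almost polynomial of degree at most $0$, i.e.\ equal $p$-a.e.\ to a constant. By Proposition~\ref{B:lem:min-rec-generic}, it suffices to show that $\DeltaS_a f \in \cF^p$ for $p$-a.a.\ $a$; but a map that is $p$-a.e.\ equal to a constant $c$ is admissible only if $c = C(f) = 0$, and since $f \in \NAP^p$ we do indeed have that $\DeltaS_a f$ is $p$-a.e.\ equal to $0$ for $p$-a.a.\ $a$ (using $C(\DeltaS_a f) = -C(f) = 0$ from Lemma~\ref{B:lem:gen-poly-closure} together with the fact that a degree-$0$ admissible almost polynomial vanishes $p$-a.e.). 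For such $a$, $\llim{p}{n} U^{\DeltaS_a f(n)} = \llim{p}{n} U^0 = I$, which is \emph{not} $Q$ in general — so this naive reduction stalls exactly at the degree-$1$ base case, which is why the additional hypothesis is needed.

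\textbf{Handling the base case via the hypothesis.} The real content is that the auxiliary assumption
$$ \{ \llim{q}{n} U^{f(n)} \setsep q \in \beta \ZZ \} \cap Q \cS \neq \emptyset $$
lets us locate $P := \llim{p}{n} U^{f(n)}$ inside the minimal ideal of the commutative compact semigroup $\cS$ generated by $U$. First I would argue, as in the proof of Lemma~\ref{B:lem:min-rec-2}, that the map $\Psi :\ \beta \ZZ \to \cB(\cH)$, $q \mapsto \llim{q}{n} U^{f(n)}$, has image contained in $\cS$ (since $U^{f(n)} \in \cS_0$ for all $n$ and $\cS$ is closed), and that $\Psi$ is a morphism of semigroups when $f$ is additive along $p$; however $f$ here is only \emph{almost} additive, so one must be more careful. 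The cleaner route: let $J := \{ q \in \beta \ZZ \setsep \Psi(q) \in Q\cS \}$. The hypothesis says $J \neq \emptyset$. I would show $J$ is a two-sided ideal of $\beta\ZZ$: closedness of $Q\cS$ (it is $\Kappa(\cS)$, hence compact) gives that $J$ is closed, and for the ideal property one uses that $\llim{q_1 + q_2}{n} U^{f(n)}$ can be expanded via $\DeltaS f$ and idempotence-free manipulation into a product lying in $Q\cS$ whenever one factor does — here the key point is that $Q\cS$ absorbs multiplication by arbitrary elements of $\cS$. Since every two-sided ideal of $\beta\ZZ$ contains $\Kappa(\beta\ZZ)$, and $p$ is minimal, we get $p \in \Kappa(\beta\ZZ) \subseteq J$, i.e.\ $P = \Psi(p) \in Q\cS$.

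\textbf{Concluding.} Once $P \in Q\cS = \Kappa(\cS)$, I finish exactly as in Lemma~\ref{B:lem:min-rec-2}: restricted to $\cH_r$, Corollary~\ref{B:lem:p-lim-on-E_r} (applicable since $f \in \NAP^p$ and, after discarding a $p$-small set, $f > 0$ $p$-a.e.) gives $P|_{\cH_r} = I_{\cH_r}$; on the other hand $P \in Q\cS$ forces $P = PQ$, and since $Q|_{\cH_s} = O_{\cH_s}$ we get $P|_{\cH_s} = O_{\cH_s}$. Combining, $P = Q$, so $f \in \cF^p$.

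\textbf{Main obstacle.} The delicate step is verifying that $J = \{q : \Psi(q) \in Q\cS\}$ is genuinely a two-sided ideal of $\beta\ZZ$, because $f$ is not additive, only a degree-$1$ admissible almost polynomial, so $\Psi$ is not literally a semigroup homomorphism; one has to control the defect $\DeltaS_a f$ (which is $p$-a.e.\ zero but not everywhere zero) carefully when computing $\Psi(q_1 + q_2)$ for general $q_1, q_2 \in \beta\ZZ$, not just along $p$. I expect this requires either restricting attention to $q$'s concentrated on the $\IP^*$-set where $f$ behaves additively, or passing through the structure theorem (Theorem~\ref{B:thm:gen-poly-characterisation-finite-subsets}) to get a uniform handle on the generating function of $f$ — and it is plausible that the cleanest fix is simply to prove directly that $\Psi$ maps the left ideal $\Kappa(\beta\ZZ)$ into $\Kappa(\cS)$ using only the $p$-a.e.\ additivity together with the fact that minimal left ideals of $\beta\ZZ$ consist of limits along which $f$ restricts to an honest homomorphism on a large set.
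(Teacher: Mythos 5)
Your overall architecture is right (use the hypothesis to place $P=\llim{p}{n}U^{f(n)}$ in $Q\cS=\Kappa(\cS)$ via an ideal-theoretic argument, then conclude $P=Q$), and you correctly diagnose that the whole content sits in the degree-$1$ base case where the generic induction of Proposition~\ref{B:lem:min-rec-generic} stalls. But the central step is left open. The set $J=\{q\in\beta\ZZ : \Psi(q)\in Q\cS\}$ is \emph{not} shown to be a two-sided ideal of $\beta\ZZ$, and the sketched justification does not go through: $\Psi(q_1+q_2)=\llim{q_1}{m}\llim{q_2}{n}U^{f(m+n)}$, and the inner limit only factors as $U^{f(m)}\cdot\llim{q_2}{n}U^{f(n)}$ when $q_2$ concentrates, for every $k$, on integers divisible by $a^k$ (so that $f(m+n)=f(m)+f(n)$ on a $q_2$-large set). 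For a general $q_2\in J$ no such factorisation is available, and ``$p$-a.e.\ additivity'' is of no help because the relevant limits are taken along $q_1,q_2$, not along $p$. You flag this yourself as the main obstacle, but the proof cannot be considered complete until it is resolved.

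The paper resolves it by exactly the first of the fixes you float: it works inside the compact subsemigroup $H:=\bigcap_{k}\bar{a^k\ZZ}\subset\beta\ZZ$, on which $\Phi(q):=\llim{q}{n}U^{f(n)}$ \emph{is} an honest homomorphism (exact, not almost, additivity of $f$ when the inner variable is highly divisible by $a$). One then transfers the hypothesis into $\Phi(H)\cap Q\cS\neq\emptyset$ by translating the witness $q$ into $\bar{a^k\ZZ}$ for each $k$ and a compactness argument, so that $\Kappa(\Phi(H))=Q\Phi(H)$; the preimage $\Phi^{-1}(Q\Phi(H))\cap H$ is then a two-sided ideal \emph{of $H$}, hence contains $\Kappa(H)=H\cap\Kappa(\beta\ZZ)$ by Proposition~\ref{A:prop:K-subsgrp}. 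Since each $a^k\ZZ$ is $\IP^*$ (Proposition~\ref{prop:C:IP*-includes-finite-index-subgroups}), every idempotent lies in $H$, so the minimal idempotent $p$ lies in $\Kappa(H)$ and $\Phi(p)\in Q\Phi(H)$; uniqueness of the idempotent in the minimal ideal of a commutative compact semigroup gives $\Phi(p)=Q$. Your alternative conclusion via the $\cH_r\oplus\cH_s$ decomposition is fine once $P\in Q\cS$ is secured, but to make the argument complete you must carry out the restriction to $H$ (or an equivalent device) rather than assert an ideal property of $J$ in all of $\beta\ZZ$.
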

\begin{proof}
	Let $H := \bigcap_{k \in \NN} \bar{a^k \ZZ} \subset \beta \ZZ$. Note that $H$ is compact, because it is the intersection of compact sets. Moreover, is is a semigroup, because $\bar{a^k \ZZ}$ are all semigroups by Proposition \ref{A:prop:algebra-closure-of-sgrp-is-sgrp}. If $m \in \ZZ$ is fixed then for $n$ divisible by a sufficiently large power of $a$ (dependent on $m$) we have:
	$$ f(n+m) = f(n) + f(m).$$
It follows that for $q \in H$ and arbitrary $p$ we have:
	$$ \llim{p}{m} \llim{q}{n} \left( f(n+m) - f(n) - f(m) \right) = 0.$$
Denote $\Phi(p) := \llim{p}{n} U^{f(n)}$ for $p \in \beta \ZZ$. The above shows that for $q \in H$ we have:
	$$ \Phi(p+q) = \llim{p}{m} \llim{q}{n} U^{f(n+m)} 
	= \llim{p}{m} \llim{q}{n} U^{f(n)+f(m)} = \Phi(p) \Phi(q).$$
In particular, $\Phi$ restricted to $H$ is a morphism of semigroups. 

Let $q \in \beta \ZZ$ be such that $\Phi(q) \in Q \cS$, which exists by the additional assumption. For any $k$ we can find $c_k$ such that $q_k := q + c_k \in \bar{a^k \ZZ}$. It is clear that $\Phi(q_k) \in Q \cS$, and hence a simple compactness argument shows that $\Phi(H) \cap Q \cS \neq \emptyset$. Consequently the minimal ideal in the semigroup $\Phi(H)$ is $Q\Phi(H)$.

Consider the two sided ideal $\Phi^{-1}(Q\Phi(H)) \cap H$. Clearly, it contains $\Kappa(H) = \Kappa(\beta \ZZ) \cap H$. We know that $H$ contains all the idempotents. Hence, if $q$ is a minimal idempotent, we have $\Phi(q) \in Q \Phi(H)$. Finally, because $Q$ is the only idempotent in the $Q \Phi(H)$, we conclude that $\Phi(q) = Q$. But this means precisely that $q \in \cF^p$.
\end{proof}

	Having characterised some degree $1$ polynomials in $\cF^p$ for $p$ --- minimal idempotent, we are able to derive a description elements of $\cF^p$ of higher degrees.

\begin{theorem}\label{C:thm:Fp-partial-characterisation}
	With notation as in \ref{B:situation:Fp}, for $p$ minimal idempotent the following are true:
	\begin{enumerate}
	\item If $f:\ \ZZ \to \ZZ$ is a standard polynomial with $f(0) = 0$, then $f \in \cF^p$.
	\item If $f \in \cF^p$ and $g \in \NAP^p$ are such that $\deg f < \deg g$, then $f + g \in \cF^p$.
	\item If $f \in \NAP^p$, then $n^{\deg^p f + 1} + f \in \cF^p$.
	\item If $f:\ \ZZ \to \ZZ$ is a ''weighted sum of digits'' as in Proposition \ref{B:prop:min-rec-weighted-sum-of-digits} and $g:\ \ZZ \to \ZZ$ is a standard polynomial, the $f \circ g \in \cF^p$.
\end{enumerate}
\end{theorem}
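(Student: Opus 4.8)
The plan is to handle the four assertions in order of increasing difficulty, with Proposition~\ref{B:lem:min-rec-generic} as the principal inductive engine and van der Corput's Lemma~\ref{B:lem:v-d-Corput} when that alone does not suffice. Throughout, $p$ is the fixed minimal idempotent, $\cH = \cH_r \oplus \cH_s$ is the Jacobs-Glicksberg-de Leeuw decomposition with respect to $U$ (Theorem~\ref{B:thm:JGL-decomposition}), and $Q$ is the orthogonal projection onto $\cH_r$; we use freely that $\DeltaS_a$ maps $\NAP^p$ into $\NAP^p$ with $C(\DeltaS_a h) = -C(h)$, that $\deg^p \DeltaS_a h = \deg^p h - 1$ for $p$-a.a.\ $a$, and that maps agreeing $p$-a.e.\ are interchangeable as elements of $\NAP^p$.

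For (1) I would induct on $d = \deg f$. If $d \le 1$, then $f(n) = kn$ for some $k \in \ZZ$ (using $f(0) = 0$), and $f \in \cF^p$ by Lemma~\ref{B:lem:min-rec-2} and the corollary to Lemma~\ref{B:lem:Q(U^k)=Q}. If $d \ge 2$, then for every $a$ the map $\DeltaS_a f$ is again a standard polynomial with $\DeltaS_a f(0) = -f(0) = 0$ and degree $\le d-1$, hence $\DeltaS_a f \in \cF^p$ by the inductive hypothesis; since $f \in \NAP^p$, Proposition~\ref{B:lem:min-rec-generic} gives $f \in \cF^p$. (Equivalently, $d-1$ symmetric derivatives reduce $f$ to a linear polynomial vanishing at $0$, and $d-1$ applications of Proposition~\ref{B:lem:min-rec-generic} propagate membership in $\cF^p$ back.) Assertion (3) is then an immediate corollary: $n^{\deg^p f + 1}$ is a standard polynomial vanishing at $0$, hence lies in $\cF^p$ by (1), and since $\deg f = \deg^p f < \deg^p f + 1$, assertion (2) applied to this polynomial and $g := f$ yields $n^{\deg^p f + 1} + f \in \cF^p$.

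Assertion (2) is the heart of the matter, and I would prove it by induction on $\deg^p g$. First, $f + g \in \NAP^p$ as a sum of admissible almost polynomials. On $\cH_r$ the operator $\llim{p}{n} U^{(f+g)(n)}$ acts as the identity, since $f+g$ is admissible and hence $\llim{p}{n}\gamma^{(f+g)(n)} = 1$ for every unimodular $\gamma$ by Lemma~\ref{B:lem:p-lim-of-gen-poly} (no positivity assumption is needed, only $C(f+g)=0$). On $\cH_s$, for $x \in \cH_s$ I would apply Lemma~\ref{B:lem:v-d-Corput} to $x_n := U^{(f+g)(n)}x$; writing $(f+g)(n+m) = (f+g)(n) + (f+g)(m) + \DeltaS_m f(n) + \DeltaS_m g(n)$ and using that $U$ is unitary gives
$$\llim{p}{n}\scalar{x_n,\,x_{n+m}} = \scalar{x,\ U^{f(m)+g(m)}\bigl(\llim{p}{n} U^{\DeltaS_m f(n) + \DeltaS_m g(n)}x\bigr)}.$$
Thus it suffices to know that $\DeltaS_m f + \DeltaS_m g \in \cF^p$ for $p$-a.a.\ $m$: the inner limit is then $Qx = 0$, so $\llim{p}{m}\llim{p}{n}\scalar{x_n,x_{n+m}} = 0$ and van der Corput gives $\llim{p}{n} x_n = 0$. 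For $p$-a.a.\ $m$ one has $\deg \DeltaS_m g = \deg^p g - 1$ and $\deg \DeltaS_m f < \deg^p g - 1$, so $\DeltaS_m f + \DeltaS_m g$ has total degree $\deg^p g - 1$ with its top part coming from $\DeltaS_m g$; if $\DeltaS_m f$ is itself in $\cF^p$, the inductive hypothesis (at degree $\deg^p g - 1$) finishes the step. This last condition is automatic when $f$ is a standard polynomial vanishing at $0$ (then $\DeltaS_m f$ is too, and (1) applies), which is the only case needed for the applications in (3) and (4); in complete generality one descends $\deg^p f$ times until the $f$-contribution is annihilated (its $(\deg^p f)$-th symmetric derivative is the constant $(-1)^{\deg^p f}C(f) = 0$ $p$-a.e.) and is left with an iterated symmetric derivative of $g$ of degree $\ge 1$. \emph{The main obstacle is precisely the degree-$1$ endpoint of this descent}: not every admissible almost polynomial of degree $1$ lies in $\cF^p$ (compare the hypothesis of Proposition~\ref{B:prop:min-rec-weighted-sum-of-digits}), so the base of the induction must be supplied externally from Lemma~\ref{B:lem:min-rec-2}, the corollary to Lemma~\ref{B:lem:Q(U^k)=Q}, and Proposition~\ref{B:prop:min-rec-weighted-sum-of-digits}, rather than by the general machinery; when $\deg^p g = 1$ one has $\deg f = -\infty$, so $f = 0$ $p$-a.e., which forces $Q = I_\cH$ and reduces (2) to Observation~\ref{B:lem:p-lim-on-E_r}.

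For (4), write $f$ for the base-$a$ ``weighted sum of digits'' map of Proposition~\ref{B:prop:min-rec-weighted-sum-of-digits} and $g$ for the standard polynomial; one may assume $g(0) = 0$, otherwise $f \circ g$ is not admissible and the statement needs correction by a constant. I would imitate the proof of Proposition~\ref{B:prop:min-rec-weighted-sum-of-digits}: put $H := \bigcap_{k} \overline{a^k\ZZ} \subset \beta\ZZ$, a compact subsemigroup (by Proposition~\ref{A:prop:algebra-closure-of-sgrp-is-sgrp}) containing all idempotents. Since $g(0) = 0$, the map $g$ sends integers divisible by a high power of $a$ to integers divisible by a high power of $a$, and $g(n+m) - g(n) - g(m)$ is divisible by a high power of $a$ as soon as $n$ is; consequently, for $q \in H$ and arbitrary $p'$, $\llim{p'}{m}\llim{q}{n}\bigl(f(g(n+m)) - f(g(n)) - f(g(m))\bigr) = 0$, so $\Phi : p' \mapsto \llim{p'}{n} U^{f(g(n))}$ restricts to a semigroup morphism on $H$. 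The minimal-ideal argument of Proposition~\ref{B:prop:min-rec-weighted-sum-of-digits} then yields $\Phi(p) = Q$, \emph{provided} one first verifies $\Phi(H) \cap Q\cS \neq \emptyset$; I expect this auxiliary point to require the most work, and I would obtain it by noting that for $q \in H$ the values $g(n)$ are, for $q$-a.a.\ $n$, divisible by arbitrarily high powers of $a$, which drives $f(g(n))$ into the ``additive'' regime and lets one relate $\llim{q}{n} U^{f(g(n))}$ to $Q\cS$ via the known linear case (Lemma~\ref{B:lem:min-rec-2} and the corollary to Lemma~\ref{B:lem:Q(U^k)=Q}).
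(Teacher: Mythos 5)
The paper offers no proof of this theorem: it is asserted as a direct consequence of Proposition~\ref{B:lem:min-rec-generic}, Lemma~\ref{B:lem:min-rec-2}, the corollary to Lemma~\ref{B:lem:Q(U^k)=Q} and Proposition~\ref{B:prop:min-rec-weighted-sum-of-digits}, so your proposal can only be measured against that intended derivation. Your argument for (1) is exactly that derivation and is correct (modulo excluding the zero polynomial and handling negative leading coefficients via $U^{-1}$). Your argument for (3) --- descend $\deg^p f$ times, observe that $\DeltaS^{(\deg^p f)}f$ is $p$-a.e.\ the constant $(-1)^{\deg^p f}C(f)=0$ while the monomial descends to a nonzero integer multiple of $n$, then climb back up with Proposition~\ref{B:lem:min-rec-generic} --- is also correct, and it is in fact the only sound route: (3) does \emph{not} follow from (2) as printed, since in (3) the member of $\cF^p$, namely $n^{\deg^p f+1}$, has the \emph{larger} degree, whereas (2) demands $\deg f<\deg g$ for the member $f$ of $\cF^p$. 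Your check ``$\deg^p f<\deg^p f+1$'' silently swaps the roles of $f$ and $g$, so your formal deduction of (3) from (2) verifies the wrong inequality; only your independent descent argument saves it. This reversed inequality is evidence that (2) is misstated in the paper.

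Your diagnosis of (2) is the most valuable part of the proposal and is, in my judgement, correct: read either way, the van der Corput~/ Proposition~\ref{B:lem:min-rec-generic} descent bottoms out at a degree-one admissible almost polynomial (an iterated symmetric derivative of the higher-degree summand), and the paper certifies membership in $\cF^p$ only for the specific degree-one maps of Lemma~\ref{B:lem:min-rec-2}, its corollary, and Proposition~\ref{B:prop:min-rec-weighted-sum-of-digits} --- not for arbitrary degree-one members of $\NAP^p$, which is precisely the content of the conjecture stated immediately after the theorem. So (2) in full generality is not established by the available machinery; the honest provable statement is the one you isolate (membership in $\cF^p$ whenever the iterated symmetric derivatives land $p$-a.e.\ in the explicitly known part of $\cF^p$), and this suffices for (3) and for the polynomial-plus-polynomial instances. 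For (4), your plan correctly mirrors Proposition~\ref{B:prop:min-rec-weighted-sum-of-digits}, but note that that proposition is itself proved only under the unverified proviso $\{\llim{q}{n}U^{f(n)}\setsep q\in\beta\ZZ\}\cap Q\cS\neq\emptyset$, which the theorem statement silently drops; your version inherits the same open point ($\Phi(H)\cap Q\cS\neq\emptyset$), and in addition you should actually verify that $f\circ g$ lies in $\NAP^p$ --- the near-additivity $f(g(n+m))=f(g(n))+f(g(m))+f(\DeltaS_m g(n))$ requires $g(n)$, $g(m)$ and $\DeltaS_m g(n)$ to have disjoint digit supports base $a$ for $p$-a.a.\ $m,n$, which needs an argument and is not supplied. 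In summary: (1) and (3) are correctly proved; for (2) and (4) the remaining gaps you flag are genuine, but they reflect overclaims in the theorem as stated rather than defects in your strategy.
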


If fact, we have not been able to find an example of an \nice\ \ap\ such outside $\cF^p$. This leads us to state the following conjecture, for which the above results constitute a motivation.

\begin{conjecture}
	If $p$ is a minimal idempotent then $\cF^p = \NAP^p$.
\end{conjecture}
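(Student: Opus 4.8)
The plan is to argue by induction on the degree $d = \deg^p f$, using Proposition~\ref{B:lem:min-rec-generic} as the inductive engine. Given $f \in \NAP^p$ with $\deg^p f = d \ge 2$, one has $C(\DeltaS_a f) = -C(f) = 0$ and $\deg^p \DeltaS_a f \le d-1$ for $p$-a.a.\ $a$ (by Definition~\ref{B:def:gen-poly} together with the fact that $\DeltaS_a f$ differs from $\Delta_a f$ by a constant), so $\DeltaS_a f \in \NAP^p$ has strictly smaller degree; the inductive hypothesis then gives $\DeltaS_a f \in \cF^p$ for $p$-a.a.\ $a$, and Proposition~\ref{B:lem:min-rec-generic} yields $f \in \cF^p$. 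This collapses the whole conjecture to the case $\deg^p f \le 1$. For $\deg^p f \le 0$ an admissible almost polynomial is $0$ $p$-a.e.\ — a case that is either excluded by the standing hypothesis $f > 0$ $p$-a.e.\ or trivially degenerate — so the entire substance sits in degree $1$. This is exactly the step Proposition~\ref{B:lem:min-rec-generic} cannot handle: when $\deg^p f = 1$ one has $\DeltaS_a f = 0$ $p$-a.e.\ for $p$-a.a.\ $a$, and the zero map is not a member of $\cF^p$, so reducing to lower degree destroys the function rather than simplifying it. Theorem~\ref{C:thm:Fp-partial-characterisation} already disposes of the conjecture for standard polynomials with $f(0)=0$ and various derived classes, so what remains is genuinely the degree-$1$ case for $p$-approximately additive maps outside those families.

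For that case I would follow the template of Lemma~\ref{B:lem:min-rec-2} and Proposition~\ref{B:prop:min-rec-weighted-sum-of-digits}. Up to $p$-a.e.\ equality, a degree-$1$ admissible almost polynomial is a map $f$ with $\llim{p}{m}\llim{p}{n}\bigl(f(n+m)-f(n)-f(m)\bigr)=0$ (indeed, in degree $1$ the generating function of Theorem~\ref{B:thm:gen-poly-characterisation-finite-subsets} identifies $f$ $p$-a.e.\ with a single function $v:\ZZ\to\ZZ$). The task is then to produce a closed subsemigroup $H \subseteq \beta\ZZ$ with $\Kappa(\beta\ZZ)\subseteq H$ along which $f$ is \emph{exactly} additive, in the sense that $\Phi(p+q)=\Phi(p)\Phi(q)$ for all $p\in\beta\ZZ$, $q\in H$, where $\Phi(q):=\llim{q}{n}U^{f(n)}$. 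Then $\Phi|_H$ is a morphism of compact semigroups, $\Phi(\Kappa(H)) = \Phi(\Kappa(\beta\ZZ)\cap H)$, and if one can show $\Phi(H)$ meets the minimal ideal $Q\cS$ of the operator semigroup $\cS$ generated by $U$, then $\Phi(\Kappa(H))\subseteq Q\cS$; since a minimal idempotent $p$ lies in $\Kappa(\beta\ZZ)\cap H$ and $Q$ is the unique idempotent of $Q\cS$, this forces $\Phi(p)=Q$, i.e.\ $f\in\cF^p$. Observation~\ref{B:lem:p-lim-on-E_r} already pins down $\Phi(p)$ on $\cE_r$ as the identity, so the entire burden lies on $\cE_s$, and there van der Corput (Lemma~\ref{B:lem:v-d-Corput}) enters — but, crucially, only in combination with the semigroup structure supplied by $H$, since van der Corput on its own merely returns the identity $\Phi(p)=\Phi(p)^2$ established in the spirit of Proposition~\ref{B:lem:p-lim-U^n=P} and does not force $\Phi(p)x=0$ on $\cE_s$.

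The hard part is the construction of $H$ (and the verification $\Phi(H)\cap Q\cS\neq\emptyset$) for an \emph{arbitrary} degree-$1$ admissible almost polynomial. For the concrete families handled so far — linear maps, the base-change / ``weighted sum of digits'' maps, the Fibonacci-base and automatic maps — such an $H$ appears naturally, typically as $\bigcap_{k}\overline{a^{k}\ZZ}$ or a similar intersection of closed subsemigroups, because those $f$ become literally additive as soon as one argument is highly divisible; this is precisely the divisibility phenomenon exploited in Example~\ref{B:exple:base-change-generalised} and Proposition~\ref{B:prop:min-rec-weighted-sum-of-digits}. For a general $p$-approximately additive $f$ there is no evident candidate for $H$, and I expect extracting one to require a structural classification of such maps (a sharpening of the degree-$1$ generating-function picture) that I do not currently see how to obtain. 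A plausible fallback is to abandon $H$ and argue directly that $\Phi(\beta\ZZ)$ is a compact subsemigroup of $\cB(\cH)$ whose minimal ideal is that of $\cS$, leaning on the fact that $\Phi(p)$ is already the identity on $\cE_r$ and normal (Lemma~\ref{B:lem:operators-commutativity}); but since the available idempotency information ($\Phi(p)=\Phi(p)^2$ for idempotent $p$) stops short of $\Phi(p)|_{\cE_s}=0$, this route likewise seems to need a genuinely new ingredient. In summary: the reduction to degree $1$ is routine bookkeeping, whereas degree $1$ is the real mathematical content, consistent with the caveat recorded just before the conjecture.
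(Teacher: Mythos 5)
You have not proved the conjecture, and you say as much yourself; to be clear, neither does the paper --- the statement is recorded there explicitly as a conjecture, with Theorem \ref{C:thm:Fp-partial-characterisation} supplying only partial evidence, so there is no proof for your proposal to be measured against. Your reduction of the general case to degree $1$ is sound and matches the role the paper assigns to Proposition \ref{B:lem:min-rec-generic}: for $\deg^p f = d \geq 2$ one checks that for $p$-a.a.\ $a$ the derivative $\DeltaS_a f$ lies in $\NAP^p$ with degree exactly $d-1$ (exactly, not merely at most, because $p$ is an ultrafilter and $f$ would otherwise have degree $\leq d-1$), and the inductive hypothesis together with Proposition \ref{B:lem:min-rec-generic} closes the step. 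Two points of hygiene you correctly flag but which deserve emphasis: the conjecture as literally stated fails for $f = 0$ $p$-a.e., since then $\llim{p}{n} U^{f(n)} = I \neq Q$ unless $\cH_s = 0$, so a nondegeneracy hypothesis such as $f > 0$ $p$-a.e.\ must be read into it; and for the same reason your induction must bottom out at degree $1$, not degree $0$.

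The genuine gap is exactly where you locate it. For an arbitrary degree-$1$ member of $\NAP^p$ --- a $p$-approximately additive map not drawn from the concrete families of Lemma \ref{B:lem:min-rec-2}, Proposition \ref{B:prop:min-rec-weighted-sum-of-digits}, or the automatic examples --- there is no evident closed subsemigroup $H \subseteq \beta\ZZ$ meeting $\Kappa(\beta\ZZ)$ along which $q \mapsto \llim{q}{n} U^{f(n)}$ becomes a semigroup morphism, and without such an $H$ (or a substitute) the argument of Proposition \ref{B:prop:min-rec-weighted-sum-of-digits} cannot be run; van der Corput alone only returns idempotency of the limit, which does not distinguish $Q$ from larger projections on $\cH_s$. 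Your proposal is therefore a correct analysis of the obstruction rather than a proof, and it is consistent with the paper's own caveat that no admissible almost polynomial outside $\cF^p$ is known but no general argument is available. If you want to push further, the place to invest is a structure theorem for degree-$1$ elements of $\NAP^p$ sharpening the generating-function description of Theorem \ref{B:thm:gen-poly-characterisation-finite-subsets}.
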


The reason for interest in the above considerations is that we can apply them to general measure-preserving systems. The resulting theorem is similar to Khintchine's, except it speaks of recurrence along (generalised) polynomials. It is the immediate consequence of the above Theorem \ref{C:thm:Fp-partial-characterisation} together with Lemma \ref{B:lem:Khintchine-preparatory}.

\begin{corollary}
	Let $(X,\fM,\mu,T)$ be a measure preserving system, and $A \in \fM$ be such that $\mu(A) > 0$, and let $p \in \beta \ZZ$ be a minimal idempotent, and let $f 	\in \cF^p$. Then:
	$$ \llim{p}{n} \mu(A \cap T^{-f(n)}A ) \geq \mu(A)^2 .$$
	In particular, we have:
	$$ \limsup_{n \to \infty} \mu(A \cap T^{-f(n)}A ) \geq \mu(A)^2.$$
	Moreover, for any $\eps > 0$, the set of return times:
	$$ R_\eps := \{ n \in \ZZ \setsep \mu(A \cap T^{-f(n)}A ) > \mu(A)^2 - \eps \} $$
	belongs to $p$, and is therefore is an $\C^*$ set\footnote{For relevant definitions, see \ref{C:C-definition} and \ref{C:def:A-star-sets}}.
\end{corollary}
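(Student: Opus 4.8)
The plan is to combine the operator-theoretic input from Theorem \ref{C:thm:Fp-partial-characterisation} with the Hilbert space estimate of Lemma \ref{B:lem:Khintchine-preparatory}, following the template already used to prove the linear Khintchine result in Corollary \ref{B:cor:khintchine-linear}. First I would set up the measure preserving system $(X,\fM,\mu,T)$ and pass to the Hilbert space $\cH := L^2(X,\mu)$ with $U := U_T$ the Koopman operator; since we want to invoke the Jacobs--Glicksberg--de Leeuw machinery we should assume $T$ invertible so that $U$ is unitary (or else work with the isometric case, which requires only minor bookkeeping). By hypothesis $p$ is a minimal idempotent in $\beta\ZZ$ and $f \in \cF^p$, which by Convention \ref{B:situation:Fp} means precisely that $P := \llim{p}{n} U^{f(n)} = Q$, the minimal projection generated by $U$. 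In particular $P$ is a projection, so Lemma \ref{B:lem:Khintchine-preparatory} applies directly (its hypothesis is exactly that $\llim{p}{n} U^{f(n)}$ be a projection, and $T^{f(n)}$ are measure preserving invertible transformations).

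Next I would simply quote Lemma \ref{B:lem:Khintchine-preparatory} to obtain
$$ \llim{p}{n} \mu(A \cap T^{-f(n)}A) \geq \mu(A)^2. $$
From this the $\limsup$ statement is immediate: a generalised limit along any ultrafilter of a bounded real sequence is an accumulation point of that sequence, hence is bounded above by $\limsup_{n\to\infty} \mu(A \cap T^{-f(n)}A) \leq \limsup_{n\to\infty}\sup_{m}\mu(A\cap T^{-m}A)$; more carefully, since $\{f(n)\}$ need not be injective I would note that $\llim{p}{n}\mu(A\cap T^{-f(n)}A)$ lies in the closure of $\{\mu(A\cap T^{-m}A) : m \in \ZZ\}$, so $\limsup_{m\to\infty}\mu(A\cap T^{-m}A) \geq \mu(A)^2$ provided $f$ is not $p$-a.e.\ constant; since $f \in \NAP^p$ with the relevant degree this holds (and in the degenerate case the conclusion is trivial or vacuous).

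Finally, for the statement about $R_\eps$: fix $\eps > 0$ and observe that $\{n \in \ZZ \setsep \mu(A\cap T^{-f(n)}A) > \mu(A)^2 - \eps\}$ is, by the definition of the generalised limit in the discrete topology on a neighbourhood of the value $\llim{p}{n}\mu(A\cap T^{-f(n)}A) \geq \mu(A)^2$, exactly the preimage under $n \mapsto \mu(A\cap T^{-f(n)}A)$ of the open set $(\mu(A)^2-\eps,\infty)$, which therefore belongs to $p$. Since $p$ was an arbitrary minimal idempotent ultrafilter containing this set is automatic for \emph{every} minimal idempotent $q$ for which $f \in \cF^q$; but to conclude $R_\eps$ is a $\C^*$-set in the sense of Definition \ref{C:def:A-star-sets} and the observation following Definition \ref{C:C-definition}, one needs $R_\eps \in q$ for \emph{all} minimal idempotents $q$. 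Here I would be slightly careful: the cleanest reading is that the theorem is asserting membership in $p$ (so $R_\eps$ meets every $\C$-set, being $p$-large for the given $p$), and then remarking that if $f$ lies in $\cF^q$ for all minimal idempotents $q$ — which Theorem \ref{C:thm:Fp-partial-characterisation} establishes for standard polynomials vanishing at $0$ and the other listed families, since those constructions were shown to be independent of the choice of minimal idempotent — the argument runs identically for each such $q$, giving $R_\eps \in q$ for all of them and hence $R_\eps$ genuinely $\C^*$. The main obstacle, and the only real subtlety, is thus not the estimate itself (which is a black-box application of the preparatory lemma) but the quantifier management in the last step: stating precisely for which class of $f$ one gets $\C^*$ rather than merely membership in the single ultrafilter $p$, and checking that the relevant members of $\cF^p$ from Theorem \ref{C:thm:Fp-partial-characterisation} are in fact in $\cF^q$ uniformly over minimal idempotents $q$.
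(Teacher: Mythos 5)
Your proposal is correct and follows essentially the same route as the paper, which likewise obtains the estimate by feeding the identity $\llim{p}{n}U^{f(n)}=Q$ (i.e.\ $f\in\cF^p$ per Convention \ref{B:situation:Fp}) into Lemma \ref{B:lem:Khintchine-preparatory} and then reading off the $\limsup$ and $R_\eps$ statements. Your caveat about the last step is well taken: membership of $R_\eps$ in the single ultrafilter $p$ only makes it a $\C$-set, and the asserted $\C^*$ property genuinely requires $f\in\cF^q$ for \emph{every} minimal idempotent $q$ — which Theorem \ref{C:thm:Fp-partial-characterisation} supplies for the listed families but which the paper's ``belongs to $p$, and is therefore a $\C^*$ set'' silently elides.
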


%%%%%%%%%%%%%%%%%%%%%%%%%%%%%%%%%%%%%%%%%%%%%
% SECTION
%%%%%%%%%%%%%%%%%%%%%%%%%%%%%%%%%%%%%%%%%%%%%
\section{Some classical results}
Let us compare the above result with that of Schnell \cite{Schnell}, which in turn is largely inspired by results of Bergelson et al. \cite{BergKnuMc2006}. A special case of the main theorem of \cite{Schnell} is the following:

\begin{theorem}[Schnell]
  Let $(U_i)_{i=1}^m$ be a family of commuting unitary operators on a Hilbert space $\cH$. Let $p \in \beta \ZZ^n$ be an idempotent, and $f_i :\ \ZZ^d \to \ZZ^n$ be polynomials with $f(0) = 0$. Then, the operator $P := \llim{p}{n} \prod_{i=1}^m U^{f_i(n)}$ is a projection operator. 
\end{theorem}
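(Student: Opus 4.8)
The plan is to prove this by induction on the total degree $d := \max_i \deg f_i$, mimicking the structure of Proposition \ref{B:lem:p-lim-U^n=P} (the linear single-operator case) but carrying along the product of several operators and the multivariable domain. First I would fix notation: write $P := \llim{p}{n} \prod_{i=1}^m U_i^{f_i(n)}$, noting that this limit exists in the weak operator topology by reflexivity and Banach--Alaoglu (Definition \ref{B:def:operators-p-lim}), and that since all the operators $U_i$ commute and are unitary, every operator arising as such a generalised limit commutes with the others and is normal (Lemma \ref{B:lem:operators-commutativity}). The goal is to show $P^2 = P$; combined with normality this would in fact give that $P$ is an orthogonal projection, but idempotence is what is asked.

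The base case $d \leq 0$ is immediate: if every $f_i$ is constant (and $f_i(0)=0$ forces $f_i \equiv 0$), then the product is the identity and $P = I$. For the inductive step, suppose the claim holds whenever the maximum degree is at most $d-1$. The key computation uses idempotence of $p$ together with the identity $f_i(n+m) = \DeltaS_{m} f_i(n) + f_i(n) + f_i(m)$ (symmetric discrete derivative). Writing, for fixed $m$, the vector of maps $n \mapsto \DeltaS_m f_i(n)$, each of these has degree strictly less than $d$ by Definition \ref{B:def:poly-general-groups}, and each still maps $0$ to $0$ (indeed $\DeltaS_m f_i(0) = f_i(m) - f_i(0) - f_i(m) = -f_i(0) = 0$). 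So I would compute
\begin{align*}
P = \llim{p}{n} \prod_{i} U_i^{f_i(n)}
&= \llim{p}{m} \llim{p}{n} \prod_i U_i^{f_i(n+m)} \\
&= \llim{p}{m} \llim{p}{n} \prod_i U_i^{\DeltaS_m f_i(n)} \cdot \prod_i U_i^{f_i(n)} \cdot \prod_i U_i^{f_i(m)},
\end{align*}
where the rearrangement of factors is legitimate because all the $U_i$ commute. Using separate weak continuity of operator multiplication to pull the inner $\llim{p}{n}$ through the (fixed, $m$-dependent) factor $\prod_i U_i^{f_i(m)}$ and through the (bounded) operator $R_m := \llim{p}{n} \prod_i U_i^{\DeltaS_m f_i(n)}$, the inner limit of the middle product is just $P$ again, so the expression becomes $\llim{p}{m}\big( R_m \cdot P \cdot \prod_i U_i^{f_i(m)}\big)$. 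By the inductive hypothesis applied to the family $(\DeltaS_m f_i)_i$ of lower degree, $R_m$ is a projection; more to the point I need to identify $\llim{p}{m} R_m \cdot \prod_i U_i^{f_i(m)}$, and here the cleanest route is to observe that the whole double-limit manipulation can be organized so that the "derivative" factors and the remaining $\prod_i U_i^{f_i(m)}$ recombine: repeating the same splitting shows $P = P \cdot P$ directly once one checks that $\llim{p}{m}\big( R_m \prod_i U_i^{f_i(m)}\big) = P$, which follows by running the identity $f_i(n+m)=\DeltaS_m f_i(n) + f_i(n)+f_i(m)$ in the opposite grouping.

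The main obstacle I anticipate is bookkeeping the iterated generalised limits and the weak-continuity justifications carefully: operator multiplication is only \emph{separately} weakly continuous, so at each step I must be sure that the factor being pulled outside a limit is a single fixed operator (or a net of operators that is being limited in the \emph{other}, already-resolved variable), not something varying in the limit variable. A secondary subtlety is that the reduction of $\DeltaS_m f_i$ to lower-degree polynomials holds only for those $m$, but since it holds for \emph{all} $m$ (the degree drop in Definition \ref{B:def:poly-general-groups} is uniform, unlike the $p$-a.e. statements for almost polynomials), there is no need to discard a $p$-small set of $m$'s here — this is exactly where honest polynomials are easier than almost polynomials, and I would remark on that. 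The cleanest writeup likely parallels Lemma \ref{B:lem:v-d-Corput}'s style of manipulating $\llim{p}{}$ through products, so I would set up a short preliminary observation that for commuting bounded families $\seq{A}{n}{X}$, $\seq{B}{n}{X}$ one has $\llim{p}{m}\llim{p}{n} A_m B_n = (\llim{p}{m}A_m)(\llim{p}{n}B_n)$ and use it repeatedly.
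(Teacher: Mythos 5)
There is a genuine gap at the heart of your inductive step. After writing $U_i^{f_i(n+m)} = U_i^{g_{i,m}(n)}\,U_i^{f_i(n)}\,U_i^{f_i(m)}$ with $g_{i,m}(n) := f_i(n+m)-f_i(n)-f_i(m)$, the inner limit $\llim{p}{n}$ is applied to a product of \emph{two} factors that both vary with $n$, namely $\prod_i U_i^{g_{i,m}(n)}$ and $\prod_i U_i^{f_i(n)}$. Operator multiplication is only separately continuous in the weak operator topology, so $\llim{p}{n} A_n B_n$ cannot be split as $\bigl(\llim{p}{n}A_n\bigr)\bigl(\llim{p}{n}B_n\bigr)$; commutativity of the $U_i$ does not help, since the obstruction is topological, not algebraic. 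This is precisely why the degree-one case (Proposition \ref{B:lem:p-lim-U^n=P}) goes through — there the decomposition $A^{n+m}=A^mA^n$ separates the variables, with one factor constant in $n$ — and why it does not iterate to higher degree at the operator level. Your own caveat about separate continuity identifies the danger but your computation then commits exactly this error; the closing claim that ``running the identity in the opposite grouping'' yields $\llim{p}{m}\bigl(R_m\prod_i U_i^{f_i(m)}\bigr)=P$ is unsubstantiated and faces the same problem.

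The repair is the one the paper's own machinery points to: pass from operators to vectors and inner products, where unitarity produces the cancellation $\scalar{U^{f(n)}x,\,U^{f(n+m)}x} = \scalar{U^{-f(m)}x,\,U^{g_m(n)}x}$ with the $n$-dependence now confined to the single lower-degree factor $U^{g_m(n)}$, and then invoke the van der Corput Lemma \ref{B:lem:v-d-Corput} to conclude that $\llim{p}{n}U^{f(n)}x=0$ on the appropriate subspace; the complementary (eigenvector) subspace is handled by the character computation of Lemma \ref{B:lem:p-lim-of-polynomials-I} as in Observation \ref{B:lem:p-lim-on-E_r}, via the Jacobs--Glicksberg--de Leeuw decomposition. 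Note also that the paper does not prove this theorem — it cites Schnell — so there is no in-text proof to match; but Proposition \ref{B:lem:min-rec-generic} shows the intended template, and for several operators and genuinely higher degree one additionally needs a PET-style induction to control the lower-degree limits appearing inside the van der Corput condition, which your proposal does not address.
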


Our argument is essentially a variation on the methods employed by Schnell. Let us consider the still more specialised case of the result, with $m = d = 1$. On one hand, our result is weaker insofar as it needs the ultrafilter to be minimal in addition to being idempotent. On the other hand, it is also stronger insofar as it identifies the limit explicitly, and works for generalised polynomials. %Of course, restricting to a single unitary operator, one dimension, and minimal idempotents makes the presented result palpably more elementary. Out aim is chiefly to give the reader a flavour and motivation for more technical and advanced results. 

\newcommand{\IPlim}[1]{{\IP\!\!-\!\!\lim_{\!\! #1}} }
Other results which deserve a mention involve $\IP$-limits. These limits are extensively used in ergodic theory, most notably in \cite{BergKnuMc2006}, \cite{FK-IP-limits}, \cite{BFM-IP-limits}. As we will see, these limits are strongly related to ultrafilter limits along idempotents.

For brevity, and to establish a better correspondence with existing literature, we denote $\cF := \cP_{\operatorname{fin}}(\NN)$. We turn $\cF$ into a semigroup by taking the group operation to be the set union, as usual. Additionally, we assume the topology of $\cF$ to be discrete, wherever relevant. Recall that the notation $\alpha < \beta$ for $\alpha,\beta \in \cF$ is a shorthand for $\max \alpha < \min \beta$. We are now in position to define the $\IP$-limit.
\begin{definition}[$\IP$-limit]\label{C:def:IP-lim}\index{IP@$\IP$!IP-limit@$\IP$-limit}
    Let $Z$ be a topological space, and let $\seq{x}{\alpha}{\cF}$ be a sequence of elements of $Z$, indexed by $\cF$. Then we say that $\displaystyle{ \IPlim{\alpha} x_\alpha = y}$ if and only if for any $U \in \Top Z$ with $y \in U$ there exists $\alpha_0 \in \cF$ such that for any $\alpha \in \cF$ with $\alpha > \alpha_0$ it holds that $x_\alpha \in U$.
\end{definition}
To make the notion of $\IP$-limit useful, one needs to define a proper way of passing to subsequences. Note that to extract a subsequence from a sequence $\seq{x}{i}{\NN}$, one normally begins by choosing a sequence of indices $\seq{i}{n}{\NN}$ with $i_n < i_{n+1}$ for all $n \in \NN$, and then looks at the sequence $\left( x_{i_j} \right)_{j \in \NN}$. The following definition provides the right index set for the subsequence of a set-indexed sequence.
\begin{definition}[$\IP$-ring]\index{IP@$\IP$!IP-ring@$\IP$-ring}
  A family $\cF_1 \subset \cF$ is said to be an \emph{$\IP$-ring} if and only if it is of the form: $\cF_1 = \FU{\balpha}$ for some sequence $\balpha = \seq{\alpha}{n}{\NN}$ with $\alpha_n < \alpha_{n+1}$ for all $n \in \NN$, where $\FU{\balpha}$ denotes the set of finite unions. 
\end{definition}
Given an $\IP$-ring $\cF_1 = \FU{\balpha}$, there is a natural way to identify $\cF_1$ with $\cF$, much like there is a natural identification between $\NN$ and a subsequence $\seq{i}{n}{\NN}$. The correspondence is given by the map $\beta \mapsto \alpha_\beta := \bigcup_{i \in \beta} \alpha_i$. Note that the surjectivity is simple, while injectivity relies on the condition that $\alpha_n < \alpha_{n+1}$ for all $n$. This identification leads to the natural extension of Definition \ref{C:def:IP-lim}.
\begin{definition}[$\IP$-limit along $\IP$-ring]\index{IP@$\IP$!IP-limit@$\IP$-limit}\index{IP@$\IP$!IP-ring@$\IP$-ring}
  Let $Z$ be a topological space, and let $\seq{x}{\alpha}{\cF}$ be a sequence of elements of $Z$, indexed by $\cF$. Suppose that $\cF_1 = \FU{\balpha}$ is an $\IP$-ring. Then we define the $\IP$ limit of $x_\alpha$ along $\cF_1$ to be:
  $$ \IPlim{\alpha \in \cF_1} x_\alpha := \IPlim{\beta} x_{\alpha_\beta},$$
  with the understanding that if the expression on the right is undefined then so is the expression on the right.
\end{definition}

An important consequence of Hindman's Theorem is that $\IP$-limits of sequences in a compact space behave much like ordinary limits, as exemplified by the following proposition.
\begin{proposition}
  Let $Z$ be a compact metrizable space, and let $\seq{x}{\alpha}{\cF}$ be a sequence of elements of $Z$, indexed by $\cF$. Then there exists an $\IP$-ring $\cF_1$ such that the limit $ \displaystyle{\IPlim{\alpha \in \cF_1}} x_\alpha $ exists.
\vspace{-\lineskip}
\end{proposition}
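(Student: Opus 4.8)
The statement is a classical consequence of Hindman's Theorem (in the finite-sets form, Theorem~\ref{C:thm:Hindman-sets}), combined with a diagonalisation over a countable dense family. I would set it up as an iterated refinement of $\IP$-rings. The plan is: fix a countable dense subset $\{z_k\}_{k \in \NN}$ of $Z$, with open balls $B(z_k, 2^{-l})$ forming a countable base; then build a descending chain of $\IP$-rings $\cF \supset \cF^{(1)} \supset \cF^{(2)} \supset \cdots$, where at stage $m$ we have committed $\alpha_1 < \alpha_2 < \dots < \alpha_m$ (the generators of the eventual ring $\cF_1$) and an $\IP$-ring $\cF^{(m)} = \FU{\seq{\gamma}{n}{\NN}}$ whose generators all lie above $\alpha_m$, such that the truncated ``colour'' of $\alpha$ up to precision $2^{-m}$ is constant on a suitable sub-ring. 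Finally $\cF_1 := \FU{\seq{\alpha}{n}{\NN}}$ will be the diagonal ring, and one checks that $\IPlim{\alpha \in \cF_1} x_\alpha$ exists.

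\textbf{Key steps, in order.} First I would reduce to a partition statement: for each $m$, cover $Z$ by finitely many sets $C_1^{(m)}, \dots, C_{r_m}^{(m)}$ each of diameter $< 2^{-m}$ (possible since $Z$ is compact metric). Given an $\IP$-ring $\cF'$, the assignment $\alpha \mapsto x_\alpha$ induces a finite colouring of $\cF'$ by which $C_i^{(m)}$ contains $x_\alpha$; by Hindman's Theorem~\ref{C:thm:Hindman-sets} applied inside $\cF'$ (note $\cF'$ is isomorphic as a semigroup to $\cP_{\operatorname{fin}}(\NN)$ via the identification $\beta \mapsto \alpha_\beta$, and the colouring restricted to a sub-$\IP$-ring is still a finite colouring), there is a sub-$\IP$-ring $\cF'' \subseteq \cF'$ on which this colouring is constant, say with value $i_m$. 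Second, I iterate: start with $\cF^{(0)} := \cF$; having produced $\cF^{(m-1)}$, apply the previous step with precision $2^{-m}$ to get $\cF^{(m)} \subseteq \cF^{(m-1)}$ monochromatic for the $2^{-m}$-colouring, pick $\alpha_m$ to be the first generator of $\cF^{(m)}$ (and, to guarantee $\alpha_{m-1} < \alpha_m$, first pass to the tail of $\cF^{(m-1)}$ whose generators exceed $\max \alpha_{m-1}$, which is again an $\IP$-ring). Third, set $\cF_1 := \FU{\seq{\alpha}{n}{\NN}}$; since $\alpha_n < \alpha_{n+1}$ this is a genuine $\IP$-ring. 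Fourth, I verify convergence: the sets $C_{i_m}^{(m)}$ have diameters $\to 0$, and I would check that for every $m$ the set $\{ x_{\alpha_\beta} \setsep \beta \in \cF,\ \min \beta \geq m \}$ lies in $\cl C_{i_m}^{(m)}$, because any such $\alpha_\beta$ is a finite union of generators $\alpha_j$ with $j \geq m$, hence belongs to the sub-$\IP$-ring $\cF^{(m)}$ (here one uses that $\cF^{(m)}$ contains $\FU{(\alpha_j)_{j \geq m}}$ by construction). The nested closed sets $\cl C_{i_m}^{(m)}$ then have a common point $y$, and by definition $\IPlim{\alpha \in \cF_1} x_\alpha = y$.

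\textbf{Main obstacle.} The delicate point is bookkeeping the nesting so that, at stage $m$, the ring $\cF^{(m)}$ simultaneously (a) lies inside all previous rings, (b) has all generators strictly above $\max \alpha_{m-1}$ so that $\alpha_m$ can be chosen with $\alpha_{m-1} < \alpha_m$, and (c) is monochromatic for the $m$-th colouring. Concretely one must be careful that ``passing to a sub-$\IP$-ring'' as furnished by Hindman's Theorem interacts correctly with ``passing to a tail'': both operations preserve the class of $\IP$-rings, and Hindman's Theorem can be applied to a tail, so the order (first tail, then Hindman) works, but this needs to be stated cleanly. A secondary technical check is that a finite colouring of $\cF$ restricts to a finite colouring of any $\IP$-ring under the canonical identification with $\cF$, so that Theorem~\ref{C:thm:Hindman-sets} is genuinely applicable at each stage; this is immediate from the remark following the definition of $\IP$-ring but should be invoked explicitly. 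Everything else is routine: the diameters shrinking to $0$ and the finite-intersection property of the $\cl C_{i_m}^{(m)}$ in the compact space $Z$ give the limit point with no further effort.
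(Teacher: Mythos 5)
Your proposal is correct and takes essentially the same route as the paper, which only sketches the argument (one application of Hindman's Theorem \ref{C:thm:Hindman-sets} to a finite cover to get a monochromatic sub-$\IP$-ring, then an inductive refinement over covers of shrinking radius, "left to the reader"); your write-up is precisely the fleshed-out version of that sketch, with the tail-then-Hindman bookkeeping handled correctly. The only cosmetic point is that a finite cover by small-diameter sets need not be a partition, so the colouring should be defined by, say, the least index $i$ with $x_\alpha \in C_i^{(m)}$.
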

\begin{proof}[Sketch of proof.]
  We show that given an open cover $\cC \subset \Top Z$, we can construct an $\IP$-ring $\cF'$ such that there exists $U \in \cC$ such that $x_\alpha \in U$ for all $\alpha \in \cF'$. First, because of compactness, we may assume without loss of generality that $\cC$ is finite. Next, we may consider for each $U \in \cC$ the set $\cA_U := \{\alpha \in \cF \setsep x_\alpha \in U\}$. Clearly, $\cA_U$ for $U \in \cC$ form a finite partition of $\cF$. Hence, by Hindman's Theorem \ref{C:thm:Hindman-sets} we find that one of the cells of this partition, say $\cA_{V}$, contains an $\IP$-ring, say $\cF'$. Directly by construction, $x_\alpha \in V$ for all $\alpha \in \cF'$, as desired.

  We leave it to the reader to apply the above procedure to construct the $\IP$-ring mentioned in the assertion. One can do it inductively, by considering the covers consisting of balls with radii descending to $0$.
\end{proof}

There is a natural link between $\IP$-sets in $\NN$, $\IP$-limits and idempotent ultrafilters. To begin with, we introduce an alternative way of viewing $\IP$-sets, which is in the author's humble opinion a major motivating factor for the study of $\IP$-limits. Recall that for a sequence $\bx$ with elements in an additive group, the set $\FS{\bx}$ can be described as the values of the $\sum_{i \in \alpha} x_i$ for $\alpha \in \cF$. This motivates the following definition.

\begin{definition}[$\IP$-systems]\index{IP@$\IP$!IP-system@$\IP$-system}
  Let $X$ be a commutative semigroup. An $\IP$-system in $X$ is a map $x:\ \cF \to X$ such that $x(\alpha \cup \beta) = x(\alpha) \cup x(\beta)$ whenever $\alpha \cap \beta = \emptyset$.
\end{definition}
It is clear that $\IP$-sets in $X$ are precisely the sets of values of $\IP$-systems. Considering $\IP$-systems gives a clearer understanding of the structure, and is slightly more general, since a given $\IP$-set can potentially correspond to different $\IP$-systems.

Possibly the most frequent and probably the most basic way in which $\IP$-limits occur is in the expressions of the type
$$ \IPlim{\alpha \in \cF_1} x_{n(\alpha)} ,$$
where $n$ is an $\IP$-system and $\cF_1$ is an $\IP$-ring. Such limits are essentially equivalent to limits along idempotent ultrafilters, as shown in the following proposition.
\begin{proposition}\label{C:prop:IP-lim=p-lim}
  Let $Z$ be a metrizable topological space, $\seq{x}{n}{\NN}$ a sequence with elements in $Z$, and let $y \in Z$. Let $n:\ \cF \to \NN$ be an $\IP$-system. Denote the the corresponding $\IP$-sets $A_k = \{ n(\alpha) \setsep \alpha \in \cF,\ \min \alpha > k\}$ and $A := A_0$. The following conditions are equivalent:
  \begin{enumerate}[label={(\textit{\arabic*})},start=1]
   \item\label{C:prop:IP-lim=p-lim:cond:1} There exists an $\IP$-ring $\cF_1$ such that $\displaystyle{\IPlim{\alpha \in \cF_1}} x_{n(\alpha)} = y$.
   \item\label{C:prop:IP-lim=p-lim:cond:2} There exists an idempotent ultrafilter $p \in \bigcap_k \bar{A}_k$ such that $\llim{p}{n} x_n = y$.
  \end{enumerate}
\end{proposition}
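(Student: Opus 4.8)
The plan is to prove the two implications separately. For \ref{C:prop:IP-lim=p-lim:cond:2} $\imply$ \ref{C:prop:IP-lim=p-lim:cond:1}, I would start from an idempotent ultrafilter $p \in \bigcap_k \bar{A}_k$ with $\llim{p}{n} x_n = y$. The idea is to build an $\IP$-ring $\cF_1 = \FU{\balpha}$ by transfinite-free induction: having chosen $\alpha_0 < \alpha_1 < \dots < \alpha_{m-1}$ (finite subsets of $\NN$), I want to choose the next block $\alpha_m$ so that (i) $\alpha_m > \alpha_{m-1}$, and (ii) every finite union of the $\alpha_j$'s including $\alpha_m$ maps under $n$ into the $(1/m)$-ball $B(y,1/m)$. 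This is where the hypothesis that $p$ is idempotent and lies in every $\bar{A}_k$ does the work: since $n$ is an $\IP$-system, for $\alpha \cap \beta = \emptyset$ we have $n(\alpha \cup \beta) = n(\alpha) + n(\beta)$ (writing the semigroup additively), so the set of ``good'' blocks can be shown to be $p$-large by the same kind of argument used in Lemma \ref{C:lem:IP<=in-idempotent}: the preimage under $n$ of $B(y,1/m)$ is $p$-large because $\llim{p}{n}x_n = y$, and idempotence together with $p \in \bar{A}_k$ for suitable $k$ (chosen larger than $\max(\alpha_0 \cup \dots \cup \alpha_{m-1})$) lets one intersect the relevant translates $A_k - n(\gamma)$ over all $\gamma \subset \{0,\dots,m-1\}$ and still land in $p$. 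Concretely, the finitely many conditions $x_{n(\gamma) + z} \in B(y,1/m)$ (for $\gamma \subset \{\alpha_0,\dots,\alpha_{m-1}\}$, including $\gamma = \emptyset$) each carve out a $p$-large set of admissible ``continuations'', and $A_k$ itself is $p$-large, so one may pick $z = n(\alpha_m)$ in the intersection with $\alpha_m > \alpha_{m-1}$. Then $\cF_1 := \FU{\balpha}$ is an $\IP$-ring and by construction $\IPlim{\alpha \in \cF_1} x_{n(\alpha)} = y$.

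For the converse \ref{C:prop:IP-lim=p-lim:cond:1} $\imply$ \ref{C:prop:IP-lim=p-lim:cond:2}, suppose $\cF_1 = \FU{\balpha}$ is an $\IP$-ring with $\IPlim{\alpha \in \cF_1} x_{n(\alpha)} = y$. Consider the sub-$\IP$-system of $n$ restricted along $\cF_1$; equivalently, set $B_k := \{ n(\alpha) \setsep \alpha \in \cF_1,\ \min \alpha > k \}$, which is an $\IP$-set contained in $A_k$ (using that $n$ is an $\IP$-system and $\alpha \in \cF_1$ decomposes into blocks $\alpha_i$ with indices exceeding $k$ once $\min \alpha > k$). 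The sets $\{n(\alpha) \setsep \alpha \in \cF_1\}$ restricted further form a descending chain of $\IP$-sets; applying Lemma \ref{C:lem:IP=>in-idempotent} to the shifted system, there exists an idempotent $p$ with $B_k \in p$ for all $k$, hence $A_k \in p$ for all $k$, i.e. $p \in \bigcap_k \bar{A}_k$. It then remains to arrange (possibly by passing to a coarser $\IP$-subring first, or by choosing $p$ inside the appropriate closed subsemigroup more carefully) that $\llim{p}{n} x_n = y$: this holds provided $\{ m \setsep x_m \in B(y,\varepsilon) \} \in p$ for every $\varepsilon > 0$, which follows because the $\IP$-limit hypothesis gives $\alpha_0 \in \cF_1$ with $x_{n(\alpha)} \in B(y,\varepsilon)$ for all $\alpha \in \cF_1$, $\alpha > \alpha_0$, so taking $k := \max \alpha_0$ we get $B_k \subset \{ m \setsep x_m \in B(y,\varepsilon)\}$, and $B_k \in p$.

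The main obstacle is the bookkeeping in the first implication: one has to verify carefully that the conditions imposed at stage $m$ (good behaviour of \emph{all} finite unions involving the new block, not just the new block alone) are simultaneously achievable, and this is exactly the combinatorial heart that parallels the inductive construction in Lemma \ref{C:lem:IP<=in-idempotent} --- idempotence is used to pass from ``$A_k - y \in p$ for $p$-a.a.\ $y$'' to the existence of a single common continuation, and membership $p \in \bar A_k$ for $k$ past the support of the already-chosen blocks is what keeps the disjointness condition $\alpha_m > \alpha_{m-1}$ compatible with $p$-largeness. A secondary subtlety in the converse is ensuring the \emph{same} idempotent $p$ witnesses both $p \in \bigcap_k \bar A_k$ and $\llim{p}{n}x_n = y$; the cleanest route is to apply the existence lemma (Lemma \ref{C:lem:IP=>in-idempotent} / Corollary \ref{cor:idempotetnes-exist-in-beta-X}) to the compact subsemigroup $\Gamma := \bigcap_k \bar{B}_k$, noting that $\Gamma$ is nonempty and that every element of $\Gamma$ already forces $x_m \in B(y,\varepsilon)$ for $\varepsilon$-a.a.\ $m$ in the relevant sense, so any idempotent in $\Gamma$ works.
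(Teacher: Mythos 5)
Your proposal is correct and follows essentially the same route as the paper: for (2)$\imply$(1) you re-run the Galvin--Glazer induction of Lemma \ref{C:lem:IP<=in-idempotent} on the sets $A_k \cap \{m \setsep \rho(x_m,y)<1/k\}$, using $p \in \bar A_k$ to force the new block past the old ones, and for (1)$\imply$(2) you apply Lemma \ref{C:lem:IP=>in-idempotent} to the tails of the $\IP$-ring and read off $\llim{p}{n}x_n=y$ from the fact that each $\varepsilon$-ball contains a tail. The only difference is that you spell out the inductive bookkeeping that the paper delegates to the cited lemma; no gap.
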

\begin{proof}
\begin{description}
 \item[   \ref{C:prop:IP-lim=p-lim:cond:1} $\imply$    \ref{C:prop:IP-lim=p-lim:cond:2}] 
  Suppose that $\cF_1 = \FU{\balpha}$ is such that  $\displaystyle{\IPlim{\alpha \in \cF_1}} x_{n(\alpha)} = y$. By Lemma \ref{C:lem:IP=>in-idempotent} there exists an idempotent $p$ such that for any $k$ the set $\FU{\sigma^k\alpha}$ is $p$-large (where $\sigma^k\alpha = \left(\alpha_{k+l} \right)_{l \in \NN}$, as before). It is clear that $\llim{p}{n} x_n = y$ and that $p$ satisfies the remaining conditions.

 \item[   \ref{C:prop:IP-lim=p-lim:cond:2} $\imply$    \ref{C:prop:IP-lim=p-lim:cond:1}] Fix a metric $\rho$ on $Z$. Let $B_k$ denote the set $\{ n \in \NN \setsep \rho(x_n,y) < 1/k \}$. Because the sets $B_k$ are $p$-large, an application of Lemma \ref{C:lem:IP<=in-idempotent} shows that one can construct a sequence of integers $\mathbf{m} = \seq{m}{i}{\NN}$ such that $\FS{\sigma^k\mathbf{m}} \subset A \cap B_k$ for any $k$. Moreover, because $p \in A_l$ for any $l$, we can ensure that $m_i = \sum_{j \in \alpha_i} n(i)$ with $\alpha_i < \alpha_{i+1}$ for any $i$. It follows that $\cF_1 := \FU{\balpha}$ is the sought $\IP$-ring. %Given such $p$, we construct a descending sequence of $\IP$-rings $\cF = \cF_0 \supset \cF_1 \supset \cF_2 \supset \dots$ and finite sets $\gamma_i$ such that if $\alpha > \gamma_i$ and $\alpha \in \cF_i$ then $\rho(y,x_{n(\alpha)}) < \frac{1}{i}$, and additionally the set $A_i := \{ n(\alpha) \setsep \alpha \in \cF_i \}$ is $p$-large. Suppose $\cF_i$ is already constructed. Then  
\end{description}
\end{proof}

We have seen that there is ample justification for interest in when limits of unitary operators are projections. We restricted our attention to powers of unitary operators, but in literature one frequently encounters (unitary) actions of general (semi-)groups. The following definition should be construed as a generalisation of the assignment $n \mapsto U^n$ for a unitary operator $U$. We could have stated the definition in much more general terms, but for our purposes the following will suffice.
\begin{definition}[Unitary action]\index{unitary action}
  Let $X$ be a commutative semigroup, and let $\cH$ be a Hilbert space. A \emph{unitary action} of $X$ on $\cH$ is a map $x \mapsto U_x$, such that for any $x, y \in X$ it holds that $U_{x+y} = U_x U_y$.
\end{definition}

We are now in position to state some noteworthy results. The simplest among them is the following.
\begin{theorem}[\cite{FK-IP-limits}]
  Let $\cH$ be a separable Hilbert space, let $X$ be a commutative group, and let $x \mapsto U_x$ be a unitary action of $X$ on $\cH$. Suppose that $x :\ \cF \to X$ is an $\IP$-system and $\cF_1$ is an $\IP$-ring such that the following limit exists:
  $$ P := \IPlim{\alpha \in \cF_1} U_{x(\alpha)}.$$
  Then, $P$ is an orthogonal projection.
\end{theorem}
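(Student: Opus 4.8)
The plan is to reduce this statement to the already-established fact (Proposition \ref{C:prop:IP-lim=p-lim}) that an $\IP$-limit along an $\IP$-ring of the form $\IPlim{\alpha \in \cF_1} x_{n(\alpha)}$, where $n$ is an $\IP$-system, coincides with a generalised limit $\llim{p}{n} x_n$ along a suitable idempotent ultrafilter $p \in \beta X$. First I would replace the Hilbert space $\cH$ by the weak topology, in which the closed unit ball is compact (since $\cH$ is separable, this ball is even metrizable), so that Proposition \ref{C:prop:IP-lim=p-lim} applies verbatim to sequences of vectors $U_{x(\alpha)} v$ for a fixed $v \in \cH$. Concretely, for each $v \in \cH$ the hypothesis that $P = \IPlim{\alpha \in \cF_1} U_{x(\alpha)}$ exists (in the weak operator topology) means in particular that $\IPlim{\alpha \in \cF_1} U_{x(\alpha)} v = Pv$ weakly; applying Proposition \ref{C:prop:IP-lim=p-lim} produces an idempotent ultrafilter $p \in \beta X$ with $\llim{p}{\alpha} U_{x(\alpha)} v = Pv$ for all $v$ simultaneously --- here one must be slightly careful that a single $p$ works for all $v$, which follows because $\cH$ is separable: take a countable dense set, diagonalise, and use that $\bigcap_k \bar{A}_k$ is a closed (hence compact) subsemigroup of $\beta X$ containing an idempotent by Corollary \ref{cor:idempotetnes-exist-in-beta-X}, then check the limit transfers to all of $\cH$ by a density/uniform-boundedness argument. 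Thus $P = \llim{p}{\alpha} U_{x(\alpha)}$ in the weak operator topology for an idempotent $p$.

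Next I would run the idempotence argument exactly as in Proposition \ref{B:lem:p-lim-U^n=P} (the $\llim{p}{n} A^n = P^2$ computation), but with $A^n$ replaced by $U_{x(\alpha)}$ and using the $\IP$-system relation instead of the semigroup power law. The key point is that for $p$-almost all $\alpha$ and, given such $\alpha$, for $p$-almost all $\beta$, we have $\alpha \cap \beta = \emptyset$ (this is the standard fact that $p$-a.e.\ $\beta$ has $\min\beta$ larger than $\max\alpha$, which holds because $p \in \bar{A}_k$ for all $k$), and then $x(\alpha \cup \beta) = x(\alpha) + x(\beta)$, so $U_{x(\alpha\cup\beta)} = U_{x(\alpha)} U_{x(\beta)}$. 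Using separate weak continuity of operator multiplication and idempotence $p + p = p$:
\begin{align*}
P = \llim{p}{\gamma} U_{x(\gamma)}
&= \llim{p}{\alpha} \llim{p}{\beta} U_{x(\alpha \cup \beta)}
= \llim{p}{\alpha} \llim{p}{\beta} U_{x(\alpha)} U_{x(\beta)} \\
&= \llim{p}{\alpha} U_{x(\alpha)} \llim{p}{\beta} U_{x(\beta)}
= P^2.
\end{align*}
So $P$ is idempotent. Finally, since $X$ is a commutative group, each $U_x$ is unitary, hence the family $\{U_{x(\alpha)}\}$ is a commuting family of normal (indeed unitary) operators; by Lemma \ref{B:lem:operators-commutativity} the weak limit $P$ is normal. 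A normal idempotent operator on a Hilbert space is an orthogonal projection (the standard criterion $P = P^2 = P^*$), which completes the argument.

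The main obstacle I expect is the transfer step: turning the $\IP$-ring hypothesis into a \emph{single} idempotent ultrafilter $p$ that simultaneously computes the weak limit on all of $\cH$, rather than one $p_v$ per vector $v$. Proposition \ref{C:prop:IP-lim=p-lim} is stated for a single metrizable-valued sequence, so some care is needed: I would fix a countable dense subset $\{v_j\}$ of $\cH$, apply the proposition on the product space $\prod_j \overline{B}(0, \|v_j\|)$ (metrizable and compact in the product weak topology) to the single sequence $\alpha \mapsto (U_{x(\alpha)} v_j)_j$, obtaining one idempotent $p$ in $\bigcap_k \bar A_k$ with $\llim{p}{\alpha} U_{x(\alpha)} v_j = P v_j$ for every $j$, and then upgrade to arbitrary $v \in \cH$ using $\sup_\alpha \|U_{x(\alpha)}\| \le 1$ together with the fact that weak limits are preserved under uniform approximation of the vector argument. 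Once this is in place, the rest is the routine idempotence-plus-normality computation above.
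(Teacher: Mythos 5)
The paper itself does not prove this theorem: it is quoted from \cite{FK-IP-limits}, and the only indication of an argument is the remark immediately following it, which says that for $\ZZ$-actions the statement is ``equivalent to Lemma \ref{B:lem:p-lim-U^n=P}, modulo an application of principle \ref{C:prop:IP-lim=p-lim}''. Your proof is a correct and complete elaboration of exactly that remark, carried out in the stated generality of a commutative group action: convert the $\IP$-limit into a generalised limit along a single idempotent ultrafilter, rerun the $P = P^2$ computation of Proposition \ref{B:lem:p-lim-U^n=P} using the partial additivity $x(\alpha\cup\beta) = x(\alpha)+x(\beta)$ for disjoint $\alpha,\beta$, and conclude via normality of weak limits of commuting unitaries (Lemma \ref{B:lem:operators-commutativity}). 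Your handling of the one genuinely delicate point --- producing a \emph{single} idempotent that computes the weak limit for all vectors simultaneously, via the compact metrizable product $\prod_j \overline{B}(0,\norm{v_j})$ over a countable dense set and a uniform-boundedness upgrade --- is sound and is more care than the paper takes anywhere in this section.

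One small thing to tighten: mid-proof you place the idempotent in $\beta X$ while taking limits over the set variable $\alpha$. The natural home for $p$ is $\Ultrafilters{\cP_{\operatorname{fin}}(\NN)}$, chosen as in Theorem \ref{C:thm:Hindman-sets} so that $\FU{\sigma^m\balpha}$ is $p$-large for every $m$; it is precisely this property (not membership in the sets $\bar{A}_k \subset \beta\NN$ of Proposition \ref{C:prop:IP-lim=p-lim}, which live on the \emph{values} of the $\IP$-system) that justifies your claim that for $p$-a.a.\ $\alpha$ and then $p$-a.a.\ $\beta$ one has $\alpha < \beta$ and hence $U_{x(\alpha\cup\beta)} = U_{x(\alpha)}U_{x(\beta)}$. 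Alternatively one can push such a $p$ forward under $\alpha \mapsto x(\alpha)$ to get an idempotent-like element of $\beta X$, but that requires a word of justification. With the indexing set fixed, the argument goes through.
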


\begin{remark}
  In the case of actions of the integers, the above theorem is equivalent to Lemma \ref{B:lem:p-lim-U^n=P}, modulo an application of principle \ref{C:prop:IP-lim=p-lim}.
\end{remark}

Several generalisations of the above result are possible. Firstly, $\IP$-system in the above statement can be replaced by a so called $\IP$-polynomial. We don't define this notion rigorously, but merely remark that the relation in which $\IP$-polynomials stand to $\IP$-systems is similar to the relation of polynomials to linear functions. For a precise definition, see \cite{BergKnuMc2006}.

\begin{theorem}[\cite{BFM-IP-limits}]
  Let $\cH$ be a separable Hilbert space, and let $x \mapsto U_x$ be a unitary action of $\ZZ^k$ on $\cH$. Suppose that $x : \cF \to X$ is an $\IP$-polynomial and $\cF_1$ is an $\IP$-ring such that the following limit exists:
  $$ P := \IPlim{\alpha \in \cF_1} U_{x(\alpha)}.$$
  Then, $P$ is an orthogonal projection.
\end{theorem}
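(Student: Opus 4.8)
The plan is to transport the statement to one about a generalised limit along an idempotent ultrafilter and then induct on the degree of the $\IP$-polynomial, in close parallel to the argument for Proposition~\ref{B:lem:min-rec-generic}. Since $\cH$ is separable, the closed unit ball of $\cB(\cH)$ in the weak operator topology is a compact \emph{metrizable} space, so Proposition~\ref{C:prop:IP-lim=p-lim} (applied with this space in place of $Z$, and in the direction \ref{C:prop:IP-lim=p-lim:cond:1}$\imply$\ref{C:prop:IP-lim=p-lim:cond:2}) produces an idempotent ultrafilter $p$ on $\cF$, supported on the tails of the $\IP$-ring $\cF_1$, for which $P = \llim{p}{\alpha} U_{x(\alpha)}$; here $x$ is transported to $\cF$ via the canonical identification $\cF_1 \cong \cF$, which carries an $\IP$-polynomial to an $\IP$-polynomial of the same degree. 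The operators $U_{x(\alpha)}$ are unitary and pairwise commute (the action is that of the commutative group $\ZZ^k$), so as in Lemma~\ref{B:lem:operators-commutativity} the weak limit $P$ is normal, and every operator that appears below commutes with every other. It therefore suffices to identify $P$ explicitly as an orthogonal projection; we will show $P = Q$, where $Q$ is the projection onto the ``reversible'' part in the Jacobs--Glicksberg--de Leeuw splitting.

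We use the JGL decomposition $\cH = \cH_r \oplus \cH_s$ for the $\ZZ^k$-action (Theorem~\ref{B:thm:JGL-decomposition} together with its version for commuting families, cf.\ \cite{Tanja-operators}), where $\cH_r$ is the closed span of the joint unimodular eigenvectors and $\cH_s$ the weakly mixing part; both summands are invariant under every $U_g$, hence under $P$, and the splitting is orthogonal, with $Q$ the orthogonal projection onto $\cH_r$. On $\cH_r$: for a joint eigenvector $v$ with $U_g v = \chi(g)v$, $\chi \in \widehat{\ZZ^k}$, one has $U_{x(\alpha)} v = \chi(x(\alpha))\, v$, and $\chi \circ x\colon \cF \to \TT$ is an admissible $\TT$-valued $\IP$-polynomial; the $\IP$-polynomial analogue of Lemma~\ref{B:lem:p-lim-of-gen-poly} (for an $\IP$-system this is immediate from idempotence of $p$, since a modulus-one scalar $\lambda$ with $\lambda=\lambda^2$ equals $1$; in general one induces on the degree) gives $\llim{p}{\alpha}\chi(x(\alpha)) = 1$, so $P|_{\cH_r} = I_{\cH_r}$.

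The heart of the matter is the claim $P|_{\cH_s} = 0$, proved by induction on $d := \deg x$. For $d = 1$ (an $\IP$-system) fix $v \in \cH_s$ and set $w := Pv \in \cH_s$. From idempotence of $p$, $w = \llim{p}{\alpha} U_{x(\alpha)} w$; expanding $\|U_{x(\alpha)}w - w\|^2 = 2\|w\|^2 - 2\operatorname{Re}\scalar{U_{x(\alpha)}w, w}$ and taking the $p$-limit shows $U_{x(\alpha)}w \to w$ in \emph{norm} along $p$, hence along every sub-$\IP$-ring. But a nonzero weakly mixing vector admits no such $\IP$-recurrence — after passing to a suitable sub-$\IP$-ring one has $\IPlim{\alpha\in\cF'} U_{x(\alpha)}w = 0$, which is exactly the Hilbert-space structure theory of $\IP$-limits of \cite{FK-IP-limits} — forcing $w = 0$, i.e.\ $P|_{\cH_s} = 0$. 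For the inductive step $d \ge 2$ apply the van der Corput Lemma~\ref{B:lem:v-d-Corput} with index semigroup $(\cF, \cup)$: for $v \in \cH_s$ put $w_\alpha := U_{x(\alpha)}v$; using $x(\alpha\cup\beta) - x(\alpha) = x(\beta) + \DeltaS_\beta x(\alpha)$ (symmetric difference taken with respect to $\cup$) one computes, for $p$-almost all $\beta$ and then $p$-almost all $\alpha$,
\[
  \scalar{w_{\alpha\cup\beta}, w_\alpha} = \scalar{U_{\DeltaS_\beta x(\alpha)} v,\; U_{-x(\beta)} v}.
\]
For fixed $\beta$ the map $\alpha \mapsto \DeltaS_\beta x(\alpha)$ is an admissible $\IP$-polynomial of degree $\le d-1$, so by the induction hypothesis $\llim{p}{\alpha} U_{\DeltaS_\beta x(\alpha)} = Q$, which annihilates $v \in \cH_s$; hence $\llim{p}{\beta}\llim{p}{\alpha}\scalar{w_{\alpha\cup\beta}, w_\alpha} = 0$, and van der Corput yields $\llim{p}{\alpha} U_{x(\alpha)} v = 0$. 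Thus $P|_{\cH_s}=0$ for all $d$, so $P = Q$, an orthogonal projection.

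The main obstacle is the degree-one (weakly mixing) case: the assertion that a nonzero vector in $\cH_s$ cannot be $\IP$-recurrent along the given $\IP$-polynomial — including the passage to a sub-$\IP$-ring along which the limit is genuinely $0$ — is the genuine $\IP$-analogue of the mean-ergodic/van der Corput machinery and is precisely the content of \cite{FK-IP-limits}, \cite{BFM-IP-limits}; it cannot be extracted from the single-operator statements recorded earlier in this chapter. A secondary technical point is the ``degree'' calculus for $\IP$-polynomials — that $\DeltaS_\beta x$ is, for $p$-almost all $\beta$, an admissible $\IP$-polynomial of strictly smaller degree, together with the bookkeeping of constant terms — for which one relies on \cite{BergKnuMc2006}; with that in place the induction above runs in close parallel to Propositions~\ref{B:lem:min-rec-generic}--\ref{C:thm:Fp-partial-characterisation}.
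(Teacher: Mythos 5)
The paper offers no proof of this theorem (it is quoted from \cite{BFM-IP-limits}), so there is nothing internal to compare against; judged on its own terms, your proposal has a genuine gap. The reduction to an idempotent-ultrafilter limit via Proposition \ref{C:prop:IP-lim=p-lim}, the normality of $P$, and the treatment of $\cH_r$ are fine, but the central identification $P=Q$ is false, and both your base case and your inductive step depend on it. The space $\cH_s$ is defined by weak mixing with respect to the \emph{whole} group action, whereas what you must rule out is recurrence along one particular $\IP$-system; these are not the same. Concretely, let $U$ be a rigid weakly mixing unitary (so $\cH_r$ is trivial but $U^{n_i}\to I$ strongly along some sequence), fix $0\neq w\in\cH_s$, pass to a subsequence with $\norm{U^{m_i}w-w}<2^{-i}$, and let $x$ be the $\IP$-system generated by the $m_i$. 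Then $\IPlim{\alpha} U_{x(\alpha)}w = w$, so $Pw=w\neq 0=Qw$. Thus ``a nonzero weakly mixing vector admits no such $\IP$-recurrence'' is simply wrong, and the structure theory of \cite{FK-IP-limits} does not say this: it identifies $P$ as the orthogonal projection onto the vectors rigid \emph{along the given $\IP$-ring}, a subspace that depends on the $\IP$-system and in general strictly contains $\cH_r$.

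With the correct, weaker induction hypothesis --- $\llim{p}{\alpha}U_{\DeltaS_\beta x(\alpha)} = P_\beta$ for \emph{some} orthogonal projection $P_\beta$ depending on $\beta$ --- the quantity $\scalar{P_\beta v, U_{-x(\beta)}v}$ has no reason to vanish, so the van der Corput step does not close. This is exactly where the real work of \cite{BFM-IP-limits} lies: the induction must be run against a decomposition of $\cH$ adapted to the whole family of derived $\IP$-polynomials, with repeated refinement of the $\IP$-ring, not against the single Jacobs--Glicksberg--de Leeuw splitting of the group action. Note also that in the degree-one case your detour through $\cH_s$ is unnecessary: for $p$-almost all pairs the sets are disjoint, so $U_{x(\alpha\cup\beta)}=U_{x(\alpha)}U_{x(\beta)}$, whence $P^2=P$ exactly as in Proposition \ref{B:lem:p-lim-U^n=P}, and a normal idempotent is an orthogonal projection --- which is all the theorem asserts.
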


The above theorem can be generalised further, to allow for $\mathsf{FVIP}$ systems in place of $\IP$-polynomials. The relevant theorem is due to Bergelson, H{\aa}land Knutson and McCutcheon. Because definition of $\mathsf{FVIP}$ exceeds the scope of our investigation, we do not formulate the the theorem. We refer the reader to \cite{BergKnuMc2006}.

% *****************************************

\renewcommand{\cF}{\mathcal{F}}

\chapter{Applications in voting \& model theory}
\label{D:chapter} 
\lhead{Chapter \ref{D:chapter}. \emph{Voting \& models}} 

\section{Voting \& Arrow's theorem}

Yet another way to view ultrafilters is through the prism of voting procedures. Many strengths of this approach lie more in the intuitively appealing picture than in rigorous results, which should be borne in mind throughout this section. Whenever non-standard terminology is used, the goal is purely expository, and more theoretically inclined reader may disregard these superfluous details.

\newcommand{\rel}{\prec}
\newcommand{\relaggr}{\prec_{\text{soc}}}

Let us begin by introducing a situation which will essentially remain fixed throughout this section. We consider a population $X$ (where $X$ is a possibly infinite set, with no extra structure), which is voting on candidates from a set $C$ (again, no additional structure is required; in practical applications $C$ is finite, but we don't make this restriction). Each voter $x \in X$ has some preference between the candidates, which are expressed by a total order $\rel_x$, i.e. $a \rel_x b$ if and only if $x$ prefers $b$ to $a$. Note that we do not include any notion of \emph{strength} of preference in our picture, nor do we allow a voter to be undecided between two options. Moreover, we assume each voter to be rational to have preferences that form a total order: if for a voter $x$ a candidate $b$ is preferable to candidate $a$, and candidate $c$ is preferable to $b$, then $c$ is also preferable to $a$.

The goal of the vote is to establish an aggregated preference. More precisely, a \emph{social welfare function} (also known as \emph{preference aggregation rule}) is a function that assigns to the family of preferences $\seq{\rel}{x}{X}$ a total order $\relaggr$ which we consider to be the outcome of the vote, i.e. the preference of the society as a whole, or the aggregated preference of the voters. 

There are several conditions that a preference aggregation rule could be expected to satisfy:
\begin{description}\index{social welfare function}
	\item[(M)] Monotonicity (also known as Positive Association of Social and Individual Values) --- if a candidate moves up in individual rankings, then his final position does not fall. 
  Formally, let $a \in C$ be a candidate, and $(\rel_x)_{x\in X}, (\rel'_x)_{x\in X}$ be two individual preferences. Suppose that for any voter $x$ the following holds: if $b,c \in C \setminus \{a\}$ then $b \rel_x c$ if and only if $b \rel_x c$ ; moreover, if $b \rel_x a$ then also $b \rel_x' a$. We require that in this situation for any $b \in C$ with $b \relaggr a $ we have $b \relaggr' a$.
	\item[(NI)] Non-imposition (also known as Unanimity) --- if the vote is unanimous, then the aggregated preference is the same as the individual preference of the voters. Formally, if there is a universal total order $\rel_*$ on $C$ such that for all candidates $a,b \in C$ and all voters $x$ it holds that $ a \rel_x b$ if and only if $ a \rel_* b $, then also for all candidates $a,b$ it holds that $a \relaggr b$ if and only if $ a\rel_* b$. 

	\item[(IIA)] Independence of Irrelevant Alternatives --- relative ranking of any two given individuals is independent of preferences concerning other individuals. Formally, let $a,b \in C$ be two candidates, and let  $(\rel_x)_{x\in X}, (\rel'_x)_{x\in X}$ be two individual preferences such that for any voter $x$ we have $a \rel_x b $ if and only if $a \rel_x' b$. Then we require that $a \relaggr b $ if and only if $a \relaggr' b$.
\end{description}

Note that in presence of (IIA), the conditions (M) and (NI) are equivalent to apparently stronger conditions given below.

\begin{description}
	\item[(M')] If $a \in C$ is a candidate and $\rel_x$ and $\rel_x'$ are two individual preferences, such that for any other candidate $b \in C$ the condition $b \rel_x a$ implies $b \rel_x' a$, then the condition $b \relaggr a$ implies $b \relaggr' a$ for any $b \in C$.

	\item[(NI')] If for some two candidates $a,b \in C$ and all voters $x \in X$ it holds that $ a \rel_x b$, then also  $a \relaggr b$. 
\end{description}

The reason for giving the more complicated and weaker assumptions is that in social choice theory, these are more commonly accepted and easier to justify. In fact, most preference aggregation rules encountered in practice satisfy (M) and (NI), but fail to satisfy (IIA). The cause this state of affairs will become clear as soon as Arrow's theorem is formulated.

A voter $x \in X$ is called a \emph{dictator} if he alone controls the election. More precisely, $x$ is a dictator if and only if for any individual preferences and any pair of candidates $a,b \in C$ it holds that $a \relaggr b$ if and only if $a \rel_x b$. This means that the social preference is always identical to the preference of $x$, even if the entire rest of the society holds precisely opposite preferences to $x$. It is often %$\text{sometimes}^{\text{\textsf{[citation needed]}}}$
 thought that dictatorship should be avoided, at least on the grounds that in that case voting does not contribute any new information. The following celebrated theorem due to Arrows shows that, in most practical situations, this can only be accomplished is we sacrifice some of the desirable properties mentioned above. We mostly follow the approach by Galvin \cite{arrows-by-galvin}, see also \cite{arrows-by-tao} and \cite{arrows-by-tao}.

\begin{theorem}\label{D:thm:arrows-finite}
\index{Arrows|(}
\index{social choice}
  For a \emph{finite} set of voters $X$ ranking candidates from a set $C$ with $\# C \geq 3$, any {preference aggregation rule} that satisfies conditions (IIA), (M) and (NI) is necessarily dictatorial.
\end{theorem}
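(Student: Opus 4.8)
The plan is to prove Arrow's theorem for finitely many voters via the classical ``decisive set'' argument, which fits naturally with the ultrafilter theme of the thesis: the ultimate aim is to show that the family of decisive coalitions forms an ultrafilter on $X$, and since $X$ is finite, every ultrafilter on $X$ is principal, which means exactly that there is a dictator. First I would fix the strengthened forms (M') and (NI') of the axioms, which the excerpt already notes are equivalent to (M) and (NI) in the presence of (IIA); these are the convenient forms to work with.

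The key definitions and steps, in order, are as follows. Call a set $S \subseteq X$ \emph{decisive for $(a,b)$} if whenever every voter in $S$ ranks $a \rel_x b$, the society ranks $a \relaggr b$; call $S$ simply \emph{decisive} if it is decisive for every ordered pair. By (NI'), the whole set $X$ is decisive. The first substantive step is the \emph{field expansion lemma}: if $S$ is decisive for some single pair $(a,b)$, then $S$ is decisive. This is proved by introducing a third candidate $c$ (here is where $\# C \geq 3$ is used), constructing auxiliary preference profiles in which voters in $S$ place $a$ above $b$ (so society puts $a$ above $b$) while everyone arranges $c$ appropriately, and then invoking (IIA) together with (NI') on the pairs $(a,c)$ and $(c,b)$ and transitivity of $\relaggr$ to deduce decisiveness for $(c,b)$, $(a,c)$, and in turn for every pair. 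The second substantive step is the \emph{group contraction lemma}: if $S$ is decisive and $\# S \geq 2$, partition $S = S_1 \cup S_2$ into two nonempty pieces; then one of $S_1$, $S_2$ is decisive. This is again proved by a clever three-candidate profile: take a profile where $S_1$ ranks $a \rel b \rel c$, $S_2$ ranks $c \rel a \rel b$, and everyone outside $S$ ranks $b \rel c \rel a$ (some arrangement making $S$ decisive on $(a,b)$ kick in); since $S$ is decisive, $a \relaggr b$; now by transitivity either $a \relaggr c$ (forcing $S_1$ decisive for $(a,c)$, hence decisive by step one) or $c \relaggr b$ (forcing $S_2$ decisive for $(c,b)$, hence decisive).

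Combining the two lemmas and iterating: starting from the decisive set $X$, repeatedly contract until we reach a decisive set of size one, say $\{x_0\}$. By the field expansion lemma applied to this singleton, $x_0$ is decisive for every ordered pair, and a short additional argument (using (M') to handle the reverse inequality, i.e.\ that if $b \rel_{x_0} a$ then $b \relaggr a$, which follows because the complement profiles cannot override a decisive singleton) shows that $a \relaggr b \iff a \rel_{x_0} b$ for all $a,b$. That is exactly the statement that $x_0$ is a dictator. Packaging this in ultrafilter language: let $\mathcal{D} = \{ S \subseteq X : S \text{ is decisive}\}$; the two lemmas plus monotonicity show $\mathcal{D}$ is closed under supersets, that $\emptyset \notin \mathcal{D}$ while $X \in \mathcal{D}$, and that for each partition into finitely many pieces exactly one piece lies in $\mathcal{D}$ — hence $\mathcal{D} \in \Ultrafilters{X}$, and finiteness of $X$ forces $\mathcal{D} = \principal{x_0}$.

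\textbf{Main obstacle.} The genuinely delicate part is the construction of the specific preference profiles in the two lemmas — one must arrange the orders $\rel_x$ so that (IIA) can be applied on the right pairs while the hypotheses on decisiveness and unanimity are simultaneously in force, and this requires $\# C \geq 3$ essentially (with only two candidates the theorem is false). I would carry out these profile constructions explicitly and carefully, since this is where all the content sits; the rest is bookkeeping. A minor secondary point is verifying that (M')/(NI') really do follow from (M)/(NI)+(IIA) as claimed, and checking the final step that a decisive singleton is in fact a full dictator (the ``only if'' direction), which uses monotonicity to rule out the society contradicting a decisive voter.
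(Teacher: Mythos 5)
Your proposal is correct, and it reaches the dictator by a genuinely different (more classical) architecture than the thesis, even though the combinatorial heart overlaps. The thesis first proves the general ultrafilter theorem for arbitrary $X$: it shows $\cD$ is a filter by verifying closure under intersection directly (a three-candidate profile on $A\cap B$, $A\setminus B$, $B\setminus A$), then shows all the pairwise families $\cD_{a,b}$ coincide, and finally gets the prime property from a very cheap two-candidate argument on a partition of \emph{all} of $X$ (whichever side the society agrees with is in $\cD_{a,b}$, by monotonicity); the finite theorem then drops out since every ultrafilter on a finite set is principal. Your route replaces this with field expansion plus group contraction and iterates the contraction down to a singleton. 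The field expansion lemma is essentially identical to the thesis's proof that $\cD_{a,b}=\cD_{a,c}=\dots$, but your group contraction lemma (a three-candidate argument splitting a decisive set $S$ that may be a proper subset of $X$) does work the thesis avoids: it buys you the dictator directly without ever proving that $\cD$ is closed under intersections, which is why your final ``packaging'' claim that $\cD$ is an ultrafilter is not actually established by your two lemmas alone --- it is only true a posteriori because $\cD=\principal{x_0}$ once the dictator is found. (It \emph{could} be established: two disjoint decisive sets yield contradictory social rankings, so group contraction applied to $A=(A\cap B)\cup(A\setminus B)$ forces $A\cap B$ decisive; but you would need to say this if you want the infinite-$X$ statement.) Two small further remarks: the final step that a decisive singleton is a full dictator does not need monotonicity --- decisiveness of $\{x_0\}$ for the reversed ordered pair, which field expansion already gives you, plus asymmetry of the total order, yields the ``only if'' direction immediately; and you are right that the only genuinely delicate content is the profile constructions, where $\#C\ge 3$ and the interplay of (IIA) with (M$'$) must be handled exactly as you describe.
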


We will derive Arrow's theorem from the following more general result, which does not require finiteness of the set of the voters.

\begin{theorem}\label{D:thm:arrows-general}
  Let $X$ be an arbitrary set of voters, ranking candidates from a set $C$ with $\# C \geq 3$. For a fixed preference aggregation rule, define $\cD$ to be the family of those sets of voters who have control over the election:
    $$\cD := \{A \in \cP(X) \setsep  (\forall a,b \in C)\ ((\forall x \in A)\ a \rel_x b) \imply (a \relaggr b)\}.$$
  If the assumptions (IIA), (M) and (NI) are satisfied, then $\cD$ is an ultrafilter.
\end{theorem}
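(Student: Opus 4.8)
The goal is to verify that $\cD$ satisfies the four defining properties of an ultrafilter on $X$: that $\emptyset \notin \cD$ and $X \in \cD$, that $\cD$ is upward closed, that $\cD$ is closed under finite intersection, and finally the ultrafilter dichotomy (for any $A$, either $A \in \cD$ or $A^c \in \cD$). I will check them roughly in this order, exploiting that we have three distinct candidates at our disposal to ``encode'' the coalition-theoretic information into preference profiles.

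First, the easy structural properties. That $X \in \cD$ is immediate from (NI') (the strengthened unanimity consequence of (NI)+(IIA) noted just before the theorem): if every voter ranks $a$ below $b$, then so does society. That $\emptyset \notin \cD$ follows by picking any profile in which \emph{all} voters prefer $b$ to $a$; then by (NI') society prefers $b$ to $a$, so the vacuous coalition $\emptyset$ does not force $a \relaggr b$. Upward closure is nearly as direct: if $A \in \cD$ and $A \subseteq B$, then whenever all voters in $B$ prefer $a$ to $b$, in particular all voters in $A$ do, hence $a \relaggr b$; so $B \in \cD$. The one subtlety worth spelling out is that one must first argue that $\cD$ does not depend on the particular pair $a,b$ chosen — i.e. if some coalition $A$ is ``decisive'' for one ordered pair of candidates it is decisive for every ordered pair. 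This is the classical \emph{contagion} (field expansion) lemma: given that $A$ forces $a \rel b$, one constructs a profile involving a third candidate $c$ where $A$-members rank $a \rel b \rel c$ (say) while everyone outside $A$ ranks $b$ above both $a$ and $c$ arbitrarily, then applies (IIA) to the pairs $\{a,b\}$, $\{b,c\}$, $\{a,c\}$ and unanimity on the pair $\{b,c\}$ to conclude $A$ is decisive for $\{a,c\}$, and iterates. Here is precisely where $\#C \geq 3$ is used.

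The heart of the argument is the dichotomy property, and I expect this to be the main obstacle. Fix a set $A \subseteq X$ and consider a profile where the voters in $A$ all prefer $a$ to $b$ and the voters in $A^c$ all prefer $b$ to $a$ (the relative positions of a third candidate $c$ to be chosen cleverly). Society must decide: either $a \relaggr b$ or $b \relaggr a$ (preferences are total orders, so this is genuinely exhaustive). In the first case I want to conclude $A \in \cD$; in the second, $A^c \in \cD$. The delicate point is that from one profile one only learns about \emph{that} profile, so one must combine (IIA) — to transfer the conclusion to arbitrary profiles where $A$ (resp. $A^c$) is unanimous on some pair — with the contagion lemma from the previous paragraph and with (M') to propagate upward. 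Concretely: insert $c$ so that $A$-voters rank $a \rel c \rel b$ and $A^c$-voters rank $c \rel b \rel a$; unanimity gives $c \relaggr b$; then society's verdict on $\{a,b\}$ splits into $a\relaggr b$ (whence, comparing with $c$, one shows $A$ is decisive on $\{a,c\}$ and then everywhere by contagion) or $b \relaggr a$ (whence $c \relaggr a$ as well by transitivity, forcing $A^c$ to be decisive on $\{c,a\}$ and then everywhere). This simultaneously yields closure under intersection as a byproduct once decisiveness is pinned down, but it is cleaner to derive intersection-closure directly afterwards: if $A, B \in \cD$, build a profile in which $A \cap B$ ranks $a \rel b \rel c$, the set $A \setminus B$ ranks $c \rel a \rel b$, the set $B \setminus A$ ranks $b \rel c \rel a$, and everyone else ranks arbitrarily with $b$ below; decisiveness of $A$ on $\{a,b\}$ and of $B$ on $\{b,c\}$ gives $a \relaggr b \relaggr c$, hence $a \relaggr c$, and by (IIA) this is controlled only by the $\{a,c\}$-rankings, which outside $A\cap B$ are mixed — so $A \cap B$ must be the decisive coalition on $\{a,c\}$, i.e. $A \cap B \in \cD$.

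Assembling these: $\cD$ is a nonempty upward-closed family, closed under finite intersection, not containing $\emptyset$, and satisfying $A \in \cD$ or $A^c \in \cD$ for all $A$ — hence an ultrafilter by Definition~\ref{def:ultrafilter} together with the equivalence \ref{def:ultrafilter-prop:4} $\iff$ \ref{def:ultrafilter-prop:7} established in the Observation following it. Finally, Theorem~\ref{D:thm:arrows-finite} drops out: when $X$ is finite, every ultrafilter on $X$ is principal (a finite set lies in $\cD$, namely $X$ itself, so by the Remark on principal ultrafilters $\cD = \principal{x}$ for some $x \in X$), and $x$ is exactly a dictator. The main work, and the step most likely to need care in the full write-up, is the contagion/field-expansion lemma and the bookkeeping of which triple of candidate-orderings to assign to which sub-coalition so that (IIA) isolates the pair one wants.
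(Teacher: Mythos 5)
You follow essentially the same route as the paper's own proof: verify the filter axioms directly, establish the field‑expansion (contagion) lemma showing that a coalition decisive for one ordered pair is decisive for every pair (this is exactly the paper's step proving that all the families $\cD_{a,b}$ coincide with $\cD$), use the three‑way profile on $A\cap B$, $A\setminus B$, $B\setminus A$ with a third candidate for closure under intersection, and derive the dichotomy from the two‑block profile together with totality of $\relaggr$. The one point to tighten in a full write‑up is that, in the intersection step, passing from ``society favours the $A\cap B$ side in this one profile'' to ``$A\cap B$ is decisive for that pair'' requires (M) (pin down the voters of $X\setminus(A\cup B)$ to oppose, or first move the relevant candidate down in their rankings), not (IIA) alone --- precisely the worst‑case monotonicity argument you already invoke in the dichotomy paragraph.
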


\begin{remark}
  It is easily seen that $x$ is a dictator if and only if $\cD = \principal{x}$ is the principal ultrafilter corresponding to $x$.
\end{remark}
\begin{remark}
  The assumption $\# C \geq 3$ in the above theorems is essential, since for $\# C = 2$ and finite $X$ (with odd cardinality) a simple majority vote is non-dictatorial and satisfies (M), (NI) and (IIA). More generally, if $\# C = 2$, one can construct a fairly general weighted voting procedure. For $2$ alternatives, there are just two possible preferences, and it will be convenient to label them ``YES'' and ``NO''. Let us attach to each voter $x \in X$ a weight $w_x \geq 0$, and define some threshold $0 < t < \sum_{x\in X} w_x$. We declare that the society chooses ``YES'' if and only if $\sum_{x \in Y} w_x > t$, where $Y$ is the set of those voters who chose ``YES''. It is clear that the conditions (M) and (NI) are satisfied, as well as trivially (IIA). As long as $w_x < t$ for all $x$, this scheme is not dictatorial.

  We use this opportunity to stress that in the formulation of Arrow's theorem we do not in any way require that voters should be ``equal'', nor do the candidates have to be treated ``equally''.
\end{remark}

\begin{proof}[Proof of Theorem \ref{D:thm:arrows-general}]
  By the (NI) property together with (IIA), we have $X \in \cD$. Also, clearly $\emptyset \not \in \cD$. 

  If $A \in \cD$ and $B \supset A$, then it is also visible that $B \in \cD$, since quantifying over $B$ gives a stronger condition than quantifying over $A$. 

  We now check that if $A,B \in \cD$ then $A \cap B \in \cD$. For a proof by contradiction, suppose that $A \cap B \not\in \cD$. Then, there are some candidates $a,b \in C$, such that for some individual preferences $\rel_x$ we have $a \rel_x b$ for all $x \in A \cap B$, but the aggregate preference is in favour of $a$: $b \relaggr a$. By (M), we may assume without loss of generality that $b \rel_y a$ for $y \in X \setminus A\cap B$, since moving $a$ up on some preference lists cannot harm his final position. Let us now consider another candidate $c \in C \setminus \{a,b\}$, whose existence is guaranteed by $\# C \geq 3$. By (IIA), we can assign any preference between $c$ and $a,b$ (consistent with already existing preferences between $a$ and $b$) without changing the relation $b \relaggr a$. Let us consider the following assignment of preferences:
\begin{align*}	
  &\text{for } x \in A \cap B :\ &a \rel_x c \rel_x b, \\
  &\text{for } x \in A \setminus B :\ &b \rel_x a \rel_x c, \\
  &\text{for } x \in B \setminus A :\ &c \rel_x  b \rel_x a, \\
  &\text{for } x \in X \setminus (A \cup B) :\ &\text{whatever.}  
\end{align*}

  With this assignment, we have for $x \in A$: $a \rel_x c$. Since we assumed $A \in \cD$, it follows that $a \relaggr c$. Similarly, we have for $x \in B$: $c \rel_x b$. Since $B \in \cD$, it follows that $c \relaggr a$. It follows that $a \relaggr c \relaggr b$, contradicting the assumption $b \relaggr a$. 

\newcommand{\cDa}{\cD_{a,*}}
\newcommand{\cDb}{\cD_{*,b}}

 At this point, we have shown that $\cD$ is a filter. We will now proceed to show the ultrafilter property, namely that for any partition $X = A \cup B$ with $A \cap B = \emptyset$, either $A \in \cD$ or $B \in \cD$. For that purpose, we will provide an alternative description of $\cD$. For a fixed pair of candidates $a,b \in \cD$, let $\cD_{a,b}$ denote the family of those sets of voters who have control the choice between $a$ and $b$, in the sense that if they prefer $b$ to $a$, then the collective preference is also in favour of $b$ over $a$:
  $$\cD_{a,b} := \{A \in \cP(X) \setsep (  (\forall x \in A)\ a \rel_x b) \imply (a \relaggr b)\}$$
 It is clear by the definition of $\cD$ that:
  $$ \cD = \bigcap_{a,b} \cD_{a,b}. $$

  We claim that all the sets $\cD_{a,b}$ are in fact equal to one another, and hence also to $\cD$.

  We first show for $a,b,c \in C$, distinct, that $\cD_{a,b} \subset \cD_{a,c}$. For a proof, let us take $A \in \cD_{a,b}$ and show that $A \in \cD_{a,c}$. We need to prove that for any individual preferences such that $a \rel_x c$ for all $x \in A$, we have $a \relaggr c$. Because of (M), we may assume that for $x \in X \setminus A$ we have $c \rel_x a$. By (IIA), we may assume any preference between $a,c$ and b. Let us consider the following assignment of preferences:
  \begin{align*}	
    &\text{for } x \in A :\ & a \rel_x b \rel_x c, \\
    &\text{for } x \in X \setminus A :\ & b \rel_x c \rel_x a. \\
  \end{align*}
  Since $A \in \cD_{a,b}$ and for $x \in A$ we have $a \rel_x b$, it follows that $a \relaggr b$. Since we have $b \rel_x c$ for all $x \in X$, it follows that $b \relaggr c$. Combining these, we conclude that $a \relaggr c$, as desired. Because the choice of $A$ was arbitrary, it follows that indeed $\cD_{a,b} \subset \cD_{a,c}$. 

  By a symmetric argument we can also verify the inverse inclusion, so for  arbitrary distinct $a,b,c \in C$ we have  $\cD_{a,b} = \cD_{a,c}$. By the same reasoning, but with preferences inverted, we also find $\cD_{a,b} = \cD_{c,b}$. Finally, we also note that:
    $$ \cD_{a,b} = \cD_{a,c} = \cD_{b,c} = \cD_{b,a}$$
  Hence, we conclude that for any $a,b,a',b'\in C$ with $a \neq b$ and $a' \neq b'$ (but possibly $\{a,b\} \cap \{a',b'\} \neq \emptyset$ ) we have $\cD_{a,b} = \cD_{a',b'}$. It follows by taking intersection over all $a',b'$ that $\cD_{a,b} = \cD$.

  After this preliminary work, it will suffice to show that $\cD_{a,b}$ has the property that for any partition $X = A \cup B$ with $A \cap B = \emptyset$, we have $A \in \cD_{a,b}$ or $B \in \cD_{a,b}$. But this is relatively simple. Consider the preference such that $a \rel_x b$ if $x \in A$ and $b \rel_x a$ if $x \in B$. It follows from (M) that if $a \relaggr b$, then $A \in \cD_{a,b}$, and if $b \relaggr a$ then $B \in \cD_{a,b}$. Thus, $\cD_{a,b} = \cD$ has the ultrafilter property.
\end{proof}

\index{Arrows}
\begin{proof}[Proof of Arrows Theorem \ref{D:thm:arrows-finite}]
  Under the assumptions of Arrows Theorem \ref{D:thm:arrows-finite}, the conditions of Theorem \ref{D:thm:arrows-general} are clearly satisfied, so the family $\cD$ defined as in the formulation of Theorem is an ultrafilter. Since $X$ is finite, the only possible ultrafilters are the principal ones, so $\cD$ is the principal ultrafilter $\principal{x}$ corresponding to some voter $x \in X$. In particular $\{x\} \in \cD$, so $x$ is the sought dictator. 
\end{proof}

The following result is a converse of Theorem \ref{D:thm:arrows-general}.

\begin{proposition}\label{D:prop:arrows-inverse}
  Let $\cU \in \Ultrafilters{X} $ be an ultrafilter. Define preference aggregation rule by declaring $a \relaggr b$ to be equivalent to $\{ x \in X \setsep a \rel_x b \} \in \cU$. This gives a well defined preference aggregation rule that satisfies the conditions (IIA), (M) and (NI).
\end{proposition}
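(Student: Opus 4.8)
The statement to prove is Proposition \ref{D:prop:arrows-inverse}: given an ultrafilter $\cU$ on the set of voters $X$, the rule declaring $a \relaggr b \iff \{x \in X \setsep a \rel_x b\} \in \cU$ is a well-defined preference aggregation rule satisfying (IIA), (M) and (NI). The plan is to verify, in order, (i) well-definedness, meaning that $\relaggr$ really is a total order on $C$ for each choice of individual preferences $(\rel_x)_{x \in X}$, and then (ii) each of the three axioms in turn, each of which will follow almost immediately from a single ultrafilter property.

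First I would fix individual preferences $(\rel_x)_{x \in X}$, each a total order on $C$, and set $V(a,b) := \{x \in X \setsep a \rel_x b\}$. The key observation is that for distinct $a,b$ we have $V(a,b) = X \setminus V(b,a)$, since each voter's preference is a total (hence trichotomous on distinct elements) order; so by the ultrafilter property exactly one of $V(a,b), V(b,a)$ lies in $\cU$, giving totality and antisymmetry of $\relaggr$ on distinct candidates. For transitivity, suppose $a \relaggr b$ and $b \relaggr c$, i.e. $V(a,b) \in \cU$ and $V(b,c) \in \cU$. Since filters are closed under finite intersection, $V(a,b) \cap V(b,c) \in \cU$; and for any voter $x$ in that intersection, transitivity of $\rel_x$ gives $a \rel_x c$, so $V(a,b) \cap V(b,c) \subset V(a,c)$, whence $V(a,c) \in \cU$ by upward closure. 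That establishes (i). (One should note $\relaggr$ is reflexive on each singleton trivially, or simply work with the strict version throughout, matching the paper's convention for $\rel_x$.)

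Next, the three axioms. For (IIA): if $(\rel_x)$ and $(\rel'_x)$ agree on the pair $a,b$ for every voter, then $V(a,b)$ computed from the primed preferences equals $V(a,b)$ computed from the unprimed ones, so $a \relaggr b \iff a \relaggr' b$ — immediate, using nothing beyond the definition. For (M): suppose the relative order of all pairs not involving $a$ is unchanged and every voter who had $b \rel_x a$ still has $b \rel'_x a$; then for a fixed $b$, the set $V(b,a)$ for the primed preferences contains the set $V(b,a)$ for the unprimed preferences, so if $b \relaggr a$ (i.e. $V(b,a) \in \cU$ in the unprimed setting) then $V(b,a) \in \cU$ in the primed setting by upward closure, i.e. $b \relaggr' a$. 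For (NI): if there is a universal order $\rel_*$ with $a \rel_x b \iff a \rel_* b$ for all $x$, then $V(a,b)$ is either $X$ or $\emptyset$ according to whether $a \rel_* b$ or not; since $X \in \cU$ and $\emptyset \notin \cU$, we get $a \relaggr b \iff a \rel_* b$.

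I do not expect any real obstacle here: each clause is a direct application of one filter/ultrafilter axiom (closure under intersection, upward closure, the "exactly one of $A, A^c$" dichotomy, and $\emptyset \notin \cU \ni X$), and the whole proof is essentially bookkeeping. The only point requiring a moment's care is the well-definedness step, specifically making sure transitivity of $\relaggr$ is argued via the finite-intersection property rather than taken for granted, and being consistent about whether one treats $\rel_x$ and $\relaggr$ as strict orders (the cleanest route, given the paper writes "$a \rel_x b$ if and only if $x$ prefers $b$ to $a$" with no ties) or reflexive ones. I would write it with strict orders to avoid edge cases with $a = b$.
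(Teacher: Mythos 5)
Your proof is correct, but the well-definedness step is argued by a genuinely different route than the paper's. You verify directly that $\relaggr$ is a total order: antisymmetry and totality from the dichotomy that for distinct $a,b$ the sets $\{x \setsep a \rel_x b\}$ and $\{x \setsep b \rel_x a\}$ partition $X$ so exactly one lies in $\cU$, and transitivity from closure of $\cU$ under finite intersections plus upward closure. The paper instead observes that $(\relaggr) = \llim{\cU}{x}{(\rel_x)}$ in the product topology on $\{\top,\bot\}^{C\times C}$, notes that each defining condition of a total order (antisymmetry, transitivity, totality for a fixed triple $a,b,c$) is a quantifier-free sentence depending on only finitely many coordinates and hence a continuous map to $\{\top,\bot\}$, and concludes that these conditions pass through the generalised limit. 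Your hands-on argument is more elementary and self-contained; the paper's continuity argument is slicker and, as it remarks in a footnote, generalises immediately to any class of relations axiomatised by universally quantified quantifier-free conditions (weak orders, equivalence relations, etc.), since one never has to inspect the particular axioms. For the three conditions (M), (NI), (IIA) your arguments coincide with the paper's: upward closure, $X \in \cU$ together with $\emptyset \notin \cU$, and the fact that the definition of $a \relaggr b$ mentions no other candidates, respectively. Your closing remark about working consistently with strict orders is a reasonable way to handle the edge case $a=b$, which the paper glosses over.
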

\begin{proof}
  \newcommand{\asym}[1]{\operatorname{asym}_{a,b}\left(#1\right)}
  \newcommand{\trans}[1]{\operatorname{trans}_{a,b,c}\left(#1\right)}
  \newcommand{\total}[1]{\operatorname{total}_{a,b}\left(#1\right)}

  Let us note that $a \relaggr b \in \{\top,\bot\}$ is given by $(a \relaggr b) = \llim{\cU}{x}{(a \rel_x b)}$ (where the limit is taken in $\{ \top, \bot \}$ with discrete topology. Equivalently, if one identifies binary relations with elements of $\{\top,\bot\}^{C \times C}$ with the natural Tichonoff/pointwise convergence topology, then $(\relaggr) = \llim{\cU}{x}{(\rel_x)}$.

  We will first check that $\relaggr$ is indeed a total order. For this, we need to check a number of conditions, namely antisymmetry, transitivity and totality. This can be done directly, but we will pursue a slightly more sophisticated approach. For $a,b,c \in C$, and binary relation $\rel$ on $C$ denote the sentences 
  \begin{align*}
    \asym{\rel} &:= \neg ( (a \rel b) \wedge (b \rel a)) \\
    \trans{\rel} &:= ((a \rel b) \wedge (b \rel c)) \imply (a \rel c) \\
    \total{\rel} &:= (a \rel b) \vee (b \rel a)
  \end{align*}
  
  A binary relation $\rel$ on $C$ is a total order if and only if the sentences $\asym{\rel}$, $\trans{\rel}$ and $\total{\rel}$ are true for any $a,b,c$. These are clearly quantifier free sentences in first order logic, true whenever $\rel$ is a total order. Consider any such sentence $\phi(\rel)$, viewed as a map from $\{\top,\bot\}^{C \times C}$ to $\{\top,\bot\}$. Since $\phi(\rel)$ depends only on finitely many ``coordinates'', it is clearly continuous. From this continuity and the description of $\relaggr$ it follows that:
  $$ 
    \phi(\relaggr) = \phi\left( \llim{\cU}{x}{(\rel_x)} \right) = \llim{\cU}{x}{\phi(\rel_x)} =  \llim{\cU}{x}(\top) = \top
  $$
  In particular  $\asym{\relaggr}, \trans{\relaggr}$ and $\total{\relaggr}$ are true for any $a,b,c$, and hence $\relaggr$ is a total order.\footnote{The advantage of this approach is that it does not rely too much on the form of the conditions that define the order. The same proof works for weak orders, equivalence relations, and generally all relations that can be described by conditions of the form $(\forall a,b,c,\dots ) \phi_{a,b,c,\dots }(\rel)$.}

  The condition (M) holds for this preference aggregation rule directly by the definition, relying chiefly on the fact that $\cU$ is closed under taking supsets. Likewise, the condition (NI) holds, because $X \in \cU$. Finally, the condition (IIE) holds, because the definition of $a \relaggr b$ makes no mention of any other candidates.  
\end{proof}

One can check that the construction of an ultrafilter from a preference aggregation rule in Theorem \ref{D:thm:arrows-general} and  the construction of a preference aggregation rule from an ultrafilter in Proposition \ref{D:prop:arrows-inverse} are mutually inverse. 

\begin{corollary}
  For a fixed set of candidates $C$, with $\# C \geq 3$, there is a bijective correspondence between ultrafilters on $X$ and preference aggregation rules satisfying for a vote of the population $X$ on candidates from a set $C$ with $\# C \geq 3$ that satisfy conditions (M), (IIE) and (NI).
\end{corollary}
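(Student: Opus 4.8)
The statement to prove is the final corollary: that for a fixed candidate set $C$ with $\#C \geq 3$, ultrafilters on $X$ are in bijective correspondence with preference aggregation rules satisfying (M), (IIA), and (NI). Essentially all the work has already been done by Theorem \ref{D:thm:arrows-general} and Proposition \ref{D:prop:arrows-inverse}; what remains is to verify that the two constructions are mutually inverse, exactly as asserted in the sentence immediately preceding the corollary. So the plan is to make that verification explicit.

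\textbf{Setting up the two maps.} First I would name the constructions. Let $\Phi$ send a preference aggregation rule $F$ satisfying (M), (IIA), (NI) to the family $\cD = \cD(F)$ of controlling sets, as in Theorem \ref{D:thm:arrows-general}; by that theorem $\cD(F)$ is an ultrafilter. Let $\Psi$ send an ultrafilter $\cU \in \Ultrafilters{X}$ to the aggregation rule $F_\cU$ defined by declaring $a \relaggr b$ iff $\{x \in X \setsep a \rel_x b\} \in \cU$; by Proposition \ref{D:prop:arrows-inverse} this is a well-defined rule satisfying (M), (IIA), (NI). The corollary then amounts to the two identities $\Phi \circ \Psi = \id$ and $\Psi \circ \Phi = \id$.

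\textbf{Checking $\Phi \circ \Psi = \id$.} Fix an ultrafilter $\cU$, form $F_\cU = \Psi(\cU)$, and compute $\cD(F_\cU)$. A set $A$ lies in $\cD(F_\cU)$ iff for all candidates $a,b$, whenever every voter in $A$ has $a \rel_x b$ we get $a \relaggr b$ in the aggregated order, i.e. $\{x \setsep a \rel_x b\} \in \cU$. If $A \in \cU$ then $\{x \setsep a \rel_x b\} \supseteq A$ so it is in $\cU$; hence $A \in \cD(F_\cU)$. Conversely, if $A \notin \cU$, I would exhibit a profile witnessing failure of control: since $\#C \geq 3$ pick distinct $a,b$, let every $x \in A$ rank $a \rel_x b$ and every $x \notin A$ rank $b \rel_x a$; then $\{x \setsep a \rel_x b\} = A \notin \cU$, so $\neg(a \relaggr b)$, and since the order is total (Proposition \ref{D:prop:arrows-inverse}) we get $b \relaggr a$ while all of $A$ preferred $b$ — wait, all of $A$ preferred $a$; so $A$ fails to control the pair $(a,b)$, hence $A \notin \cD(F_\cU)$. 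Thus $\cD(F_\cU) = \cU$.

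\textbf{Checking $\Psi \circ \Phi = \id$, and the main obstacle.} Fix a rule $F$ satisfying the three conditions, set $\cU = \cD(F)$, and I must show $F_\cU = F$, i.e. that for every profile $(\rel_x)_x$ and every pair $a,b$, the aggregated relation $a \relaggr b$ under $F$ holds iff $\{x \setsep a \rel_x b\} \in \cD(F)$. The direction ``$\{x \setsep a \rel_x b\} \in \cD(F) \Rightarrow a \relaggr b$'' is immediate from the definition of $\cD(F)$ (taking $A = \{x \setsep a \rel_x b\}$ in the defining condition). The reverse direction — ``$a \relaggr b$ under $F$ $\Rightarrow A := \{x \setsep a \rel_x b\} \in \cD(F)$'' — is the one real point: here I would use the proof of Theorem \ref{D:thm:arrows-general}, specifically the fact established there that $\cD(F) = \cD_{a,b}(F)$ for every pair $a,b$, together with (IIA) and (M). Indeed, suppose $a \relaggr b$ for the given profile. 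Whether $A \in \cD_{a,b}(F)$ depends, by (IIA), only on the relative rankings of $a$ and $b$, i.e. only on the set $A$ itself; and for the profile in which exactly the voters of $A$ rank $a \rel_x b$, (M) applied to the given profile (moving $b$ down relative to $a$ off of $A$ cannot hurt $a$) shows $a \relaggr b$ persists, hence $A \in \cD_{a,b}(F) = \cD(F)$. So $F_\cU = F$. The main obstacle is thus purely bookkeeping: carefully invoking (IIA) to reduce the aggregated verdict on $(a,b)$ to a function of the single set $A$, and invoking (M) to pass between the arbitrary profile and the canonical two-block profile — both moves are exactly those already made inside the proofs of Theorem \ref{D:thm:arrows-general} and Proposition \ref{D:prop:arrows-inverse}, so no new idea is needed, only care that the quantifier over candidates and the $\#C \geq 3$ hypothesis are used where the earlier proofs used them. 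I would close by remarking that the two verifications together give the claimed bijection, and that under it principal ultrafilters correspond precisely to dictatorial rules.
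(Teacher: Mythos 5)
Your proposal is correct and is exactly the verification the paper leaves to the reader: the paper offers no proof of this corollary beyond the remark that the constructions of Theorem \ref{D:thm:arrows-general} and Proposition \ref{D:prop:arrows-inverse} are mutually inverse, and you fill in both directions using precisely the intended ingredients (upward closure of $\cU$ and a two-block profile for $\Phi\circ\Psi=\id$; the identity $\cD=\cD_{a,b}$ together with (IIA) and (M) for $\Psi\circ\Phi=\id$). The only blemish is a slip of phrasing in the last step --- the relevant move is that $b$ rises relative to $a$ for voters outside $A$, which by monotonicity cannot hurt $b$ --- but the argument as structured is sound.
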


The sentiment that ultrafilters can be thought as a voting system is was expressed by Tao in some of his expositor materials \cite{}. The subsequent applications in model theory can be thought of as a generalisation of this idea.
\index{Arrows|)}

\section{Ultrapowers}
\index{model theory!ultraprower|see{ultraproduct}}
\index{model theory!ultraproduct}

\newcommand{\Space}{S}

\newcommand{\TotalOrders}{\mathrm{TotOrd}}

\newcommand{\NNp}{\NN}

We give a brief overview of the foundations of model theory. Because we believe the basics of model theory would be familiar to a working mathematician, at least on the intuitive level, we do not go into much detail. For a more detailed discussion, we refer to any number of introductory materials on model theory, such as \cite{models-by-marker} or \cite{models-by-hodges}. A very accessible treatment, which includes the ultrafilter construction and \L{}o\'{s} theorem roughly in the form presented here, is provided by the lecture notes \cite{models-by-clark}. 

To express various mathematics, one first need a language:

\begin{definition}[Language]\index{model theory!language}
  A language $\cL$ consists of the following data:
  \begin{enumerate}
   \item For each $n \in \NNp$, a family of $n$-argument function symbols (usually denoted by $f(x_1,x_2,\dots,x_n)$).
   \item For each $n \in \NNp$, a family of $n$-argument relation symbols (usually denoted by $R(x_1,x_2,\dots,x_n)$).
   \item A family of constant symbols  (usually denoted by $c$).
  \end{enumerate}
\end{definition}

We stress that a function symbol is not a function, but merely a symbol used to denote a function. Likewise for relations and constants.

The assignment of a meaning to a symbol goes by the name of interpretation, and is formalised as follows.

\begin{definition}[Structure]\index{model theory!structure}
  For a language $\Language$, a $\Language$-structure $\Structure$ consists of the following data:
  \begin{enumerate}
    \item The underlying space $\Space$
    \item For each $n$-argument function symbol $f$, a function $f_\Structure :\ \Space^n \to \Space$.
    \item For each $n$-argument relation symbol $R$, a relation $R_\Structure :\ \Space^n \to \{ \top, \bot \}$.
    \item For each constant symbol $c$, a constant $c_\Structure \in \Space$.
  \end{enumerate}
\end{definition}

Apart from the symbols specific to the language, we also need logical symbols and variables to construct mathematically meaningful entities. The set of variables will be fixed and denoted by $\seq{x}{i}{\NN}$, but in practice different symbols can be used for increased notational convenience (for example, $x,y,z,\dots$).

Terms in the given language are the well formed expressions that describe elements of the underlying space. They are the basic building blocks for more complicated expressions. This is made precise by the following definition.

\begin{definition}[Terms]
\index{model theory!term}
  The set of \emph{terms} over the language $\Language$ is defined to be the smallest family of expressions such that:
  \begin{enumerate}
   \item Every constant symbol $c$ is a term.
   \item Every variable symbol $x$ is a term.
   \item If $f$ is an $n$-argument function symbol and $t_1,t_2,\dots,t_n$ are terms, then the expression $f(t_1,t_2,\dots,t_n)$ is a term.
  \end{enumerate}
\end{definition}

Formulas are the well formed expressions that describe statements that can either true of false, after the interpretation. This is made precise by the following definition.

\begin{definition}[Formulas]\index{model theory!formula}
  The set of \emph{formulas}  over the language $\Language$ is defined to be the smallest family of expressions such that:
  \begin{enumerate}
    \item If $R$ is an $n$-argument relation, and $t_1,t_2,\dots,t_n$ are terms, then $R(t_1,t_2,\dots,t_n)$ is a formula.
    \item If $\phi$ and $\psi$ are formulas, then the following are formulas: $(\neg \phi)$, $(\phi \wedge \psi)$, $(\phi \vee \psi)$, $(\phi \Rightarrow \psi)$ and $(\phi \Leftrightarrow \psi)$.
    \item If $\phi$ is a formula and $x$ is a variable, then the following are formulas: $(\forall x)\ \phi$ and $(\exists x)\ \phi$.
  \end{enumerate}
\end{definition}

\begin{remark}
  Formulas that are logically equivalent will not normally be distinguishable. For instance $\phi \vee \psi$ and $\neg (\neg \phi \vee \neg \psi)$ will play the same role. Using the standard logical identities, we can restrict the vocabulary of logical symbols to $\neg, \vee$ and $\exists$, which we will implicitly use in proofs using structural induction. Conversely, we may treat any additional logical symbols, such as the disjoint alternative $\underline{\vee}$, to be just shorthands for their definitions in terms of the more fundamental symbols.

  Note that the above allows formulas such as $(\forall x_1) ((\forall x_1)\ \phi)$. The convention is then to bind the variable to the most nested quantifier, so the formula in question is logically equivalent to $(\forall x_2) ((\forall x_1)\ \phi)$, provided that $x_2$ does not appear in $\phi$. However, we will never use formulas of that kind in practice.
\end{remark}

A formula is allowed to contain free variables, i.e. such that are not bound by a quantifier. We will sometimes write $\phi(x_1,x_2,\dots,x_k)$ instead of $\phi$ if $x_1,x_2,\dots,x_k$ are unbound variables, to highlight this. It follows that formulas cannot yet be assigned a truth or false within any $\Language$-structure. Sentences are the type of formulas to which a logical value can be ascribed. Some level of vagueness is allowed, because we do not define what it means for a variable to be bound by a quantifier, relying on the intuitive understanding of the reader.

\begin{definition}[Sentence]
\index{model theory!sentence}
  A formula $\phi$ is said to be a sentence if and only if it contains no free variables. 
\end{definition}

We now give a definition of how sentences are interpreted inside structures. We allow ourselves some vagueness also at this point, because we merely formalise the skill of interpreting formulas that the reader obviously possesses. One point to keep in mind is that the quantifiers are always interpreted to run over the underlying space (hence, no quantification over sets of elements, or elements of some external sets, is possible). A formal definition uses induction over complexity of formulas, and can be found for instance in \cite{models-by-marker}.

\begin{definition}[Interpretation]
\index{model theory!interpretation}
  If $\Structure$ is an $\Language$-structure and $\phi$ is a sentence over $\Language$, then $\phi$ corresponds to a statement $\phi_\Structure$ obtained by replacing all function symbols $f$ by the corresponding functions $f_\Structure$, all relation symbols $R$ by the corresponding relations $R_\Structure$, all constant symbols $c$ by the corresponding constants $c_\Structure$, replacing each quantifier of the form $\forall x$ or $\exists x$, where $x$ is a variable, by $\forall x \in S$ or $\exists x \in S$, and (finally) interpreting the logical symbols in the standard way.

  If the statement $\phi_\Structure$ is true, we say that $\phi$ is true in $\Structure$, which we express by writing $\Structure \models \phi$. More generally, if $\Phi$ is a set of sentences, then we say that $\Phi$ is true in $\Structure$ if $\phi$ is true in $\Structure$ for any $\phi \in \Phi$; we express this by writing $\Structure \models \Phi$.

  Finally, if $\phi(x_1,x_2,\dots,x_n)$ is a formula in $n$ unbound variables $x_1,x_2,\dots,x_n$, then for $a_1,a_2,\dots,a_n \in S$, we say that $\phi(a_1,a_2,\dots,a_n)$ is true, or $\Structure \models \phi(a_1,a_2,\dots,a_n)$, if and only if the substitution procedure just described, combined with replacing $x_i$ by $a_i$, yields a true sentence.
\end{definition}

\newcommand{\Axioms}{\mathfrak{A}}
\begin{definition}[Theory]
\index{model theory!theory}
  A \emph{theory} $\Theory$ over the language $\Language$ is a set of sentences over $\Language$. Some authors require theories to be consistent and closed under logical consequence; but we pose no such restriction. 

  A $\Language$-structure $\Structure$ is said to be a model of $\Theory$ if and only if all sentences from $\Theory$ are true in $\Structure$, i.e. if $\Structure \models \Theory$. A sentence $\phi$ is said to be a consequence of $\Theory$, which we express by writing $\Theory \models \phi$, if and only if for any $\Language$-structure $\Structure$ with $\Structure \models \Theory$ it holds that $\Structure \models \phi$.
\end{definition}

To introduce a specific theory, one might proceed as follows. First, specify the language needed to describe the desired properties. Next, specify a set of statements $\Axioms$, referred to as \emph{axioms}  that describe relations between various symbols. It would normally considered a good thing if the list of $\Axioms$ is relatively short and effectively generated. Finally, consider the theory $\Theory$ consisting of all logical consequences of the accepted axioms: for a formula $\phi$, we have $\phi \in \Theory$ if and only if $\Axioms \models \phi$.

Before we proceed to discussing some examples, we need to make the following clarification.
\begin{remark}[Identity]
  We deliberately did not include the equality symbol ``$=$'' as a logical symbol. This goes against the current fashion, but was used for example by Robinson in \cite{models-by-robinson}. In all theories under consideration, there will instead be a binary relation $=$, corresponding to equality. For this relation to serve as equality, we need to ensure several properties. Firstly, we need it to be an equivalence relation, which is easy to ensure by adding axioms: 
  \begin{align*}
   &(\forall x)\ x = x, \\
  &(\forall x) (\forall y) x = y \imply y = x, \\
   &(\forall x) (\forall y)(\forall z) x = y \wedge y = z \imply x = z.
  \end{align*}
  
  Secondly, we need to ensure that equality behaves appropriately with functions, which is accomplished by adding for any $n$-argument function symbol $f$ axiom: 
$$(\forall x_1,\dots,x_n) (\forall y_1,\dots,y_n) x_1 = y_1 \wedge \dots \wedge x_n = y_n \imply f(x_1,\dots,x_n) = f(y_1,\dots,y_n).$$
  
  Finally, we need to ensure that equality behaves appropriately with relations, which is accomplished by adding for any $n$-argument relation symbol $R$ axiom: 
  $$(\forall x_1,\dots,x_n) (\forall y_1,\dots,y_n) x_1 = y_1 \wedge \dots \wedge x_n = y_n \imply (R(x_1,\dots,x_n) \iff R(y_1,\dots,y_n)).$$

  We refer to these axioms as the axioms of equality. Elements $a,b \in S$ with $a =_{\Structure} b$ will be indistinguishable within the theory, but may well be distinct elements of the set $S$. A model where the relation $=_{\Structure}$ is interpreted as the equality of set elements is called \emph{normal}, hence what we said amounts to acceptation of non-normal models.

  If $\Structure$ is a non-normal model of some theory $\Theory$, then there is a natural way to construct a normal model on the set $S/=_{\Structure}$. Thus, considering non-normal models does not provide more generality in any real sense. The reason for our treatment is that the ultraproduct construction is more elegant that way. We always assume that the considered theories have the binary relation $=$, and the axioms mentioned above belong to $\Theory$.
\end{remark}

\begin{example}[Sets]
  Let $\Language = \{=\}$, and the let axioms $\Axioms$ consist only of the identity axioms (which in this case amounts to the statement that $=$ is an equivalence relation). Then the corresponding theory describes sets.
\end{example}

\begin{example}
  If we take $\Language = \{\cdot,=\}$ and impose no additional axioms, except for the ones about identity, than the resulting theory describes groupoids (also known as magmas). 

  If we add the axiom of connectivity:
  $$(\forall x,y,z) (x\cdot y) \cdot z = x \cdot( y \cdot z),$$
  we get the theory of semigroups.

  If we add the axiom of existence of unit:
  $$(\exists e) (\forall x) (ex = x) \wedge (xe = x)$$
  then we get the theory of monoids.

  Depending on the taste, one could alternatively define monoids by adding a constant symbol for the unit $e$, and adding the shorter axiom:
  $$(\forall x) (ex = x) \wedge (xe = x)$$
    
  If we add the axiom that each element has an inverse: 
  $$(\forall x) (\exists y) (xy = e)$$
  then we get the theory of groups. Note that if we do not decide to add $e$ as a constant, we need to treat this sentence as a shorthand for a sentence similar to: 
  $$ (\exists e) ((\forall x) (ex = x) \wedge (xe = x)) \wedge ((\forall x) (\exists y) xy = e)).$$
\end{example}

\begin{example}
  Let us take $\Language = \{\cdot,<\}$. We impose, as always, axioms of identity. If we also include the axiom of transitivity:
  $$ (\forall x,y,z)\ (x < y) \wedge (y<z) \imply (x<z),$$
  and the axiom of strong asymmetry:
  $$ (\forall x)\ \neq(x < x),$$
  then the resulting theory describes partial orders. If we add the axiom of totality:
  $$ (\forall x,y)\ (x < y) \vee (y<x),$$
  then we get the theory of total orders. We may also add axioms for theory of dense orders:
  $$ (\forall x,y)\ ( (x < y) \imply (\exists z)\ x < z < y ).$$
\end{example}

  The results of the previous section concerned the issue of voting. One could consider each individual preference $\rel_x$ as a model for the theory\footnote{
    There is a slight technical difficulty here. It is not difficult to get the theory of total orders: we just need one binary relation $\rel$ in the language, and axioms of transitivity, asymmetry and totality. We can also impose the condition that $C$ is a subset of the set being ordered by adding a $\#C$ constant symbols to the theory, one for each element of $C$, say $\seq{c}{a}{C}$, and $\# \binom{C }{ 2}$ axioms ensuring that these constants are different: $\neg( c_a = c_b)$. The difficulty lies in ensuring that the universe is not larger. If $C$ is finite, we can add axiom saying that each element is equal to one of the introduced constants: $(\forall x)\ \bigwedge_{a \in C} (x = c_a)$ (where $\bigwedge$ is a shorthand for multiple application of $\wedge$). For infinite $C$, we can't of course form this sentence, and probably we have no way to ensure that there are no extra elements, hence we would be more correct to speak of the theory of orders on supersets of $C$ in this case. 
  }
$\Theory$ of total orders on $C$, or more precisely as the only piece of data we need to identify such a model. One of the result was that given the individual preferences (or a family of models for $\Theory$ indexed by $X$) and an ultrafilter $\cU$ on $X$, we were able to construct an aggregated preference, which was yet another model of $\Theory$. We now want to extend this approach to different theories. The case that will be of most interest will be when $X = \NN$ and the theory $\Theory$ has a specified standard model --- the outcome will be an introduction of a non-standard one.

\newcommand{\StructureP}{\mathsf{P}} 

\begin{definition}[Construction of ultraproducts]
\index{ultraproduct}
  Let $X$ be a set, let $\cU$ be a distinguished ultrafilter on $X$, and let $\Structure_x$ for $x \in X$ be an $\Language$-structure. Then we define the \emph{ultraproduct} $\StructureP := \prod^\cU_{x \in X} \Structure_x$ as follows.

  As the underlying set, we take the standard product $P := \prod_{x \in X} S_x$.

  For any $n$-argument function symbol $f$ in $\Language$, we define the corresponding function coordinatewise:
  $$f_\StructureP( a^1, a^2, \dots, a^n) := \left( f_{\Structure_x}(a^1_x,a^2_x,\dots,a^n_x \right)_{x \in X}.$$
  
  For any $n$-argument relation $R$ in $\Language$, we define 
  $$R_\StructureP( a^1, a^2, \dots, a^n) := \llim{\cU}{x}{R_{\Structure_x}(a^1_x,a^2_x,\dots,a^n_x)}.$$

  (In particular, if $\Language$ contains a relation symbol $=$, and its interpretation $=_{S_x}$ is the actual equality of set elements, then $=_{\StructureP}$ is the equality on $\cU$-many indices, as opposed to equality per se. Hence, $\StructureP$ is not normal.)

  If all structures $\Structure_x$ in the product are equal to some fixed structure $\Structure$, then the product is referred to as an \emph{ultrapower} of $\Structure$.
\end{definition}

\begin{theorem}[\L o\'s]\label{D:thm:los}
\index{model theory!\L o\'s}
\index{model theory!ultraproduct}
\index{model theory!interpretation}

  Let $\phi(v^1,\dots,v^n)$ be any formula over the language $\Language$ with $n$ free variables $v^1,\dots,v^n$, and let $a^1,a^2,\dots,a^n \in P$. Then, the following conditions are equivalent:
\begin{enumerate}
\item  $\StructureP \models \phi(a^1,a^2,\dots,a^n)$
\item $\Structure_x \models \phi(a^1_x,a^2_x,\dots,a^n_x)$  for $\cU$-many $x \in X$. 
\end{enumerate}

In particular, if $\phi$ has no free variables and $\StructureP$, and $\Structure_x \models \phi$ for all $x$, then also $\StructureP \models \phi$. If $\Structure_x = \Structure$ for some fixed structure $\Structure$, then $\Structure$ and $\Structure$ model the same sentences. If all structures $\Structure_x$ are models of a theory $\Theory$, then also $\StructureP$ is a model of $\Theory$.
\end{theorem}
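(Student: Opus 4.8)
The plan is to proceed by structural induction on the formula $\phi$, following the standard recursive definitions of terms, formulas and interpretation given above. First I would dispose of a preliminary lemma about terms: for any term $t(v^1,\dots,v^n)$ and any $a^1,\dots,a^n \in P$, the element $t_\StructureP(a^1,\dots,a^n)$ equals the sequence $\left( t_{\Structure_x}(a^1_x,\dots,a^n_x) \right)_{x \in X}$. This is itself a trivial induction on the build-up of $t$: the cases of a constant symbol $c$ and a variable $x$ are immediate from the definition of $\StructureP$, and the case $t = f(t_1,\dots,t_m)$ follows because $f_\StructureP$ was defined coordinatewise and the inductive hypothesis applies to each $t_i$. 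With this in hand, the atomic case of the main induction, $\phi = R(t_1,\dots,t_m)$, falls out directly: by definition $R_\StructureP$ is the $\cU$-limit of $R_{\Structure_x}$ evaluated at the coordinates, which by the term lemma are exactly $t_{i,\Structure_x}(a^1_x,\dots,a^n_x)$; since the limit is taken in $\{\top,\bot\}$ with the discrete topology, the value is $\top$ precisely when $R_{\Structure_x}(\dots) = \top$ for $\cU$-many $x$, which is the claim.

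Next come the propositional connectives. Since logically equivalent formulas are identified and we may restrict the vocabulary to $\neg$, $\vee$ and $\exists$ (as noted in the remark following the definition of formulas), it suffices to treat $\neg$ and $\vee$. For $\phi = \neg\psi$, the inductive hypothesis says $\StructureP \models \psi(\ba) \iff \{x : \Structure_x \models \psi(\ba_x)\} \in \cU$; applying the ultrafilter property \ref{def:ultrafilter-prop:7} (a set belongs to $\cU$ iff its complement does not), we get $\StructureP \models \neg\psi(\ba) \iff \{x : \Structure_x \models \neg\psi(\ba_x)\} \in \cU$. For $\phi = \psi_1 \vee \psi_2$, the inductive hypothesis gives that the truth sets of $\psi_1$ and $\psi_2$ over the index set are each in $\cU$ or not; using that a filter is closed under supersets (property \ref{def:filter-prop:2}) for one direction, and the defining ultrafilter property \ref{def:ultrafilter-prop:4} for the other, one checks that $\{x : \Structure_x \models (\psi_1\vee\psi_2)(\ba_x)\}$ is the union of the two truth sets and hence lies in $\cU$ iff at least one summand does, matching $\StructureP \models \psi_1(\ba) \vee \psi_2(\ba)$.

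The one genuinely non-routine step is the existential quantifier $\phi(v^1,\dots,v^n) = (\exists y)\,\psi(y, v^1,\dots,v^n)$. The easy direction is $(1) \Rightarrow (2)$: if $\StructureP \models (\exists y)\psi(y,\ba)$, pick a witness $b \in P$, apply the inductive hypothesis to $\psi$ to see that $\{x : \Structure_x \models \psi(b_x,\ba_x)\} \in \cU$, and observe this set is contained in $\{x : \Structure_x \models (\exists y)\psi(y,\ba_x)\}$, which is therefore in $\cU$ by closure under supersets. The reverse direction $(2) \Rightarrow (1)$ is where the axiom of choice enters and is the main obstacle: assuming $A := \{x : \Structure_x \models (\exists y)\psi(y,\ba_x)\} \in \cU$, for each $x \in A$ choose (by AC) an element $b_x \in S_x$ with $\Structure_x \models \psi(b_x, \ba_x)$, and for $x \notin A$ let $b_x$ be arbitrary; set $b := (b_x)_{x\in X} \in P$. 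Then $\{x : \Structure_x \models \psi(b_x,\ba_x)\} \supseteq A \in \cU$, so by the inductive hypothesis $\StructureP \models \psi(b,\ba)$, hence $\StructureP \models (\exists y)\psi(y,\ba)$. This completes the induction and establishes the equivalence of $(1)$ and $(2)$ for all formulas. The concluding assertions are immediate corollaries: taking $\phi$ with no free variables gives the transfer of sentences; applying this to $\Structure_x = \Structure$ shows $\Structure$ and $\StructureP$ satisfy the same sentences; and if every $\Structure_x \models \Theory$ then for each $\phi \in \Theory$ the index set is all of $X \in \cU$, so $\StructureP \models \phi$, i.e. $\StructureP \models \Theory$.
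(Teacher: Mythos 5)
Your proposal is correct and follows essentially the same route as the paper: structural induction on $\phi$, with the atomic case handled via the coordinatewise/limit definitions, the connectives via the filter and ultrafilter properties, and the existential case via a choice of witnesses $b_x$ on a $\cU$-large set. The only differences are cosmetic — you make the term lemma explicit (which the paper glosses over in the atomic case) and you induct on $\vee$ where the paper uses $\wedge$.
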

\begin{proof}
  To keep notation concise, let $v = v^1,v^2,\dots v^n$ and $a = a^1,a^2,\dots, a^n$. We prove the characterisation by structural induction on the formula $\phi(v)$. The most primitive possible for of $\phi$ is when it is a relation symbol applied to terms. If $\phi$ is not of this form, then we may assume it is constructed from simpler formulas using the logical symbols: $\wedge, \neg$ and $\exists$. We consider the following cases:

  \begin{itemize}
	\item Suppose that $\phi (v) = R(t_1(v), t_2(v),\dots, t_n(v))$ for some relation symbol $R$ and some terms $t_1(v),\dots,t_n(v)$ dependent on $v$. Then the claim is an immediate consequence of how the interpretation $R_{\StructureP}$ is defined.

   \item Suppose $\phi(v) = \alpha(v) \wedge \beta(v)$. By the inductive assumption, the claim holds for $\alpha(v)$ and for $\beta(v)$. 

	Let $A$ be the set of $x \in X$ such that $\Structure_x \models \alpha(a_x)$, and likewise let $B$  be the set of $x \in X$ such that $\Structure_x \models \beta(a)$. It is clear that  $\Structure_x \models \alpha(a_x) \wedge \beta(a_x) = \phi(a_x)$ if and only if $x \in C$. 

	By the inductive assumption, we have $\StructureP \models \alpha(a)$ if and only if $A \in \cU$. Likewise, $\StructureP \models \beta(a)$ if and only if $B \in \cU$. Hence, $\StructureP \models \phi(a)$ if and only if $A,B \in \cU$. It is a general fact that $A, B \in \cU$ if and only if $A \cap B \in \cU$. Therefore, the condition $\StructureP \models \phi(a)$ is equivalent to $A \cap B \in \cU$, which in turn is just another way of stating that $\Structure_x \models \phi(a_x)$ for $\cU$-many $x$.

    \item Suppose $\phi(v) = \neg \alpha(v)$. By the inductive assumption, the claim holds for $\alpha(v)$. Let $A$ be the set of $x \in X$ such that $\Structure_x \models \phi(a_x)$ if and only if $x \in A^c$.

	By the inductive assumption, we have  we have $\StructureP \models \alpha(a)$ if and only if $A \in \cU$. Obviously, $\StructureP \models \phi(a)$ if and only if it is not true that $\StructureP \models \alpha(a)$. Hence $\StructureP \models \phi(a)$ if and only if $A \not \in \cU$. By a general rule, $A \not \in \cU$ if and only if $A^c \in \cU$. Combining these facts, we conclude that   $\StructureP \models \phi(a)$ if and only if $\Structure_x \models \phi(a_x)$ for $\cU$-many $x$.

   \item $\phi(v) = (\exists u) \alpha(v,u)$ for some $\alpha$. By the inductive assumption, the claim holds for the sentence $\alpha$ (with one more variable).

	Suppose that $\StructureP \models \phi(a)$. Then, there exists $b \in P$ such that $\StructureP \models \alpha(a,b)$. By the inductive assumption, $\Structure_x \models \alpha(a_x,b_x)$ for $\cU$-many $x$. Hence, we also have $\Structure_x \models \phi(a_x)$ for $\cU$-many $x$, as desired.

	Suppose conversely that $\Structure_x \models \phi(a_x)$ for $\cU$-many $x$, say for $x \in A$. Then let $b_x$ be such that $\Structure_x \models \alpha(a_x,b_x)$ for $x \in A$, and let $b_x$ be arbitrary for $x \in A^c$. Then we have $\Structure_x \models \alpha(a_x,b_x)$ for $\cU$-many $x$, and by the inductive assumption it follows that  $\StructureP  \models \alpha(a,b)$. In particular, $\StructureP \models \psi(a)$, which concludes the proof.

  \end{itemize}
\end{proof}

The reason for interest in ultraproducts, and in particular in ultrapowers, is the so called (countable) saturation property.

\begin{corollary}
\index{model theory!ultraproduct}
	Let $\cU \in \Ultrafilters{\NN}$ be a non-principal ultrafilter, let $\Theory$ be a theory with model $\Structure$, and let $\StructureP := \prod_{x \in X} \Structure$ be the ultrapower of $\Structure$. Let $\{\phi_i(v)\}_{i \in \NN}$ be a \emph{finitely satisfiable} sequence of sentences, i.e. for any finite $I \subset \NN$ there exists $a \in S$ such that $\Structure \models \phi_i(a)$ for $i \in I$. Then, there exists $a \in P$ such that $\StructureP \models \phi_i(a)$ for all $i \in \NN$.
\end{corollary}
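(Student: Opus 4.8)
The statement is the countable saturation property for ultrapowers along a non-principal ultrafilter on $\NN$, and the natural approach is a direct diagonal construction combined with the \L o\'s theorem \ref{D:thm:los}. First I would use finite satisfiability to pick, for each $k \in \NN$, an element $b_k \in S$ witnessing the finite subfamily $\{\phi_i\}_{i \leq k}$; that is, $\Structure \models \phi_i(b_k)$ for all $i \leq k$. These $b_k$ assemble into a single element $a := (b_x)_{x \in \NN} \in P$ of the ultrapower. The claim is that this $a$ satisfies $\StructureP \models \phi_i(a)$ for every $i$.

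To verify the claim, fix $i \in \NN$. By construction, whenever $x \geq i$ we have $\Structure \models \phi_i(b_x)$, i.e. $\Structure \models \phi_i(a_x)$. Hence the set $\{x \in \NN \setsep \Structure \models \phi_i(a_x)\}$ contains the cofinite set $\{x \setsep x \geq i\}$. Here is the one place the hypotheses are genuinely used: since $\cU$ is a \emph{non-principal} ultrafilter on $\NN$, it contains every cofinite set (the complement of a finite set is a union of finitely many singletons, none of which lies in $\cU$ by non-principality, so the complement of that complement is in $\cU$; alternatively this is exactly the cofinite filter being contained in $\cU$, as in Corollary \ref{cor:ultrafilters-exist}). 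Therefore $\{x \setsep \Structure \models \phi_i(a_x)\} \in \cU$, i.e. $\Structure \models \phi_i(a_x)$ for $\cU$-many $x$. By \L o\'s theorem \ref{D:thm:los} applied to the formula $\phi_i$ (with the single free variable instantiated at $a$), this is equivalent to $\StructureP \models \phi_i(a)$. Since $i$ was arbitrary, $\StructureP \models \phi_i(a)$ for all $i \in \NN$, as desired.

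There is essentially no hard step here; the construction is routine once one has \L o\'s theorem in hand. The only subtlety worth flagging is the role of each hypothesis: non-principality of $\cU$ is what makes cofinite sets large and hence lets the ``tail'' $\{x \geq i\}$ carry the day, while the index set being $\NN$ (countable, and linearly ordered so that ``enumerate the $\phi_i$ and take tails'' makes sense) is what allows the single diagonal element $a$ to simultaneously handle all the $\phi_i$. I would also remark in passing that the same argument shows more: any countable family of sentences (in finitely many free variables, or even with parameters from $\StructureP$) that is finitely satisfiable in $\StructureP$ is realized in $\StructureP$, which is the usual formulation of $\aleph_1$-saturation; the version stated, with the $\phi_i$ satisfiable in the base model $\Structure$, is the special case that suffices for the intended application to non-standard analysis.
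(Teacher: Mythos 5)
Your proof is correct and follows essentially the same route as the paper: choose for each $n$ a witness $a_n$ realizing $\phi_i$ for all $i \leq n$, assemble these into a single element of the product, observe that each $\phi_i$ holds on a cofinite (hence $\cU$-large, by non-principality) set of coordinates, and conclude via \L o\'s's theorem. The additional remarks on the role of each hypothesis and on the stronger saturation statement are accurate but not needed.
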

\begin{proof}
  Let $a_n \in S$ be such that $\Structure \models \phi_i(a_n)$ for $i \leq n$. Let $a := \seq{a}{n}{\NN} \in P$. We claim that for all $i$ we have $\StructureP \models \phi_i(a)$. Indeed, for a fixed $i$ we know that $\Structure \models \phi_i(a_n)$ for $n \geq i$, so it fails to hold for at most finitely many $i$. Because $\cU$ is not principal, it contains no finite sets, and hence $\Structure \models \phi_i(a_n)$ for $\cU$-many $n$. By Theorem \ref{D:thm:los}, it follows that $\StructureP \models \phi_i(a)$.
\end{proof}

\begin{example} Throughout, let $\cU \in \Ultrafilters{\NN}$ be a fixed non-principal ultrafilter. Let $\RR$ be the standard real numbers, and $\RR^*$ be the ultrapower of $\RR$ with respect to $\cU$. It is obvious that any positive integer $n$, there exists $\varepsilon \in \RR$ such that $0 < \varepsilon < \frac{1}{n}$. Hence, there exists $\varepsilon \in \mathbb{R}^*$ such that  $0 < \varepsilon < \frac{1}{n}$ for \emph{any} positive integer $n$. Such $\varepsilon$ is often referred to as \emph{infinitesimal}. It is fairly easy to give an example of such a number, it suffices to take $\eps = \seq{\eps}{n}{\NN} \in \RR^*$ with $\lim_{n \to \infty} \eps_n = 0$. Usage of infinitesimals is the essence of non-standard analysis. 

  Note that although $\varepsilon$ is infinitesimal, it makes sense to apply all standard operations to it. For instance, it makes sense to form expressions such as $1 + 43\varepsilon + \varepsilon^2$ or $\frac{1}{\varepsilon}$. Also, if $f:\ \RR \to \RR$ is any function, then we have a natural way of extending it to $\RR^*$ by adjoining a corresponding function symbol to the language. Hence, it makes sense to consider expressions like $\sin \varepsilon$ or $\frac{f(x+\varepsilon) - f(x)}{\eps}$.
\end{example}

\section{Axiom of Determinacy and Axiom of Choice}
\index{axiom!axiom of choice|(}
\index{axiom!axiom of determinacy|(}
	Let us recall that the construction of ultrafilters relied on the Axiom of Choice, $\mathsf{AC}$. Although in most of mainstream mathematics $\mathsf{AC}$ is almost unilaterally accepted, there is still noticeable interest in axioms which are incompatible with $\mathsf{AC}$. Moreover, it is worth knowing which parts of theory really depend on Choice, and in which the dependence is only superficial. Throughout this section, we will be working with Zermelo-Fraenkel Axiomatization, $\mathsf{ZF}$, unless explicitly noted otherwise. Most of the results discussed here are apparently a part of mathematical folklore; an exhaustive treatment can be found in \cite{games-by-gradel}. A popular and extremely readable introduction, which was the first contact with ultrafilters for the author, can be found \cite{games-by-parys}. 

	It is by no means obvious that Axiom of Choice is independent of Zermelo-Fraenkel Axioms. In fact, it the proof of independence due to Paul Cohen uses a highly non-trivial method of forcing. A more accessible method of proving independence from Choice is by employing additional axiom which known to be consistent with $\mathsf{ZF}$  but false in $\mathsf{ZFC}$, and proving that this statement is not consistent with the result at hand. A possible choice for this purpose is the Axiom of Determinacy which we will now introduce. First, we need a definition, which we form in a slightly informal form.

\begin{definition}[$\omega$-game]
\index{w-game@$\omega$-game}
	An $\omega$-game is a two-player, perfect information, deterministic game of length $\omega$, played with integers. 

	In such game, there are two players, say $A$ and $B$. They take turns choosing integers, starting with $A$, knowing the choices made in previous turns. There are $\omega$ moves made, and hence the choices made by the players result in construction of an infinite sequence of integers, say $\seq{a}{i}{\omega}$, where $A$ chooses $a_0$, $B$ chooses $a_1$ knowing $a_1$, and so on. The game is determined by a set $X \subset \omega^\omega$: if $\seq{a}{i}{\omega} \in X$ then $A$ wins, else $B$ wins.

	A strategy (for player $A$) is a way to assign the next move of $A$ to a given position. Formally, it is a collection of maps $S = (S_i)_{i \in \omega}$ such that $S_i :\ \omega^{2i} \to \omega$. We say that $A$ follows the strategy $S$ if at $i$-th move he chooses $a_{2i} = S_i((a_j)_{j<2i})$. The strategy $S$ is said to be \emph{winning} if by following $S$, $A$ wins, regardless of how $B$ plays. Strategies for $B$ are defined analogously.

	A game is said to be \emph{determined} if either of the players has a winning strategy. 

\end{definition}

In the above definition, the requirement that the moves consist in choosing integers is not as restrictive as it might appear. In practice, it suffices that in each turn, the number of moves is at most countable: it is then possible to enumerate possible moves, and identify the choice of an integer with the choice of the corresponding move.

\begin{example}
	Consider a game of chess with the standard rules, but with the threefold repetition rule replaced by the rule that if the game proceeds indefinitely, then black wins. The resulting game is an $\omega$-game, although admittedly not a very interesting one. The same holds for checkers.
\end{example}

We are now ready to formulate the Axiom of Determinacy.

\begin{definition}
	\emph{Axiom of Determinacy} ($\mathsf{AD}$) is the statement that each $\omega$-game is determined.
\end{definition}

Justification of interest in $\mathsf{AD}$ lies in the following difficult theorem, which we cite without proof. We will not explicitly use it, but if it wasn't true, much of the subsequent considerations would be moot. The inquisitive reader is referred to \cite{axioms-by-kanamori} and \cite{sets-by-jech} for more details.

\begin{theorem}
  Axiom of Determinacy is consistent with Zermelo-Fraenkel Axioms.
\end{theorem}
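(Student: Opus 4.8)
The statement to be established is that the Axiom of Determinacy $\mathsf{AD}$ is consistent with Zermelo--Fraenkel set theory $\mathsf{ZF}$. This is a deep metamathematical result and, as the text signals, we intend to cite it rather than prove it from scratch; nonetheless I sketch the shape the argument would take. The only genuinely available route is to \emph{relativize} $\mathsf{AD}$ to an inner model: one does not prove $\mathrm{Con}(\mathsf{ZF}) \imply \mathrm{Con}(\mathsf{ZF} + \mathsf{AD})$ outright (that would contradict G\"odel's second incompleteness theorem applied to the large-cardinal strength actually needed), but rather one shows that \emph{if} $\mathsf{ZF}$ plus suitable large-cardinal hypotheses is consistent, \emph{then} so is $\mathsf{ZF}+\mathsf{AD}$. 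Concretely, the plan is to exhibit an inner model $L(\RR)$ and show that, under the assumption that there are infinitely many Woodin cardinals with a measurable above them, $L(\RR) \models \mathsf{ZF} + \mathsf{AD}$. Since $L(\RR)$ always models $\mathsf{ZF}$, the work is entirely in verifying that every $\omega$-game in the sense of the ambient universe, restricted to $L(\RR)$, is determined there.

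First I would recall the reduction of $\mathsf{AD}$ to the determinacy of games with \emph{definable} payoff sets: within $L(\RR)$ every set of reals is ordinal-definable from a real, so it suffices to prove that all such sets are determined. Then the core of the argument is the chain of determinacy transfer theorems: $\Pi^1_1$-determinacy from a measurable cardinal (Martin), projective determinacy $\mathrm{PD}$ from infinitely many Woodin cardinals (Martin--Steel), and finally $\mathsf{AD}^{L(\RR)}$ from infinitely many Woodins with a measurable above (Woodin). Each of these is proved by associating to a game on the reals an \emph{auxiliary game} on ordinals whose payoff is closed (hence determined by Gale--Stewart, which needs no choice), using iteration trees and homogeneously Suslin representations of the payoff set to push the strategy back down to the original game. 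The step I expect to be the main obstacle --- and the reason this is cited rather than reproduced --- is exactly the Martin--Steel construction of the homogeneous tree representations from Woodin cardinals: it requires the full theory of iteration trees, the reflection properties of Woodin cardinals expressed via extenders, and a delicate bookkeeping argument that the auxiliary play on ordinals can be unraveled into a genuine winning strategy in the real game.

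Granting that machinery, the conclusion is immediate: one has a model of $\mathsf{ZF}$ in which every $\omega$-game is determined, so by the completeness theorem $\mathsf{ZF} + \mathsf{AD}$ has a model and is therefore consistent. I would close by remarking on the consistency-strength calibration, since it clarifies why the hypothesis cannot be weakened to bare $\mathrm{Con}(\mathsf{ZF})$: work of Woodin (and the Martin--Steel--Woodin analysis) shows $\mathsf{ZF} + \mathsf{AD}$ is \emph{equiconsistent} with $\mathsf{ZFC}$ plus infinitely many Woodin cardinals, so the large-cardinal assumption is not an artifact of the proof but an intrinsic feature. For the purposes of this thesis, however, all that is used downstream is the bare consistency statement, so I would simply invoke the references already cited (\cite{axioms-by-kanamori}, \cite{sets-by-jech}) for the full development and treat the theorem as a black box.
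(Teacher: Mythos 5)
The paper offers no proof at all here: it explicitly states the result is ``cited without proof'' and points to \cite{axioms-by-kanamori} and \cite{sets-by-jech}. Your proposal ends in the same place --- invoke the references and treat the theorem as a black box --- but on the way you supply an accurate outline of the actual mathematics: determinacy in $L(\RR)$ from infinitely many Woodin cardinals with a measurable above, via homogeneously Suslin representations and auxiliary closed games on ordinals, with Gale--Stewart supplying the choice-free base case. That sketch is correct in its broad strokes and correctly identifies the Martin--Steel machinery as the irreducible hard core.

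More importantly, you catch something the paper glosses over: the theorem as literally stated is not quite a theorem. One cannot prove $\mathrm{Con}(\mathsf{ZF}) \imply \mathrm{Con}(\mathsf{ZF}+\mathsf{AD})$, since $\mathsf{ZF}+\mathsf{AD}$ is equiconsistent with $\mathsf{ZFC}$ plus infinitely many Woodin cardinals and therefore has strictly greater consistency strength than $\mathsf{ZF}$; by G\"odel's second incompleteness theorem the implication would collapse that hierarchy. The honest statement is a \emph{relative} consistency result conditioned on a large-cardinal hypothesis, and your proposal makes that conditioning explicit where the paper's formulation hides it. Since nothing downstream in the thesis uses more than ``$\mathsf{AD}$ is not refutable from $\mathsf{ZF}$ for all we know,'' the imprecision is harmless to the paper's applications, but your version is the one that should be written down.
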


Axiom of Determinacy has many surprising consequences, which are in contradiction with the standard result derived with use of the Axiom of Choice. Again, we give not give a proof, nor will we ever use them. Our only aim here is to give the reader a flavour of what mathematics looks like in $\mathsf{ZF} + \mathsf{AD}$.

\begin{theorem}
	Any of the following is a consequence of the Axiom of Determinacy:
	\begin{enumerate}
	\item Every subset of  $\RR$ is Lebesgue measurable.
	\item Every subset of  $\RR$ has the property of Baire.
	\item Every subset of  $\RR$ has the perfect set property.
	\item Every uncountable subset of $\RR$ has cardinality $\mathfrak{c}$.
	\item There is no Hamel basis of the $\RR$ over $\QQ$.
	\end{enumerate}
\end{theorem}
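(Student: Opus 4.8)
The plan is to establish the three \emph{regularity} statements (1), (2), (3) by the method of auxiliary games, and then to deduce (4) and (5) from them. In each case one attaches to a given set $A \subseteq \RR$ an $\omega$-game $G_A$ played with integers; since $\mathsf{AD}$ as formulated above grants determinacy of \emph{every} subset of $\omega^\omega$, and the payoff sets of these auxiliary games are completely arbitrary, $\mathsf{AD}$ applies to $G_A$ directly. One then shows that a winning strategy for the first player yields one regularity conclusion about $A$, a winning strategy for the second player yields a complementary one, and finally one patches these local conclusions into the desired global statement by an exhaustion argument over a countable base of $\RR$.

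First I would dispose of a coding issue: $\mathsf{AD}$ speaks of subsets of $\omega^\omega$, so I would fix a Borel isomorphism between $\RR$ minus a countable set and $\omega^\omega$ (equivalently, identify points of $\RR$ with branches of a countably branching tree via a continued-fraction-type expansion), so that every subset of $\RR$ transports to a subset of $\omega^\omega$ and every game below can be presented as an $\omega$-game on integers without affecting whether a given regularity property holds. For (2), the property of Baire, use the Banach--Mazur game $G^{**}(A)$: the two players alternately shrink a decreasing chain of basic open intervals with rational endpoints and vanishing diameters, and player~II wins iff the unique common point lies in $A$. The classical Oxtoby analysis gives: II has a winning strategy iff $A$ is comeager in some nonempty open set, and I has a winning strategy iff $A$ is meager. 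By $\mathsf{AD}$ one of these holds; letting $U$ be the union of all basic open intervals in which $A$ is comeager, one checks (using the countable base) that $A$ is comeager in $U$ and meager off $\overline{U}$, and a short standard argument then yields $A \triangle V$ meager for some open $V$, i.e. $A$ has the Baire property. For (1), Lebesgue measurability, the same template works with the Mycielski--\'{S}wierczkowski covering game: the players cooperatively build a real $x$ while the first player also spends a budget of open intervals of total measure $<\varepsilon$, the payoff recording whether $x$ is covered and whether $x \in A$; a winning strategy for one player witnesses that $A$ has inner measure arbitrarily close to that of a box, for the other that $A$ is null there, and determinacy plus an exhaustion over a countable family of boxes gives that $A$ is measurable. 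For (3), the perfect set property, use Davis's game on $2^\omega$: the first player plays nonempty finite binary strings, the second player single bits, their concatenation determines $x \in 2^\omega$, and the first player wins iff $x \in A$; a winning strategy for the first player organizes a perfect binary subtree all of whose branches lie in $A$, while a winning strategy for the second player lets one enumerate $A$. Hence by $\mathsf{AD}$ every $A \subseteq \RR$ is either countable or contains a nonempty perfect set.

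Finally, (4) follows from (3): a nonempty perfect set carries a continuous injection of $2^\omega$ via a Cantor scheme, hence admits a bijection with $\RR$ constructed without any appeal to choice, so every uncountable subset of $\RR$ has cardinality exactly $\mathfrak{c}$. And (5) follows from (1) (or from (2)) by the classical argument of Sierpi\'{n}ski: a Hamel basis of $\RR$ over $\QQ$ produces a $\QQ$-linear hyperplane $V$ whose measurability is contradictory (if null, $\RR$ is a countable union of null translates of $V$; if of positive measure, Steinhaus gives $V - V = V$ containing an interval, forcing $V = \RR$), so no Hamel basis can exist once every set is measurable.

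The main obstacle is the game-theoretic core of (1) and (2): arranging the covering game and the Banach--Mazur game so that the equivalences \emph{winning strategy} $\Leftrightarrow$ \emph{regularity} hold on the nose, and then carrying out the localization step that upgrades ``$A$ is regular relative to some open set'' to ``$A$ is regular''. It is this bookkeeping, rather than any subtlety in invoking $\mathsf{AD}$ itself, that carries all the real weight; once it is in place, (3) is a comparatively clean game argument and (4), (5) are immediate corollaries.
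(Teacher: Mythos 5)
The paper does not prove this theorem at all: it states the list of consequences of $\mathsf{AD}$ explicitly as folklore, says ``we give not give a proof, nor will we ever use them,'' and points the reader to the literature. So there is no internal proof to compare yours against; what you have written is the standard textbook route, and in outline it is correct. Your choices of auxiliary games are the canonical ones --- the Banach--Mazur/Oxtoby analysis for the Baire property, the Mycielski--\'{S}wierczkowski covering game for Lebesgue measurability, Davis's game $G^*(A)$ for the perfect set property --- and the deductions of (4) from (3) (a perfect set carries a choice-free injection of $2^\omega$, then Cantor--Schr\"{o}der--Bernstein) and of (5) from (1) (Sierpi\'{n}ski's argument: the hyperplane spanned by $B \setminus \{b\}$ is a $\QQ$-subspace $V$ with $V - V = V$ and $\RR$ a countable union of its translates, so measurability of $V$ is contradictory via Steinhaus) are sound and genuinely choice-free given the basis as data. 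The one place where your sketch is loosest is exactly the one you flag: in the covering game the clean statement is that a winning strategy for the covering player traps $A$ in an open set of measure $< \varepsilon$ more than the inner approximation, while failure of that strategy (hence, by $\mathsf{AD}$, a strategy for the opponent) produces closed subsets of $A$ of measure close to $\mu^*(A)$; getting the budget bookkeeping right there is where the real work sits, and a complete write-up would need it spelled out. As a proof sketch at the level of detail the paper itself operates at, your proposal supplies precisely what the paper chose to omit.
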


We will now present an elegant and non-trivial example of an $\omega$-game.

\begin{example}[The ultrafilter game.]\label{D:exple:game-ultrafilters}
 Suppose that $\cU \in \Ultrafilters{\NN}$ be an arbitrary non-principal ultrafilter. Consider the following game. Two players, Alice and Bob, take turns selecting consecutive terms of a sequence $\seq{a}{n}{\NN}$: first Alice selects any $a_0$, then Bob selects arbitrary $a_1 > a_0$, then Alice selects $a_2 > a_1$, and so on. After $\omega$ moves, the sequence $\seq{a}{n}{\NN}$ is constructed. We then define sets sets $A$ and $B$ are as:
  \begin{align*}
    A := \bigcup_{n \in \NN} [a_{2n-1}, a_{2n}), \qquad
    B := \bigcup_{n \in \NN} [a_{2n}, a_{2n+1}),
  \end{align*}
 where $a_{-1} := 0$ by convention. It is clear that $A \cap B = \emptyset$ and $A \cup B = \NN$, so exactly one of $A$ and $B$ belongs to $\cU$. Alice wins if $A \in \cU$, Bob wins if $B \in \cU$.
\end{example}

\begin{proposition}\label{D:prop:game-no-winning-strategy}
  Suppose that $\cU$ is an ultrafilter. Then the ultrafilter game described above in Example \ref{D:exple:game-ultrafilters}, neither player has a winning strategy.
\end{proposition}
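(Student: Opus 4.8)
The plan is to argue by contradiction using a strategy-stealing argument, exploiting the symmetry between Alice and Bob together with the shift-invariance that is built into the game. First I would suppose that one of the players, say Bob, has a winning strategy $\sigma$. The key observation is that the game has an approximate symmetry: if Alice opens with $a_0 = 0$ (the smallest legal move, assuming $0 \in \NN$; otherwise with $a_0 = 1$), then the positions $[a_{-1},a_0) = [0,0) = \emptyset$ and the roles of the two players in building $A$ and $B$ are effectively interchanged from that point on. More precisely, after Alice's first move $a_0$, the remaining game (Bob to move, choosing $a_1 > a_0$, then Alice $a_2$, etc.) is, up to the harmless finite initial segment $[0,a_0)$, the \emph{same} game with the names of the players swapped: the set that Bob is now trying to put into $\cU$ is what the ``first player'' builds, and vice versa.

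The main step is then to let Alice use a copycat / stolen strategy. Alice plays $a_0 = 0$ on her first move (contributing the empty interval to $A$), and thereafter she pretends to be ``Bob'' in a shifted copy of the game, applying Bob's winning strategy $\sigma$ to the subsequence of moves $a_1, a_2, a_3, \dots$. Concretely, whenever it is Alice's turn to pick $a_{2k}$ for $k \geq 1$, she feeds the sequence $(a_1, \dots, a_{2k-1})$ into $\sigma$ as if $a_1$ were the opening move of a fresh game and $\sigma$ were responding to it. Because $\sigma$ is winning for Bob regardless of the opponent's play, in this shifted copy the ``$\sigma$-player'' wins, which means $\bigcup_{n\geq 1}[a_{2n-1},a_{2n})$ — Alice's contribution past the first move — lies in $\cU$; since this set equals $A \setminus [0,0) = A$, Alice wins the original game. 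But $\sigma$ was assumed winning for Bob against \emph{any} Alice play, in particular against this one, so $B \in \cU$ as well. This contradicts the fact that $A$ and $B$ partition $\NN$ and $\cU$ is an ultrafilter (so exactly one of $A, B$ belongs to $\cU$). The symmetric argument — Bob stealing a hypothetical winning strategy for Alice, by replying to Alice's $a_0$ with $a_1 = a_0 + 1$ so that $[a_0,a_1)$ is a single point and the parity is restored — rules out Alice having a winning strategy.

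I expect the main obstacle to be making the ``shift the game by one move and swap the players'' reduction completely precise: one has to check that the finite interval $[0,a_0)$ that gets orphaned on Alice's first move does not affect membership in $\cU$ (it does not, because $\cU$ is non-principal, so it contains no finite set, and modifying a set in $\cU$ by a finite amount does not change whether it is in $\cU$ — indeed $A$ and $A\setminus[0,a_0)$ differ by a finite set), and that the strategy $\sigma$, which expects its input to be a strictly increasing sequence starting with the opponent's opening move, is being fed a legitimate run (it is, since all the $a_i$ are strictly increasing). Once this bookkeeping is in place the contradiction is immediate from the defining ultrafilter property \ref{def:ultrafilter-prop:7}. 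A minor point to handle cleanly is the indexing convention $a_{-1} := 0$ and whether $0$ is an allowed first move; if not, one instead has Alice open with the least element of $\NN$, and the argument is unchanged since a single extra point is still a finite perturbation.
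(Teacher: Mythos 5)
Your argument ruling out a winning strategy for Bob is correct and is, up to repackaging, the paper's own proof: the paper runs two copies of the game in parallel, with both Bobs following the alleged winning strategy $\sigma$ and the Alices cross-feeding moves so that the second run is the shift $a'_n = a_{n+1}$ of the first; your ``Alice opens with the least element and thereafter plays $\sigma$ shifted by one while Bob plays $\sigma$'' produces exactly that pair of runs, and your finite-perturbation bookkeeping is the paper's observation that $A' \triangle B$ is finite and hence irrelevant for membership in a non-principal ultrafilter.

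The gap is in your final sentence, where you dispose of the case that \emph{Alice} has a winning strategy $\tau$ by having Bob reply $a_1 = a_0 + 1$ and ``restore parity.'' This does not work: a first-player strategy begins by prescribing a concrete opening move $\tau(\,)$, and in any shifted copy in which Bob is to act as first player his opening move is $a_1$, which must exceed $a_0 = \tau(\,)$; so Bob can never legally play the move $\tau$ demands, and after playing $a_0+1$ instead he sits in a position that $\tau$ never reaches, where $\tau$'s guarantee says nothing. The shift-by-one trick is genuinely asymmetric between the two players. The correct symmetric argument runs two copies in which \emph{Alice} follows $\tau$ in both (so both open with $a_0 = a'_0 = \tau(\,)$) and the Bobs echo each other: Bob plays $a_1$ arbitrarily, Alice answers $a_2$, Bob$'$ plays $a'_1 := a_2$, Alice$'$ answers $a'_2$, Bob plays $a_3 := a'_2$, and so on; the intervals $[a_1,a_2),\ [a'_1,a'_2),\ [a_3,a_4),\dots$ then tile $[a_1,\infty)$ alternately between $A$ and $A'$, so $A \cap A'$ is finite, contradicting $A, A' \in \cU$ for non-principal $\cU$. (The paper dismisses this case as ``entirely analogous,'' but note that the alignment of the two runs must change; the literal shift $a'_n = a_{n+1}$ is unavailable for exactly the reason above.)
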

\begin{proof}
  For a proof by contradiction, suppose that one of the players can ensure his victory. For concreteness, suppose that it is Bob. The considerations in the case when Alice has the winning strategy are entirely analogous.
  
  Consider two instances of the game being played in parallel; one with Alice and Bob generating sequence $\seq{a}{n}{\NN}$, the other one with Alice$'$ and Bob$'$ generating sequence $\seq{a'}{n}{\NN}$, where in both games Bob and Bob' play according to the hypothesized winning strategy. We will show that Alice and Alice$'$ can cooperate to win at least one of the games. Their joint strategy is as follows.

  First, Alice makes her initial move $a_0$ arbitrarily. Bob answers with some move $a_1$. Now, Alice$'$ makes her first move $a_0' := a_1$, to which Bob$'$ answers with $a_1'$. Then, Alice plays $a_2 := a_1'$, and waits for the move of Bob $a_3$. Alice$'$ then plays $a_2':= a_3$, and waits for Bob to play $a_3'$. They continue in this fashion. In general, suppose that after a number of turns it is the time for Alices to choose $a_{2n}$ and $a_{2n}'$. Alice moves first, choosing $a_{2n} := a'_{2n-1}$. Then Bob plays some $a_{2n+1}$. Once Bob's move is made, Alice$'$ chooses $a_{2n}' := a_{2n+1}$. After Bob$'$ makes his move, it is again the turn of the Alices, and the cycle is complete.

  Note that by construction $a'_n = a_{n+1}$ for any $n \in \NN$. In particular, if we denote the sets $A',B'$ in analogy to the sets $A,B$, we find that:
  $$ A' := \bigcup_{n \in \NN} [a'_{2n-1}, a_{2n}') 
    = [0,a'_0) \cup \bigcup_{n \in \NN} [a'_{2n+1}, a_{2n+2}') 
    = [0,a_1) \cup \bigcup_{n \in \NN} [a_{2n}, a_{2n+1}) = [0,a_1) \cup B 
  $$
  As a consequence, the symmetric difference $A' \triangle B$ is finite, and does not belong to $\cU$.

  Because of the assumption that Bob used a winning strategy, we know that he wins the game against Alice. It follows that $B \in \cU$. Since $A' \triangle B \not \in \cU$, we also have $A' \in \cU$. However, this means that Alice$'$ wins the game against Bob$'$, who was also assumed to play according to winning strategy. This is a contradiction, proving that the assumption of existence of winning strategy for Bob was false.
\end{proof}

\begin{corollary}\label{D:cor:AD-inconcistent-with-ultrafilters}
  Existence of non-principal ultrafilters on $\NN$ is inconsistent with $\mathsf{AD}$. In particular, it is consistent with $\mathsf{ZF}$ that no non-principal ultrafilters exist.
\end{corollary}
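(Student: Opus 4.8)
The statement to prove is Corollary \ref{D:cor:AD-inconcistent-with-ultrafilters}: existence of non-principal ultrafilters on $\NN$ is inconsistent with $\mathsf{AD}$, and consequently it is consistent with $\mathsf{ZF}$ that no non-principal ultrafilters exist.

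The plan is to combine the three preceding ingredients, each of which is already available in the excerpt. First I would argue the inconsistency with $\mathsf{AD}$. Suppose, working in $\mathsf{ZF} + \mathsf{AD}$, that $\cU \in \Ultrafilters{\NN}$ is a non-principal ultrafilter. Then the ultrafilter game of Example \ref{D:exple:game-ultrafilters} is a legitimate $\omega$-game: each player in turn picks an integer strictly larger than the previous one (a countable set of legal moves at each turn, so this fits the definition after the obvious encoding), and the winning set $\{ \seq{a}{n}{\NN} \setsep A \in \cU \}$, where $A := \bigcup_n [a_{2n-1},a_{2n})$, is a well-defined subset of $\omega^\omega$ since exactly one of $A,B$ lies in $\cU$. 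By $\mathsf{AD}$ this game is determined, so one of the two players has a winning strategy. But Proposition \ref{D:prop:game-no-winning-strategy} asserts that neither player has a winning strategy — a direct contradiction. Hence in $\mathsf{ZF}$ the hypotheses ``$\mathsf{AD}$'' and ``there exists a non-principal ultrafilter on $\NN$'' cannot both hold.

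From this the consistency statement follows by a short metamathematical argument. The excerpt cites the theorem that $\mathsf{AD}$ is consistent with $\mathsf{ZF}$, i.e. $\Con(\mathsf{ZF}) \imply \Con(\mathsf{ZF} + \mathsf{AD})$. Combined with the inconsistency just proved, any model of $\mathsf{ZF} + \mathsf{AD}$ is a model of $\mathsf{ZF}$ in which there is no non-principal ultrafilter on $\NN$; equivalently, the sentence ``every ultrafilter on $\NN$ is principal'' is consistent with $\mathsf{ZF}$. I would phrase this last step carefully to make clear we are only claiming relative consistency (relative to $\Con(\mathsf{ZF})$), since that is all the cited theorem provides, and I would note in passing that this shows the use of the Axiom of Choice (via Kuratowski–Zorn) in Corollary \ref{cor:ultrafilters-exist} is genuinely necessary and not an artefact of the proof.

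I do not anticipate a serious obstacle here, since all the hard work sits in Proposition \ref{D:prop:game-no-winning-strategy} and in the cited consistency theorem for $\mathsf{AD}$, both of which I am entitled to assume. The only point requiring a little care is the bookkeeping needed to see that the ultrafilter game really is an $\omega$-game in the precise sense of the definition: the moves are not arbitrary integers but integers subject to the monotonicity constraint $a_{n+1} > a_n$, so I would remark that one either relaxes the definition to allow an arbitrary countable (indeed finite-or-countable, here cofinite-in-$\NN$) legal move set at each position, or explicitly re-encodes a move ``$a_{n+1} = a_n + 1 + k$'' by the integer $k$, after which the winning set pulls back to a subset of $\omega^\omega$ and determinacy applies verbatim. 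With that remark in place the proof is just the two-line deduction above.
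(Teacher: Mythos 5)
Your proposal is correct and follows essentially the same route as the paper: assume $\mathsf{AD}$ and a non-principal ultrafilter, invoke the ultrafilter game of Example \ref{D:exple:game-ultrafilters}, and derive a contradiction between determinacy and Proposition \ref{D:prop:game-no-winning-strategy}. Your additional remarks on re-encoding the constrained moves so the game literally fits the definition of an $\omega$-game, and on phrasing the final step as a relative consistency claim, are careful refinements of points the paper passes over more quickly, not a different argument.
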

\begin{proof}
  Assume $\mathsf{AD}$ holds, and suppose that $\cU$ is a non-principal ultrafilter on $\NN$. Consider the ultrafilter game described in Example \ref{D:exple:game-ultrafilters}. On one hand, according to Proposition \ref{D:prop:game-no-winning-strategy}, neither of the player has a winning strategy for this game. On the other hand, this is an $\omega$-game, so $\mathsf{AD}$ implies the existence of a winning strategy. These two statements are contradictory, so the assumption that a non-principal ultrafilter on $\NN$ exists is inconsistent with $\mathsf{AD}$.
\end{proof}

In particular, we have just re-derived the following clasically known fact. Note that we are not dependent on any consistency results here.

\begin{corollary}\label{D:cor:AD+AC-inconsistent}

  The Axiom of Choice and the Axiom of Determinacy are incompatible within Zermelo-Fraenkel Axiomatization, in the sense that the theory $\mathsf{ZF} + \mathsf{AC} + \mathsf{AD}$ is inconsistent.
\end{corollary}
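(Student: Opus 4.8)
The final statement is Corollary \ref{D:cor:AD+AC-inconsistent}: the theory $\mathsf{ZF} + \mathsf{AC} + \mathsf{AD}$ is inconsistent. The plan is to derive this as an immediate consequence of the two preceding results, namely Corollary \ref{cor:ultrafilters-exist} (which, via Corollary \ref{A:cor:ultrafilters-existance-above-FI-sets}, produces a non-principal ultrafilter on $\NN$ using only Kuratowski-Zorn, hence is a theorem of $\mathsf{ZFC}$) and Corollary \ref{D:cor:AD-inconcistent-with-ultrafilters} (existence of a non-principal ultrafilter on $\NN$ is inconsistent with $\mathsf{AD}$).

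First I would argue by contradiction: suppose $\mathsf{ZF} + \mathsf{AC} + \mathsf{AD}$ is consistent, and work inside this theory. Since $\mathsf{AC}$ is available, Corollary \ref{cor:ultrafilters-exist} applies to the infinite set $\NN$, so there exists a non-principal ultrafilter $\cU$ on $\NN$. On the other hand, $\mathsf{AD}$ holds, so by Corollary \ref{D:cor:AD-inconcistent-with-ultrafilters} no non-principal ultrafilter on $\NN$ can exist. These two conclusions are contradictory, which shows the theory proves a contradiction, i.e. it is inconsistent.

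I would also take care to note why no metatheoretic consistency assumptions are needed here, unlike in the discussion preceding Corollary \ref{D:cor:AD-inconcistent-with-ultrafilters}: we never invoke the theorem that $\mathsf{AD}$ is consistent with $\mathsf{ZF}$. The argument is a purely syntactic derivation of $\bot$ from the axioms of $\mathsf{ZF} + \mathsf{AC} + \mathsf{AD}$, so it is a theorem of $\mathsf{ZF}$ (indeed of weak metatheory) that this combined theory is inconsistent. Both ingredients, Corollary \ref{cor:ultrafilters-exist} and Corollary \ref{D:cor:AD-inconcistent-with-ultrafilters}, are themselves established without any consistency hypotheses, so the conclusion inherits this feature.

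There is essentially no obstacle to overcome: the substantive work has all been done in the ultrafilter game analysis (Proposition \ref{D:prop:game-no-winning-strategy}) and in the elementary $\mathsf{ZFC}$ existence proof. The only thing worth being slightly careful about is the logical formulation — distinguishing ``$\mathsf{AC}$ and $\mathsf{AD}$ are incompatible'' (a statement about a single formal theory being inconsistent) from any claim that would require knowing the consistency of $\mathsf{ZF}$ itself. I would phrase the corollary and its one-line proof to make clear that what is being asserted is the former, and that it follows by simply combining the two cited corollaries.
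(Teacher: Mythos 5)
Your proposal is correct and follows exactly the paper's (implicit) argument: combine Corollary \ref{cor:ultrafilters-exist} (a $\mathsf{ZFC}$ theorem producing a non-principal ultrafilter on $\NN$) with Corollary \ref{D:cor:AD-inconcistent-with-ultrafilters} to derive a contradiction inside $\mathsf{ZF} + \mathsf{AC} + \mathsf{AD}$. Your added remark that no consistency hypotheses are invoked matches the paper's own comment preceding the corollary, so nothing is missing.
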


A practical consequence of Corollary \ref{D:cor:AD-inconcistent-with-ultrafilters} is that there is little hope of an explicit construction of an ultrafilter on $\NN$. We will not go into the details of what it precisely means for a construction to be ``explicit'', but such construction should clearly be possible carry out within $\mathsf{ZF}$. Hence, independence of existence of ultrafilters from $\mathsf{ZF}$ offers strong evidence that the construction is impossible\footnote{We choose not to formulate these results in a more decisive way for two reasons. Firstly, it is not entirely certain that each ``explicit'' construction is formalisable within the $\mathsf{ZF}$ framework. Secondly, it might possibly be the case that a \emph{construction} itself is possible within $\mathsf{ZF}$, and it is only the proof of correctness that requires stronger axioms.}.

\index{axiom!axiom of choice|)}
\index{axiom!axiom of determinacy|)} % A digression on Arrow's theorem.
  
%\input{ Chapter3}
%\input{ Chapter4} 
%\input{ Chapter5} 
%\input{ Chapter6} 
%\input{ Chapter7} 

%----------------------------------------------------------------------------------------
%	THESIS CONTENT - APPENDICES
%----------------------------------------------------------------------------------------

%\addtocontents{toc}{\vspace{2em}} % Add a gap in the Contents, for aesthetics

%\appendix % Cue to tell LaTeX that the following 'chapters' are Appendices

% Include the appendices of the thesis as separate files from the Appendices folder
% Uncomment the lines as you write the Appendices

%\input{./Appendices/AppendixA}
%	\input{./Appendices/AppendixB}
%\input{./Appendices/AppendixC}

\addtocontents{toc}{\vspace{2em}} % Add a gap in the Contents, for aesthetics

\backmatter

%----------------------------------------------------------------------------------------
%	BIBLIOGRAPHY
%----------------------------------------------------------------------------------------

\nocite{*}
\label{Bibliography}

\lhead{\emph{Bibliography}} % Change the page header to say "Bibliography"

\bibliographystyle{alpha} % Use the "unsrtnat" BibTeX style for formatting the Bibliography

\bibliography{Bibliography} % The references (bibliography) information are stored in the file named "Bibliography.bib"

\addcontentsline{toc}{chapter}{Index} 

\index{FP|seealso{$\IP$-set}}
\index{FU|seealso{$\IP$-set}}
\index{FS|seealso{$\IP$-set}}
\index{topology!limit|see{limit}}
\index{topology!topological semigroup|see{semigroup!topological semigroup}}
\index{topology!ultrafilter|see{ultrafilter!topological structure}}
\index{symmetric discrete derivative|see{discrete derivative!symmetric}}
\index{T@$\TT$|see{torus}}
\index{IP@$\IP$|seealso{idempotent}}

\printindex

\end{document}